\newcommand{\vertiii}[1]{{\left\vert\kern-0.25ex\left\vert\kern-0.25ex\left\vert #1 
    \right\vert\kern-0.25ex\right\vert\kern-0.25ex\right\vert}}
\theoremstyle{plain}
\newtheorem{teorema}{Theorem}[section]
\newtheorem{proposizione}[teorema]{Proposition}
\newtheorem{lemma}[teorema]{Lemma}
\newtheorem{corollario}[teorema]{Corollary}
\newtheorem*{theorem*}{Theorem}
\theoremstyle{definition}
\newtheorem{definizione}{Definition}[section]
\theoremstyle{plain}
\newtheorem*{congettura}{Dentsity Problem}
\theoremstyle{plain}
\newtheorem{teorema2}{Theorem}
\theoremstyle{plain}
\newtheorem{proposizione2}{Proposition}
\theoremstyle{definition}
\theoremstyle{remark}
\newtheorem{Notation}{Notation}[section]
\theoremstyle{remark}
\newtheorem{osservazione}{Remark}[section]
\theoremstyle{remark}
\newcommand{\Tan}{\mathrm{Tan}}
\newcommand{\N}{\mathbb{N}}
\newcommand{\Q}{\mathbb{Q}}
\newcommand{\R}{\mathbb{R}}
\newcommand{\HH}{\mathbb{H}}
\newcommand{\G}{Gr}
\newcommand{\res}
\newcounter{const}
\newcommand{\newC}{\refstepcounter{const}\ensuremath{C_{\theconst}}}
\newcommand{\oldC}[1]{\ensuremath{C_{\ref{#1}}}}
\newcounter{eps}
\newcommand{\newep}{\refstepcounter{eps}\ensuremath{\varepsilon_{\theeps}}}
\newcommand{\oldep}[1]{\ensuremath{\varepsilon_{\ref{#1}}}}
\DeclareMathOperator*{\lip}{Lip_1^+}
\DeclareMathOperator*{\Card}{Card}
\DeclareMathOperator*{\supp}{supp}
\DeclareMathOperator*{\diam}{diam}
\DeclareMathOperator*{\dist}{dist}
\title{\normalfont\spacedallcaps{Marstrand-Mattila rectifiability criterion for $1$-codimensional measures in Carnot Groups}} 
\author{\spacedlowsmallcaps{Andrea Merlo\textsuperscript{*}}} 
\date{} 
\begin{document}


\renewcommand{\sectionmark}[1]{\markright{\spacedlowsmallcaps{#1}}} 
\lehead{\mbox{\llap{\small\thepage\kern1em\color{halfgray} \vline}\color{halfgray}\hspace{0.5em}\rightmark\hfil}} 

\pagestyle{scrheadings} 


\maketitle 

\setcounter{tocdepth}{2} 




\paragraph*{Abstract} 
This paper is devoted to show that the flatness of tangents of $1$-codimensional measures in Carnot Groups implies $C^1_\mathbb{G}$-rectifiability. As applications we prove a criterion for intrinsic Lipschitz rectifiability of finite perimeter sets in general Carnot groups and we show that measures with $(2n+1)$-density in the Heisenberg groups $\mathbb{H}^n$ are $C^1_{\mathbb{H}^n}$-rectifiable, providing the first non-Euclidean extension of Preiss's rectifiability theorem.

\paragraph*{Keywords} Marstrand-Mattila rectifiability criterion, Preiss's rectifiability Theorem, Carnot groups.

\paragraph*{MSC (2010)} 28A75, 28A78, 53C17.

{\let\thefootnote\relax\footnotetext{* \textit{Universit\'a di Pisa, Largo Bruno Pontecorvo, 5, Pisa, Italy}}}

\section*{Introduction}\label{introduction}
In Euclidean spaces the following rectifiability criterion is available, known as Marstrand-Mattila rectifiability theorem. It was first proved by J. M. Marstrand in \cite{marstrand} for $m=2$ and $n=3$, later extended by P. Mattila to every $m\leq n$ in \cite{mattilarc} and eventually strengthened by D. Preiss in \cite{Preiss1987GeometryDensities}.

\begin{teorema2}\label{th:eu:mm}
Suppose $\phi$ is a Radon measure on $\R^n$ and let $m\in\{1,\ldots,n-1\}$. Then, the following are equivalent:
\begin{itemize}
    \item[(i)] $\phi$ is absolutely continuous with respect to the $m$-dimensional Hausdorff measure $\mathcal{H}^m$ and $\R^n$ can be covered $\phi$-almost all with countably many $m$-dimensional Lipschitz surfaces,
    \item[(ii)] $\phi$ satisfies the following two conditions for $\phi$-almost every $x\in\R^n$:
    \begin{itemize}
    \item[(a)]$0<\Theta_*^{m}(\phi,x)\leq \Theta^{m,*}(\phi,x)<\infty$,
    \item[(b)]$\Tan_m(\phi,x)\subseteq\{\lambda \mathcal{H}^{m}\llcorner V:\lambda>0, V\in\text{Gr}(n,m)\}$, where the set of tangent measures $\Tan_m(\phi,x)$ is introduced in Definition \ref{tangentsdef}.
    \end{itemize}
\end{itemize}
\end{teorema2}

The rectifiability of a measure, namely that (i) of Theorem \ref{th:eu:mm} holds, is a global property and as such it is usually very difficult to verify in applications. Rectifiability criteria serve the purpose of characterizing such global property with local ones, that are usually conditions on the \emph{density} and on the \emph{tangents} of the measure. Most of the more basic criteria impose condition (ii a) and the existence of an affine plane $V(x)$, depending only on the point $x$, on which at small scales the support of measure is squeezed on around $x$. The difference between these various elementary criteria relies on how one defines ``squeezed on'', for an example see Theorem 15.19 of \cite{Mattila1995GeometrySpaces}. However, the existence of just one plane approximating the measure at small scales may be still too difficult to prove in many applications and this is where 
Theorem \ref{th:eu:mm} comes into play. The Marstrand-Mattila rectifiability criterion says that even if we allow a priori the approximating plane to rotate at different scales, the density hypothesis (ii a) guarantees a posteriori this cannot happen almost everywhere.

It is well known that if a Carnot group $\mathbb{G}$ has Hausdorff dimension $\mathcal{Q}$, then it is $(\mathcal{Q}-1)$-purely unrectifiable in the sense of Federer, see for instance Theorem 1.2 of \cite{rigiditymagnani}. Despite this geometric irregularity, in the fundational paper \cite{Serapioni2001RectifiabilityGroup} B. Franchi, F. Serra Cassano and R. Serapioni introduced the new notion of $C^1_\mathbb{G}$-rectifiability in Carnot groups, see Definition \ref{regsur}. This definition allowed them to establish De Giorgi's rectifiability theorem for finite perimeter sets in the Heisenberg groups $\HH^n$:

\begin{teorema2}[Corollary 7.6, \cite{Serapioni2001RectifiabilityGroup}]\label{th:fps}
Suppose $\Omega\subseteq \HH^n$ is a finite perimeter set. Then its reduced boundary $\partial_\HH^* \Omega$ is $C^1_\mathbb{G}$-rectifiable.
\end{teorema2}

It is not hard to see that an open set with smooth boundary is of finite perimeter in $\HH^n$, but there are finite perimeter sets in $\HH^1$ whose boundary is a fractal from an Euclidean perspective, see for instance \cite{kirchhserra}. This means that the Euclidean and $C^1_\mathbb{G}$-rectifiability are not equivalent.

\medskip
The main goal of this paper is to establish a $1$-codimensional analogue of Theorem \ref{th:eu:mm} in Carnot groups:

\begin{teorema2}\label{main:2}
Suppose $\phi$ is a Radon measure on $\mathbb{G}$. The following are equivalent:
\begin{itemize}
    \item[(i)] $\phi$ is absolutely continuous with respect to the $(\mathcal{Q}-1)$-dimensional Hausdorff measure $\mathcal{H}^{\mathcal{Q}-1}$ and $\phi$-almost all $\mathbb{G}$ can be covered  by countably many $C^1_\mathbb{G}$-surfaces,
    \item[(ii)]$\phi$ satisfies the following two conditions for $\phi$-almost every $x\in\mathbb{G}$:
    \begin{itemize}
    \item[(a)]$0<\Theta_*^{\mathcal{Q}-1}(\phi,x)\leq \Theta^{\mathcal{Q}-1,*}(\phi,x)<\infty$,
    \item[(b)] $\Tan_{\mathcal{Q}-1}(\phi,x)$ is contained in $\mathfrak{M}$, the family of non-null Haar measures of the elements of $\text{Gr}(\mathcal{Q}-1)$, the $1$-codimensional homogeneous subgroups of $\mathbb{G}$.
    \end{itemize}
\end{itemize}
\end{teorema2}

While the fact that (i) implies (ii) follows for instance from Lemma 3.4 and Corollary 3.6 of \cite{VITTONE}, the viceversa is the subject of investigation of this work. Besides the already mentioned importance for the applications, Theorem \ref{th:eu:mm} is also relevant because it establishes that $C^1_\mathbb{G}$-rectifiability is characterized in the same way as the Euclidean one, and this is the main motivation behind the definition of $\mathscr{P}$-rectifiable measures, given in Definition \ref{def:rect}.
Our main application of Theorem \ref{main:2}, is the proof of the first extension of Preiss's rectifability theorem outside the Euclidean spaces:

\begin{teorema2}\label{main:preiss}
Suppose $\phi$ is a Radon measure on the Heisenberg group $\HH^n$ such that for $\phi$-almost every $x\in\HH^n$, we have:
$$0<\Theta^{2n+1}(\phi,x):=\lim_{r\to 0}\frac{\phi(B(x,r))}{r^{2n+1}}<\infty,$$
where $B(x,r)$ are the metric balls relative to the Koranyi metric.
Then $\phi$ is absolutely continuous with respect to $\mathcal{H}^{2n+1}$ and $\HH^n$ can be covered $\phi$-almost all with countably many $C^1_{\mathbb{\HH}^n}$-regular surfaces.
\end{teorema2}


Finally, an easy adaptation of the arguments used to prove Theorem \ref{main:2} also provides the following rectifiability criterion for finite perimeter sets in arbitrary Carnot groups. Theorem \ref{rett:perf} asserts that if the tangent measures to the boundary of a finite perimeter set are sufficiently close to vertical hyperplanes, then the boundary can be covered by countably many intrinsic Lipschitz graphs.

\begin{teorema2}\label{rett:perf}
There exists an $\varepsilon_\mathbb{G}>0$ such that if $\Omega\subseteq \mathbb{G}$ is a finite perimeter set for which:
    $$\limsup_{r\to 0} d_{x,r}(\lvert\partial \Omega\rvert_\mathbb{G},\mathfrak{M}):=\limsup_{r\to 0}\inf_{\nu\in \mathfrak{M}}\frac{W_1(\lvert\partial \Omega\rvert_\mathbb{G}\llcorner B(x,r),\nu\llcorner B(x,r))}{r^\mathcal{Q}}\leq \varepsilon_\mathbb{G},$$
for $\lvert\partial \Omega\rvert_{\mathbb{G}}$-almost every $x\in \mathbb{G}$ where $W_1$ is the $1$-Wasserstein distance, then $\mathbb{G}$ can be covered $\lvert\partial \Omega\rvert_\mathbb{G}$-almost all with countably many intrinsic Lipschitz graphs. 
\end{teorema2}

\medskip

We present here a survey on the strategy of the proof of our main result, Theorem \ref{main:2}. For the sake of discussion, let us put ourselves in a simplified situation. Assume $E$ is a compact subset of a Carnot group $\mathbb{G}=(\R^n,*)$ such that:
\begin{itemize}
    \item[\hypertarget{AD}{($\alpha$)}] there exists an $\eta_1\in\N$ such that $\eta_1^{-1}r^{\mathcal{Q}-1}\leq \mathcal{H}^{\mathcal{Q}-1}(E\cap B(x,r))\leq \eta_1 r^{\mathcal{Q}-1}$ for any $0<r<\diam(E)$ and any $x\in E$,
    \item[(\hypertarget{flat}{$\beta$})]the functions $x\mapsto d_{x,r}(\mathcal{H}^{\mathcal{Q}-1}\llcorner E,\mathfrak{M})$ converge uniformly to $0$ on $E$ as $r$ goes to $0$.
\end{itemize}

The cryptic condition (\hyperlink{flat}{$\beta$}) can be reformulated, thanks to Propositions \ref{prop1vsinfty} and \ref{prop:bil2} in the following more geometric way.
For any $\epsilon>0$ there is a $\mathfrak{r}(\epsilon)>0$ such that for $\mathcal{H}^{\mathcal{Q}-1}$-almost any $x\in E$ and any $0<\rho<\mathfrak{r}(\epsilon)$ there is a plane $V(x,\rho)\in\G(\mathcal{Q}-1)$, depending on both the point $x$ and the scale $\rho$, for which:
\begin{align} 
        E\cap B(x,\rho)\subseteq \{y\in\mathbb{G}:\dist(y,x*V(x,\rho))\leq \epsilon \rho\},\label{4.8}\\
        B(y, \epsilon \rho)\cap E\neq \emptyset \text{ for any }y\in B(x,\rho/2)\cap x*V(x,\rho).\label{4.9}
\end{align}
In Euclidean spaces if a Borel set $E$ satisfies \eqref{4.8} and \eqref{4.9} it is said \emph{weakly linear approximable}, see for instance Chapter 5 of  \cite{DeLellis2008RectifiableMeasures}. The condition \eqref{4.8} says that  at small scales $E$ is squeezed on the plane $x*V(x,\rho)$, while \eqref{4.9} implies that inside $B(x,\rho)$ any point of $x*V(x,\rho)$ is very close to $E$, see the picture below:

\begin{figure}[H]
\centering
    \includegraphics[scale=0.15
    ]{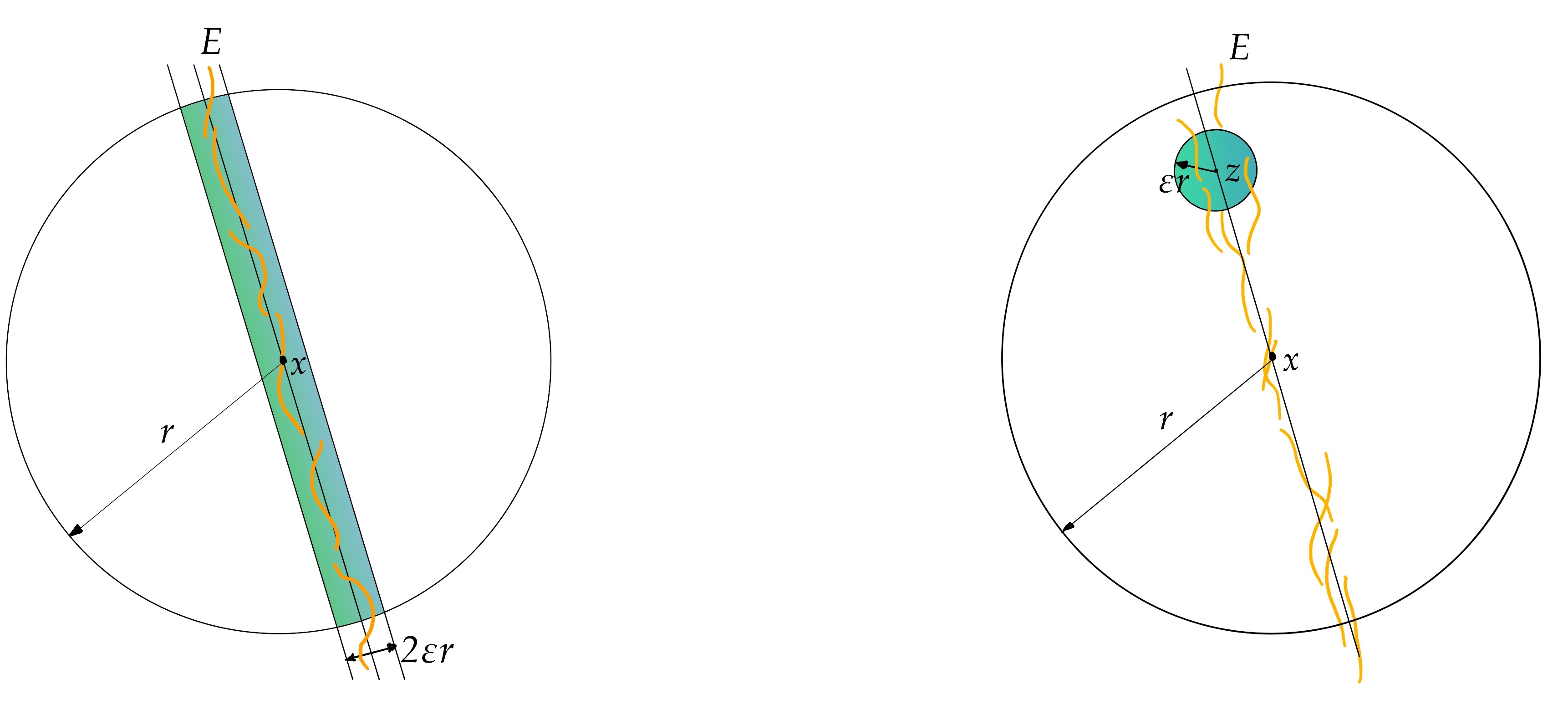}
\caption{On the left we see that \eqref{4.8} implies that at the scale $\rho$ the set $E$, in yellow, is contained in a narrow strip of size $2\epsilon \rho$ around $x*V(x,\rho)$. On the right we see that \eqref{4.9} implies that any ball centred on the plane $x*V(x,\rho)$ inside $B(x,\rho/2)$ and of radius $\epsilon \rho$ must meet $E$.}
\end{figure}
\medskip

The first step towards the proof of our main result is the following technical proposition that is a suitably reformulated version of Theorem 1.14 of \cite{DavidSemmes} and of Lemma 3.8 of \cite{CFO}. Proposition \ref{prop:uno} shows that if at some point $x$  the set $E$ has also big projections on some plane $W$, i.e. \eqref{proj:intr} holds, then around $x$ the set $E$ is almost a $W$-intrinsic Lipschitz surface. 

\begin{proposizione2}\label{prop:uno}
Let $k>10\eta_1^2$ and $\omega>0$. Suppose further that $x\in E$ and $\rho>0$ are such that:
\begin{itemize}
\item[(i)] $d_{x,k\rho}(\mathcal{H}^{\mathcal{Q}-1}\llcorner E,\mathfrak{M})\leq \omega$,
    \item[(ii)] there exists a plane $W\in\G(\mathcal{Q}-1)$  such that:
    \begin{equation}
        (\rho/k)^{\mathcal{Q}-1}\leq\mathcal{H}^{\mathcal{Q}-1}\llcorner W(P_W(B(x,\rho)\cap E)),
        \label{proj:intr}
    \end{equation}
    where $P_W$ is the splitting projection on $W$, see Definition \ref{def:splitproj}.
\end{itemize}
If $k$ is chosen suitably large and $\omega$ are suitably small, there exists an $\alpha=\alpha(\eta_1, k,\omega)>0$ with the following property.
For any $z\in E\cap B(x,\rho)$ and any $y\in B(x,k\rho/8)\cap E$ for which $10\omega \rho\leq d(z,y)\leq k\rho/2$, we have that
$y$ is contained in the cone $zC_{W}(\alpha)$, that is introduced in Definition \ref{deffi:cono}.
\end{proposizione2}

We remark that thanks to our assumption (\hyperlink{flat}{$\beta$}) on $E$, hypothesis (i) of the above proposition is satisfied almost everywhere on $E$ whenever $\rho<\tilde{\mathfrak{r}}(\omega)$, where $\tilde{\mathfrak{r}}(\omega)$ is suitably small and depends only on $\omega$.
Let us explain some of the ideas of the proof of Proposition \ref{prop:uno}. If the plane $W$ is almost orthogonal to $V(x,\rho)$, the element of $\G(\mathcal{Q}-1)$ for which \eqref{4.8} and \eqref{4.9} are satisfied by $E$ at $x$ at scale $\rho$, we would have that the projection  of $E$ on $W$ would be too small and in contradiction with \eqref{proj:intr}, see Figure \ref{fig1}.

\begin{figure}[H]
    \centering
    \includegraphics[scale=0.16
    ]{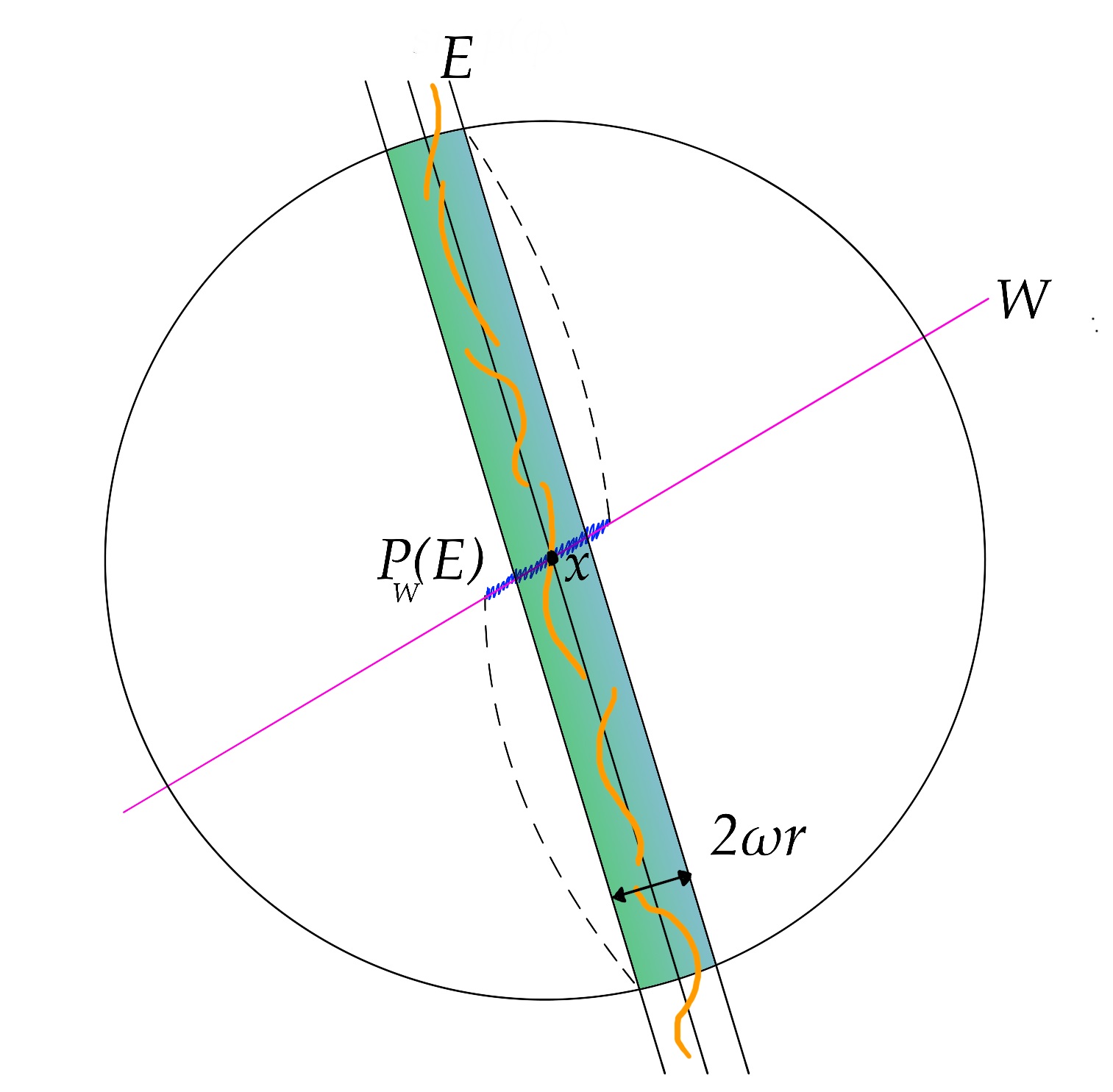}
     \caption{The weak linear approximability of $E$ implies that $E\cap B(x,\rho)$ is contained inside $V_{\omega \rho}$, an $\omega \rho$-neighbourhood of the plane $V(x,\rho)$. If $V(x,\rho)$ and $W$ are almost orthogonal, i.e. the Euclidean scalar product of their normals is very small, it can be shown that that the projection $P_W$ on $W$ of $V_{\omega \rho}\cap B(x,\rho)$ has $\mathcal{H}^{\mathcal{Q}-1}$-measure smaller than $(\omega \rho)^{\mathcal{Q}-1}$.}
     \label{fig1}
\end{figure}

It is possible to push the argument and prove that if the constants $k$ and $\omega$ are chosen suitably, the planes $V(x,\rho)$ and $W$ must be at a very small angle, and in particular inside $B(x,\rho)$ the plane $x*V(x,\rho)$ must be very close to $x*W$. So close in fact that it can be proved that $E\cap B(x,\rho)$ is contained in a $2\omega \rho$-neighbourhood $W_{2\omega \rho}$ of $W$. This implies that $z,y\in W_{2\omega \rho}$, and since $W$ and $V(x,\rho)$ are at a small angle, it is possible to show that $\dist(y,zW)\leq 4\omega \rho$. Furthermore, 
by assumption on $y,z$ we have $d(z,y)>10\omega \rho$ and thus we infer that  $\dist(y,zW)\leq 5 d(y,z)$. This implies in particular that $y\in zC_W(2/5)$.

\medskip

The second step towards the proof of the main result is to show that at any point $x$ of $E$ and for any $\rho>0$ sufficiently small there is a plane $W_{x,\rho}\in\G(\mathcal{Q}-1)$ on which $E$ has big projectons.

\begin{teorema2}\label{th:conoeu}
There is a $\eta_2\in\N$, such that for $\mathcal{H}^{\mathcal{Q}-1}$-almost every $x\in E$ and $\rho>0$ sufficiently small there is a plane $W_{x,\rho}\in\G(\mathcal{Q}-1)$ for which:
\begin{equation}
\mathcal{H}^{\mathcal{Q}-1}\big(P_{W_{x,\rho}}(E\cap B(x,\rho))\big)\geq \eta_2^{-1} \rho^{\mathcal{Q}-1}.\label{eq:bigprog2}
\end{equation}
\end{teorema2}

We now briefly explain the ideas
behind the proof of Theorem \ref{th:conoeu}, that are borrowed from Chapter 2, \S 2.1 and \S2.2 of \cite{DavidSemmes}. Fix two parameters $\eta_3\in\N$ and $\omega>0$ such that  $\omega<1/\eta_3^{\mathcal{Q}(\mathcal{Q}+1)}$ and for which:
\begin{align}
B_+:=B(\delta_{10\eta_3^{-1}}(v),\eta_3^{-1})\subseteq&  \{y\in B(0,1):\langle y, v\rangle> \omega \},\nonumber\\
B_-:=B_+*\delta_{20\eta_3^{-1}}(v^{-1})\subseteq& \{y\in B(0,1):\langle y,v\rangle< -\omega \},\nonumber
\end{align}
where $\delta_\lambda$ are the intrinsic dilations introduced in \eqref{eq:nummerigno} and $v\in V_1$ is an arbitrary vector with unitary Euclidean norm. Thanks to the assumption \eqref{4.8} on $E$, for any $0<\rho<\mathfrak{r}(\omega)$ we have that: $$E\cap B(x,\rho)\subseteq \{y\in B(x,\rho):\dist(y,x*V(x,\rho))\leq \omega \rho\}.$$
In particular thanks to the assumptions on $\eta_3$ and $\omega$ we infer that $E\cap x\delta_\rho B_+=\emptyset=E\cap x\delta_\rho B_-$. Let $W_{x,\rho}:=V(x,\rho)$ and for any $z\in x\delta_\rho B_+$ we define the curve:
$$\gamma_z(t):=z\delta_{20\eta_3^{-1}t}(\mathfrak{n}(W_{x,\rho})^{-1}),$$
as $t$ varies in $[0,1]$ and where $\mathfrak{n}(W_{x,\rho})$ is the unit Euclidean normal to $W_{x,\rho}$.
The curve $\gamma_z$ must intersect $W_{x,\rho}$ at the point $P_{W_{x,\rho}}(z)$ since $\gamma_z(1)\in x\delta_\rho B_-$ and as a consequence we have the inclusion $\gamma_{z}([0,1])\subseteq  P_{W_{x,\rho}}^{-1}(P_{W_{x,\rho}}(z))$.
Since conditions \eqref{4.8} and \eqref{4.9} heuristically say that $E$ almost coincides with the plane $x*W_{x,\rho}$ inside $B(x,\rho)$ and it has very few holes, most of the curves $\gamma_z$ should intersect the set $E$ too. 

More precisely, we prove that if some $\gamma_{z}$ does not intersect $E$, there is a small ball $U_{z}$ centred at some $q\in E$ such that $\gamma_{z}\cap U_z\neq \emptyset$. It is clear that defined the set:
\begin{equation}
    F:=E\cup \bigcup_{\substack{z\in x\delta_r B_+,\text{ } \gamma_z\cap E= \emptyset}} U_z,
    \nonumber
\end{equation}
we have $P_{W_{x,r}}(x\delta_r B_+)\subseteq P_{W_{x,r}}(F)$. 
So, intuitively speaking adding these balls $U_z$ allows us to close the holes of $E$. 
An easy computation proves that $\mathcal{H}^{\mathcal{Q}-1}(P_{W_{x,r}}(x\delta_r B_+))\geq r^{\mathcal{Q}-1}/\eta_3^{\mathcal{Q}-1}$ and thus in order to be able to conclude the proof of \eqref{eq:bigprog2} we should have some control over the size of the projection of the balls $U_z$. This control is achievable thanks to \eqref{4.9}, see Proposition \ref{lemma2.28} and Theorem \ref{TH:proiezioni}, and in particular we are able to show that: $$\mathcal{H}^{\mathcal{Q}-1}\bigg(P_{W_{x,r}}\Big(\bigcup_{\substack{z\in x\delta_r B_+\\ \gamma_z\cap E= \emptyset}} U_z\Big)\bigg)\leq \omega r^{\mathcal{Q}-1}.$$
This implies that $E$ satisfies the big projection properties, i.e. \eqref{eq:bigprog2} holds with $\eta_2:=2\eta_3^{\mathcal{Q}-1}$.  This part of the argument is rather delicate and technical. For the details we refer to the proof of Theorem \ref{TH:proiezioni}.

\medskip

The third step towards the proof of Theorem \ref{main:2} is achieved in Subsection \ref{intr:graph}, where we prove the following:

\begin{teorema2}\label{th:th:th:th}
There exists a intrinsic Lipschitz graph $\Gamma$ such that $\mathcal{H}^{\mathcal{Q}-1}(E\cap \Gamma)>0$.
\end{teorema2}

The strategy we employ to prove the above theorem is the following. We know that at $\mathcal{H}^{\mathcal{Q}-1}$-almost every point of $x\in E$ there exists a plane $W_{x,r}$ such that $\mathcal{H}^{\mathcal{Q}-1}(P_{W_{x,r}}(E\cap B(x,r)))\geq \eta_2^{-1}r^{\mathcal{Q}-1}$. For any $x\in E$ at which the previous inequality holds, we let $\mathscr{B}$ the points $y\in B(x,r)$ for which there is a scale $s\in(0,r)$ for which $W_{y,s}$ is almost orthogonal to $W_{x,r}$. Choosing the angle between  $W_{y,s}$ and $W_{x,r}$ sufficiently big it is possible to prove that the projection of $\mathscr{B}$ on $W_{x,r}$ is smaller than $\eta_2^{-1} r^{\mathcal{Q}-1}/2$. This follows from the intuitive idea that if $y\in\mathscr{B}$, the set $E\cap B(y,s)$ is contained in a narrow strip that is almost orthogonal to $W_{x,r}$ inside $B(y,s)$ and thus its projection on $W_{x,r}$ has very small $\mathcal{H}^{\mathcal{Q}-1}$-measure.
On the other hand, Proposition \ref{cor:2.2.19} tells us that $\mathcal{S}^{\mathcal{Q}-1}\llcorner V(P_{W_{x,r}}(E\cap B(x,r)\setminus \mathscr{B}))\leq 2c(V)\mathcal{S}^{\mathcal{Q}-1}(E\cap B(x,r)\setminus \mathscr{B})$, and this allows us to infer that there are many points $z\in B(x,r)\cap E$ for which $W_{z,s}$ is contained in a (potentially large) fixed cone with axis $W_{x,r}$ for any $0<s<r$. This uniformity on the scales allows us to infer thanks to Proposition \ref{prop:uno} that $E\cap B(x,r)\setminus \mathscr{B}$ is an intrinsic Lipschitz graph.

Since properties (\hyperlink{AD}{$\alpha$}) and (\hyperlink{flat}{$\beta$}) are stable for the restriction-to-a-subset operation, Theorem \ref{th:th:th:th} implies by means of a classical argument that $E$ can be covered $\mathcal{H}^{\mathcal{Q}-1}$-almost all with intrinsic Lipschitz graphs.

\medskip

Therefore, we are reduced to see how we can improve the regularity of the surfaces $\Gamma_i$ covering $E$ from intrinsic Lipschitz to $C^1_\mathbb{G}$. Since the blowups of $\mathcal{H}^{\mathcal{Q}-1}\llcorner E$ are almost everywhere flat, the locality of the tangents, i.e. Proposition \ref{prop:locality}, implies that the blowups of the measures $\mathcal{H}^{\mathcal{Q}-1}\llcorner\Gamma_i$ are flat as well, where we recall that a measure is said flat if it is the Haar measure of a $1$-codimensional homogeneous subgroup of $\mathbb{G}$.
Furthermore, since
intrinsic Lipschitz graphs can be extended to boundaries of sets of finite perimeter, see Theorem \ref{whole}, they have an associated normal vector field $\mathfrak{n}_{i}$.
Therefore, for $\mathcal{H}^{\mathcal{Q}-1}$-almost every $x\in\Gamma_i$ the elements of
$\Tan_{\mathcal{Q}-1}(\mathcal{H}^{\mathcal{Q}-1}\llcorner \Gamma_i,x)$ are also the perimeter measures of sets with constant horizontal normal $\mathfrak{n}_i(x)$, see Propositions \ref{prop:costnorm}, \ref{obs6.5}, and \ref{prop:tang}.
The above argument shows that on the one hand $\Tan_{\mathcal{Q}-1}(\mathcal{H}^{\mathcal{Q}-1}\llcorner \Gamma_i,x)$ are flat measures and on the other if seen as boundary of finite perimeter sets, they must have constant horizontal normal coinciding with $\mathfrak{n}_i(x)$ almost everywhere. Therefore, for $\mathcal{H}^{\mathcal{Q}-1}$-almost every $x\in E\cap \Gamma_i$ the set $\Tan_{\mathcal{Q}-1}(\mathcal{H}^{\mathcal{Q}-1}\llcorner \Gamma_i,x)$ must be contained in the family of Haar measures of the $1$-codimensional subgroup orthogonal to $\mathfrak{n}_i(x)$. The fact that $E\cap \Gamma_i$ is covered with countably many $C^1_\mathbb{G}$-surfaces follows by means of the rigidity of the tangents discussed above and a Whitney-type theorem, that is obtained in Appendix \ref{def:perimetro} with an adaptation of the arguments of \cite{MFSSC}.

\section*{Structure of the paper}

In Section \ref{preliminaries} we recall some well known facts about Carnot groups and Radon measures. Section \ref{sec:main} is divided in four parts. The main results of Subsection \ref{flat:1} are Propositions \ref{prop1vsinfty} and \ref{prop:bil2}, that allow us to interpret the flatness of tangents in a more geometric way. Subsection \ref{sub:project} is devoted to the proof of Proposition \ref{prop:cono}, that is roughly Theorem \ref{th:conoeu}. Subsection \ref{big:proj} is the technical core of this work and the main result proved in it is Theorem \ref{TH:proiezioni}, that codifies the fact that the flatness of tangents implies big projections on planes.
Finally, in Subsection \ref{intr:graph} we put together the results of the previous three subsections to prove Theorem \ref{th:intr:lipgraph} that asserts that for any Radon measure satisfying the hypothesis of Theorem \ref{main:2}, there is an intrinsic Lipschitz graph of positive $\phi$-measure. In Section \ref{sec:end} we prove  Theorem \ref{main} that is the main result of the paper and its consequences. In Appendix \ref{AppendiceA} we construct the dyadic cubes that are needed in Section \ref{sec:main} and in Appendix \ref{def:perimetro} we recall some well known facts about finite perimeter sets in Carnot groups and intrinsic Lipschitz graphs whose surface measures has flat tangents.

\section*{Notation}
We add below a list of frequently used notations, together with the page of their first appearance:
\medskip

\begin{longtable}{c p{0.7\textwidth} p{\textwidth}}
$\lvert\cdot\rvert$\label{euclide} & Euclidean norm, & \pageref{euclide}\\

$\lVert\cdot\rVert$ & smooth-box norm, & \pageref{smoothnorm}\\

$\langle \cdot,\cdot\rangle$ & scalar product in the Euclidean spaces, & \pageref{scalprod}\\

$V_i$ & layers of the stratification of the Lie algebra of $\mathbb{G}$, & \pageref{notiniz}\\

$n_i$ & dimension of the $i$-th layer of the Lie algebra of $\mathbb{G}$, &\pageref{notiniz}\\

$\pi_i(\cdot)$ & projections of $\R^n$ onto $V_i$, & \pageref{homog}\\

$h_i$ & the topological dimension of the vector space $V_1\oplus\ldots\oplus V_i$, &\pageref{notiniz}\\

$\mathcal{Q}$ & homogeneous dimension of the group $\mathbb{G}$, &\pageref{homog}\\

$\mathscr{Q}_i$ & coefficients of the coordinate representation of the group operation & \pageref{tran}\\

$\tau_x$ & left translation by $x$, & \pageref{tran}\\

$\delta_\lambda$ & intrinsic dilations, & \pageref{homog}\\

$U_{i}(x,r)$ &  open Euclidean ball in $V_i$ of radius $r>0$ and centre $x$, & \pageref{ballstrat}\\

$B(x,r)$ &  open ball of radius $r>0$ and centre $x$, & \pageref{smoothnorm}\\

$\overline{B(x,r)}$ &  closed ball of radius $r>0$ and centre $x$, & \pageref{smoothnorm}\\

$T_{x,r}\phi$ & dilated of a factor $r>0$ of the measure $\phi$ at the point $x\in\HH^n$, & \pageref{tang}\\

$\Tan_{m}(\phi,x)$ & set of $m$-dimensional tangent measures to the measure $\phi$ at $x$, & \pageref{tang}\\

  $\rightharpoonup$ & weak convergence of measures, &\pageref{lippi}
  \\

  $\G(m)$ & the $m$-dimensional Grassmanian, &\pageref{grass}\\

$\mathfrak{M}(m)$ & the set of the Haar measures of the elements of $\G(m)$, &\pageref{eq:flatmeasures}\\

  $\mathfrak{n}(V)$ & the normal of the plane $V\in\G(\mathcal{Q}-1)$, &\pageref{plano}\\
  
  $\mathfrak{N}(V)$ & the $1$-dimensional homogeneous subgroup generated by $\mathfrak{n}(V)$ &\pageref{scalprod}\\
  
    $\pi_{\mathfrak{N}(V)}$ & the orthogonal projection of $V_1$ onto $\mathscr{V}:=V\cap V_1$ where $V\in\G(\mathcal{Q}-1)$, &\pageref{scalprod}\\

   $\pi_{\mathscr{V}}$ & the orthogonal projection of $V_1$ onto $\mathfrak{N}(V)$, &\pageref{scalprod}\\

$P_V$ & splitting projection on the plane $V\in\G(\mathcal{Q}-1)$,& \pageref{def:splitproj}\\

$P_{\mathfrak{N}(V)}$ &splitting projection on the $1$-dimensional subgroup $\mathfrak{N}(V)$,& \pageref{def:splitproj}\\

$C_V(\alpha)$ &cone of amplitude $\alpha$ with axis $V$,& \pageref{deffi:cono}\\

$\Lambda(\alpha)$ & upper bound on $\lvert \pi_1(w)\rvert/\lVert w\rVert$ for any $x\in C_V(\alpha)$,& \pageref{prop:conifero}\\

$\lip(K)$ & non-negative $1$-Lipschitz functions with support contained in the compact set $K$. &\pageref{lippi}\\

$\mathcal{S}^{\alpha}$ & $\alpha$-dimensional spherical Hausdorff measure relative to the metric $\lVert\cdot\rVert$, & \pageref{hausdorrmeas}\\

$\mathcal{C}^{\alpha}$ & $\alpha$-dimensional centred spherical Hausdorff measure relative to the metric $\lVert\cdot\rVert$, & \pageref{hausdorrmeas}\\

  $\mathcal{H}^{k}_{eu}$ &  Euclidean $k$-dimensional Hausdorff measure, &\pageref{prop:rapp}\\
  
  $\Theta^m_*(\phi,x)$ & $m$-dimensional lower density of the measure $\phi$ at $x$, & \pageref{lippi}\\
  
  $\Theta^{m,*}(\phi,x)$ & $m$-dimensional upper density of the measure $\phi$ at $x$, & \pageref{lippi}\\
 
  $\mathfrak{c}(Q)$& centre of the cube $Q$ & \pageref{def:compactscube}\\
  
  $\alpha(Q)$& measure of the distance of the cube $Q$ from the planes $V\in\G(\mathcal{Q}-1)$& \pageref{def:compactscube}\\
  
  $d_{x,r}(\cdot,\mathfrak{M})$ & distance of the Radon measure $\phi$ inside the ball $B(x,r)$ from flat measures, &\pageref{def:metr}\\
  
  $d_H(\cdot,\cdot)$ & Hausdorff distance of sets& \pageref{def:Haus}\\
  
  $\Delta$ & dyadic cubes,&\pageref{notation1}
\end{longtable}

\medskip

Since dyadic cubes are used extensively throughout the paper we introduce some nomenclature for the relationships of two cubes. For any couple of dyadic cubes $Q_1,Q_2\in\Delta$:
\begin{itemize}
\item[(i)] if $Q_1\subseteq Q_2$, then $Q_2$ is said to be an \emph{ancestor} of $Q_1$ and $Q_2$ a \emph{sub-cube} of $Q_2$,
\item[(ii)] if $Q_2$ is the smallest cube for which $Q_1\subsetneq Q_2$, then $Q_2$ is said to be the \emph{parent} of $Q_1$ and $Q_1$ the \emph{child} of $Q_2$.
\end{itemize}

Finally, throughout the paper we should adopt the convention that the set of natural numbers $\N$ does not include the number $0$. The set of natural numbers including $0$ will be denoted by $\N_0$.

\section{Preliminaries}\label{preliminaries}

This preliminary section is divided into four subsections. In Subsections \ref{Carnotinizio} and \ref{conesplit} we introduce the setting, fix notations and prove some basic facts on splitting projections and intrinsic cones. In Subsection \ref{densityandtangents} we recall some well known facts on Radon measures and their blowups and finally in Subsection \ref{sub:rect} we introduce the two main notions of $1$-codimensional rectifiable sets available in Carnot groups.

\subsection{Carnot groups}\label{Carnotinizio}
In this subsection we briefly introduce some notations on Carnot groups that we will extensively use throughout the paper. For a detailed account on Carnot groups and sub-Riemannian geometry we refer to \cite{SCln}.

A Carnot group $\mathbb{G}$ of step $s$  is a connected and simply connected Lie group whose Lie algebra $\mathfrak g$ admits a stratification $\mathfrak g=V_1\,  \oplus \, V_2 \, \oplus \dots \oplus \, V_s$. The stratification has the further property  that the entire Lie algebra $\mathfrak g$ is generated by its first layer $V_1$, the so called \emph{horizontal layer}. We denote by $n$ the topological dimension of $\mathfrak g$, by $n_j$ the dimension of $V_j$ and by $h_j$ the number $\sum_{i=1}^j n_i$.  \label{notiniz}

Furthermore, we let $\pi_i:\mathbb{G}\to V_i$ be the projection maps on the $i$-th layer of the Lie algebra $V_i$ and denote by $U_i(a,r)$ the Euclidean ball of radius $r$ and centre $a$ inside the $i$-th layer $V_i$\label{ballstrat}. We shall remark that more often than not, we will shorten the notation to $v_i:=\pi_i v$.

The exponential map $\exp :\mathfrak g \to \mathbb{G}$ is a global diffeomorphism from $\mathfrak g$ to $\mathbb{G}$.
Hence, if we choose a basis $\{X_1,\dots , X_n\}$ of $\mathfrak g$,  any $p\in \mathbb{G}$ can be written in a unique way as $p=\exp (p_1X_1+\dots +p_nX_n)$. This means that we can identify $p\in \mathbb{G}$ with the $n$-tuple $(p_1,\dots , p_n)\in \R^n$ and the group $\mathbb{G}$ itself with $\R^n$ endowed with $*$, the operation determined by the Campbell-Hausdorff formula. From now on, we will always assume that $\mathbb{G}=(\R^n,*)$ and as a consequence, that the exponential map $\exp$ acts as the identity.

The stratificaton of $\mathfrak{g}$ carries with it a natural family of dilations $\delta_\lambda :\mathfrak{g}\to \mathfrak{g}$, that are Lie algebra automorphisms of $\mathfrak{g}$ and are defined by: 
\begin{equation}
     \delta_\lambda (v_1,\dots , v_s)=(\lambda v_1,\lambda^2 v_2,\dots , \lambda^s v_s),
     \label{eq:nummerigno}
\end{equation}
where $v_i\in V_i$. The stratification of the Lie algebra $\mathfrak{g}$  naturally induces a stratification on each of its Lie sub-algebras $\mathfrak{h}$, that is:
\begin{equation}
    \mathfrak{h}=V_1\cap \mathfrak{h}\oplus\ldots\oplus V_s\cap \mathfrak{h}.
    \label{eq:intr1}
\end{equation}
Furthermore, note that since the exponential map acts as the identity, the Lie algebra automorphisms $\delta_\lambda$ are also group automorphisms of $\mathbb{G}$.

\begin{definizione}[Homogeneous subgroups]\label{homsub}
A subgroup $V$ of $\mathbb{G}$ is said to be \emph{homogeneous} if it is a Lie subgroup of $\mathbb{G}$ that is invariant under the dilations $\delta_\lambda$ for any $\lambda>0$.
\end{definizione}

Thanks to Lie's theorem and the fact that $\exp$ acts as the identity map, homogeneous Lie subgroups of $\mathbb{G}$ are in bijective correspondence through $\exp$ with the Lie sub-algebras of $\mathfrak{g}$ that are invariant under the dilations $\delta_\lambda$. Therefore, homogeneous subgroups in $\mathbb{G}$ are identified with the Lie sub-algebras of $\mathfrak{g}$ (that in particular are vector sub-spaces of $\R^n$) that are invariant under the intrinsic dilations $\delta_\lambda$. 

For any nilpotent, Lie algebra $\mathfrak{h}$ with stratification $W_1\oplus\ldots\oplus W_{\overline{s}}$, we define its \emph{homogeneous dimension} as:
$$\text{dim}_{hom}(\mathfrak{h}):=\sum_{i=1}^{\overline{s}} i\cdot\text{dim}(W_i).$$
Thanks to \eqref{eq:intr1} we infer that, if $\mathfrak{h}$ is a Lie sub-algebra of $\mathfrak{g}$, we have $\text{dim}_{hom}(\mathfrak{h}):=\sum_{i=1}^{s} i\cdot\text{dim}(\mathfrak{h}\cap V_i)$. It is a classical fact that the Hausdorff dimension (for a definition of Hausdorff dimension see for instance Definition 4.8 in \cite{Mattila1995GeometrySpaces}) of a nilpotent, connected and simply connected Lie group coincides with the homogeneous dimension $\text{dim}_{hom}(\mathfrak{h})$ of its Lie algebra. Therefore, the above discussion implies that if $\mathfrak{h}$ is a vector space of $\R^n$ which is also an $\alpha$-dimesional homogeneuous subgroup of $\mathbb{G}$, we have:
\begin{equation}
    \alpha=\sum_{i=1}^{s} i\cdot\text{dim}(\mathfrak{h}\cap V_i)=\text{dim}_{hom} (\mathfrak{h}).\label{eq:intr2}
\end{equation}

\begin{definizione}
Let $\mathcal{Q}:=\text{dim}_{hom}(\mathfrak{g})$\label{homog} and for any $m\in\{1,\ldots,\mathcal{Q}-1\}$ we define the $m$-dimensional Grassmanian of $\mathbb{G}$, denoted by $\G(m)$, as the family of all homogeneous subgroups $V$ of $\mathbb{G}$ of Hausdorff dimension $m$.
\label{grass}
\end{definizione}
Furthermore, thanks to \eqref{eq:intr2} and some easy algebraic considerations that we omit, one deduces that for the elements of  $\G(\mathcal{Q}-1)$ the following identities hold:
\begin{equation}
    \dim(V\cap V_1)=n_1-1\qquad\text{and}\qquad\dim(V\cap V_i)=\dim(V_i)\text{ for any }i=2,\ldots,s.\label{eq:intr3}
\end{equation}
Thanks to \eqref{eq:intr3}, we infer that for any $V\in\G(\mathcal{Q}-1)$ there exists a $\mathfrak{n}(V)\in V_1$ such that:
$$V=\mathscr{V}\oplus V_2\oplus\ldots\oplus V_s,$$
where $\mathscr{V}:=\{w\in V_1:\langle \mathfrak{n}(V),w\rangle=0\}$.\label{scalprod}
In the following we will denote with $\mathfrak{N}(V)$ the $1$-dimensional homogeneous subgroup generated by the horizontal vector $\mathfrak{n}(V)$\label{plano}.
We shall remark that the above discussion implies that the elements of $\G(\mathcal{Q}-1)$ are hyperplanes in $\R^n$ whose normal lies in $V_1$. It is not hard to see that the viceversa holds too and that the elements of $\G(\mathcal{Q}-1)$ are normal subgroups of $\mathbb{G}$.

For any $p\in \mathbb{G}$, we define the left translation $\tau _p:\mathbb{G} \to \mathbb{G}$ as:\label{tran}
\begin{equation*}
q \mapsto \tau _p q := p* q.
\end{equation*}
As already remarked above, we assume without loss of generality that the group operation $*$ is determined by the Campbell-Hausdorff formula, and therefore it has the form:
\begin{equation*}
p* q= p+q+\mathscr{Q}(p,q) \quad \mbox{for all }\, p,q \in  \R^n,
\end{equation*} 
where $\mathscr{Q}=(\mathscr{Q}_1,\dots , \mathscr{Q}_s):\R^n\times \R^n \to V_1\oplus\ldots\oplus V_s$, and the $\mathscr{Q}_i$s have the following properties. For any $i=1,\ldots s$ and any $p,q\in \mathbb{G}$ we have:
\begin{itemize}
    \item[(i)]$\mathscr{Q}_i(\delta_\lambda p,\delta_\lambda q)=\lambda^i\mathscr{Q}_i(p,q)$,
    \item[(ii)] $\mathscr{Q}_i(p,q)=-\mathscr{Q}_i(-q,-p)$,
    \item[(iii)] $\mathscr{Q}_1=0$ and $\mathscr{Q}_i(p,q)=\mathscr{Q}_i(p_1,\ldots,p_{i-1},q_1,\ldots,q_{i-1})$.
\end{itemize}
Thus, we can represent the product $*$ more precisely as:
\begin{equation}\label{opgr}
p* q= (p_1+q_1,p_2+q_2+\mathscr{Q}_2(p_1,q_1),\dots ,p_s +q_s+\mathscr{Q}_s (p_1,\dots , p_{s-1} ,q_1,\dots ,q_{s-1})). 
\end{equation}

\begin{definizione}
A metric $d:\mathbb{G}\times \mathbb{G}\to \R$ is said to be homogeneous and left invariant if for any $x,y\in \mathbb{G}$ we have:
\begin{itemize}
    \item[(i)] $d(\delta_\lambda x,\delta_\lambda y)=\lambda d(x,y)$ for any $\lambda>0$,
    \item[(ii)] $d(\tau_z x,\tau_z y)=d(x,y)$ for any $z\in \mathbb{G}$.
\end{itemize}
\end{definizione}

Throughout the paper we will always endow, if not otherwise stated, the group $\mathbb{G}$ with the following homogeneous and left invariant metric:

\begin{definizione}\label{smoothnorm}
For any $g\in \mathbb{G}$, we let:
$$\lVert g\rVert:=\max\{\epsilon_1\lvert g_1\rvert,\epsilon_2\lvert g_2\rvert^{1/2},\ldots, \epsilon_s\lvert g_s\rvert^{1/s}\},$$
where $\epsilon_1=1$ and $\epsilon_2,\ldots \epsilon_s$ are suitably small parameters depending only on the group $\mathbb{G}$. For the proof that $\lVert\cdot\rVert$ is a left invariant, homogeneous norm on $\mathbb{G}$ for a suitable choice of $\epsilon_2,\ldots, \epsilon_s$, we refer to Section 5 of \cite{step2}. Furthermore, we define:
$$d(x,y):=\lVert x^{-1}*y\rVert,$$
and let $B(x,r):=\{z\in \mathbb{G}:d(x,z)<r\}$ be the open metric ball relative to the distance $d$ centred at $x$ at radius $r>0$.
\end{definizione}

\begin{osservazione}\label{box}
Fix an othonormal basis $\mathcal{E}:=\{e_1,\ldots,e_n\}$ of $\R^n$ such that:
\begin{equation}
    e_j\in V_i, \text{ whenever }h_{i}\leq j<h_{i+1}.
    \label{eq:orthbasis}
\end{equation}
From the definition of the metric $d$, it immediately follows that the ball $B(0,r)$ is contained in the box:
\begin{equation}
\text{Box}_{\mathcal{E}}(0,r):=\big\{p\in \R^n: \text{ for any }i=1,\ldots,s \text{ whenever }\lvert \langle p,e_j\rangle\rvert\leq r^i/\epsilon_i \text{ for any }h_i\leq j<h_{j+1}\big\}.
\nonumber
\end{equation}
\end{osservazione}

\begin{definizione}\label{hausdorrmeas}
Let $A\subseteq \HH^n$ be a Borel set. For any $0\leq \alpha\leq 2n+2$ and  $\delta>0$, define:
\begin{equation}
\begin{split}
    &\mathscr{C}^{\alpha}_{\delta}(A):=\inf\Bigg\{\sum_{j=1}^\infty r_j^\alpha:A\subseteq \bigcup_{j=1}^\infty \overline{B_{r_j}(x_j)},\text{ }r_j\leq\delta\text{ and }x_j\in A\Bigg\},\\
    &\mathscr{S}^{\alpha}_{\delta}(A):=\inf\Bigg\{\sum_{j=1}^\infty r_j^\alpha:A\subseteq \bigcup_{j=1}^\infty \overline{B_{r_j}(x_j)},\text{ }r_j\leq\delta\Bigg\},
    \nonumber
\end{split}    
\end{equation}
and $\mathscr{S}^{\alpha}_{\delta,E}(\emptyset):=0=:\mathscr{C}^{\alpha}_{\delta}(\emptyset)$. Eventually, we let:
\begin{equation}
    \begin{split}
  \mathcal{C}^{\alpha}(A):=\sup_{B\subseteq A}\sup_{\delta>0}\mathscr{C}^{\alpha}_{\delta}(B)&\qquad\text{be the centred spherical Hausdorff measure},\\
   \mathcal{S}^{\alpha}(A):=\sup_{\delta>0}\mathscr{S}^{\alpha}_{\delta}(A)&\qquad\text{be the spherical Hausdorff measure}.
   \nonumber
    \end{split}
\end{equation}
Both $\mathcal{C}^\alpha$ and $\mathcal{S}^\alpha$ are Borel regular measures, see \cite{Schechter-2000} and Section 2.2 of \cite{Federer1996GeometricTheory} respectively. 
\end{definizione}

In the following definition, we introduce a family of measures that will be of great relevance throughout the paper.

\begin{definizione}[Flat measures]\label{def:flatmeasures}
For any $m\in\{1,\ldots,\mathcal{Q}-1\}$ we define the family of $m$-dimensional flat measures $\mathfrak{M}(m)$ as:
\begin{equation}
    \mathfrak{M}(m):=\{\lambda\mathcal{S}^{m}\llcorner V:\text{ for some }\lambda>0 \text{ and }V\in\G(m)\}.
    \label{eq:flatmeasures}
\end{equation}
In order to simplify notation in the following we let $\mathfrak{M}:=\mathfrak{M}(\mathcal{Q}-1)$.
\end{definizione}

The following proposition gives a representation of $(\mathcal{Q}-1)$-flat measures, that will come in handy later on:

\begin{proposizione}\label{prop:rapp}
For any $V\in \G(\mathcal{Q}-1)$ we have:
$$\mathcal{S}^{\mathcal{Q}-1}\llcorner V=\beta^{-1}\mathcal{H}^{n-1}_{eu}\llcorner V,$$
where $\beta:=\mathcal{H}^{n-1}_{eu}(B(0,1)\cap V)$ and $\beta$ does not dopend on $V$.
\end{proposizione}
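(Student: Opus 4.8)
The plan is to reduce the statement to the classical fact that Hausdorff measures built from different gauges but with the same ``ball'' geometry differ only by a multiplicative constant equal to the measure of a unit ball, and to check that on a homogeneous subgroup $V\in\G(\mathcal{Q}-1)$ the spherical gauge $\mathcal{S}^{\mathcal{Q}-1}$ and the Euclidean $(n-1)$-dimensional Hausdorff measure $\mathcal{H}^{n-1}_{eu}$ are comparable in exactly this way. First I would observe that, by \eqref{eq:intr3}, every $V\in\G(\mathcal{Q}-1)$ is an $(n-1)$-dimensional linear subspace of $\R^n$, so $\mathcal{H}^{n-1}_{eu}\llcorner V$ is (a constant multiple of) Lebesgue measure on $V$; in particular it is a Haar measure for the abelian translation structure on $V$ and, since $V$ is a normal homogeneous subgroup closed under the intrinsic dilations $\delta_\lambda$, it scales like $\lambda^{n-1}$ under $\delta_\lambda$ — note $\delta_\lambda$ acts on $V$ as a linear map of determinant $\lambda^{\dim(V\cap V_1)}\lambda^{2\dim(V\cap V_2)}\cdots=\lambda^{\mathcal{Q}-1}$ by \eqref{eq:intr2}, but restricted to the $(n-1)$-dimensional \emph{subspace} $V$ its Euclidean $(n-1)$-volume distortion must be reconciled with the homogeneity; the clean way is to use the intrinsic metric directly.

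The cleaner route, which I would actually write up, is: both $\mathcal{S}^{\mathcal{Q}-1}\llcorner V$ and $\mathcal{H}^{n-1}_{eu}\llcorner V$ are left-invariant (under $\tau_z$ for $z\in V$, using that $V$ is a subgroup and that $d$ is left-invariant, and that the group translation on $V$ is Euclidean translation up to the lower-order $\mathscr{Q}_i$ terms which fix $\mathcal{H}^{n-1}_{eu}\llcorner V$ since they are volume-preserving shears) and both are locally finite nonzero Borel measures on the locally compact group $V$. By uniqueness of Haar measure on $V$, they are proportional: $\mathcal{S}^{\mathcal{Q}-1}\llcorner V=c(V)\,\mathcal{H}^{n-1}_{eu}\llcorner V$ for some constant $c(V)\in(0,\infty)$. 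To identify $c(V)$ I would evaluate both sides on the set $B(0,1)\cap V$: the density theorem for spherical Hausdorff measure (the balls $\overline{B(x,r)}\cap V$ with $x\in V$ form the natural covering class, and one checks $\mathcal{S}^{\mathcal{Q}-1}\llcorner V$ assigns mass $r^{\mathcal{Q}-1}$ to $B(x,r)\cap V$ — this is exactly the normalization making the gauge $r^{\mathcal{Q}-1}$ the right one, i.e. the spherical measure of a unit intrinsic ball in $V$ is $1$) gives $\mathcal{S}^{\mathcal{Q}-1}(B(0,1)\cap V)=1$, while $\mathcal{H}^{n-1}_{eu}(B(0,1)\cap V)=:\beta$ by definition; hence $c(V)=\beta^{-1}$.

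It remains to argue $\beta$ is independent of $V$. Here I would use that $\mathbb{G}$, identified with $(\R^n,*)$, carries an orthogonal structure in which $V_1$ is a coordinate subspace, and any two elements $V,V'\in\G(\mathcal{Q}-1)$ differ by a rotation of $V_1$ fixing $V_2\oplus\cdots\oplus V_s$: such a rotation is a group automorphism commuting with $\delta_\lambda$ and an isometry of the Euclidean structure, and it maps $B(0,1)$ onto itself (the smooth-box norm $\lVert\cdot\rVert$ of Definition \ref{smoothnorm} depends on $g_1$ only through $\lvert g_1\rvert$, which is rotation-invariant, and is unchanged on the higher layers). Therefore it carries $B(0,1)\cap V$ isometrically onto $B(0,1)\cap V'$, so $\beta=\mathcal{H}^{n-1}_{eu}(B(0,1)\cap V')$ as well.

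\textbf{Main obstacle.} The one genuinely non-formal point is the normalization $\mathcal{S}^{\mathcal{Q}-1}(B(0,1)\cap V)=1$, equivalently $\mathcal{S}^{\mathcal{Q}-1}(B(x,r)\cap V)=r^{\mathcal{Q}-1}$ for all $x\in V$, $r>0$. This requires a Vitali-type covering argument on $V$ to show the efficient spherical coverings of a ball in $V$ are (asymptotically) by the ball itself, using the Ahlfors regularity of $\mathcal{H}^{n-1}_{eu}\llcorner V$ with respect to the metric $d$ restricted to $V$ (which follows since $d$ restricted to $V$ is bi-Lipschitz-homogeneous of the right exponent by the layer-dimension count \eqref{eq:intr3}). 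Everything else — left-invariance, the Haar uniqueness step, and the rotation argument for $V$-independence of $\beta$ — is routine.
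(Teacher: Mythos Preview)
Your Haar-uniqueness route is sound and genuinely different from the paper's: the paper identifies both $\mathcal{S}^{\mathcal{Q}-1}\llcorner V$ and $\mathcal{H}^{n-1}_{eu}\llcorner V$ with the perimeter measure of the half-space $E=\{z:\langle z_1,\mathfrak{n}(V)\rangle<0\}$, via the Euclidean divergence theorem on one side and Magnani's spherical area formula for $C^1_\mathbb{G}$-hypersurfaces on the other, and then reads off the constant. Your proportionality step and your rotation argument for the $V$-independence of $\beta$ are correct (and the latter is exactly what the paper does); a minor slip is that a rotation of $V_1$ need not be a \emph{group} automorphism of $\mathbb{G}$, but you only use that it is a Euclidean isometry preserving $B(0,1)$, which it is.

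The real gap is precisely at the point you flag, but your proposed fix does not close it. The spherical measure $\mathcal{S}^{\mathcal{Q}-1}$ is the ambient one on $\mathbb{G}$: competitors in its definition are coverings by closed balls $\overline{B(x_j,r_j)}$ with $x_j\in\mathbb{G}$, \emph{not} only $x_j\in V$. A Vitali argument on $(V,d|_V)$ using Ahlfors regularity of $\mathcal{H}^{n-1}_{eu}\llcorner V$ handles only $V$-centred balls; it yields $\mathcal{C}^{\mathcal{Q}-1}(B(0,1)\cap V)=1$ for the \emph{centred} spherical measure and, via the general upper-density bound $\Theta^{*,\mathcal{Q}-1}(\mathcal{S}^{\mathcal{Q}-1}\llcorner A,x)\le 1$, the inequality $c\beta\le 1$. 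To get $c\beta\ge 1$ you must rule out that off-$V$ centred balls give more efficient coverings, i.e.\ you need
\[
\mathcal{H}^{n-1}_{eu}\big(B(z,1)\cap V\big)\ \le\ \mathcal{H}^{n-1}_{eu}\big(B(0,1)\cap V\big)\quad\text{for every }z\in\mathbb{G}.
\]
After the reduction $z\mapsto P_{\mathfrak{N}(V)}z$ (which you can do by left-invariance on $V$), this is the statement that the trace of the unit ball on $V$ is largest when the ball is centred on $V$. This is exactly Magnani's identification $\beta(\lVert\cdot\rVert,\mathfrak{n}(V))=\max_{z\in B(0,1)}\mathcal{H}^{n-1}_{eu}(B(z,1)\cap V)=\mathcal{H}^{n-1}_{eu}(B(0,1)\cap V)$, and it uses the \emph{convexity} of $B(0,1)$; the paper invokes it explicitly (Theorem~5.2 of \cite{magnanishape}). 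For the smooth-box norm you can also verify it by hand: writing $z=t\,\mathfrak{n}(V)$ and shearing off the polynomial terms, one computes $\mathcal{H}^{n-1}_{eu}(B(t\mathfrak{n}(V),1)\cap V)=(1-t^2)^{(n_1-1)/2}\cdot\prod_{i\ge 2}\omega_{n_i}\epsilon_i^{-in_i}$, which is maximised at $t=0$. Without this extra input your argument only gives $2^{-(\mathcal{Q}-1)}\le c\beta\le 1$, not $c\beta=1$. Once you add it, your approach goes through, and in substance shares with the paper's the single nontrivial ingredient --- convexity of the unit ball --- while replacing the perimeter-measure machinery by elementary Haar uniqueness.
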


\begin{proof}
Let $E:=\{z\in\mathbb{G}:\langle z_1, \mathfrak{n}(V)\rangle<0\}$ and let $\partial E$ be the perimeter measure of $E$, see Definition \ref{def:norm}. Thanks to identity (2.8) in \cite{ambled}, it can be proven that:
$$\partial E=\mathfrak{n}(V)\mathcal{H}_{eu}^{n-1}\llcorner V.$$
On the other hand, since the reduced boundary $\partial^*E=V$ of $E$ is a $C^1_\mathbb{G}$-surface, see Definition \ref{regsur}, thanks to Theorem 4.1 of \cite{magnanishape} we conclude that:
$$\beta(\lVert\cdot\rVert, \mathfrak{n}(V)) \mathcal{S}^{\mathcal{Q}-1}\llcorner V=\lvert \partial E\rvert_{\mathbb{G}}=\mathcal{H}_{eu}^{n-1}\llcorner V,$$
where $\beta(\lVert\cdot\rVert, \mathfrak{n}(V)):=\max_{z\in B(0,1)} \mathcal{H}^{n-1}_{eu}(B(z,1)\cap V)$. Furthermore since $B(0,1)$ is convex, Theorem 5.2 of \cite{magnanishape} implies that:
$$\beta(\lVert\cdot\rVert,\mathfrak{n}(V))=\mathcal{H}^{n-1}_{eu}(B(0,1)\cap V).$$
Finally note that the right hand side of the above identity does not depend on $V$ since $B(0,1)$ is invariant under rotations of the first layer $V_1$.
\end{proof}

Proposition \ref{prop:rapp} has the following useful consequence:

\begin{proposizione}
\label{unif}
A function $\varphi:\mathbb{G}\to\R$ is said to be radially symmetric if there is a profile function $g:[0,\infty)\to\R$ such that $\varphi(x)=g(\lVert x\rVert)$.
For any $V\in \G(\mathcal{Q}-1)$ and any radially symmetric, positive function $\varphi$ we have:
$$\int \varphi d\mathcal{S}^{\mathcal{Q}-1}\llcorner V=(\mathcal{Q}-1)\int s^{\mathcal{Q}-2}g(s)ds.$$  
\end{proposizione}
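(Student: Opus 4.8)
The strategy is to reduce the integral on the left-hand side to a one-dimensional integral by slicing with respect to the norm $\lVert\cdot\rVert$, and then use Proposition \ref{prop:rapp} together with the scaling of the spherical Hausdorff measure $\mathcal{S}^{\mathcal{Q}-1}\llcorner V$ to identify the slicing density explicitly. First I would recall that since $\varphi(x)=g(\lVert x\rVert)$ and $g$ is positive, the coarea/layer-cake formula applies and we may write
\[
\int \varphi\, d\mathcal{S}^{\mathcal{Q}-1}\llcorner V = \int_0^\infty \mathcal{S}^{\mathcal{Q}-1}\big(V\cap \{x:g(\lVert x\rVert)>t\}\big)\, dt,
\]
or, more directly, approximate $g$ by simple functions of the form $\sum_j c_j \mathbf{1}_{[0,s_j)}$ so that everything is reduced to computing $\mathcal{S}^{\mathcal{Q}-1}(V\cap B(0,s))$. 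Since $\mathcal{S}^{\mathcal{Q}-1}\llcorner V$ is a homogeneous measure of dimension $\mathcal{Q}-1$ — indeed by Proposition \ref{prop:rapp} it is a constant multiple of $\mathcal{H}^{n-1}_{eu}\llcorner V$, and $V$ is a linear subspace while $B(0,s)=\delta_s B(0,1)$ — we get $\mathcal{S}^{\mathcal{Q}-1}(V\cap B(0,s)) = s^{\mathcal{Q}-1}\,\mathcal{S}^{\mathcal{Q}-1}(V\cap B(0,1))$.

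Next I would pin down the normalization constant $\mathcal{S}^{\mathcal{Q}-1}(V\cap B(0,1))$. By the very definition of the centred/spherical Hausdorff measure normalized by balls, or simply by taking $\varphi = \mathbf{1}_{B(0,1)}$, one should have $\mathcal{S}^{\mathcal{Q}-1}(V\cap B(0,1))=1$; alternatively this is consistent with Proposition \ref{prop:rapp}, since $\beta^{-1}\mathcal{H}^{n-1}_{eu}(V\cap B(0,1)) = \beta^{-1}\beta = 1$. Hence for a radial $\varphi$ one obtains, writing $F(s):=\mathcal{S}^{\mathcal{Q}-1}(V\cap B(0,s)) = s^{\mathcal{Q}-1}$,
\[
\int \varphi\, d\mathcal{S}^{\mathcal{Q}-1}\llcorner V = \int_0^\infty g(s)\, dF(s) = \int_0^\infty g(s)\,(\mathcal{Q}-1)s^{\mathcal{Q}-2}\, ds,
\]
where the middle equality is a Riemann–Stieltjes change of variables that is rigorously justified by the simple-function approximation of the monotone rearrangement of $g$ (or, if $g$ is merely measurable, by the layer-cake identity above applied to each superlevel set, using that $\{g(\lVert\cdot\rVert)>t\}\cap V = V\cap B(0, s(t))$ up to null sets when $g$ is, say, continuous, and by a standard density/monotone class argument in general).

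The only genuine subtlety — and hence the step I expect to require the most care — is justifying that the slicing is exact rather than merely an inequality: that is, that $\mathcal{S}^{\mathcal{Q}-1}\llcorner V$, restricted to $V$, really does disintegrate along the level sets of $\lVert\cdot\rVert$ with density $(\mathcal{Q}-1)s^{\mathcal{Q}-2}$ and no correction term. This is where Proposition \ref{prop:rapp} does the essential work: it lets me transfer the computation to $\mathcal{H}^{n-1}_{eu}\llcorner V$, for which the slicing along the level sets of the (Euclidean-Lipschitz) function $\lVert\cdot\rVert$ is a classical instance of the coarea formula on the $(n-1)$-plane $V$, and the homogeneity $B(0,s)=\delta_s B(0,1)$ together with the fact that $\delta_s$ acts on the hyperplane $V$ essentially as a Euclidean scaling (up to a fixed Jacobian which is absorbed into $\beta$) gives $\mathcal{H}^{n-1}_{eu}(V\cap B(0,s)) = c\, s^{\mathcal{Q}-1}$; matching $s=1$ fixes $c=\beta$, and dividing by $\beta$ yields $s^{\mathcal{Q}-1}$ as claimed. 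Differentiating in $s$ produces the factor $(\mathcal{Q}-1)s^{\mathcal{Q}-2}$ and completes the proof.
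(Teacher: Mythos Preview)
Your proposal is correct and follows essentially the same route as the paper: reduce to computing $\mathcal{S}^{\mathcal{Q}-1}(V\cap B(0,s))$ via a simple-function approximation of the profile $g$, use Proposition \ref{prop:rapp} together with the homogeneity of $\delta_s$ on the hyperplane $V$ to get $\mathcal{S}^{\mathcal{Q}-1}(V\cap B(0,s))=s^{\mathcal{Q}-1}$, and then integrate. The paper's argument is simply the ``more direct'' branch you already describe (positive simple functions plus Beppo Levi), so your layer-cake and coarea digressions, while not wrong, are superfluous here.
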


\begin{proof}
It suffices to prove the proposition for positive simple functions, since the general result follows by Beppo Levi's convergence theorem. Thus, let $\varphi(z):=\sum_{i=1}^N a_i \chi_{B(0,r_i)}(z)$ and note that for any $V\in\G(\mathcal{Q}-1)$ we have:
\begin{equation}
\begin{split}
        \int \varphi(z)d\mathcal{S}^{\mathcal{Q}-1}\llcorner V=&\sum_{i=1}^N a_i\mathcal{S}^{\mathcal{Q}-1}\llcorner V(B(0,r_i))=\beta^{-1}\sum_{i=1}^N a_i\mathcal{H}_{eu}^{n-1}\llcorner V(B(0,r_i))=\beta^{-1}\mathcal{H}_{eu}^{n-1}\llcorner V(B(0,1))\sum_{i=1}^N a_ir_i^{\mathcal{Q}-1}\\
        =&(\mathcal{Q}-1)\sum_{i=1}^N a_i\int_0^{r_i}s^{\mathcal{Q}-2}ds=(\mathcal{Q}-1)\int\sum_{i=1}^N a_i s^{\mathcal{Q}-2}\chi_{[0,r_i]}(s)ds
        =(\mathcal{Q}-1)\int s^{\mathcal{Q}-2}g(s)ds.
        \nonumber
\end{split}
\end{equation}
\end{proof}

\subsection{Cones and splitting projections}
\label{conesplit}
For any $V\in \G(\mathcal{Q}-1)$, the group $\mathbb{G}$ can be written as a semi-direct product of $V$ and $\mathfrak{N}(V)$, i.e.: \begin{equation}
    \mathbb{G}=V\rtimes \mathfrak{N}(V).
    \label{eq:intr10}
\end{equation}
In this subsection we specialize some of the results on projections of Section 2.2 of \cite{MFSSC} to the case in which splitting of $\mathbb{G}$ is given by \eqref{eq:intr10}.

\begin{definizione}[Splitting projections]\label{def:splitproj} For any $g\in\mathbb{G}$, there are two unique elements $P_Vg\in V$ and $P_{\mathfrak{N}(V)}g\in \mathfrak{N}(V)$ such that:
$$g=P_Vg*P_{\mathfrak{N}(V)}g.$$
\end{definizione}

The following result is a particular case of Proposition 2.2.16 of \cite{MFSSC}.

\begin{proposizione}\label{prop:proiezioni}
For any $V\in\G(\mathcal{Q}-1)$, we let:
\begin{equation}
    \begin{split}
        A_2 g_2:=&g_2-\mathscr{Q}_2\big(\pi_\mathscr{V}g_1, \pi_{\mathfrak{n}(V)}g_1\big),\\
        A_i g_i:=&g_i-\mathscr{Q}_i\big(\pi_\mathscr{V}g_1,A_2g_2,\ldots,A_{i-1}g_{i-1},  \pi_{\mathfrak{n}(V)}g_1,0,\ldots,0\big)\text{ for any }i=3,\ldots,s,
    \end{split}
    \nonumber
\end{equation}
where $\pi_{\mathfrak{n}(V)}g_1:=\langle g_1,\mathfrak{n}(V)\rangle\mathfrak{n}(V)$ and $\pi_{\mathscr{V}}g_1=g_1-\pi_{\mathfrak{n}(V)}g_1$. With these definitions, the projections $P_V$ and $P_{\mathfrak{N}(V)}$ have the following expressions in coordinates:
\begin{equation}
\begin{split}
P_Vg=\Big(\pi_\mathscr{V}g_1,A_2g_2,\ldots,A_sg_s\Big),\qquad\text{and}\qquad
    P_{\mathfrak{N}(V)}g=\Big(\pi_{\mathfrak{n}(V)}g_1,0,\ldots,0\Big).
\end{split}
\nonumber
\end{equation}
Furthermore, for any $x,y\in \mathbb{G}$ the above representations and the fact that $V\in\G(\mathcal{Q}-1)$ is normal imply:
\begin{itemize}
    \item[(i)]$P_{V}(x*y)=x * P_Vy * P_{\mathfrak{N}(V)}x^{-1}$,
    \item[(ii)] $P_{\mathfrak{N}(V)}(x*y)=P_{\mathfrak{N}(V)}(x)*P_{\mathfrak{N}(V)}(y)=P_{\mathfrak{N}(V)}(x)+P_{\mathfrak{N}(V)}(y)$,
\end{itemize}
where here the symbol $+$ has to be intended as the sum of vectors.
\end{proposizione}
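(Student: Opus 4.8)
The plan is to treat existence and uniqueness of the decomposition $g=P_Vg*P_{\mathfrak{N}(V)}g$ as already granted by the splitting $\mathbb{G}=V\rtimes\mathfrak{N}(V)$ recorded in \eqref{eq:intr10}, and then simply to \emph{verify} that the vectors written in the statement lie in $V$ and $\mathfrak{N}(V)$ respectively and multiply back to $g$; by uniqueness this identifies them with the splitting projections. Everything is then a computation with the coordinate form \eqref{opgr} of the product, together with property (iii) of the $\mathscr{Q}_i$.

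First I would record the two membership claims. Since $\mathscr{V}=V\cap V_1$ and $V\cap V_i=V_i$ for $i\geq 2$ by \eqref{eq:intr3}, and $\pi_\mathscr{V}g_1\in\mathscr{V}$ while $A_ig_i\in V_i$, the tuple $(\pi_\mathscr{V}g_1,A_2g_2,\dots,A_sg_s)$ lies in $\mathscr{V}\oplus V_2\oplus\dots\oplus V_s=V$. For $\mathfrak{N}(V)$ I would first observe that the $1$-dimensional homogeneous subgroup generated by $\mathfrak{n}(V)\in V_1$ is exactly the horizontal line $\{(t\,\mathfrak{n}(V),0,\dots,0):t\in\R\}$ — it is the one–parameter subgroup $t\mapsto\exp(t\,\mathfrak{n}(V))$, which in exponential coordinates is this line — and that on it the group law is plain addition of the first coordinate, because each $\mathscr{Q}_i$ is a polynomial with no constant or linear term built from iterated brackets of its arguments and $[\mathfrak{n}(V),\mathfrak{n}(V)]=0$. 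In particular $(\pi_{\mathfrak{n}(V)}g_1,0,\dots,0)\in\mathfrak{N}(V)$.

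Next I would check $g=P_Vg*P_{\mathfrak{N}(V)}g$ componentwise. The first coordinate of the product is $\pi_\mathscr{V}g_1+\pi_{\mathfrak{n}(V)}g_1=g_1$ since $\pi_\mathscr{V}g_1+\pi_{\mathfrak{n}(V)}g_1$ is the orthogonal decomposition of $g_1$. For $i\geq 2$, the $\mathfrak{N}(V)$-factor has vanishing $i$-th coordinate, and by property (iii) of $\mathscr{Q}$ the map $\mathscr{Q}_i$ depends only on the first $i-1$ coordinates of each argument, so the $i$-th coordinate of the product equals $A_ig_i+\mathscr{Q}_i\big(\pi_\mathscr{V}g_1,A_2g_2,\dots,A_{i-1}g_{i-1},\pi_{\mathfrak{n}(V)}g_1,0,\dots,0\big)$, which is $g_i$ by the very definition of $A_ig_i$. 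This is a straightforward induction on $i$ and establishes the two displayed formulas.

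Finally, for (ii) I would use that the first coordinate of $x*y$ is $x_1+y_1$, whence $P_{\mathfrak{N}(V)}(x*y)=(\pi_{\mathfrak{n}(V)}(x_1+y_1),0,\dots,0)=(\pi_{\mathfrak{n}(V)}x_1,0,\dots,0)+(\pi_{\mathfrak{n}(V)}y_1,0,\dots,0)$ by linearity of $\pi_{\mathfrak{n}(V)}$; since addition is the group operation on $\mathfrak{N}(V)$, this equals $P_{\mathfrak{N}(V)}(x)*P_{\mathfrak{N}(V)}(y)=P_{\mathfrak{N}(V)}(x)+P_{\mathfrak{N}(V)}(y)$. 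For (i), writing $x=P_Vx*P_{\mathfrak{N}(V)}x$ and $y=P_Vy*P_{\mathfrak{N}(V)}y$ and using that $V$ is normal to conjugate $P_Vy$ by $P_{\mathfrak{N}(V)}x$, one gets $x*y=\big(P_Vx*(P_{\mathfrak{N}(V)}x*P_Vy*P_{\mathfrak{N}(V)}x^{-1})\big)*\big(P_{\mathfrak{N}(V)}x*P_{\mathfrak{N}(V)}y\big)$, with the first parenthesis in $V$ and the second in $\mathfrak{N}(V)$; uniqueness of the splitting then yields $P_V(x*y)=P_Vx*P_{\mathfrak{N}(V)}x*P_Vy*P_{\mathfrak{N}(V)}x^{-1}=x*P_Vy*P_{\mathfrak{N}(V)}x^{-1}$, where I also use $P_{\mathfrak{N}(V)}(x)^{-1}=P_{\mathfrak{N}(V)}(x^{-1})$. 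The argument is entirely routine; the only points requiring a little care are the inductive bookkeeping of which arguments of $\mathscr{Q}_i$ are forced to vanish and the preliminary identification of $\mathfrak{N}(V)$ with the horizontal line on which the group law is additive.
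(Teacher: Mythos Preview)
Your proof is correct and complete. The paper itself does not give a proof of this proposition: it simply records the result as ``a particular case of Proposition 2.2.16 of \cite{MFSSC}'' and moves on. Your argument is a direct self-contained verification---checking membership of the two candidates in $V$ and $\mathfrak{N}(V)$, multiplying them back to $g$ using \eqref{opgr} and property (iii) of the $\mathscr{Q}_i$, invoking uniqueness of the splitting, and then deriving (i) and (ii) from normality of $V$ and the explicit coordinate formula for $P_{\mathfrak{N}(V)}$---which is presumably what the cited reference does as well.
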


\begin{osservazione}
Throughout the paper the reader should always keep in mind that the projections $P_V$ are \emph{not} Lipschitz maps and that this is the major source of the technical problems we have to overcome in order to prove our main result, Theorem \ref{main}.
\end{osservazione}

The splitting projections allows us to give the following intrinsic notion of cone.

\begin{definizione}\label{deffi:cono}
For any  $\alpha>0$ and $V\in \G(\mathcal{Q}-1)$, we define the cone $C_V(\alpha)$ as:
$$C_V(\alpha):=\{w\in\mathbb{G}:\lVert P_{\mathfrak{N}(V)}(w)\rVert\leq \alpha\lVert P_{V}(w)\rVert\}.$$
\end{definizione}

The following result is very useful, since one of the major difficulties when dealing with geometric problems in Carnot groups is that $d(x,y)\approx \lvert x-y\rvert^{1/s}$ if $x$ and $y$ are not suitably chosen. However, Proposition \ref{prop:conifero} shows that if $y\not\in xC_V(\alpha)$, then $d(x,y)$ is bi-Lipschitz equivalent to the Euclidean distance $\lvert x-y\rvert$.

\begin{proposizione}\label{prop:conifero}
Suppose $x,y\in \mathbb{G}$ are such that $x^{-1}y\not\in C_V(\alpha)$ for some $\alpha>0$ and $V\in\G(\mathcal{Q}-1)$. Then, there is a decreasing function $\alpha\mapsto\Lambda(\alpha)$ for which:
$$d(x,y)\leq\Lambda(\alpha)\lvert \pi_1(x^{-1}y)\rvert.$$
\end{proposizione}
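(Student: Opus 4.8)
The plan is to use the left-invariance of both sides to reduce to a statement about the single element $w:=x^{-1}y$, and then to exploit the semidirect splitting $w=P_V(w)*P_{\mathfrak{N}(V)}(w)$ together with the explicit coordinate formulas of Proposition \ref{prop:proiezioni}.

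\textbf{Step 1: reduction.} Since $d$ is left invariant, $d(x,y)=\lVert x^{-1}y\rVert=\lVert w\rVert$; and since $\mathscr{Q}_1=0$ (property (iii) of the $\mathscr{Q}_i$), the projection $\pi_1$ is additive, so $\pi_1(x^{-1}y)=\pi_1(w)=w_1$. Thus it suffices to prove: if $w\notin C_V(\alpha)$, then $\lVert w\rVert\leq(1+\alpha^{-1})\lvert w_1\rvert$, and to take $\Lambda(\alpha):=1+\alpha^{-1}$.

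\textbf{Step 2: reading off the cone condition.} By Proposition \ref{prop:proiezioni} we have $P_{\mathfrak{N}(V)}(w)=(\pi_{\mathfrak{n}(V)}w_1,0,\ldots,0)$ with $\pi_{\mathfrak{n}(V)}w_1=\langle w_1,\mathfrak{n}(V)\rangle\mathfrak{n}(V)$; since $\lvert\mathfrak{n}(V)\rvert=1$ and $\epsilon_1=1$, and all the non-horizontal layers of $P_{\mathfrak{N}(V)}(w)$ vanish, the definition of $\lVert\cdot\rVert$ gives $\lVert P_{\mathfrak{N}(V)}(w)\rVert=\lvert\langle w_1,\mathfrak{n}(V)\rangle\rvert$. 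Hence the hypothesis $w\notin C_V(\alpha)$ reads $\lvert\langle w_1,\mathfrak{n}(V)\rangle\rvert>\alpha\lVert P_V(w)\rVert$. On the other hand the orthogonal decomposition $w_1=\pi_{\mathscr{V}}w_1+\pi_{\mathfrak{n}(V)}w_1$ yields $\lvert w_1\rvert\geq\lvert\langle w_1,\mathfrak{n}(V)\rangle\rvert$. Combining these, $\lVert P_V(w)\rVert<\alpha^{-1}\lvert w_1\rvert$ and also $\lVert P_{\mathfrak{N}(V)}(w)\rVert=\lvert\langle w_1,\mathfrak{n}(V)\rangle\rvert\leq\lvert w_1\rvert$.

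\textbf{Step 3: triangle inequality and conclusion.} Writing $w=P_V(w)*P_{\mathfrak{N}(V)}(w)$ and using that $\lVert\cdot\rVert$ is subadditive, $\lVert w\rVert\leq\lVert P_V(w)\rVert+\lVert P_{\mathfrak{N}(V)}(w)\rVert<\alpha^{-1}\lvert w_1\rvert+\lvert w_1\rvert=(1+\alpha^{-1})\lvert w_1\rvert$. Since $\alpha\mapsto 1+\alpha^{-1}$ is decreasing on $(0,\infty)$, this gives the claim with $\Lambda(\alpha)=1+\alpha^{-1}$.

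There is no serious obstacle here: the argument is short. The only points requiring a little care are (a) the identification $\lVert P_{\mathfrak{N}(V)}(w)\rVert=\lvert\langle w_1,\mathfrak{n}(V)\rangle\rvert$, which relies on all layers but the first vanishing for $P_{\mathfrak{N}(V)}(w)$ and on $\epsilon_1=1$ and $\lvert\mathfrak{n}(V)\rvert=1$, and (b) the use of subadditivity of the homogeneous norm (if one prefers to invoke only a quasi-triangle inequality with constant $c$, the same computation yields the decreasing function $\Lambda(\alpha)=c(1+\alpha^{-1})$).
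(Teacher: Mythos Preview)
Your proof is correct and takes a genuinely different, more elementary route than the paper's. The paper argues by induction on the layers: from the cone condition it extracts the bounds $\epsilon_i\lvert A_iv_i\rvert^{1/i}\leq\alpha^{-1}\lvert\langle v_1,\mathfrak n(V)\rangle\rvert$ for each $i$, then uses the explicit coordinate formulas for the $A_i$ together with the homogeneity of the polynomials $\mathscr Q_i$ to propagate estimates $\lvert v_i\rvert\leq\Lambda_i(\alpha)\lvert v_1\rvert^i$ up the stratification, finally summing to obtain $\Lambda(\alpha)=\sum_{i}\Lambda_i(\alpha)^{1/i}$. Your argument bypasses all of this by invoking directly the triangle inequality $\lVert w\rVert\leq\lVert P_V(w)\rVert+\lVert P_{\mathfrak N(V)}(w)\rVert$ (which the paper itself records later as the right-hand inequality in Proposition~\ref{prop:2.2.11}), together with the single observation $\lVert P_{\mathfrak N(V)}(w)\rVert=\lvert\langle w_1,\mathfrak n(V)\rangle\rvert\leq\lvert w_1\rvert$. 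This yields the explicit and much cleaner constant $\Lambda(\alpha)=1+\alpha^{-1}$, whereas the paper's constant depends implicitly on the structure polynomials $\mathscr Q_i$ of the group. The paper's inductive approach does produce finer layer-by-layer information $\lvert v_i\rvert\lesssim_\alpha\lvert v_1\rvert^i$, but this extra content is not used anywhere else, so your shortcut loses nothing for the purposes of the paper.
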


\begin{proof}
For the sake of notation, we define 
$v:=x^{-1}y$ and claim that:
\begin{equation}
    \lvert v_i\rvert\leq \Lambda_i\lvert v_1\rvert^i,\text{ for any }i=2,\ldots,s,
    \label{eq:205}
\end{equation}
for some constant $\Lambda_i$ depending only on $\alpha$ and $i$. Before proving \eqref{eq:205} by induction, note that thanks to Proposition \ref{prop:proiezioni}, the definition of the distance $d$ and the fact that $v\not\in C_V(\alpha)$ we have:
\begin{equation}
    \epsilon_{i}\lvert A_i v_i\rvert^{1/i}\leq \alpha^{-1}\lvert \langle v_1,\mathfrak{n}(V)\rangle\rvert, \text{ for any }i=2,\ldots,s.
    \label{eq:m1}
\end{equation}
If $i=2$ thanks to the omogeneity of $\mathscr{Q}_2(\cdot,\cdot)$ we have:
\begin{equation}
\begin{split}
    \lvert\mathscr{Q}_2\big(\pi_\mathscr{V}v_1, \pi_{\mathfrak{n}(W)}v_1\big)\rvert=&\lvert v_1\rvert^2\lvert\mathscr{Q}_2\big(\delta_{\lvert v_1\rvert^{-1}}(\pi_\mathscr{W}v_1), \delta_{\lvert v_1\rvert^{-1}}(\pi_{\mathfrak{n}(W)}v_1)\big)\rvert\\
    =&\lvert v_1\rvert^2\Big\lvert\mathscr{Q}_2\bigg(\frac{\pi_\mathscr{W}v_1}{\lvert v_1\rvert}, \frac{\pi_{\mathfrak{n}(W)}v_1}{\lvert v_1\rvert}\bigg)\Big\rvert\leq \lvert v_1\rvert^2\sup_{a_1,b_1\in U_{1}(0,1)}\mathscr{Q}(a_1,b_1)=: \lambda_2\lvert v_1\rvert^2.
    \label{eq:2006}
\end{split}
\end{equation}
Putting \eqref{eq:m1} and \eqref{eq:2006} together we have:
$$\epsilon_2^2(\lvert v_2\rvert-\lambda_2\lvert v_1\rvert^2)\leq\epsilon_2^2\lvert v_2-\mathscr{Q}_2\big(\pi_\mathscr{W}v_1, \pi_{\mathfrak{n}(W)}v_1\big)\rvert=\epsilon_2^2\lvert A_2v_2\rvert\leq \alpha^{-2}\lvert \langle v_1,\mathfrak{n}(V)\rangle\rvert^2\leq \alpha^{-2}\lvert v_1\rvert^2.$$
Therefore, for $i=2$, inequality \eqref{eq:205} holds with the choice $\Lambda_2(\alpha):=\lambda_2+(1/\alpha\epsilon_2)^2$. Note that the function $\Lambda_2$ is decreasing in $\alpha$.

Suppose now that for any $i=2,\ldots,k$ inequality \eqref{eq:205} holds and that the functions $\Lambda_i(\alpha)$ are decreasing in $\alpha$. Then, thanks to the homogeneity of $\mathscr{Q}_{k+1}$, we have:
\begin{equation}
    \begin{split}
        \lvert\mathscr{Q}_{k+1}\big(\pi_\mathscr{W}v_1,&A_2v_2,\ldots,A_{k}v_{k},  \pi_{\mathfrak{n}(W)}v_1,0,\ldots,0\big)\rvert\\
        =&\lvert v_1\rvert^{k+1} \lvert\mathscr{Q}_{k+1}\big(\delta_{\lvert v_1\rvert^{-1}}(\pi_\mathscr{W}v_1),\delta_{\lvert v_1\rvert^{-1}}(A_2v_2),\ldots,\delta_{\lvert v_1\rvert^{-1}}(A_{k}v_{k}),  \delta_{\lvert v_1\rvert^{-1}}(\pi_{\mathfrak{n}(W)}v_1),0,\ldots,0\big)\rvert\\
        \leq&\lvert v_1\rvert^{k+1}\sup_{\substack{a_1,b_1\in U_{1}(0,1)\\a_i\in U_{i}(0,\Lambda_i),~i=2,\ldots,k.}} \lvert\mathscr{Q}_i(a_1,\ldots,a_k,b_1,0,\ldots,0)\rvert=:\lambda_{k+1}\lvert v_1\rvert^{k+1}.
    \end{split}
    \label{eq:206}
\end{equation}
Note that $\alpha\mapsto\lambda_{k+1}$ is decreasing in $\alpha$ and that the bounds \eqref{eq:m1} and \eqref{eq:206} imply:
\begin{equation}
\begin{split}
\epsilon_{k+1}^{k+1}(\lvert v_{k+1}\rvert-\lambda_{k+1}\lvert v_1\rvert^{k+1})& \leq
 \epsilon_{k+1}^{k+1}(\lvert v_{k+1}\rvert-\lvert\mathscr{Q}_{k+1}\big(\pi_\mathscr{W}v_1,A_2v_2,\ldots,A_{k}v_{k},  \pi_{\mathfrak{n}(W)}v_1,0,\ldots,0\big)\rvert)\\&\leq\epsilon_{k+1}^{k+1}\lvert A_{k+1}v_{k+1}\rvert\leq \alpha^{-(k+1)}\lvert v_1\rvert^{k+1}.\nonumber
\end{split}
\end{equation}
Defined $\Lambda_{k+1}(\alpha):=\lambda_{k+1}+(1/\alpha\epsilon_{k+1})^{k+1}$, inequality \eqref{eq:205} is proved. Finally we infer that:
\begin{equation}
    d(x,y)=\max\{\lvert v_1\rvert,\epsilon_2\lvert v_2\rvert^{1/2},\ldots,\epsilon_{k+1}\lvert v_s\rvert^{1/s}\}\leq \sum_{i=1}^s\Lambda_i(\alpha)^{1/i}\lvert v_1\rvert=:\Lambda(\alpha) \lvert v_1\rvert=\Lambda(\alpha)\lvert \pi_1(x^{-1}y)\rvert.
    \nonumber
\end{equation}
The fact that $\Lambda(\alpha)$ is a decreasing function, is easily seen from the fact that the $\Lambda_i(\alpha)$s are decreasing.
\end{proof}

The following proposition allows us to exactly estimate the distance of a point $g\in\mathbb{G}$ from a plane $V\in\G(\mathcal{Q}-1)$ and gives us a bound on the Lipschitz constant of $P_V$ at the origin.

\begin{proposizione}\label{cor:SSCM2.2.14}
For any $V\in\G(\mathcal
{Q}-1)$ and any
$g\in\mathbb{G}$ we have:
$\dist(P_{\mathfrak{N}(V)} g,V)=\lvert \pi_{\mathfrak{n}(V)} g_1\rvert$,
and in particular $\dist(g,V)=\lvert \pi_{\mathfrak{n}(V)} g_1\rvert$. Furthermore, there is a constant $\newC\label{C:0}>0$ depending only on $\mathbb{G}$ such that:
$$\lVert P_V(g)\rVert\leq \oldC{C:0}\lVert g\rVert.$$
\end{proposizione}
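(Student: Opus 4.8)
The plan is to work directly with the coordinate formulas for the splitting projections provided by Proposition \ref{prop:proiezioni}. First I would establish the distance formula $\dist(g,V)=\lvert\pi_{\mathfrak{n}(V)}g_1\rvert$. Recall that $V=\mathscr{V}\oplus V_2\oplus\ldots\oplus V_s$, so a point $w\in V$ has first-layer component $w_1\in\mathscr{V}$, i.e. $\langle w_1,\mathfrak{n}(V)\rangle=0$. For any $w\in V$, the definition of $\lVert\cdot\rVert$ gives $\lVert g^{-1}*w\rVert\geq\lvert\pi_1(g^{-1}*w)\rvert=\lvert w_1-g_1\rvert\geq\lvert\langle w_1-g_1,\mathfrak{n}(V)\rangle\rvert=\lvert\langle g_1,\mathfrak{n}(V)\rangle\rvert=\lvert\pi_{\mathfrak{n}(V)}g_1\rvert$, where I used that $\mathscr{Q}_1=0$ so the first layer of a product is the sum, and that $\langle w_1,\mathfrak{n}(V)\rangle=0$. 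This shows $\dist(g,V)\geq\lvert\pi_{\mathfrak{n}(V)}g_1\rvert$. For the reverse inequality, I would exhibit a concrete point of $V$ realizing the distance: by Definition \ref{def:splitproj} we can write $g=P_Vg*P_{\mathfrak{N}(V)}g$ with $P_Vg\in V$, and by Proposition \ref{prop:proiezioni}(ii)-type identities, $P_{\mathfrak{N}(V)}g=(\pi_{\mathfrak{n}(V)}g_1,0,\ldots,0)$, whose norm is exactly $\lvert\pi_{\mathfrak{n}(V)}g_1\rvert$ since $\epsilon_1=1$ and all higher layers vanish. Then $d(g,P_Vg)=\lVert(P_Vg)^{-1}*g\rVert=\lVert P_{\mathfrak{N}(V)}g\rVert=\lvert\pi_{\mathfrak{n}(V)}g_1\rvert$. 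The statement $\dist(P_{\mathfrak{N}(V)}g,V)=\lvert\pi_{\mathfrak{n}(V)}g_1\rvert$ follows the same way, noting $P_{\mathfrak{N}(V)}g$ has the same first-layer $\mathfrak{n}(V)$-component as $g$ and lies in $\mathfrak{N}(V)$, whose only point in $V$ is the origin.

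For the bound $\lVert P_V(g)\rVert\leq \oldC{C:0}\lVert g\rVert$, I would estimate each layer of $P_Vg=(\pi_{\mathscr{V}}g_1,A_2g_2,\ldots,A_sg_s)$ separately using the explicit recursive formulas for $A_ig_i$. For the first layer, $\lvert\pi_{\mathscr{V}}g_1\rvert\leq\lvert g_1\rvert\leq\lVert g\rVert$ trivially since $\pi_{\mathscr{V}}$ is an orthogonal projection. For higher layers, I would argue by induction on $i$: assuming $\lvert A_jg_j\rvert\leq C_j\lVert g\rVert^j$ for $j<i$, the homogeneity property $\mathscr{Q}_i(\delta_\lambda p,\delta_\lambda q)=\lambda^i\mathscr{Q}_i(p,q)$ together with continuity of $\mathscr{Q}_i$ on the compact unit sphere (exactly the scaling trick used in the proof of Proposition \ref{prop:conifero}) gives $\lvert\mathscr{Q}_i(\pi_{\mathscr{V}}g_1,A_2g_2,\ldots,A_{i-1}g_{i-1},\pi_{\mathfrak{n}(V)}g_1,0,\ldots,0)\rvert\leq c_i\lVert g\rVert^i$ for a constant $c_i$ depending only on $\mathbb{G}$. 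Hence $\lvert A_ig_i\rvert\leq\lvert g_i\rvert+c_i\lVert g\rVert^i\leq(\epsilon_i^{-i}+c_i)\lVert g\rVert^i$, since $\lvert g_i\rvert\leq(\lVert g\rVert/\epsilon_i)^i$ by definition of the norm. Taking $i$-th roots and multiplying by $\epsilon_i$, then taking the maximum over $i=1,\ldots,s$, yields the claim with $\oldC{C:0}:=\max_i\epsilon_i(\epsilon_i^{-i}+c_i)^{1/i}$ (with the convention $c_1=0$).

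The main obstacle is bookkeeping rather than conceptual: one must be careful that the constants $c_i$ in the inductive step genuinely depend only on the group (via the structure constants $\mathscr{Q}_i$ and the parameters $\epsilon_j$) and not on $g$, which is guaranteed by confining the homogeneous-rescaled arguments to fixed compact balls whose radii $\Lambda_j$ were fixed at earlier stages of the induction — precisely the mechanism already exploited in the proof of Proposition \ref{prop:conifero}. No genuinely new idea is needed; the whole proof is a direct unwinding of the coordinate representation of $P_V$ combined with the homogeneity of the Campbell--Hausdorff polynomials.
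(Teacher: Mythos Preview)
Your proposal is correct and follows essentially the same approach as the paper: the distance formula via the first-layer projection and the norm bound via the inductive homogeneity estimate on the $A_ig_i$. The only cosmetic difference is the order: the paper first proves $\dist(P_{\mathfrak{N}(V)}g,V)=\lvert\pi_{\mathfrak{n}(V)}g_1\rvert$ by constructing an explicit minimizer $v^*\in V$ and then deduces $\dist(g,V)$ via left-invariance, whereas you prove $\dist(g,V)$ directly using $P_Vg$ as the minimizing point (which is slightly cleaner, since $(P_Vg)^{-1}g=P_{\mathfrak{N}(V)}g$ by definition).
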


\begin{proof}
For any $v\in V$ and $g\in\mathbb{G}$, thanks to Proposition \ref{prop:proiezioni} and identity \eqref{opgr}, we have:
$$P_{\mathfrak{N}(V)}(g)^{-1}*v=\big(v_1-\pi_{\mathfrak{n}(V)}g_1,v_2+\mathscr{Q}_2\big(-\pi_{\mathfrak{n}(V)}g_1,v_1\big),\ldots,v_s+\mathscr{Q}_s\big(-\pi_{\mathfrak{n}(V)}g_1,0,\ldots,0,v_1\ldots,v_s\big)\big).$$
Since by assumption $\langle v_1,\mathfrak{n}(V)\rangle=0$, in order to minimize $\lVert P_{\mathfrak{N}(V)}(g)^{-1}*v \rVert$ as $v$ varies in $V$, we choose:
\begin{equation}
\begin{split}
v_1^*:=&0,\\
    v_2^*:=&-\mathscr{Q}_2\big(-\pi_{\mathfrak{n}(V)}g_1,0\big),\\
v_3^*:=&-\mathscr{Q}_3\big(-\pi_{\mathfrak{n}(V)}g_1,0,0,v_2^*\big),\\
\vdots\\
v_s^*:=&-\mathscr{Q}_s\big(-\pi_{\mathfrak{n}(V)}g_1,0,\ldots,0,0,v_1^*,\ldots,v_{s-1}^*\big).
\end{split}
\end{equation}
Let us prove that $v^*$ is an element of $V$ and that it is of minimal distance for $P_{\mathfrak{N}(V)}(g)$ on $V$. In order to show this, let us note that thanks to the definition of $d$, we have:
\begin{equation}
    \dist(P_{\mathfrak{N}(V)}(g),V)=\inf_{v\in V}\lVert P_{\mathfrak{N}(V)}(g)^{-1}*v\rVert\geq \inf_{v\in V}\lvert-\pi_{\mathfrak{n}(V)}g_1+v_1\rvert=\lvert\pi_{\mathfrak{n}(V)}g_1\rvert,
    \label{eq:n100}
\end{equation}
Furthermore, thanks to the definition of the operation and few algebraic computations that we omit, we have:
\begin{equation}
    P_{\mathfrak{N}(V)}(g)^{-1}*v^*=(-\pi_{\mathfrak{n}(V)}g_1,0,\ldots,0).
    \label{eq:nummm101}
\end{equation}
Therefore, since $v^*\in V$ thanks to \eqref{eq:n100} and \eqref{eq:nummm101} the first part of the proposition is proved.
The second part follows since:
$$\dist(g,V)=\inf_{v\in V}d(g,v)=\inf_{v\in V}d(P_V g*P_{\mathfrak{N}(V)}g,v)=\inf_{v\in V}d(P_{\mathfrak{N}(V)}g,P_V g^{-1}*v)=\dist(P_{\mathfrak{N}(V)}g,V),$$
where the last identity comes from the fact that the translation by $P_Vg^{-1}$ is surjective on $V$.

We proceed with the proof of the last claim of the proposition, estimating the norm of each component of $P_V(g)$. By definition of $\pi_\mathscr{V}$ and $\pi_{\mathfrak{n}(V)}$ we have:
\begin{equation}
    \lvert\pi_{\mathfrak{n}(V)}(g_1)\rvert\leq \lvert g_1\rvert\leq\lVert g\rVert\qquad\text{and}\qquad
        \lvert\pi_\mathscr{V}(g_1)\rvert\leq \lvert g_1\rvert\leq \lVert g\rVert.
        \nonumber
\end{equation}
In order to estimate the norm of $\pi_2(P_V g),\ldots, \pi_s(P_Vg)$ we proceed by induction and make use of their representations yielded by Proposition \ref{prop:proiezioni}. The base case is the estimate of the norm of  $\pi_2(P_Vg)$:
\begin{equation}
    \begin{split}
        \lvert\pi_2 (P_V g)\rvert=\lvert A_2g_2\rvert&\leq \lvert g_2\rvert+\lvert\mathscr{Q}_2\big(\pi_\mathscr{V}g_1, \pi_{\mathfrak{n}(V)}g_1\big)\rvert\leq \lVert g\rVert^2\Big(\epsilon_2^{-1}+\Big\lvert\mathscr{Q}_2\Big(\frac{\pi_\mathscr{V}g_1}{\lVert g\rVert}, \frac{\pi_{\mathfrak{n}(V)}g_1}{\lVert g\rVert}\Big)\Big\rvert\Big)\leq (\epsilon_2^{-1}+\lambda_2)\lVert g\rVert^2,
    \end{split}
    \nonumber
\end{equation}
where $\lambda_2$ is the constant introduced in \eqref{eq:2006}. Note that the constant $\mathfrak{c}_2:=\epsilon_2^{-1}+\lambda_2$ depends only on $\mathbb{G}$.

Suppose now that for any $i=2,\ldots, k$, there is a constant $\mathfrak{c}_i$, depending only on $\mathbb{G}$, such that $\lvert \pi_i(P_V g)\rvert\leq \mathfrak{c}_i\lVert g\rVert^i$. This implies that:
\begin{equation}
\begin{split}
     \lvert \pi_{k+1}(P_V g)\rvert= &\lvert A_{k+1}g_{k+1}\rvert\leq \lvert g_{k+1}\rvert+\lvert \mathscr{Q}_k(\pi_\mathscr{V}g_1, A_2 g_2,\ldots, A_k g_k \pi_{\mathfrak{n}(V)}g_1,0,\ldots,0)\rvert\\
     \leq &\lVert g\rVert^{k+1}\Big(\epsilon_{k+1}^{-1}+\Big\lvert \mathscr{Q}_k\Big(\frac{\pi_\mathscr{V}g_1}{\lVert g\rVert}, \frac{A_2 g_2}{\lVert g\rVert^2},\ldots, \frac{A_k g_k}{\lVert g\rVert^k}, \frac{\pi_{\mathfrak{n}(V)}g_1}{\lVert g\rVert},0,\ldots,0\Big)\Big\rvert\Big)\\
     \leq&   \sup_{\substack{a_1,b_1\in U_{1}(0,1)\\a_i\in U_{i}(0,\mathfrak{c}_i),~i=2,\ldots,k.}} \lvert\mathscr{Q}_i(a_1,\ldots,a_k,b_1,0,\ldots,0)\rvert\cdot \lVert g\rVert^{k+1} =:\mathfrak{c}_{k+1}\lVert g\rVert^{k+1}.
\end{split}
\nonumber
\end{equation}
Note that since the $\mathfrak{c}_i$ depended only on $\mathbb{G}$, so does $\mathfrak{c}_{k+1}$.
Thanks to the definition of $\lVert\cdot\rVert$, we eventually deduce that:
$$\lVert P_Vg\rVert\leq \sum_{i=1}^s\epsilon_i\mathfrak{c}_i^{1/i}\lVert g\rVert=:\oldC{C:0}\lVert g\rVert,$$
which concludes the proof of the proposition.
\end{proof}

The following result is the analogue of Proposition 2.2.11 of \cite{MFSSC}, where $\mathbb{M}:=V$ and $\mathbb{H}:=\mathfrak{N}(V)$.

\begin{proposizione}\label{prop:2.2.11}
For any $V\in\G(\mathcal{Q}-1)$ and any $g\in\mathbb{G}$, defined $\newC\label{C:split}:=(\oldC{C:0}+1)^{-1}$ we have:
\begin{equation}
    \oldC{C:split}(\lVert P_{\mathfrak{N}(V)}g\rVert+\lVert P_{V}g\rVert)\leq \lVert g\rVert\leq \lVert P_{\mathfrak{N}(V)}g\rVert+\lVert P_{V}g\rVert.
    \label{eq:n1}
\end{equation}
\end{proposizione}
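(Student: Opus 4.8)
The plan is to prove the double inequality \eqref{eq:n1} in two halves, obtaining the right-hand (subadditivity) estimate essentially for free and then extracting the left-hand one from it via Proposition \ref{cor:SSCM2.2.14}.

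\textbf{Upper bound.} First I would write $g = P_{\mathfrak{N}(V)}g * P_{\mathfrak{N}(V)}g^{-1} * g$, but more directly: since $g = P_Vg * P_{\mathfrak{N}(V)}g$ by Definition \ref{def:splitproj}, the triangle inequality for the homogeneous norm $\lVert\cdot\rVert$ (which holds for a suitable choice of the $\epsilon_i$, as recalled after Definition \ref{smoothnorm}) gives
\[
\lVert g\rVert = \lVert P_Vg * P_{\mathfrak{N}(V)}g\rVert \leq \lVert P_Vg\rVert + \lVert P_{\mathfrak{N}(V)}g\rVert,
\]
which is exactly the right-hand inequality. (If one prefers to avoid invoking the triangle inequality directly, the same bound follows from Proposition \ref{cor:SSCM2.2.14} applied twice: $\lVert P_{\mathfrak{N}(V)}g\rVert = \lvert\pi_{\mathfrak{n}(V)}g_1\rvert \leq \lVert g\rVert$ and $\lVert P_Vg\rVert \leq \oldC{C:0}\lVert g\rVert$, but this only gives $\lVert g\rVert \leq \lVert g\rVert \leq \lVert P_Vg\rVert + \lVert P_{\mathfrak{N}(V)}g\rVert$ trivially once we know each summand is nonnegative — so the cleaner route is the triangle inequality.)

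\textbf{Lower bound.} For the left-hand inequality, I would use Proposition \ref{cor:SSCM2.2.14}, which yields $\lVert P_V g\rVert \leq \oldC{C:0}\lVert g\rVert$, together with the observation $\lVert P_{\mathfrak{N}(V)}g\rVert = \lvert \pi_{\mathfrak{n}(V)}g_1\rvert \leq \lvert g_1\rvert \leq \lVert g\rVert$. Adding these,
\[
\lVert P_{\mathfrak{N}(V)}g\rVert + \lVert P_Vg\rVert \leq (\oldC{C:0}+1)\lVert g\rVert = \oldC{C:split}^{-1}\lVert g\rVert,
\]
and rearranging gives $\oldC{C:split}(\lVert P_{\mathfrak{N}(V)}g\rVert + \lVert P_Vg\rVert) \leq \lVert g\rVert$, as claimed. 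Since $\oldC{C:0}$ depends only on $\mathbb{G}$, so does $\oldC{C:split}$.

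\textbf{Main obstacle.} There is essentially no hard step here: both halves are immediate consequences of the triangle inequality for $\lVert\cdot\rVert$ and of Proposition \ref{cor:SSCM2.2.14}. The only point requiring a little care is making sure the homogeneous norm genuinely satisfies the triangle inequality (only true after the $\epsilon_i$ are chosen small enough, per \cite{step2}); once that is granted, the proposition is a two-line computation. I would therefore expect the write-up to consist of exactly the two displayed chains above plus the remark on the dependence of the constant.
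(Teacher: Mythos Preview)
Your proposal is correct and follows exactly the paper's own argument: the upper bound via the triangle inequality applied to $g=P_Vg*P_{\mathfrak{N}(V)}g$, and the lower bound by adding $\lVert P_{\mathfrak{N}(V)}g\rVert\leq\lVert g\rVert$ and $\lVert P_Vg\rVert\leq\oldC{C:0}\lVert g\rVert$ from Proposition~\ref{cor:SSCM2.2.14}. Your remark about the $\epsilon_i$ needing to be chosen so that $\lVert\cdot\rVert$ is genuinely subadditive is a fair caveat, and the paper simply takes this for granted.
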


\begin{proof}
The right hand side of \eqref{eq:n1} follows directly from the triangular inequality. Furthermore,  thanks to Proposition \ref{cor:SSCM2.2.14} we deduce that $\lVert P_{\mathfrak{N}(V)}(g)\rVert=\lvert \pi_{\mathfrak{n}(V)}(g_1)\rvert\leq \lVert g\rVert$ and $\lVert P_{\mathfrak{N}(V)}g\rVert+\lVert P_{V}g\rVert\leq(\oldC{C:0}+1)\lVert g\rVert$.
\end{proof}

The following proposition allows us to estimate the distance of parallel $1$-codimensional  planes.

\begin{proposizione}\label{prop:dist-piani}
Let $x,y\in\mathbb{G}$ and $V\in\G(\mathcal{Q}-1)$. Defined:
\begin{equation}
    \dist(xV,yV):=\max\bigg\{\sup_{v\in V}\dist(xv,yV),\sup_{v\in V}\dist(yv,xV)\bigg\},
    \nonumber
\end{equation}
we have:
\begin{itemize}
    \item[(i)]$\dist(xV,yV)=\dist(x,yV)=\dist(y,xV)=\lvert \pi_{\mathfrak{n}(V)}(\pi_1(x^{-1}y))\rvert$,
    \item[(ii)]$    \dist(u,xV)\leq \dist(u,yV)+\dist(xV,yV), \text{ for any }u\in\mathbb{G}$.
\end{itemize}
\end{proposizione}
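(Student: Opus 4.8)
The plan is to reduce both statements to Proposition \ref{cor:SSCM2.2.14}, which gives $\dist(g,V)=\lvert\pi_{\mathfrak{n}(V)}(g_1)\rvert$, combined with the left invariance of $d$ and the elementary facts, read off from \eqref{opgr}, that $\pi_1$ is additive on products, that $\pi_1(g^{-1})=-\pi_1(g)$, and that $\pi_{\mathfrak{n}(V)}\circ\pi_1$ is linear and annihilates $\mathscr{V}=V\cap V_1$.

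For (i) I would fix $v\in V$ and use left invariance to write $\dist(xv,yV)=\inf_{w\in V}d(y^{-1}xv,w)=\dist(y^{-1}xv,V)$, which by Proposition \ref{cor:SSCM2.2.14} equals $\lvert\pi_{\mathfrak{n}(V)}(\pi_1(y^{-1}xv))\rvert$. Since $\pi_1(y^{-1}xv)=-\pi_1(y)+\pi_1(x)+\pi_1(v)=-\pi_1(x^{-1}y)+\pi_1(v)$ and $\pi_1(v)\in\mathscr{V}$ is killed by $\pi_{\mathfrak{n}(V)}$, this reduces to $\lvert\pi_{\mathfrak{n}(V)}(\pi_1(x^{-1}y))\rvert$, a quantity independent of $v$. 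Hence $\sup_{v\in V}\dist(xv,yV)=\lvert\pi_{\mathfrak{n}(V)}(\pi_1(x^{-1}y))\rvert$ and, taking $v$ equal to the identity, also $\dist(x,yV)=\lvert\pi_{\mathfrak{n}(V)}(\pi_1(x^{-1}y))\rvert$. Exchanging the roles of $x$ and $y$ and using $\pi_1(y^{-1}x)=-\pi_1(x^{-1}y)$ gives the same value for $\sup_{v\in V}\dist(yv,xV)$ and for $\dist(y,xV)$, so all four quantities in (i) coincide.

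For (ii) I would again use left invariance and Proposition \ref{cor:SSCM2.2.14} to write $\dist(u,xV)=\lvert\pi_{\mathfrak{n}(V)}(\pi_1(x^{-1}u))\rvert$ and $\dist(u,yV)=\lvert\pi_{\mathfrak{n}(V)}(\pi_1(y^{-1}u))\rvert$. Writing $x^{-1}u=(x^{-1}y)*(y^{-1}u)$ and using additivity of $\pi_1$ gives $\pi_{\mathfrak{n}(V)}(\pi_1(x^{-1}u))=\pi_{\mathfrak{n}(V)}(\pi_1(x^{-1}y))+\pi_{\mathfrak{n}(V)}(\pi_1(y^{-1}u))$; the triangle inequality for the Euclidean norm on $V_1$, together with the identities of part (i), then yields $\dist(u,xV)\le\dist(xV,yV)+\dist(u,yV)$.

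The argument is entirely routine: there is no real obstacle. The only step that deserves a moment's care is the observation in (i) that $\dist(xv,yV)$ does not depend on $v$, so that the supremum over $V$ collapses — this is exactly what forces the three distances in (i) to agree — together with bookkeeping of signs and of the vanishing of $\pi_{\mathfrak{n}(V)}$ on $\mathscr{V}=V\cap V_1$.
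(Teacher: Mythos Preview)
Your proof is correct and rests on the same core ingredient as the paper's, namely Proposition~\ref{cor:SSCM2.2.14}. The execution differs slightly in both parts. For (i), the paper argues that $\dist(xv,yV)=\dist(x,yV)$ by using that $V$ is a \emph{normal} subgroup: one writes $\inf_{w\in V}d(xv,yw)=\inf_{w\in V}d(x,y(y^{-1}xv^{-1}x^{-1}y)w)$ and absorbs the conjugate $y^{-1}xv^{-1}x^{-1}y\in V$ into the infimum. You instead reach the same $v$-independence by computing $\pi_1$ in coordinates and noting that $\pi_{\mathfrak{n}(V)}$ kills $\pi_1(v)\in\mathscr{V}$; this is the same fact seen one layer lower. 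For (ii), the paper chooses a minimizer $w^*\in V$ realizing $\dist(u,yV)$ and applies the triangle inequality for $d$, while you reduce both distances to the explicit formula $|\pi_{\mathfrak{n}(V)}(\pi_1(\cdot))|$ and use the Euclidean triangle inequality in $V_1$. Your route is a bit more direct and sidesteps the (harmless) question of whether the infimum is attained; the paper's route is metric and would survive in settings where no such coordinate formula is available.
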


\begin{proof}
For any $v\in V$ we have:
\begin{equation}
    \dist(xv,yV)=\inf_{w\in V}\dist(xv,yw)=\inf_{w\in V} d(x,y (y^{-1}xv^{-1}x^{-1}y)w)=\inf_{w\in V} d(x,yw)=\dist(x,yV),
    \nonumber
\end{equation}
where the second last identity comes from the fact that $v^*:=y^{-1}xv^{-1}x^{-1}y\in V$ and the transitivity of the translation by $v^*$ on $V$. Therefore, we have
$\sup_{v\in V}\dist(xv,yV)=d(x,yV)$ and thus by Proposition \ref{cor:SSCM2.2.14} we infer:
\begin{equation}
    \begin{split}
        \dist(xV,yV)=&\max\big\{\dist(x,yV),\dist(y,xV)\big\}
        =\max\{\lvert\pi_{\mathfrak{n}(V)}(\pi_1(y^{-1}x))\rvert,\lvert \pi_{\mathfrak{n}(V)}(\pi_1(x^{-1}y))\rvert\}\\=&\lvert \pi_{\mathfrak{n}(V)}(\pi_1(x^{-1}y))\rvert=\dist(x,yV)=\dist(y,xV),
        \nonumber
    \end{split}
\end{equation}
where the last identity comes from the arbitrariness of $x$ and $y$.
In order to prove (ii), let $w^*$ be the element of $V$ for which $\dist(u,yV)=d(u,yw^*)$ and note that:
\begin{equation}
\begin{split}
    \dist(u,xV)=&\inf_{v\in V}d(u,xv)\leq d(u,yw^*)+\inf_{v\in V}d(yw^*,xv)=\dist(u,yV)+\inf_{v\in V}d(y w^*,xv)\\
    =&\dist(u,yV)+d(xw^*,yV)\leq \dist(u,yV)+\dist(xV,yV).
    \nonumber
\end{split}
\end{equation}
This concludes the proof of the proposition.
\end{proof}

The following result is a direct consequence of Proposition 2.2.19 of \cite{MFSSC}. The bound \eqref{eq:n520} is obtained with the same argument used by V. Chousionis, K. F\"assler and T. Orponen to prove Lemma 3.6 of \cite{CFO}.

\begin{proposizione}\label{cor:2.2.19}
For any $V\in \G(\mathcal{Q}-1)$ there is a constant $1\leq c(V)\leq\mathcal{S}^{\mathcal{Q}-1}(B(0,\oldC{C:0})\cap V)=:\newC\label{C:up}$ such that for any $p\in\mathbb{G}$ and any $r>0$ we have:
$$\mathcal{S}^{\mathcal{Q}-1}\llcorner V\big(P_V(B(p,r))\big)=c(V)r^{\mathcal{Q}-1}.$$
Furthermore, for any Borel set $A\subseteq \mathbb{G}$ for which $\mathcal{S}^{\mathcal{Q}-1}(A)<\infty$, we have:
\begin{equation}
    \mathcal{S}^{\mathcal{Q}-1}\llcorner V(P_V(A))\leq 2c(V)\mathcal{S}^{\mathcal{Q}-1}(A).
    \label{eq:n520}
\end{equation}
\end{proposizione}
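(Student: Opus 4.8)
The plan is to prove the three assertions in turn: the exact value $\mathcal{S}^{\mathcal{Q}-1}\llcorner V\big(P_V(B(p,r))\big)=c(V)r^{\mathcal{Q}-1}$ (which also \emph{defines} $c(V)$, via $p=0$, $r=1$), then the two bounds $1\le c(V)\le\oldC{C:up}$, and finally the covering inequality \eqref{eq:n520}. For the first I would simply invoke Proposition 2.2.19 of \cite{MFSSC} applied to the splitting $\mathbb{G}=V\rtimes\mathfrak{N}(V)$; it is worth recalling why this is ``direct''. By the coordinate expressions of Proposition \ref{prop:proiezioni}, $P_V$ is a polynomial map satisfying $P_V(\delta_r g)=\delta_r P_V(g)$ and, $V$ being a normal subgroup, $P_V(p*g)=p*P_V(g)*P_{\mathfrak{N}(V)}(p)^{-1}$; hence $P_V(B(p,r))=p*\delta_r\big(P_V(B(0,1))\big)*P_{\mathfrak{N}(V)}(p)^{-1}$. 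Now $\mathcal{S}^{\mathcal{Q}-1}$ is left invariant (because $d$ is) and $(\mathcal{Q}-1)$-homogeneous under dilations, while — by Proposition \ref{prop:rapp} — $\mathcal{S}^{\mathcal{Q}-1}\llcorner V$ is a fixed multiple of $(n-1)$-dimensional Lebesgue measure on the hyperplane $V$, which is invariant under the conjugation $w\mapsto P_{\mathfrak{N}(V)}(p)*w*P_{\mathfrak{N}(V)}(p)^{-1}$ of $V$ — a map that is the restriction to $V$ of a unipotent linear automorphism of $\R^n$ fixing $\mathfrak{N}(V)$ pointwise, hence of determinant $1$ on $V$. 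Combining these three invariances gives the identity, with $c(V):=\mathcal{S}^{\mathcal{Q}-1}\llcorner V\big(P_V(B(0,1))\big)$; no measurability difficulty arises since $P_V(B(p,r))$ is $\sigma$-compact, being the continuous image of a $\sigma$-compact set.

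For the bounds on $c(V)$ I would use that $P_V$ restricts to the identity on $V$: if $w\in V$ then $w=w*0$ with $0\in\mathfrak{N}(V)$, so uniqueness of the splitting forces $P_Vw=w$ (equivalently, in Proposition \ref{prop:proiezioni} one has $\pi_{\mathscr{V}}w_1=w_1$, $\pi_{\mathfrak{n}(V)}w_1=0$, and then $A_iw_i=w_i$ by induction). Consequently $B(0,1)\cap V\subseteq P_V(B(0,1))$, whence $c(V)\ge\mathcal{S}^{\mathcal{Q}-1}(B(0,1)\cap V)$, and the latter equals $1$ by Proposition \ref{prop:rapp} (there $\mathcal{S}^{\mathcal{Q}-1}\llcorner V=\beta^{-1}\mathcal{H}^{n-1}_{eu}\llcorner V$ with $\beta=\mathcal{H}^{n-1}_{eu}(B(0,1)\cap V)$). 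For the upper bound, Proposition \ref{cor:SSCM2.2.14} gives $\lVert P_V g\rVert\le\oldC{C:0}\lVert g\rVert$, so $P_V(B(0,1))\subseteq B(0,\oldC{C:0})\cap V$ and therefore $c(V)\le\mathcal{S}^{\mathcal{Q}-1}\big(B(0,\oldC{C:0})\cap V\big)=\oldC{C:up}$; this quantity is finite and independent of $V$ by the rotational symmetry of $B(0,\oldC{C:0})$ in the first layer $V_1$, exactly as in the proof of Proposition \ref{prop:rapp}.

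The inequality \eqref{eq:n520} is the only part requiring genuine work, and I would follow the covering argument in the proof of Lemma 3.6 of \cite{CFO}. The point is that $P_V$ is \emph{not} a Lipschitz map (as emphasised in the remark following Proposition \ref{prop:proiezioni}), so $\mathcal{S}^{\mathcal{Q}-1}(P_V(A))$ cannot be estimated through a Lipschitz-image bound; the identity just proved is the correct substitute — it says that $P_V$ maps balls to sets of exactly controlled $\mathcal{S}^{\mathcal{Q}-1}\llcorner V$-measure — and it is fed into a Hausdorff-type covering. Concretely, fix $\delta>0$; since $\mathscr{S}^{\mathcal{Q}-1}_\delta(A)\le\mathcal{S}^{\mathcal{Q}-1}(A)<\infty$, one may choose closed balls $\overline{B(x_j,r_j)}$ with $r_j<\delta$, $A\subseteq\bigcup_j\overline{B(x_j,r_j)}$ and $\sum_j r_j^{\mathcal{Q}-1}\le\mathcal{S}^{\mathcal{Q}-1}(A)+\delta$. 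Using the inclusion $\overline{B(x_j,r_j)}\subseteq B(x_j,2r_j)$, the countable subadditivity of the outer measure $\mathcal{S}^{\mathcal{Q}-1}$, and the first part of the proposition,
\[
\mathcal{S}^{\mathcal{Q}-1}\llcorner V\big(P_V(A)\big)\;\le\;\sum_j\mathcal{S}^{\mathcal{Q}-1}\llcorner V\big(P_V(B(x_j,2r_j))\big)\;=\;2^{\mathcal{Q}-1}c(V)\sum_j r_j^{\mathcal{Q}-1}\;\le\;2^{\mathcal{Q}-1}c(V)\big(\mathcal{S}^{\mathcal{Q}-1}(A)+\delta\big),
\]
and letting $\delta\to0$ yields $\mathcal{S}^{\mathcal{Q}-1}\llcorner V(P_V(A))\le 2^{\mathcal{Q}-1}c(V)\,\mathcal{S}^{\mathcal{Q}-1}(A)$; replacing $2r_j$ by $(1+\varepsilon)r_j$ and letting $\varepsilon\to0$ as well improves the constant, and in particular gives \eqref{eq:n520}.

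The main obstacle is conceptual rather than computational: it is the non-Lipschitz nature of $P_V$, which rules out the usual Hausdorff-measure image bound and forces one to route everything through the ball-projection identity. Once that identity is in hand — for which the shortest route is to cite Proposition 2.2.19 of \cite{MFSSC} — the sandwiching that gives $1\le c(V)\le\oldC{C:up}$ and the covering argument that gives \eqref{eq:n520} are both routine.
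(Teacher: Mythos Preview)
Your proposal is correct and follows essentially the same route as the paper: cite Proposition 2.2.19 of \cite{MFSSC} for the existence of $c(V)$, use $B(0,1)\cap V\subseteq P_V(B(0,1))\subseteq B(0,\oldC{C:0})\cap V$ together with Propositions \ref{prop:rapp} and \ref{cor:SSCM2.2.14} for the bounds $1\le c(V)\le\oldC{C:up}$, and run a covering argument for \eqref{eq:n520}. Your treatment is in fact slightly more careful than the paper's (you justify the invariances behind the cited result and handle the closed--versus--open ball issue explicitly, sharpening the constant), but the approach is the same.
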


\begin{proof}
The existence of the constant $c(V)$ is yielded by Proposition 2.2.19 of \cite{MFSSC}. Furthermore  Propositions \ref{prop:rapp},  \ref{cor:SSCM2.2.14} and the fact that $B(0,1)\cap V\subseteq P_V(B(0,1))$ for any $V\in\G(\mathcal{Q}-1)$, imply that:
\begin{equation}
    1=\beta^{-1}\mathcal{H}^{n-1}_{eu}(B(0,1)\cap V)=\mathcal{S}^{\mathcal{Q}-1}\llcorner V(B(0,1)\cap V)\leq \mathcal{S}^{\mathcal{Q}-1}\llcorner V\big(P_V(B(0,1))\big)=c(V),
    \nonumber
\end{equation}
and:
\begin{equation}
    \mathcal{S}^{\mathcal{Q}-1}\llcorner V(P_V(B(0,1)))\leq \mathcal{S}^{\mathcal{Q}-1}(B(0,\oldC{C:0})\cap V)=\oldC{C:up},
    \nonumber
\end{equation}
where the constant $\oldC{C:up}$ does not depend on $V$ thanks to Proposition \ref{prop:rapp} and since the ball $B(0,\oldC{C:0})$ is invariant under rotations of $V_1$.

Suppose now $A$ is a Borel set with finite $\mathcal{S}^h$-measure and let
$\{B(x_i,r_i)\}_{i\in\N}$ be a sequence of balls covering $A$ such that $\sum_{i\in\N} r_i^{\mathcal{Q}-1}\leq 2\mathcal{S}^{\mathcal{Q}-1}(A)$. Then:
\begin{equation}
    \mathcal{S}^{\mathcal{Q}-1}(P_V(A))\leq \mathcal{S}^{\mathcal{Q}-1}\Big(P_V\Big(\bigcup_{i\in\N}B(x_i,r_i)\Big)\Big)\leq c(V)\sum_{i\in\N} r_i^{\mathcal{Q}-1}\leq 2 c(V)\mathcal{S}^{\mathcal{Q}-1}(A).
    \nonumber
\end{equation}
This concludes the proof.
\end{proof}

\subsection{Densities and tangents of Radon measures}
\label{densityandtangents}
In this subsection we briefly recall some facts and notations about Radon measures on Carnot groups and their blowups.

\begin{definizione}
If $\phi$ is a Radon measure on $\mathbb{G}$, we define:
$$\Theta_*^{m}(\phi,x):=\liminf_{r\to 0} \frac{\phi(B(x,r))}{r^{m}}\qquad \text{and}\qquad \Theta^{m,*}(\phi,x):=\limsup_{r\to 0} \frac{\phi(B(x,r))}{r^{m}},$$
and say that $\Theta_*^{m}(\phi,x)$ and $\Theta^{m,*}(\phi,x)$ are respectively the lower and upper $m$-density of $\phi$ at the point $x\in\mathbb{G}$.
\end{definizione}

\begin{definizione}[weak convergence of measures]\label{lippi}
A sequence of Radon measures $\{\mu_i\}_{i\in\N}$ is said to be weakly converging in the sense of measures to some Radon measure $\nu$, if for any continuous functions with compact support $f\in \mathcal{C}_c$, we have:
$$\int fd\mu_i\to\int fd\nu.$$\label{compi}
Throughout the paper, we denote such convergence with $\mu_i\rightharpoonup \nu$.
\end{definizione}

\begin{definizione}
For any couple of Radon measures $\phi$ and $\psi$ and any compact set $K\subseteq \mathbb{G}$ we let:
\begin{equation}
    F_{K}(\phi,\psi):=\sup\Big\{\Big\lvert \int f d\phi-\int fd\psi\Big\rvert: f\in\lip(K)\Big\},
\label{eq:F}
\end{equation}
where $\lip(K)$ is the set of non-negative $1$-Lipschitz functions whose support is contained in $K$.
Furthermore, if $K=\overline{B(x,r)}$ we shorten the notation to $F_{x,r}(\phi,\psi):=F_{\overline{B(x,r)}}(\phi,\psi)$.
\end{definizione}

The next lemma is an elementary fact on Radon measures. We omit its proof.

\begin{lemma}\label{radii}
If $\phi$ is a Radon measure, for any $x\in \mathbb{G}$ there are at most countably many radii $R>0$ for which: $$\phi(\partial B(x,R))>0.$$
\end{lemma}

The following proposition allows us to characterise the weak convergence of measures by means of the convergence to $0$ of the functional $F_K$. 

\begin{proposizione}\label{prop:convF}
Assume that $\{\mu_i\}_{i\in\N}$ is a sequece of Radon measures such that $\limsup_{i\to \infty} \mu_i(\overline{B(0,R)})<\infty$ for every $R>0$ and let $\mu$ be a Radon measure on $\mathbb{G}$. The following are equivalent:
\begin{itemize}
    \item[(i)] $\mu_i\rightharpoonup \mu$,
    \item[(ii)] $\lim_{i\to\infty}F_K(\mu_i,\mu)=0$ for any compact set $K\subseteq \mathbb{G}$.
\end{itemize}
\end{proposizione}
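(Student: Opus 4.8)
The plan is to prove the two implications separately, each via a standard density/approximation argument, exploiting throughout the hypothesis that $\mu_i(\overline{B(0,R)})$ stays bounded in $i$; this is exactly what tames the errors coming from $L^\infty$-approximations.

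For (ii) $\Rightarrow$ (i) I would first reduce, by splitting $f=f^+-f^-$, to showing $\int f\,d\mu_i\to\int f\,d\mu$ for $f\in\mathcal{C}_c(\mathbb{G})$ with $f\ge 0$; fix such an $f$ with $\supp f\subseteq K_0\subseteq\overline{B(0,R)}$. The key device is the inf-convolution $g_L(x):=\inf_{y\in\mathbb{G}}\big(f(y)+L\,d(x,y)\big)$. One checks the elementary facts that $g_L$ is $L$-Lipschitz (an infimum of $L$-Lipschitz functions, since $d$ is a metric), that $0\le g_L\le f$ — hence $\supp g_L\subseteq\supp f\subseteq K_0$ and $L^{-1}g_L\in\lip(K_0)$ — and that $g_L\uparrow f$ pointwise, which by Dini's theorem (as $f$ is continuous with compact support) upgrades to $\lVert f-g_L\rVert_\infty\to 0$ as $L\to\infty$. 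Then for each fixed $L$ one has $\big|\int g_L\,d\mu_i-\int g_L\,d\mu\big|\le L\,F_{K_0}(\mu_i,\mu)\to 0$, while $\big|\int f\,d\mu_i-\int g_L\,d\mu_i\big|\le\lVert f-g_L\rVert_\infty\,\mu_i(K_0)$ and likewise for $\mu$; taking $\limsup_i$ (using $M:=\limsup_i\mu_i(\overline{B(0,R)})<\infty$ and $\mu(K_0)<\infty$) and then $L\to\infty$ gives the claim.

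For (i) $\Rightarrow$ (ii) I would fix a compact $K$ and an $R$ with $K\subseteq\overline{B(0,R)}$ (note $\overline{B(0,R)}$ is compact, being contained in a Euclidean box by Remark \ref{box}), and observe that $\lip(K)$, restricted to $\overline{B(0,R)}$, is uniformly bounded and equi-$1$-Lipschitz, hence totally bounded in $\mathcal{C}(\overline{B(0,R)})$ by the Arzel\`a--Ascoli theorem. Given $\varepsilon>0$, take a finite $\varepsilon$-net $f_1,\dots,f_N\in\lip(K)$; for any $f\in\lip(K)$ pick $j$ with $\lVert f-f_j\rVert_\infty<\varepsilon$, so that
\[
\Big|\int f\,d\mu_i-\int f\,d\mu\Big|\le\max_{1\le\ell\le N}\Big|\int f_\ell\,d\mu_i-\int f_\ell\,d\mu\Big|+\varepsilon\big(\mu_i(\overline{B(0,R)})+\mu(\overline{B(0,R)})\big).
\]
Passing to the supremum over $f\in\lip(K)$ and then to $\limsup_i$: the finite maximum vanishes because each $f_\ell\in\mathcal{C}_c$ and $\mu_i\rightharpoonup\mu$, and what remains is $\le\varepsilon(M+\mu(\overline{B(0,R)}))$ with $M:=\limsup_i\mu_i(\overline{B(0,R)})<\infty$; since $\varepsilon$ is arbitrary, $F_K(\mu_i,\mu)\to 0$.

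The only genuine difficulty I anticipate is the upgrade from pointwise (in the test function) convergence to convergence uniform over the infinite family $\lip(K)$ in the direction (i) $\Rightarrow$ (ii) — resolved precisely by the equicontinuity of $1$-Lipschitz functions together with Arzel\`a--Ascoli — and, dually, the point in (ii) $\Rightarrow$ (i) that the inf-convolutions retain their support inside $\supp f$, so that convergence of $F_{K_0}$ alone already carries the needed information; everything else is bookkeeping rendered harmless by the uniform mass bound.
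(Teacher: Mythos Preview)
Your proof is correct. The direction (i) $\Rightarrow$ (ii) is essentially the paper's argument verbatim: both of you use that $\lip(K)$ is totally bounded in the sup norm (the paper simply asserts compactness, you invoke Arzel\`a--Ascoli), take a finite $\varepsilon$-net, and control the error via the uniform mass bound.

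For (ii) $\Rightarrow$ (i) you take a genuinely different and shorter route. The paper first observes that convergence against nonnegative Lipschitz compactly supported test functions is immediate from (ii), then uses this to establish the Portmanteau-type inequality $\limsup_i\mu_i(K)\le\mu(K)$ for every compact $K$, and finally upgrades to general $g\in\mathcal{C}_c$ via a level-set/simple-function approximation that carefully avoids atoms of $g_\#\mu$. Your inf-convolution $g_L$ bypasses all of this: because $0\le g_L\le f$ forces $\supp g_L\subseteq\supp f=K_0$, the single quantity $F_{K_0}$ already controls $\int g_L\,d\mu_i-\int g_L\,d\mu$, and Dini's theorem turns the pointwise monotone convergence $g_L\uparrow f$ into the uniform estimate needed to close the argument. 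What you gain is brevity and the avoidance of any measure-theoretic subtleties about null boundaries; what the paper's route buys is the explicit intermediate inequality $\limsup_i\mu_i(K)\le\mu(K)$, which is of independent interest but not needed here.
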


\begin{proof}
As a first step, we prove that (i) implies (ii). If (i) holds, for any non-negative Lipschitz function $f$ with compact support we have:
$$\lim_{i\to\infty} \int f d\mu_i=\int f d\mu.$$
Fix a compact subset $K$ of $\mathbb{G}$ contained in $\overline{B(0,R)}$ for some $R>0$.
It is immediate to see that $\lip(K)$ is a compact metric space when endowed with the supremum distance $\lVert\cdot\rVert_\infty$ and thus it is totally bounded.
Therefore, for any $\epsilon>0$, there is a finite set $S\subseteq \lip(K)$ such that whenever $f\in \lip(K)$, we can find a $g\in S$ such that $\lVert f-g\rVert_\infty<\epsilon$. This implies that:
\begin{equation}
\begin{split}
    \Big\lvert \int f d\mu_i-\int fd\mu\Big\rvert\leq&\Big\lvert \int f d\mu_i-\int gd\mu_i\Big\rvert+\Big\lvert \int g d\mu_i-\int gd\mu\Big\rvert+\Big\lvert \int g d\mu-\int fd\mu\Big\rvert\\
\leq& \epsilon(\mu_i(K)+\mu(K))+\Big\lvert \int g d\mu_i-\int gd\mu\Big\rvert.
\end{split}
\nonumber
\end{equation}
Thanks to the arbitrariness of $f$, we infer that:
\begin{equation}
    F_K(\mu_i,\mu)\leq \epsilon(\mu_i(K)+\mu(K))+\sup_{g\in S}\Big\lvert \int g d\mu_i-\int gd\mu\Big\rvert.
    \label{eq:n6000}
\end{equation}
Taking the limit as $i$ goes at infinity on both sides of \eqref{eq:n6000}, thanks to (i) we have:
$$\limsup_{i\to\infty} F_K\big(\mu_i,\mu)\leq \epsilon (\mu(K)+\limsup_{i\to \infty}\mu_i(K))\leq  \epsilon (\mu(K)+\limsup_{i\to \infty}\mu_i(\overline{B(0,R)})\big).$$
The arbitrariness of $\epsilon$ eventually concludes the proof.

We are left to prove that (ii) implies (i). Let $g$ be a non-negative Lipschitz function with compact support. Since we assumed that (ii) holds, we infer that:
$$\lim_{i\to\infty}\Big\lvert \int g d\mu_i-\int g d\mu\Big\rvert=\text{Lip}(g)\lim_{i\to\infty}\Big\lvert \int \frac{g}{\text{Lip}(g)} d\mu_i-\int \frac{g}{\text{Lip}(g)} d\mu\Big\rvert\leq \text{Lip}(g)\lim_{i\to\infty}F_{\supp(g)}(\mu_i,\mu)=0.$$
Let $K$ be a compact set in $\mathbb{G}$. By the continuity of the measure from above, for any $\epsilon>0$ there exists an $s>0$ such that $\mu(B(K,s))\leq \mu(K)+\epsilon$, where $B(K,s):=\{x\in \mathbb{G}:\dist(x,K)\leq s\}$. Defined $f(z):=\min\{1,\dist(z,B(K,s)^c)/s\}$, we conclude that:
$$\mu(K)+\epsilon\geq \int f d\mu=\lim_{i\to\infty} \int fd\mu_i\geq \limsup_{i\to \infty}\mu_i(K).$$
The arbitrariness of $\epsilon$ finally implies that
$\limsup_{i\to\infty}\mu_i(K)\leq \mu(K)$ for any compact set $K\subseteq \mathbb{G}$.
Assume now $g$ is a continuous function whose support is contained in a ball $B(0,R)$ with $\mu$-null boundary. This choice can be done without loss of generality thanks to Lemma \ref{radii}. Since $g$ is continuous, it can be easily proved that for any $D\subseteq \R$ we have:
\begin{equation}
    \partial g^{-1}(D)\subseteq g^{-1}(\partial D).
    \label{eq:n10E6}
\end{equation}
Let $\epsilon>0$ and note that $A:=\{t\in\R:\mu(g^{-1}(\{t\}))>0\}$ is the set of atoms of the measure push forward $g_\#(\mu)$. Since $g$ is bounded, there is an $N\in\N$ and a finite sequence $\{t_i\}_{i=1,\ldots,N}$ such that:
\begin{itemize}
    \item[(a)]$t_1\leq -\lVert g\rVert_\infty<t_2<\ldots<t_{N-1}<\lVert g\rVert_\infty\leq t_N$,
    \item[(b)]$t_n\in \R\setminus A$ and $\lvert t_{n+1}-t_n\rvert<\epsilon$ for any $n=1,\ldots,N-1$.
\end{itemize}
Let $E_n:=B(0,R)\cap g^{-1}([t_n,t_{n+1}])$ for any $n=1,\ldots,N-1$ and note by \eqref{eq:n10E6} we have:
\begin{equation}
\mu(\partial E_n)\leq\mu( \partial B(0,R)\cup g^{-1}(\{t_n\}\cup\{t_{n+1}\}))\leq\mu(\partial B(0,R))+\mu(g^{-1}(\{t_n\}))+\mu(g^{-1}(\{t_{n+1}\}))=0.
\label{eq:e1}
\end{equation}
Thanks to the definition of the $E_i$s it is possible to show that $E_i\cap E_j\subseteq \partial E_i\cup \partial E_j$ and thus by \eqref{eq:e1} we infer that $\sum_{n=1}^{N-1}\mu(E_n)\leq \mu(B(0,R))$. This implies that:
\begin{equation}
\begin{split}
     \limsup_{i\to\infty}\int gd\mu\leq& \limsup_{i\to\infty}\sum_{n=1}^{N-1} t_{n+1} \mu_i(E_n)\leq \sum_{n=1}^{N-1}t_{n+1}\mu(E_n)\leq\sum_{n=1}^{N-1}(t_{n+1}-t_{n})\mu(E_n)+ \sum_{n=1}^{N-1}t_{n}\mu(E_n)\\
     \leq &\epsilon\sum_{n}^{N-1}\mu(E_n)+\int g d\mu\leq \epsilon\mu(B(0,R))+\int gd\mu.
\end{split}
    \nonumber
\end{equation}
The arbitrariness of $\epsilon$ implies that $\limsup_{i\to\infty}\int g d\mu_i\leq \int g d \mu$. Repeating the argument for $-g$ we deduce that $\limsup_{i\to \infty} \int -g d\mu_i\leq \int -g d\mu$, concluding the proof of the proposition.
\end{proof}

\begin{definizione}[Tangent measures]\label{tangentsdef}
Let $\phi$ be a Radon measure on $\mathbb{G}$. For any $x\in \mathbb{G}$ and any $r>0$, we define $T_{x,r}\phi$ to be the Radon measure for which:\label{tang}
$$T_{x,r}\phi(B)=\phi(x\delta_r(B))\text{ for any Borel set }B\subseteq \mathbb{G}.$$
We define $\Tan_{m}(\phi,x)$, the set of the $m$-dimensional tangent measures to $\phi$ at $x$, as the collection of Radon measures $\nu$ for which there is an infinitesimal sequence $\{r_i\}_{i\in\N}$ such that:
$$r_i^{-m}T_{x,r}\phi\rightharpoonup \nu.$$
\end{definizione}

\begin{proposizione}\label{propspt1}
Let $\phi$ be a Radon measure and $\nu\in\Tan_m(\phi,x)$, i.e., $r_i^{-m} T_{x,r_i}\phi\rightharpoonup \nu$ for some $r_i\to 0$. If $y\in\supp(\nu)$, there exists a sequence $\{z_i\}_{i\in\N}\subseteq\supp(\phi)$ such that $\delta_{1/r_i}(x^{-1}z_i)\to y$.
\end{proposizione}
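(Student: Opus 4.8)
The plan is to combine the lower semicontinuity of weak convergence on open sets with the fact that the dilations $\delta_r$ and the left translations are isometric-type maps for the homogeneous left invariant metric $d$. Since $y\in\supp(\nu)$, for every $\rho>0$ the open ball $B(y,\rho)$ is a neighbourhood of $y$ and hence $\nu(B(y,\rho))>0$. Approximating $\chi_{B(y,\rho)}$ from below by functions in $\mathcal{C}_c$ and using $r_i^{-m}T_{x,r_i}\phi\rightharpoonup\nu$, one gets $\liminf_{i\to\infty} r_i^{-m}T_{x,r_i}\phi(B(y,\rho))\ge\nu(B(y,\rho))>0$; in particular there is an index $i(\rho)$ with $T_{x,r_i}\phi(B(y,\rho))>0$ for all $i\ge i(\rho)$.

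Next I would rewrite this as a statement about $\supp(\phi)$. By Definition \ref{tangentsdef}, $T_{x,r_i}\phi(B(y,\rho))=\phi\big(x\delta_{r_i}(B(y,\rho))\big)$, and the homogeneity and left invariance of $d$ give the elementary identities $\delta_{r_i}(B(y,\rho))=B(\delta_{r_i}y,r_i\rho)$ and $x\cdot B(p,s)=B(x\ast p,s)$, so that $x\delta_{r_i}(B(y,\rho))=B(x\delta_{r_i}y,r_i\rho)$. Hence $\phi\big(B(x\delta_{r_i}y,r_i\rho)\big)>0$ for $i\ge i(\rho)$, which forces $\supp(\phi)\cap B(x\delta_{r_i}y,r_i\rho)\neq\emptyset$.

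Finally I would run a diagonal argument over the scales $\rho=1/k$, $k\in\N$. After replacing $\{i(1/k)\}_k$ by a non‑decreasing sequence, for each $i$ let $k(i)$ be the largest $k$ with $i\ge i(1/k)$ (so $k(i)\to\infty$), and choose $z_i\in\supp(\phi)\cap B\big(x\delta_{r_i}y,r_i/k(i)\big)$; for the finitely many initial indices $i<i(1)$ pick $z_i\in\supp(\phi)$ arbitrarily, which is possible because $y\in\supp(\nu)$ forces $\nu\neq 0$ and hence $\phi\neq 0$. Then, again by left invariance and homogeneity of $d$, $d\big(\delta_{1/r_i}(x^{-1}z_i),y\big)=r_i^{-1}d\big(z_i,x\delta_{r_i}y\big)<1/k(i)\to 0$, so $\delta_{1/r_i}(x^{-1}z_i)\to y$, as required. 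The argument is essentially routine; the only mildly delicate point is the bookkeeping in the diagonalization, together with the verification of the metric identities $\delta_r(B(y,\rho))=B(\delta_r y,r\rho)$ and $x\cdot B(p,s)=B(x\ast p,s)$, both of which are immediate from the two defining properties of a homogeneous left invariant metric.
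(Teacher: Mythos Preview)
Your argument is correct and is essentially the same as the standard proof the paper defers to (Proposition~3.4 in \cite{DeLellis2008RectifiableMeasures}): both rest on lower semicontinuity of weak convergence on open balls together with the metric identities $\delta_r(B(y,\rho))=B(\delta_r y,r\rho)$ and $xB(p,s)=B(xp,s)$. The only cosmetic difference is that the reference phrases it by contradiction (if no such sequence exists, some ball $B(y,\rho)$ would have $T_{x,r_i}\phi$-measure zero along a subsequence, contradicting $\nu(B(y,\rho))>0$), whereas you unwind this into a direct diagonal construction.
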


\begin{proof}
A simple argument by contradiction yields the claim, the proof follows verbatim its Euclidean analogue, Proposition 3.4 in \cite{DeLellis2008RectifiableMeasures}.
\end{proof}

\begin{proposizione}\label{tg:nonempty}
Suppose $\phi$ is a Radon measure on $\mathbb{G}$ such that:
$$0<\Theta^{m}_*(\phi,x)\leq \Theta^{m,*}(\phi,x)<\infty,\qquad\text{for }\phi\text{-almost every }x\in\mathbb{G}.$$
Then $\Tan_m(\phi,x)\neq \emptyset$ for $\phi$-almost every $x\in\mathbb{G}$.
\end{proposizione}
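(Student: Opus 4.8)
The plan is the standard blow-up compactness argument. Fix a point $x\in\mathbb{G}$ belonging to the set of full $\phi$-measure on which $0<\Theta^m_*(\phi,x)\leq\Theta^{m,*}(\phi,x)<\infty$, and let $\{r_i\}_{i\in\N}$ be an arbitrary infinitesimal sequence. Set $\mu_i:=r_i^{-m}T_{x,r_i}\phi$. It suffices to show that $\{\mu_i\}$ has a subsequence converging weakly to some Radon measure $\nu$, because then $\nu\in\Tan_m(\phi,x)$ directly from Definition \ref{tangentsdef}, whence $\Tan_m(\phi,x)\neq\emptyset$.

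First I would establish a uniform bound for the mass of $\mu_i$ on compact sets. Using the homogeneity of $\lVert\cdot\rVert$ and the left invariance of $d$ one has $x\delta_{r_i}\big(\overline{B(0,R)}\big)=\overline{B(x,Rr_i)}$, and since $\overline{B(x,Rr_i)}\subseteq B(x,2Rr_i)$,
\[
\mu_i\big(\overline{B(0,R)}\big)=r_i^{-m}\phi\big(\overline{B(x,Rr_i)}\big)\leq (2R)^m\,\frac{\phi\big(B(x,2Rr_i)\big)}{(2Rr_i)^m}.
\]
Because $2Rr_i\to 0$, taking $\limsup_{i\to\infty}$ and using the definition of the upper density gives $\limsup_{i\to\infty}\mu_i(\overline{B(0,R)})\leq (2R)^m\,\Theta^{m,*}(\phi,x)<\infty$ for every $R>0$.

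Next I would invoke the weak-$*$ compactness of Radon measures with uniformly bounded mass on compact sets --- the same statement used in the Euclidean theory, whose proof transfers verbatim (cf. the argument behind Proposition 3.4 of \cite{DeLellis2008RectifiableMeasures}) --- to extract a subsequence with $\mu_{i_k}\rightharpoonup\nu$. This already proves the proposition. To see moreover that $\nu\neq 0$, which is what one normally wants of a tangent measure, I would use the standard fact that $\limsup_k\mu_{i_k}(K)\leq\nu(K)$ for every compact $K$, together with the lower density bound: since $B(x,r_{i_k})\subseteq\overline{B(x,2r_{i_k})}$,
\[
\nu\big(\overline{B(0,2)}\big)\geq\limsup_{k\to\infty}\mu_{i_k}\big(\overline{B(0,2)}\big)\geq\limsup_{k\to\infty}\frac{\phi\big(B(x,r_{i_k})\big)}{r_{i_k}^{m}}\geq\Theta^m_*(\phi,x)>0.
\]

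The argument is almost entirely routine; the one ingredient that is not elementary is the compactness theorem for Radon measures under a uniform local mass bound, and the only points requiring care are the passage between open and closed balls and the bookkeeping ensuring that the finiteness of the upper density is what feeds the compactness while the positivity of the lower density is what rules out the trivial limit.
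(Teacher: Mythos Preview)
Your proposal is correct and is precisely the standard blow-up compactness argument the paper invokes: the paper's own proof simply says the claim follows from the local uniform boundedness of the rescaled measures and refers to Lemma~3.4 of \cite{DeLellis2008RectifiableMeasures}, which is exactly what you have carried out in detail. Your additional verification that the limit is non-null (using the lower density and upper semicontinuity on compact sets) is a welcome addendum, since elsewhere the paper tacitly works with non-trivial tangent measures.
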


\begin{proof}
    This is an immediate consequence of the local uniform boundness of the rescaled measures $T_{x,r}\phi$. The proof follows verbatim the Euclidean one, see for instance Lemma 3.4 of \cite{DeLellis2008RectifiableMeasures}.
\end{proof}

The following result is the analogue of Proposition 3.12 of \cite{DeLellis2008RectifiableMeasures} which establishes the locality of tangents in the Euclidean space. We give here a detailed proof, since we need to avoid to use the Besicovitch covering theorem, which may not hold for the distance $d$. This proposition is of capital importance since it will ensure us that restricting and multiplying by a density a measure with flat tangents will yield a measure still having flat tangents.

\begin{proposizione}[Locality of the tangents]\label{prop:locality}
Let $\phi$ be a Radon measure such that:
\begin{equation}
    0<\Theta^{m}_*(\phi,x)\leq \Theta^{m,*}(\phi,x)<\infty,\qquad\text{for }\phi\text{-almost every }x\in\mathbb{G}.
    \label{numero1}
\end{equation}
Then, for any $\rho\in L^1(\phi)$ we have
$\Tan_{m}(\rho\phi,x)=\rho(x)\Tan_{m}(\phi,x)$
for $\phi$-almost every $x\in \mathbb{G}$.
\end{proposizione}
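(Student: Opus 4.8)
The plan is to reduce the statement to a Lebesgue-differentiation-type fact for the density $\rho$ with respect to $\phi$, following the scheme of Proposition 3.12 of \cite{DeLellis2008RectifiableMeasures}, but replacing every appeal to the Besicovitch covering theorem by a suitable Vitali-type covering argument valid for the homogeneous metric $d$. First I would fix a point $x$ at which several good properties hold simultaneously: \eqref{numero1} is satisfied, $x$ is a Lebesgue point of $\rho$ with respect to $\phi$, i.e.\ $\lim_{r\to 0}\phi(B(x,r))^{-1}\int_{B(x,r)}\lvert\rho(y)-\rho(x)\rvert\,d\phi(y)=0$, and moreover the ``thin annuli'' have negligible mass, meaning that for the relevant radii one controls $\phi$ of small neighbourhoods of spheres. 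The set of such $x$ has full $\phi$-measure: the density bounds come from the hypothesis, and the Lebesgue-point property holds $\phi$-a.e.\ by a covering argument. Since $\phi$ is a Radon measure on $\mathbb{G}$ which, thanks to \eqref{numero1}, is (locally) comparable to $\mathcal{S}^m$ and hence is asymptotically doubling along the relevant scales, one can run the standard maximal-function/Vitali argument to get the Lebesgue differentiation theorem without invoking Besicovitch; this is exactly the technical point the proposition's preamble is flagging.

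Next I would unfold the definitions. Fix $\nu\in\Tan_m(\phi,x)$ with $r_i^{-m}T_{x,r_i}\phi\rightharpoonup\nu$. The claim is that $r_i^{-m}T_{x,r_i}(\rho\phi)\rightharpoonup\rho(x)\nu$, and conversely that every tangent of $\rho\phi$ arises this way, which together give the set equality $\Tan_m(\rho\phi,x)=\rho(x)\Tan_m(\phi,x)$ (the case $\rho(x)=0$ being handled separately and easily, since then the rescalings of $\rho\phi$ converge to the zero measure). By Proposition \ref{prop:convF} it suffices to show $F_K\big(r_i^{-m}T_{x,r_i}(\rho\phi),\,\rho(x)\,r_i^{-m}T_{x,r_i}\phi\big)\to 0$ for every compact $K$, because then $r_i^{-m}T_{x,r_i}(\rho\phi)$ has the same weak limit as $\rho(x)\,r_i^{-m}T_{x,r_i}\phi$, namely $\rho(x)\nu$. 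Unwinding the rescaling, for $f\in\lip(K)$ with $K\subseteq\overline{B(0,R)}$ one has
\begin{equation}
\Big\lvert\int f\,d\big(r_i^{-m}T_{x,r_i}(\rho\phi)\big)-\rho(x)\int f\,d\big(r_i^{-m}T_{x,r_i}\phi\big)\Big\rvert\leq r_i^{-m}\int_{B(x,Rr_i)}\lvert\rho(y)-\rho(x)\rvert\,d\phi(y),
\end{equation}
using $\lVert f\rVert_\infty\leq R$ (as $f$ is $1$-Lipschitz and supported in $\overline{B(0,R)}$) and the change of variables $y=x\delta_{r_i}(z)$. The right-hand side equals $\frac{\phi(B(x,Rr_i))}{(Rr_i)^m}\cdot R^m\cdot\frac{1}{\phi(B(x,Rr_i))}\int_{B(x,Rr_i)}\lvert\rho-\rho(x)\rvert\,d\phi$, and this tends to $0$ because the first factor is bounded by $\Theta^{m,*}(\phi,x)<\infty$ along a subsequence (and one may pass to a further subsequence if needed, or use $\limsup$) while the last factor tends to $0$ by the Lebesgue-point property. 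For the reverse inclusion one argues symmetrically: given $\mu\in\Tan_m(\rho\phi,x)$ along $r_i\to 0$, up to extracting a subsequence $r_i^{-m}T_{x,r_i}\phi$ converges (by the uniform boundedness from $\Theta^{m,*}(\phi,x)<\infty$, Proposition \ref{tg:nonempty}-style) to some $\nu\in\Tan_m(\phi,x)$, and the same estimate forces $\mu=\rho(x)\nu$.

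The main obstacle is establishing the Lebesgue differentiation theorem $\lim_{r\to 0}\phi(B(x,r))^{-1}\int_{B(x,r)}\lvert\rho(y)-\rho(x)\rvert\,d\phi(y)=0$ for $\phi$-a.e.\ $x$ without Besicovitch. The way around it is to exploit the density hypothesis \eqref{numero1}: on a set of full measure the lower and upper densities are positive and finite, so for $\phi$-a.e.\ $x$ there is $r_0(x)>0$ and a constant $C(x)$ with $\phi(B(x,2r))\leq C(x)\phi(B(x,r))$ for $0<r<r_0(x)$; this local asymptotic doubling is enough to run the Vitali covering lemma (in the $5r$-covering form, valid in any metric space) and deduce weak-$(1,1)$ bounds for the associated maximal operator on each of the sets $\{x: r_0(x)>1/k,\ C(x)<k\}$, hence the differentiation theorem on each such set and therefore $\phi$-a.e. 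Once this is in hand the rest is the routine $F_K$-estimate above together with the characterisation of weak convergence from Proposition \ref{prop:convF}. I would also need to double-check the harmless measurability/integrability points, namely that $\rho\phi$ is a Radon measure (immediate since $\rho\in L^1(\phi)$) and that the exceptional null sets coming from \eqref{numero1}, from the Lebesgue points, and from the countable-bad-radii Lemma \ref{radii} can be unioned over a countable exhaustion.
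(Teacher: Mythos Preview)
Your proposal is correct and follows essentially the same route as the paper. Both arguments first observe that the density hypothesis \eqref{numero1} makes $\phi$ asymptotically doubling, then invoke the Lebesgue differentiation theorem for such measures (the paper simply cites \cite{MR3363168}, while you sketch the Vitali/maximal-function proof), and finally estimate the discrepancy between $r_i^{-m}T_{x,r_i}(\rho\phi)$ and $\rho(x)\,r_i^{-m}T_{x,r_i}\phi$ by $r_i^{-m}\int_{B(x,Rr_i)}\lvert\rho-\rho(x)\rvert\,d\phi$, which tends to zero by the Lebesgue-point property combined with $\Theta^{m,*}(\phi,x)<\infty$. The only cosmetic difference is that the paper bounds the total variation $\lvert\rho(x)\nu_i-\nu_i'\rvert(B(0,l))$ directly and then appeals to Proposition~2.7 of \cite{DeLellis2008RectifiableMeasures}, whereas you route through the $F_K$ functional and Proposition~\ref{prop:convF}; your treatment of the reverse inclusion via compactness is also slightly more explicit than the paper's.
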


\begin{proof}
First of all, let us prove that $\phi$ is asymptotically doubling:
\begin{equation}
    \limsup_{r\to 0}\frac{\phi(B(x,2r))}{\phi(B(x,r))}\leq\limsup_{r\to 0}\frac{\phi(B(x,2r))}{(2r)^m}\frac{2^mr^m}{\phi(B(x,r))}\leq \frac{2^m\Theta^{m,*}(\phi,x)}{\Theta^{m}_*(\phi,x)}<\infty, 
\end{equation}
for $\phi$-almost every $x\in\mathbb{G}$. 
Thanks to Theorem 3.4.3 and the \emph{Lebesgue differentiation Theorem} in \cite{MR3363168}, we have:
\begin{equation}
    \phi(B_1):=\phi\Big(\Big\{x\in\mathbb{G}:\limsup_{r\to 0}\fint_{B(x,r)}\lvert \rho(y)-\rho(x)\rvert d\phi(y)>0\Big\}\Big)=0.
    \label{eq:Leb1}
\end{equation}
Let $x\in\mathbb{G}\setminus B_1$, suppose $\nu\in\Tan_{m}(\phi,x)$, and let $r_i\to 0$ be an infinitesimal sequence such that:
$$\nu_i:=\frac{T_{x,r_i}\phi}{r_i^{m}}\rightharpoonup \nu.$$
Defined $\nu^\prime_i:=r_i^{-m}T_{x,r_i}(\rho \phi)$ for every ball $B(0,l)$, we have:
\begin{equation}
    \lvert \rho(x)\nu_i-\nu_i^\prime\rvert(B(0,l))\leq \frac{1}{r_i^{m}}\int _{B(x,l r_i)} \lvert \rho(y)-\rho(x)\rvert d\phi(y)=\frac{\phi(B_{\rho r_i}(x))}{r_i^{m}}\cdot\fint_{B(x,l r_i)}\lvert \rho(y)-\rho(x)\rvert d\phi(y)=:(I)_i\cdot(II)_i,
    \nonumber
    \end{equation}
    where $ \lvert \rho(x)\nu_i-\nu_i^\prime\rvert$ is the total variation of the measure $\rho(x)\nu_i-\nu_i^\prime$.
For $\phi$-almost every $x\in B_1$ thanks to \eqref{eq:Leb1} we have  $\Theta^{m,*}(\phi,x)<\infty$ and $\lim_{i\to\infty}(II)_i=0$. This implies that: $$\lim_{i\to\infty}  \lvert \rho(x)\nu_i-\nu_i^\prime\rvert(B(0,l))=0,$$
for every $l>0$. If in addition to this, we also assume that $\nu(\partial B(0,l))=0$, since $\nu_i\rightharpoonup\nu$, then by Proposition 2.7 of \cite{DeLellis2008RectifiableMeasures} we also have that:
\begin{equation}
    \lim_{i\to\infty}  \lvert \rho(x)\nu-\nu_i^\prime\rvert(B(0,l))=0.
    \label{eq:nummm2010}
\end{equation}
Finally, since by Proposition \ref{radii} we have $\nu(\partial B(0,l))=0$ for almost every $l>0$, thanks to  \eqref{eq:nummm2010} we conclude that $\nu_i^\prime\rightharpoonup \rho(x)\nu$.
\end{proof}

\begin{proposizione}\label{prop:cpt}
Assume $\phi$ is a Radon measure supported on a compact set $K$ such that for $\phi$-almost every $x\in\mathbb{G}$ we have:
$$0<\Theta^{\mathcal{Q}-1}_*(\phi,x)\leq \Theta^{\mathcal{Q}-1,*}(\phi,x)<\infty.$$
Then, for any $\vartheta,\gamma\in\N$ the set $  E^\phi(\vartheta,\gamma):=\big\{x\in K:\vartheta^{-1}r^{{\mathcal{Q}-1}}\leq \phi(B(x,r))\leq \vartheta r^{{\mathcal{Q}-1}}\text{ for any }0<r<1/\gamma\big\}$
is compact.
\end{proposizione}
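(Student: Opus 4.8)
The plan is to show that $E^\phi(\vartheta,\gamma)$ is a closed subset of the compact set $K$, whence it is itself compact. Since $K$ is compact, it is closed, so any sequence $\{x_j\}_{j\in\N}\subseteq E^\phi(\vartheta,\gamma)$ converging to some $x\in\mathbb{G}$ has its limit in $K$; it then remains to check that $\vartheta^{-1}r^{\mathcal{Q}-1}\leq\phi(B(x,r))\leq\vartheta r^{\mathcal{Q}-1}$ for every $0<r<1/\gamma$. The only tool I would use is the elementary observation that, since $d$ is a genuine left-invariant metric and $d(x_j,x)\to 0$, the triangle inequality yields, for any radii $0<r'<r<r''$, the inclusions $B(x_j,r')\subseteq B(x,r)$ and $B(x,r)\subseteq B(x_j,r'')$ for all $j$ large enough, together with monotonicity of $\phi$.

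For the lower bound, fix $0<r<1/\gamma$ and any $r'\in(0,r)$. For $j$ large we have $B(x_j,r')\subseteq B(x,r)$, and since $x_j\in E^\phi(\vartheta,\gamma)$ and $0<r'<1/\gamma$, monotonicity gives $\vartheta^{-1}(r')^{\mathcal{Q}-1}\leq\phi(B(x_j,r'))\leq\phi(B(x,r))$. Letting $r'\to r^-$ yields $\vartheta^{-1}r^{\mathcal{Q}-1}\leq\phi(B(x,r))$. For the upper bound, fix $0<r<1/\gamma$ and any $r''\in(r,1/\gamma)$, which exists precisely because $r<1/\gamma$. For $j$ large we have $B(x,r)\subseteq B(x_j,r'')$, and using $x_j\in E^\phi(\vartheta,\gamma)$ with $r''<1/\gamma$ we obtain $\phi(B(x,r))\leq\phi(B(x_j,r''))\leq\vartheta(r'')^{\mathcal{Q}-1}$. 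Letting $r''\to r^+$ gives $\phi(B(x,r))\leq\vartheta r^{\mathcal{Q}-1}$, so $x\in E^\phi(\vartheta,\gamma)$ and the set is closed.

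I do not expect a genuine obstacle here: the density assumption on $\phi$ plays no role in this particular statement, and the argument uses nothing beyond the triangle inequality for $d$ and monotonicity of the measure. The one point deserving care is that the metric balls $B(x,r)$ are open, so one cannot directly compare $\phi(B(x_j,r))$ with $\phi(B(x,r))$; this is handled by the standard device of shrinking the radius slightly for the lower estimate and enlarging it slightly for the upper estimate, and only then letting the auxiliary radius tend to $r$. In particular no Besicovitch-type covering result and no continuity of $\phi$ along monotone sequences of balls are needed.
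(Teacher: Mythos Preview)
Your proof is correct and follows essentially the same approach as the paper: both show that $E^\phi(\vartheta,\gamma)$ is closed in $K$ via the triangle-inequality inclusions $B(x_j,r')\subseteq B(x,r)\subseteq B(x_j,r'')$ and monotonicity of $\phi$, then pass to the limit in the radius. The only cosmetic difference is that the paper takes $r'=r-d(x,x_j)$ and $r''=r+d(x,x_j)$ directly and lets $j\to\infty$, whereas you fix auxiliary radii and let them tend to $r$; the content is identical.
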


\begin{proof}
Since $K$ is compact, in order to verify that $E^\phi(\vartheta,\gamma)$
is compact, it suffices to prove that it is closed. If $E^\phi(\vartheta,\gamma)$ is empty or finite, there is nothing to prove. So, suppose there is a sequence $\{x_i\}_{i\in\N}\subseteq E^\phi(\vartheta,\gamma)$ converging to some $x\in K$. Fix an $0<r<1/\gamma$ and assume that $\delta>0$ is so small that $r+\delta<1/\gamma$. Therefore, if $d(x,x_i)<\delta$ and $r-d(x,x_i)>0$, we have:
$$\vartheta^{-1}\big(r-d(x,x_i)\big)^{{\mathcal{Q}-1}}\leq \phi\big(B(x_i,r-d(x,x_i))\big)\leq \phi(B(x,r))\leq \phi\big(B(x_i,r+d(x,x_i))\big)\leq \vartheta (r+d(x,x_i))^{{\mathcal{Q}-1}},$$
Taking the limit as $i$ goes to $\infty$, we see that $x\in E(\vartheta,\gamma)$.
\end{proof}

\begin{proposizione}\label{prop:Ecorsivo}
Assume $\phi$ is a Radon measure supported on a compact set $K$ such that for $\phi$-almost every $x\in\mathbb{G}$ we have:
$$0<\Theta^{\mathcal{Q}-1}_*(\phi,x)\leq \Theta^{\mathcal{Q}-1,*}(\phi,x)<\infty.$$
Then, for any $\vartheta,\gamma,\mu,\nu\in\N$ the set:
$$\mathscr{E}_{\vartheta,\gamma}^\phi(\mu,\nu)=\{x\in E^\phi(\vartheta,\gamma): (1-1/\mu)\phi(B(x,r))\leq \phi(B(x,r)\cap E^\phi(\vartheta,\gamma))\text{ for any }0<r<1/\nu\},$$
is compact.
\end{proposizione}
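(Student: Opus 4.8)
The plan is to observe first that, since $E^\phi(\vartheta,\gamma)$ is compact by Proposition \ref{prop:cpt} and $\mathscr{E}_{\vartheta,\gamma}^\phi(\mu,\nu)\subseteq E^\phi(\vartheta,\gamma)$, it suffices to prove that $\mathscr{E}_{\vartheta,\gamma}^\phi(\mu,\nu)$ is \emph{closed}, because a closed subset of a compact set is compact. So I would take a sequence $\{x_i\}_{i\in\N}\subseteq \mathscr{E}_{\vartheta,\gamma}^\phi(\mu,\nu)$ converging to some $x$; closedness of $E^\phi(\vartheta,\gamma)$ immediately gives $x\in E^\phi(\vartheta,\gamma)$, and the only thing left to check is the defining inequality $(1-1/\mu)\phi(B(x,r))\leq \phi(B(x,r)\cap E^\phi(\vartheta,\gamma))$ for every $0<r<1/\nu$.

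The core of the argument is a ball-squeezing estimate coming from left-invariance and the triangle inequality for $d$: for any radius $t>0$ one has $B(x,t)\subseteq B(x_i,t+d(x,x_i))\subseteq B(x,t+2d(x,x_i))$. Fixing $0<s<r<1/\nu$, for $i$ large enough $s+d(x,x_i)<1/\nu$, so I may apply the membership $x_i\in \mathscr{E}_{\vartheta,\gamma}^\phi(\mu,\nu)$ at radius $s+d(x,x_i)$; combining $(1-1/\mu)\phi\big(B(x_i,s+d(x,x_i))\big)\leq \phi\big(B(x_i,s+d(x,x_i))\cap E^\phi(\vartheta,\gamma)\big)$ with the two inclusions above and monotonicity of $\phi$ yields $(1-1/\mu)\phi(B(x,s))\leq \phi\big(B(x,s+2d(x,x_i))\cap E^\phi(\vartheta,\gamma)\big)$. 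Then I pass to the limit twice: for each $\epsilon>0$, eventually $2d(x,x_i)<\epsilon$, hence $(1-1/\mu)\phi(B(x,s))\leq \phi\big(B(x,s+\epsilon)\cap E^\phi(\vartheta,\gamma)\big)$; letting $\epsilon\downarrow 0$ and using continuity from above of the finite measure $\phi$ gives $(1-1/\mu)\phi(B(x,s))\leq \phi\big(\{y:d(x,y)\leq s\}\cap E^\phi(\vartheta,\gamma)\big)\leq \phi\big(B(x,r)\cap E^\phi(\vartheta,\gamma)\big)$, the last inequality because $s<r$; finally, letting $s\uparrow r$ and using continuity from below of $\phi$ on $B(x,r)=\bigcup_{s<r}B(x,s)$ gives the required inequality. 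As $r\in(0,1/\nu)$ is arbitrary, $x\in\mathscr{E}_{\vartheta,\gamma}^\phi(\mu,\nu)$.

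The delicate point — and the step I would be most careful about — is the treatment of the spheres $\partial B(x,t)$: since $d(x,x_i)$ need not decrease monotonically one cannot collapse $B(x,s+2d(x,x_i))$ directly onto a closed ball, so it is essential to route the estimate through a fixed auxiliary radius $s+\epsilon$, send $\epsilon\to 0$ first, and only then send $s\uparrow r$, keeping $s<r$ strict throughout so that the limiting metrically-closed ball still lies inside the \emph{open} ball $B(x,r)$ and no mass is lost on $\partial B(x,r)$. Everything else is the triangle inequality together with monotone convergence for the finite Radon measure $\phi$, so I expect no further obstacles.
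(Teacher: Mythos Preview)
Your proof is correct and follows essentially the same approach as the paper: reduce to closedness in the compact set $E^\phi(\vartheta,\gamma)$, take a convergent sequence, and compare balls centred at $x$ and $x_i$ via the triangle inequality. The only cosmetic difference is that the paper \emph{shrinks} the radius, working with $B(y_i,r-2d(y,y_i))\subseteq B(y,r)$ and passing to the limit once via dominated convergence of the indicator functions, whereas you \emph{enlarge} the radius and take two monotone limits; both organisations are equally valid.
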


\begin{proof}
If $\mathscr{E}^\phi_{\vartheta,\gamma}(\mu,\nu)$ is empty or finite, there is nothing to prove.
Furthermore, thanks to Proposition \ref{prop:cpt} the sets $E^\phi(\vartheta,\gamma)$ are compact and thus to prove our claim it is sufficient to show that $\mathscr{E}^\phi_{\vartheta,\gamma}(\mu,\nu)$ is closed in $E^\phi(\vartheta,\gamma)$. Take a sequence $\{y_i\}_{i\in\N}\subseteq \mathscr{E}^\phi_{\vartheta,\gamma}(\mu,\nu)$ converging to some $y\in E^\phi(\vartheta,\gamma)$. Fix an $0<r<1/\nu$ and a $\delta\in (0,1/4)$ and let $i_0(\delta)\in\N$ be such that for any $i\geq i_0(\delta)$ we have $d(y,y_i)<\delta r$. These choices imply:
$$(1-1/\mu)\phi(B(y_i,r-2d(y,y_i)))\leq\phi(B(y_i,r-2d(y,y_i))\cap E^\phi(\vartheta,\gamma))\leq \phi(B(y,r)\cap E^\phi(\vartheta,\gamma)).$$
Note that the sequence of functions $f_i(z):=\chi_{B(y_i,r-2d(y,y_i))}(z)$ converges pointwise $\phi$-almost everywhere to $\chi_{B(y,r)}(z)$. This is due to the fact that for any $i\geq i_0(\delta)$ on the one hand we have $\supp(f_i)\Subset B(y,r)$ and on the other the functions $f_i$ are equal to $1$ on $B(y,r(1-3\delta))$. Thus, the dominated convergence theorem implies:
$$(1-1/\mu)\phi(B(y,r))=\lim_{i\to\infty}(1-1/\mu)\phi(B(y_i,r-2d(y,y_i)))\leq\phi(B(y,r)\cap E(\vartheta,\gamma)).$$
Since $r$ was aribtrarily chosen in $(0,1/\nu)$, this shows that $y\in \mathscr{E}_{\vartheta,\gamma}(\mu,\nu)$ concluding the proof.
\end{proof}

\begin{proposizione}\label{prop:BIGGI}
Assume $\phi$ is a Radon measure supported on a compact set $K$ such that for $\phi$-almost every $x\in\mathbb{G}$ we have:
$$0<\Theta^{\mathcal{Q}-1}_*(\phi,x)\leq \Theta^{\mathcal{Q}-1,*}(\phi,x)<\infty.$$
Then, for any $0<\epsilon<1/10$ there are $\vartheta_0,\gamma_0\in\N$ such that for any $\vartheta\geq \vartheta_0$, $\gamma\geq \gamma_0$ and $\mu\in\N$ there is a $\nu=\nu(\vartheta,\gamma,\mu)\in\N$ such that:
\begin{equation}
    \phi(K/\mathscr{E}^\phi_{\vartheta,\gamma}(\mu,\nu))\leq \epsilon\phi(K).
    \label{eq:A20}
\end{equation}
\end{proposizione}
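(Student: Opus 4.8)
The plan is to derive \eqref{eq:A20} from two soft ingredients only: the monotonicity of the families $E^\phi(\vartheta,\gamma)$ and $\mathscr{E}^\phi_{\vartheta,\gamma}(\mu,\nu)$ in their parameters, and a Lebesgue density theorem for $\phi$. No genuine computation is needed; the work is bookkeeping of parameters and appeals to continuity of $\phi$ from below.

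First I would exhaust $K$ by the sets $E^\phi(\vartheta,\gamma)$. By the standing hypothesis $0<\Theta^{\mathcal{Q}-1}_*(\phi,x)\le\Theta^{\mathcal{Q}-1,*}(\phi,x)<\infty$ for $\phi$-a.e. $x$, every such $x$ lies in $E^\phi(\vartheta,\gamma)$ for some $\vartheta,\gamma$ depending on $x$. Since $E^\phi(\vartheta,\gamma)\subseteq E^\phi(\vartheta',\gamma')$ whenever $\vartheta\le\vartheta'$ and $\gamma\le\gamma'$ (the lower bound $\vartheta^{-1}r^{\mathcal{Q}-1}$ shrinks, the upper bound $\vartheta r^{\mathcal{Q}-1}$ grows, and the range $0<r<1/\gamma$ shrinks), the increasing sequence $E^\phi(N,N)$, $N\in\N$, has union of full $\phi$-measure in $K$. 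Continuity of $\phi$ from below then yields $N_0\in\N$ with $\phi\big(K\setminus E^\phi(N_0,N_0)\big)\le\tfrac{\epsilon}{2}\phi(K)$, and I set $\vartheta_0:=\gamma_0:=N_0$; monotonicity extends this to $\phi\big(K\setminus E^\phi(\vartheta,\gamma)\big)\le\tfrac{\epsilon}{2}\phi(K)$ for all $\vartheta\ge\vartheta_0$, $\gamma\ge\gamma_0$.

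Next, for a fixed choice of $\vartheta\ge\vartheta_0$, $\gamma\ge\gamma_0$ and $\mu\in\N$, write $E:=E^\phi(\vartheta,\gamma)$, which is compact by Proposition \ref{prop:cpt}. As recalled at the start of the proof of Proposition \ref{prop:locality}, the density bounds force $\phi$ to be asymptotically doubling, so the Lebesgue differentiation theorem of \cite{MR3363168} applies to $\chi_E\in L^1_{\mathrm{loc}}(\phi)$ and gives, for $\phi$-a.e. $x\in E$, that $\phi(B(x,r)\cap E)/\phi(B(x,r))\to 1$ as $r\to 0$. Hence for $\phi$-a.e. $x\in E$ there is $\nu(x)\in\N$ with $(1-1/\mu)\phi(B(x,r))\le\phi(B(x,r)\cap E)$ for every $0<r<1/\nu(x)$, i.e. $x\in\mathscr{E}^\phi_{\vartheta,\gamma}(\mu,\nu(x))$. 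Since $\mathscr{E}^\phi_{\vartheta,\gamma}(\mu,\nu)$ is increasing in $\nu$ (the range $0<r<1/\nu$ shrinks), the union $\bigcup_{\nu\in\N}\mathscr{E}^\phi_{\vartheta,\gamma}(\mu,\nu)$ exhausts $E$ up to a $\phi$-null set, and continuity from below produces $\nu=\nu(\vartheta,\gamma,\mu)\in\N$ with $\phi\big(E\setminus\mathscr{E}^\phi_{\vartheta,\gamma}(\mu,\nu)\big)\le\tfrac{\epsilon}{2}\phi(K)$. Adding the two estimates and using $\mathscr{E}^\phi_{\vartheta,\gamma}(\mu,\nu)\subseteq E\subseteq K$ gives $\phi\big(K\setminus\mathscr{E}^\phi_{\vartheta,\gamma}(\mu,\nu)\big)\le\epsilon\phi(K)$, which is \eqref{eq:A20}.

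I do not anticipate a serious obstacle: the one non-formal point is that the Lebesgue density theorem is used for a merely asymptotically doubling measure, which cannot be obtained here from a Besicovitch covering argument but is supplied by \cite{MR3363168} — exactly the device already used in Proposition \ref{prop:locality}. The only care required is in the order of quantifiers: $\vartheta_0,\gamma_0$ must be pinned down once and for all in the first step, whereas $\nu$ is allowed to depend on $\vartheta,\gamma,\mu$, so the second step is performed after those are frozen.
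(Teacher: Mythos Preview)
Your proof is correct and follows essentially the same approach as the paper's own proof: both use the monotonicity of $E^\phi(\vartheta,\gamma)$ and $\mathscr{E}^\phi_{\vartheta,\gamma}(\mu,\nu)$ in their parameters, the Lebesgue differentiation theorem from \cite{MR3363168} (valid because $\phi$ is asymptotically doubling), and continuity of the measure from below. The only cosmetic difference is the order in which the two exhaustion steps are carried out.
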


\begin{proof}
Assume at first that $\vartheta$ and $\gamma$ are fixed. It is easy to check that:
\begin{align}
    \mathscr{E}^\phi_{\vartheta,\gamma}(\mu_2,\nu)&\subseteq \mathscr{E}^\phi_{\vartheta,\gamma}(\mu_1,\nu),~\text{whenever }\mu_1\leq \mu_2,
    \label{eq:70}\\
    \mathscr{E}^\phi_{\vartheta,\gamma}(\mu,\nu_1)&\subseteq \mathscr{E}^\phi_{\vartheta,\gamma}(\mu,\nu_2),~\text{whenever }\nu_1\leq \nu_2.
    \label{eq:71}
\end{align}
Fix some $\mu\in \N$ and let $x\in E^\phi(\vartheta,\gamma)\setminus\bigcup_{\nu\in\N}\mathscr{E}^\phi_{\vartheta,\gamma}(\mu,\nu)$. For such an $x$, we can find a sequence $r_\nu$ such that $r_\nu\leq 1/\nu$ and:
\begin{equation}
\phi(B(x,r_\nu)\cap E^\phi(\vartheta,\gamma))<(1-1/\mu)\phi(B(x,r_\nu)).
\nonumber
\end{equation}
This would imply that $\liminf_{r\to 0}\frac{\phi(B(x,r)\cap E^\phi(\vartheta,\gamma))}{\phi(B(x,r))}\leq(1-1/\mu)$, showing thanks to \emph{Lebescue differentiability Theorem} of \cite{MR3363168} that: 
\begin{equation}
    \phi\bigg(E^\phi(\vartheta,\gamma)\setminus\bigcup_{\nu\in\N}\mathscr{E}^\phi_{\vartheta,\gamma}(\mu,\nu)\bigg)=0.
    \label{eq:n102}
\end{equation}
Thanks to inclusion \eqref{eq:71} and identity \eqref{eq:n102}, for any $\mu\in\N$  we can find some $\nu\in\N$ such that:
$$\phi(E^\phi(\vartheta,\gamma)\setminus\mathscr{E}^\phi_{\vartheta,\gamma}(\mu,\nu))\leq \epsilon\phi(E^\phi(\vartheta,\gamma))/2.$$
We finally prove that there are $\vartheta,\gamma\in\N$ such that $\phi(K\setminus E^\phi(\vartheta,\gamma))\leq \epsilon\phi(K)/2$. For any $x\in K\setminus \bigcup_{\vartheta,\gamma\in\N} E^\phi(\vartheta,\gamma)$, it is immediate to see that $\Theta_*^{\mathcal{Q}-1}(\phi,x)=0$ or $\Theta^{\mathcal{Q}-1,*}(\phi,x)=\infty$, which thanks to the choice of $\phi$, implies:
\begin{equation}
    \phi\bigg(K\setminus \bigcup_{\vartheta,\gamma\in\N} E^\phi(\vartheta,\gamma)\bigg)=0.
    \label{eq:n24}
\end{equation}
Thanks to the fact that $E^\phi(\vartheta,\gamma)\subseteq E^\phi(\vartheta^\prime,\gamma^\prime)$ whenever $\gamma\leq \gamma^\prime$ and $\vartheta\leq \vartheta^\prime$ and the continuity from above of the measure, this proves the proposition.
\end{proof}

The following result allows us to compare the measure $\phi$ when restricted to $E^\phi(\vartheta,\gamma)$ with the spherical Hausdorff measure. 

\begin{proposizione}\label{prop:dens}
Let $\psi$ be a Radon measure supported on a Borel set $E$. Suppose further that there are $0<\delta_1\leq\delta_2$ such that:
$$\delta_1\leq \Theta^{m}_*(\psi,x)\leq \Theta^{m,*}(\psi,x)\leq \delta_2,\qquad \text{for }\psi\text{-almost every }x\in\mathbb{G}.$$
Then, $\psi=\Theta^{m,*}(\psi,x)\mathcal{C}^{m}\llcorner E$ where $\mathcal{C}^{m}$ is the centred spherical Hausdorff measure introduced in Definition \ref{hausdorrmeas} and in particular:
     \begin{equation}
     \begin{split}
      \delta_1\mathcal{C}^{m}\llcorner E\leq\psi\llcorner E\leq\delta_2\mathcal{C}^{m}\llcorner E,\qquad \text{and}\qquad          \delta_1\mathcal{S}^{m}\llcorner E\leq\psi\llcorner E\leq \delta_22^{m}\mathcal{S}^{m}\llcorner E.
         \nonumber
     \end{split}
     \end{equation}
\end{proposizione}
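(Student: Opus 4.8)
The plan is to reduce the whole statement to the single identity $\psi\llcorner E=\Theta^{m,*}(\psi,\cdot)\,\mathcal{C}^{m}\llcorner E$. Indeed, once this is established the two displayed chains of inequalities are immediate: the pointwise bounds $\delta_1\le\Theta^{m,*}(\psi,x)\le\delta_2$ give $\delta_1\mathcal{C}^{m}\llcorner E\le\psi\llcorner E\le\delta_2\mathcal{C}^{m}\llcorner E$, and the elementary relations $\mathcal{S}^{m}\llcorner E\le\mathcal{C}^{m}\llcorner E\le 2^{m}\mathcal{S}^{m}\llcorner E$ — read off from Definition \ref{hausdorrmeas}, since every centred cover is admissible for $\mathcal{S}^m$, while re-centring an arbitrary cover on $E$ costs a factor $2$ in the radii — turn this into the second chain. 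So I would spend all the work on the identity.

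First I would record, exactly as in the opening lines of the proof of Proposition \ref{prop:locality}, that the density bounds force $\psi$ to be asymptotically doubling, i.e. $\limsup_{r\to0}\psi(B(x,2r))/\psi(B(x,r))\le 2^{m}\delta_2/\delta_1<\infty$ for $\psi$-a.e.\ $x$; this is what lets one use differentiation and covering theorems for $\psi$ without invoking the Besicovitch covering theorem, which is unavailable for $d$. Next I would prove two one-sided density comparisons. The upper one is soft and needs no covering argument: if $B\subseteq E$ is Borel and $\Theta^{m,*}(\psi,x)\le t$ for $\psi$-a.e.\ $x\in B$, then writing $B$ as the increasing union of the sets $B_k=\{x\in B:\psi(B(x,r))\le(t+\varepsilon)r^m\text{ for all }0<r<1/k\}$, any admissible centred cover $B_k\subseteq\bigcup_j\overline{B(x_j,r_j)}$ with $r_j<1/k$ satisfies $\psi(\overline{B(x_j,r_j)})\le(t+\varepsilon)r_j^m$ by continuity of $\psi$ from above, whence $\psi(B_k)\le(t+\varepsilon)\mathscr{C}^{m}_\delta(B_k)\le(t+\varepsilon)\mathcal{C}^{m}(B_k)$; letting $k\to\infty$ and $\varepsilon\to0$ gives $\psi\llcorner B\le t\,\mathcal{C}^{m}\llcorner B$, and in particular $\psi\llcorner E\le\delta_2\,\mathcal{C}^{m}\llcorner E$. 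The lower one is the delicate point, where the Euclidean arguments use Besicovitch: here I would instead combine the lower-density hypothesis $\Theta^{m}_*(\psi,x)\ge\delta_1$ with the $5r$-covering lemma (valid in any metric space) to obtain, for Borel $B\subseteq E$, a bound of the form $\mathcal{C}^{m}\llcorner B\le 5^{m}\delta_1^{-1}\,\psi\llcorner B$; the non-sharp constant is irrelevant, since all that is needed from it is that $\mathcal{C}^{m}\llcorner E\ll\psi$. Together with the upper comparison, this shows that $\psi\llcorner E$ and $\mathcal{C}^{m}\llcorner E$ are mutually absolutely continuous.

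Finally I would invoke the Lebesgue differentiation theorem for asymptotically doubling measures of \cite{MR3363168} — the same tool used in Proposition \ref{prop:locality} — to write $\psi\llcorner E=\theta\,\mathcal{C}^{m}\llcorner E$, where $\theta(x)=\lim_{r\to0}\psi(B(x,r))/(\mathcal{C}^{m}\llcorner E)(B(x,r))$ for $(\mathcal{C}^{m}\llcorner E)$-a.e.\ (hence also $\psi$-a.e.)\ $x\in E$. Since then $\psi(B(x,r))=(\theta(x)+o(1))\,(\mathcal{C}^{m}\llcorner E)(B(x,r))$ as $r\to0$, dividing by $r^{m}$ and taking $\limsup$ yields $\Theta^{m,*}(\psi,x)=\theta(x)\,\Theta^{m,*}(\mathcal{C}^{m}\llcorner E,x)$; the characteristic calibration property of the centred spherical Hausdorff measure, namely $\Theta^{m,*}(\mathcal{C}^{m}\llcorner E,x)=1$ for $\mathcal{C}^{m}$-a.e.\ $x\in E$ (for which I would cite \cite{Schechter-2000}; the bound $\le1$ is elementary, covering $E\cap\overline{B(x,r)}$ by the single ball $\overline{B(x,r)}$ centred at $x$, and $\ge1$ is a Federer-type density estimate obtained once more from the $5r$-covering lemma), then gives $\theta(x)=\Theta^{m,*}(\psi,x)$, which is the desired identity on a subset of $E$ full both for $\psi$ and for $\mathcal{C}^{m}\llcorner E$; the hypothesis $\delta_1\le\Theta^{m,*}(\psi,x)\le\delta_2$ ensures the two measures agree on the complement as well, completing the proof. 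I expect the main obstacle to be precisely this last step: pinning down the Radon--Nikodym density as $\Theta^{m,*}(\psi,\cdot)$ while bypassing Besicovitch forces one to lean simultaneously on the asymptotic doubling of $\psi$, the differentiation theorem valid in that generality, and the special compatibility of $\mathcal{C}^{m}$ with upper densities.
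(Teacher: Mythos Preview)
Your overall strategy is sound and the reduction to the identity $\psi\llcorner E=\Theta^{m,*}(\psi,\cdot)\,\mathcal{C}^{m}\llcorner E$ is exactly what the paper does. But the paper's proof is far shorter: it uses Federer's density estimate (Proposition 2.10.17 of \cite{Federer1996GeometricTheory}) to get $\psi\le\delta_2\,\mathcal{S}^m\llcorner E$, and then directly cites Theorem~3.1 of \cite{densita} for the representation formula. Your proposal instead tries to \emph{re-derive} that representation formula, and this is where a gap opens.

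The issue is your ``elementary'' argument for $\Theta^{m,*}(\mathcal{C}^m\llcorner E,x)\le 1$. Covering $E\cap\overline{B(x,r)}$ by the single ball $\overline{B(x,r)}$ only bounds the pre-measure $\mathscr{C}^m_\delta$ when $r\le\delta$; since $\mathcal{C}^m(A)=\sup_{B\subseteq A}\sup_{\delta>0}\mathscr{C}^m_\delta(B)$, letting $\delta\to0$ renders that cover inadmissible and gives you nothing for $\mathcal{C}^m(E\cap\overline{B(x,r)})$ itself. (There is also the secondary problem that the supremum runs over subsets $B$ that need not contain $x$, so you cannot always centre at $x$.) The density identity $\Theta^{m,*}(\mathcal{C}^m\llcorner E,x)=1$ \emph{is} true in this setting, but its proof is not elementary in the way you suggest---it requires a genuine covering/differentiation argument and is essentially the content of (or equivalent to) the very theorem in \cite{densita} that the paper invokes. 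So once you fill this gap properly, your route collapses back onto the paper's citation; your additional machinery (mutual absolute continuity via the $5r$-lemma, the Lebesgue differentiation step) becomes scaffolding around a black box that already delivers the conclusion directly.
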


\begin{proof}
Thanks to \emph{Lebesgue differentiability theorem} of \cite{MR3363168}, for any Borel set $A\subseteq \mathbb{G}$ and $\psi$-almost all $x\in A\cap E$ we have:
\begin{equation}
\begin{split}
    \limsup_{r\to 0}\frac{\psi(B(x,r)\cap E\cap A)}{r^{m}}
    \leq\limsup_{r\to 0}\frac{\psi(B(x,r)\cap E\cap A)}{\psi(B(x,r))}\frac{\psi(B(x,r))}{r^{m}}\leq \delta_2.
     \nonumber
\end{split}
\end{equation}
Thanks to Proposition 2.10.17 of \cite{Federer1996GeometricTheory}, we conclude that $\psi\leq \delta_2 \mathcal{S}^{m}\llcorner E$ and in particular $\psi$ is absolutely continuous with respect to $\mathcal{S}^{m}\llcorner E$. Furthermore, thanks to Theorem 3.1 of \cite{densita}, we infer that:
\begin{equation}
    \psi=\Theta^{m,*}(\psi,x)\mathcal{C}^{m}\llcorner E.
    \label{eq:n101}
\end{equation}
Finally, from \eqref{eq:n101} we deduce that:
\begin{equation}
     \delta_1 \mathcal{S}^{m}\llcorner E\leq \delta_1 \mathcal{C}^{m}\llcorner E\leq \psi\leq \delta_2 \mathcal{C}^{m}\llcorner E\leq \delta_2 2^{m}\mathcal{S}^{m}\llcorner E,
     \nonumber
\end{equation}
where the first and last inequality follow from the fact that the measures $\mathcal{C}^{m}$ and $\mathcal{S}^{m}$ are equivalent and satisfy the bounds $\mathcal{S}^{m}\leq \mathcal{C}^{m}\leq 2^{m}\mathcal{S}^{m}$, for a reference see \cite{densita}.
\end{proof}

An immediate consequence of the above proposition is the following:

\begin{corollario}\label{cor:cor1}
For any $\vartheta,\gamma\in\N$ we have
$\vartheta^{-1}\mathcal{S}^{\mathcal{Q}-1}\llcorner E^\phi(\vartheta,\gamma)\leq \phi\llcorner E^\phi(\vartheta,\gamma)\leq \vartheta 2^{\mathcal{Q}-1}\mathcal{S}^{\mathcal{Q}-1}\llcorner E^\phi(\vartheta,\gamma)$.
\end{corollario}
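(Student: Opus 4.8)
The plan is to read off Corollary \ref{cor:cor1} as a direct application of Proposition \ref{prop:dens} to the restricted measure $\psi:=\phi\llcorner E^\phi(\vartheta,\gamma)$, with $m=\mathcal{Q}-1$, $E:=E^\phi(\vartheta,\gamma)$, $\delta_1:=\vartheta^{-1}$ and $\delta_2:=\vartheta$ (note $\vartheta^{-1}\le\vartheta$ since $\vartheta\in\N$). If $\phi\llcorner E^\phi(\vartheta,\gamma)$ is the null measure there is nothing to prove; otherwise $\psi$ is a nontrivial Radon measure, supported on $E^\phi(\vartheta,\gamma)$, which is Borel --- indeed compact --- by Proposition \ref{prop:cpt}, and $\psi\llcorner E=\psi$.

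The only point that is not completely formal is the verification of the density bounds $\delta_1\le\Theta^{\mathcal{Q}-1}_*(\psi,x)\le\Theta^{\mathcal{Q}-1,*}(\psi,x)\le\delta_2$ for $\psi$-almost every $x$. The upper bound is immediate: $\psi\le\phi$ and, by the very definition of $E^\phi(\vartheta,\gamma)$, one has $\phi(B(x,r))\le\vartheta r^{\mathcal{Q}-1}$ for every $x\in E^\phi(\vartheta,\gamma)$ and every $0<r<1/\gamma$, whence $\Theta^{\mathcal{Q}-1,*}(\psi,x)\le\vartheta$. For the lower bound I would first note that $\phi$ is asymptotically doubling, exactly as in the opening lines of the proof of Proposition \ref{prop:locality}, because of the standing assumption $0<\Theta^{\mathcal{Q}-1}_*(\phi,x)\le\Theta^{\mathcal{Q}-1,*}(\phi,x)<\infty$ for $\phi$-almost every $x$. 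Hence the Lebesgue differentiation theorem of \cite{MR3363168} gives
$$\lim_{r\to 0}\frac{\phi(B(x,r)\cap E^\phi(\vartheta,\gamma))}{\phi(B(x,r))}=1\qquad\text{for }\phi\text{-almost every }x\in E^\phi(\vartheta,\gamma),$$
and therefore for $\psi$-almost every such $x$. Writing $\psi(B(x,r))/r^{\mathcal{Q}-1}$ as the product of $\phi(B(x,r)\cap E^\phi(\vartheta,\gamma))/\phi(B(x,r))$ and $\phi(B(x,r))/r^{\mathcal{Q}-1}$, the first factor tends to $1$ while the second is $\ge\vartheta^{-1}$ for $0<r<1/\gamma$ by definition of $E^\phi(\vartheta,\gamma)$, so $\Theta^{\mathcal{Q}-1}_*(\psi,x)\ge\vartheta^{-1}$.

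With these bounds at hand, Proposition \ref{prop:dens} applied to $\psi$ yields
$$\vartheta^{-1}\mathcal{S}^{\mathcal{Q}-1}\llcorner E\le\psi\llcorner E\le\vartheta 2^{\mathcal{Q}-1}\mathcal{S}^{\mathcal{Q}-1}\llcorner E,$$
which is exactly the claimed chain of inequalities once one recalls $\psi\llcorner E=\phi\llcorner E^\phi(\vartheta,\gamma)$. I expect no real obstacle here: the argument is essentially a bookkeeping reduction to Proposition \ref{prop:dens}, the only mildly delicate step being the transfer of the lower density bound from $\phi$ to its restriction $\psi$, which is handled by the density theorem for the asymptotically doubling measure $\phi$.
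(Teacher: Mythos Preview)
Your proof is correct and follows exactly the approach the paper intends: the corollary is stated as an immediate consequence of Proposition~\ref{prop:dens}, and you have correctly applied it to $\psi=\phi\llcorner E^\phi(\vartheta,\gamma)$, supplying the (indeed necessary) verification of the lower density bound via the Lebesgue differentiation theorem for the asymptotically doubling measure $\phi$. The paper omits this verification entirely, so your write-up is in fact more complete than the original.
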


The following result will be used in the proof of the very important Proposition \ref{prop:flatty}. It establishes the natural request that if a sequence of planes $V_i$ in $\G(\mathcal{Q}-1)$ convergences in the Grassmanian to some plane $V\in\G(\mathcal{Q}-1)$ (i.e. the normals converge as vectors in $V_1$), then the surface measures on the $V_i$s converge weakly to the surface measure on $V$.

\begin{proposizione}\label{prop:pianconv}
Suppose that $\{V_i\}_{i\in\N}$ is a sequence of planes in $\G(\mathcal{Q}-1)$ such that $\mathfrak{n}(V_i)\to\mathfrak{n}$ for some $\mathfrak{n}\in V_1$. Then, there exists a $V\in \G(\mathcal{Q}-1)$ such that $\mathfrak{n}(V)=\mathfrak{n}$ and:
$$\mathcal{S}^{\mathcal{Q}-1}\llcorner V_i\rightharpoonup \mathcal{S}^{\mathcal{Q}-1}\llcorner V.$$
\end{proposizione}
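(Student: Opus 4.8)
The plan is to reduce the statement to the analogous fact for the Euclidean $(n-1)$-dimensional Hausdorff measure by means of Proposition \ref{prop:rapp}, and then to prove the Euclidean statement by pushing the measures forward along a sequence of rotations of $\R^n$ converging to the identity.

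First, since each $\mathfrak{n}(V_i)$ is a unit vector of $V_1$ and $\mathfrak{n}(V_i)\to\mathfrak{n}$, the limit $\mathfrak{n}$ is itself a unit vector of $V_1$; hence, by the discussion following \eqref{eq:intr3}, the hyperplane $V:=\{z\in\R^n:\langle \pi_1(z),\mathfrak{n}\rangle=0\}=\mathscr{V}\oplus V_2\oplus\ldots\oplus V_s$, with $\mathscr{V}:=\{w\in V_1:\langle w,\mathfrak{n}\rangle=0\}$, belongs to $\G(\mathcal{Q}-1)$ and satisfies $\mathfrak{n}(V)=\mathfrak{n}$. By Proposition \ref{prop:rapp} we have $\mathcal{S}^{\mathcal{Q}-1}\llcorner V_i=\beta^{-1}\mathcal{H}^{n-1}_{eu}\llcorner V_i$ and $\mathcal{S}^{\mathcal{Q}-1}\llcorner V=\beta^{-1}\mathcal{H}^{n-1}_{eu}\llcorner V$ with the same constant $\beta$, so it suffices to show $\mathcal{H}^{n-1}_{eu}\llcorner V_i\rightharpoonup \mathcal{H}^{n-1}_{eu}\llcorner V$.

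For each $i$, let $R_i$ be the rotation of $\R^n$ that fixes the orthogonal complement of $\mathrm{span}\{\mathfrak{n},\mathfrak{n}(V_i)\}$ pointwise and rotates $\mathfrak{n}$ onto $\mathfrak{n}(V_i)$ inside this $2$-plane (when $\mathfrak{n}(V_i)=\mathfrak{n}$ take $R_i=\mathrm{Id}$; the degenerate case $\mathfrak{n}(V_i)=-\mathfrak{n}$ occurs for at most finitely many $i$). Since $\mathfrak{n}(V_i)\to\mathfrak{n}$ we have $R_i\to\mathrm{Id}$ in operator norm, and since $R_i$ sends the hyperplane orthogonal to $\mathfrak{n}$ onto the hyperplane orthogonal to $R_i\mathfrak{n}=\mathfrak{n}(V_i)$, we obtain $R_i(V)=V_i$. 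Now fix $f\in\mathcal{C}_c(\mathbb{G})$ with $\supp f\subseteq K$, $K$ compact. As $R_i$ is a Euclidean isometry carrying $V$ onto $V_i$, the change of variables for Hausdorff measures gives
\[
\int f\, d\big(\mathcal{H}^{n-1}_{eu}\llcorner V_i\big)=\int_{V_i}f\, d\mathcal{H}^{n-1}_{eu}=\int_{V}(f\circ R_i)\, d\mathcal{H}^{n-1}_{eu}.
\]
Because $R_i\to\mathrm{Id}$ uniformly on compact sets, there is a bounded set $K'$ with $R_i^{-1}(K)\subseteq K'$ for all large $i$, so all these integrals are carried by $V\cap K'$, which has finite $\mathcal{H}^{n-1}_{eu}$-measure; moreover $f\circ R_i\to f$ uniformly by the uniform continuity of $f$. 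Hence $\int_V (f\circ R_i)\,d\mathcal{H}^{n-1}_{eu}\to\int_V f\, d\mathcal{H}^{n-1}_{eu}$, which yields $\mathcal{H}^{n-1}_{eu}\llcorner V_i\rightharpoonup\mathcal{H}^{n-1}_{eu}\llcorner V$, and multiplying by $\beta^{-1}$ concludes the proof.

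The only point requiring any care is the construction of the rotations $R_i$ with $R_i(V)=V_i$ together with the verification that $R_i\to\mathrm{Id}$; once this is in place, the argument is a routine change of variables. One could equivalently phrase the convergence through the functional $F_K$ and Proposition \ref{prop:convF}, but the test-function computation above is the most economical route, and the reduction via Proposition \ref{prop:rapp} is what makes the orientation-dependence disappear.
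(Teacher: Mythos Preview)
Your proof is correct and follows the same reduction as the paper: both invoke Proposition \ref{prop:rapp} to rewrite $\mathcal{S}^{\mathcal{Q}-1}\llcorner V_i$ as $\beta^{-1}\mathcal{H}^{n-1}_{eu}\llcorner V_i$ and then appeal to the Euclidean weak convergence $\mathcal{H}^{n-1}_{eu}\llcorner V_i\rightharpoonup\mathcal{H}^{n-1}_{eu}\llcorner V$. The only difference is that the paper simply asserts this last convergence as a known fact, whereas you supply an explicit argument via rotations $R_i\to\mathrm{Id}$ with $R_i(V)=V_i$ and a change of variables; this is a clean and self-contained justification of a step the paper leaves implicit.
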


\begin{proof}
For any continuous function of compact support $f\in \mathcal{C}_c$ we have:
\begin{equation}
   \lim_{i\to\infty} \int f d\mathcal{S}^{\mathcal{Q}-1}\llcorner V_i-\int f d\mathcal{S}^{\mathcal{Q}-1}\llcorner V= \lim_{i\to\infty}\beta^{-1}\bigg(\int f d\mathcal{H}^{n-1}_{eu}\llcorner V_i-\int f d\mathcal{H}^{n-1}_{eu}\llcorner V\bigg)=0,
\end{equation}
where the last identity comes from the fact that $\mathcal{H}^{n-1}_{eu}\llcorner V_i\rightharpoonup \mathcal{H}^{n-1}_{eu}\llcorner V$.
\end{proof}

\subsection{Rectifiabile sets in Carnot groups}\label{sub:rect}
In this subsection we recall the two main notions of rectifiability in Carnot groups that will be extensively used throughout the paper. 
First of all, let us recall the definition of horizontal vector fields and of horizontal distribution.

\begin{definizione}
For any $i=1,\ldots,n_1$ and any $x\in\mathbb{G}$ we let 
$X_i(x):=\partial_t(x*\delta_t(e_i))\rvert_{t=0}$,
and say that the map $X_i:\mathbb{G}\cong\R^n\to \R^n$ so defined is the $i$-th \emph{horizontal vector field}. Furthermore, we define the \emph{horizontal distribution }of $\mathbb{G}$ to be the following $n_1$-dimensional distribution of planes of $\R^n$:
$$H\mathbb{G}(x):=\text{span}\{X_1(x),\ldots,X_{n_1}(x)\}.$$
Finally, for any open set $\Omega$ in $\mathbb{G}$ we denote by $\mathcal{C}^1_0(\Omega,H\mathbb{G})$ the sections of $H\mathbb{G}$ of class $\mathcal{C}^1$ with support contained in $\Omega$.
\end{definizione}

The definition of regular surface we are about to give is reminiscent of the characterisation of smooth surfaces in the Euclidean spaces through the local inversion theorem. Heuristically speaking, a $C^1_\mathbb{G}$-surface is a set that is transverse to $H\mathbb{G}$ and whose sections with $H\mathbb{G}$ are $C^1$-surfaces.

\begin{definizione}[$C^1_\mathbb{G}$-surfaces]
\label{regsur}
We say that a closed set $C\subseteq \mathbb{G}$ is a $C^1_\mathbb{G}$-\emph{surface} if there exists a continuous function $f:\mathbb{G}\to\R$ such that $C=f^{-1}(0)$ and whose horizontal distributional gradient $\nabla_\mathbb{G} f:=(X_1f,\ldots,X_{n_1}f)$ can be represented by a continuous, never-vanishing section of $H\mathbb{G}$.
\end{definizione}

\begin{osservazione}
Thanks to Corollary 4.27 of \cite{SCln}, if $C$ is a $C^1_\mathbb{G}$-regular surface, then $\mathcal{S}^{\mathcal{Q}-1}\llcorner C$ is $\sigma$-finite. 
\end{osservazione}

The second notion of regular surface we give in this subsection is inspired by the characterisation of Lipschitz graphs through cones. 

\begin{definizione}[Intrinsic Lipschitz graphs]
\label{def:intrgraph}
Let $V\in \G(\mathcal{Q}-1)$ and $E$ be a Borel subset of $V$. A function $f:E\to \mathfrak{N}(V)$ is said to be \emph{intrinsic Lipschitz} if there exists an $\alpha>0$ such that for any $v\in E$ we have:
$$gr(f):=\{wf(w):w\in E\}\subseteq vf(v)C_V(\alpha).$$
A Borel set $A\subseteq \mathbb{G}$ is said to be an \emph{intrinsic Lipschitz graph} if there is an intrinsic Lipschitz function $f:E\subseteq V\to \mathfrak{N}(V)$ such that $A=f(E)$.
\end{definizione}

The following extension theorem is of capital importance for us:

\begin{teorema}[Theorem 3.4, \cite{MR3060706}]\label{whole}
Suppose $V\in\G(\mathcal{Q}-1)$ and let $f:E\to\mathfrak{N}(V)$ be an intrinsic Lipschitz function. Then there is an intrinsic Lipschitz function $\tilde{f}:V\to \mathfrak{N}(V)$ such that $f(v)=\tilde{f}(v)$ for any $v\in E$.
\end{teorema}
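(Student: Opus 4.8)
This is Theorem~3.4 of \cite{MR3060706}; I sketch a proof. The idea is to realise $gr(f)$ as the common boundary of an ``intrinsic epigraph'' and an ``intrinsic hypograph'' and then to fill the missing fibres by a selection. Identify $\mathfrak{N}(V)$ with $\R$ through $t\mapsto t\,\mathfrak{n}(V)$ (a group isomorphism, since $\mathfrak{N}(V)\subseteq V_1$ carries the trivial law) and write $f(w)=\varphi(w)\,\mathfrak{n}(V)$ with $\varphi\colon E\to\R$; by Proposition~\ref{prop:proiezioni} each vertical line $v*\mathfrak{N}(V)$ is exactly the fibre $P_V^{-1}(v)$, and $\mathbb{G}=\bigsqcup_{v\in V}v*\mathfrak{N}(V)$. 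First I would reduce to $E$ closed: the cone inequality defining intrinsic Lipschitzianity passes to the closure of $gr(f)$ in $\mathbb{G}$ --- one uses that $P_V$, $P_{\mathfrak{N}(V)}$ and $\lVert\cdot\rVert$ are continuous, so $C_V(\alpha)$ is closed, and that the inequality based at one fixed point of $gr(f)$ keeps $\varphi$ bounded along convergent sequences --- and the closure is again the graph of a function on $\overline{E}$ extending $\varphi$.

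Fix $\alpha'>\alpha$, to be chosen large, and split the open complement of the cone by the sign of the vertical component:
\[
D^{\pm}:=\bigl\{w\in\mathbb{G}:\lVert P_{\mathfrak{N}(V)}w\rVert>\alpha'\lVert P_V w\rVert,\ \pm\langle\pi_1(w),\mathfrak{n}(V)\rangle>0\bigr\},\qquad A^{\pm}:=\bigcup_{v\in E}vf(v)*D^{\pm}.
\]
Using that $P_{\mathfrak{N}(V)}$ is a homomorphism and that $P_V\bigl(w*(t\,\mathfrak{n}(V))\bigr)=P_V w$ (both from Proposition~\ref{prop:proiezioni}), one checks that $A^{\pm}\cap(v*\mathfrak{N}(V))$ is always an open half-line, upward resp.\ downward; that every fibre meets both $A^{+}$ and $A^{-}$ (let $t\to\pm\infty$ along $v*(t\,\mathfrak{n}(V))$, the $\mathfrak{N}(V)$-component running to $\pm\infty$ while $\lVert P_V\rVert$ stays bounded); and, since $C_V(\alpha)\subseteq C_V(\alpha')$, that $gr(f)\cap(A^{+}\cup A^{-})=\emptyset$. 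The crucial point is $A^{+}\cap A^{-}=\emptyset$: a common point $p=vf(v)*a=wf(w)*b$ with $a\in D^{+}$, $b\in D^{-}$ gives $(vf(v))^{-1}(wf(w))=a*b^{-1}$, whose $\mathfrak{N}(V)$-component has norm $\lVert P_{\mathfrak{N}(V)}a\rVert+\lVert P_{\mathfrak{N}(V)}b\rVert$, while $a*b^{-1}$ is so nearly vertical that $\lVert P_V(a*b^{-1})\rVert\le C_{\mathbb{G}}\bigl(\lVert P_V a\rVert+\lVert P_V b\rVert\bigr)$; for $\alpha'$ large this contradicts $(vf(v))^{-1}(wf(w))\in C_V(\alpha)$. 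Hence for every $v\in V$ the set $\mathbb{G}\setminus(A^{+}\cup A^{-})$ meets $v*\mathfrak{N}(V)$ in a nonempty compact interval, which reduces to $\{vf(v)\}$ when $v\in E$.

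Now put $\tilde\varphi(v):=\sup\{t\in\R:v*(t\,\mathfrak{n}(V))\notin A^{+}\}$, the lower endpoint of the half-line $A^{+}\cap(v*\mathfrak{N}(V))$, and $\tilde f(v):=\tilde\varphi(v)\,\mathfrak{n}(V)$; then $A^{+}$ is the strict epigraph $\{v*(t\,\mathfrak{n}(V)):t>\tilde\varphi(v)\}$, $gr(\tilde f)\cap A^{+}=\emptyset$, and $\tilde f=f$ on $E$. Choosing $\alpha'$ large enough also gives $D^{+}*D^{+}\subseteq D^{+}$, hence $A^{+}*D^{+}\subseteq A^{+}$; combining this with the translation identity $p=\bigl(v*((\tilde\varphi(v)+\varepsilon)\,\mathfrak{n}(V))\bigr)*(-\varepsilon\,\mathfrak{n}(V))$ one deduces $p*w\in A^{+}$ whenever $p\in gr(\tilde f)$ and $w\in D^{+}$. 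To finish, suppose $gr(\tilde f)\not\subseteq v\tilde f(v)\,C_V(\alpha'')$ for every $\alpha''$; fixing $\alpha''$ huge we obtain $p,p'\in gr(\tilde f)$ with $p^{-1}p'\notin C_V(\alpha'')$, and since $C_V(\alpha')^{c}=D^{+}\sqcup D^{-}$, the cone arithmetic --- applied also to the inverse, which remains nearly vertical up to a power loss --- yields $p^{-1}p'\in D^{+}$ or $(p')^{-1}p\in D^{+}$; in the first case $p'\in p*D^{+}\subseteq A^{+}$, in the second $p\in p'*D^{+}\subseteq A^{+}$, each contradicting $gr(\tilde f)\cap A^{+}=\emptyset$. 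Therefore $\tilde f$ is intrinsic Lipschitz --- with a larger constant, which suffices.

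The main obstacle, and the only genuinely non-Euclidean ingredient, is the ``cone arithmetic'' invoked twice above: bounding $\lVert P_V(g*h)\rVert$ by $C_{\mathbb{G}}\bigl(\lVert P_V g\rVert+\lVert P_V h\rVert\bigr)$ for nearly-vertical $g,h$ whose vertical parts have the same sign, and controlling $\lVert P_V(g^{-1})\rVert$ in terms of $\lVert P_V g\rVert$. Since $P_V$ is \emph{not} Lipschitz (the remark following Proposition~\ref{prop:proiezioni}), these estimates are not formal: they must be extracted from the polynomial product \eqref{opgr}, the homogeneity of the $\mathscr{Q}_i$, and the normality of the elements of $\G(\mathcal{Q}-1)$, in the spirit of the proofs of Propositions~\ref{cor:SSCM2.2.14} and~\ref{prop:2.2.11}. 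All the constants produced along the way depend only on $\mathbb{G}$ and on $\alpha$.
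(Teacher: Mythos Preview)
The paper does not give a proof of this statement; it is quoted from \cite{MR3060706}, so there is nothing in the present paper to compare your argument against. Your overall strategy---splitting the complement of a cone into upper and lower parts $D^\pm$, defining $A^\pm=gr(f)*D^\pm$, showing these are disjoint, and selecting the extension fibrewise as the lower boundary of $A^+$---is indeed essentially the approach of the cited reference, and it is correct in outline.

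The specific ``cone arithmetic'' estimates you invoke, however, are false as stated. Take $\mathbb H^1$ with $V=\{y=0\}$, $\mathfrak n(V)=e_2$, and $g=h=(\epsilon,T,\tfrac12\epsilon T)$ for $T\gg\epsilon$. Then by Proposition~\ref{prop:proiezioni} one has $P_Vg=(\epsilon,0,0)$, so $\lVert P_Vg\rVert=\epsilon$ and $g\in D^+$ once $T>\alpha'\epsilon$; but $g*g=(2\epsilon,2T,\epsilon T)$ has $P_V(g*g)=(2\epsilon,0,-\epsilon T)$, giving $\lVert P_V(g*g)\rVert\asymp\sqrt{\epsilon T}$, and the ratio $\sqrt{\epsilon T}/(2\epsilon)=\tfrac12\sqrt{T/\epsilon}$ is unbounded. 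So neither $\lVert P_V(g*h)\rVert\le C_{\mathbb G}\bigl(\lVert P_Vg\rVert+\lVert P_Vh\rVert\bigr)$ nor $D^+*D^+\subseteq D^+$ holds with a constant depending only on $\mathbb G$. What saves the argument is a weaker estimate with an $\alpha'$-dependent constant: writing $\tau=\lVert P_{\mathfrak N(V)}(a*b^{-1})\rVert=t_a+\lvert t_b\rvert$, the normality of $V$ gives $P_V(a*b^{-1})=P_V(a)*\bigl(\tau\mathfrak n(V)*P_V(b)^{-1}*(-\tau\mathfrak n(V))\bigr)$, and since $\lVert P_Va\rVert,\lVert P_Vb\rVert<\tau/\alpha'$, the homogeneity of the $\mathscr Q_i$ bounds this conjugate by $C\tau/(\alpha')^{1/s}$ (the worst layer being the top one), hence $\lVert P_V(a*b^{-1})\rVert\lesssim\tau/(\alpha')^{1/s}$. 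Thus $a*b^{-1}\notin C_V(\alpha)$ once $\alpha'\gtrsim\alpha^s$, which is the real content of the ``cone arithmetic''. The same correction applies to the semigroup step and to the final cone inclusion for $gr(\tilde f)$; the upshot is that the extension is intrinsic Lipschitz with a constant polynomially worse in $\alpha$ (of order $\alpha^s$), not linearly worse as your sketch suggests.
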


The following result is an immediate consequence of Theorem \ref{whole}:

\begin{proposizione}\label{intrlipsigfin}
If $f:E\subseteq V\to\mathfrak{N}(V)$ is an intrinsic Lipschitz function, then $\mathcal{S}^{\mathcal{Q}-1}\llcorner f(E)$ is $\sigma$-finite.
\end{proposizione}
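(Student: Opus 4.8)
The plan is to use the extension Theorem \ref{whole} to reduce the statement to the case $E=V$, i.e. to an \emph{entire} intrinsic Lipschitz graph, and then to invoke the local finiteness of $\mathcal{S}^{\mathcal{Q}-1}$ on such a graph.

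First I would apply Theorem \ref{whole} to obtain an intrinsic Lipschitz function $\tilde{f}\colon V\to\mathfrak{N}(V)$ with $\tilde{f}|_E=f$; let $\alpha$ be an intrinsic Lipschitz constant for $\tilde{f}$ and set $gr(\tilde{f}):=\{w\tilde{f}(w):w\in V\}$. Since $f(E)=\{w\tilde{f}(w):w\in E\}\subseteq gr(\tilde{f})$, we have $\mathcal{S}^{\mathcal{Q}-1}\llcorner f(E)\le\mathcal{S}^{\mathcal{Q}-1}\llcorner gr(\tilde{f})$ as measures, and a measure dominated by a $\sigma$-finite one is $\sigma$-finite; so it is enough to prove that $\mathcal{S}^{\mathcal{Q}-1}\llcorner gr(\tilde{f})$ is $\sigma$-finite. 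Because $\mathbb{G}\cong\R^n$ is second countable, hence Lindel\"of, for this it suffices to check that every point of $gr(\tilde{f})$ has a neighbourhood of finite $\mathcal{S}^{\mathcal{Q}-1}$-measure: countably many of these cover $gr(\tilde{f})$. One can even write down an explicit exhaustion: translating by $\tilde{f}(0)^{-1}$ (which preserves $\mathcal{S}^{\mathcal{Q}-1}$ and the class of intrinsic Lipschitz graphs over $V$, since $V$ is normal) we may assume $\tilde{f}(0)=0$, so that the cone condition at $0$ gives $gr(\tilde{f})\subseteq C_V(\alpha)$; then Proposition \ref{prop:proiezioni} yields $P_V(w\tilde{f}(w))=w$ and $P_{\mathfrak{N}(V)}(w\tilde{f}(w))=\tilde{f}(w)$, whence $\lVert\tilde{f}(w)\rVert\le\alpha\lVert w\rVert$ and $\lVert w\tilde{f}(w)\rVert\le(1+\alpha)\lVert w\rVert$, so $gr(\tilde{f})=\bigcup_{k\in\N}\bigl(gr(\tilde{f})\cap\overline{B(0,(1+\alpha)k)}\bigr)$ and it remains only to bound $\mathcal{S}^{\mathcal{Q}-1}\bigl(gr(\tilde{f})\cap\overline{B(0,R)}\bigr)$ for each $R>0$.

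The substantive input, and the step I expect to be the real obstacle, is precisely this last bound: a bounded portion of an entire intrinsic Lipschitz graph has finite $(\mathcal{Q}-1)$-dimensional spherical Hausdorff measure. This does \emph{not} follow formally from Proposition \ref{cor:2.2.19}, which estimates the measure of a splitting projection \emph{from above} by the measure of the set — the opposite direction to what is needed here; and since $P_V$ is not a Lipschitz map, one genuinely has to use the cone condition to keep $P_V$ from contracting $\mathcal{S}^{\mathcal{Q}-1}$-mass, i.e. one needs the codimension-one area formula / local Ahlfors $(\mathcal{Q}-1)$-regularity for intrinsic Lipschitz graphs in Carnot groups. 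For this I would invoke the intrinsic Lipschitz graph theory — see \cite{VITTONE} and \cite{MR3060706}, together with the description of entire intrinsic Lipschitz graphs as boundaries of sets of locally finite $\mathbb{G}$-perimeter recalled in Appendix \ref{def:perimetro} — which gives $\mathcal{S}^{\mathcal{Q}-1}\bigl(gr(\tilde{f})\cap\overline{B(0,R)}\bigr)<\infty$ for every $R>0$. Combining this with the exhaustion above shows that $\mathcal{S}^{\mathcal{Q}-1}\llcorner gr(\tilde{f})$, and hence $\mathcal{S}^{\mathcal{Q}-1}\llcorner f(E)$, is $\sigma$-finite.
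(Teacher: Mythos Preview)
Your proposal is correct and follows essentially the same approach as the paper: extend via Theorem \ref{whole} to an entire graph, then use that an entire intrinsic Lipschitz graph has locally finite $\mathcal{S}^{\mathcal{Q}-1}$-measure. The paper is simply more terse, citing Theorem 3.2.1 of \cite{MFSSC} directly for the bound $\mathcal{S}^{\mathcal{Q}-1}(gr(f)\cap B(0,R))<\infty$ rather than spelling out the exhaustion and discussing which ingredients do or do not suffice.
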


\begin{proof}
Theorem \ref{whole} together with Theorem 3.2.1 of \cite{MFSSC} immediately implies that $\mathcal{S}^{\mathcal{Q}-1}(gr(f)\cap B(0,R))<\infty$ for any $R>0$.
\end{proof}

From the notions of $C^1_\mathbb{G}$-surface and of intrinsic Lipschitz surface rise the two following definitions of rectifiability:

\begin{definizione}\label{def:rectif}
A Borel set $A\subseteq \mathbb{G}$ of finite $\mathcal{S}^{\mathcal{Q}-1}$-measure is said to be:
\begin{itemize}
    \item[(i)]$C^1_\mathbb{G}$-\emph{rectifiable} if there are countably many $C^1_\mathbb{G}$-surfaces $\Gamma_i$ such that $\mathcal{S}^{\mathcal{Q}-1}(A\setminus \bigcup_{i\in\N}\Gamma_i)=0$.
        \item[(ii)]\emph{intrinsic rectifiable} if there are countably many intrinsic Lipschitz graphs $\Gamma_i$ such that $\mathcal{S}^{\mathcal{Q}-1}(A\setminus \bigcup_{i\in\N}\Gamma_i)=0$.
\end{itemize}
\end{definizione}

The  following proposition is an adaptation of the well known fact that Borel sets can be written in an essentially unique way, as the union of a rectifiable and a purely unrectifiable set. For the Euclidean statement and its proof, we refer to Theorem 5.7 of \cite{DeLellis2008RectifiableMeasures}.

\begin{proposizione}(Decomposition Theorem)\label{th:dec}
Suppose $\mathscr{F}$ is a family of Borel sets in $\mathbb{G}$ for which $\mathcal{S}^{\mathcal{Q}-1}\llcorner C$ is $\sigma$-finite for any $C\in\mathscr{F}$.
Then,  for any Borel set $E\subseteq \mathbb{G}$ such that $\mathcal{S}^{\mathcal{Q}-1}(E)<\infty$, there are two Borel sets $E^u,E^r\subseteq E$ such that:
\begin{itemize}
    \item[(i)] $E^u\cup E^r=E$,
    \item[(ii)] $E^r$ is contained in countable union of elements of $\mathscr{F}$,
    \item[(iii)] $\mathcal{S}^{\mathcal{Q}-1}(E^u\cap C)=0$ for any $C\in\mathscr{F}$.
\end{itemize}
Such decomposition is unique up to $\mathcal{S}^{\mathcal{Q}-1}$-null sets, i.e. if $F^u$ and $F^r$ are Borel sets satisfing the three properties listed above, we have:
$$\mathcal{S}^{\mathcal{Q}-1}(E^r\triangle F^r)=\mathcal{S}^{\mathcal{Q}-1}(E^u\triangle F^u)=0.$$
\end{proposizione}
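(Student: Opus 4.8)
The plan is to adapt the classical exhaustion argument for the rectifiable/purely unrectifiable splitting (Theorem 5.7 of \cite{DeLellis2008RectifiableMeasures}), the only point requiring care being that members of $\mathscr{F}$ may be combined only in \emph{countable} unions so that all sets involved stay $\mathcal{S}^{\mathcal{Q}-1}$-measurable (recall $\mathcal{S}^{\mathcal{Q}-1}$ is Borel regular). Assuming $\mathscr{F}\neq\emptyset$, the empty case being trivial, I would first set
$$m:=\sup\Big\{\mathcal{S}^{\mathcal{Q}-1}\Big(E\cap\bigcup_{i\in\N}C_i\Big):C_i\in\mathscr{F}\text{ for every }i\in\N\Big\},$$
a supremum over all countable subfamilies of $\mathscr{F}$; since $E\cap\bigcup_iC_i$ is Borel and contained in $E$, we have $0\le m\le\mathcal{S}^{\mathcal{Q}-1}(E)<\infty$. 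For each $k\in\N$ I would choose a countable family $\{C^k_i\}_{i\in\N}\subseteq\mathscr{F}$ with $\mathcal{S}^{\mathcal{Q}-1}\big(E\cap\bigcup_iC^k_i\big)\ge m-1/k$, and then define $E^r:=E\cap\bigcup_{k,i}C^k_i$ and $E^u:=E\setminus E^r$. Since $\{C^k_i\}_{k,i}$ is again a countable subfamily, $\mathcal{S}^{\mathcal{Q}-1}(E^r)\le m$ by the definition of $m$, while $\mathcal{S}^{\mathcal{Q}-1}(E^r)\ge m-1/k$ for all $k$ forces $\mathcal{S}^{\mathcal{Q}-1}(E^r)=m$. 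Properties (i) and (ii) then hold by construction.

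The heart of the existence part is property (iii), which encodes the maximality of $m$. If there were a $C\in\mathscr{F}$ with $\mathcal{S}^{\mathcal{Q}-1}(E^u\cap C)>0$, then, since $E^r$ and $E^u\cap C=(E\cap C)\setminus E^r$ are disjoint Borel sets whose union is $E\cap\big(C\cup\bigcup_{k,i}C^k_i\big)$, the countable family $\{C^k_i\}_{k,i}\cup\{C\}$ would satisfy
$$\mathcal{S}^{\mathcal{Q}-1}\Big(E\cap\Big(C\cup\bigcup_{k,i}C^k_i\Big)\Big)=\mathcal{S}^{\mathcal{Q}-1}(E^r)+\mathcal{S}^{\mathcal{Q}-1}(E^u\cap C)=m+\mathcal{S}^{\mathcal{Q}-1}(E^u\cap C)>m,$$
contradicting the definition of $m$. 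Hence $\mathcal{S}^{\mathcal{Q}-1}(E^u\cap C)=0$ for every $C\in\mathscr{F}$, which is exactly (iii).

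For uniqueness, suppose $F^u,F^r$ is another decomposition of $E$. The key observation is that whenever a decomposition $(G^u,G^r)$ of $E$ satisfies (iii) and $D\subseteq\bigcup_iD_i$ with $D_i\in\mathscr{F}$, countable subadditivity gives $\mathcal{S}^{\mathcal{Q}-1}(G^u\cap D)\le\sum_i\mathcal{S}^{\mathcal{Q}-1}(G^u\cap D_i)=0$. Applying this to $(E^u,E^r)$ with $D=F^r$ yields $\mathcal{S}^{\mathcal{Q}-1}(E^u\cap F^r)=0$, and to $(F^u,F^r)$ with $D=E^r$ yields $\mathcal{S}^{\mathcal{Q}-1}(F^u\cap E^r)=0$. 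Now $E^r\subseteq E=F^u\cup F^r$ gives $E^r\setminus F^r\subseteq E^r\cap F^u$, and symmetrically $F^r\setminus E^r\subseteq F^r\cap E^u$, so $\mathcal{S}^{\mathcal{Q}-1}(E^r\triangle F^r)=0$; likewise $E^u\setminus F^u\subseteq E^u\cap F^r$ and $F^u\setminus E^u\subseteq F^u\cap E^r$ give $\mathcal{S}^{\mathcal{Q}-1}(E^u\triangle F^u)=0$.

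I do not expect a genuine obstacle here: the whole argument is elementary measure theory. The one subtlety worth flagging is that one cannot pass to arbitrary, possibly uncountable, unions of elements of $\mathscr{F}$, which is precisely why $E^r$ is produced by a countable exhaustion of the supremum $m$ rather than in a single step; the $\sigma$-finiteness of $\mathcal{S}^{\mathcal{Q}-1}\llcorner C$ is not needed in the proof itself and only matters for the intended applications of the proposition.
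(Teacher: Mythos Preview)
Your proof is correct and follows essentially the same exhaustion argument as the paper: both define a supremum over countable $\mathscr{F}$-coverings, extract a maximizing sequence, and obtain (iii) from maximality, with uniqueness via the mutual null-intersection observation. Your remark that the $\sigma$-finiteness hypothesis on $\mathcal{S}^{\mathcal{Q}-1}\llcorner C$ is not actually used in the proof is also accurate.
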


\begin{proof}
Define $\mathcal{R}(E):=\{E^\prime\subseteq E:\text{ there are }\{\Gamma_i\}_{i\in\N}\subseteq \mathscr{F}\text{ such that }E^\prime\subseteq \bigcup_{i\in\N}\Gamma_i\}$ and: $$\alpha:=\sup_{E^\prime\in\mathcal{R}(E)}\mathcal{S}^{\mathcal{Q}-1}(E^\prime).$$ 
Suppose $\{E_i:i\in\N\}\subseteq \mathcal{R}(E)$ is a maximizing sequence, i.e. $\lim_{i\to\infty} \mathcal{S}^{\mathcal{Q}-1}(E_i)=\alpha$. Then we let $E^r:=\bigcup_{i\in\N} E_i$ and note that $E_r$ is covered countably many sets in $\mathscr{F}$ and $\mathcal{S}^{\mathcal{Q}-1}(E^r)=\alpha$. The set $E\setminus E^r$ is unrectifiable with respect to $\mathscr{F}$. Indeed, if there was an intrinsic Lipschitz graph $\Gamma$ such that $\mathcal{S}^{\mathcal{Q}-1}(\mathbb{G}\setminus E\cap \Gamma)>0$, we would infer that: $$\mathcal{S}^{\mathcal{Q}-1}(E^r\cup (\mathbb{G}\setminus E^r \cap \Gamma))>\alpha.$$ 
Since $E^r\cup (\mathbb{G}\setminus E^r \cap \Gamma)\in\mathcal{R}(E)$, this would contradict the maximality of $\alpha$. 
If $F^r$ and $F^u$ are as in the statement, we have:
$$\mathcal{S}^{\mathcal{Q}-1}(E^r\cap E^u)=\mathcal{S}^{\mathcal{Q}-1}(E^r\cap F^u)=\mathcal{S}^{\mathcal{Q}-1}(F^r\cap E^u)=\mathcal{S}^{\mathcal{Q}-1}(F^r\cap F^u)=0.$$
Since $E^r\cup E^u=E=F^r\cup F^u$ and the above chain of identities proves the uniqueness of the decomposition.
\end{proof}

\begin{corollario}\label{corrii}
For any Borel set $E\subseteq \mathbb{G}$ such that $\mathcal{S}^{\mathcal{Q}-1}(E)<\infty$, there are two Borel sets $E^u,E^r\subseteq E$ such that:
\begin{itemize}
    \item[(i)] $E^u\cup E^r=E$,
    \item[(ii)] there are countably many intrinsic Lipschitz functions $f_i:V_i\to\mathfrak{N}(V_i)$, where $V_i\in\G(\mathcal{Q}-1)$, whose graphs cover $\mathcal{S}^{\mathcal{Q}-1}$-almost all $E^r$, 
    \item[(iii)] $\mathcal{S}^{\mathcal{Q}-1}(E^u\cap C)=0$ for any $C$ intrinsic Lipschitz graph.
\end{itemize}
\end{corollario}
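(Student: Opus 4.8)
The plan is to deduce Corollary \ref{corrii} directly from the Decomposition Theorem, Proposition \ref{th:dec}, by taking $\mathscr{F}$ to be the family of all intrinsic Lipschitz graphs in $\mathbb{G}$. Before invoking Proposition \ref{th:dec} one only needs to verify its hypothesis, namely that $\mathcal{S}^{\mathcal{Q}-1}\llcorner C$ is $\sigma$-finite for every $C\in\mathscr{F}$; this is exactly Proposition \ref{intrlipsigfin}. Hence Proposition \ref{th:dec} applies to the given Borel set $E$ with $\mathcal{S}^{\mathcal{Q}-1}(E)<\infty$ and produces Borel sets $E^u,E^r\subseteq E$ with $E^u\cup E^r=E$, with $E^r$ contained in a countable union of intrinsic Lipschitz graphs, and with $\mathcal{S}^{\mathcal{Q}-1}(E^u\cap C)=0$ for every intrinsic Lipschitz graph $C$. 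Items (i) and (iii) are thus immediate.

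For item (ii) it remains to pass from the covering of $E^r$ by intrinsic Lipschitz graphs to a covering by graphs of intrinsic Lipschitz functions defined on entire planes. By Definition \ref{def:intrgraph} each of the countably many intrinsic Lipschitz graphs $\Gamma_i$ covering $E^r$ has the form $\Gamma_i=g_i(D_i)$ for some $V_i\in\G(\mathcal{Q}-1)$, some Borel set $D_i\subseteq V_i$, and some intrinsic Lipschitz $g_i:D_i\to\mathfrak{N}(V_i)$. Applying the extension theorem, Theorem \ref{whole}, to each $g_i$ yields an intrinsic Lipschitz $f_i:V_i\to\mathfrak{N}(V_i)$ with $f_i|_{D_i}=g_i$, so that $\Gamma_i=gr(g_i)\subseteq gr(f_i)$. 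Consequently the graphs of the $f_i$ still cover $E^r$, which is (even slightly more than) what (ii) requires.

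There is no genuine obstacle here; the two points deserving a word of care are that the family $\mathscr{F}$ of intrinsic Lipschitz graphs is an admissible choice in Proposition \ref{th:dec} --- which it is, since that proposition only requires the $\sigma$-finiteness just checked and manipulates members of $\mathscr{F}$ solely as a covering family --- and that the extension step of Theorem \ref{whole} cannot destroy the covering, which is clear because a graph only enlarges when its defining function is extended to the whole plane.
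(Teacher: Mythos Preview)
Your proof is correct and follows essentially the same approach as the paper's own proof: apply Proposition \ref{th:dec} with $\mathscr{F}$ the family of intrinsic Lipschitz graphs (verifying the $\sigma$-finiteness hypothesis via Proposition \ref{intrlipsigfin}), and then use the extension Theorem \ref{whole} to upgrade the covering functions to ones defined on the whole plane $V_i$.
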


\begin{proof}
Thanks to Proposition \ref{intrlipsigfin} we know that every intrinsic Lipschitz graph is $\mathcal{S}^{\mathcal{Q}-1}$-$\sigma$-finite. If we choose $\mathscr{F}$ in the statement of Proposition \ref{th:dec} to be the family of all intrinsic Lipschitz graphs of $\mathbb{G}$, we get two sets $E^u$ and $E^r$ whose union is the whole $E$, such that $E^u$ has $\mathcal{S}^h$-null intersection with every intrinsic Lipschitz graph and $E_r$ can be covered by countably many graphs of intrinsic Lipschitz functions $f_i:E_i\subseteq V_i\to \mathfrak{N}(V_i)$. The conclusion follows from Theorem \ref{whole}.
\end{proof}


\section{The support of \texorpdfstring{$1$}{Lg}-codimensional measures with flat tangents is intrinsic rectifiable}\label{sec:main}

Throughout this section we assume $\phi$ to be a fixed Radon measure on $\mathbb{G}$ whose support is a compact set $K$ and such that for $\phi$-almost every $x\in \mathbb{G}$ we have:
\begin{itemize}
    \item[(i)] $
    0<\Theta_*^{\mathcal{Q}-1}(\phi,x)\leq \Theta^{\mathcal{Q}-1,*}(\phi,x)<\infty$,
    \item[(ii)] $\Tan_{\mathcal{Q}-1}(\phi,x)\subseteq \mathfrak{M}$, where $\mathfrak{M}$ is the family of $1$-codimensional flat measures introduced in Definition \ref{def:flatmeasures}.
\end{itemize}

The main goal of this section is to prove the following:

\begin{restatable}{teorema}{ueue}
\label{ueue}
There is an intrinsic Lipschitz graph $\Gamma$ such that $\phi(\Gamma)>0$.
\end{restatable}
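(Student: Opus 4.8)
The plan is to pass from $\phi$ to a spherical Hausdorff measure carried by a compact piece of $K$ which is Ahlfors $(\mathcal{Q}-1)$-regular and on which the tangents are \emph{uniformly} flat, and then to produce the graph on a large subset of such a piece by playing the big projections of Theorem \ref{th:conoeu} off against the cone estimate of Proposition \ref{prop:uno}.

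\emph{Step 1 (a good compact model).} Using Propositions \ref{prop:cpt}, \ref{prop:Ecorsivo} and \ref{prop:BIGGI} I would fix parameters $\vartheta,\gamma,\mu\in\N$ and $\nu=\nu(\vartheta,\gamma,\mu)\in\N$ so that $E:=\mathscr{E}^\phi_{\vartheta,\gamma}(\mu,\nu)$ is compact, $\phi(E)>0$, and $\vartheta^{-1}\rho^{\mathcal{Q}-1}\le\phi(B(y,\rho))\le\vartheta\rho^{\mathcal{Q}-1}$ together with $(1-1/\mu)\phi(B(y,\rho))\le\phi(B(y,\rho)\cap E)$ for all $y\in E$ and $0<\rho<1/\max\{\gamma,\nu\}$. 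Corollary \ref{cor:cor1} then gives $\vartheta^{-1}\mathcal{S}^{\mathcal{Q}-1}\llcorner E\le\phi\llcorner E\le\vartheta2^{\mathcal{Q}-1}\mathcal{S}^{\mathcal{Q}-1}\llcorner E$, so $\mathcal{S}^{\mathcal{Q}-1}(E)>0$; moreover, by Proposition \ref{prop:dens} together with the locality of tangents (Proposition \ref{prop:locality}) the measure $\mathcal{S}^{\mathcal{Q}-1}\llcorner E$ still has tangents in $\mathfrak{M}$ at $\mathcal{S}^{\mathcal{Q}-1}$-a.e.\ point of $E$. Finally, applying Egorov's theorem to the functions $y\mapsto d_{y,\rho}(\mathcal{S}^{\mathcal{Q}-1}\llcorner E,\mathfrak{M})$ — which tend to $0$ pointwise by Propositions \ref{prop1vsinfty} and \ref{prop:bil2} — I would pass to a further compact subset, still of positive $\mathcal{S}^{\mathcal{Q}-1}$-measure and still Ahlfors regular, on which this convergence is uniform. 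Renaming it $E$, the set $E$ now satisfies (\hyperlink{AD}{$\alpha$}) and (\hyperlink{flat}{$\beta$}) of the Introduction, and it suffices to find an intrinsic Lipschitz graph $\Gamma$ with $\mathcal{S}^{\mathcal{Q}-1}(E\cap\Gamma)>0$, since then $\phi(\Gamma)\ge\vartheta^{-1}\mathcal{S}^{\mathcal{Q}-1}(E\cap\Gamma)>0$.

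\emph{Step 2 (big projections and the bad set).} By Theorem \ref{th:conoeu} there is $\eta_2\in\N$ such that for $\mathcal{S}^{\mathcal{Q}-1}$-a.e.\ $x\in E$ and all small $r>0$ there is $W_{x,r}\in\G(\mathcal{Q}-1)$ with $\mathcal{S}^{\mathcal{Q}-1}\big(P_{W_{x,r}}(E\cap B(x,r))\big)\ge\eta_2^{-1}r^{\mathcal{Q}-1}$. Fix such an $x$ and $r$, write $W:=W_{x,r}$, pick a threshold $\tau>0$, and set
$$\mathscr{B}:=\big\{y\in E\cap B(x,r):\ |\langle\mathfrak{n}(W_{y,s}),\mathfrak{n}(W)\rangle|\le\tau\ \text{ for some }s\in(0,r)\big\},\qquad G:=(E\cap B(x,r))\setminus\mathscr{B}.$$
The role of $\mathscr{B}$ is that, by (\hyperlink{flat}{$\beta$}) and the geometric estimate underlying Proposition \ref{prop:uno} (cf.\ Figure \ref{fig1}), at a point $y\in\mathscr{B}$ the set $E\cap B(y,s)$ sits in a thin strip around $yW_{y,s}$ that is nearly orthogonal to $W$, so its $P_W$-image has small $\mathcal{S}^{\mathcal{Q}-1}\llcorner W$-measure; covering $\mathscr{B}$ economically and summing, one gets $\mathcal{S}^{\mathcal{Q}-1}\llcorner W\big(P_W(\mathscr{B})\big)\le\tfrac12\eta_2^{-1}r^{\mathcal{Q}-1}$ for $\tau$ small enough. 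Hence $\mathcal{S}^{\mathcal{Q}-1}\llcorner W\big(P_W(G)\big)\ge\tfrac12\eta_2^{-1}r^{\mathcal{Q}-1}$, and inequality \eqref{eq:n520} of Proposition \ref{cor:2.2.19} yields $\mathcal{S}^{\mathcal{Q}-1}(G)\ge\eta_2^{-1}r^{\mathcal{Q}-1}/(4\oldC{C:up})>0$.

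\emph{Step 3 (cone condition on $G$, and conclusion).} Given $p,q\in G$ I would apply Proposition \ref{prop:uno} with a suitable base point and scale: for far-apart pairs at $(x,r)$ with the plane $W$ directly, and for close pairs at $(p,s)$ with $s\approx d(p,q)$ and the plane $W_{p,s}$, checking that hypothesis (i) holds by (\hyperlink{flat}{$\beta$}) and hypothesis (ii) is the big-projection property. This produces a constant $\alpha=\alpha(\vartheta,k,\omega)>0$ with $q\in pC_W(\alpha)$ in the first case and $q\in pC_{W_{p,s}}(\alpha)$ in the second. Since $p\in G$ forces $|\langle\mathfrak{n}(W_{p,s}),\mathfrak{n}(W)\rangle|>\tau$ for every $s\in(0,r)$, all the planes $W_{p,s}$ lie in a fixed cone around $W$ in the Grassmannian, so — once $k,\omega$ and $\tau$ are tuned to make the opening of that cone compatible with the amplitude $\alpha$ — the relations between splitting projections of Subsection \ref{conesplit} (Propositions \ref{cor:SSCM2.2.14} and \ref{prop:2.2.11}) give $C_{W_{p,s}}(\alpha)\subseteq C_W(\alpha')$ for a fixed $\alpha'$. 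Thus $q\in pC_W(\alpha')$ for all $p,q\in G$. This cone inequality forces $P_W|_G$ to be injective (if $P_W(p^{-1}q)=0$ and $p\ne q$, then $P_{\mathfrak{N}(W)}(p^{-1}q)\ne0$, contradicting the inequality), so $G$ is the graph of a function $f:P_W(G)\subseteq W\to\mathfrak{N}(W)$, and the very same inequality is the intrinsic Lipschitz condition of Definition \ref{def:intrgraph}. Hence $G$ lies in an intrinsic Lipschitz graph $\Gamma$ with $\mathcal{S}^{\mathcal{Q}-1}(E\cap\Gamma)\ge\mathcal{S}^{\mathcal{Q}-1}(G)>0$, and by Step 1 this proves the theorem.

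\emph{Main obstacle.} The crux lies in Steps 2 and 3: because the splitting projections $P_V$ are not Lipschitz, both the smallness of $\mathcal{S}^{\mathcal{Q}-1}\llcorner W(P_W(\mathscr{B}))$ and the inclusion $C_{W_{p,s}}(\alpha)\subseteq C_W(\alpha')$ — uniformly over all $p\in G$ and all scales $0<s<r$ — require a delicate quantitative analysis, and one must choose $k$, $\omega$ and $\tau$ so that the amplitude produced by Proposition \ref{prop:uno} is compatible with the angle control used to keep the bad set's projection small. This is the technically hardest part, already anticipated in the discussion of Theorem \ref{th:conoeu} in the Introduction.
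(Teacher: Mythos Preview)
Your overall architecture is the paper's architecture: reduce to a compact Ahlfors-regular piece with uniform flatness, establish big projections, separate a bad set with small $P_W$-image from a good set, and show the good set satisfies a $C_W$-cone condition. Steps~1 and~2 are essentially correct; the paper carries them out via the dyadic cubes of Appendix~\ref{AppendiceA} (Propositions~\ref{proppers}, \ref{propCa} and Theorem~\ref{TH:proiezioni}), which replaces your informal ``covering $\mathscr B$ economically'' by disjoint cubes with controlled measure, but this is a technical rather than conceptual difference.

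The genuine gap is in Step~3. The inclusion $C_{W_{p,s}}(\alpha)\subseteq C_W(\alpha')$ does \emph{not} follow from the angle bound $|\langle\mathfrak n(W_{p,s}),\mathfrak n(W)\rangle|>\tau$ when $\tau$ is small and $\alpha$ is the (large) aperture produced by Proposition~\ref{prop:uno}. Indeed, the point $w=\mathfrak n(W)\in V_1$ satisfies $P_W(w)=0$, so $w\notin C_W(\alpha')$ for any finite $\alpha'$, while $\|P_{\mathfrak N(W_{p,s})}(w)\|=|\langle\mathfrak n(W),\mathfrak n(W_{p,s})\rangle|$ and $\|P_{W_{p,s}}(w)\|\ge|\pi_{\mathscr W_{p,s}}(w)|=\sqrt{1-\langle\mathfrak n(W),\mathfrak n(W_{p,s})\rangle^2}$, so $w\in C_{W_{p,s}}(\alpha)$ as soon as $\alpha\ge\tau/\sqrt{1-\tau^2}$. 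Since Proposition~\ref{prop:cono} only yields $\alpha>\alpha_0$ with $\alpha_0$ large and you need $\tau$ small for Step~2, this inequality is certainly met, and the asserted inclusion fails. Propositions~\ref{cor:SSCM2.2.14} and~\ref{prop:2.2.11} do not help here: they only compare $\|P_V(\cdot)\|$ with $\|\cdot\|$, not $\|P_V(\cdot)\|$ with $\|P_W(\cdot)\|$ for two different planes.

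The fix --- and this is exactly what the paper does in Theorem~\ref{th:intr:lipgraph} --- is to change the definition of the bad set from angle-based to projection-based: declare a sub-cube $Q$ of $Q_0$ bad when $\mathcal S^{\mathcal Q-1}\big(P_{\Pi(Q_0)}(E(\vartheta,\gamma)\cap Q)\big)$ is below the threshold in \eqref{eq:50}. Then a cube $Q'$ in the complement of the bad family has, \emph{by construction}, big projections on the fixed plane $W=\Pi(Q_0)$ at every sub-scale (equation \eqref{eq:cono3}), so hypothesis~(ii) of Proposition~\ref{prop:cono} is satisfied with $W$ itself at the scale $\diam Q_{x_i}$ adapted to each pair $x_1,x_2$ (equation \eqref{eq:cono1}). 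The conclusion is then directly $x_1\in x_2C_W(2\alpha_0)$, with no cone transfer needed, and Proposition~\ref{intrgraph} finishes the job. In other words, the correct route is not ``apply Proposition~\ref{prop:uno} with $W_{p,s}$ and then rotate the cone'', but ``use the good-set condition to apply Proposition~\ref{prop:uno} with the fixed $W$ at every scale''. The smallness of the bad set's projection is then controlled exactly as you sketch, but using disjoint maximal cubes rather than a Vitali covering (inequalities \eqref{eq:n51}--\eqref{eq:n57}).
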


The strategy we employ in order to prove Theorem \ref{ueue} is divided in four parts. First of all in Subsection \ref{flat:1} we show that the hypothesis (ii) on $\phi$ implies that for $\phi$-almost any $x\in K$ and $r>0$ sufficiently small, there is a plane $V_{x,r}$ for which $K$ as a set is very close in the Hausdorff distance to $V_{x,r}$. In Subsection \ref{sub:project} we prove that if $K\cap B(x,r)$ has big projection on some plane $W$, then $W$ is very close to $V_{x,r}$ and there exists an $\alpha>0$ such that for any $y,z\in B(x,r)$ for which $d_H(y,z)\geq \text{dist} (W,V_{x,r}) r$, we have $z\in yC_W(\alpha)$. Subsection \ref{big:proj} is the technical core of this section, and its main result Theorem \ref{TH:proiezioni}, shows that for $\phi$-almost any $x\in K$ we have that the set $B(x,r)\cap K$ has a big projection on $V_{x,r}$. Finally, in Subsection \ref{intr:graph} making use of the results of the previous subsections, we construct the wanted $\phi$-positive intrinsic Lipschitz graph.

\subsection{Geometric implications of flat tangents}\label{flat:1}

In this subsection we reformulate the hypothesis (ii) on $\phi$ in more geometric terms. Furthermore, in the following Definition \ref{def:metr}, we introduce two functionals on Radon measures that will be used in the following to estimate at small scales the distance of $\supp(\phi)$ from the  planes in $\G(\mathcal{Q}-1)$. These functionals can be considered the Carnot analogue of the \emph{bilateral beta numbers} of Chapter 2 of  \cite{DavidSemmes} or of the functional $d(\cdot,\mathfrak{M})$ of Section 2 of \cite{Preiss1987GeometryDensities}.

\begin{definizione}\label{def:metr}
For any $x\in\mathbb{G}$ and any $r>0$ we define the functionals:
\begin{equation}
    \begin{split}
        d_{x,r}(\phi,\mathfrak{M}):=&\inf_{\substack{\Theta>0,\\ V\in \G(\mathcal{Q}-1)}} \frac{F_{x,r}(\phi,\Theta \mathcal{S}^{\mathcal{Q}-1}\llcorner xV)}{r^{\mathcal{Q}}},\qquad \text{and}\qquad
        \tilde{d}_{x,r}(\phi,\mathfrak{M}):=\inf_{\substack{\Theta>0,~z\in\mathbb{G},\\ V\in \G(\mathcal{Q}-1)}}\frac{F_{x,r}(\phi,\Theta\mathcal{S}^{\mathcal{Q}-1}\llcorner zV)}{r^{\mathcal{Q}}},
        \nonumber
    \end{split}
\end{equation}
where $F_{x,r}$ was introduced in \eqref{eq:F}.
\end{definizione}

In the following proposition we summarize some useful properties of the functionals introduced above.

\begin{proposizione}\label{prop:stab}
The functionals $d_{x,r}(\cdot,\mathfrak{M})$ and $\tilde{d}_{x,r}(\cdot,\mathfrak{M})$ satisfy the following properties.
\begin{itemize}
    \item[(i)] for any $x\in\mathbb{G}$, $k>0$ and $r>0$ we have $d_{x,kr}(\phi,\mathfrak{M})=d_{0,k}(r^{-(\mathcal{Q}-1)}T_{x,r}\phi,\mathfrak{M})$,
    \item[(ii)]for any $r>0$ the function $x\mapsto d_{x,r}(\phi,\mathfrak{M})$ is continuous,
    \item[(iii)] for any $x,y\in\mathbb{G}$ and $r,s>0$ for which $B(y,s)\subseteq B(x,r)$, we have $(s/r)^{\mathcal{Q}}\tilde{d}_{y,s}(\phi,\mathfrak{M})\leq \tilde{d}_{x,r}(\phi,\mathfrak{M})$,
\item[(iv)] for any $x\in \mathbb{G}$ and any $s\leq r$, we have $(s/r)^{\mathcal{Q}}d_{x,s}(\phi,\mathfrak{M})\leq d_{x,r}(\phi,\mathfrak{M})$.
\end{itemize}
\end{proposizione}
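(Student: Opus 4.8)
The plan is to verify the four items in turn; none of the structural hypotheses on $\phi$ are needed, only that $\phi$ is a Radon measure (so that $\phi(\overline{B(x,r)})<\infty$, closed balls being compact). Items (iii) and (iv) are elementary monotonicity facts, (i) is a change of variables, and (ii) is the genuinely delicate point.

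I would start with (iv). If $s\le r$ then $\overline{B(x,s)}\subseteq\overline{B(x,r)}$, hence $\lip(\overline{B(x,s)})\subseteq\lip(\overline{B(x,r)})$ and therefore $F_{x,s}(\phi,\mu)\le F_{x,r}(\phi,\mu)$ for every Radon measure $\mu$. Applying this with $\mu=\Theta\mathcal{S}^{\mathcal{Q}-1}\llcorner xV$ and passing to the infimum over $\Theta>0$ and $V\in\G(\mathcal{Q}-1)$ yields $s^{\mathcal{Q}}d_{x,s}(\phi,\mathfrak{M})\le r^{\mathcal{Q}}d_{x,r}(\phi,\mathfrak{M})$, which is (iv). Item (iii) follows in the same way: from $B(y,s)\subseteq B(x,r)$ one gets $\overline{B(y,s)}\subseteq\overline{B(x,r)}$ by approximating each point of $\partial B(y,s)$ from inside along dilations, hence $\lip(\overline{B(y,s)})\subseteq\lip(\overline{B(x,r)})$ and $F_{y,s}(\phi,\mu)\le F_{x,r}(\phi,\mu)$; crucially, in the definition of $\tilde d$ the centre $z$ of the competing plane ranges over all of $\mathbb{G}$, so the entire family $\{\Theta\mathcal{S}^{\mathcal{Q}-1}\llcorner zV:\Theta>0,\ z\in\mathbb{G},\ V\in\G(\mathcal{Q}-1)\}$ is admissible for both functionals, and taking the infimum gives $s^{\mathcal{Q}}\tilde d_{y,s}(\phi,\mathfrak{M})\le r^{\mathcal{Q}}\tilde d_{x,r}(\phi,\mathfrak{M})$.

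For (i), fix $x,r,k$ and write $\phi_r:=r^{-(\mathcal{Q}-1)}T_{x,r}\phi$. The map $\Phi:=\delta_{1/r}\circ\tau_{x^{-1}}$ satisfies $T_{x,r}\mu=\Phi_{\#}\mu$ for every Radon measure $\mu$, is a bijection of $\mathbb{G}$ carrying $\overline{B(x,kr)}$ onto $\overline{B(0,k)}$, and scales the metric by the factor $1/r$ (by left-invariance and $1$-homogeneity of $d$, using that $\delta_{1/r}$ is a group automorphism); consequently $f\mapsto r\,(f\circ\Phi)$ is a bijection of $\lip(\overline{B(0,k)})$ onto $\lip(\overline{B(x,kr)})$. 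Using in addition the identity $T_{x,r}(\mathcal{S}^{\mathcal{Q}-1}\llcorner xV)=r^{\mathcal{Q}-1}\,\mathcal{S}^{\mathcal{Q}-1}\llcorner V$ for $V\in\G(\mathcal{Q}-1)$, which follows from left-invariance of $\mathcal{S}^{\mathcal{Q}-1}$, dilation-invariance of $V$ and the scaling $\mathcal{S}^{\mathcal{Q}-1}(\delta_rA)=r^{\mathcal{Q}-1}\mathcal{S}^{\mathcal{Q}-1}(A)$, a direct change of variables gives, for all $f\in\lip(\overline{B(0,k)})$, $\Theta>0$ and $V\in\G(\mathcal{Q}-1)$, the identity $\int f\,d\phi_r-\Theta\int f\,d(\mathcal{S}^{\mathcal{Q}-1}\llcorner V)=r^{-\mathcal{Q}}\bigl(\int r(f\circ\Phi)\,d\phi-\Theta\int r(f\circ\Phi)\,d(\mathcal{S}^{\mathcal{Q}-1}\llcorner xV)\bigr)$. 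Taking absolute values, the supremum over $f$ (equivalently over $r\,(f\circ\Phi)\in\lip(\overline{B(x,kr)})$), then the infimum over $\Theta$ and $V$, and dividing by $k^{\mathcal{Q}}$, produces $d_{0,k}(\phi_r,\mathfrak{M})=(kr)^{-\mathcal{Q}}\inf_{\Theta,V}F_{x,kr}(\phi,\Theta\mathcal{S}^{\mathcal{Q}-1}\llcorner xV)=d_{x,kr}(\phi,\mathfrak{M})$, which is (i).

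Item (ii) is the main obstacle. I would fix $r>0$ and $x_0\in\mathbb{G}$ and prove both semicontinuities of $y\mapsto d_{y,r}(\phi,\mathfrak{M})$ at $x_0$ through a competitor argument. Two reductions make the infimum tractable: testing with the radial bump $z\mapsto(r/2-d(y,z))^+$ and invoking Proposition \ref{unif} shows that $\int (r/2-d(y,\cdot))^+\,d(\mathcal{S}^{\mathcal{Q}-1}\llcorner yV)$ is a strictly positive constant depending only on $r$, so that comparing with the competitor $\Theta=0$ one may restrict the infimum defining $d_{y,r}(\phi,\mathfrak{M})$ to $\Theta\le M$, where $M$ depends only on $r$ and on $\sup_{d(y,x_0)\le1}\phi(\overline{B(y,r)})\le\phi(\overline{B(x_0,r+1)})<\infty$; moreover, by translation-invariance and Proposition \ref{prop:rapp}, $\mathcal{S}^{\mathcal{Q}-1}(\overline{B(z,t)}\cap yV)\le C(t)$ with $C(t)$ independent of $y,z,V$. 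Now, taking an almost-optimal competitor $(\Theta_0,V_0)$ for $d_{x_0,r}(\phi,\mathfrak{M})$ with $\Theta_0\le M$ and using $(\Theta_0,yV_0)$ as a competitor for $d_{y,r}(\phi,\mathfrak{M})$, I would estimate $F_{y,r}(\phi,\Theta_0\mathcal{S}^{\mathcal{Q}-1}\llcorner yV_0)$ against $F_{x_0,r}(\phi,\Theta_0\mathcal{S}^{\mathcal{Q}-1}\llcorner x_0V_0)$: for $g\in\lip(\overline{B(y,r)})$, setting $\rho:=d(x_0,y)$, the truncation $g_\rho:=(g-2\rho)^+$ belongs to $\lip(\overline{B(x_0,r)})$ (because $g$ is $1$-Lipschitz and vanishes off $\overline{B(y,r)}$, so $\{g>2\rho\}\subseteq B(x_0,r)$) and satisfies $\lVert g-g_\rho\rVert_\infty\le2\rho$, while writing $\mathcal{S}^{\mathcal{Q}-1}\llcorner yV_0=(\tau_{y*x_0^{-1}})_{\#}(\mathcal{S}^{\mathcal{Q}-1}\llcorner x_0V_0)$ the cost of replacing $yV_0$ by $x_0V_0$ is bounded by $C'\,\omega(\lVert y*x_0^{-1}\rVert)$, with $C'$ depending only on $x_0,r$ and $\omega$ a modulus of uniform continuity of the group operation on a fixed compact set. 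Summing the errors yields $F_{y,r}(\phi,\Theta_0\mathcal{S}^{\mathcal{Q}-1}\llcorner yV_0)\le F_{x_0,r}(\phi,\Theta_0\mathcal{S}^{\mathcal{Q}-1}\llcorner x_0V_0)+\varepsilon(y)$ with $\varepsilon(y)\to0$ as $y\to x_0$ uniformly in the admissible competitors, hence $\limsup_{y\to x_0}d_{y,r}(\phi,\mathfrak{M})\le d_{x_0,r}(\phi,\mathfrak{M})$; running the same computation with the roles of $x_0$ and $y$ interchanged gives $d_{x_0,r}(\phi,\mathfrak{M})\le\liminf_{y\to x_0}d_{y,r}(\phi,\mathfrak{M})$, and continuity follows. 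The genuine difficulty here is that the splitting projections are not Lipschitz and the ball $B(y,r)$ genuinely moves with $y$, so the two suprema cannot be compared directly; the truncation $g\mapsto g_\rho$ and the use of uniform continuity (rather than Lipschitzness) of $\tau_{y*x_0^{-1}}$ on compacta are precisely what circumvents this.
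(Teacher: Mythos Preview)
Your argument is correct. Items (i), (iii), (iv) match the paper's reasoning (the paper merely says (iii), (iv) ``follow directly by the definitions'' and gives the same change-of-variables computation for (i)).

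For (ii) you take a genuinely different route. The paper first applies (i) to rewrite $d_{x,r}(\phi,\mathfrak{M})=d_{0,1}(r^{-(\mathcal{Q}-1)}T_{x,r}\phi,\mathfrak{M})$ and $d_{y,r}(\phi,\mathfrak{M})=d_{0,1}(r^{-(\mathcal{Q}-1)}T_{y,r}\phi,\mathfrak{M})$; in this form the competing planes pass through the origin for \emph{both} points, so one never has to translate $xV$ to $yV$. Choosing $(\Theta^*,V^*)$ almost optimal for $y$ and an almost-optimal test function $f^*\in\lip(\overline{B(0,1)})$ for $x$, one is reduced to bounding
\[
r^{-(\mathcal{Q}-1)}\int \big\lvert f^*(\delta_{1/r}(x^{-1}w))-f^*(\delta_{1/r}(y^{-1}w))\big\rvert\, d\phi(w),
\]
and the paper asserts the quantitative Lipschitz-type estimate $\lvert d_{x,r}(\phi,\mathfrak{M})-d_{y,r}(\phi,\mathfrak{M})\rvert\le r^{-\mathcal{Q}}d(x,y)\,\phi(B(x,r+d(x,y)))$. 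Your approach instead keeps the problem at scale $r$, translates the competitor from $x_0V_0$ to $yV_0$, and uses the truncation $g\mapsto (g-2\rho)^+$ together with uniform continuity of left translation on compacta. What the paper's trick buys is that the plane-moving step disappears entirely and one compares two measures against a \emph{fixed} flat measure; what your approach buys is that it is entirely self-contained (it does not rely on (i)) and is explicit about the fact that only a modulus of continuity (not a Lipschitz bound) is available for the conjugation term $z\mapsto z^{-1}(x_0y^{-1})z$---a point the paper's quantitative bound glosses over, since $\lVert w^{-1}xy^{-1}w\rVert$ is in general not controlled by $d(x,y)$ in a non-abelian Carnot group, though of course it still tends to $0$ uniformly on compacta, which is all that continuity requires.
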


\begin{proof}
It is immediate to see that $f$ belongs to $\lip(\overline{B(x,kr)})$ if and only if there is a $g\in \lip(\overline{B(0,k)})$ such that $f(z)=rg(\delta_{1/r}(x^{-1}z))$. This implies that:
\begin{equation}
    \begin{split}
        \frac{1}{(kr)^\mathcal{Q}}\bigg(\int f d\phi-\Theta\int fd\mathcal{S}^{\mathcal{Q}-1}\llcorner xV\bigg)=&\frac{1}{k^{\mathcal{Q}}r^{\mathcal{Q}-1}}\bigg(\int g(\delta_{1/r}(x^{-1}z)) d\phi(z)-\Theta\int g(\delta_{1/r}(x^{-1}z))d\mathcal{S}^{\mathcal{Q}-1}\llcorner xV\bigg)\\
        =&\frac{1}{k^{\mathcal{Q}}}\bigg(\int g(z) d\frac{T_{x,r}\phi}{r^{Q-1}}(z)-\Theta\int g(z)d\mathcal{S}^{\mathcal{Q}-1}\llcorner V\bigg),
    \end{split}
    \nonumber
\end{equation}
and this proves (i). To show that the map $x\mapsto d_{x,r}(\phi,\mathfrak{M})$ is continuous, we prove the following stronger fact. For any $x,y\in \mathbb{G}$ we have:
\begin{equation}
    \lvert d_{x,r}(\phi,\mathfrak{M})-\delta_{y,r}(\phi,\mathfrak{M})\rvert\leq \frac{d(x,y)}{r^{\mathcal{Q}}}\phi(B(x,r+d(x,y))).
    \label{eq:2020}
\end{equation}
In order to prove \eqref{eq:2020}, for any $\epsilon>0$ we let $\Theta^*>0$ and $V^*\in \G(\mathcal{Q}-1)$ be such that:
    $$\Big\lvert \int fd\frac{T_{y,r}\phi}{r^{\mathcal{Q}-1}}-\Theta^*\int f d\mathcal{S}^{\mathcal{Q}-1}\llcorner V^*\Big\rvert\leq d_{y,r}(\phi,\mathfrak{M})+\epsilon,\text{ for any }f\in \lip(\overline{B(0,1)}),$$
Furthermore, by definition of $d_{y,r}$ we can find an $f^*\in \lip(\overline{B(0,1)})$ such that:
    $$d_{x,r}(\phi,\mathfrak{M})-\epsilon\leq\Big\lvert \int f^*d\frac{T_{x,r}\phi}{r^{\mathcal{Q}-1}}-\Theta^*\int f^* d\mathcal{S}^{\mathcal{Q}-1}\llcorner V_*\Big\rvert.$$
This choice of $f^*$, $\Theta^*$ and $V^*$ implies:
\begin{equation}
\begin{split}
    d_{x,r}(\phi,\mathfrak{M})-d_{y,r}(\phi,\mathfrak{M})\leq \Big\lvert \int f^*d\frac{T_{x,r}\phi}{r^{\mathcal{Q}-1}}-\Theta^*&\int f^* d\mathcal{S}^{\mathcal{Q}-1}\llcorner V^*\Big\rvert-\Big\lvert \int f^*d\frac{T_{y,r}\phi}{r^{\mathcal{Q}-1}}-\Theta^*\int f^* d\mathcal{S}^{\mathcal{Q}-1}\llcorner V^*\Big\rvert+2\epsilon\\
    \leq  \Big\lvert \int f^*d\frac{T_{x,r}\phi}{r^{\mathcal{Q}-1}}- \int f^*d\frac{T_{y,r}\phi}{r^{\mathcal{Q}-1}}\Big\rvert+2\epsilon\leq &r^{-(\mathcal{Q}-1)}\int \lvert f^*(\delta_{1/r}(x^{-1}w))-f^*(\delta_{1/r}(y^{-1}w))\rvert d\phi(w)+2\epsilon\\
    \leq & \frac{d(x,y)}{r^{\mathcal{Q}}}\phi(B(x,r+d(x,y)))+2\epsilon.
    \nonumber
    \end{split}
\end{equation}
Interchanging $x$ and $y$, the bound \eqref{eq:2020} is proved thanks to the arbitrariness of $\epsilon$. Finally the statements (iii) and (iv) follow directly by the definitions.
\end{proof}

The following proposition allows us to rephrase the rather geometric condition on $\phi$ that is the flatness of the tangents, into a more malleable functional-analytic condition that is the $\phi$-almost everywhere convergence of the functions $x\mapsto d_{x,kr}(\phi,\mathfrak{M})$ to $0$.

\begin{proposizione}\label{prop:flatty}
The two following conditions are equivalent:
\begin{itemize}
    \item[(i)]  $\lim_{r\to 0} d_{x, kr}(\phi,\mathfrak{M})=0$ for $\phi$-almost every $x\in \mathbb{G}$ and any $k>0$,
    \item[(ii)]$\Tan(\phi,x)\subseteq \mathfrak{M}$ for $\phi$-almost every $x\in \mathbb{G}$.
\end{itemize}
\end{proposizione}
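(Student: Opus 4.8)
The plan is to prove the equivalence at every point $x\in\mathbb{G}$ where the two–sided density bound $0<\Theta_*^{\mathcal{Q}-1}(\phi,x)\le\Theta^{\mathcal{Q}-1,*}(\phi,x)<\infty$ holds (a standing assumption of this section, valid $\phi$–a.e.), which by Proposition \ref{prop:stab}(i) suffices. The three workhorses will be the scaling identity $d_{x,kr}(\phi,\mathfrak{M})=d_{0,k}\big(r^{-(\mathcal{Q}-1)}T_{x,r}\phi,\mathfrak{M}\big)$ of Proposition \ref{prop:stab}(i), the equivalence \emph{weak convergence} $\Leftrightarrow$ \emph{$F_K\to 0$} of Proposition \ref{prop:convF}, and the weak continuity $\mathfrak{n}(V_i)\to\mathfrak{n}(V)\Rightarrow\mathcal{S}^{\mathcal{Q}-1}\llcorner V_i\rightharpoonup\mathcal{S}^{\mathcal{Q}-1}\llcorner V$ of Proposition \ref{prop:pianconv}. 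First I would record two consequences of the density bounds: for any infinitesimal $r_i$ the rescalings $\nu_i:=r_i^{-(\mathcal{Q}-1)}T_{x,r_i}\phi$ have locally uniformly bounded mass, hence are weakly precompact; and every $\nu\in\Tan_{\mathcal{Q}-1}(\phi,x)$ satisfies $\nu(\overline{B(0,\rho)})\ge\tfrac12\Theta_*^{\mathcal{Q}-1}(\phi,x)\,\rho^{\mathcal{Q}-1}$ for all $\rho>0$, so that $\nu\llcorner B(0,k)\ne 0$ for every $k>0$ --- a fact that will be used to rule out degeneracies.

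For (ii)$\Rightarrow$(i) I would argue by contradiction. Fix $k>0$ and suppose $d_{x,kr_i}(\phi,\mathfrak{M})\ge\eta>0$ along some $r_i\to 0$; passing to a subsequence, $\nu_i\rightharpoonup\nu$ with $\nu\in\Tan_{\mathcal{Q}-1}(\phi,x)$, and by (ii) $\nu=\Theta_0\,\mathcal{S}^{\mathcal{Q}-1}\llcorner V_0\in\mathfrak{M}$. Since $0V_0=V_0$, $\nu$ is admissible in the infimum defining $d_{0,k}(\nu_i,\mathfrak{M})$, so Propositions \ref{prop:stab}(i) and \ref{prop:convF} give $d_{x,kr_i}(\phi,\mathfrak{M})=d_{0,k}(\nu_i,\mathfrak{M})\le k^{-\mathcal{Q}}F_{0,k}(\nu_i,\nu)\to 0$, contradicting $d_{x,kr_i}(\phi,\mathfrak{M})\ge\eta$. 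Hence $\lim_{r\to 0}d_{x,kr}(\phi,\mathfrak{M})=0$ for every $k>0$.

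The substantive direction is (i)$\Rightarrow$(ii). Take $\nu\in\Tan_{\mathcal{Q}-1}(\phi,x)$, say $\nu_i\rightharpoonup\nu$ along $r_i\to 0$, and fix $k\ge 2$. By (i) and Proposition \ref{prop:stab}(i), $d_{0,k}(\nu_i,\mathfrak{M})=d_{x,kr_i}(\phi,\mathfrak{M})\to 0$, so there are $\Theta_i>0$ and $V_i\in\G(\mathcal{Q}-1)$ with $F_{0,k}\big(\nu_i,\Theta_i\mathcal{S}^{\mathcal{Q}-1}\llcorner V_i\big)\to 0$. The delicate point is to keep $\{\Theta_i,V_i\}$ from degenerating: testing against a fixed $g\in\lip(\overline{B(0,k)})$ that is $\ge 1$ on $B(0,1)$ and using $\mathcal{S}^{\mathcal{Q}-1}\llcorner V_i(B(0,1))=1$ (see the proof of Proposition \ref{cor:2.2.19}) together with the upper mass bound on $\nu_i$ forces $\{\Theta_i\}$ bounded above; and if $\Theta_i\to 0$ along a subsequence, then $c(V_i)\le\oldC{C:up}$ forces $\Theta_i\mathcal{S}^{\mathcal{Q}-1}\llcorner V_i\rightharpoonup 0$, whence $\nu\llcorner B(0,k)=0$, contradicting the lower bound recorded above. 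Thus eventually $\Theta_i$ lies in a compact subset of $(0,\infty)$; since the normals $\mathfrak{n}(V_i)$ lie in the compact unit sphere of $V_1$, along a subsequence $\Theta_i\to\Theta\in(0,\infty)$ and $\mathfrak{n}(V_i)\to\mathfrak{n}$, and Proposition \ref{prop:pianconv} yields $V\in\G(\mathcal{Q}-1)$ with $\Theta_i\mathcal{S}^{\mathcal{Q}-1}\llcorner V_i\rightharpoonup\Theta\,\mathcal{S}^{\mathcal{Q}-1}\llcorner V$. Combining this with $\nu_i\rightharpoonup\nu$ and $F_{0,k}\to 0$ gives $\int f\,d\nu=\Theta\int f\,d\mathcal{S}^{\mathcal{Q}-1}\llcorner V$ for all $f\in\lip(\overline{B(0,k)})$, i.e. $\nu\llcorner B(0,k)=\Theta\,\mathcal{S}^{\mathcal{Q}-1}\llcorner V\llcorner B(0,k)$.

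It then remains to patch over scales. Running the previous step for every $k\ge 2$ produces, for each $k$, a nonzero flat measure coinciding with $\nu$ on $B(0,k)$; since two nonzero flat measures agreeing on a ball centred at the origin must coincide everywhere (their supports, being $(\mathcal{Q}-1)$–planes through $0$, coincide, and then so do their densities), all these flat measures are one and the same $\Theta\,\mathcal{S}^{\mathcal{Q}-1}\llcorner V$. Letting $k\to\infty$ gives $\nu=\Theta\,\mathcal{S}^{\mathcal{Q}-1}\llcorner V\in\mathfrak{M}$, hence $\Tan_{\mathcal{Q}-1}(\phi,x)\subseteq\mathfrak{M}$. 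I expect the main obstacle to be precisely the non-degeneracy of $\{\Theta_i,V_i\}$ in the direction (i)$\Rightarrow$(ii): this is where the \emph{two-sided} density hypothesis is indispensable and where Propositions \ref{cor:2.2.19} and \ref{prop:pianconv} do the real work.
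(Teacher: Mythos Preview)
Your proposal is correct and follows essentially the same route as the paper: both directions hinge on Proposition~\ref{prop:stab}(i), Proposition~\ref{prop:convF}, and Proposition~\ref{prop:pianconv} in exactly the way you describe, including the extraction of convergent $(\Theta_i,V_i)$ and the patching over scales $k$. Your (ii)$\Rightarrow$(i) is in fact marginally cleaner than the paper's (which goes through compactness of $\lip(\overline{B(0,k)})$ rather than directly plugging the flat limit as a competitor), and your explicit non-degeneracy of $\Theta_i$ via the lower density bound is a point the paper leaves implicit.
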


\begin{proof}
Let us prove that (i) implies (ii). Fix $k>0$ and suppose that $\{r_i\}_{i\in\N}$ is an infinitesimal sequence such that:
\begin{equation}
    r_i^{-(\mathcal{Q}-1)}T_{x,r_i}\phi\rightharpoonup \nu\in \Tan_{\mathcal{Q}-1}(\phi,x).
    \label{eq:1000}
\end{equation}
By definition of $d_{x,kr}(\cdot,\mathfrak{M})$ for any $i\in\N$ we can find $\Theta_i(k)>0$ and $V_i(k)\in\G(\mathcal{Q}-1)$ such that whenever $f\in\lip(\overline{B(0,k)})$, we have:
\begin{equation}
\begin{split}
      \bigg\lvert\int f d\frac{T_{x,r_i}\phi}{r_i^{\mathcal{Q}-1}}-\Theta_i(k)\int fd\mathcal{S}^{\mathcal{Q}-1}\llcorner V_i(k)\bigg\rvert\leq& F_{0,k}\Big(r_i^{-(\mathcal{Q}-1)}T_{x,r_i}\phi, \Theta_i(k)\mathcal{S}^{\mathcal{Q}-1}\llcorner x V_i(k)\Big)\\
      \leq& 2k^\mathcal{Q}d_{0,k}\Big(r_i^{-(\mathcal{Q}-1)}T_{x,r_i}\phi,\mathfrak{M}\Big)=2k^\mathcal{Q}d_{x,kr_i}(\phi,\mathfrak{M}),
    \label{eq:1001}
\end{split}
\end{equation}
where the last identity comes from Proposition \ref{prop:stab}(i). Putting \eqref{eq:1000} and \eqref{eq:1001} together with the fact that we are assuming that (i) holds, we infer:
\begin{equation}
    \lim_{i\to\infty} \bigg\lvert\int f d\nu-\Theta_i(k)\int fd\mathcal{S}^{\mathcal{Q}-1}\llcorner V_i(k)\bigg\rvert=0,\text{ for any }f\in\lip(\overline{B(0,k)}).
    \label{eq:1002}
\end{equation}
Defined $\varphi(z):=\max\{k-d(0,z),0\}$, we deduce that:
\begin{equation}
\begin{split}
    0=\limsup_{i\to\infty} \bigg\lvert\int \varphi d\nu-\Theta_i(k)\int \varphi d\mathcal{S}^{\mathcal{Q}-1}\llcorner V_i(k)\bigg\rvert\geq\frac{k^{\mathcal{Q}}}{\mathcal{Q}}\limsup_{i\to\infty} \Theta_i(k)-\int \varphi d\nu,
\end{split}
\label{eq:2010}
    \end{equation}
    where the last identity comes Proposition \ref{unif} and the fact that $\varphi$ is radial. The bound \eqref{eq:2010} implies in particular that: 
    $$0\leq \limsup_{i\to\infty} \Theta_i(k)\leq \frac{\mathcal{Q}}{k^\mathcal{Q}}\int \varphi d\nu.$$
  Therefore, there are a $\Theta(k)\geq 0$, an $\mathfrak{n}(k)\in V_1$ and (not relabeled) a subsequence for which:
  $$\lim_{i\to\infty} \Theta_i(k)=\Theta(k)\qquad\text{and}\qquad\lim_{i\to \infty} \mathfrak{n}(V_i(k))=\mathfrak{n}(k),$$ 
  where $\mathfrak{n}(k)=\mathfrak{n}(V(k))$ for some $V(k)\in\G(\mathcal{Q}-1)$. This implies that for any $f\in\lip(\overline{B(0,k)})$ we have:
\begin{equation}
    \begin{split}
        \bigg\lvert\int f d\nu-\Theta(k)\int f d\mathcal{S}^{\mathcal{Q}-1}&\llcorner V(k)\bigg\rvert\leq \bigg\lvert\int f d\nu-\Theta_i(k)\int f d\mathcal{S}^{\mathcal{Q}-1}\llcorner V_i(k)\bigg\rvert\\
        +&\Theta_i(k)\bigg\lvert\int f d\mathcal{S}^{\mathcal{Q}-1}\llcorner V-\int f d\mathcal{S}^{\mathcal{Q}-1}\llcorner V_i(k)\bigg\rvert
        +\lvert\Theta(k)-\Theta_i(k)\rvert\int f d\mathcal{S}^{\mathcal{Q}-1}\llcorner V(k).
        \label{eq:1011}
    \end{split}
\end{equation}
Since the sequence $\{\Theta_i(k)\}_{i\in\N}$ is bounded, thanks to Proposition \ref{prop:pianconv} and the fact that inequality \eqref{eq:1011} holds for any $i\in\N$, we infer that:
\begin{equation}
\bigg\lvert\int f d\nu-\Theta(k)\int f d\mathcal{S}^{\mathcal{Q}-1}\llcorner V(k)\bigg\rvert=0,\text{ for any }f\in\lip(\overline{B(0,k)}).
    \label{eq:num2011}
\end{equation}
Since Lipschitz functions with support contained in $B(0,k)$ are dense in $L^1(\overline{B(0,k)})$, thanks to \eqref{eq:num2011}  we conclude that $\nu\llcorner B(0,k)=\Theta(k)\mathcal{S}^{\mathcal{Q}-1}\llcorner V(k)\cap B(0,k)$.
Let $k_1\leq k_2$ and note that:
$$\Theta(k_2)\mathcal{S}^{\mathcal{Q}-1}\llcorner V(k_2)\cap B(0,k_1)=\nu\llcorner B(0,k_1)=\Theta(k_1)\mathcal{S}^{\mathcal{Q}-1}\llcorner V(k_1)\cap B(0,k_1).$$
The above identity yields $\Theta(k_1)=\Theta(k_2)$,  $V(k_1)=V(k_2)$ and in particular  $\nu=\Theta(1)\mathcal{S}^{\mathcal{Q}-1}\llcorner V(1)$.

We are left to prove the viceversa. Assume by contradiction that (i) does not hold. This implies that we can find a $k>0$, an $\epsilon>0$ and an infinitesimal sequence $\{r_i\}_{i\in\N}$ such that:
\begin{equation}
    \liminf_{r_i\to 0}d_{x,kr_i}(\phi,\mathfrak{M})>\epsilon.
    \label{eq:n103}
\end{equation}
Thanks to the weak-$*$ pre-compactness of the sequence $r_i^{-(\mathcal{Q}-1)}T_{x,r_i}\phi$ and the fact that we are assuming that (ii) holds, we can find a (non-relabeled) sequence $\{r_i\}_{i\in\N}$ a $\Theta>0$ and a $V\in \G(\mathcal{Q}-1)$ such that
$r_i^{-(\mathcal{Q}-1)}T_{x,r_i}\phi\rightharpoonup \Theta \mathcal{S}^{\mathcal{Q}-1}\llcorner V$. For any $i\in \N$ let $f^*_i\in\text{Lip}_1(\overline{B(0,k)})$ be such that:
$$d_{0,k}(r_i^{-(\mathcal{Q}-1)}T_{x,r_i}\phi,\mathfrak{M})\leq 2 \bigg\lvert\int f^*_i d\frac{T_{x,r_i}\phi}{r_i^{\mathcal{Q}-1}}-\Theta \int f^*_i d\mathcal{S}^{\mathcal{Q}-1}\llcorner V\bigg\rvert,$$
and note that since $\lip(\overline{B(0,k)})$ is compact when endowed with the supremum distance, we can assume without loss of generality that $f_i^*$ is uniformly converging to some $f^*\in\lip(\overline{B(0,k)})$.
Thanks to Proposition \ref{prop:stab} this implies that:
\begin{equation}
  \liminf_{r_j\to 0}d_{x,kr_i}(\phi,\mathfrak{M})= \liminf_{r_j\to 0} d_{0,k}(r_i^{-(\mathcal{Q}-1)}T_{x,r_i}\phi,\mathfrak{M})\leq 2\liminf_{r_j\to 0} \bigg\lvert\int f^*_i d\frac{T_{x,r_i}\phi}{r_i^{\mathcal{Q}-1}}-\Theta \int f^*_i d\mathcal{S}^{\mathcal{Q}-1}\llcorner V\bigg\rvert=0,
  \nonumber
\end{equation}
where the last identity follows from the assumption that $r_i^{-(\mathcal{Q}-1)}T_{x,r_i}\phi\rightharpoonup \Theta \mathcal{S}^{\mathcal{Q}-1}\llcorner V$, contradicting \eqref{eq:n103}
\end{proof}

\begin{proposizione}\label{loc:tgbis}
Let $\delta\in \N$ and assume $\phi$ is a Radon measure such that for $\phi$-almost any $x\in \mathbb{G}$ we have:
\begin{itemize}
 \item[(i)] 
       $\delta^{-1}\leq \Theta^{\mathcal{Q}-1}_*(\phi,x)\leq\Theta^{\mathcal{Q}-1,*}(\phi,x)\leq \delta$,
    \item[(ii)]$\limsup_{r\to 0}d_{x,kr}(\phi, \mathfrak{M})<(2^{\mathcal{Q}}\delta)^{-1}$.
\end{itemize}
Then for any $\phi$-almost all $x\in\mathbb{G}$ and any $k>0$ we have:
$$\limsup_{r\to 0}d_{x,kr}(\phi, \mathfrak{M})= \sup\{d_{0,k}(\nu,\mathfrak{M}):\nu\in \Tan_{\mathcal{Q}-1}(\phi,x)\}.$$
\end{proposizione}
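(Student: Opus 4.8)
The plan is to deduce the identity from a single soft fact: the functional $\mu\mapsto d_{0,k}(\mu,\mathfrak{M})$ is continuous along weakly convergent sequences of Radon measures with locally uniformly bounded mass. Granting this, everything else is bookkeeping. By Proposition \ref{prop:stab}(i) one has $d_{x,kr}(\phi,\mathfrak{M})=d_{0,k}(r^{-(\mathcal{Q}-1)}T_{x,r}\phi,\mathfrak{M})$ for every $r>0$, and since $kr\to 0$ as $r\to 0$ the number $\limsup_{r\to 0}d_{x,kr}(\phi,\mathfrak{M})$ is merely a reparametrisation of $\limsup_{s\to 0}d_{x,s}(\phi,\mathfrak{M})$; in particular it does not depend on $k$. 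So I would reduce the statement, for $\phi$-a.e. $x$, to proving: (a) $d_{0,k}(\nu,\mathfrak{M})\le\limsup_{r\to 0}d_{x,kr}(\phi,\mathfrak{M})$ for every $\nu\in\Tan_{\mathcal{Q}-1}(\phi,x)$, and (b) some $\nu\in\Tan_{\mathcal{Q}-1}(\phi,x)$ attains this value. Part (a) gives the inequality $\ge$ in the Proposition, part (b) the inequality $\le$.

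To establish the continuity I would exploit that $F_{0,k}$ obeys the triangle inequality $F_{0,k}(\mu,\sigma)\le F_{0,k}(\mu,\mu')+F_{0,k}(\mu',\sigma)$. Hence for any two Radon measures $\mu,\mu'$ finite on $\overline{B(0,k)}$ one has $|F_{0,k}(\mu,\sigma)-F_{0,k}(\mu',\sigma)|\le F_{0,k}(\mu,\mu')$ uniformly in $\sigma\in\mathfrak{M}$, and taking infima over $\sigma$ (using $|\inf a_\sigma-\inf b_\sigma|\le\sup|a_\sigma-b_\sigma|$) yields $|d_{0,k}(\mu,\mathfrak{M})-d_{0,k}(\mu',\mathfrak{M})|\le k^{-\mathcal{Q}}F_{0,k}(\mu,\mu')$. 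If now $\mu_i\rightharpoonup\mu$ with $\limsup_i\mu_i(\overline{B(0,R)})<\infty$ for every $R>0$, Proposition \ref{prop:convF} gives $F_{0,k}(\mu_i,\mu)\to 0$, hence $d_{0,k}(\mu_i,\mathfrak{M})\to d_{0,k}(\mu,\mathfrak{M})$. This is the one point where the hypotheses enter: for $\phi$-a.e. $x$, condition (i) provides $\Theta^{\mathcal{Q}-1,*}(\phi,x)<\infty$, so every blow-up sequence $r_i^{-(\mathcal{Q}-1)}T_{x,r_i}\phi$ (with $r_i\to 0$) has mass on $\overline{B(0,R)}$ bounded in the limit by $R^{\mathcal{Q}-1}\Theta^{\mathcal{Q}-1,*}(\phi,x)<\infty$ for every $R$, so the continuity above applies to all such rescalings.

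Fix such an $x$. For (a): given $\nu\in\Tan_{\mathcal{Q}-1}(\phi,x)$ with $r_i^{-(\mathcal{Q}-1)}T_{x,r_i}\phi\rightharpoonup\nu$, Proposition \ref{prop:stab}(i) and the continuity give $d_{x,kr_i}(\phi,\mathfrak{M})\to d_{0,k}(\nu,\mathfrak{M})$, and since $kr_i\to 0$ this limit is $\le\limsup_{r\to 0}d_{x,kr}(\phi,\mathfrak{M})$. For (b): choose $r_i\to 0$ with $d_{x,kr_i}(\phi,\mathfrak{M})\to\limsup_{r\to 0}d_{x,kr}(\phi,\mathfrak{M})$; the rescalings $r_i^{-(\mathcal{Q}-1)}T_{x,r_i}\phi$ are weak-$*$ pre-compact by the mass bound, so along a subsequence they converge to some $\nu\in\Tan_{\mathcal{Q}-1}(\phi,x)$ (this set is non-empty for $\phi$-a.e. $x$ by Proposition \ref{tg:nonempty}), and the continuity then forces $d_{0,k}(\nu,\mathfrak{M})=\limsup_{r\to 0}d_{x,kr}(\phi,\mathfrak{M})$. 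Combining (a) and (b) yields the asserted equality.

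On the bookkeeping: the $\phi$-null exceptional set consists of those $x$ at which $\Theta^{\mathcal{Q}-1,*}(\phi,x)=\infty$, or $\Tan_{\mathcal{Q}-1}(\phi,x)=\emptyset$, or the density bounds in (i) fail — none of which involves $k$ — so the conclusion holds for all $k>0$ off a single null set, as stated. I expect the only genuine subtlety to be the verification that the blow-up measures have locally uniformly bounded mass before invoking Proposition \ref{prop:convF}, which is exactly what the upper density bound in (i) supplies; once $F_{0,k}(\cdot,\cdot)$ is recognised as a pseudo-metric on measures no compactness in the Grassmannian nor any analysis of near-optimal approximating planes is needed. A more hands-on alternative, extracting convergent subsequences of near-optimal pairs $(\Theta_i,V_i)$ and using Proposition \ref{prop:pianconv}, would also work but is heavier; it is in that alternative that the lower density bound in (i) together with the smallness threshold $(2^{\mathcal{Q}}\delta)^{-1}$ in (ii) would be used to keep the multiples $\Theta_i$ in a fixed compact subset of $(0,\infty)$.
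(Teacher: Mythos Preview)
Your proof is correct and takes a genuinely different route from the paper's. Your key observation---that $F_{0,k}$ satisfies a triangle inequality, so $\mu\mapsto d_{0,k}(\mu,\mathfrak{M})$ is $k^{-\mathcal{Q}}$-Lipschitz with respect to $F_{0,k}$ and hence continuous along weakly convergent sequences with locally bounded mass---reduces both inequalities to a single soft continuity statement. The paper instead argues the two directions separately: for $\limsup\le\sup$ it proceeds much as you do (triangle inequality against a near-optimal flat approximant of $\nu$, then Proposition~\ref{prop:convF}); but for $\sup\le\limsup$ it takes exactly the hands-on alternative you describe at the end---extracting near-optimal pairs $(\Theta_i,V_i)$ for the rescalings, then invoking hypothesis~(ii) and the lower density bound in~(i) to trap the $\Theta_i$ in a compact interval of $(0,\infty)$, and finally using compactness of the Grassmannian together with Proposition~\ref{prop:pianconv} to pass to a limit.

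What your approach buys is not only brevity but a strictly stronger conclusion: as you correctly note, the argument uses neither hypothesis~(ii) nor the lower bound $\delta^{-1}\le\Theta_*^{\mathcal{Q}-1}(\phi,x)$; only the finiteness of $\Theta^{\mathcal{Q}-1,*}(\phi,x)$ is needed, to guarantee the mass bound required by Proposition~\ref{prop:convF} and the weak-$*$ precompactness of the blow-ups. The paper's route, by contrast, genuinely consumes those extra hypotheses in bounding the multiples $\Theta_i$ from above and below.
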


\begin{proof}
Since $\phi$ is locally finite, we can assume without loss of generality that it is supported on a compact set $K$. For any $\epsilon>0$, we let $C_\epsilon$ be a compact subset of $K$ such that:
\begin{enumerate}
 \item $\phi(K\setminus C_\epsilon)<\epsilon\phi(K)$,
    \item for any $x\in C_\epsilon$ we have
$\delta^{-1}\leq \Theta^{\mathcal{Q}-1}_*(\phi,x)\leq\Theta^{\mathcal{Q}-1,*}(\phi,x)\leq \delta$.
\end{enumerate}
Fix a point $x\in C_\epsilon$ such that $\Tan_{\mathcal{Q}-1}(\phi\llcorner C_\epsilon,x)=\Tan_{\mathcal{Q}-1}(\phi,x)\neq \emptyset$ and recall that such a choice can be done without loss of generality thanks to Proposition \ref{tg:nonempty}.

Suppose $\{r_i\}_{i\in\N}$ is an infinitesimal sequence such that $\lim_{i\to \infty}d_{x,kr_i}(\phi, x)=\limsup_{r\to 0} d_{x,kr}(\phi, x)$ and assume without loss of generality $\nu$ is an element of $\Tan_{\mathcal{Q}-1}(\phi,x)$ such that:
$$r_i^{-(\mathcal{Q}-1)}T_{x,r_i}\phi\rightharpoonup \nu.$$
Let us prove that $\limsup_{r\to 0}d_{x,kr}(\phi,\mathfrak{M})\leq d_{0,k}(\nu,\mathfrak{M})$. For any $0<\eta<1$ let $\Theta\mathcal{S}^{\mathcal{Q}-1}\llcorner V$ be an element of $\mathfrak{M}$ such that $F_{0,k}(\nu, \Theta\mathcal{S}^{\mathcal{Q}-1}\llcorner V)/k^\mathcal{Q}\leq d_{0,k}(\nu,\mathfrak{M})+\eta$. With this choice, thanks to the triangular inequality we infer that:
\begin{equation}
    \begin{split}
        \limsup_{i\to\infty} d_{0,k}(r_i^{-(\mathcal{Q}-1)}T_{x,r_i}\phi,\mathfrak{M})\leq &\limsup_{i\to\infty}\frac{F_{0,k}(r_i^{-(\mathcal{Q}-1)}T_{x,r_i}\phi,\Theta\mathcal{S}^{\mathcal{Q}-1}\llcorner V)}{k^{\mathcal{Q}}}\\
        \leq & \limsup_{i\to\infty}\frac{F_{0,k}(r_i^{-(\mathcal{Q}-1)}T_{x,r_i}\phi,\nu)+F_{0,k}(\nu,\Theta\mathcal{S}^{\mathcal{Q}-1}\llcorner V)}{k^{\mathcal{Q}}}\leq d_{0,k}(\nu,\mathfrak{M})+\eta,
    \end{split}
\end{equation}
where the last inequality comes from the choice of $\Theta$ and $V$ and Proposition \ref{prop:convF}.
The arbitrariness of $\eta$ concludes the proof of the first claim.

As a second and final step of the proof, let us fix a $\nu\in\Tan_{\mathcal{Q}-1}(\phi,x)$ and show that $\limsup_{r\to 0}d_{x,kr}(\phi,\mathfrak{M})\geq d_{0,k}(\nu,\mathfrak{M})$. Since $\nu\in\Tan_{\mathcal{Q}-1}(\phi,x)$, we can find an infinitesimal sequence $\{r_i\}_{i\in\N}$ such that:
$$r_i^{-(\mathcal{Q}-1)}T_{x,r_i}\phi\rightharpoonup \nu.$$
Furthermore, for any $0<\eta<1$ and any $i\in\N$ there exists a $\Theta_i>0$ and a $V_i\in\G(\mathcal{Q}-1)$ such that: $$\frac{F_{0,k}(r_i^{-(\mathcal{Q}-1)}T_{x,r_i}\phi,\Theta_i \mathcal{S}^{\mathcal{Q}-1}\llcorner V_i)}{k^\mathcal{Q}}\leq d_{0,k}(r_i^{-(\mathcal{Q}-1)}T_{x,r_i}\phi,\mathfrak{M})+\eta=d_{x,kr}(\phi,\mathfrak{M})+\eta,$$
where the last identity above comes from Proposition \ref{prop:stab}(i). In addition to this, thanks to assumption (ii) and since the sequence $\{r_i\}_{i\in\N}$ is infinitesimal, there exists a $i_0\in\N$ such that for any $i\geq i_0$ we have $d_{x,kr_i}(\phi,\mathfrak{M})\leq (2^\mathcal{Q}\delta)^{-1}$. This implies for any $i\geq i_0$ that:
\begin{equation}
    \bigg\lvert\int g(w) d\frac{T_{x,r_i}\phi(w)}{r_i^{\mathcal{Q}-1}}-\Theta_i\int g(w) d\mathcal{S}^{\mathcal{Q}-1}\llcorner V_i(w)\bigg\rvert\leq F_{0,k}(r_i^{-(\mathcal{Q}-1)}T_{x,r_i}\phi,\Theta_i \mathcal{S}^{\mathcal{Q}-1}\llcorner V_i)\leq (2^{\mathcal{Q}}\delta)^{-1}k^\mathcal{Q}+\eta k^\mathcal{Q},
    \label{eq:nummm10}
\end{equation}
where $g(x):=\min\{\dist(x,B(0,k)^c),(1-\eta) k\}$. Thanks to the definition of $g$ and to \eqref{eq:nummm10} we infer:
\begin{equation}
\begin{split}
      \Theta_i ((1-\eta)k)^{\mathcal{Q}}-\frac{(1-\eta)k\phi(B(x,k r_i))}{r_i^{\mathcal{Q}-1}}\leq &\Theta_i\int_{B(0,(1-\eta)k)} g(w) d\mathcal{S}^{\mathcal{Q}-1}\llcorner V_i(w)-\int_{B(0,k)} (1-\eta)k~ d\frac{T_{x,r_i}\phi(w)}{r_i^{\mathcal{Q}-1}}\\
      \leq & \bigg\lvert\int g(w) d\frac{T_{x,r_i}\phi(w)}{r_i^{\mathcal{Q}-1}}-\Theta_i\int g(w) d\mathcal{S}^{\mathcal{Q}-1}\llcorner V_i(w)\bigg\rvert\leq (2^{\mathcal{Q}}\delta)^{-1}k^\mathcal{Q}+\eta k^\mathcal{Q},
\end{split}
\label{eq:nummm12}
\end{equation}
With a similar argument, one can also prove that:
\begin{equation}
\begin{split}
   \frac{(1-\eta)k\phi(B(x,(1-\eta)k r_i))}{r_i^{\mathcal{Q}-1}}-\Theta_i(1-\eta)k^{\mathcal{Q}}\leq (2^{\mathcal{Q}}\delta)^{-1}k^\mathcal{Q}+\eta k^\mathcal{Q}.
\end{split}
\label{eq:nummm13}
\end{equation}
Rearranging inequality \eqref{eq:nummm12} and dividing both sides by $k^{\mathcal{Q}}$, thanks to the choice of $x$ and to the arbitrariness of $i$, we have:
$$\limsup_{i\to\infty}  (1-\eta)^{\mathcal{Q}}\Theta_i\leq  \limsup_{i\to\infty} \frac{(1-\eta)\phi(B(x,kr_i))}{(kr_i)^{\mathcal{Q}-1}}+ (2^{\mathcal{Q}}\delta)^{-1}+\eta\leq \delta+(2^{\mathcal{Q}}\delta)^{-1}+\eta.$$
Thanks to the arbitrariness of $0<\eta<1$ we finally deduce that $\limsup_{i\to\infty}\Theta_i\leq \delta+(2^{\mathcal{Q}}\delta)^{-1}$.

Similarly, rearranging inequality \eqref{eq:nummm13} and dividing both sides by $k^{\mathcal{Q}}$, thanks to the arbitrariness of $i$ we infer that:
$$(1-\eta)^{\mathcal{Q}-1}\Theta_*^{\mathcal{Q}-1}(\phi,x)-(2^{\mathcal{Q}}\delta)^{-1}-\eta=\liminf_{i\to\infty}(1-\eta)^{\mathcal{Q}-1}\frac{\phi(B(x,(1-\eta) r_i))}{(1-\eta)^{\mathcal{Q}-1}(kr_i)^{\mathcal{Q}-1}}-(2^{\mathcal{Q}}\delta)^{-1}-\eta \leq (1-\eta)\liminf_{i\to\infty} \Theta_i.$$
The arbitrariness of $\eta$ and the choice of $x$ conclude that $\liminf_{i\to\infty} \Theta_i\geq (1-2^{-\mathcal{Q}})\delta^{-1}$.

Thus, up to subsequences we can assume that $\Theta_i$ converge to some $\Theta\in [(1-2^{-\mathcal{Q}})\delta^{-1},\delta+(2^{\mathcal{Q}}\delta)^{-1}]$ and that there exists a $V\in \G(\mathcal{Q}-1)$ such that $\mathfrak{n}(V_i)\to \mathfrak{n}(V)$. By Proposition \ref{prop:pianconv} this implies that $\Theta_i\mathcal{S}^{\mathcal{Q}-1}\llcorner V_i\rightharpoonup \Theta\mathcal{S}^{\mathcal{Q}-1}\llcorner V$. Therefore, thanks to the triangular inequality, this implies for any $i\in\N$ that:
\begin{equation}
\begin{split}
       d_{0,k}(\nu,\mathfrak{M})
       \leq& \frac{F_{0,k}(\nu,r_i^{-(\mathcal{Q}-1)}T_{x,r_i}\phi)+F_{0,k}(r_i^{-(\mathcal{Q}-1)}T_{x,r_i}\phi,\Theta_i\mathcal{S}^{\mathcal{Q}-1}\llcorner V_i)+F_{0,k}(\Theta_i\mathcal{S}^{\mathcal{Q}-1}\llcorner V_i,\Theta\mathcal{S}^{\mathcal{Q}-1}\llcorner V)}{k^\mathcal{Q}}\\
       \leq &\frac{F_{0,k}(\nu,r_i^{-(\mathcal{Q}-1)}T_{x,r_i}\phi)}{k^\mathcal{Q}}+d_{0,k}(r_i^{-(\mathcal{Q}-1)}T_{x,r_i}\phi,\mathfrak{M})+\eta+\frac{F_{0,k}(\Theta_i\mathcal{S}^{\mathcal{Q}-1}\llcorner V_i,\Theta\mathcal{S}^{\mathcal{Q}-1}\llcorner V)}{k^\mathcal{Q}}.
\end{split}
\nonumber
\end{equation}
Finally, thanks to the arbitrariness of $i$ and of $\eta$ and to Proposition \ref{prop:convF}, we conclude that:
$$d_{0,k}(\nu,\mathfrak{M})\leq \limsup_{i\to\infty} d_{0,k}(r_i^{-(\mathcal{Q}-1)}T_{x,r_i}\phi,\mathfrak{M}).$$
\end{proof}

\begin{Notation}\label{definizione:piano}
Throughout Section \ref{sec:main} we let $0<\newep\label{eps:1}<1/10$ be a fixed constant. Proposition \ref{prop:BIGGI} yields two natural numbers $\vartheta,\gamma\in\N$, that from now on we consider fixed, such that:
$$\phi(K\setminus E(\vartheta,\gamma))\leq \oldep{eps:1}\phi(K).$$
These $\vartheta$ and $\gamma$ have the further property, again thanks to Proposition \ref{prop:BIGGI}, that for any $\mu\geq 4 \vartheta$ there is a $\nu\in\N$ for which:
$$\phi(K\setminus \mathscr{E}_{\vartheta,\gamma}(\mu,\nu))\leq \oldep{eps:1}\phi(K).$$
Furthermore, we define $\eta:=1/\mathcal{Q}$ and let: $$\delta_\mathbb{G}:=\min\Bigg\{\frac{1}{2^{4(\mathcal{Q}+1)}\vartheta},\frac{\eta^{\mathcal{Q}+1}(1-\eta)^{\mathcal{Q}^2-1}}{(32\vartheta)^{\mathcal{Q}+1}}\Bigg\}.$$
Eventually, if $\tilde{d}_{x,r}(\phi,\mathfrak{M})\leq \delta$ for some $0<\delta<\delta_{\mathbb{G}}$, we define $\Pi(x,r)$ to be the subset of planes $V\in\G(\mathcal{Q}-1)$ for which there exists a $\Theta>0$ and a $z\in\mathbb{G}$ such that:
$$F_{x,r}(\phi, \Theta \mathcal{S}^{\mathcal{Q}-1}\llcorner zV)\leq 2\delta r^{\mathcal{Q}}.$$
\end{Notation}

The following two propositions are the main results of this subsection. They are so relevant since they give a more geometric interpretation of the condition \say{flatness of the tangents} and In particular tell us that $E(\vartheta,\gamma)$ is a weakly linearly approximable set. For a discussion on how this will play a role in the proof of the main result of this work, we refer to the Introduction.

\begin{proposizione}\label{prop1vsinfty}
Let $x\in E^\phi(\vartheta,\gamma)$ be such that $\tilde{d}_{x,r}(\phi,\mathfrak{M})\leq \delta$ for some $\delta<\delta_\mathbb{G}$ and $0<r<1/\gamma$. Then for every $V\in \Pi(x,r)$ we have:
$$\sup_{w\in E^\phi(\vartheta,\gamma)\cap B(x,r/4)} \frac{\dist\big(w,xV\big)}{r}\leq2^{2+3/\mathcal{Q}}\vartheta^{1/\mathcal{Q}}\delta^{1/\mathcal{Q}}=: \newC\label{C:b1}\delta^{1/\mathcal{Q}}.$$
\end{proposizione}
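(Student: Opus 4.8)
The idea is to convert the smallness of $F_{x,r}$ against a translated flat measure into pointwise flatness, using the density lower bound that is available on $E^\phi(\vartheta,\gamma)$. Fix $V\in\Pi(x,r)$; by the definition in Notation \ref{definizione:piano} there are $\Theta>0$ and $z\in\mathbb{G}$ with $F_{x,r}\big(\phi,\Theta\mathcal{S}^{\mathcal{Q}-1}\llcorner zV\big)\le 2\delta r^{\mathcal{Q}}$. The plan is first to show that every $w\in E^\phi(\vartheta,\gamma)\cap B(x,r/4)$ is close to the translate $zV$. To this end, for $0<\rho\le 3r/4$ consider $f_\rho(y):=\max\{\rho-d(y,w),0\}$, which is nonnegative, $1$-Lipschitz and supported in $\overline{B(w,\rho)}\subseteq\overline{B(x,r)}$ (because $w\in B(x,r/4)$ and $\rho\le 3r/4$), hence $f_\rho\in\lip(\overline{B(x,r)})$ and
$$\Big|\int f_\rho\,d\phi-\Theta\int f_\rho\,d\mathcal{S}^{\mathcal{Q}-1}\llcorner zV\Big|\le 2\delta r^{\mathcal{Q}}.$$
If $\dist(w,zV)\ge\rho$ then $B(w,\rho)\cap zV=\emptyset$, so the second integral vanishes; on the other hand $f_\rho\ge\rho/2$ on $B(w,\rho/2)$ and $w\in E^\phi(\vartheta,\gamma)$ together with $\rho/2<1/\gamma$ (automatic since $r<1/\gamma$) give $\int f_\rho\,d\phi\ge(\rho/2)\,\phi(B(w,\rho/2))\ge\vartheta^{-1}(\rho/2)^{\mathcal{Q}}$. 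Combining the two bounds yields $\vartheta^{-1}(\rho/2)^{\mathcal{Q}}\le 2\delta r^{\mathcal{Q}}$, i.e. $\rho\le t:=(2^{\mathcal{Q}+1}\vartheta\delta)^{1/\mathcal{Q}}r$.

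Next I would use the explicit choice of $\delta_\mathbb{G}$ to check that $t<3r/4$: since $\delta<\delta_\mathbb{G}\le 2^{-4(\mathcal{Q}+1)}\vartheta^{-1}$ one gets $t/r=2^{(\mathcal{Q}+1)/\mathcal{Q}}(\vartheta\delta)^{1/\mathcal{Q}}<2^{-3(\mathcal{Q}+1)/\mathcal{Q}}\le 2^{-3}<3/4$. Consequently, for every $\rho\in(t,3r/4]$ the alternative $\dist(w,zV)\ge\rho$ is impossible, which forces $\dist(w,zV)\le t$ for every $w\in E^\phi(\vartheta,\gamma)\cap B(x,r/4)$; in particular, applying this with $w=x$ (which lies in $E^\phi(\vartheta,\gamma)\cap B(x,r/4)$ by hypothesis) gives $\dist(x,zV)\le t$. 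Finally I would pass from $zV$ to $xV$ using that $V\in\G(\mathcal{Q}-1)$ is normal and Proposition \ref{prop:dist-piani}: for every such $w$,
$$\dist(w,xV)\le\dist(w,zV)+\dist(xV,zV)=\dist(w,zV)+\dist(x,zV)\le 2t,$$
so that
$$\sup_{w\in E^\phi(\vartheta,\gamma)\cap B(x,r/4)}\frac{\dist(w,xV)}{r}\le 2\,(2^{\mathcal{Q}+1}\vartheta\delta)^{1/\mathcal{Q}}=2^{2+1/\mathcal{Q}}\vartheta^{1/\mathcal{Q}}\delta^{1/\mathcal{Q}}\le 2^{2+3/\mathcal{Q}}\vartheta^{1/\mathcal{Q}}\delta^{1/\mathcal{Q}}=\oldC{C:b1}\delta^{1/\mathcal{Q}},$$
which is the asserted estimate.

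The argument is essentially elementary; there is no conceptual obstacle, and the only points requiring care are of bookkeeping nature: verifying that the bump functions $f_\rho$ are admissible test functions for $F_{x,r}$ (which is why one restricts to $w\in B(x,r/4)$ and $\rho\le 3r/4$), keeping all radii below $1/\gamma$ so that the density estimate defining $E^\phi(\vartheta,\gamma)$ can be invoked, and — crucially — exploiting the explicit smallness of $\delta_\mathbb{G}$ precisely to ensure $t<3r/4$, so that the contradiction step closes and the supremum can be taken over the whole intersection $E^\phi(\vartheta,\gamma)\cap B(x,r/4)$, including the centre $x$ used to relate $zV$ to $xV$.
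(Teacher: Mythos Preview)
Your proof is correct and follows essentially the same approach as the paper's: both pick a plane $zV$ realising the $F_{x,r}$ bound, test against a $1$-Lipschitz function that vanishes on $zV$ while capturing mass near a point of $E^\phi(\vartheta,\gamma)$ (the paper uses $g=\min\{\dist(\cdot,B(x,r)^c),\dist(\cdot,zV)\}$ and then localises near the farthest point, you use the bump $f_\rho$ at each $w$), and then transfer from $zV$ to $xV$ via Proposition~\ref{prop:dist-piani}. Your per-point bump argument and check that $t<3r/4$ play exactly the role of the paper's preliminary step showing $D<r/8$, and your final constant $2^{2+1/\mathcal{Q}}\vartheta^{1/\mathcal{Q}}\delta^{1/\mathcal{Q}}$ is in fact slightly sharper than the stated $\oldC{C:b1}\delta^{1/\mathcal{Q}}$.
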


\begin{proof}
Let $V$ be any element of $\Pi(x,r)$ and suppose $z\in \mathbb{G}$, $\Theta>0$ are such that:
$$\bigg\lvert \int f d\phi-\Theta\int fd\mathcal{S}^{\mathcal{Q}-1}\llcorner zV\bigg\rvert\leq 2\delta r^{\mathcal{Q}},\text{ for any }f\in\lip(\overline{B(x,r)}).$$
Since the function $g(w):=\min\{\dist(w,B(x,r)^c),\dist(w,zV)\}$ belongs to $\lip(\overline{B(x,r)})$, we deduce that:
$$2\delta r^{\mathcal{Q}}\geq\int g(w)d\phi(w)-\Theta\int g(w)d\mathcal{S}^{\mathcal{Q}-1}\llcorner zV=\int g(w)d\phi(w)\geq \int_{B(x,r/2)}\min\{r/2,\dist(w,zV)\}d\phi(w).$$
Suppose that $y$ is a point in $\overline{B(x,r/4)}\cap E^\phi(\vartheta,\gamma)$ furthest from $zV$ and let $D=\dist(y,zV)$.
If $D\geq r/8$, this would imply that:
\begin{equation}
\begin{split}
 2\delta r^{\mathcal{Q}}\geq&\int_{B(x,r/2)}\min\{r/2,\dist(w,zV)\}d\phi(w)\geq \int_{B(y,r/16)}\min\{r/2,\dist(w,zV)\}d\phi(w)
 \geq\frac{r}{16}\phi(B(y,r/16))\geq \frac{r^\mathcal
Q}{\vartheta 16^\mathcal{Q}},
    \nonumber
\end{split}
\end{equation}
which is not possible thanks to the choice of $\delta$.
This implies that $D\leq r/8$ and as a consequence, we have:
\begin{equation}
\begin{split}
     2\delta r^{\mathcal{Q}}\geq \int_{B(x,r/2)}\min\{r/2,\dist(w,zV)\}d\phi(w)\geq& \int_{B(y,D/2)}\min\{r/2,\dist(w,zV)\}d\phi(w)\\
     \geq& \frac{D\phi(B(y,D/2))}{2}\geq \vartheta^{-1}\left(\frac{D}{2}\right)^{\mathcal{Q}},
\end{split}
    \label{eq:2023}
\end{equation}
where the second inequality comes from the fact that $B(y,D/2)\subseteq B(x,r/2)$. This implies thanks to \eqref{eq:2023}, that:
$$\sup_{w\in E(\vartheta,\gamma)\cap B(x,r/4)}\frac{\dist(w,zV)}{r}\leq\frac{D}{r}\leq 2^{1+3/\mathcal{Q}}\vartheta^{1/\mathcal{Q}}\delta^{1/\mathcal{Q}}=\oldC{C:b1}\delta^{1/\mathcal{Q}}/2.$$
In particular we infer that $\dist(x,zV)/r\leq \oldC{C:b1}\delta^{1/\mathcal{Q}}/2$. Therefore, thanks to Proposition \ref{prop:dist-piani}, we deduce:
$$\sup_{w\in E(\vartheta,\gamma)\cap B(x,r/4)}\frac{\dist(w,xV)}{r}\leq \sup_{w\in E(\vartheta,\gamma)\cap B(x,r/4)}\frac{\dist(w,zV)+\dist(xV,zV)}{r}\leq \oldC{C:b1}\delta^{1/\mathcal{Q}}.$$
\end{proof}

\begin{proposizione}\label{prop:bil2}
Let $x\in E^\phi(\vartheta,\gamma)$ and $0<r<1/\gamma$ be such that for some $0<\delta<\delta_\mathbb{G}$ we have: 
\begin{equation}
    d_{x,r}(\phi,\mathfrak{M})+d_{x,r}(\phi\llcorner E^\phi(\vartheta,\gamma),\mathfrak{M})\leq \delta.
    \label{eq:n3}
\end{equation}
Then for any $V\in\Pi(x,r)$ and any $w\in B(x,r/2)\cap xV$ we have
$E^\phi(\vartheta,\gamma)\cap B(w,\delta^\frac{1}{\mathcal{Q}+1} r)\neq \emptyset$.
\end{proposizione}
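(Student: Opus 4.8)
The plan is to argue by contradiction. Suppose that for some $V\in\Pi(x,r)$ and some $w\in B(x,r/2)\cap xV$ we have $E^\phi(\vartheta,\gamma)\cap B(w,\mathfrak{r})=\emptyset$, where $\mathfrak{r}:=\delta^{1/(\mathcal{Q}+1)}r$. Since $\delta<\delta_\mathbb{G}\leq 2^{-4(\mathcal{Q}+1)}\vartheta^{-1}$ we have $\mathfrak{r}<r/16$, so $B(w,\mathfrak{r})\subseteq B(x,r)$. Because $d_{x,r}(\phi,\mathfrak{M})\geq\tilde d_{x,r}(\phi,\mathfrak{M})$, the set $\Pi(x,r)$ is well defined and, by Notation \ref{definizione:piano}, there are $\Theta>0$ and $z\in\mathbb{G}$ with $F_{x,r}(\phi,\Theta\mathcal{S}^{\mathcal{Q}-1}\llcorner zV)\leq 2\delta r^{\mathcal{Q}}$; moreover $d_{x,r}(\phi\llcorner E^\phi(\vartheta,\gamma),\mathfrak{M})\leq\delta$ supplies $\Theta'>0$ and $V'\in\G(\mathcal{Q}-1)$ with $F_{x,r}(\phi\llcorner E^\phi(\vartheta,\gamma),\Theta'\mathcal{S}^{\mathcal{Q}-1}\llcorner xV')\leq 2\delta r^{\mathcal{Q}}$.

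The next step is to extract the quantitative consequences by testing these two inequalities against the tent functions $y\mapsto\max\{s-d(p,y),0\}$, which are $1$-Lipschitz and supported in $\overline{B(p,s)}$, evaluating the flat-measure integrals with Propositions \ref{unif} and \ref{cor:2.2.19} and using $x\in E^\phi(\vartheta,\gamma)$, i.e. $\vartheta^{-1}s^{\mathcal{Q}-1}\leq\phi(B(x,s))\leq\vartheta s^{\mathcal{Q}-1}$ for $0<s<1/\gamma$. Running verbatim the computation in the proof of Proposition \ref{prop1vsinfty} gives $\dist(x,zV)\leq\oldC{C:b1}\delta^{1/\mathcal{Q}}r/2$, hence, since $w\in xV$ and $xV,zV$ are parallel translates of $V$, Proposition \ref{prop:dist-piani} yields $\dist(w,zV)\leq\oldC{C:b1}\delta^{1/\mathcal{Q}}r/2$ as well. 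Testing the first inequality against a tent at $x$ of radius $r/2$ gives $\Theta$ comparable to $\vartheta^{-1}$ up to dimensional constants; the analogous test of the second inequality, combined with Corollary \ref{cor:cor1} and the lower density bound for $\phi$ at $x$ (which forces $\phi\llcorner E^\phi(\vartheta,\gamma)(B(x,r/4))$ to be at least a dimensional multiple of $\vartheta^{-1}r^{\mathcal{Q}-1}$), gives $\Theta'\geq c\,\vartheta^{-1}$ for a dimensional constant $c>0$.

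The geometric heart of the argument is to show that $V'$ is close to $V$. By Proposition \ref{prop1vsinfty} applied to $\phi$ (with $V\in\Pi(x,r)$), the set $E^\phi(\vartheta,\gamma)\cap B(x,r/4)$, hence $\supp(\phi\llcorner E^\phi(\vartheta,\gamma))\cap B(x,r/4)$, lies in the strip $\{y:\dist(y,xV)\leq\oldC{C:b1}\delta^{1/\mathcal{Q}}r\}$. If the angle $\theta$ between $\mathfrak{n}(V')$ and $\pm\mathfrak{n}(V)$ were larger than a suitable dimensional multiple of $(\vartheta\delta)^{1/\mathcal{Q}}$, one could choose $p\in xV'$ with $d(x,p)=r/8$ and $x^{-1}p$ horizontal in the direction of the component of $\mathfrak{n}(V)$ orthogonal to $\mathfrak{n}(V')$, so that $\dist(p,xV)$ is of order $r\theta$, hence $p$ is at distance of order $r\theta$ from $\supp(\phi\llcorner E^\phi(\vartheta,\gamma))$; a tent at $p$ of radius $\sim r\theta$ then detects mass of order $\Theta'(r\theta)^{\mathcal{Q}}$ of $\Theta'\mathcal{S}^{\mathcal{Q}-1}\llcorner xV'$ and none of $\phi\llcorner E^\phi(\vartheta,\gamma)$, forcing $\Theta'(r\theta)^{\mathcal{Q}}\lesssim\delta r^{\mathcal{Q}}$, i.e. $\theta\lesssim(\vartheta\delta)^{1/\mathcal{Q}}$, a contradiction. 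Consequently, using $\dist(w,xV')=|\pi_{\mathfrak{n}(V')}(\pi_1(x^{-1}w))|$ (Proposition \ref{cor:SSCM2.2.14}) together with $\pi_1(x^{-1}w)\in V\cap V_1\perp\mathfrak{n}(V)$ and $d(x,w)\leq r/2$, one gets $\dist(w,xV')\leq\tfrac r2\sin\theta$, which is much smaller than $\mathfrak{r}$ once $\delta<\delta_\mathbb{G}$ (this comparison, $\delta^{1/(\mathcal{Q}(\mathcal{Q}+1))}\vartheta^{1/\mathcal{Q}}\ll1$, is ensured by $\delta_\mathbb{G}\leq\eta^{\mathcal{Q}+1}(1-\eta)^{\mathcal{Q}^2-1}/(32\vartheta)^{\mathcal{Q}+1}$).

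To finish, let $p'\in xV'$ be nearest to $w$, so $d(w,p')\ll\mathfrak{r}$ and $B(p',\mathfrak{r}/4)\subseteq B(w,\mathfrak{r}/2)$; since $p'$ lies on the plane $xV'$ one has $\mathcal{S}^{\mathcal{Q}-1}\llcorner xV'(B(p',\mathfrak{r}/4))=(\mathfrak{r}/4)^{\mathcal{Q}-1}$. Testing $F_{x,r}(\phi\llcorner E^\phi(\vartheta,\gamma),\Theta'\mathcal{S}^{\mathcal{Q}-1}\llcorner xV')\leq 2\delta r^{\mathcal{Q}}$ against the tent at $w$ of radius $\mathfrak{r}/2$ (supported in $B(w,\mathfrak{r}/2)\subseteq B(x,r)$) and using $\phi\llcorner E^\phi(\vartheta,\gamma)(B(w,\mathfrak{r}))=0$ gives $\Theta'4^{-\mathcal{Q}}\mathfrak{r}^{\mathcal{Q}}\leq 2\delta r^{\mathcal{Q}}$; since $\mathfrak{r}^{\mathcal{Q}}=\delta^{\mathcal{Q}/(\mathcal{Q}+1)}r^{\mathcal{Q}}$ and $\Theta'\geq c\,\vartheta^{-1}$, this forces $\delta^{1/(\mathcal{Q}+1)}\geq c\,4^{-\mathcal{Q}}/(2\vartheta)$, which is incompatible with $\delta<\delta_\mathbb{G}$ by the very form of $\delta_\mathbb{G}$ in Notation \ref{definizione:piano} — a contradiction that proves the proposition. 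I expect the main obstacle to be exactly the lower bound $\Theta'\gtrsim\vartheta^{-1}$, equivalently the proximity of $V'$ to $V$: this is the one place where both terms of hypothesis \eqref{eq:n3} must be used together with the density control at $x$ (rather than the flatness of $\phi$ alone), and it is the step most sensitive to the fact that the Besicovitch covering theorem may fail for the metric $d$, so that containment-in-a-strip (Proposition \ref{prop1vsinfty}) has to be used as a substitute.
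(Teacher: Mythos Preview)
Your instinct at the end is right: the lower bound $\Theta'\gtrsim\vartheta^{-1}$ is indeed the crux, and your proposed justification for it does not work. You write that ``the lower density bound for $\phi$ at $x$ \dots\ forces $\phi\llcorner E^\phi(\vartheta,\gamma)(B(x,r/4))$ to be at least a dimensional multiple of $\vartheta^{-1}r^{\mathcal{Q}-1}$'', but the hypothesis $x\in E^\phi(\vartheta,\gamma)$ only yields $\phi(B(x,r/4))\geq\vartheta^{-1}(r/4)^{\mathcal{Q}-1}$; nothing in the stated assumptions controls how much of that mass lies on $E^\phi(\vartheta,\gamma)$. Corollary~\ref{cor:cor1} compares $\phi\llcorner E^\phi(\vartheta,\gamma)$ with $\mathcal{S}^{\mathcal{Q}-1}\llcorner E^\phi(\vartheta,\gamma)$, which is of no help here. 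Without this lower bound on $\Theta'$, both your angle-comparison paragraph (which needs $\Theta'(r\theta)^{\mathcal{Q}}\lesssim\delta r^{\mathcal{Q}}$ to force $\theta$ small) and the final tent-test at $w$ collapse.

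The paper sidesteps this completely by never introducing a second pair $(V',\Theta')$. It works with the given $V\in\Pi(x,r)$ and a single $\Theta$ for which both
\[
F_{x,r}(\phi,\Theta\mathcal{S}^{\mathcal{Q}-1}\llcorner xV)\le 2\delta r^{\mathcal{Q}}
\quad\text{and}\quad
F_{x,r}\bigl(\phi\llcorner E^\phi(\vartheta,\gamma),\Theta\mathcal{S}^{\mathcal{Q}-1}\llcorner xV\bigr)\le 2\delta r^{\mathcal{Q}}
\]
hold. The first inequality, tested with the tent $g(y)=\min\{\dist(y,B(0,1)^c),\eta\}$ at $x$, gives $\Theta\ge 1/(2e\vartheta)$ directly from the lower density of $\phi$ (not of $\phi\llcorner E^\phi(\vartheta,\gamma)$). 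The second, tested with $\lambda r\,g(\delta_{1/\lambda r}(w^{-1}\cdot))$ for $\lambda>\delta^{1/(\mathcal{Q}+1)}$, has vanishing $\phi\llcorner E^\phi(\vartheta,\gamma)$-integral under the contradiction hypothesis (since $w\in xV$ and the support is $B(w,\lambda r)$), giving $\Theta\,\eta(1-\eta)^{\mathcal{Q}-1}\lambda^{\mathcal{Q}}\le 2\delta$, which contradicts $\delta<\delta_{\mathbb{G}}$. The essential trick is that the lower bound on the flat-measure coefficient is harvested from $\phi$, and then the \emph{same} coefficient and plane are used against the restricted measure; your detour through a separate $(V',\Theta')$ forfeits exactly this leverage, and there is no way to recover it from the hypotheses of the proposition alone.
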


\begin{osservazione}
The set $\Pi(x,r)$ in the above statement is non-empty since Proposition \ref{prop:stab} insures that \eqref{eq:n3} implies $\tilde{d}_{x,r}(\phi,\mathfrak{M})\leq \delta$.
\end{osservazione}

\begin{proof}
Fix some $V\in\Pi(x,r)$ and suppose that $\Theta>0$ is such that:
$$\frac{F_{x,r}(\phi,\Theta\mathcal{S}^{\mathcal{Q}}\llcorner xV)+F_{x,r}(\phi\llcorner E^\phi(\vartheta,\gamma),\Theta\mathcal{S}^{\mathcal{Q}}\llcorner xV)}{r^{\mathcal{Q}}}\leq 2\delta.$$
Defined $g(x):=\min\{\dist(x,B(0,1)^c),\eta\}$, we deduce that:
\begin{equation}
    \begin{split}
        \vartheta^{-1}(1-\eta)^{\mathcal{Q}-1}\eta r^{\mathcal{Q}}- \Theta \eta r^{\mathcal{Q}}\leq&\eta r\phi\big(B(x,(1-\eta)r)\big)-\eta r\Theta\mathcal{S}^{\mathcal{Q}-1}\llcorner xV(B(x,r))\\
        \leq& \int r g(\delta_{1/r}(x^{-1}z)) d\phi(z)-\Theta\int r g(\delta_{1/r}(x^{-1}z)) d\mathcal{S}^{\mathcal{Q}-1}\llcorner xV\leq2\delta r^\mathcal{Q},
        \nonumber
    \end{split}
\end{equation}
since $rg(\delta_{1/r}(x^{-1}\cdot))\in\lip(\overline{B(x,r)})$.
Simplifying and rearranging the above chain of inequalities, we infer that:
$$\Theta\geq \vartheta^{-1}(1-\eta)^{\mathcal{Q}-1}-2\delta/\eta\geq(2\vartheta)^{-1}(1-\eta)^{\mathcal{Q}-1}=(2\vartheta)^{-1}(1-1/\mathcal{Q})^{\mathcal{Q}-1},$$
where the first inequality comes from the choice of $\delta$ and the last equality from the definition of $\eta$, see Definition \ref{definizione:piano}.
Since the function $\mathcal{Q}\mapsto(1-1/\mathcal{Q})^{\mathcal{Q}-1}$ is decreasing and $\lim_{\mathcal{Q}\to\infty}(1-1/\mathcal{Q})^{\mathcal{Q}-1}=1/e$, we deduce that $\Theta\geq 1/2\vartheta e$.
Suppose that $\delta^{1/(\mathcal{Q}+1)}<\lambda<1/2$ and assume that we can find a $w\in xV\cap B(x,r/2)$ such that $\phi \big(B(w,\lambda r)\cap E(\vartheta,\gamma)\big)=0$. This would imply that:
\begin{equation}
\begin{split}
    \Theta \eta(1-\eta)^{\mathcal{Q}-1}\lambda^{\mathcal{Q}}r^{\mathcal{Q}}=&\Theta\eta \lambda r \mathcal{S}^{\mathcal{Q}-1}\llcorner xV\big(B(w,(1-\eta)\lambda r)\big)\\
\leq& \Theta\int \lambda rg(\delta_{1/\lambda r}(w^{-1}z))d\mathcal{S}^{\mathcal{Q}-1}\llcorner xV(z)\\
=&\Theta\int \lambda rg(\delta_{1/\lambda r}(w^{-1}z))d\mathcal{S}^{\mathcal{Q}-1}\llcorner xV(z)-\int \lambda rg(\delta_{1/\lambda r}(w^{-1}z)) d\phi(z)
\leq2\delta r^{\mathcal{Q}},
\label{eq:n104}
\end{split}
\end{equation}
where the last inequality comes from the choice of $\Theta$, $V$ and the fact that $\lambda rg(\delta_{1/\lambda r}(w^{-1}\cdot))\in\lip(\overline{B(x,r)})$.
Thanks to \eqref{eq:n104}, the choice of $\lambda$ and the fact that $1/4e\vartheta<\Theta$, we have that:
\begin{equation}
  \frac{\delta^{\frac{\mathcal{Q}}{\mathcal{Q}+1}}}{4e\vartheta}\eta(1-\eta)^{\mathcal{Q}-1}<\Theta\lambda^{\mathcal{Q}} \eta(1-\eta)^{\mathcal{Q}-1}\leq2\delta.
  \nonumber
\end{equation}
However, few algebraic computations that we omit show that the above inequality chain is in contradiction with the choice of $\delta<\delta_\mathbb{G}$.
\end{proof}

\subsection{Construction of cones complementing \texorpdfstring{$\supp(\phi)$}{Lg} in case it has big projections on planes}\label{sub:project}

This subsection is devoted to the proof of Proposition \ref{prop:cono} that tells us that if the measure $\phi$ is well approximated inside a ball $B(x,r)$ by some plane $V$ and if there exists some \emph{other} plane $W$ on which the $\mathcal{S}^{\mathcal{Q}-1}$-measure of the projection $P_W(\supp(\phi)\cap B(x,r))$ is comparable with $r^{\mathcal{Q}-1}$, then at scales comparable with $r$ the set $\supp(\phi)$ is a $W$-intrinsic Lipschitz surface. In other words, we can find an $\alpha>0$ such that:
$$y\in zC_W(\alpha)\text{ whenever }y,z\in B(x,r)\text{ and }d(z,y)\gtrsim r.$$ 

Before proceeding with the statement and the proof of Proposition \ref{prop:cono}, we fix some notation that will be extensively used throughout the rest of the paper.

\begin{Notation}\label{notation1}
Throughout this paragraph we assume $\sigma\in\N$ to be a fixed positive natural number. First of all, let us define the following two numbers:
$$\zeta(\sigma):=2^{-50\mathcal{Q}}\sigma^{-2}\qquad \text{and }\qquad N(\sigma):=\lfloor-4\log(\zeta(\sigma))\rfloor+40.$$
Secondly, we let:
\begin{center}
    \begin{tabular}{c c c c}
         $\newC\label{C:1.0}(\sigma):=2^{20}(n_1-1)\oldC{C:b1}(\sigma)^2$, & $\newC\label{C:F}(\sigma):=2^{24\mathcal{Q}}\sigma$, &$\newC\label{C:VIB}(\sigma):=\oldC{C:F}(32\zeta(\sigma)^{-2})^m$, & $\newC\label{child}(\sigma):=2^{\frac{2\log \oldC{C:F}}{m}+N}\zeta(\sigma)^{-2}$.
    \end{tabular}
\end{center}
Finally, we introduce six further new constants that depend only on $\sigma$. Although we could avoid giving an explicit expression for such constants, we choose nonetheless to make them explicit. This is due to a couple of reasons. First of all, having their values helps keeping under control their interactions in proofs, getting more precise statements. Secondly, fixing these constants once and for all, we avoid the practise of choosing them ``large enough'' when necessary. In doing so we hope to help the reader not to get distracted with the problem of whether these choices were legitimate or not. 

For the sake of readibility, we choose not make the dependence on $\sigma$ of the numbers $N,\zeta$ and the constants $\oldC{C:0},\ldots,\oldC{child}$ explicit in the following. We let:
\begin{itemize}
    \item[(i)] $A_0(\sigma):=2 \max\bigg\{\oldC{child},\frac{7\log2-2\log \zeta}{N\log 2-2}\bigg\}$,
    \item[(ii)] $k(\sigma):=40^{N+8}\oldC{C:0}\zeta^{-2}A_0^4(1+e^{8NA_0^2})$ and $0<R<2^{-(N+11)}\zeta^2 k$,
    \item[(iii)] $\varepsilon_\mathbb{G}(\sigma):=\frac{2^{3\mathcal{Q}-n-20}\beta\prod_{j=2}^s \epsilon_i^{n_i}}{(A_0 k)^{\mathcal{Q}-1}\oldC{C:0}^{Q-2}\oldC{C:VIB}^2}$ where $\beta$ is the constant introduced in Proposition \ref{prop:rapp},
    \item[(iv)]
    $\newep\label{e}(\sigma):=\frac{1}{2}\min\bigg\{\delta_{\mathbb{G}},\frac{\varepsilon_{\mathbb{G}}^\mathcal{Q}}{(2^{20}\oldC{C:b1}^2\oldC{C:VIB}^2A_0k)^{\mathcal{Q}+1}\big(1+2kR^{-1}\Lambda(1)(\oldC{C:0}+1)^2\big)^\mathcal{Q}},\Big(\frac{k-20}{20k}\Big)^{\mathcal{Q}+1},\frac{1}{(2A_0^2\oldC{C:1.0}+2A_0k\oldC{C:b1}\oldC{C:F}e^{8NA_0^2})^{\mathcal{Q}(\mathcal{Q}+1)}}\bigg\}$,
    \item[(v)]$\newep\label{eps:3}(\sigma):=\frac{1}{2^{2\mathcal{Q}}\oldC{C:F}^2(A_0\oldC{child})^{\mathcal{Q}-1}}$,
\end{itemize}
Since in the rest of Section \ref{sec:main} we make an extensive use of the dyadic cubes constructed in Appendix \ref{AppendiceA}, we recall here some of the notation. For any Radon measure $\psi$ such that for $\psi$-almost every $x\in \mathbb{G}$ we have:
$$0<\Theta^{\mathcal{Q}-1}_*(\psi,x)\leq \Theta^{\mathcal{Q}-1,*}(\psi,x)<\infty,$$
and any $\xi,\tau\in\N$ we denote by $\Delta^\psi(\xi,\tau)$ the family of dyadic cubes relative to $\psi$ and to the parameters $\xi$ and $\tau$ yielded by Theorem \ref{evev}.
\label{def:compactscube}
Furthermore, for any compact subset  $\kappa$  of $E^\psi(\xi,\tau)$ and $l\in \N$ we let:
\begin{equation}
    \begin{split}
    \Delta^\psi(\kappa;\xi,\tau,l):=&\{Q\in\Delta^\psi(\xi,\tau):Q\cap \kappa\neq \emptyset \text{ and }Q\in\Delta_j^\phi(\xi,\tau)\text{ for some }j\geq l\},
\end{split}
    \nonumber
\end{equation}
where $\Delta_j^\phi(\xi,\tau)$ \label{stratta} is the $j$-th layer of cubes, see Theorem \ref{evev}. Finally, for any $Q\in \Delta^\psi(E^\psi(\xi,\tau);\xi,\tau,1)$, we define: $$\alpha(Q):=\tilde{d}_{\mathfrak{c}(Q),2k\diam{Q}}(\psi,\mathfrak{M})+\tilde{d}_{\mathfrak{c}(Q),2k\diam Q}(\psi\llcorner E^\psi(\xi,\tau),\mathfrak{M}),$$
where $\mathfrak{c}(Q)\in Q$ is the centre of the cube $Q$, see Theorem \ref{evev}.

    Eventually,  we recall for the reader's sake some nomenclature on dyadic cubes that was already introduced in the section Notation at the beginning of the paper. For any couple of dyadic cubes $Q_1,Q_2\in\Delta^\psi(\xi,\tau)$:
\begin{itemize}
\item[(i)] if $Q_1\subseteq Q_2$, then $Q_2$ is said to be an \emph{ancestor} of $Q_1$ and $Q_2$ a \emph{sub-cube} of $Q_2$,
\item[(ii)] if $Q_2$ is the smallest cube for which $Q_1\subsetneq Q_2$, then $Q_2$ is said to be the \emph{parent} of $Q_1$ and $Q_1$ the \emph{child} of $Q_2$.
\end{itemize}
\end{Notation}

\begin{Notation}
If not otherwise stated, in order to simplify notation throughout Section \ref{sec:main} we will always denote by $\Delta:=\Delta^\phi(\vartheta,\gamma)$ the family of dyadic cubes constructed in Theorem \ref{evev} relative to the the measure $\phi$ that was fixed at the beginning of this Section and to the parameters $\vartheta,\gamma$, fixed in Notation \ref{definizione:piano}. 
Furthermore, we let:
$$E(\vartheta,\gamma):=E^\phi(\vartheta,\gamma),\qquad\mathscr{E}(\mu,\nu):=\mathscr{E}_{\vartheta,\gamma}^\phi(\mu,\nu)\qquad\text{and}\qquad\Delta(\kappa,l):=\Delta^\phi(\kappa;\vartheta,\gamma,l).$$

Finally, if the dependence on $\sigma$ of the constants introduced above is \emph{not} specified, we will always assume that $\sigma=\vartheta$, where once again $\vartheta$ is the one natural number fixed in Notation \ref{definizione:piano}. 
\end{Notation}

\begin{osservazione}\label{oss:01}
For any compact set $\kappa$ of $E(\vartheta,\gamma)$, we let $\mathcal{M}(\kappa, l)$ be the set of maximal cubes of $\Delta(\kappa,l)$ ordered by inclusion. The elements of  $\mathcal{M}(\kappa, l)$ are pairwise disjoint and enjoy the following properties:
\begin{itemize}
\item[(i)]for any $Q\in\Delta(\kappa,l)$ there is a cube $Q_0\in\mathcal{M}(\kappa,l)$ such that $Q\subseteq Q_0$,
\item[(ii)] if $Q_0\in \mathcal{M}(\kappa,l)$ and there exists some $Q^\prime\in \Delta(\kappa,l)$ for which $Q_0\subseteq Q^\prime$, then $Q_0=Q^\prime$.
\end{itemize}
\end{osservazione}

The proof of the following proposition is inspired by the argument employed in proving Lemma 2.19 of \cite{DS2} and its counterpart in the first Heisenberg group $\mathbb{H}^1$, Lemma 3.8 of \cite{CFO}.

\begin{proposizione}\label{prop:cono}
Suppose that $Q$ is a cube in $\Delta(E(\vartheta,\gamma),\iota)$ for some $\iota\in\N$ satisfying the two following conditions:
\begin{itemize}
\item[(i)] $\tilde{d}_{\mathfrak{c}(Q),4k\diam Q}(\phi\llcorner E(\vartheta,\gamma),\mathfrak{M})\leq \oldep{e}$,
    \item[(ii)] there exists a plane $W\in\G(\mathcal{Q}-1)$  such that:
\begin{equation}
\frac{\diam Q^{\mathcal{Q}-1}}{4\oldC{C:VIB}^2A_0^{\mathcal{Q}-1}} \leq\mathcal{S}^{\mathcal{Q}-1}\llcorner W\Big(P_W\Big[\mathfrak{c}(Q)^{-1}(Q\cap E(\vartheta,\gamma))\Big]\Big).
\label{eq:60}
\end{equation}
\end{itemize}
Let $x\in E(\vartheta,\gamma)\cap Q$ and $y\in B(x,(k-1)\diam Q/8)\cap E(\vartheta,\gamma)$ be two points for which: 
\begin{equation}
    R \diam Q\leq d(x,y)\leq 2^{N+6}\zeta^{-2}R\diam Q.
    \label{eq:2055}
\end{equation}
Then, for any $\alpha>\Big(\frac{\zeta^2\varepsilon_{\mathbb{G}}}{2^{8+N}R^{-1}k\Lambda(1)(\oldC{C:0}+1)}\Big)^{-1}=:\alpha_0$ we have $y\in xC_{W}(\alpha)$.
\end{proposizione}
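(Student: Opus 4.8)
The plan is to argue by contradiction: assume $x^{-1}y\notin C_W(\alpha)$ for some $\alpha>\alpha_0$ and derive a bound of the shape $\alpha\lesssim\theta_*^{-2}$, where $\theta_*>0$ is a lower bound for $|\langle\mathfrak{n}(V),\mathfrak{n}(W)\rangle|$ extracted from the big‑projection hypothesis \eqref{eq:60} ($V$ being the approximating plane provided by hypothesis (i)); inserting the explicit values of the constants of Notation \ref{notation1} then contradicts $\alpha>\alpha_0$. First, set $r:=4k\diam Q$, $c:=\mathfrak{c}(Q)$. Hypothesis (i) gives $\tilde d_{c,r}(\phi\llcorner E(\vartheta,\gamma),\mathfrak M)\le\oldep{e}$, hence $V\in\G(\mathcal{Q}-1)$, $z\in\mathbb{G}$, $\Theta>0$ with $F_{c,r}(\phi\llcorner E(\vartheta,\gamma),\Theta\mathcal S^{\mathcal{Q}-1}\llcorner zV)\le2\oldep{e}\,r^{\mathcal{Q}}$; testing against $w\mapsto\min\{\dist(w,B(c,r)^c),\dist(w,zV)\}\in\lip(\overline{B(c,r)})$, which vanishes on $zV$, and using the Ahlfors regularity of $\phi$ on $E(\vartheta,\gamma)$ exactly as in the proof of Proposition \ref{prop1vsinfty}, one obtains
$$E(\vartheta,\gamma)\cap B(c,k\diam Q)\subseteq\{w:\dist(w,cV)\le t\},\qquad t:=4\oldC{C:b1}k\,\oldep{e}^{1/\mathcal{Q}}\diam Q.$$
Since $x\in Q\subseteq\overline{B(c,\diam Q)}$ and $y\in B(x,(k-1)\diam Q/8)$ both lie in $B(c,k\diam Q)$, Proposition \ref{prop:dist-piani} yields $\|P_{\mathfrak{N}(V)}(x^{-1}y)\|=\dist(y,xV)\le\dist(y,cV)+\dist(x,cV)\le2t$; and the choice of $\oldep{e}$ in Notation \ref{notation1} makes $t$ much smaller than $R\diam Q$ and than $\theta_* R\diam Q$.

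\textbf{The angle estimate (main obstacle).} Translating by $c^{-1}$, the set $c^{-1}(Q\cap E(\vartheta,\gamma))$ lies in the slab $\{v\in B(0,\diam Q):\dist(v,V)\le t\}$. Using the coordinate description of the splitting projection $P_W$ (Proposition \ref{prop:proiezioni}), the identity $\mathcal S^{\mathcal{Q}-1}\llcorner W=\beta^{-1}\mathcal H^{n-1}_{eu}\llcorner W$ (Proposition \ref{prop:rapp}), and the homogeneity of the $\mathscr Q_i$, one estimates the $P_W$‑image of such a slab by
$$\mathcal S^{\mathcal{Q}-1}\llcorner W\big(P_W(c^{-1}(Q\cap E(\vartheta,\gamma)))\big)\le C_\mathbb{G}\,(\diam Q)^{\mathcal{Q}-1}\Big(|\langle\mathfrak{n}(V),\mathfrak{n}(W)\rangle|+\tfrac{t}{\diam Q}\Big),$$
with $C_\mathbb{G}$ depending only on $\mathbb{G}$ (bookkeeping the $\epsilon_i$'s and $\beta$). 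Comparing with \eqref{eq:60} and absorbing the $t$‑term into the choice of $\oldep{e}$ gives
$$|\langle\mathfrak{n}(V),\mathfrak{n}(W)\rangle|\ \ge\ \theta_*\ :=\ \frac{1}{8C_\mathbb{G}\,\oldC{C:VIB}^2 A_0^{\mathcal{Q}-1}}\ =\ \tfrac18\,k^{\mathcal{Q}-1}\oldC{C:0}^{\mathcal{Q}-2}\varepsilon_\mathbb{G},$$
the last equality being the way $\varepsilon_\mathbb{G}$ is calibrated in Notation \ref{notation1}. This is the delicate point: one must measure the projection of a thin slab through the \emph{non‑Lipschitz} projection $P_W$ and the anisotropic dilations, and it is precisely here that the constants $\oldC{C:VIB}$, $A_0$, $\varepsilon_\mathbb{G}$ enter. (This estimate is in the spirit of, and can be read off from, the projection bounds of Proposition \ref{cor:2.2.19} together with the Euclidean slab computation.)

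\textbf{Conclusion.} Suppose $x^{-1}y\notin C_W(\alpha)$, i.e.\ $\|P_{\mathfrak{N}(W)}(x^{-1}y)\|>\alpha\|P_W(x^{-1}y)\|$. By Proposition \ref{prop:2.2.11}, $d(x,y)=\|x^{-1}y\|\le\|P_{\mathfrak{N}(W)}(x^{-1}y)\|+\|P_W(x^{-1}y)\|<(1+\alpha^{-1})\|P_{\mathfrak{N}(W)}(x^{-1}y)\|$, so by Proposition \ref{cor:SSCM2.2.14},
$$|\langle\pi_1(x^{-1}y),\mathfrak{n}(W)\rangle|=\|P_{\mathfrak{N}(W)}(x^{-1}y)\|\ge(1-\alpha^{-1})d(x,y);$$
since $|\pi_1(x^{-1}y)|\le\|x^{-1}y\|=d(x,y)$ and $\mathfrak{n}(W)\perp\mathscr W$ in $V_1$, Pythagoras gives $|\pi_{\mathscr W}(\pi_1(x^{-1}y))|\le\sqrt{2/\alpha}\,d(x,y)$. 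Decomposing $\pi_1(x^{-1}y)=\langle\pi_1(x^{-1}y),\mathfrak{n}(W)\rangle\mathfrak{n}(W)+\pi_{\mathscr W}(\pi_1(x^{-1}y))$, pairing with $\mathfrak{n}(V)$, and using $|\langle\pi_1(x^{-1}y),\mathfrak{n}(V)\rangle|=\|P_{\mathfrak{N}(V)}(x^{-1}y)\|\le2t$ from Step 1, we get
$$(1-\alpha^{-1})\,|\langle\mathfrak{n}(V),\mathfrak{n}(W)\rangle|\,d(x,y)\le2t+\sqrt{2/\alpha}\,d(x,y).$$
Using $d(x,y)\ge R\diam Q$, $t\le\theta_* R\diam Q/16$ and $\alpha\ge2$, this together with Step 2 forces $\theta_*\le\tfrac{\theta_*}{4}+2\sqrt{2/\alpha}$, hence $\alpha\le\tfrac{128}{9\theta_*^2}$. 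Finally, inserting $\theta_*=\tfrac18 k^{\mathcal{Q}-1}\oldC{C:0}^{\mathcal{Q}-2}\varepsilon_\mathbb{G}$, the bound $R<2^{-(N+11)}\zeta^2k$ and the (enormous) value of $k$ from Notation \ref{notation1}, a direct computation shows $\tfrac{128}{9\theta_*^2}\le\alpha_0$, contradicting $\alpha>\alpha_0$. Hence $y\in xC_W(\alpha)$ for every $\alpha>\alpha_0$.
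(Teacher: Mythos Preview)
Your argument is essentially correct and uses the same two ingredients as the paper—the slab/strip projection estimate and a first-layer angle computation—but in a different order. The paper assumes $y\notin xC_W(\alpha)$, constructs the foot $v\in V$ of the perpendicular from $y$ onto $xV$, and uses Proposition~\ref{prop:conifero} together with the upper bound in \eqref{eq:2055} to show $|\pi_1(P_Wv)|\le\theta(\alpha,\oldep{e})|v_1|$; from this it deduces $|\langle\mathfrak n(V),\mathfrak n(W)\rangle|\le2\theta(\alpha,\oldep{e})$, then computes (as in \eqref{eq:220}--\eqref{eq:2054}) that the projection is contained in a thin strip, contradicting \eqref{eq:60}. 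You instead run the strip estimate first, with $|\langle\mathfrak n(V),\mathfrak n(W)\rangle|$ kept explicit, and combine it with \eqref{eq:60} to get the lower bound $|\langle\mathfrak n(V),\mathfrak n(W)\rangle|\ge\theta_*$ \emph{independently of} $\alpha$; then the contradiction assumption plus Pythagoras on $\pi_1(x^{-1}y)$ gives $\alpha\le128/(9\theta_*^2)$. This buys you two simplifications: you never invoke Proposition~\ref{prop:conifero}, and you never use the upper bound $d(x,y)\le2^{N+6}\zeta^{-2}R\diam Q$ in \eqref{eq:2055} (only the lower bound and $y\in B(x,(k-1)\diam Q/8)$).

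Two places deserve more care. First, your slab bound in Step~2 is really the computation \eqref{eq:220}--\eqref{eq:n15E} with $|\langle\mathfrak n(V),\mathfrak n(W)\rangle|$ left as a parameter; as written it should carry a factor $1/\sin\theta$ (with $\cos\theta=|\langle\mathfrak n(V),\mathfrak n(W)\rangle|$), which is harmless since you only need the estimate in the regime $\cos\theta<\theta_*\ll1$, but this case split should be stated. Second, the final inequality $128/(9\theta_*^2)\le\alpha_0$ is asserted rather than checked: it does hold, because $\theta_*^{-2}$ is polynomial in $A_0,\oldC{C:VIB}$ while $\alpha_0\gtrsim\zeta^{-4}\varepsilon_\mathbb{G}^{-1}$ contains a factor $k^{\mathcal Q-1}$ and $k$ is exponential in $A_0^2$ by Notation~\ref{notation1}(ii); but you should exhibit at least one line of this comparison.
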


\begin{osservazione}
Thanks to the definition of $R$ and $k$, we have:
\begin{equation}
    \begin{split}
         2^{(N+6)}\zeta^{-2}R= 2^{(N+6)}\zeta^{-2}\cdot2^{-(N+11)}\zeta^{2}k=k/32<(k-1)/8.
         \nonumber
    \end{split}
\end{equation}
This implies that $B(x,2^{N+6}\zeta^{-2}R\diam Q)\Subset B(x,(k-1)\diam Q/8)$ and thus the request $d(x,y)\geq R\diam Q$ is compatible with the fact that $y$ is chosen in $ B(x,(k-1)\diam Q/8)$.
\end{osservazione}

\begin{proof}[Proof of Proposition \ref{prop:cono}]
Suppose by contradiction there are two points $x,y\in E(\vartheta,\gamma)$ satisfying the hypothesis of the proposition such that $y\not\in xC_W(\alpha)$ for some $\alpha>\alpha_0$. This implies, since the cone $C_W(\alpha)$ is closed by definition, that we have $\pi_1(x^{-1}y)\neq 0$. Furthermore, Proposition \ref{prop:conifero} together with \eqref{eq:2055} yield:
\begin{equation}
    \diam Q\leq R^{-1}\Lambda(\alpha) \lvert\pi_1(x^{-1}y)\rvert\leq R^{-1}\Lambda(1) \lvert\pi_1(x^{-1}y)\rvert,
    \label{eq:207}
\end{equation}
where the last inequality comes from the fact that $\Lambda$, the function yielded by Proposition \ref{prop:conifero}, is decreasing.
Let $\rho:=\diam(Q)$ and note that Proposition \ref{prop:stab} and the fact that $B(x,4(k-1)\rho)\subseteq B(\mathfrak{c}(Q),4k\rho)$, imply:
$$\tilde{d}_{x,4(k-1)\rho}(\phi\llcorner E(\vartheta,\gamma),\mathfrak{M})\leq (k/(k-1))^{\mathcal{Q}-1}\tilde{d}_{\mathfrak{c}(Q),4k \rho}(\phi\llcorner E(\vartheta,\gamma),\mathfrak{M})\leq  2^{\mathcal{Q}-1}\oldep{e},$$
Therefore, thanks to Proposition \ref{prop1vsinfty} and the choice of $\oldep{e}$ we infer that there exists a plane $V\in \G(\mathcal{Q}-1)$, that we consider fixed throughout the proof, such that:
\begin{equation}
    \sup_{w\in E(\vartheta,\gamma)\cap B(x,(k-1)\rho)}\frac{\dist(w,xV)}{4(k-1)\rho}\leq 2\oldC{C:b1}\oldep{e}^{1/\mathcal{Q}}.
    \label{eq:55}
\end{equation}
Since $y\in B(x,(k-1)\rho)$ we deduce from \eqref{eq:55}, that:
\begin{equation}
    \dist(y,xV)\leq8(k-1) \oldC{C:b1}\oldep{e}^{1/\mathcal{Q}}\rho.
    \label{eq:n10}
\end{equation}

In this paragraph we prove that if there exists a point $v\in V$ such that $v_1\neq 0$ and $\lvert \pi_1(P_W v)\rvert\leq \vartheta \lvert v_1\rvert$ for some $0<\vartheta<1$, then:
\begin{equation}
\lvert \langle \mathfrak{n}(V),\mathfrak{n}(W)\rangle\rvert\leq\vartheta/\sqrt{1-\vartheta^2}. 
\label{eq:claimy1}
\end{equation}
We note that the assumptions on $v_1$ imply that:
\begin{equation}
    \lvert v_1\rvert^2-\langle \mathfrak{n}(W),v_1\rangle^2=\lvert v_1-\langle \mathfrak{n}(W),v_1\rangle \mathfrak{n}(W)\rvert^2=\lvert \pi_\mathscr{W}v_1\rvert^2=\lvert\pi_1(P_Wv)\rvert^2\leq \vartheta^2 \lvert v_1\rvert^2.
    \label{eq:2030}
\end{equation}
By means of few omitted algebraic manipulations of \eqref{eq:2030}, we conclude that $\sqrt{1-\vartheta^2}\lvert v_1\rvert\leq \lvert\langle \mathfrak{n}(W),v_1\rangle\rvert$.
Finally, since $\langle \mathfrak{n}(V),v_1\rangle=0$, thanks to \eqref{eq:2030} and Cauchy-Schwartz inequality, we have:
\begin{equation}
\begin{split}
    \vartheta\lvert v_1\rvert\geq& \lvert\langle \pi_\mathscr{W}v_1,\mathfrak{n}(V)\rangle\rvert =  \lvert\langle v_1-\langle \mathfrak{n}(W),v_1\rangle\mathfrak{n}(W),\mathfrak{n}(V)\rangle\rvert\\
    =&\lvert\langle \mathfrak{n}(W),v_1\rangle\langle \mathfrak{n}(V),\mathfrak{n}(W)\rangle\rvert\geq \sqrt{1-\vartheta^2} \lvert v_1\rvert\lvert \langle \mathfrak{n}(V),\mathfrak{n}(W)\rangle\rvert.
    \label{eq:2051}
\end{split}
\end{equation}
It is immediate to see that \eqref{eq:2051} is equivalent to \eqref{eq:claimy1}, proving the claim.

Given $V,W\in\G(\mathcal{Q}-1)$ and $x,y\in E(\vartheta,\gamma)$ as above, let us construct a $v$ with $v_1\neq 0$ that satisfies $\lvert \pi_1(P_W v)\rvert\leq \vartheta \lvert v_1\rvert$ for a suitably small $\vartheta$. Since $y\not\in xC_W(\alpha)$, thanks to Proposition \ref{prop:proiezioni} we have:
\begin{equation}
    \begin{split}
        \lvert  \pi_1(P_W(x^{-1}y))\rvert \leq& \lVert P_W(x^{-1}y)\rVert< \alpha^{-1}\lVert P_{\mathfrak{n}(W)}(x^{-1}y)\rVert
        = \alpha^{-1} \lvert\langle \mathfrak{n}(W), \pi_1(x^{-1}y)\rangle\rvert
        \leq\alpha^{-1}\lvert \pi_1(x^{-1}y)\rvert.
        \nonumber
    \end{split}
\end{equation}
Defined $v$ to be the point of $V$ for which $d(y, xv)=\dist(y,xV)$, thanks to \eqref{eq:55} and the fact that $y\in B(x,(k-1)\rho/8)$ we have: 
\begin{equation}
\begin{split}
    \lVert v\rVert\leq d(xv,y)&+d(y,x)\leq \dist(y,xV)+(k-1)\rho/8\leq (8\oldC{C:b1}\oldep{e}^{1/\mathcal{Q}}+1/8)k\rho<(k-1)\rho,
    \end{split}
    \nonumber
\end{equation}
    where the last inequality comes from the choice of $\oldep{e}$. Furthermore, thanks to \eqref{eq:207} and \eqref{eq:n10} we have:
    \begin{equation}
\begin{split}
    R\Lambda(1)^{-1}\rho/2 \leq(R\Lambda(1)^{-1}-8\oldC{C:b1}k\oldep{e}^{1/\mathcal{Q}})\rho\leq&\lvert \pi_1(x^{-1}y)\rvert-d(y,xv)\leq\lvert\pi_1(x^{-1}y)\rvert-\lvert\pi_1(y^{-1}xv)\rvert\\
    \leq &\lvert\pi_1(x^{-1}y)-\pi_1(y^{-1}xv)\rvert=\lvert v_1\rvert,
\end{split}
    \label{eq:211a}
\end{equation}
and where the first inequality above, comes from the choice of $\oldep{e}$.
Let us prove that $v$ satisfies the inequality $\lvert \pi_1(P_W v)\rvert\leq 2R^{-1}\Lambda(1)k(8\oldC{C:0}\oldC{C:b1}\oldep{e}^{1/\mathcal{Q}}+2^{6+N}\zeta^{-2}\alpha^{-1})\lvert v_1\rvert$. Since $v\not\in C_W(\alpha)$, thanks to Proposition \ref{prop:proiezioni}:
\begin{equation}
    \begin{split}
        \lvert \pi_1(P_W(v))\rvert\leq& \lvert \pi_1(P_W(v))-\pi_1(P_W(x^{-1}y))\rvert+\lvert \pi_1(P_W(x^{-1}y))\rvert\\
       \leq&\lvert\pi_1(P_W(y^{-1}xv))\rvert+\lVert P_W(x^{-1}y)\rVert
        \leq \lvert\pi_1(P_W(y^{-1}xv))\rvert+\alpha^{-1} \lVert P_{\mathfrak{N}(W)}(x^{-1}y)\rVert\\
        \leq&\lVert P_W(y^{-1}xv)\rVert+\alpha^{-1} \lvert \pi_1(x^{-1}y)\rvert\leq \lVert P_W(y^{-1}xv)\rVert+2^{6+N}\zeta^{-2}R\alpha^{-1} \rho,
        \label{eq:2040}
    \end{split}
\end{equation}
where the last inequality of the last line above comes from \eqref{eq:2055}. Theorem \ref{cor:SSCM2.2.14} together with  \eqref{eq:207}, \eqref{eq:211a} and \eqref{eq:2040} implies that:
\begin{equation}
\begin{split}
    \lvert \pi_1(P_W(v))\rvert\leq& \lVert P_W(y^{-1}xv)\rVert+2^{6+N}\zeta^{-2}R\alpha^{-1}
    \leq \oldC{C:0}\lVert y^{-1}xv\rVert+2^{6+N}\zeta^{-2}R\alpha^{-1}\rho\\
    =&\oldC{C:0}d(y,xV)+2^{6+N}\zeta^{-2}R\alpha^{-1}\rho
    \leq (8\oldC{C:0}\oldC{C:b1}(k-1)\oldep{e}^{1/\mathcal{Q}}+2^{6+N}\zeta^{-2}R\alpha^{-1})\rho\\
    \leq&2R^{-1}\Lambda(1)k(8\oldC{C:0}\oldC{C:b1}\oldep{e}^{1/\mathcal{Q}}+2^{6+N}\zeta^{-2}\alpha^{-1})\lvert v_1\rvert=:\theta(\alpha,\oldep{e})\lvert v_1\rvert.
    \label{eq:2052}
\end{split}
\end{equation}
Thanks to the choice of the constants $\oldep{e}, R$ and $k$ together with some algebraic computations that we omit, it is possible to prove that $\sqrt{1-\theta(\alpha,\oldep{e})^2}\geq 1/2$.
Therefore, since $\lvert \pi_1(P_W(v))\rvert\leq \theta(\alpha,\oldep{e})\lvert \pi_1(v)\rvert$, we deduce thanks \eqref{eq:claimy1} that:
\begin{equation}
    \lvert\langle\mathfrak{n}(V),\mathfrak{n}(W)\rangle\rvert\leq \frac{\theta(\alpha,\oldep{e})}{\sqrt{1-\theta(\alpha,\oldep{e})^2}}\leq2\theta(\alpha,\oldep{e}).
    \label{eq:212}
\end{equation}
Let us prove that \eqref{eq:212} is in contradiction with \eqref{eq:60}.
Suppose that $z\in B(x,(k-1)\rho/8)\cap E(\vartheta,\gamma)$ and note that:
\begin{equation}
\begin{split}
\lvert\langle \mathfrak{n}(V),\pi_1(P_W(x^{-1}z))\rangle\rvert=&\lvert\langle \mathfrak{n}(V),\pi_\mathscr{W}(z_1-x_1)\rangle\rvert\leq \lvert\langle \mathfrak{n}(V),z_1-x_1\rangle\rvert+\lvert\langle \mathfrak{n}(V),\pi_{\mathfrak{n}(W)}(z_1-x_1)\rangle\rvert\\
    \leq& \lvert\langle \mathfrak{n}(V),z_1-x_1\rangle\rvert+\lvert\langle \mathfrak{n}(V),\mathfrak{n}(W)\rangle\rvert\lvert \langle z_1-x_1, \mathfrak{n}(W)\rangle\rvert\\
    \leq &   \lVert P_{\mathfrak{N}(V)}(x^{-1}z)\rVert+d(x,z)\lvert\langle \mathfrak{n}(V),\mathfrak{n}(W)\rangle\rvert= \text{dist}(z,xV)+d(x,z)\lvert\langle \mathfrak{n}(V),\mathfrak{n}(W)\rangle\rvert,
\end{split}
    \label{eq:220}
\end{equation}
where the last identity comes from Proposition \ref{cor:SSCM2.2.14}.
Inequalities \eqref{eq:55}, \eqref{eq:212}, \eqref{eq:220} and the choice of $z$ imply:
\begin{equation}
\begin{split}
    \lvert\langle \mathfrak{n}(V),\pi_1(P_W(x^{-1}z))\rangle\rvert\leq&d(z,xV)+d(x,z)\lvert\langle \mathfrak{n}(V),\mathfrak{n}(W)\rangle\rvert\\
    \leq &8 \oldC{C:b1}k\oldep{e}^{1/\mathcal{Q}}\rho+2\theta(\alpha,\oldep{e})d(x,z)
    \leq8 \oldC{C:b1}k\oldep{e}^{1/\mathcal{Q}}\rho+2\theta(\alpha,\oldep{e})k\rho.
\end{split}
    \label{eq:dist}
\end{equation}
Furthermore, defined $\mathfrak{n}:=\pi_\mathscr{W}(\mathfrak{n}(V))$, it is immediate to see from \eqref{eq:212} that  $\lvert\mathfrak{n}-\mathfrak{n}(V)\rvert\leq 2 \theta(\alpha,\oldep{e})$, which yields thanks to \eqref{eq:55}, \eqref{eq:dist}, the triangular inequality and Proposition \ref{prop:2.2.11} the following bound:
\begin{equation}
\begin{split}
    \lvert\langle\mathfrak{n},\pi_1(P_W(x^{-1}z))\rangle\rvert
    \leq &\lvert\langle\mathfrak{n}(V),\pi_1(P_W(x^{-1}z))\rangle\rvert+\lvert \mathfrak{n}-\mathfrak{n}(V)\rvert\lvert \pi_1(P_W(x^{-1}z))\rvert\\
        \leq &\lvert\langle\mathfrak{n}(V),\pi_1(P_W(x^{-1}z))\rangle\rvert+\lvert \mathfrak{n}-\mathfrak{n}(V)\rvert\lVert P_W(x^{-1}z)\rVert\\
    \leq&(8k \oldC{C:b1}\oldep{e}^{1/\mathcal{Q}}\rho+2\theta(\alpha,\oldep{e})k\rho)+ 2\theta(\alpha,\oldep{e})\oldC{C:0}k\rho=8(\oldC{C:b1}\oldep{e}^{1/\mathcal{Q}}+(\oldC{C:0}+1)\theta(\alpha,\oldep{e}))k\rho.
\end{split}
    \label{eq:importante}
\end{equation}
For the sake of notation, we introduce the following set:
$$S:=\{w\in W:\lvert\langle\mathfrak{n},w_1\rangle\rvert
    \leq8(\oldC{C:b1}\oldep{e}^{1/\mathcal{Q}}+(\oldC{C:0}+1)\theta(\alpha,\oldep{e}))k\rho\}.$$
The bound \eqref{eq:importante} implies that the projection of $x^{-1}E(\vartheta,\gamma)\cap B(0,(k-1)\rho/8)$ on $W$ is contained in $S$, which is a very narrow strip around $V\cap W$ inside $W$. Furthermore, we recall that thanks to Proposition \ref{cor:SSCM2.2.14} we have:
\begin{equation}
    P_W(B(0,(k-1)\rho/8))\subseteq B(0,\oldC{C:0}(k-1)\rho/8).
\label{eq:n11}
\end{equation}
Finally, putting together  \eqref{eq:importante} and \eqref{eq:n11}, we deduce that:
\begin{equation}
\begin{split}
        P_W\Big(x^{-1}E(\vartheta,\gamma)\cap B(0,(k-1)\rho/8)\Big)\subseteq P_W\big(x^{-1}E(\vartheta,\gamma)\big)\cap P_W\big(B(0,(k-1)\rho/8)\big)
        \subseteq S \cap B(0,\oldC{C:0}(k-1)\rho/8).
        \label{eq:n12}
\end{split}
\end{equation}
Completing $\{\mathfrak{n}(W),\mathfrak{n}\}$ to an orthonormal basis $\mathcal{E}:=\{\mathfrak{n}(W),\mathfrak{n}/\lvert\mathfrak{n}\rvert, e_3,\ldots, e_n\}$ of $\R^n$ satisfying \eqref{eq:orthbasis} thanks to Remark \ref{box}, we have:
\begin{equation}
    S \cap B(0,\oldC{C:0}(k-1)\rho/8)\subseteq S\cap \text{Box}_{\mathcal{E}}(0,\oldC{C:0}k\rho/8).
    \label{eq:n14}
\end{equation}
The above inclusion together with Tonelli's theorem yieds:
\begin{equation}
\begin{split}
        \mathcal{H}_{eu}^{n-1}\llcorner W\Big(P_W\Big(x^{-1}E(\vartheta,\gamma)\cap B(0,(k-1)\rho/8)\Big)\Big)\leq& \mathcal{H}_{eu}^{n-1}\llcorner W(S \cap B(0,\oldC{C:0}(k-1)\rho/8))\\\leq \mathcal{H}_{eu}^{n-1}\llcorner W(S\cap \text{Box}_{\mathcal{E}}(0,\oldC{C:0}k\rho/8))
        =&16\big(\oldC{C:b1}\oldep{e}^{1/\mathcal{Q}}+(\oldC{C:0}+1)\theta(\alpha,\oldep{e})\big)k\rho\cdot2^{n-2}\prod_{i=2}^s\epsilon_i^{-n_i}\bigg(\frac{\oldC{C:0}k\rho}{8}\bigg)^{\mathcal{Q}-2}\\
        =&2^{n-3\mathcal{Q}+10}\oldC{C:0}^{\mathcal{Q}-2}\prod_{i=2}^s\epsilon_i^{-n_i}\big(\oldC{C:b1}\oldep{e}^{1/\mathcal{Q}}+(\oldC{C:0}+1)\theta(\alpha,\oldep{e})\big)(k\rho)^{\mathcal{Q}-1}.
\end{split}
\label{eq:n15E}
\end{equation}
The inclusion \eqref{eq:n12}, the bound \eqref{eq:n15E}, Proposition \ref{prop:rapp} and the definition of $A_0$ finally imply that:
\begin{equation}
    \begin{split}
        &\mathcal{S}^{\mathcal{Q}-1}\llcorner W\Big(P_W\Big(x^{-1}E(\vartheta,\gamma)\cap B(0,(k-1)\rho/8)\Big)\Big)\leq
        \mathcal{S}^{\mathcal{Q}-1}\llcorner W\Big( S\cap B(0,\oldC{C:0}k\rho/8
)\Big)\\
=&\beta^{-1}\mathcal{H}_{eu}^{n-1}\llcorner W\Big( S\cap B(0,\oldC{C:0}k\rho/8
)\Big)
\leq \beta^{-1}2^{n-3\mathcal{Q}+10}\oldC{C:0}^{\mathcal{Q}-2}\prod_{i=2}^s\epsilon_i^{-n_i}\big(\oldC{C:b1}\oldep{e}^{1/\mathcal{Q}}+(\oldC{C:0}+1)\theta(\alpha,\oldep{e})\big)(k\rho)^{\mathcal{Q}-1}\\
= &2^{-10}\oldC{C:VIB}^{-2}\varepsilon_{\mathbb{G}}^{-1}A_0^{-(\mathcal{Q}-1)}\big(\oldC{C:b1}\oldep{e}^{1/\mathcal{Q}}+(\oldC{C:0}+1)\theta(\alpha,\oldep{e})\big)\rho^{\mathcal{Q}-1},
   \label{eq:2054}
    \end{split}
\end{equation}
where the last identity comes from the definition of $\varepsilon_\mathbb{G}$, see Notation \ref{notation1}.
Furthermore, since $\mathcal{S}^{\mathcal{Q}-1}\llcorner W(P_W(p*E)=\mathcal{S}^{\mathcal{Q}-1}\llcorner W(P_W(E))$ for any measurable set $E$ in $\mathbb{G}$, see for instance the proof of Proposition 2.2.19 in \cite{MFSSC}, we deduce that:
\begin{equation}
\begin{split}
    \mathcal{S}^{\mathcal{Q}-1}\llcorner W\Big(P_W\Big(x^{-1}E(\vartheta,\gamma)\cap B(0,(k-1)\rho/8)\Big)\Big)=\mathcal{S}^{\mathcal{Q}-1}\llcorner W\Big(P_W\Big(\mathfrak{c}(Q)^{-1}E(\vartheta,\gamma)\cap B(\mathfrak{c}(Q)^{-1}x,(k-1)\rho/8)\Big)\Big).
    \nonumber
    \end{split}
\end{equation}
    Thanks to the choice of $k$ and the fact that $x\in Q$, we infer that $B(0,\rho)\subseteq B(\mathfrak{c}(Q)^{-1}x,(k-1)\rho/8)$. Together with \eqref{eq:60}, this allows us to deduce that:
    \begin{equation}
\begin{split}
    \mathcal{S}^{\mathcal{Q}-1}\llcorner W\Big(P_W\Big(x^{-1}E(\vartheta,\gamma)\cap B(0,(k-1)\rho/8)\Big)\Big)
    \geq&\mathcal{S}^{\mathcal{Q}-1}\llcorner W\Big(P_W\Big(\mathfrak{c}(Q)^{-1}E(\vartheta,\gamma)\cap B(0,\rho)\Big)\Big)
    \\\geq &\mathcal{S}^{\mathcal{Q}-1}\llcorner W\Big(P_W\big(\mathfrak{c}(Q)^{-1}(E(\vartheta,\gamma)\cap Q)\big)\Big)\geq \frac{\rho^{\mathcal{Q}-1}}{4\oldC{C:VIB}^2A_0^{\mathcal{Q}-1}}.
\end{split}
    \label{eq:301}
\end{equation}
Putting together \eqref{eq:2054} and \eqref{eq:301} we conclude:
\begin{equation}
\begin{split}
    2^8\varepsilon_{\mathbb{G}}\leq&\big(\oldC{C:b1}\oldep{e}^{1/\mathcal{Q}}+(\oldC{C:0}+1)\theta(\alpha,\oldep{e})\big)
       =\oldC{C:b1}\oldep{e}^{1/\mathcal{Q}}+2R^{-1}k\Lambda(1)(\oldC{C:0}+1)(\oldC{C:0}\oldC{C:b1}\oldep{e}^{1/\mathcal{Q}}+2^{6+N}\zeta^{-2}\alpha^{-1}),
    \nonumber
\end{split}
\end{equation}
The choice of $\oldep{e}$ and $\alpha$ imply, with some algebraic computations that we omit, that the above inequality is false, showing that the assumption $y\not\in xC_W(\alpha)$ is false.
\end{proof}

\subsection{Flat tangents imply big projections}\label{big:proj}

This subsection is devoted to the proof of the following result, that asserts that the hypothesis (ii) of Proposition \ref{prop:cono} is satisfied by $\phi\llcorner E(\vartheta,\gamma)$. More precisely we construct a compact subset $C$ of $E(\vartheta,\gamma)$ having big measure inside $E(\vartheta,\gamma)$ such that:

\begin{teorema}\label{th:projji}
For any cube $Q$ of sufficient small diameter such that $(1-\oldep{eps:3})\phi(Q)\leq \phi(Q\cap C)$, we have:
$$\mathcal{S}^{\mathcal{Q}-1}(P_{\Pi(Q)}(Q\cap C))\geq \frac{\diam Q^{\mathcal{Q}-1}}{2A_0^{\mathcal{Q}-1}}.$$
\end{teorema}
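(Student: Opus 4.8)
The plan is to carry out the curve-sweeping argument of David and Semmes (Chapter~2, \S2.1--\S2.2 of \cite{DavidSemmes}), exploiting the non-degeneracy consequences of flat tangents recorded in Propositions~\ref{prop1vsinfty} and \ref{prop:bil2}. First I would construct the set $C$. By Proposition~\ref{prop:locality} the measure $\phi\llcorner E(\vartheta,\gamma)=\chi_{E(\vartheta,\gamma)}\phi$ still has flat tangents at $\phi$-a.e. point of $E(\vartheta,\gamma)$, so Proposition~\ref{prop:flatty} yields $d_{x,kr}(\phi,\mathfrak{M})+d_{x,kr}(\phi\llcorner E(\vartheta,\gamma),\mathfrak{M})\to0$ as $r\to0$ for $\phi$-a.e. $x\in E(\vartheta,\gamma)$ and every $k>0$; combining this with Proposition~\ref{prop:BIGGI} and an Egorov-type argument produces a compact $C\subseteq E(\vartheta,\gamma)$ with $\phi(E(\vartheta,\gamma)\setminus C)\le\oldep{eps:1}\phi(E(\vartheta,\gamma))$ and a scale $\mathfrak{r}>0$ such that $d_{y,4kr}(\phi,\mathfrak{M})+d_{y,4kr}(\phi\llcorner E(\vartheta,\gamma),\mathfrak{M})\le\tfrac12\oldep{e}$ for every $y\in C$ and $0<r<\mathfrak{r}$. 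Now fix a cube $Q$ as in the statement with $\rho:=\diam Q<\mathfrak{r}$ small enough, write $x:=\mathfrak{c}(Q)$, choose a plane $W\in\Pi(Q)$ (this set is non-empty by the definition in Notation~\ref{notation1} and the bound on $\alpha(Q)$), and let $v:=\mathfrak{n}(W)$ be its unit horizontal normal. The hypothesis $(1-\oldep{eps:3})\phi(Q)\le\phi(Q\cap C)$ together with Corollary~\ref{cor:cor1} shows that $Q\cap C$ exhausts $Q$ both in $\phi$- and in $\mathcal{S}^{\mathcal{Q}-1}$-measure up to an error $\lesssim\vartheta\oldep{eps:3}\rho^{\mathcal{Q}-1}$.

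Set $B_+:=B(\delta_{10A_0^{-1}}(v),A_0^{-1})$ and $B_-:=B_+*\delta_{20A_0^{-1}}(v^{-1})$. Since $A_0$ is large and $v$ is horizontal and unitary, $B_+\subseteq\{y\in B(0,1):\langle y,v\rangle>\omega\}$ and $B_-\subseteq\{y\in B(0,1):\langle y,v\rangle<-\omega\}$ for some $\omega$ comparable with $\varepsilon_\mathbb{G}$ and $\oldep{e}$. By Proposition~\ref{prop1vsinfty} the set $E(\vartheta,\gamma)\cap B(x,k\rho)$ lies in the $\omega\rho$-neighbourhood of $x*W$, hence $x\delta_\rho B_+$ and $x\delta_\rho B_-$ do not meet $E(\vartheta,\gamma)$; by Proposition~\ref{prop:bil2}, applied to $\phi$ and to $\phi\llcorner E(\vartheta,\gamma)$, every point of $x*W\cap B(x,k\rho/2)$ lies within $\omega\rho$ of $E(\vartheta,\gamma)$, and, since on $Q$ the measures $\phi\llcorner C$ and $\phi$ differ negligibly, within a fixed multiple of $\omega\rho$ of $C$ on the relevant portion of $x*W$.

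For each $z\in x\delta_\rho B_+$ consider the curve $\gamma_z(t):=z\,\delta_{20A_0^{-1}t}(v^{-1})$, $t\in[0,1]$: its endpoint lies in $x\delta_\rho B_-$, the whole curve sits in one fibre $P_W^{-1}(P_W(z))$ of the splitting $\mathbb{G}=W\rtimes\mathfrak{N}(W)$ (Proposition~\ref{prop:proiezioni}), and, crossing from one side of $x*W$ to the other, it meets $x*W$ at a single point $\bar z\in B(x,k\rho/2)$. If $\gamma_z\cap C\ne\emptyset$ then $P_W(z)\in P_W\big(C\cap B(x,c\rho)\big)$ for a dimensional constant $c$; if $\gamma_z\cap C=\emptyset$ then the no-holes estimate provides $q_z\in C$ with $d(q_z,\bar z)\le c'\omega\rho$, so $U_z:=B(q_z,2c'\omega\rho)$ meets $\gamma_z$ and $P_W(z)=P_W(\bar z)\in P_W(U_z)$. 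With $F:=C\cup\bigcup_{\gamma_z\cap C=\emptyset}U_z$ one gets $P_W(x\delta_\rho B_+)\subseteq P_W\big(F\cap B(x,c\rho)\big)$, that is
\[
\mathcal{S}^{\mathcal{Q}-1}\llcorner W\big(P_W(x\delta_\rho B_+)\big)\le\mathcal{S}^{\mathcal{Q}-1}\llcorner W\big(P_W(C\cap B(x,c\rho))\big)+\mathcal{S}^{\mathcal{Q}-1}\llcorner W\Big(P_W\big({\textstyle\bigcup_{\gamma_z\cap C=\emptyset}}U_z\big)\Big).
\]
The left-hand side equals $c(W)(A_0^{-1}\rho)^{\mathcal{Q}-1}\ge(\rho/A_0)^{\mathcal{Q}-1}$ by Proposition~\ref{cor:2.2.19}, since $P_W$ is covariant under left translations and dilations up to measure and $c(W)\ge1$. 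A $5$-times Vitali covering of $\{U_z\}$ plus Proposition~\ref{cor:2.2.19} bound the last term by $c(W)5^{\mathcal{Q}-1}\sum_i(2c'\omega\rho)^{\mathcal{Q}-1}$ over a disjoint subfamily centred on $C$ inside a fixed dilate of $B(x,\rho)$, and a packing argument over the scales between $\omega\rho$ and $\rho$ turns this into $\le\omega\rho^{\mathcal{Q}-1}$. Finally $P_W(C\cap B(x,c\rho))\setminus P_W(Q\cap C)$ has $\mathcal{S}^{\mathcal{Q}-1}\llcorner W$-measure $\le2c(W)\mathcal{S}^{\mathcal{Q}-1}\big((C\cap B(x,c\rho))\setminus Q\big)\lesssim\vartheta\oldep{eps:3}\rho^{\mathcal{Q}-1}$ by Corollary~\ref{cor:cor1} and the dyadic-cube geometry of Appendix~\ref{AppendiceA}. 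Choosing $\oldep{eps:3}$, $\varepsilon_\mathbb{G}$, $\oldep{e}$ (hence $\omega$) as in Notation~\ref{notation1} makes the two error terms each $\le\tfrac12(\rho/A_0)^{\mathcal{Q}-1}$, whence
\[
\mathcal{S}^{\mathcal{Q}-1}\big(P_{\Pi(Q)}(Q\cap C)\big)\ge(\rho/A_0)^{\mathcal{Q}-1}-\tfrac12(\rho/A_0)^{\mathcal{Q}-1}=\frac{\diam Q^{\mathcal{Q}-1}}{2A_0^{\mathcal{Q}-1}}.
\]

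The genuinely hard step is the bound $\mathcal{S}^{\mathcal{Q}-1}\llcorner W\big(P_W(\bigcup U_z)\big)\le\omega\rho^{\mathcal{Q}-1}$: a crude count of the disjoint balls $U_z$ only gives an $O(1)$ bound, so one must really use that each $U_z$ may be taken with radius comparable to $\dist(\gamma_z,C)$ --- small by Proposition~\ref{prop:bil2} --- and organise the curves missing $C$ according to the dyadic size of the hole of $C$ they lie in, summing a geometric series over the scales in $(\omega\rho,\rho)$. This multiscale bookkeeping --- the auxiliary projection estimates of this subsection --- is where essentially all of the technical work is concentrated.
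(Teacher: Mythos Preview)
Your overall architecture is the paper's (David--Semmes curve sweeping, Propositions~\ref{prop1vsinfty} and \ref{prop:bil2} for flatness/no-holes, then hole-filling), but the hole-filling step as you wrote it does not work, and the paper's actual mechanism is different in a way that matters.

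\medskip
\textbf{The concrete gap.} You claim that when $\gamma_z\cap C=\emptyset$, ``the no-holes estimate provides $q_z\in C$ with $d(q_z,\bar z)\le c'\omega\rho$''. This is false: Proposition~\ref{prop:bil2} only yields a point of $E(\vartheta,\gamma)$ near $\bar z$, not of $C$. The hypothesis $(1-\oldep{eps:3})\phi(Q)\le\phi(Q\cap C)$ controls the \emph{total} $\phi$-mass of $Q\setminus C$; it says nothing about $C$ being $\omega\rho$-dense near every point of $x*W$. So the balls $U_z$ you write down need not exist with centres in $C$, and if you centre them on $E(\vartheta,\gamma)$ instead, the quantity you have produced is $\mathcal{S}^{\mathcal{Q}-1}(P_W(E(\vartheta,\gamma)\cap\text{ball}))$, which does not give the stated conclusion for $Q\cap C$. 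Your subsequent ``geometric series over scales $(\omega\rho,\rho)$'' sketch does not repair this: it presupposes a way of associating to each bad curve a scale whose contributions are summable, and that association is exactly what is missing.

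\medskip
\textbf{What the paper actually does.} The hole-filling balls are not indexed by points of $B_+$ at a single (or even variable) metric scale; they are indexed by \emph{dyadic cubes}. One lets $\mathscr I(Q_0)$ be the maximal cubes of $\Delta(E(\vartheta,\gamma),\iota)$ contained in $Q_0$ that fail to lie in $\Delta(C,\iota)$, and sets
\[
F(Q_0)\;:=\;(C\cap Q_0)\;\cup\;\bigcup_{\tilde Q\in\mathscr I(Q_0)} B(\mathfrak c(\tilde Q),2\oldC{child}\diam\tilde Q).
\]
The projection of the union of balls is bounded, via Proposition~\ref{cor:2.2.19} and Remark~\ref{rk:rephrase}, by a constant times $\sum_{\tilde Q\in\mathscr I(Q_0)}\diam\tilde Q^{\mathcal Q-1}\lesssim\phi(Q_0\setminus C)\le\oldep{eps:3}\phi(Q_0)\lesssim\oldep{eps:3}\diam Q_0^{\mathcal Q-1}$; this is precisely where the hypothesis $(1-\oldep{eps:3})\phi(Q)\le\phi(Q\cap C)$ is spent. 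There is no geometric series: the sum over $\mathscr I(Q_0)$ is a single disjoint family controlled directly by mass.

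The hard step is then proving that every curve $\gamma_a$ meets $F(Q_0)$. This is \emph{not} achieved by looking at the single crossing of $\gamma_a$ with $x*\Pi(Q_0)$; one must use the multiscale sets $\mathfrak G_\pm(\overline Q)=\bigcup_{Q'}G_\pm(Q')$, with the union over \emph{all} sub-cubes $Q'\in\Delta(C,\iota)$ of the maximal cube $\overline Q\supseteq Q_0$. Lemma~\ref{lemma:intersectionG} (whose proof in turn needs the normal-compatibility of neighbouring cubes, Propositions~\ref{prop:nearnorm}--\ref{prop:cubbit}) shows $\mathfrak G_+\cap\mathfrak G_-=\emptyset$, so each curve exits $\mathfrak G(\overline Q)$ at some $y$; Proposition~\ref{lemma2.28} then gives $\dist(y,E(\vartheta,\gamma))\le 4A_0^{-1}d(y)$ with $d(y)=\inf_{Q'\in\Delta(C,\iota)}(\dist(y,Q')+\diam Q')$, and a short argument locates the nearest $z\in E(\vartheta,\gamma)$ inside a specific $\tilde Q\in\mathscr I(Q_0)$ with $y\in B(\mathfrak c(\tilde Q),2\oldC{child}\diam\tilde Q)$. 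The case $d(y)=0$ gives $y\in E(\vartheta,\gamma)\cap Q_0\subseteq F(Q_0)$ directly. The separate issue you raise --- that the curve might leave $Q_0$ --- is handled by the estimate $\dist(y,E(\vartheta,\gamma)\setminus Q_0)\ge 100A_0^{-1}\diam Q_0>d(y,\mathfrak c(Q_0))$ coming from Proposition~\ref{prop:centri}, so $z\in Q_0$ automatically; your bound on $P_W(C\cap B(x,c\rho))\setminus P_W(Q\cap C)$ is unnecessary (and, as written, unjustified).
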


In the following it will be useful to reduce to a compact subset $C$ of $E(\vartheta,\gamma)$ where the distance of $\phi$ from planes is uniformly small under a fixed scale.

\begin{proposizione}\label{proppers}
For any $\mu\geq 4\vartheta$, there exists a $\nu\in\N$, a compact subset $C$ of $\mathscr{E}(\mu,\nu)$ and a $\iota_0\in\N$ such that:
\begin{itemize}
\item[(i)]$\phi(K\setminus C)\leq 2\oldep{eps:1}\phi(K)$,
    \item[(ii)]$d_{x,4kr}(\phi,\mathfrak{M})+d_{x,4kr}(\phi\llcorner E(\vartheta,\gamma),\mathfrak{M})\leq 4^{-\mathcal{Q}} \oldep{e}$  for any $x\in C$  and any $0<r<2^{-\iota_0 N+5}/\gamma$.
\end{itemize}
\end{proposizione}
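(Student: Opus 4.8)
The plan is to deduce the statement from the pointwise a.e.\ convergence $d_{x,kr}(\phi,\mathfrak{M})\to0$ provided by Proposition \ref{prop:flatty}, upgrade it to locally uniform convergence by Egorov's theorem, and then pass from a fixed sequence of scales to all small scales using the homogeneity of the $d$-functional.

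The first step is to observe that the measure $\psi:=\phi\llcorner E(\vartheta,\gamma)$ still satisfies, at $\phi$-almost every point of $E(\vartheta,\gamma)$, the two conditions imposed on $\phi$ at the beginning of this section. Indeed, $\phi$ is finite since it is supported on the compact set $K$, so $\chi_{E(\vartheta,\gamma)}\in L^1(\phi)$ and the locality of tangents, Proposition \ref{prop:locality}, gives $\Tan_{\mathcal{Q}-1}(\psi,x)=\Tan_{\mathcal{Q}-1}(\phi,x)\subseteq\mathfrak{M}$ for $\phi$-almost every $x\in E(\vartheta,\gamma)$; moreover the Lebesgue differentiation theorem yields $\psi(B(x,r))/\phi(B(x,r))\to1$ there, so that $0<\Theta^{\mathcal{Q}-1}_*(\psi,x)\le\Theta^{\mathcal{Q}-1,*}(\psi,x)<\infty$ at $\psi$-almost every $x$. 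Applying Proposition \ref{prop:flatty} to $\phi$ and to $\psi$ (with the value $4k$ playing the role of the parameter $k$), I would conclude that
\[
F_r(x):=d_{x,4kr}(\phi,\mathfrak{M})+d_{x,4kr}(\phi\llcorner E(\vartheta,\gamma),\mathfrak{M})\longrightarrow0\qquad\text{as }r\to0
\]
for $\phi$-almost every $x\in E(\vartheta,\gamma)$.

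The second step is an application of Egorov's theorem. Fix $\mu\ge4\vartheta$, let $\nu\in\N$ be the integer furnished by Notation \ref{definizione:piano} so that $\phi(K\setminus\mathscr{E}(\mu,\nu))\le\oldep{eps:1}\phi(K)$, and recall that $\mathscr{E}(\mu,\nu)$ is a compact subset of $E(\vartheta,\gamma)$ by Proposition \ref{prop:Ecorsivo}. Set $s_j:=2^{-jN+5}/\gamma$, so that $s_{j+1}=2^{-N}s_j$. By Proposition \ref{prop:stab}(ii)---whose proof applies verbatim to $\psi$---each function $F_{s_j}$ is continuous, and by the first step $F_{s_j}\to0$ $\phi$-almost everywhere on $\mathscr{E}(\mu,\nu)$; since $\phi(\mathscr{E}(\mu,\nu))\le\phi(K)<\infty$, Egorov's theorem produces a Borel set $C_0\subseteq\mathscr{E}(\mu,\nu)$ with $\phi(\mathscr{E}(\mu,\nu)\setminus C_0)<\oldep{eps:1}\phi(K)$ on which $F_{s_j}\to0$ uniformly. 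By inner regularity of the Radon measure $\phi$ I would take a compact set $C\subseteq C_0$ with $\phi(\mathscr{E}(\mu,\nu)\setminus C)\le\oldep{eps:1}\phi(K)$, on which $F_{s_j}\to0$ is still uniform; then $\phi(K\setminus C)\le\phi(K\setminus\mathscr{E}(\mu,\nu))+\phi(\mathscr{E}(\mu,\nu)\setminus C)\le2\oldep{eps:1}\phi(K)$, which is (i).

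The final step transfers the estimate from the scales $s_j$ to all $0<r<s_{\iota_0}$. If $s_{j+1}<r\le s_j$ with $j\ge\iota_0$, then $4kr\le4ks_j$ and $s_j/r<s_j/s_{j+1}=2^N$, so Proposition \ref{prop:stab}(iv), which holds for $\psi$ as well, gives for $\varrho\in\{\phi,\psi\}$
\[
d_{x,4kr}(\varrho,\mathfrak{M})\le(s_j/r)^{\mathcal{Q}}\,d_{x,4ks_j}(\varrho,\mathfrak{M})\le2^{N\mathcal{Q}}\,d_{x,4ks_j}(\varrho,\mathfrak{M}),
\]
hence $F_r(x)\le2^{N\mathcal{Q}}F_{s_j}(x)$ for every $x\in C$. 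Choosing $\iota_0$ large enough that $\sup_{x\in C}F_{s_j}(x)\le2^{-N\mathcal{Q}}4^{-\mathcal{Q}}\oldep{e}$ for all $j\ge\iota_0$, which is legitimate by the uniform convergence on $C$, yields $F_r(x)\le4^{-\mathcal{Q}}\oldep{e}$ for all $x\in C$ and all $0<r<s_{\iota_0}=2^{-\iota_0 N+5}/\gamma$, which is (ii). The delicate point is really the first step---checking that $\phi\llcorner E(\vartheta,\gamma)$ keeps positive, finite density and flat tangents $\phi$-a.e.\ on $E(\vartheta,\gamma)$, so that Proposition \ref{prop:flatty} applies to it; everything afterwards is Egorov's theorem combined with the scaling monotonicity of Proposition \ref{prop:stab}.
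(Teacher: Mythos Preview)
Your proof is correct and follows essentially the same route as the paper's: a.e.\ convergence from Proposition \ref{prop:flatty} (applied to both $\phi$ and $\phi\llcorner E(\vartheta,\gamma)$ via Proposition \ref{prop:locality}), then Severini--Egoroff for uniform convergence on a large compact subset of $\mathscr{E}(\mu,\nu)$. The paper is more terse and does not spell out the interpolation from the discrete sequence of scales to all small $r$ via Proposition \ref{prop:stab}(iv), but your extra care there is correct.
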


\begin{proof}
Since by assumption $\Tan_{\mathcal{Q}-1}(\phi,x)\subseteq \mathfrak{M}$ for $\phi$-almost every $x\in \mathbb{G}$, thanks to Proposition \ref{prop:flatty}  we infer that the functions $f_r(x):=d_{x,4kr}(\phi,\mathfrak{M})$ converge $\phi$-almost everywhere to $0$ on $K$ as $r$ goes to $0$. Thanks to Proposition \ref{prop:locality}, the same line of reasoning implies also that $f_r^{\vartheta,\gamma}(x):=d_{x,4kr}(\phi\llcorner E(\vartheta,\gamma),\mathfrak{M})$ converges $\phi$-almost everywhere to $0$ on $E(\vartheta,\gamma)$. 
Proposition \ref{prop:stab} and Severini-Egoroff's theorem yield a compact subset $C$ of $\mathscr{E}(\mu,\nu)$ such that $\phi(E(\vartheta,\gamma)\setminus C)\leq \oldep{eps:1}\phi(E(\vartheta,\gamma))$ and such that the sequence $d_{x,4kr}(\phi,\mathfrak{M})+d_{x,4kr}(\phi\llcorner E(\vartheta,\gamma),\mathfrak{M})$ converges uniformly to $0$ on $C$. This directly implies both (i) and (ii) thanks to the choice of $\vartheta$ and $\gamma$.
\end{proof}

\begin{Notation}
From now on, we assume $\mu\geq 4\vartheta$, the set $C$ and the natural numbers $\nu$ and $\iota_0$ yielded by Proposition \ref{proppers} to be fixed. Furthermore, we define $\iota:=\max\{\iota_0,\nu\}$.
\end{Notation}

The construction of a compact set satisfying the thesis of Proposition \ref{proppers} can be achieved in the following slightly different conditions on the measure:

\begin{proposizione}\label{rk:primo}
Suppose $\psi$ is a Radon on $\mathbb{G}$ supported on the compact set $K$ and satisfying the following assumptions: 
\begin{itemize}
    \item[(i)] there exists a $\delta\in\N$ such that $\delta^{-1}\leq \Theta^{\mathcal{Q}-1}_*(\psi,x)\leq \Theta^{\mathcal{Q}-1,*}(\psi,x)\leq \delta$ for $\psi$-almost every $x\in\mathbb{G}$,
        \item[(ii)] $ \limsup_{r\to 0}d_{x,4kr}(\psi,\mathfrak{M})\leq 4^{-(\mathcal{Q}+1)}\oldep{e}(\delta)$ for $\psi$-almost every $x\in \mathbb{G}$.
\end{itemize}
Then, there exist $\tilde{\iota}_0\in\N$ and $\gamma\in\N$ for which for any $\mu\geq 8\delta$ we can find a $\nu\in\N$ and a compact set $\tilde{C}\subseteq \mathscr{E}_{2\delta,\gamma}^\psi(\mu,\nu)$ such that:
\begin{itemize}
\item[(i)]$\psi(K\setminus \tilde{C})\leq 2\oldep{eps:1}\psi(K)$ where $\oldep{eps:1}$ was introduced in Notation \ref{definizione:piano},
    \item[(ii)]$d_{x,4kr}(\psi,\mathfrak{M})+d_{x,4kr}(\psi\llcorner E^\psi(2\delta,\gamma),\mathfrak{M})\leq 4^{-\mathcal{Q}} \oldep{e}(\delta)$  for any $x\in \tilde{C}$  and any $0<r<2^{-\tilde{\iota}_0 N(2\delta)+5}/\gamma$,
\end{itemize}
where $\oldep{eps:1}$ and $\oldep{e}(\delta)$ are the constants introduced in Notation \ref{definizione:piano} and \ref{notation1} respectively.
\end{proposizione}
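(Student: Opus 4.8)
The plan is to follow the proof of Proposition~\ref{proppers} almost verbatim, the two differences being that the pointwise smallness of $x\mapsto d_{x,4kr}(\psi,\mathfrak{M})$ as $r\to0$ is now supplied directly by hypothesis (ii) instead of being extracted from $\Tan_{\mathcal{Q}-1}(\psi,\cdot)\subseteq\mathfrak{M}$ via Proposition~\ref{prop:flatty}, and that the passage from $\psi$ to $\psi\llcorner E^\psi(2\delta,\gamma)$ will be handled by a direct density-point estimate rather than by the locality of tangents, Proposition~\ref{prop:locality} (which is not available here, since we are not assuming a condition on the tangents of $\psi$ but only the $\limsup$ bound (ii)).

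First I would fix the reference set. Hypothesis (i) makes $\psi$ asymptotically doubling, by the very computation opening the proof of Proposition~\ref{prop:locality}, so the Lebesgue differentiation theorem of \cite{MR3363168} is available for $\psi$; moreover (i) forces $(2\delta)^{-1}r^{\mathcal{Q}-1}\le\psi(B(x,r))\le 2\delta\,r^{\mathcal{Q}-1}$ for all small enough $r$ at $\psi$-almost every $x$, so the compact sets $E^\psi(2\delta,\gamma)$ (compact by Proposition~\ref{prop:cpt}) increase to a set of full $\psi$-measure. Choose and fix $\gamma$ with $\psi(K\setminus E^\psi(2\delta,\gamma))\le\oldep{eps:1}\psi(K)/3$, and set $E:=E^\psi(2\delta,\gamma)$. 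Next I would transfer the smallness to $\psi\llcorner E$: since every $f\in\lip(\overline{B(x,4kr)})$ is bounded by the diameter of its support, $F_{x,4kr}(\psi\llcorner E,\psi)\le 8kr\,\psi(B(x,4kr)\setminus E)$, hence
\[
 d_{x,4kr}(\psi\llcorner E,\mathfrak{M})\le d_{x,4kr}(\psi,\mathfrak{M})+\frac{2\,\psi(B(x,4kr)\setminus E)}{(4kr)^{\mathcal{Q}-1}} .
\]
By the Lebesgue differentiation theorem applied to $\chi_E$, $\psi$-a.e.\ point of $E$ is a $\psi$-density point of $E$, and there the last quotient tends to $0$ thanks to (i); combined with (ii) this gives
\[
 \limsup_{r\to0}\Big(d_{x,4kr}(\psi,\mathfrak{M})+d_{x,4kr}(\psi\llcorner E,\mathfrak{M})\Big)\le 2\cdot4^{-(\mathcal{Q}+1)}\oldep{e}(\delta)=\tfrac12\,4^{-\mathcal{Q}}\oldep{e}(\delta)
\]
for $\psi$-a.e.\ $x\in E$. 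Setting $g_\rho(x):=\sup_{0<r<\rho}\big(d_{x,4kr}(\psi,\mathfrak{M})+d_{x,4kr}(\psi\llcorner E,\mathfrak{M})\big)$, each $g_\rho$ is lower semicontinuous (hence Borel) by Proposition~\ref{prop:stab}(ii) and $g_\rho$ decreases to the above $\limsup$ as $\rho\downarrow0$; applying Severini–Egoroff to $g_{1/j}$ and then shrinking to a compact subset by inner regularity of $\psi$ produces a compact $C''\subseteq E$ with $\psi(E\setminus C'')\le\oldep{eps:1}\psi(K)/3$ and an index $\tilde\iota_0\in\N$ such that, writing $\rho_0:=2^{-\tilde\iota_0 N(2\delta)+5}/\gamma$, one has $g_{\rho_0}(x)\le 4^{-\mathcal{Q}}\oldep{e}(\delta)$ for every $x\in C''$; that is, statement (ii) holds on $C''$ for all $0<r<\rho_0$. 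Note that $\gamma$, $\tilde\iota_0$ and $C''$ do not involve $\mu$.

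Finally, for a given $\mu\ge8\delta$ I would refine by $\mathscr{E}$. Exactly as in the proof of Proposition~\ref{prop:BIGGI} (Lebesgue differentiation for the ratio $\psi(B(x,r)\cap E)/\psi(B(x,r))$), one obtains $\psi\big(E\setminus\bigcup_{\nu\in\N}\mathscr{E}^\psi_{2\delta,\gamma}(\mu,\nu)\big)=0$; since $\mathscr{E}^\psi_{2\delta,\gamma}(\mu,\nu)$ is compact (Proposition~\ref{prop:Ecorsivo}) and increases with $\nu$, pick $\nu$ with $\psi\big(E\setminus\mathscr{E}^\psi_{2\delta,\gamma}(\mu,\nu)\big)\le\oldep{eps:1}\psi(K)/3$ and set $\tilde C:=C''\cap\mathscr{E}^\psi_{2\delta,\gamma}(\mu,\nu)$, a compact subset of $\mathscr{E}^\psi_{2\delta,\gamma}(\mu,\nu)$. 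Then (ii) holds on $\tilde C\subseteq C''$ for $0<r<\rho_0=2^{-\tilde\iota_0 N(2\delta)+5}/\gamma$, and
\[
 \psi(K\setminus\tilde C)\le\psi(K\setminus E)+\psi(E\setminus C'')+\psi\big(E\setminus\mathscr{E}^\psi_{2\delta,\gamma}(\mu,\nu)\big)\le\oldep{eps:1}\psi(K)\le2\oldep{eps:1}\psi(K),
\]
which is (i).

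The only genuinely delicate point, compared with Proposition~\ref{proppers}, is that hypothesis (ii) gives a $\limsup$ bound rather than a limit equal to zero, so Severini–Egoroff cannot be applied to the functions $f_r(x)=d_{x,4kr}(\psi,\mathfrak{M})$ themselves and one must pass to the monotone envelopes $g_\rho$; one must also keep the quantifier order straight, so that the uniform scale $\rho_0$ (equivalently $\tilde\iota_0$) and the compact set $C''$ are produced before $\mu$ enters, and only $\nu$ and the final intersection $\tilde C$ depend on $\mu$. Everything else — asymptotic doubling and Lebesgue differentiation from \cite{MR3363168}, compactness of the sets $E^\psi$ and $\mathscr{E}^\psi$, and the continuity and stability of $d_{\cdot,r}(\cdot,\mathfrak{M})$ — is already in hand, so the argument is essentially a re-assembly of Propositions~\ref{prop:stab},~\ref{prop:cpt},~\ref{prop:Ecorsivo} and~\ref{prop:BIGGI}.
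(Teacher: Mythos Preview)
Your proof is correct and follows the same overall skeleton as the paper's (fix $\gamma$, establish the $\limsup$ bound for the sum $d_{x,4kr}(\psi,\mathfrak{M})+d_{x,4kr}(\psi\llcorner E,\mathfrak{M})$, pass to a compact set where the bound is uniform, then intersect with $\mathscr{E}^\psi_{2\delta,\gamma}(\mu,\nu)$), but the one substantive step --- transferring the $\limsup$ bound from $\psi$ to $\psi\llcorner E^\psi(2\delta,\gamma)$ --- is handled by a genuinely different mechanism. The paper goes through tangent measures: it invokes Proposition~\ref{prop:locality} to obtain $\Tan_{\mathcal{Q}-1}(\psi\llcorner E,x)=\Tan_{\mathcal{Q}-1}(\psi,x)$ and then Proposition~\ref{loc:tgbis} to identify $\limsup_{r\to0}d_{x,4kr}(\cdot,\mathfrak{M})$ with $\sup\{d_{0,4k}(\nu,\mathfrak{M}):\nu\in\Tan_{\mathcal{Q}-1}(\cdot,x)\}$, so that the two limsups coincide. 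You instead use the elementary triangle-inequality estimate
\[
 d_{x,4kr}(\psi\llcorner E,\mathfrak{M})\le d_{x,4kr}(\psi,\mathfrak{M})+\frac{2\,\psi(B(x,4kr)\setminus E)}{(4kr)^{\mathcal{Q}-1}}
\]
and kill the error term by Lebesgue differentiation at density points. Your route is more self-contained (it bypasses Proposition~\ref{loc:tgbis} entirely), and you are also more careful than the paper's write-up about the quantifier order, producing $\gamma$, $\tilde\iota_0$ and $C''$ before $\mu$ enters; the paper then extracts the uniform scale via the level sets $E_j=\{x:r(x)>1/j\}$ rather than Egoroff on the monotone envelopes $g_\rho$, but the two devices are equivalent here.

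One small inaccuracy in your motivating parenthetical: Proposition~\ref{prop:locality} \emph{is} available, since it requires only the density hypothesis~(i), not any assumption on tangents --- and the paper does use it. What is not directly available is the shortcut via Proposition~\ref{prop:flatty} used in the proof of Proposition~\ref{proppers}; the paper compensates with Proposition~\ref{loc:tgbis}. This does not affect the correctness of your argument.
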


\begin{proof}
First of all, thanks to Propositions \ref{prop:cpt} and \ref{prop:BIGGI} we can find a $\gamma\in\N$ such that $\psi(K\setminus\mathscr{E}^\psi_{\vartheta,\gamma}(\mu,\nu))\leq \oldep{eps:1}\psi(K)$. Let us now prove that:
\begin{equation}
    \limsup_{r\to 0}d_{x,4kr}(\psi\llcorner E^\psi(2\delta,\gamma),\mathfrak{M})\leq 4^{-(\mathcal{Q}+1)}\oldep{e}(\delta),\text{ for }\psi\text{-almost every }x\in E^\psi(2\delta,\gamma).
   \nonumber
\end{equation}
Recall that for $\psi$-almost every $x\in E^\psi(2\delta,\gamma)$ we have that $\Tan_{\mathcal{Q}-1}(\psi\llcorner E^\psi(2\delta,\gamma),x)=\Tan_{\mathcal{Q}-1}(\psi,x)$. Thanks to this and the fact that $\oldep{e}(\delta)\leq \delta^{-1}$,  Proposition \ref{loc:tgbis} yields:
$$\limsup_{r\to 0}d_{x,4kr}(\psi,\mathfrak{M})=\limsup_{r\to 0}d_{x,4kr}(\psi\llcorner E^\psi(2\delta,\gamma),\mathfrak{M})\leq 4^{-(\mathcal{Q}+1)}\oldep{e}(\delta).$$

Therefore, for $\psi$-almost every $x\in E^\psi(2\delta,\gamma)$, there exists an $r(x)>0$ such that $d_{x,4kr}(\psi,\mathfrak{M})+d_{x,4kr}(\psi\llcorner E^\psi(2\delta,\gamma),\mathfrak{M})\leq 4^{-\mathcal{Q}}\oldep{e}(\delta)$, for every $0<r<r(x)$.
For any $j\in\N$, define:
$$E_j:=\{x\in\mathscr{E}^\psi_{\vartheta,\gamma}(\mu,\nu):r(x)>1/j\}.$$
Let us show that the sets $E_j$ are Borel. Thanks to Proposition \ref{prop:stab}(ii) we know that the map $x\mapsto d_{x,r}(\psi,\mathfrak{M})$ is continuous and thus for any $r>0$ the set $\Omega_{r}:=\{y\in\mathbb{G}: d_{y,r}(\psi,\mathfrak{M})<4^{-\mathcal{Q}}\oldep{e}(\delta)\}$ is open. In particular if $x\in \Omega_r$ for any $r\in (0,1/j)\cap \Q$, then thanks to Proposition \ref{prop:stab}(iv) we have that $x\in E_j$, and obviously the viceversa holds as well. This shows that the sets $E_j$ are $G_\delta$ sets intersected with the compact set $\mathscr{E}^\psi_{\vartheta,\gamma}(\mu,\nu)$ and thus are Borel.
Furthermore, thanks to the assumption on the measure $\psi$ we infer that:
\begin{equation}
    \psi(E^\psi(2\delta,\gamma)\setminus \bigcup_{j\in\N}E_j)=0.
    \label{eq:coverall}
\end{equation}
Finally, \eqref{eq:coverall} together with the measurability of the sets $E_j$ are measurable, implies that we can find a big enough $j\in\N$ and a compact set $C$ contained in $E_j$ satisfying (i) and (ii) of Proposition \ref{proppers}.
\end{proof}

The following lemma rephrases Propositions \ref{prop1vsinfty} and \ref{prop:bil2} into the language of dyadic cubes. 

\begin{lemma}\label{lemmaduepsei}
For any cube $Q\in \Delta(C,\iota)$ we have $\alpha(Q)\leq \oldep{e}$. Furthermore, there is a plane $\Pi(Q)\in \G(\mathcal{Q}-1)$ for which:
\begin{itemize}
    \item[(i)]$$\sup_{w\in E(\vartheta,\gamma)\cap B(\mathfrak{c}(Q),k\diam Q/2)}\frac{\dist(w,\mathfrak{c}(Q)\Pi(Q))}{2k\diam Q}\leq \oldC{C:b1}\oldep{e}^{1/\mathcal{Q}},$$
    \item[(ii)]for any $w\in B(\mathfrak{c}(Q),k\diam Q/2)\cap \mathfrak{c}(Q)\Pi(Q)$ we have $E(\vartheta,\gamma)\cap B(w,3k\oldC{C:b1}\oldep{e}^{1/(\mathcal{Q}+1)}\diam Q)\neq \emptyset$. 
\end{itemize}
\end{lemma}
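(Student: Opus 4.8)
The plan is to obtain both assertions by applying Propositions \ref{prop1vsinfty} and \ref{prop:bil2} at a point of $C$ lying in $Q$, and then transporting the resulting estimates---which are anchored at that point---to the centre $\mathfrak{c}(Q)$. Fix $Q\in\Delta(C,\iota)$ and choose $p\in Q\cap C$, which exists by definition of $\Delta(C,\iota)$. I will use two features of the dyadic cubes of Theorem \ref{evev}: that $\mathfrak{c}(Q)\in E(\vartheta,\gamma)$ with $d(p,\mathfrak{c}(Q))\le\diam Q$, and that a cube in a layer of index $\ge\iota\ge\iota_0$ has diameter small enough that $4k\diam Q<1/\gamma$ and $\diam Q<2^{-\iota_0 N+5}/\gamma$, so that Proposition \ref{proppers}(ii) applies at $p$ with radius $r=\diam Q$. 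For the bound $\alpha(Q)\le\oldep{e}$: since $d(p,\mathfrak{c}(Q))\le\diam Q$ and $k\ge1$ we have $B(\mathfrak{c}(Q),2k\diam Q)\subseteq B(p,4k\diam Q)$, so Proposition \ref{prop:stab}(iii) applied to $\phi$ and to $\phi\llcorner E(\vartheta,\gamma)$, together with the elementary bound $\tilde d_{x,r}\le d_{x,r}$, gives $\alpha(Q)\le 2^{\mathcal{Q}}\big(d_{p,4k\diam Q}(\phi,\mathfrak{M})+d_{p,4k\diam Q}(\phi\llcorner E(\vartheta,\gamma),\mathfrak{M})\big)$; by Proposition \ref{proppers}(ii) the bracket is at most $4^{-\mathcal{Q}}\oldep{e}$, hence $\alpha(Q)\le 2^{-\mathcal{Q}}\oldep{e}\le\oldep{e}$.

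Next I would fix the plane. Again by Proposition \ref{proppers}(ii), $\tilde d_{p,4k\diam Q}(\phi,\mathfrak{M})\le d_{p,4k\diam Q}(\phi,\mathfrak{M})\le 4^{-\mathcal{Q}}\oldep{e}<\delta_\mathbb{G}$, so $\Pi(p,4k\diam Q)\neq\emptyset$; let $\Pi(Q)$ be any of its elements. Applying Proposition \ref{prop1vsinfty} at $p\in E(\vartheta,\gamma)$ with radius $4k\diam Q$ and $\delta=4^{-\mathcal{Q}}\oldep{e}$ gives $\dist(w,p\Pi(Q))\le k\oldC{C:b1}\oldep{e}^{1/\mathcal{Q}}\diam Q$ for every $w\in E(\vartheta,\gamma)\cap B(p,k\diam Q)$, in particular for $w=\mathfrak{c}(Q)$; then Proposition \ref{prop:dist-piani}(i) yields $\dist(\mathfrak{c}(Q)\Pi(Q),p\Pi(Q))=\dist(\mathfrak{c}(Q),p\Pi(Q))\le k\oldC{C:b1}\oldep{e}^{1/\mathcal{Q}}\diam Q$. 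Since $B(\mathfrak{c}(Q),k\diam Q/2)\subseteq B(p,k\diam Q)$ for $k\ge2$, combining these two bounds through Proposition \ref{prop:dist-piani}(ii) gives $\dist(w,\mathfrak{c}(Q)\Pi(Q))\le 2k\oldC{C:b1}\oldep{e}^{1/\mathcal{Q}}\diam Q$ for $w\in E(\vartheta,\gamma)\cap B(\mathfrak{c}(Q),k\diam Q/2)$, which is exactly (i).

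Finally, for (ii) I would apply Proposition \ref{prop:bil2} at $p$ with radius $4k\diam Q$ and $\delta=4^{-\mathcal{Q}}\oldep{e}<\delta_\mathbb{G}$: since $\Pi(Q)\in\Pi(p,4k\diam Q)$, for every $w'\in B(p,2k\diam Q)\cap p\Pi(Q)$ there is a point of $E(\vartheta,\gamma)$ in $B(w',4k\oldep{e}^{1/(\mathcal{Q}+1)}\diam Q)$, using $(4^{-\mathcal{Q}}\oldep{e})^{1/(\mathcal{Q}+1)}\le\oldep{e}^{1/(\mathcal{Q}+1)}$. Given $w\in B(\mathfrak{c}(Q),k\diam Q/2)\cap\mathfrak{c}(Q)\Pi(Q)$, Proposition \ref{prop:dist-piani} gives $\dist(w,p\Pi(Q))=\dist(\mathfrak{c}(Q),p\Pi(Q))\le k\oldC{C:b1}\oldep{e}^{1/\mathcal{Q}}\diam Q$, so a nearest point $w'$ of $p\Pi(Q)$ to $w$ satisfies $d(w,w')\le k\oldC{C:b1}\oldep{e}^{1/\mathcal{Q}}\diam Q$ and---using that the defining formula of $\oldep{e}$ forces $\oldC{C:b1}\oldep{e}^{1/\mathcal{Q}}\le 1/2$---$d(p,w')\le 2k\diam Q$, so that the previous inclusion applies. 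A point $q\in E(\vartheta,\gamma)$ it produces then satisfies $d(w,q)\le d(w,w')+d(w',q)\le(\oldC{C:b1}+4)k\oldep{e}^{1/(\mathcal{Q}+1)}\diam Q\le 3k\oldC{C:b1}\oldep{e}^{1/(\mathcal{Q}+1)}\diam Q$, where I used $\oldep{e}^{1/\mathcal{Q}}\le\oldep{e}^{1/(\mathcal{Q}+1)}$ and $\oldC{C:b1}\ge 2$; this is (ii).

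The step I expect to be the real content is this transport of the approximating plane from $p$ to $\mathfrak{c}(Q)$: $d_{x,r}$, $\tilde d_{x,r}$ and the optimal plane are all defined relative to $p$, whereas the statement speaks of $\mathfrak{c}(Q)$, and a priori translating a hyperplane coset by $\diam Q$ is an error of the same order as the working radius $k\diam Q$. It is overcome only because $\mathfrak{c}(Q)$ lies in $E(\vartheta,\gamma)$---hence is one of the points on which Proposition \ref{prop1vsinfty} gives control---so that $\dist(\mathfrak{c}(Q),p\Pi(Q))$ is of order $\oldep{e}^{1/\mathcal{Q}}k\diam Q\ll\diam Q$, and Proposition \ref{prop:dist-piani} then upgrades this to a bound on the displacement of the cosets $p\Pi(Q)$ and $\mathfrak{c}(Q)\Pi(Q)$. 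Everything else---the radius inclusions, checking that the smallness thresholds of $\oldep{e}$ and $\delta_\mathbb{G}$ are met, and the exponent comparison---is routine bookkeeping relying only on $k\ge 2$ and $\oldC{C:b1}\ge 2$.
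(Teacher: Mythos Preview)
Your proof is correct and follows essentially the same approach as the paper: fix a point of $C$ in $Q$, use Proposition~\ref{proppers} there, and derive (i) and (ii) from Propositions~\ref{prop1vsinfty} and~\ref{prop:bil2} after transporting the plane to $\mathfrak{c}(Q)$ via Proposition~\ref{prop:dist-piani}. The only differences are cosmetic: for (i) the paper first transfers the $\tilde d$-bound to $\mathfrak{c}(Q)$ via Proposition~\ref{prop:stab}(iii) and applies Proposition~\ref{prop1vsinfty} directly there, whereas you apply it at $p$ and then shift the coset; for (ii) the paper writes down an explicit conjugate $w^*=P_{\mathfrak{N}(V)}(\mathfrak{c}(Q)^{-1}p)^{-1}P_V(\mathfrak{c}(Q)^{-1}p)^{-1}wP_{\mathfrak{N}(V)}(\mathfrak{c}(Q)^{-1}p)$ on $p\Pi(Q)$ rather than the nearest point, but the resulting distance estimates are the same.
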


\begin{proof}
Let $Q\in\Delta(C,\iota)$, fix a $x\in Q\cap C$ and define $\rho:=\diam Q$. 
Thanks to Proposition \ref{proppers} we know that:
\begin{equation}
    d_{x,4kr}(\psi,\mathfrak{M})+d_{x,4kr}(\psi\llcorner E(\vartheta,\gamma),\mathfrak{M})\leq 4^{-(\mathcal{Q}+1)}\oldep{e},
    \label{eq:n1000}
\end{equation}
for any  $r<2^{-\iota N+5}/\gamma$.
Thanks to Theorem \ref{evev}(iv) we have that $\rho<2^{-\iota N+5}/\gamma$ and thus by Proposition \ref{prop:stab} we have:
\begin{equation}
\tilde{d}_{\mathfrak{c}(Q),2k\rho}(\psi,\mathfrak{M})\leq 2^{\mathcal{Q}}\tilde{d}_{x,4k\rho}(\psi,\mathfrak{M})\leq 2^{-\mathcal{Q}}\oldep{e} \quad\text{and}\quad\tilde{d}_{\mathfrak{c}(Q),2k\rho}(\psi\llcorner E(\vartheta,\gamma),\mathfrak{M})\leq 2^{\mathcal{Q}}\tilde{d}_{x,4k\rho}(\psi\llcorner E(\vartheta,\gamma),\mathfrak{M})\leq 2^{-\mathcal{Q}}\oldep{e} .
\label{eq:n1001}
\end{equation}
The bounds in \eqref{eq:n1001} together with Proposition \ref{prop1vsinfty} imply (i) and that $\alpha(Q)\leq \oldep{e}$.

The proof of the second part of the statement is a little more delicate. Since $C$ is a subset of $\mathscr{E}_{\vartheta,\gamma}(\mu,nu)$, thanks to the choice of $\iota$ and Proposition \ref{prop:centri} we have that $\mathfrak{c}(Q)\in E(\vartheta,\gamma)\cap B(x,\rho)$. Secondly, Proposition \ref{prop:dist-piani}(iii), Proposition \ref{prop1vsinfty} and \eqref{eq:n1001} yield for any $V\in\Pi(x,2k\rho)$ that:
\begin{equation}
    \dist(\mathfrak{c}(Q),xV)\leq \dist(\mathfrak{c}(Q),\mathfrak{c}(Q)V)+\dist(xV,\mathfrak{c}(Q)V)=\dist(x,\mathfrak{c}(Q)V)\leq 2^{-(\mathcal{Q}-2)}k\rho\oldC{C:b1}\oldep{e}^{1/\mathcal{Q}},
    \label{eq:n1004}
\end{equation}
For any $V\in \Pi(x,2k\rho)$ and any $w\in B(\mathfrak{c}(Q),k\rho/2)\cap \mathfrak{c}(Q)V$, we define:
$$w^*:=P_{\mathfrak{N}(V)}(\mathfrak{c}(Q)^{-1}x)^{-1}P_{V}(\mathfrak{c}(Q)^{-1}x)^{-1}wP_{\mathfrak{N}(V)}(\mathfrak{c}(Q)^{-1}x).$$
With few computations that we omit, it is not difficult to see that:
\begin{equation}
    d(\mathfrak{c}(Q) w,x w^*)=\lVert P_{\mathfrak{N}(V)}(\mathfrak{c}(Q)^{-1}x)=\dist(\mathfrak{c}(Q),x V)\leq  2^{-(\mathcal{Q}-2)}k\rho\oldC{C:b1}\oldep{e}^{1/\mathcal{Q}},
    \label{eq:n1005}
\end{equation}
where the second identity follows from Proposition \ref{prop:dist-piani} and the last one from inequality \eqref{eq:n1004}. Thanks to definition of $w^*$ the triangular inequality, Proposition \ref{cor:SSCM2.2.14} and the fact that $d(\mathfrak{c}(Q),x)\leq \rho$, the norm of $w^*$ can be estimated as follows:
\begin{equation}
    \lVert w^*\rVert\leq 2\lVert P_{\mathfrak{N}(V)}(\mathfrak{c}(Q)^{-1}x)\rVert+\lVert P_{V}(\mathfrak{c}(Q)^{-1}x)\rVert+\lVert w\rVert\leq 2\rho +\oldC{C:0}\rho+k\rho/2<k\rho.
    \label{eq:n1002}
\end{equation}
Thanks to inequalities \eqref{eq:n1000} and \eqref{eq:n1002} and  Proposition \ref{prop:bil2} we infer that $B(xw^*, 2k\rho \oldep{e}^{1/(\mathcal{Q}+1)})\cap E(\vartheta,\gamma)\neq \emptyset$. Finally, thanks to  \eqref{eq:n1005} and the fact that $2^{\mathcal{Q}-1}\oldC{C:b1}<1$, we conclude that:
$$E(\vartheta,\gamma)\cap B(\mathfrak{c}(Q)w,3k\rho\oldC{C:b1}\oldep{e}^{1/(\mathcal{Q}+1)})\supseteq E(\vartheta,\gamma)\cap B(xw^*,2k\rho\oldep{e}^{1/(\mathcal{Q}+1)})\neq \emptyset.$$
This concludes the proof of the proposition.
\end{proof}

As in the case of Proposition \ref{proppers}, one can impose slightly different conditions on the measure and obtain a family of cubes satisfying the same thesis as Proposition \ref{lemmaduepsei}.

\begin{proposizione}\label{rk:C}
Suppose $\psi$ is a Radon on $\mathbb{G}$ supported on the compact set $K$ satisfying the following assumptions: 
\begin{itemize}
    \item[(i)] there exists a $\delta\in\N$ such that $\delta^{-1}\leq \Theta^{\mathcal{Q}-1}_*(\psi,x)\leq \Theta^{\mathcal{Q}-1,*}(\psi,x)\leq \delta$ for $\psi$-almost every $x\in\mathbb{G}$,
        \item[(ii)] $ \limsup_{r\to 0}d_{x,4kr}(\psi,\mathfrak{M})\leq 4^{-(\mathcal{Q}+1)}\oldep{e}(\delta)$ for $\psi$-almost every $x\in \mathbb{G}$.
\end{itemize}
Then, fixed $\mu\geq 8 \delta$ if $\gamma,\tilde{\iota}_0,\nu\in\N$ and $\tilde{C}\Subset\mathscr{E}^\psi_{\vartheta,\gamma}(\mu,\nu)$ are the natural numbers and the compact set yielded by Proposition \ref{rk:primo}, defined $\tilde{\iota}:=\max\{\tilde{\iota}_0,\nu\}$ for any cube $Q\in \Delta^\psi(\tilde{C};2\delta,\gamma,\iota)$ we have that $\alpha(Q)\leq \oldep{e}(\delta)$ and for any such cube $Q$ there is a plane $\Pi(Q)\in \G(\mathcal{Q}-1)$ for which:
\begin{itemize}
    \item[(i)]$$\sup_{w\in E^\psi(2\delta,\gamma)\cap B(\mathfrak{c}(Q),k(2\delta)\diam Q/2)}\frac{\dist(w,\mathfrak{c}(Q)\Pi(Q))}{2k\diam Q}\leq \oldC{C:b1}(2\delta)\oldep{e}(2\delta)^{1/\mathcal{Q}},$$
    \item[(ii)]for any $w\in B(\mathfrak{c}(Q),k(2\delta)\diam Q/2)\cap \mathfrak{c}(Q)\Pi(Q)$ we have: $$E^\psi(2\delta,\gamma)\cap B(w,3k\oldC{C:b1}(2\delta)\oldep{e}(2\delta)^{1/(\mathcal{Q}+1)}\diam Q)\neq \emptyset.$$
\end{itemize}
\end{proposizione}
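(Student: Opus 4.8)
The plan is to reproduce the proof of Lemma \ref{lemmaduepsei} essentially verbatim, invoking Proposition \ref{rk:primo} wherever that proof invoked Proposition \ref{proppers}, and reading every $\sigma$-dependent constant of Notations \ref{definizione:piano} and \ref{notation1} with the parameter $\sigma=2\delta$ in place of $\sigma=\vartheta$. This is legitimate for two reasons: those constants are defined once and for all as explicit functions of the single parameter $\sigma$, and on $E^\psi(2\delta,\gamma)$ the measure $\psi$ satisfies $(2\delta)^{-1}r^{\mathcal{Q}-1}\le\psi(B(y,r))\le 2\delta\, r^{\mathcal{Q}-1}$ below a fixed scale, so every step of the proof of Lemma \ref{lemmaduepsei} that used the $\vartheta$-Ahlfors regularity of $E(\vartheta,\gamma)$ has a literal analogue here. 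For brevity write $k:=k(2\delta)$, $N:=N(2\delta)$, $\oldC{C:b1}:=\oldC{C:b1}(2\delta)$ and $\oldep{e}:=\oldep{e}(2\delta)$ throughout.

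First I would fix a cube $Q$ in the family $\Delta^\psi(\tilde C;2\delta,\gamma,\tilde\iota)$, choose $x\in Q\cap\tilde C$ and set $\rho:=\diam Q$. Since $\tilde\iota\ge\tilde\iota_0$, Theorem \ref{evev}(iv) gives $\rho<2^{-\tilde\iota_0 N+5}/\gamma$, so Proposition \ref{rk:primo}(ii) applies at $x$ and yields $d_{x,4k\rho}(\psi,\mathfrak{M})+d_{x,4k\rho}(\psi\llcorner E^\psi(2\delta,\gamma),\mathfrak{M})\le 4^{-\mathcal{Q}}\oldep{e}$. Because $d(x,\mathfrak{c}(Q))\le\rho$ we have $B(\mathfrak{c}(Q),2k\rho)\subseteq B(x,4k\rho)$, so Proposition \ref{prop:stab}(iii)--(iv) upgrades this to $\tilde d_{\mathfrak{c}(Q),2k\rho}(\psi,\mathfrak{M})+\tilde d_{\mathfrak{c}(Q),2k\rho}(\psi\llcorner E^\psi(2\delta,\gamma),\mathfrak{M})\le 2^{-\mathcal{Q}}\oldep{e}$; in particular $\alpha(Q)\le\oldep{e}$. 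Since $\mathfrak{c}(Q)\in E^\psi(2\delta,\gamma)$ by the dyadic cube construction (cf. Proposition \ref{prop:centri}), feeding these two bounds into Proposition \ref{prop1vsinfty} (applicable because $2^{-\mathcal{Q}}\oldep{e}<\delta_{\mathbb{G}}$) produces a plane $\Pi(Q)\in\G(\mathcal{Q}-1)$ for which item (i) of the statement holds.

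For item (ii) I would transcribe the second half of the proof of Lemma \ref{lemmaduepsei}. Since $\tilde C\subseteq\mathscr{E}^\psi_{2\delta,\gamma}(\mu,\nu)$ and $\tilde\iota\ge\nu$, Proposition \ref{prop:centri} gives $\mathfrak{c}(Q)\in E^\psi(2\delta,\gamma)\cap B(x,\rho)$; then Propositions \ref{prop:dist-piani} and \ref{prop1vsinfty} together with the bounds above give $\dist(\mathfrak{c}(Q),xV)\le 2^{-(\mathcal{Q}-2)}k\rho\,\oldC{C:b1}\,\oldep{e}^{1/\mathcal{Q}}$ for every $V\in\Pi(x,2k\rho)$. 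For $w\in B(\mathfrak{c}(Q),k\rho/2)\cap\mathfrak{c}(Q)V$ I would set
\[
w^*:=P_{\mathfrak{N}(V)}\bigl(\mathfrak{c}(Q)^{-1}x\bigr)^{-1}P_V\bigl(\mathfrak{c}(Q)^{-1}x\bigr)^{-1}\,w\,P_{\mathfrak{N}(V)}\bigl(\mathfrak{c}(Q)^{-1}x\bigr),
\]
verify $d(\mathfrak{c}(Q)w,xw^*)=\lVert P_{\mathfrak{N}(V)}(\mathfrak{c}(Q)^{-1}x)\rVert=\dist(\mathfrak{c}(Q),xV)$ via Proposition \ref{prop:dist-piani}, bound $\lVert w^*\rVert<k\rho$ via Proposition \ref{cor:SSCM2.2.14} and $d(\mathfrak{c}(Q),x)\le\rho$, then apply Proposition \ref{prop:bil2} (with the bound on $d_{x,4k\rho}$) to obtain a point of $E^\psi(2\delta,\gamma)$ in $B(xw^*,2k\rho\,\oldep{e}^{1/(\mathcal{Q}+1)})$, and finally combine this with the estimate on $d(\mathfrak{c}(Q)w,xw^*)$ and the inequality $2^{\mathcal{Q}-1}\oldC{C:b1}<1$ to conclude $E^\psi(2\delta,\gamma)\cap B(\mathfrak{c}(Q)w,3k\,\oldC{C:b1}\,\oldep{e}^{1/(\mathcal{Q}+1)}\rho)\ne\emptyset$, which is item (ii).

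The one point genuinely requiring attention — and the only real obstacle — is the bookkeeping of constants: one must check that every quantitative inequality hidden in the proof of Lemma \ref{lemmaduepsei} (for instance $2^{-\mathcal{Q}}\oldep{e}(2\delta)<\delta_{\mathbb{G}}$ and $2^{\mathcal{Q}-1}\oldC{C:b1}(2\delta)<1$, the compatibility between the diameters furnished by Theorem \ref{evev}(iv) and the scale thresholds of Proposition \ref{rk:primo}(ii), and the closing comparisons among $\oldC{C:b1}$, $k$, $R$ and $\oldep{e}$) still holds after $\vartheta$ is replaced by $2\delta$, and in particular that the constants $\oldep{e}$ appearing in hypothesis (ii), in Proposition \ref{rk:primo}, and in the conclusion of the statement are read with mutually consistent arguments. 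Since the whole hierarchy $N$, $\zeta$, $\oldC{C:0},\dots,\oldC{child}$, $A_0$, $k$, $R$, $\oldep{e}$ of Notation \ref{notation1} was fixed as explicit functions of the single parameter $\sigma$, this is routine verification rather than new mathematics, and the entire chain of choices goes through with $\sigma=2\delta$ exactly as it did with $\sigma=\vartheta$.
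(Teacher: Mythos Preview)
Your proposal is correct and takes essentially the same approach as the paper: the paper's own proof simply invokes Proposition \ref{rk:primo} to obtain $\tilde C$ and then states that one can prove (i) and (ii) ``by using the verbatim argument we employed in the proof of Proposition \ref{lemmaduepsei}'', which is precisely what you do, only with more of the details spelled out. Your closing remark about checking the consistency of the $\sigma$-dependent constants (in particular the interplay between $\oldep{e}(\delta)$ in the hypothesis and $\oldep{e}(2\delta)$, $\oldC{C:b1}(2\delta)$, $k(2\delta)$ in the conclusion) is appropriate and is exactly the routine bookkeeping the paper leaves implicit.
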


\begin{proof}
If $\psi$ is a Radon measure satisfying the hypothesis of Proposition \ref{rk:primo}, we can find a compact a $\gamma\in \N$ and a compact set $\tilde{C}$ contained in $E^\psi(2\delta,\gamma)$ such that:
\begin{itemize}
\item[(i)]$\psi(K\setminus \tilde{C})\leq 2\oldep{eps:1}\psi(K)$ where $\oldep{eps:1}$ was introduced in Notation \ref{definizione:piano},
    \item[(ii)]$d_{x,4kr}(\psi,\mathfrak{M})+d_{x,4kr}(\psi\llcorner E^\psi(2\delta,\gamma),\mathfrak{M})\leq 4^{-\mathcal{Q}} \oldep{e}(\delta)$  for any $x\in \tilde{C}$  and any $0<r<2^{-\tilde{\iota}_0 N(2\delta)+5}/\gamma$.
\end{itemize}
Thus, if $\Delta^\psi(2\delta,\gamma)$ is the family of dyadic cubes relative to the parameters $2\delta,\gamma$ and the measure $\psi$ yielded by Theorem \ref{evev}, one can prove that the cubes of $\Delta^\psi(\tilde{C};2\delta,\gamma,\iota)$ satisfy (i) and (ii) by using the verbatim argument we employed in the proof of Proposition \ref{lemmaduepsei}.
\end{proof}

The arguments we will use in the rest of the subsection to prove  Proposition \ref{parentneigh} through Theorem \ref{TH:proiezioni}, follow from an adaptation of the techniques that can be found in Chapter 2, \S2 of \cite{DavidSemmes}. 
The first of such adaptations is the following definition, that is a way of saying that two cubes are close both in metric and in size terms:

\begin{definizione}[Neighbour cubes]\label{def:neighbour}
Let $A:=4A_0^2$ and let $Q_j\in\Delta_{i_j}$ be two cubes with $j=1,2$. We say that, $Q_1$ and $Q_2$ are \emph{neighbours} if: $$\underline{\dist}(Q_1,Q_2):=\inf_{x\in Q_1,~y\in Q_2} d(x,y)\underset{\text{(I)}}{\leq} A(\diam Q_1+\diam Q_2)\qquad \text{and}\qquad \lvert j_1-j_2\rvert\underset{\text {(II)}}{\leq} A.$$
Furthermore, in the following for any $Q\in \Delta(C,\iota)$ we let for the sake of notation: $$\mathfrak{n}(Q):=\mathfrak{n}(\Pi(Q)),$$ 
where $\Pi(Q)\in \G(\mathcal{Q}-1)$ is the plane yielded by Lemma \ref{lemmaduepsei}.

Finally, two planes $V,W\in \G(\mathcal{Q}-1)$ are said to have  \emph{compatible orientations} if their normals $\mathfrak{n}(V),\mathfrak{n}(W)\in V_1$ are chosen in such a way that $\langle \mathfrak{n}(V),\mathfrak{n}(W)\rangle> 0$. By extension, we will say that two cubes $Q_1,Q_2\in \Delta(C,\iota)$ have \emph{compatible orientation} themselves if $\Pi(Q_1)$ and  $\Pi(Q_2)$ are chosen to have compatible orientation.
\end{definizione}

\begin{proposizione}\label{parentneigh}
Suppose that $Q_j\in \Delta_{i_j}$ for $j=1,2$. Then:
\begin{itemize}
    \item[(i)] if $Q_1$ is the parent of $Q_2$, then $Q_1$ and $Q_2$ are neighbors,
    \item[(ii)] if $Q_1$ and $Q_2$ are neighbors for any non- negative integer $k\leq \min\{i_1,i_2\}$ their ancestors $\tilde{Q}_1\in\Delta_{i_1-k}$ and $\tilde{Q}_2\in\Delta_{i_2-k}$ are neighbors,
    \item[(iii)] if $Q_1,Q_2\in\Delta(E(\vartheta,\gamma),1)$ are neighbours, then
   $\Big\lvert\log\frac{\diam Q_1}{\diam Q_2}\Big\rvert\leq 2AN$.
\end{itemize}
\end{proposizione}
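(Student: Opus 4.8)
The plan is to deduce all three claims from the standard structural properties of the Christ--David dyadic cubes furnished by Theorem~\ref{evev}: the \emph{layered nesting} --- for $j'<j$ every $Q\in\Delta_j$ is contained in a unique cube of $\Delta_{j'}$, which makes the notion of ``ancestor at a given layer'' unambiguous --- and the \emph{two-sided size estimate} $\diam Q\asymp 2^{-jN}$ for $Q\in\Delta_j$, with comparability constants depending only on the data and with $N=N(\sigma)$ chosen large enough (see Appendix~\ref{AppendiceA}) that no cube can belong to two distinct layers. Throughout, $A=4A_0^2$ is the constant of Definition~\ref{def:neighbour}, which by the choice of $A_0$ in Notation~\ref{notation1} dominates every absolute constant appearing below. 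For part (i), if $Q_1$ is the parent of $Q_2$ then $Q_2\subseteq Q_1$, so $\underline{\dist}(Q_1,Q_2)=0\le A(\diam Q_1+\diam Q_2)$, which is condition (I). For (II) it suffices to show $i_1=i_2-1$: let $Q'\in\Delta_{i_1+1}$ be the unique ancestor of $Q_2$ at layer $i_1+1$, so $Q_2\subseteq Q'\subseteq Q_1$; if $Q_2=Q'$ then $Q_2$ lies in both $\Delta_{i_2}$ and $\Delta_{i_1+1}$, forcing $i_2=i_1+1$; otherwise $Q_2\subsetneq Q'$, and since $Q_1$ is the \emph{smallest} strict ancestor of $Q_2$ while $Q'\subseteq Q_1$ is a strict ancestor, $Q'=Q_1$, i.e. $Q_1$ lies in both $\Delta_{i_1}$ and $\Delta_{i_1+1}$ --- impossible. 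Hence $\lvert i_1-i_2\rvert=1\le A$.

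For part (ii), let $Q_1,Q_2$ be neighbours and $0\le k\le\min\{i_1,i_2\}$. By layered nesting the ancestors $\tilde Q_1\in\Delta_{i_1-k}$ and $\tilde Q_2\in\Delta_{i_2-k}$ are well defined and $Q_j\subseteq\tilde Q_j$, whence $\diam Q_j\le\diam\tilde Q_j$. Condition (II) for $\tilde Q_1,\tilde Q_2$ is immediate since $\lvert(i_1-k)-(i_2-k)\rvert=\lvert i_1-i_2\rvert\le A$. For (I), enlarging the two sets can only decrease the infimum, so
\[
\underline{\dist}(\tilde Q_1,\tilde Q_2)\le\underline{\dist}(Q_1,Q_2)\le A(\diam Q_1+\diam Q_2)\le A(\diam\tilde Q_1+\diam\tilde Q_2).
\]

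For part (iii), if $Q_1,Q_2\in\Delta(E(\vartheta,\gamma),1)$ are neighbours then $\lvert i_1-i_2\rvert\le A$ by (II). Writing the two-sided size estimate as $c_1 2^{-jN}\le\diam Q\le c_2 2^{-jN}$ for $Q\in\Delta_j$, we get
\[
\Big\lvert\log\frac{\diam Q_1}{\diam Q_2}\Big\rvert\le (\log 2)\,N\,\lvert i_1-i_2\rvert+\log\frac{c_2}{c_1}\le (\log 2)\,AN+\log\frac{c_2}{c_1}\le 2AN,
\]
the last inequality holding because, by the choices in Notation~\ref{notation1}, $AN$ is far larger than $\log(c_2/c_1)$ while $(\log 2)N<N\le AN$.

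All three parts are routine once the structural facts are in hand; the only point requiring real attention is isolating from Theorem~\ref{evev} the precise form of the layered nesting and of the diameter comparability --- in particular the no-persistence property (each cube lies in exactly one layer) --- since this is what pins the parent one layer above its child in (i) and what lets the level bound of a neighbour pair translate into the diameter bound of (iii). Everything else is monotonicity of $\diam$ and of $\underline{\dist}$ under inclusion.
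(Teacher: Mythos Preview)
Your arguments for (ii) and (iii) are fine and essentially identical to the paper's. The problem is (i): your proof rests on a ``no-persistence property'' --- that a given set cannot belong to two distinct layers $\Delta_j$ --- which is \emph{false} for the dyadic cubes of Appendix~\ref{AppendiceA}. The paper says so explicitly just before Proposition~\ref{prop:stabcubi}: ``The families of cubes yielded by Theorem~\ref{evev} may have the annoying property that fixed a cube $Q\in\Delta_j^\phi(\xi,\tau)$, the only sub-cube of $Q$ in the layer $\Delta_{j+1}^\phi(\xi,\tau)$ contained in $Q$, is just $Q$ itself.'' In other words, the same set can sit in $\Delta_j$ and in $\Delta_{j+1}$, and the parent (defined as the smallest \emph{strictly} containing cube) can therefore lie several layers above its child. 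Both branches of your dichotomy for (i) collapse once persistence is allowed: if $Q_2=Q'$ you cannot deduce $i_2=i_1+1$, and if $Q_2\subsetneq Q'$ the conclusion $Q'=Q_1\in\Delta_{i_1}\cap\Delta_{i_1+1}$ is not a contradiction.

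The paper's fix is exactly Proposition~\ref{prop:stabcubi}: for a cube $Q$ meeting $E^\phi(\xi,\tau)$, the layer gap to its parent is at most $\lfloor 2\log\oldC{C:F}/Nm\rfloor+1\le\oldC{child}$, and the choice $A_0\ge 2\oldC{child}$ in Notation~\ref{notation1} then gives $\lvert i_1-i_2\rvert\le\oldC{child}\le 4A_0^2=A$. That is the missing ingredient for (II) in part~(i); condition~(I) is trivial as you noted since $Q_2\subseteq Q_1$ forces $\underline{\dist}(Q_1,Q_2)=0$.
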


\begin{proof}
Let us prove (i). Since $Q_2\subseteq Q_1$ then (I) of Definition \ref{def:neighbour} follows immediately. On the other hand, thanks to Proposition \ref{prop:stabcubi}, we infer:
$$\lvert j_1-j_2\rvert\leq \oldC{child}\leq 4 A_0^2=A,$$
where the second inequality comes from the choice of $A_0$, see Definition \ref{notation1}, and this proves (II) of Definition \ref{def:neighbour}.
In order to prove (ii), we first note that $\lvert (i_1-k)-(i_2-k)\rvert=\lvert i_1-i_2\rvert\leq A$ and secondly that:
$$\underline{\dist}(\tilde{Q}_1,\tilde{Q}_2)\leq \underline{\dist}(Q_1,Q_2)\leq A(\diam Q_1+\diam Q_2)\leq A(\diam \tilde{Q}_1+\diam\tilde{Q}_2).$$
In order to prove (iii), note that thanks to Theorem \ref{evev}(ii) and (v), we infer that:
$$\Big\lvert\log\frac{\diam Q_1}{\diam Q_2}\Big\rvert\leq\Big\lvert\log\frac{2^{-Nj_1+5}/\gamma}{\zeta^22^{-Nj_2-1}/\gamma}\Big\rvert=(N\lvert j_2-j_1\rvert+6)\log 2-2\log \zeta\leq 2AN,$$
where the last inequality comes from the choice of $A_0$.
\end{proof}

\begin{osservazione}\label{oss:centr}
If $Q\in\Delta(C,\iota)$ then $\mathfrak{c}(Q)\in E(\vartheta,\gamma)$ thanks to Proposition \ref{prop:centri} and the fact that we chose $\iota\geq 4\vartheta$.
\end{osservazione}

\begin{proposizione}\label{prop:nearnorm}
Suppose that $Q_1,Q_2\in\Delta(C,\iota)$ are two neighbour cubes. Then:
$$(1-\oldC{C:1.0}\oldep{e}^{\frac{2}{\mathcal{Q}+1}})^{1/2}=(1-2^{20}(n_1-1)\oldC{C:b1}^2\oldep{e}^{\frac{2}{\mathcal{Q}+1}})^{   1/2}\leq\lvert \langle \mathfrak{n}(Q_1),\mathfrak{n}(Q_2)\rangle\rvert.$$
\end{proposizione}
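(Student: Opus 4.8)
The plan is to compare the two hyperplanes $\mathfrak{c}(Q_1)\Pi(Q_1)$ and $\mathfrak{c}(Q_2)\Pi(Q_2)$ inside a single common ball and then read off the closeness of the normals from the geometric constraints (i) and (ii) of Lemma \ref{lemmaduepsei}. First I would fix $\rho_1:=\diam Q_1$, $\rho_2:=\diam Q_2$ and, using that $Q_1,Q_2$ are neighbours, invoke Proposition \ref{parentneigh}(iii) to get $|\log(\rho_1/\rho_2)|\leq 2AN$, so that $\rho_1$ and $\rho_2$ are comparable up to the fixed constant $e^{2AN}$; likewise, since $\underline{\dist}(Q_1,Q_2)\leq A(\rho_1+\rho_2)$, there is a ball $B:=B(\mathfrak{c}(Q_1),L\rho_1)$ with $L$ a fixed multiple of $Ae^{2AN}$ such that both $B(\mathfrak{c}(Q_j),k\rho_j/2)$ contain a common sub-ball $B(p,c\rho_1)$ with $c$ a fixed small constant, and $p\in E(\vartheta,\gamma)$ can be taken to be $\mathfrak{c}(Q_1)$ or a nearby centre (using Remark \ref{oss:centr}). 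The point is that both hyperplanes are $\oldC{C:b1}\oldep{e}^{1/\mathcal{Q}}$-close to $E(\vartheta,\gamma)$ on this common region (Lemma \ref{lemmaduepsei}(i)), and conversely $E(\vartheta,\gamma)$ fills each hyperplane up to scale $3k\oldC{C:b1}\oldep{e}^{1/(\mathcal{Q}+1)}\rho_j$ (Lemma \ref{lemmaduepsei}(ii)).

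Next I would make the two-sided proximity quantitative. Combining (i) and (ii) of Lemma \ref{lemmaduepsei} for $Q_1$ and $Q_2$ on the common region, together with Proposition \ref{prop:dist-piani}(ii) to pass between the planes through $E(\vartheta,\gamma)$, one gets
\[
\dist\bigl(\mathfrak{c}(Q_1)\Pi(Q_1),\,\mathfrak{c}(Q_2)\Pi(Q_2)\bigr)\leq \newC[C']\,\oldep{e}^{\frac{1}{\mathcal{Q}+1}}\,\rho_1
\]
for a fixed constant $\newC[C']$ depending only on $\mathbb{G}$ (absorbing $k$, $A$, $N$, $\oldC{C:b1}$, $\oldC{C:0}$). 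By Proposition \ref{prop:dist-piani}(i), the left-hand side equals $|\pi_{\mathfrak{n}(\Pi(Q_1))}(\pi_1(z^{-1}w))|$ for appropriate $z,w$; but more usefully, the fact that two parallel-offset hyperplanes of $\G(\mathcal{Q}-1)$ agree, as linear subspaces, exactly when their normals are parallel means that a bound on their Hausdorff distance over a ball of radius $\asymp\rho_1$ forces $|\mathfrak{n}(\Pi(Q_1))-\mathfrak{n}(\Pi(Q_2))|$ (up to sign) to be at most $\newC[C'']\oldep{e}^{1/(\mathcal{Q}+1)}$. Choosing compatible orientations so that $\langle\mathfrak{n}(Q_1),\mathfrak{n}(Q_2)\rangle>0$, this gives $|\mathfrak{n}(Q_1)-\mathfrak{n}(Q_2)|^2\leq \newC[C''']\oldep{e}^{\frac{2}{\mathcal{Q}+1}}$, and since both normals are unit vectors, $|\langle\mathfrak{n}(Q_1),\mathfrak{n}(Q_2)\rangle|=1-\tfrac12|\mathfrak{n}(Q_1)-\mathfrak{n}(Q_2)|^2\geq 1-\tfrac12\newC[C''']\oldep{e}^{2/(\mathcal{Q}+1)}\geq(1-\oldC{C:1.0}\oldep{e}^{2/(\mathcal{Q}+1)})^{1/2}$, provided $\oldC{C:1.0}=2^{20}(n_1-1)\oldC{C:b1}^2$ has been chosen large enough relative to $\newC[C''']$ — which is the role of the explicit constant in Notation \ref{notation1}.

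The step I expect to be the main obstacle is extracting the bound on $|\mathfrak{n}(Q_1)-\mathfrak{n}(Q_2)|$ from the one-sided proximities: Lemma \ref{lemmaduepsei}(i) only says $E(\vartheta,\gamma)$ lies in a thin slab around each plane, while (ii) says the plane is well-covered by $E(\vartheta,\gamma)$, and one must chain these carefully over a region large enough (radius a fixed multiple of $\rho_1$, not just $\oldep{e}^{1/\mathcal{Q}}\rho_1$) that a genuine angle bound — rather than just a displacement bound — comes out. Concretely: pick $\mathcal{Q}-1$ linearly independent points $w_1,\dots,w_{\mathcal{Q}-1}\in B(p,c\rho_1)\cap\mathfrak{c}(Q_1)\Pi(Q_1)$, use Lemma \ref{lemmaduepsei}(ii) for $Q_1$ to find points of $E(\vartheta,\gamma)$ within $3k\oldC{C:b1}\oldep{e}^{1/(\mathcal{Q}+1)}\rho_1$ of each, then Lemma \ref{lemmaduepsei}(i) for $Q_2$ to conclude these $E$-points lie in the $\oldC{C:b1}\oldep{e}^{1/\mathcal{Q}}$-slab of $\mathfrak{c}(Q_2)\Pi(Q_2)$; spreading the $w_i$ so that their first-layer components $\pi_1(w_i-p)$ are $\asymp\rho_1$-separated and span $\mathscr{V}_1:=\Pi(Q_1)\cap V_1$ (here using \eqref{eq:intr3}), the Euclidean linear-algebra estimate that a hyperplane whose defining functional $\langle\mathfrak{n}(Q_2),\cdot\rangle$ is $O(\oldep{e}^{1/(\mathcal{Q}+1)}\rho_1)$-small on a $\rho_1$-separated spanning set must have $\mathfrak{n}(Q_2)$ within $O(\oldep{e}^{1/(\mathcal{Q}+1)})$ of $\mathfrak{n}(Q_1)$ closes the argument. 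The bookkeeping of which fixed constants ($k$, $A=4A_0^2$, $N$, $e^{2AN}$, $\oldC{C:0}$, $\beta$) get absorbed into $\newC[C''']$, and verifying it is dominated by $2^{20}(n_1-1)\oldC{C:b1}^2$, is routine but needs to be done explicitly given the paper's convention of tracking constants.
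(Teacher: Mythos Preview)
Your proposal is correct and follows essentially the same route as the paper: chain Lemma~\ref{lemmaduepsei}(ii) for one cube with Lemma~\ref{lemmaduepsei}(i) for the other to force points of one plane into a thin slab around the other, then convert this into an angle bound via the first-layer Pythagorean identity $1=|\langle\mathfrak{n}(Q_1),\mathfrak{n}(Q_2)\rangle|^2+\sum_j|\langle v_j,\mathfrak{n}(Q_1)\rangle|^2$ evaluated on a spanning set in $\mathscr{V}_1$. One slip to fix: you need $n_1-1$ test directions (spanning $\Pi(Q_j)\cap V_1$), not $\mathcal{Q}-1$ --- this is exactly why the constant $\oldC{C:1.0}$ carries the factor $(n_1-1)$ --- and the paper places those test points on $\mathfrak{c}(Q_2)\Pi(Q_2)$ at radius $k\diam Q_2/80$ with $Q_2$ the smaller cube, which makes the inclusion checks into $B(\mathfrak{c}(Q_1),k\diam Q_1/2)$ immediate.
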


\begin{proof}
Thanks to the definition of $k$, we have:
$$A(\diam Q_1+\diam Q_2)\leq 2A\max\{\diam Q_1,\diam Q_2\}\leq (k/4)\max\{\diam Q_1,\diam Q_2\}.$$
Without loss of generality we can assume that $\diam Q_2\leq \diam Q_1$. Since the cubes $Q_1,Q_2$ are supposed to be neighbors, we deduce that:
\begin{equation}
    \underline{\dist}(Q_1,Q_2)\leq (k/4)\diam Q_1.
    \nonumber
\end{equation}
This implies that for any $z\in Q_1$, we have:
\begin{equation}
    \underline{\dist}(z,Q_2)\leq \diam Q_1 +\inf_{y\in Q_1}\underline{\dist}(y,Q_2)\leq\diam Q_1+\dist(Q_1,Q_2)\leq (k/4+1)\diam Q_1<(k/2)\diam Q_1.
    \label{eq:C01}
\end{equation}
Inequality \eqref{eq:C01} implies that for any $z\in Q_1$ we have $Q_2\subseteq B(z,k\diam Q_1/2)$. This, together with Lemma \ref{lemmaduepsei}(i), implies that for any $w\in E(\vartheta,\gamma)\cap Q_2$ we have:
\begin{equation}
    \dist(w,\mathfrak{c}(Q_1)\Pi(Q_1))\leq 2\oldC{C:b1} \oldep{e}^\frac{1}{\mathcal{Q}} k\diam Q_1,
    \label{eq:n15}
\end{equation}
We now claim that the set $B_2:=\{ u\in\mathbb{G}:\dist(u,Q_2)\leq (k/20)\diam Q_2\}$ is contained in the ball $B(\mathfrak{c}(Q_1),k\diam Q_1/2)$.
In order to prove such inclusion, let $u\in B_2$ and note that:
\begin{equation}
\begin{split}
      \dist(u,\mathfrak{c}(Q_1))\leq &\inf_{w\in Q_2}(d(u,w)+d(w,\mathfrak{c}(Q_1))
      \leq \inf_{w\in Q_2}d(u,w)+\diam Q_1+\underline{\dist}(Q_1,Q_2)+\diam Q_2\\
      \underset{u\in B_2}{\leq} &\frac{k}{20}\diam Q_2+\diam Q_1+\underline{\dist}(Q_1,Q_2)+\diam Q_2
      \leq \frac{3k+20}{10}\diam Q_1<\frac{k}{2}\diam Q_1,
\end{split}
  \label{eq:81}
\end{equation}
where the second last inequality comes from the assumption that $Q_1$ is the cube with the biggest diameter. The inequality \eqref{eq:81} concludes the proof of the inclusion $B_2\subseteq B(\mathfrak{c}(Q_1),k\diam Q_1/2)$. 
Proposition \ref{parentneigh}(iii) together with the fact that $Q_1,Q_2\in\Delta(E(\vartheta,\gamma),\iota)$ and inequality \eqref{eq:n15} imply that for any $u\in E(\vartheta,\gamma)\cap B_2$ we have:
\begin{equation}
    \dist(u,\mathfrak{c}(Q_1)\Pi(Q_1))\leq 2\oldC{C:b1}\oldep{e}^{\frac{1}{\mathcal{Q}}}k\diam Q_1\leq 2\oldC{C:b1}e^{2NA}\oldep{e}^{\frac{1}{\mathcal{Q}}}k\diam Q_2.
    \label{eq:numbero10k}
\end{equation}
Furthermore, thanks to Remark \ref{oss:centr} we have $\mathfrak{c}(Q_2)\in B_2\cap E(\vartheta,\gamma)$. This also implies, by Proposition \ref{prop:dist-piani} and \eqref{eq:numbero10k}, that for any $u\in B_2\cap E(\vartheta,\gamma)$ we have:
\begin{equation}
    \begin{split}
        \dist(u,\mathfrak{c}(Q_2)\Pi(Q_1))\leq& \dist(u,\mathfrak{c}(Q_1)\Pi(Q_1))+\dist(\mathfrak{c}(Q_2)\Pi(Q_1),\mathfrak{c}(Q_1)\Pi(Q_1))\\
        =&\dist(u,\mathfrak{c}(Q_1)\Pi(Q_1))+\dist(\mathfrak{c}(Q_2),\mathfrak{c}(Q_1)\Pi(Q_1))\leq4\oldC{C:b1}e^{2NA}\oldep{e}^{\frac{1}{\mathcal{Q}}}k\diam Q_2.
    \end{split}
    \label{eq:84}
\end{equation}
Thanks to Lemma \ref{lemmaduepsei}(ii), we deduce that for any $y\in B(\mathfrak{c}(Q_2),k\diam Q_2/40)\cap \mathfrak{c}(Q_2)\Pi(Q_2)$ there exists some $w(y)$ in $E(\vartheta,\gamma)\cap B(y, 3k\oldC{C:b1}\oldep{e}^{1/(\mathcal{Q}+1)}\diam Q_2)$.
Since by definition $\oldep{e}\leq ((k-20)/20k)^{\mathcal{Q}+1}$, we have:
\begin{equation}
\begin{split}
       \dist(w(y),Q_2)\leq &\inf_{p\in Q_2} d(w(y),y)+d(y,\mathfrak{c}(Q_2))+d(\mathfrak{c}(Q_2),p)\\
       \leq &3k\oldC{C:b1}\oldep{e}^{1/(\mathcal{Q}+1)}\diam Q_2+\frac{k}{40}\diam Q_2+\diam Q_2\leq \frac{k}{20}\diam Q_2,
\end{split}
    \label{eq:n160}
\end{equation}
where the last inequality comes from the choice of $k$.
Inequality \eqref{eq:n160} implies that $w(y)\in B_2$ and thanks to \eqref{eq:84} we infer that:
$$\dist(w(y),\mathfrak{c}(Q_2)\Pi(Q_1))\leq 4\oldC{C:b1}e^{2NA}\oldep{e}^\frac{1}{\mathcal{Q}}k\diam Q_2.$$
Summing up, for any $y\in B(\mathfrak{c}(Q_2),k\diam Q_2/40)\cap \mathfrak{c}(Q_2)\Pi(Q_2)$ we have:
\begin{equation}
\begin{split}
    \dist(y, \mathfrak{c}(Q_2)\Pi(Q_1))\leq& 3\oldC{C:b1}d(y,w(y))+\dist(w(y),\mathfrak{c}(Q_2)\Pi(Q_1))\leq 3\oldC{C:b1}\oldep{e}^{\frac{1}{\mathcal{Q}+1}}k\diam Q_2+4\oldC{C:b1} e^{2NA}\oldep{e}^{\frac{1}{\mathcal{Q}}}k\diam Q_2\\
    \leq &(3\oldC{C:b1}+4\oldC{C:b1}e^{2NA}\oldep{e}^{\frac{1}{\mathcal{Q}(\mathcal{Q}+1)}})\oldep{e}^{\frac{1}{\mathcal{Q}+1}} k \diam Q_2\leq 6\oldC{C:b1}\oldep{e}^{\frac{1}{\mathcal{Q}+1}}  k \diam Q_2,
\end{split}
\label{eq:85}
\end{equation}
where the last inequality comes from the choice of $\oldep{e}$ and few algebraic computations that we omit.
Furthermore, inequality \eqref{eq:85} and Proposition \ref{cor:SSCM2.2.14} imply that:
$$ \lvert \langle \pi_1 (y^{-1}\mathfrak{c}(Q_2)),\mathfrak{n}(Q_1)\rangle\rvert= \lVert P_{\mathfrak{n}(Q_1)}(\mathfrak{c}(Q_2)^{-1}y)\rVert= \dist(y,\mathfrak{c}(Q_2)\Pi(Q_1))\leq 6\oldC{C:b1}\oldep{e}^{\frac{1}{\mathcal{Q}+1}}  k \diam Q_2.$$
Suppose $\{v_i$, $i=1,\ldots,n_1-1\}$ are the unit vectors of the first layer $V_1$ spanning the orthogonal of $\mathfrak{n}(Q_2)$ inside $V_1$ and let $y_j:=\mathfrak{c}(Q_2)\delta_{k\diam Q_2/80}(v_j)$.
Then, thanks to inequality \eqref{eq:85}, we deduce that:
\begin{equation}
\begin{split}
     1=\lvert \langle \mathfrak{n}(Q_1),\mathfrak{n}(Q_2)\rangle\rvert^2+\sum_{j=1}^{n_1-1}\lvert \langle v_j,\mathfrak{n}(Q_1)\rangle\rvert^2=&\lvert \langle \mathfrak{n}(Q_1),\mathfrak{n}(Q_2)\rangle\rvert^2+\sum_{j=1}^{n_1-1}\frac{\lvert \langle \pi_1(\mathfrak{c}(Q_2)^{-1}y_j),\mathfrak{n}(Q_1)\rangle\rvert^2}{(k\diam Q_2/80)^2}.\\
     \leq &\lvert \langle \mathfrak{n}(Q_1),\mathfrak{n}(Q_2)\rangle\rvert^2+2^{20}(n_1-1)\oldC{C:b1}^2\oldep{e}^{\frac{2}{\mathcal{Q}+1}}.
     \nonumber
\end{split}
\end{equation}
This concludes the proof of the proposition.
\end{proof}

\begin{proposizione}\label{prop:cubbit}
Let $Q_1,Q_2\in\Delta(C,\iota)$ and suppose that $\Pi(Q_1)$ and $\Pi(Q_2)$, the planes yielded by Lemma \ref{lemmaduepsei}, are chosen with compatible orientations. Then:
$$\lvert \mathfrak{n}(Q_1)- \mathfrak{n}(Q_2)\rvert\leq 2\sqrt{\oldC{C:1.0}}\oldep{e}^{1/(\mathcal{Q}+1)}.$$
Furthermore, the planes $\Pi(Q_i)$ have compatible orientations if and only if the planes $\Pi(\tilde{Q}_i)$ relative to the parent cubes $\tilde{Q}_1,\tilde{Q}_2$ of $Q_1$ and $Q_2$ respectively, have the compatible orientations.   
\end{proposizione}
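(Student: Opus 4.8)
The plan is to read the first inequality straight off Proposition \ref{prop:nearnorm}, and to obtain the parent-orientation statement by propagating a chosen sign of the normal along a short cycle of neighbouring cubes, using the quantitative closeness just established to forbid any sign flip.

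First I would record that, since $Q_1$ and $Q_2$ are neighbour cubes, Proposition \ref{prop:nearnorm} gives $\lvert\langle \mathfrak{n}(Q_1),\mathfrak{n}(Q_2)\rangle\rvert\geq(1-\oldC{C:1.0}\oldep{e}^{2/(\mathcal{Q}+1)})^{1/2}$, and that the choice of $\oldep{e}$ in Notation \ref{notation1} makes $\oldC{C:1.0}\oldep{e}^{2/(\mathcal{Q}+1)}$ well below $1$ (indeed $\oldep{e}^{1/(\mathcal{Q}+1)}\leq(2\oldC{C:1.0})^{-1}$), so this inner product is bounded away from $0$. Having chosen $\Pi(Q_1),\Pi(Q_2)$ with compatible orientations means $\langle \mathfrak{n}(Q_1),\mathfrak{n}(Q_2)\rangle>0$, so the absolute value may be dropped, and since the normals are unit vectors, $\lvert \mathfrak{n}(Q_1)-\mathfrak{n}(Q_2)\rvert^2=2-2\langle \mathfrak{n}(Q_1),\mathfrak{n}(Q_2)\rangle\leq 2\oldC{C:1.0}\oldep{e}^{2/(\mathcal{Q}+1)}$, where the last step uses $\sqrt{1-t}\geq 1-t$ for $t\in[0,1]$. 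Taking square roots gives $\lvert \mathfrak{n}(Q_1)-\mathfrak{n}(Q_2)\rvert\leq\sqrt2\sqrt{\oldC{C:1.0}}\,\oldep{e}^{1/(\mathcal{Q}+1)}\leq 2\sqrt{\oldC{C:1.0}}\,\oldep{e}^{1/(\mathcal{Q}+1)}$, which is the claimed bound (with the sharp constant being $\sqrt2\sqrt{\oldC{C:1.0}}$).

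For the second assertion I would set $\kappa:=2\sqrt{\oldC{C:1.0}}\,\oldep{e}^{1/(\mathcal{Q}+1)}$ and note that the choice of $\oldep{e}$ forces $\kappa\leq\oldC{C:1.0}^{-1/2}\leq 2^{-10}$, in particular $\kappa<\sqrt2/3$. By Proposition \ref{parentneigh}(i) each $Q_i$ is a neighbour of its parent $\tilde Q_i$, and by Proposition \ref{parentneigh}(ii) the relation $Q_1\sim Q_2$ passes to $\tilde Q_1\sim\tilde Q_2$. Now fix the sign of $\mathfrak{n}(Q_1)$ and propagate compatibility successively to $\mathfrak{n}(Q_2)$, to $\mathfrak{n}(\tilde Q_1)$ and to $\mathfrak{n}(\tilde Q_2)$; each step is well defined because for neighbour cubes the first part shows the inner product of the two normals is nonzero, so exactly one sign makes it positive. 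Applying the first inequality to the neighbour pairs $(Q_1,\tilde Q_1)$, $(Q_1,Q_2)$ and $(Q_2,\tilde Q_2)$ together with the triangle inequality yields $\lvert \mathfrak{n}(\tilde Q_1)-\mathfrak{n}(\tilde Q_2)\rvert\leq 3\kappa<\sqrt2$, hence $\langle \mathfrak{n}(\tilde Q_1),\mathfrak{n}(\tilde Q_2)\rangle=1-\tfrac12\lvert \mathfrak{n}(\tilde Q_1)-\mathfrak{n}(\tilde Q_2)\rvert^2\geq 1-\tfrac92\kappa^2>0$, so $\Pi(\tilde Q_1),\Pi(\tilde Q_2)$ have compatible orientations. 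The converse is the mirror image: assuming $\Pi(\tilde Q_1),\Pi(\tilde Q_2)$ compatible and propagating the orientation from $\tilde Q_i$ down to $Q_i$, the same triangle estimate run along $Q_1\sim\tilde Q_1\sim\tilde Q_2\sim Q_2$ gives $\lvert\mathfrak{n}(Q_1)-\mathfrak{n}(Q_2)\rvert\leq 3\kappa$, whence $\langle\mathfrak{n}(Q_1),\mathfrak{n}(Q_2)\rangle>0$.

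The whole argument is short once Propositions \ref{prop:nearnorm} and \ref{parentneigh} are available; the only point requiring care is the orientation bookkeeping together with the verification that the universal constant $2\sqrt{\oldC{C:1.0}}\,\oldep{e}^{1/(\mathcal{Q}+1)}$ fixed in Notation \ref{notation1} is small enough (it suffices that it be $<1/\sqrt2$) that the property ``the two normals have positive inner product'' stays transitive around the four-cube cycle $Q_1\sim Q_2\sim\tilde Q_2\sim\tilde Q_1\sim Q_1$, which is exactly what rules out a flip of orientation in passing from children to parents.
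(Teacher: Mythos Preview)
Your proof is correct and follows essentially the same route as the paper: deduce the norm bound from Proposition \ref{prop:nearnorm} via $\lvert\mathfrak{n}(Q_1)-\mathfrak{n}(Q_2)\rvert^2=2-2\langle\mathfrak{n}(Q_1),\mathfrak{n}(Q_2)\rangle$, then use Proposition \ref{parentneigh} to get the three neighbour relations $(Q_1,\tilde Q_1)$, $(Q_2,\tilde Q_2)$, $(\tilde Q_1,\tilde Q_2)$ and propagate the sign. The only cosmetic differences are that you invoke $\sqrt{1-t}\geq 1-t$ (yielding the slightly sharper constant $\sqrt{2}\sqrt{\oldC{C:1.0}}$) whereas the paper uses $1-\sqrt{1-t}\leq\sqrt{t}$, and that you run a triangle inequality on $\lvert\mathfrak{n}(\tilde Q_1)-\mathfrak{n}(\tilde Q_2)\rvert$ while the paper decomposes the inner product $\langle\mathfrak{n}(\tilde Q_1),\mathfrak{n}(\tilde Q_2)\rangle$ directly; these are equivalent computations.
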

 
\begin{proof}

If $Q_1$ and $Q_2$ are neighbors and have the compatible orientations, then $\langle \mathfrak{n}(Q_1),\mathfrak{n}(Q_2)\rangle\geq 0$ and thanks to Proposition \ref{prop:nearnorm} we infer that:
$$\lvert \mathfrak{n}(Q_1)-\mathfrak{n}(Q_2)\rvert^2=2-2\langle \mathfrak{n}(Q_1),\mathfrak{n}(Q_2)\rangle\leq 2-2(1-\oldC{C:1.0}\oldep{e}^{\frac{2}{\mathcal{Q}+1}})^{1/2}\leq 2\sqrt{\oldC{C:1.0}}\oldep{e}^{\frac{1}{\mathcal{Q}+1}}.$$
If $Q_1$ and $Q_2$ are neighbors, Proposition \ref{parentneigh} implies that the couples $\tilde{Q}_1$ and $\tilde{Q}_2$,  $Q_1$ and $\tilde{Q}_1$,  $Q_2$ and $\tilde{Q}_2$ are neighbors. Therefore, Proposition \ref{prop:nearnorm} implies that:
\begin{equation}
    \begin{split}
        \langle \mathfrak{n}(\tilde{Q}_1),\mathfrak{n}(\tilde{Q}_2)\rangle=&\langle\mathfrak{n}(Q_1),\mathfrak{n}(Q_2)\rangle +\langle\mathfrak{n}(\tilde{Q}_1)-\mathfrak{n}(Q_1),\mathfrak{n}(Q_2)\rangle+\langle \mathfrak{n}(\tilde{Q}_1),\mathfrak{n}(\tilde{Q}_2)-\mathfrak{n}(Q_2)\rangle\\
        \geq&(1-\oldC{C:1.0}\oldep{e}^{\frac{2}{\mathcal{Q}+1}})^{1/2}-4\sqrt{\oldC{C:1.0}}\oldep{e}^{\frac{1}{\mathcal{Q}+1}}\geq 1/10.
        \nonumber
    \end{split}
\end{equation}
Viceversa, if $\Pi(\tilde{Q}_1)$ and $\Pi(\tilde{Q}_2)$ have the same orientation, the same line of reasoning yields that the planes $\Pi(Q_1)$ and $\Pi(Q_2)$ have compatible orientation as well. 
\end{proof}

\begin{proposizione}
It is possible to fix an orientation on the planes $\{\Pi(Q):Q\in\Delta(C,\iota)\}$, in such a way that:
$$\lvert\mathfrak{n}(Q_1)-\mathfrak{n}(Q_2)\rvert\leq 1/10,$$
whenever $Q_1,Q_2\in\Delta(C,\iota)$ are neighbors and contained in the same maximal cube $Q_0\in\mathcal{M}(C,\iota)$.
\end{proposizione}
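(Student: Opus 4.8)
The plan is to orient the planes one maximal cube at a time, exploiting the tree structure of the dyadic cubes. Fix $Q_0\in\mathcal M(C,\iota)$; by Remark \ref{oss:01} the maximal cubes are pairwise disjoint and cover $\Delta(C,\iota)$, so it suffices to orient $\{\Pi(Q):Q\in\Delta(C,\iota),\ Q\subseteq Q_0\}$ for each $Q_0$ independently. I would fix the orientation of $\Pi(Q_0)$ arbitrarily and then, recursively descending the tree, choose the orientation of $\Pi(Q)$ (for $Q\subsetneq Q_0$) so that $\langle\mathfrak n(Q),\mathfrak n(\tilde Q)\rangle>0$, $\tilde Q$ being the parent of $Q$. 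This is legitimate: $Q$ and $\tilde Q$ are neighbours by Proposition \ref{parentneigh}(i), so Proposition \ref{prop:nearnorm} gives $|\langle\mathfrak n(Q),\mathfrak n(\tilde Q)\rangle|\ge(1-\oldC{C:1.0}\oldep{e}^{2/(\mathcal Q+1)})^{1/2}>0$, whence exactly one of the two unit normals of $\Pi(Q)$ makes the inner product positive; moreover every ancestor of $Q$ that is still contained in $Q_0$ again belongs to $\Delta(C,\iota)$ (it meets $C$ and its layer index is at least that of $Q_0$, hence $\ge\iota$), so the recursion is well posed.

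With the orientation so fixed, the heart of the matter is the claim that any two neighbours $Q_1,Q_2\in\Delta(C,\iota)$ contained in the same $Q_0$ have \emph{compatible} orientations, i.e.\ $\langle\mathfrak n(Q_1),\mathfrak n(Q_2)\rangle>0$. Once this is known, Proposition \ref{prop:cubbit} yields at once $|\mathfrak n(Q_1)-\mathfrak n(Q_2)|\le 2\sqrt{\oldC{C:1.0}}\,\oldep{e}^{1/(\mathcal Q+1)}$, and the choice of $\oldep{e}$ in Notation \ref{notation1} — in particular $\oldep{e}\le(2A_0^2\oldC{C:1.0}+2A_0k\oldC{C:b1}\oldC{C:F}e^{8NA_0^2})^{-\mathcal Q(\mathcal Q+1)}$ — forces the right-hand side well below $1/10$, which is the desired conclusion.

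I would prove the claim by induction on $i_1+i_2$, writing $Q_j\in\Delta_{i_j}$ and assuming $i_1\le i_2$; note $i_1,i_2\ge i_0$, the layer index of $Q_0$. If $i_1=i_2=i_0$ then $Q_1=Q_2=Q_0$ and there is nothing to prove. If $i_1=i_0<i_2$ then $Q_1=Q_0$ and $Q_2$ is a proper descendant of $Q_0$; along the chain of successive parents $Q_2=P^{(0)}\subsetneq\dots\subsetneq P^{(m)}=Q_0$ every link is a neighbouring pair (Proposition \ref{parentneigh}(i)) with compatible orientation by construction, so Proposition \ref{prop:cubbit} controls each $|\mathfrak n(P^{(j)})-\mathfrak n(P^{(j+1)})|$, and since $m\le i_2-i_0\le i_2-i_1\le A$ because $Q_1,Q_2$ are neighbours, summing gives $|\mathfrak n(Q_0)-\mathfrak n(Q_2)|<\sqrt2$, i.e.\ $\langle\mathfrak n(Q_0),\mathfrak n(Q_2)\rangle>0$. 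If $i_0<i_1\le i_2$, both parents $\tilde Q_1,\tilde Q_2$ lie in $\Delta(C,\iota)$ and inside $Q_0$, they are again a neighbouring pair (as in the proof of Proposition \ref{prop:cubbit}), and their layer indices satisfy $\tilde i_1+\tilde i_2<i_1+i_2$, so the inductive hypothesis gives $\langle\mathfrak n(\tilde Q_1),\mathfrak n(\tilde Q_2)\rangle>0$; the ``if and only if'' part of Proposition \ref{prop:cubbit} then transports compatibility down from $(\tilde Q_1,\tilde Q_2)$ to $(Q_1,Q_2)$.

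The main obstacle is precisely this inductive step, or rather the temptation to bypass it: one cannot simply lift $Q_1$ and $Q_2$ to a common ancestor and declare everything oriented by that ancestor, because the least common ancestor of two neighbours may be enormously larger than either, and the angular error accumulated along such a long parent chain is not controlled. The correct mechanism is to move up one generation at a time and invoke the inheritance property of Proposition \ref{prop:cubbit}, which makes $i_1+i_2$ strictly decrease; the only genuine bookkeeping is to check that passing to parents keeps the two cubes in the neighbour relation — so that Proposition \ref{prop:cubbit} applies — and that the bounded-length chains appearing in the two base cases produce an error that, thanks to the choice of $A_0$, $A=4A_0^2$ and $\oldep{e}$, stays well below $\sqrt2$ and hence well below $1/10$.
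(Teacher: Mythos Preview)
Your proof is correct and follows essentially the same approach as the paper: fix the orientation of $\Pi(Q_0)$, propagate downward by requiring child--parent compatibility, handle the base case $Q_1=Q_0$ via the bounded-length ancestor chain (using $|i_1-i_2|\le A$), and reduce the general case to the base case by repeatedly passing to parents via the ``if and only if'' clause of Proposition~\ref{prop:cubbit}. The only difference is cosmetic: you phrase the reduction as an explicit induction on $i_1+i_2$, whereas the paper says ``taking the parents of the parents and so on, one can reduce to the case in which one of the cubes is $Q_0$.''
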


    \begin{proof}
     Let $Q_i\in\Delta_{j_i}$ for $i=1,2$ and suppose without loss of generality that $j_1\leq j_2$.
    Fix the normal of the plane $\Pi(Q_0)$ and determine the  normals of all other planes $\Pi(Q)$ as $Q$ varies in $\Delta(C,\iota)$ by demanding that the orientation of the cube $Q$ is compatible with the one of $\Pi(\tilde{Q})$, where $\tilde{Q}$ is the parent of $Q$. 
    
    If $Q_1=Q_0$, let us consider the finite sequence $\{\tilde{Q}_i\}_{i=1,\ldots,M}$ of ancestors of $Q_2$ such that $\tilde{Q}_1=Q_2$, $\tilde{Q}_M=Q_0$ and such that $\tilde{Q}_{i+1}$ is the parent of $\tilde{Q}_i$. Then, the orientation of $\Pi(Q_0)$ and $\Pi(Q_2)$ fixed in the above paragraph must be compatible, indeed:
    \begin{equation}
        \langle \mathfrak{n}(Q_0),\mathfrak{n}(Q_2)\rangle\geq \langle \mathfrak{n}(\tilde{Q}_2),\mathfrak{n}(Q_2)\rangle-\sum_{i=2}^M\lvert \mathfrak{n}(\tilde{Q}_i)-\mathfrak{n}(\tilde{Q}_{i+1})\rvert\geq(1-\oldC{C:1.0}\oldep{e}^{\frac{2}{\mathcal{Q}+1}})-2\sqrt{\oldC{C:1.0}}M\oldep{e}^{1/(\mathcal{Q}+1)},
        \label{eq:n16}
    \end{equation}
    where the last inequality comes from Propositions \ref{prop:nearnorm}, \ref{prop:cubbit} and the fact that the orientation of $\tilde{Q}_i$ and $\tilde{Q}_{i+1}$ were chosen to be compatible. Since $Q_0$ and $Q_2$ were assumed to be neighbours, we deduce that $M\leq A$ and thus, thanks to \eqref{eq:n16} and the choice of $\oldep{e}$, we have:
    $$\langle \mathfrak{n}(Q_0),\mathfrak{n}(Q_2)\rangle\geq(1-\oldC{C:1.0}\oldep{e}^{\frac{2}{\mathcal{Q}+1}})-2\sqrt{\oldC{C:1.0}}A\oldep{e}^{1/(\mathcal{Q}+1)}>0.$$
    This proves the statement if one of the cubes is $Q_0$. The proof of the general case can be obtained with the following argument. Thanks to Proposition \ref{prop:cubbit}, we know that the orientation of the planes $\Pi(Q_1)$ and $\Pi(Q_2)$ is compatible if and only if the orientation of $\Pi(\tilde{Q}_1)$ and $\Pi(\tilde{Q}_2)$, the planes relative to their parent cubes $\tilde{Q}_1$ and $\tilde{Q}_2$, are compatible. Thus, taking the parents of the parents and so on, one can reduce to the case in which one of the cubes is $Q_0$.
\end{proof}

\begin{definizione}
For each cube $Q\in\Delta(C,\iota)$, we let:
\begin{equation}
\begin{split}
    G_\pm(Q):=&\mathfrak{c}(Q)\{u\in B(0,A_0\diam Q): \pm \langle \pi_1u,\mathfrak{n}(Q)\rangle\geq A_0^{-1}\diam Q\}\\
    =&\{u\in B(\mathfrak{c}(Q),A_0\diam Q):\pm \langle \pi_1u-\pi_1(\mathfrak{c}(Q)),\mathfrak{n}(Q)\rangle\geq A_0^{-1}\diam Q\}.
    \nonumber
    \end{split}
\end{equation}
and $G(Q)=G_{+}(Q)\cup G_{-}(Q)$. Furthermore, for any $\overline{Q}\in\mathcal{M}(C,\iota)$ we let:
\begin{equation}
\mathfrak{G}_{\pm}(\overline{Q}):=\bigcup_{\substack{Q\in\Delta(C,\iota)\\ Q\subseteq \overline{Q}}}G_{\pm}(Q)\qquad\text{and}\qquad \mathfrak{G}(\overline{Q}):=\bigcup_{\substack{Q\in\Delta(C,\iota)\\ Q\subseteq \overline{Q}}}G(Q).
    \nonumber
\end{equation}
\end{definizione}

\begin{lemma}\label{lemma2.26}
For any cube $Q$ of $\Delta(C,\iota)$ and any $x\in G(Q)$, we have: 
\begin{equation}
    \frac{A_0^{-1}}{2}\diam Q\underset{\text{(A)}}{\leq} \dist(x,E(\vartheta,\gamma))\underset{\text{(B)}}{\leq} A_0\diam Q.
    \label{eq:numbero11k}
\end{equation}
\end{lemma}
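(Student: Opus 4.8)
The plan is to establish the two inequalities separately, using the two defining properties of $\Pi(Q)$ from Lemma \ref{lemmaduepsei}: its \emph{one-sided} proximity to $E(\vartheta,\gamma)$ (part (i)) and its \emph{density} inside $E(\vartheta,\gamma)$ (part (ii)). Throughout, write $\rho:=\diam Q$, $\mathfrak{n}:=\mathfrak{n}(Q)=\mathfrak{n}(\Pi(Q))$, $c:=\mathfrak{c}(Q)$, and recall by Remark \ref{oss:centr} that $c\in E(\vartheta,\gamma)$. For a point $x\in G_{\pm}(Q)$ we have $x\in B(c,A_0\rho)$ and $\pm\langle \pi_1 x-\pi_1 c,\mathfrak{n}\rangle\geq A_0^{-1}\rho$; by Proposition \ref{cor:SSCM2.2.14} this last quantity is exactly $\dist(x, c\Pi(Q))$ up to the sign, so $\dist(x,c\Pi(Q))\geq A_0^{-1}\rho$.

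\textbf{Upper bound (B).} I would produce an explicit competitor point of $E(\vartheta,\gamma)$ near $x$. Since $x\in B(c,A_0\rho)$, one can move $x$ onto $c\Pi(Q)$ by composing with the appropriate short element of $\mathfrak{N}(V)$: using the semidirect splitting, write $x=P_{c\Pi(Q)}(x)*P_{\mathfrak{N}(\cdot)}(x)$ after translating by $c^{-1}$, so there is $y\in c\Pi(Q)$ with $d(x,y)=\dist(x,c\Pi(Q))\leq A_0\rho$; moreover $y\in B(c,\newC{C:0}A_0\rho+A_0\rho)$ stays well inside $B(c,k\rho/2)$ by the choice of $k$ (which dominates any fixed multiple of $A_0$, cf.\ Definition \ref{notation1}(ii)). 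Then Lemma \ref{lemmaduepsei}(ii) gives a point $w\in E(\vartheta,\gamma)$ with $d(y,w)\leq 3k\oldC{C:b1}\oldep{e}^{1/(\mathcal{Q}+1)}\rho$. Hence $\dist(x,E(\vartheta,\gamma))\leq d(x,y)+d(y,w)\leq A_0\rho + 3k\oldC{C:b1}\oldep{e}^{1/(\mathcal{Q}+1)}\rho$. Actually, to get the clean constant $A_0$ I would instead take $y$ to be the \emph{nearest} point of $c\Pi(Q)$ to $x$ and note $d(x,y)=\dist(x,c\Pi(Q))$ is at most the gap coming from $x\in B(c,A_0\rho)$ minus the guaranteed lower gap, i.e.\ strictly below $A_0\rho$; combining with the tiny term $3k\oldC{C:b1}\oldep{e}^{1/(\mathcal{Q}+1)}\rho$ and using $\oldep{e}$ small enough (as fixed in Notation \ref{notation1}) the sum is $\leq A_0\rho$. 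This is the routine direction.

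\textbf{Lower bound (A).} Suppose for contradiction there were $z\in E(\vartheta,\gamma)$ with $d(x,z)<\tfrac{1}{2}A_0^{-1}\rho$. Then $z\in B(c,A_0\rho+\tfrac12 A_0^{-1}\rho)\subseteq B(c,k\rho/2)$, so Lemma \ref{lemmaduepsei}(i) applies to $z$:
\[
\dist(z,c\Pi(Q))\leq 2k\oldC{C:b1}\oldep{e}^{1/\mathcal{Q}}\rho.
\]
On the other hand, by Proposition \ref{cor:SSCM2.2.14} and the triangle inequality for $\dist(\cdot,c\Pi(Q))$ (monotonicity under the metric $d$, since $\dist(\cdot,c\Pi(Q))=\lVert P_{\mathfrak{N}(\cdot)}(\cdot)\rVert$ is $1$-Lipschitz by Proposition \ref{prop:2.2.11}),
\[
\dist(x,c\Pi(Q))\leq \dist(z,c\Pi(Q))+d(x,z)< 2k\oldC{C:b1}\oldep{e}^{1/\mathcal{Q}}\rho+\tfrac12 A_0^{-1}\rho.
\]
But $x\in G(Q)$ forces $\dist(x,c\Pi(Q))\geq A_0^{-1}\rho$, giving $A_0^{-1}\rho< 2k\oldC{C:b1}\oldep{e}^{1/\mathcal{Q}}\rho+\tfrac12 A_0^{-1}\rho$, i.e.\ $\tfrac12 A_0^{-1}< 2k\oldC{C:b1}\oldep{e}^{1/\mathcal{Q}}$, which is false for $\oldep{e}$ as small as fixed in Notation \ref{notation1} (recall $\oldep{e}$ is chosen beating every fixed negative power of constants built from $A_0$, $k$, $\oldC{C:b1}$). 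This contradiction yields $\dist(x,E(\vartheta,\gamma))\geq \tfrac12 A_0^{-1}\rho$.

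\textbf{Main obstacle.} The only delicate point is keeping the various competitor points inside the ball $B(c,k\rho/2)$ where Lemma \ref{lemmaduepsei} is valid, and checking that the smallness of $\oldep{e}$ fixed once and for all in Notation \ref{notation1} genuinely dominates the combination $2k\oldC{C:b1}\oldep{e}^{1/\mathcal{Q}}$ (resp.\ $3k\oldC{C:b1}\oldep{e}^{1/(\mathcal{Q}+1)}$) against the fixed fractions of $A_0^{-1}$; both are bookkeeping with the constants already pinned down, so no genuinely new estimate is needed. I would present the lower bound first (it is the substantive one) and then the upper bound.
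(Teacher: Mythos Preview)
Your argument for the lower bound (A) is correct and is essentially the paper's own argument, just organised as a contradiction rather than a direct chain of inequalities. The paper takes the nearest point $z\in E(\vartheta,\gamma)$ to $x$, observes $z\in B(\mathfrak{c}(Q),k\diam Q/2)$, applies Lemma~\ref{lemmaduepsei}(i) to bound $\dist(z,\mathfrak{c}(Q)\Pi(Q))$, and then uses the reverse triangle inequality together with $\dist(x,\mathfrak{c}(Q)\Pi(Q))\geq A_0^{-1}\diam Q$ --- precisely what you do. (Your appeal to Proposition~\ref{prop:2.2.11} for the $1$-Lipschitz property of $\dist(\cdot,c\Pi(Q))$ is superfluous: the distance to any fixed set is always $1$-Lipschitz.)

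For the upper bound (B), however, you have missed the one-line argument. You yourself quote Remark~\ref{oss:centr} at the outset: $\mathfrak{c}(Q)\in E(\vartheta,\gamma)$. Since $x\in G(Q)\subseteq B(\mathfrak{c}(Q),A_0\diam Q)$, this immediately gives $\dist(x,E(\vartheta,\gamma))\leq d(x,\mathfrak{c}(Q))\leq A_0\diam Q$, which is exactly what the paper does. Your detour through projecting onto $c\Pi(Q)$ and invoking Lemma~\ref{lemmaduepsei}(ii) yields only $A_0\rho+3k\oldC{C:b1}\oldep{e}^{1/(\mathcal{Q}+1)}\rho$, which is strictly larger than $A_0\rho$; your attempted fix (``at most the gap \dots\ minus the guaranteed lower gap'') does not make sense, since $\dist(x,c\Pi(Q))$ can equal $A_0\rho$ on the nose. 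So your route cannot recover the clean constant stated in the lemma, whereas the trivial competitor $\mathfrak{c}(Q)$ does.
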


\begin{proof}
Since $A_0\leq k/4$, if we let $z\in E(\vartheta,\gamma)$ be the point realizing the minimum distance of $x$ from $E(\vartheta,\gamma)$, we deduce that:
\begin{equation}
    d(x,z)=\dist(x,E(\vartheta,\gamma))\leq  d(x,\mathfrak{c}(Q))\leq A_0\diam Q,
    \label{eq:n17}
\end{equation}
where the first inequality above comes from the fact that $\mathfrak{c}(Q)\in E(\vartheta,\gamma)$, see Remark \ref{oss:centr}, and the last inequality comes from the very definition of $G(Q)$.
Note that inequality \eqref{eq:n17} proves \eqref{eq:numbero11k}(B).
Furthermore, since $1+A_0<k/2$ the bound \eqref{eq:n17} also implies that $z\in B(\mathfrak{c}(Q),k\diam Q/2)\cap E(\vartheta,\gamma)$ and thus, thanks to Proposition \ref{lemmaduepsei} we deduce that:
\begin{equation}
    \dist(z,\mathfrak{c}(Q)\Pi(Q))\leq 2\oldC{C:b1}\oldep{e}^{1/\mathcal{Q}}k\diam Q.
    \label{eq:n18}
\end{equation}
Let $w$ be an element of $\Pi(Q)$ satisfying the identity $d(z,\mathfrak{c}(Q)w)=\dist(z,\mathfrak{c}(Q)\Pi(Q))$ and note that \eqref{eq:n18} implies that:
\begin{equation}
    \begin{split}
       \dist(x,E(\vartheta,&\gamma))=\dist(x,z)\geq d(x,\mathfrak{c}(Q)w)-d(\mathfrak{c}(Q)w,z)
       \geq \dist(\mathfrak{c}(Q)^{-1}x,\Pi(Q))-\dist(z,\mathfrak{c}(Q)\Pi(Q))\\
       \geq& \lvert\langle \mathfrak{n}(Q),\pi_1(\mathfrak{c}(Q)^{-1}x)\rangle\rvert-2\oldC{C:b1}\oldep{e}^{1/\mathcal{Q}}k\diam Q\geq A_0^{-1}\diam Q-2\oldC{C:b1}\oldep{e}^{1/\mathcal{Q}}k\diam Q\geq \frac{1}{2A_0}\diam Q,
    \end{split}
\end{equation}
where the last inequality comes from the choice of $\oldep{e}$ and $A_0$.
\end{proof}

\begin{lemma}\label{lemma:intersectionG}
For any $\overline{Q}\in\mathcal{M}(C,\iota)$ we have $\mathfrak{G}_+(\overline{Q})\cap \mathfrak{G}_-(\overline{Q})=\emptyset$. 
\end{lemma}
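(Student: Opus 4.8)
The plan is to argue by contradiction. Suppose some $x\in\mathfrak{G}_+(\overline{Q})\cap\mathfrak{G}_-(\overline{Q})$; then there are cubes $Q_1,Q_2\in\Delta(C,\iota)$ with $Q_1,Q_2\subseteq\overline{Q}$, $x\in G_+(Q_1)$ and $x\in G_-(Q_2)$. The first step is to show that $Q_1$ and $Q_2$ are neighbours in the sense of Definition \ref{def:neighbour}. Condition (I) is immediate: since $x\in B(\mathfrak{c}(Q_1),A_0\diam Q_1)\cap B(\mathfrak{c}(Q_2),A_0\diam Q_2)$, we get $\underline{\dist}(Q_1,Q_2)\le d(\mathfrak{c}(Q_1),\mathfrak{c}(Q_2))\le A_0(\diam Q_1+\diam Q_2)\le A(\diam Q_1+\diam Q_2)$. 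For condition (II) I would invoke Lemma \ref{lemma2.26}: from $x\in G(Q_1)\cap G(Q_2)$ we have $\tfrac{A_0^{-1}}{2}\diam Q_i\le\dist(x,E(\vartheta,\gamma))\le A_0\diam Q_i$ for $i=1,2$, so $\diam Q_1$ and $\diam Q_2$ are comparable up to the factor $2A_0^2$; combining this with the relations between $\diam Q$ and the generation of $Q$ from Theorem \ref{evev}(ii),(v) and with the choice of $A_0$ yields $|j_1-j_2|\le A$. Hence $Q_1$ and $Q_2$ are neighbours.

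Since $Q_1,Q_2$ are neighbours and lie in the same maximal cube $\overline{Q}\in\mathcal{M}(C,\iota)$, the orientation-fixing proposition preceding the definition of $G_\pm$ gives $|\mathfrak{n}(Q_1)-\mathfrak{n}(Q_2)|\le 1/10$, so $\langle\mathfrak{n}(Q_1),\mathfrak{n}(Q_2)\rangle>0$, i.e.\ the orientations are compatible, and Proposition \ref{prop:cubbit} then upgrades this to $|\mathfrak{n}(Q_1)-\mathfrak{n}(Q_2)|\le 2\sqrt{\oldC{C:1.0}}\,\oldep{e}^{1/(\mathcal{Q}+1)}$. Without loss of generality assume $\diam Q_1\ge\diam Q_2$ (the other case is identical after interchanging $+$ and $-$). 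Then $d(\mathfrak{c}(Q_2),\mathfrak{c}(Q_1))\le 2A_0\diam Q_1<k\diam Q_1/2$ by the choice of $k$, and $\mathfrak{c}(Q_2)\in E(\vartheta,\gamma)$ by Remark \ref{oss:centr}; so Lemma \ref{lemmaduepsei}(i) applied to $Q_1$ together with Proposition \ref{cor:SSCM2.2.14} shows that $a:=\pi_1(\mathfrak{c}(Q_1)^{-1}\mathfrak{c}(Q_2))$ satisfies $|a|\le 2A_0\diam Q_1$ and $|\langle\mathfrak{n}(Q_1),a\rangle|\le 2k\oldC{C:b1}\,\oldep{e}^{1/\mathcal{Q}}\diam Q_1$.

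The contradiction then comes from a direct computation. Write $v:=\pi_1(\mathfrak{c}(Q_1)^{-1}x)$ and $w:=\pi_1(\mathfrak{c}(Q_2)^{-1}x)$; since the first layer of the product is additive, $w=v-a$. The membership $x\in G_+(Q_1)$ says $\langle v,\mathfrak{n}(Q_1)\rangle\ge A_0^{-1}\diam Q_1$ together with $|v|\le d(\mathfrak{c}(Q_1),x)\le A_0\diam Q_1$, while $x\in G_-(Q_2)$ says $\langle w,\mathfrak{n}(Q_2)\rangle\le -A_0^{-1}\diam Q_2<0$. On the other hand,
\[
\langle w,\mathfrak{n}(Q_2)\rangle=\langle v,\mathfrak{n}(Q_1)\rangle+\langle v,\mathfrak{n}(Q_2)-\mathfrak{n}(Q_1)\rangle-\langle a,\mathfrak{n}(Q_1)\rangle-\langle a,\mathfrak{n}(Q_2)-\mathfrak{n}(Q_1)\rangle,
\]
and estimating the last three terms by $|v|$, $|a|$, $|\langle\mathfrak{n}(Q_1),a\rangle|$ and $|\mathfrak{n}(Q_1)-\mathfrak{n}(Q_2)|$ as above gives $\langle w,\mathfrak{n}(Q_2)\rangle\ge\bigl(A_0^{-1}-6A_0\sqrt{\oldC{C:1.0}}\,\oldep{e}^{1/(\mathcal{Q}+1)}-2k\oldC{C:b1}\,\oldep{e}^{1/\mathcal{Q}}\bigr)\diam Q_1$. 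The very small choice of $\oldep{e}$ in Notation \ref{notation1} is precisely what forces the two error terms to be bounded by $\tfrac12 A_0^{-1}$, so $\langle w,\mathfrak{n}(Q_2)\rangle\ge\tfrac{A_0^{-1}}{2}\diam Q_1>0$, contradicting $\langle w,\mathfrak{n}(Q_2)\rangle<0$.

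I expect the bulk of the work — and the main obstacle — to be the bookkeeping rather than the geometry: one must verify carefully that the generations $j_1,j_2$ are within $A$ of each other (so that the neighbour relation, and hence Propositions \ref{prop:nearnorm} and \ref{prop:cubbit}, apply) using the specific form of $A_0$, and that the choices of $A_0$, $k$ and $\oldep{e}$ absorb every error term in the last display. The geometric content itself is simply the elementary observation that a point cannot be strictly on the positive side of one affine hyperplane and strictly on the negative side of another hyperplane that is very close to it, both in location (Lemma \ref{lemmaduepsei}) and in direction (Propositions \ref{prop:nearnorm}, \ref{prop:cubbit}).
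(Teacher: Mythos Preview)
Your proof is correct and follows essentially the same route as the paper's: contradiction, show $Q_1,Q_2$ are neighbours via Lemma \ref{lemma2.26} and Theorem \ref{evev}, use Proposition \ref{prop:cubbit} for $|\mathfrak{n}(Q_1)-\mathfrak{n}(Q_2)|$ and Lemma \ref{lemmaduepsei}(i) for $\langle a,\mathfrak{n}(Q_1)\rangle$, then a first-layer decomposition to reach the contradiction. The only cosmetic differences are that you expand around the larger cube $Q_1$ and derive $\langle w,\mathfrak{n}(Q_2)\rangle>0$, while the paper expands around $Q_2$ and derives $\langle \pi_1(\mathfrak{c}(Q_1)^{-1}x),\mathfrak{n}(Q_1)\rangle\le 0$, picking up an extra $e^{2NA}$ factor from Proposition \ref{parentneigh}(iii) when converting $\diam Q_1$ to $\diam Q_2$; your version is marginally cleaner in that respect.
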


\begin{proof}
Suppose this is not the case, assume that we can find two cubes $Q_1, Q_2\in \Delta(C,\iota)$ such that $G_+(Q_1)\cap G_-(Q_2)\neq \emptyset$ and let $x$ be a point of intersection. Thanks to the definition of $G_\pm(Q)$, we immediately deduce that:
\begin{equation}
    B(\mathfrak{c}(Q_1),A_0 \diam Q_1)\cap B(\mathfrak{c}(Q_2),A_0 \diam Q_2)\neq \emptyset.
    \label{eq:n19}
\end{equation}
This in particular implies that
$\dist(Q_1,Q_2)\leq 2A_0(\diam Q_1+\diam Q_2)$. Therefore, since $2A_0\leq A$, $Q_1$ and $Q_2$ satisfy the condition (I) of Definition \ref{def:neighbour}. Furthermore, Lemma \ref{lemma2.26} implies that:
\begin{equation}
    \frac{\diam Q_1}{2A_0}\leq \dist(x,E(\vartheta,\gamma))\leq A_0 \diam Q_1,\qquad\text{and}\qquad
    \frac{\diam Q_2}{2A_0}\leq \dist(x,E(\vartheta,\gamma))\leq A_0 \diam Q_2.
    \label{eq:90}
\end{equation}
Putting together the bounds in \eqref{eq:90}, we infer that:
\begin{equation}
(2 A_0^2)^{-1}\leq \frac{\diam Q_1}{\diam Q_2}\leq 2A_0^2.
    \label{eq:n190}
\end{equation}
Thanks to \eqref{eq:n190} and Theorem \ref{evev}(iv),(vii) we have that:
\begin{equation}
    (2A_0^2)^{-1}\leq \frac{\diam Q_1}{\diam Q_2}\leq \frac{2^{-j_1N+5}/\gamma}{\zeta^22^{-j_2N-1}/\gamma}\qquad\text{and}\qquad\frac{\zeta^22^{-j_1N-1}/\gamma}{2^{-j_2+5}/\gamma}\leq \frac{\diam Q_1}{\diam Q_2}\leq 2A_0^2
    \label{eq:n105}
\end{equation}
Finally, thanks to the bounds in \eqref{eq:n105} together with some algebraic computations that we omit, we deduce:
\begin{equation}
    \lvert j_2-j_1\rvert\leq \frac{\log(2^7\zeta^{-2}A_0^2)}{N\log2}\leq \log A_0,
    \nonumber
\end{equation}
where the last inequality comes from the choice of $A_0$. Since $A_0\geq 2$, we infer that $\lvert j_2-j_1\rvert\leq A$, proving (II) of Definition \ref{def:neighbour}. This concludes the proof that $Q_1$ and $Q_2$ are neighbors. Since $Q_1$ and $Q_2$ are neighbors, \eqref{eq:n19} together with Proposition \ref{parentneigh}(iii) implies that:
$$d(\mathfrak{c}(Q_1),\mathfrak{c}(Q_2))\leq d(\mathfrak{c}(Q_1),x)+d(x,\mathfrak{c}(Q_2))\leq A_0(\diam Q_1+\diam Q_2)\leq A_0(1+e^{2NA})\diam Q_2< k\diam Q_2/2,$$
where the last inequality comes from the choice of $k$ and of $A$.
Since $\mathfrak{c}(Q_2)\in E(\vartheta,\gamma)\cap B(\mathfrak{c}(Q_1),k\diam Q_2/2)$, thanks to Lemma \ref{lemmaduepsei}(i), we deduce that:
\begin{equation}
    \dist(\mathfrak{c}(Q_2),\mathfrak{c}(Q_1)\Pi(Q_1))\leq 2\oldC{C:b1}k\oldep{e}^{1/\mathcal{Q}}\diam Q_1\leq 2\oldC{C:b1}k e^{2NA}\oldep{e}^{1/\mathcal{Q}}\diam Q_2.
    \label{eq:n20}
\end{equation}
Furthermore, since Proposition \ref{prop:cubbit} implies that
$\lvert\mathfrak{n}(Q_1)-\mathfrak{n}(Q_2)\rvert\leq 2\oldC{C:1.0}\oldep{e}^{1/(\mathcal{Q}+1)}$, we have that:
\begin{equation}
    \begin{split}
        \langle \pi_1(\mathfrak{c}(Q_1)^{-1}x),&\mathfrak{n}(Q_1)\rangle\\=&\langle \pi_1(\mathfrak{c}(Q_2)^{-1}x),\mathfrak{n}(Q_2)\rangle+\langle \pi_1(\mathfrak{c}(Q_2)^{-1}x), \mathfrak{n}(Q_1)-\mathfrak{n}(Q_2)\rangle
        +\langle \pi_1(\mathfrak{c}(Q_1)^{-1}\mathfrak{c}(Q_2)),\mathfrak{n}(Q_1)\rangle\\
        \leq &-A_0^{-1} \diam Q_2+\lvert \pi_1(\mathfrak{c}(Q_2)^{-1}x)\rvert\lvert \mathfrak{n}(Q_1)-\mathfrak{n}(Q_2)\rvert+\dist(\mathfrak{c}(Q_2),\mathfrak{c}(Q_1)\Pi(Q_1))\\
        \leq &-A_0^{-1}\diam Q_2+A_0\diam Q_2\cdot 2\oldC{C:1.0}\oldep{e}^{1/(\mathcal{Q}+1)}+2\oldC{C:b1}k e^{2NA}\oldep{e}^{1/\mathcal{Q}}\diam Q_2,
        \label{eq:n200}
    \end{split}
\end{equation}
where last inequality comes from \eqref{eq:n20} and the fact that $x\in G_{-}(Q_2)$. The chain of inequalities in \eqref{eq:n200} and the definition of $A$ imply:
\begin{equation}
    \langle \pi_1(\mathfrak{c}(Q_1)^{-1}x),\mathfrak{n}(Q_1)\rangle\leq (-A_0^{-1}+A_0\oldC{C:1.0}\oldep{e}^{1/(\mathcal{Q}+1)}+\oldC{C:b1}k e^{8NA_0^2}\oldep{e}^{1/\mathcal{Q}})\diam Q_2\leq 0,
\end{equation}
where the last inequality comes from the definition of $\oldep{e}$ and some algebraic computations that we omit. This contradicts the fact that $x\in G_+(Q_1)$, proving that the assumption that $\mathfrak{G}(\overline{Q})_{+}\cap \mathfrak{G}_{-}(\overline{Q})\neq \emptyset$ was absurd.
\end{proof}

\begin{proposizione}\label{lemma2.28}
For any cube $\overline{Q}$ in $\mathcal{M}(C,\iota)$ we define: $$I(\overline{Q}):=\bigcup_{\substack{Q\in\Delta(C,\iota)\\ Q\subseteq \overline{Q}}}B(\mathfrak{c}(Q),(A_0-2)\diam Q).$$
Furthermore, for any  $x\in I(\overline{Q})$ we let:
\begin{equation}
    d(x):=\inf_{\substack{Q\in \Delta(C,\iota)\\Q\subseteq \overline{Q}}}\dist(x,Q)+\diam Q.
    \label{eq:n35}
\end{equation}
Then $\dist(x,E(\vartheta,\gamma))\leq 4A_0^{-1}d(x)$ whenever $x\in I(\overline{Q})\setminus \mathfrak{G}(\overline{Q})$.
\end{proposizione}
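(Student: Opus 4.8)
The plan is to reduce everything to producing, for the given $x\in I(\overline Q)\setminus\mathfrak{G}(\overline Q)$, one cube $Q_0\in\Delta(C,\iota)$ with $Q_0\subseteq\overline Q$ such that (a) $\dist(x,\mathfrak{c}(Q_0)\Pi(Q_0))<A_0^{-1}\diam Q_0$ and (b) $\diam Q_0\le 2d(x)$. Granting (a) and (b) the estimate is immediate: by Proposition \ref{cor:SSCM2.2.14} the distance in (a) equals $\lvert\langle\pi_1 x-\pi_1\mathfrak{c}(Q_0),\mathfrak{n}(Q_0)\rangle\rvert$, so projecting $x$ to the nearest point $w$ of $\mathfrak{c}(Q_0)\Pi(Q_0)$ I would first check that $d(\mathfrak{c}(Q_0),w)<(A_0-2)\diam Q_0+A_0^{-1}\diam Q_0<A_0\diam Q_0<k\diam Q_0/2$ (using $k\gg A_0$), so $w\in B(\mathfrak{c}(Q_0),k\diam Q_0/2)\cap \mathfrak{c}(Q_0)\Pi(Q_0)$; then Lemma \ref{lemmaduepsei}(ii) gives a point $z\in E(\vartheta,\gamma)$ within $3k\oldC{C:b1}\oldep{e}^{1/(\mathcal{Q}+1)}\diam Q_0$ of $w$, whence by the triangle inequality and (b), $\dist(x,E(\vartheta,\gamma))\le d(x,w)+d(w,z)<(A_0^{-1}+3k\oldC{C:b1}\oldep{e}^{1/(\mathcal{Q}+1)})\diam Q_0\le 2(A_0^{-1}+3k\oldC{C:b1}\oldep{e}^{1/(\mathcal{Q}+1)})d(x)$; since the choice of $\oldep{e}$ in Notation \ref{notation1} forces $3k\oldC{C:b1}\oldep{e}^{1/(\mathcal{Q}+1)}\le A_0^{-1}$, this is $\le 4A_0^{-1}d(x)$, as wanted. (If $x\in E(\vartheta,\gamma)$ there is nothing to prove, so I assume $\dist(x,E(\vartheta,\gamma))>0$.)

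For the cube $Q_0$ I would take one of \emph{minimal diameter} among all $Q\in\Delta(C,\iota)$ with $Q\subseteq\overline Q$ and $x\in B(\mathfrak{c}(Q),(A_0-2)\diam Q)$. This family is non-empty precisely because $x\in I(\overline Q)$, and its diameters are bounded below by $\dist(x,E(\vartheta,\gamma))/(A_0-2)>0$, because $\mathfrak{c}(Q)\in E(\vartheta,\gamma)$ by Remark \ref{oss:centr}; hence the minimum is attained. Property (a) is then free: since $Q_0\subseteq\overline Q$ we have $x\notin G(Q_0)$, while $x\in B(\mathfrak{c}(Q_0),(A_0-2)\diam Q_0)\subseteq B(\mathfrak{c}(Q_0),A_0\diam Q_0)$, so the definition of $G_\pm(Q_0)$ forces $\lvert\langle\pi_1 x-\pi_1\mathfrak{c}(Q_0),\mathfrak{n}(Q_0)\rangle\rvert<A_0^{-1}\diam Q_0$.

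The main work, and the step I expect to be the real obstacle, is property (b), namely $d(x)\ge\tfrac12\diam Q_0$: here one must prevent a small cube sitting at moderate distance from $x$ from making the infimum $d(x)$ spuriously small. For a competitor $Q'\in\Delta(C,\iota)$ with $Q'\subseteq\overline Q$, if $x\in B(\mathfrak{c}(Q'),(A_0-2)\diam Q')$ then $\diam Q'\ge\diam Q_0$ by minimality and we are done; if $x\notin B(\mathfrak{c}(Q'),(A_0-2)\diam Q')$ then $\dist(x,Q')+\diam Q'\ge(A_0-2)\diam Q'$, which already exceeds $\tfrac12\diam Q_0$ once $\diam Q'\ge \diam Q_0/A_0$. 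The remaining case — $\diam Q'<\diam Q_0/A_0$ and $\dist(x,Q')+\diam Q'<\tfrac12\diam Q_0$ — I would rule out by passing to the ancestor $\widehat Q'$ of $Q'$ of smallest diameter with $\diam\widehat Q'\ge\diam Q_0/(A_0-2)$ (such an ancestor exists since the ancestors of $Q'$ in $\Delta(C,\iota)$ form a chain up to $\overline Q$ by Remark \ref{oss:01}(i), and $\diam\overline Q\ge\diam Q_0$): its child still contains $Q'$ and has diameter $<\diam Q_0/(A_0-2)$, so by the controlled ratio $2^{N+6}\zeta^{-2}$ between diameters of consecutive generations (Theorem \ref{evev}(ii),(v)) one gets $\diam\widehat Q'<2^{N+6}\zeta^{-2}\diam Q_0/(A_0-2)<\tfrac12\diam Q_0$, the last inequality by the choice of $A_0$ (which makes $A_0-2>2^{N+7}\zeta^{-2}$); on the other hand $d(x,\mathfrak{c}(\widehat Q'))\le\dist(x,Q')+\diam\widehat Q'<\tfrac12\diam Q_0+\tfrac12\diam Q_0=\diam Q_0\le(A_0-2)\diam\widehat Q'$, so $x\in B(\mathfrak{c}(\widehat Q'),(A_0-2)\diam\widehat Q')$, and minimality of $Q_0$ forces $\diam\widehat Q'\ge\diam Q_0$, a contradiction. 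Thus $\dist(x,Q')+\diam Q'\ge\tfrac12\diam Q_0$ for every competitor, i.e. $d(x)\ge\tfrac12\diam Q_0$. Everything outside this balancing of the constants $A_0,N,\zeta$ against the slack $A_0-2$ built into the definition of $I(\overline Q)$ is the triangle inequality together with Lemma \ref{lemmaduepsei}, already at our disposal.
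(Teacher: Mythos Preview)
Your argument is correct and arrives at the same estimate, but the route differs from the paper's. The paper starts from a near-optimizer $Q$ of the infimum defining $d(x)$ (with $\dist(x,Q)+\diam Q\le \tfrac43 d(x)$), climbs the ancestor chain of $Q$ until it finds the first $Q_0$ with $d(x,\mathfrak{c}(Q_0))<A_0\diam Q_0$, uses Lemma~\ref{lemmaduepsei}(ii) exactly as you do to get $\dist(x,E(\vartheta,\gamma))\le 2A_0^{-1}\diam Q_0$, and then controls $\diam Q_0$ by looking at the child $Q_1$ of $Q_0$ containing $Q$: minimality forces $d(x,\mathfrak{c}(Q_1))\ge A_0\diam Q_1$, hence $\dist(x,Q)\ge\dist(x,Q_1)\ge (A_0-1)\diam Q_1\ge\diam Q_0$ via Proposition~\ref{prop:stabcubi}. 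You instead take $Q_0$ as the diameter-minimizer in the family $\{Q:\,x\in B(\mathfrak{c}(Q),(A_0-2)\diam Q)\}$ defining $I(\overline Q)$, and then run a case analysis on \emph{all} competitors $Q'$ to show $d(x)\ge\tfrac12\diam Q_0$, the delicate case being handled by passing to an ancestor $\widehat{Q}'$ that lands back in the minimizing family with smaller diameter than $Q_0$.

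Both arguments rest on the same three ingredients: the fact that $x\notin G(Q_0)$ forces closeness to $\mathfrak c(Q_0)\Pi(Q_0)$; Lemma~\ref{lemmaduepsei}(ii) to place a point of $E(\vartheta,\gamma)$ nearby; and the controlled diameter ratio between a cube and its child. The paper's version is a little shorter because it only needs to analyse the single chain of ancestors above the near-optimizer, whereas you must run the ancestor-passing argument for an arbitrary competitor; on the other hand your choice of $Q_0$ makes property~(a) immediate. One remark on your constants: the ratio you invoke is indeed $2^{N+6}\zeta^{-2}$ if you take, as you implicitly do, the generation-$(j_0{+}1)$ ancestor of $Q'$ rather than the child in the parent--child sense (the two coincide here since the diameter actually drops at $j_0{+}1$), and then $A_0-2>2^{N+7}\zeta^{-2}$ follows from $A_0\ge 2\oldC{child}$; had you used the parent--child ratio $\oldC{child}$ directly, the inequality $A_0-2>2\oldC{child}$ would be borderline.
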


\begin{proof}
Fix some $x\in I(\overline{Q}) \setminus \mathfrak{G}(\overline{Q})$ and let $Q\subseteq \overline{Q}$ be a cube of $ \Delta(C,\iota)$ such that:
\begin{equation}
    \dist(x,Q)+\diam Q\leq 4 d(x)/3.
    \label{eq:103a}
\end{equation}
Let $Q^\prime$ be an ancestor of $Q$ in $\Delta(C,\iota)$, possibly $Q$ itself. Since $x\not\in \mathfrak{G}(\overline{Q})$, then $x\not \in G(Q^\prime)$ and, thanks to Proposition \ref{cor:SSCM2.2.14}, we have: 
\begin{equation}
    \dist(x,\mathfrak{c}(Q^\prime)\Pi(Q^\prime))=\lvert\langle \pi_1(\mathfrak{c}(Q^\prime)^{-1}x),\mathfrak{n}(Q^\prime)\rangle\rvert\leq A_0^{-1}\diam Q^\prime,
    \label{eq:101}
\end{equation}
where the last inequality is true provided $\dist(x, \mathfrak{c}(Q^\prime))< A_0\diam Q^\prime$. Since $x\in I(\overline{Q})$, there must exist some $\tilde{Q}\in \Delta(C,\iota)$ such that $\tilde{Q}\subseteq \overline{Q}$ and $x\in B(\mathfrak{c}(\tilde{Q}),(A_0-2)\diam \tilde{Q})$. This implies that:
\begin{equation}
    \dist(x,\mathfrak{c}(\overline{Q}))\leq d(x,\mathfrak{c}(\tilde{Q}))+d(\mathfrak{c}(\tilde{Q}),\mathfrak{c}(\overline{Q}))\leq (A_0-2)\diam \tilde{Q}+\diam \overline {Q}< A_0 \diam \overline{Q}.
    \label{eq:201}
\end{equation}
Therefore the inequality $\dist(x, \mathfrak{c}(\overline{Q}))< A_0\diam \overline{Q}$ is verified and hence \eqref{eq:101} holds for $Q^\prime=\overline{Q}$. Let $Q\subseteq Q_0\subseteq \overline{Q}$ be the smallest cube in $\Delta(C,\iota)$ for which $\dist(x, \mathfrak{c}(Q_0))< A_0\diam Q_0$ holds.

Let $w\in \Pi(Q_0)$ be the point for which $d(x,\mathfrak{c}(Q_0)w)=\dist(x,\mathfrak{c}(Q_0)\Pi(Q_0))$, and note that the choice of $Q_0$ and the bound \eqref{eq:101} imply:
\begin{equation}
\begin{split}
     \lVert w\rVert=\dist(\mathfrak{c}(Q_0) w,\mathfrak{c}(Q_0))\leq& d(\mathfrak{c}(Q_0)w,x)+d(x,\mathfrak{c}(Q_0))\leq\dist(x,\mathfrak{c}(Q_0)\Pi(Q_0))+ A_0\diam Q_0\\
     \leq&A_0^{-1}\diam Q_0+ A_0\diam Q_0\leq 2A_0\diam Q_0<k\diam Q/2.
\end{split}
    \label{eq:102}
\end{equation}
Since $Q_0\in\Delta(C,\iota)$, thanks to inequality \eqref{eq:102}, Lemma \ref{lemmaduepsei}(ii) implies that $E(\vartheta,\gamma)\cap B(\mathfrak{c}(Q_0) w, 3k\oldC{C:b1}\oldep{e}^{1/(\mathcal{Q}+1)}\diam Q_0)\neq \emptyset$. Therefore, since by definition of $Q_0$ the bound \eqref{eq:101} holds with $Q^\prime=Q_0$, we have:
\begin{equation}
\begin{split}
    \dist(x,E(\vartheta,\gamma))\leq& d(x,\mathfrak{c}(Q_0)w)+\dist(\mathfrak{c}(Q_0)w,E(\vartheta,\gamma))=d(x,\mathfrak{c}(Q_0)\Pi(Q_0))+\dist(\mathfrak{c}(Q_0)w,E(\vartheta,\gamma))\\
    \leq& A_0^{-1}\diam Q_0+3k\oldC{C:b1}\oldep{e}^{1/(\mathcal{Q}+1)}\diam Q_0\leq 2A_0^{-1}\diam Q_0,
    \label{eq:n22}
\end{split}
\end{equation}
where the last inequality comes from the choice of $\oldep{e}$.

If $Q_0=Q$, then \eqref{eq:103a} implies that $\dist(x,E(\vartheta,\gamma))\leq 2A_0^{-1} \diam Q_0\leq 4A_0^{-1}d(x)$. Otherwise, let $Q_1$ be the child of $Q_0$, possibly $Q_0$ itself, that contains $Q$. Thanks to the choice of
$Q_1$ we have $\dist(x, \mathfrak{c}(Q_1))\geq A_0\diam Q_1$, and thus:
\begin{equation}
    \dist(x,Q_1)\geq d(x,\mathfrak{c}(Q_1))-\diam Q_1\geq(A_0-1)\diam Q_1\geq \frac{A_0-1}{\oldC{child}} \diam Q_0\geq \diam Q_0,
    \label{eq:n230}
\end{equation}
where the second last inequality above follows from Proposition \ref{prop:stabcubi} and the last one from the choice of $A_0$.
Eventually, thanks to \eqref{eq:103a}, \eqref{eq:n22}, \eqref{eq:n230} and the fact that $Q\subseteq Q_1$, we deduce that:
$$\dist(x,E(\vartheta,\gamma))\leq 2A_0^{-1}\diam Q_0\leq 2 A_0^{-1}\dist(x,Q_1)\leq 2 A_0^{-1}\dist(x,Q)\leq 4A_0^{-1}d(x),$$
concluding the proof of the proposition.
\end{proof}

We are now ready to prove the main result of this subsection. Theorem \ref{TH:proiezioni} asserts that the compact set $C$ has big projections on planes.

\begin{teorema}\label{TH:proiezioni}
For any cube $Q\in \Delta(C,\iota)$  such that $(1-\oldep{eps:3})\phi(Q)\leq \phi(Q\cap C)$, we have:
$$\mathcal{S}^{\mathcal{Q}-1}(P_{\Pi(Q)}(Q\cap C))\geq \frac{\diam Q^{\mathcal{Q}-1}}{2A_0^{\mathcal{Q}-1}}.$$
\end{teorema}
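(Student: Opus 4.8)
The plan is to adapt the sliding argument of David--Semmes (Chapter~2, \S2.1--2.2 of \cite{DavidSemmes}) sketched in the Introduction. After left-translating by $\mathfrak{c}(Q)^{-1}$ we may assume $\mathfrak{c}(Q)=0$, so the relevant plane is $V:=\Pi(Q)$ itself; write $\rho:=\diam Q$, $\mathfrak{n}:=\mathfrak{n}(Q)$, and let $\overline{Q}\in\mathcal{M}(C,\iota)$ be the maximal cube containing $Q$. Fix constants $0<a\ll b$ comparable to $A_0^{-1}$ (with $b\rho$ still much smaller than the radius $c_1\rho$ of the ball on which $Q$ controls $E(\vartheta,\gamma)$, i.e. $B(0,c_1\rho)\cap E(\vartheta,\gamma)\subseteq Q$, from Theorem \ref{evev}; this forces one more lower bound on $A_0$), and set the two opposite slabs $\mathcal{U}_\pm:=\{u\in B(0,b\rho):\pm\langle \pi_1 u,\mathfrak{n}\rangle\geq a\rho\}$. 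For $u\in\mathcal{U}_+$ define $\gamma_u(t):=u*\delta_{ts}(-\mathfrak{n})$, $t\in[0,1]$, with $s$ chosen so that $\gamma_u(1)\in\mathcal{U}_-$. By Proposition \ref{prop:proiezioni} one has $P_V(\gamma_u(t))\equiv P_V(u)$, while by Proposition \ref{cor:SSCM2.2.14} the signed distance $\langle\pi_1\gamma_u(t),\mathfrak{n}\rangle$ decreases affinely from $\geq a\rho$ to $\leq -a\rho$; hence each fibre $P_V^{-1}(p)$ ($p\in X:=P_V(\mathcal{U}_+)$) crosses $V$ in a unique point $x(p)$, lying in $B(0,c_1\rho)$, and $P_V(x(p))=p$. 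Using Proposition \ref{prop:rapp} together with Proposition \ref{cor:2.2.19}, a routine choice of the slab makes $\mathcal{S}^{\mathcal{Q}-1}(X)\geq A_0^{-(\mathcal{Q}-1)}\rho^{\mathcal{Q}-1}$.

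The core is a trichotomy for $p\in X$. If $x(p)\notin\mathfrak{G}(\overline{Q})$, then by Proposition \ref{lemma2.28} (applicable since $x(p)\in B(0,c_1\rho)\subseteq I(\overline{Q})$) one has $\dist(x(p),E(\vartheta,\gamma))\leq 4A_0^{-1}d(x(p))$; on the other hand, since each point of $E(\vartheta,\gamma)$ lies in cubes of every sufficiently deep generation with comparable diameters (Theorem \ref{evev}) and cubes are contained in $E(\vartheta,\gamma)$, one gets $\dist(x(p),E(\vartheta,\gamma))\leq d(x(p))\leq C\dist(x(p),E(\vartheta,\gamma))$ for an absolute $C$; taking $A_0>4C$ forces $\dist(x(p),E(\vartheta,\gamma))=0$, i.e. $x(p)\in E(\vartheta,\gamma)$. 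Thus either (a) $x(p)\in E(\vartheta,\gamma)$, hence $x(p)\in B(0,c_1\rho)\cap E(\vartheta,\gamma)\subseteq Q$, so $x(p)\in Q\cap C$ (a \emph{good} fibre, with $p\in P_V(Q\cap C)$) or $x(p)\in(Q\cap E(\vartheta,\gamma))\setminus C$; or (b) $x(p)\in G(Q')$ for some $Q'\subseteq\overline{Q}$, and then $p\in P_V(G(Q'))$. (Here Lemma \ref{lemma:intersectionG} guarantees $\mathfrak{G}_+$ and $\mathfrak{G}_-$ do not interfere.)

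For the (a)-bad fibres: by Proposition \ref{cor:2.2.19}, $\mathcal{S}^{\mathcal{Q}-1}\big(P_V((Q\cap E(\vartheta,\gamma))\setminus C)\big)\leq 2\oldC{C:up}\,\mathcal{S}^{\mathcal{Q}-1}\big((Q\cap E(\vartheta,\gamma))\setminus C\big)$, and by Corollary \ref{cor:cor1} together with the hypothesis $(1-\oldep{eps:3})\phi(Q)\leq\phi(Q\cap C)$ and $\phi(Q)\leq\vartheta 2^{\mathcal{Q}-1}\mathcal{S}^{\mathcal{Q}-1}(Q\cap E(\vartheta,\gamma))\lesssim\vartheta\rho^{\mathcal{Q}-1}$ this is at most $C\oldC{C:up}\vartheta^{2}\oldep{eps:3}\rho^{\mathcal{Q}-1}$, which by the value of $\oldep{eps:3}$ fixed in Notation \ref{notation1} (recall $\oldC{C:F}=2^{24\mathcal{Q}}\vartheta$) is $<\tfrac14A_0^{-(\mathcal{Q}-1)}\rho^{\mathcal{Q}-1}$.

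The remaining, and genuinely delicate, point is the (b)-bad set $\bigcup_{Q'}P_V(\{p:x(p)\in G(Q')\})\subseteq\bigcup_{Q'}P_V(G(Q'))$. Each term is $\leq\oldC{C:up}(A_0\diam Q')^{\mathcal{Q}-1}$ by Proposition \ref{cor:2.2.19}; the task is to show the relevant cubes $Q'$ (those whose cap $G(Q')$ is met by the plane $V$, with $x(p)\notin E(\vartheta,\gamma)$) satisfy $\sum\diam Q'^{\mathcal{Q}-1}\leq C(A_0k)^{2(\mathcal{Q}-1)}\oldep{e}^{(\mathcal{Q}-1)/\mathcal{Q}}\rho^{\mathcal{Q}-1}$, which by the value of $\oldep{e}$ in Notation \ref{notation1} is $<\tfrac14A_0^{-(\mathcal{Q}-1)}\rho^{\mathcal{Q}-1}$. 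For this one combines: Lemma \ref{lemma2.26} (points of $G(Q')$ sit at distance $\sim\diam Q'$ from $E(\vartheta,\gamma)$), Lemma \ref{lemmaduepsei} (flatness of $E(\vartheta,\gamma)$ at every scale $\diam Q'$, and absence of holes of relative size $\gg\oldep{e}^{1/(\mathcal{Q}+1)}$), and Propositions \ref{parentneigh}, \ref{prop:nearnorm}, \ref{prop:cubbit} (near-parallelism and bounded overlap of the planes attached to comparable cubes) to see that each such $Q'$ deviates from flatness by more than $A_0^{-1}\diam Q'$ and that these cubes pack with controlled multiplicity near $V$, so that only few scales, all $\lesssim A_0k\,\oldep{e}^{1/\mathcal{Q}}\rho$, contribute. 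Summing the three estimates gives $\mathcal{S}^{\mathcal{Q}-1}\big(X\setminus P_V(Q\cap C)\big)<\tfrac12A_0^{-(\mathcal{Q}-1)}\rho^{\mathcal{Q}-1}$, hence $\mathcal{S}^{\mathcal{Q}-1}(P_V(Q\cap C))\geq\mathcal{S}^{\mathcal{Q}-1}(X)-\tfrac12A_0^{-(\mathcal{Q}-1)}\rho^{\mathcal{Q}-1}\geq\tfrac12A_0^{-(\mathcal{Q}-1)}\rho^{\mathcal{Q}-1}=\dfrac{\diam Q^{\mathcal{Q}-1}}{2A_0^{\mathcal{Q}-1}}$. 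The main obstacle is exactly the (b)-estimate: making the covering of the holes $G(Q')$ traversed by the plane genuinely summable, which is where the balancing of $\oldep{e}$, $A_0$, $k$, $\varepsilon_{\mathbb{G}}$ engineered in Notation \ref{notation1} is used in full force.
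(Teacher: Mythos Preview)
Your outline misidentifies the point on the fibre at which the David--Semmes argument is run, and this propagates into two genuine gaps.

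\textbf{Wrong dichotomy point.} You take $x(p)$ to be the crossing of the fibre with the plane $V=\Pi(Q_0)$ and then ask whether $x(p)\in\mathfrak{G}(\overline{Q})$. The paper does \emph{not} look at the crossing with $V$; it uses Lemma~\ref{lemma:intersectionG} to pick, for each curve $\gamma_a$, a point $y=\gamma_a(s)$ lying in the \emph{complement} of $\mathfrak{G}(\overline{Q})$ (such a point exists precisely because $\gamma_a(0)\in\mathfrak{G}_+(\overline{Q})$, $\gamma_a(1)\in\mathfrak{G}_-(\overline{Q})$, and these open sets are disjoint). There is no reason why $x(p)$ itself should lie outside $\mathfrak{G}(\overline{Q})$, so your case split is not exhaustive in the way you need.

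\textbf{The forcing argument fails.} Even granting $x(p)\notin\mathfrak{G}(\overline{Q})$, your claim $d(x(p))\leq C\,\dist(x(p),E(\vartheta,\gamma))$ is false. The infimum defining $d(\cdot)$ in \eqref{eq:n35} runs only over cubes in $\Delta(C,\iota)$, i.e.\ cubes that \emph{intersect $C$}. Near a point $z\in E(\vartheta,\gamma)\setminus C$ there may be no small cubes in $\Delta(C,\iota)$ at all, so $d(x(p))$ can stay bounded below by a fixed positive number while $\dist(x(p),E(\vartheta,\gamma))$ is as small as you like. Hence you cannot conclude $x(p)\in E(\vartheta,\gamma)$. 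In the paper this is exactly the r\^ole of the family $\mathscr{I}(Q_0)$ of \emph{maximal} sub-cubes of $Q_0$ that lie in $\Delta(E(\vartheta,\gamma),\iota)$ but \emph{not} in $\Delta(C,\iota)$: when $d(y)>0$, one shows (via Proposition~\ref{lemma2.28} and Proposition~\ref{prop:stabcubi}) that $y\in B(\mathfrak{c}(\tilde Q),2\oldC{child}\diam\tilde Q)$ for some $\tilde Q\in\mathscr{I}(Q_0)$, and one plugs these holes by defining
\[
F(Q_0):=(C\cap Q_0)\cup\bigcup_{\tilde Q\in\mathscr{I}(Q_0)}B(\mathfrak{c}(\tilde Q),2\oldC{child}\diam\tilde Q).
\]
The hypothesis $(1-\oldep{eps:3})\phi(Q_0)\leq\phi(Q_0\cap C)$ is used to bound $\sum_{\tilde Q\in\mathscr{I}(Q_0)}\diam\tilde Q^{\mathcal{Q}-1}$ (the cubes of $\mathscr{I}(Q_0)$ are disjoint and sit inside $Q_0\setminus C$), hence the projection of the plugging balls. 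Your proposal never introduces $\mathscr{I}(Q_0)$ or $F(Q_0)$, and without them the argument cannot close.

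\textbf{The (b)-estimate is not the mechanism.} The paper makes no attempt to sum $\mathcal{S}^{\mathcal{Q}-1}(P_V(G(Q')))$ over cubes $Q'$ whose caps meet $V$; in fact those caps cover a large neighbourhood of $E(\vartheta,\gamma)$ at every scale and their projections are not summable in the way you suggest. The packing/near-parallelism facts (Propositions~\ref{parentneigh}, \ref{prop:nearnorm}, \ref{prop:cubbit}) are used earlier to establish Lemma~\ref{lemma:intersectionG} and Proposition~\ref{lemma2.28}, not to bound projections of $G(Q')$. What actually gets projected and bounded is the union of the plugging balls $B(\mathfrak{c}(\tilde Q),2\oldC{child}\diam\tilde Q)$ over $\tilde Q\in\mathscr{I}(Q_0)$, via Proposition~\ref{cor:2.2.19} and Remark~\ref{rk:rephrase}; this yields \eqref{Schifo1}, and the definition of $\oldep{eps:3}$ makes it at most $\tfrac{c(\Pi(Q_0))}{2A_0^{\mathcal{Q}-1}}\diam Q_0^{\mathcal{Q}-1}$.
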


\begin{proof}
Let $Q_0\in \Delta(C,\iota)$ be such that $(1-\oldep{eps:3})\phi(Q_0)\leq \phi(Q_0\cap C)$ and define:
$$F(Q_0):=C\cap Q_0\cup \bigcup_{Q\in \mathscr{I}(Q_0)} B(\mathfrak{c}(Q),2\oldC{child}\diam Q),$$
where $\mathscr{I}(Q_0)$ is a family of maximal cubes $Q\in\Delta(E(\vartheta,\gamma),\iota)$ such that $Q\subseteq Q_0$ and $Q\not\in\Delta(C,\iota)$.
As a first step, we estimate the size of the projection of the balls $\bigcup_{Q\in \mathscr{I}(Q_0)} B(\mathfrak{c}(Q),\oldC{child}\diam Q)$. Thanks to Proposition \ref{cor:2.2.19} we deduce that:
\begin{equation}
    \begin{split}
       \mathcal{S}^{\mathcal{Q}-1}\bigg(P_{\Pi(Q)}\bigg(\bigcup_{Q\in \mathscr{I}(Q_0)} B(\mathfrak{c}(Q),2\oldC{child}\diam Q)\bigg)\bigg)\leq 2^{\mathcal{Q}-1}c(\Pi(Q_0)) \oldC{child}^{\mathcal{Q}-1}\sum_{Q\in\mathscr{I}(Q_0)}\diam Q^{\mathcal{Q}-1}.
        \label{schifo1}
    \end{split}
\end{equation}
We now need to estimate the sum in the right-hand side of \eqref{schifo1}. Since the cubes in $\mathscr{I}(Q_0)$ are disjoint and they are contained in $\Delta(E(\vartheta,\gamma),\iota)$, thanks to Remark \ref{rk:rephrase} and the fact that $(1-\oldep{eps:3})\phi(Q_0)\leq \phi(Q_0\cap C)$ we have:
\begin{equation}
    \begin{split}
       \oldC{C:VIB}^{-1}\sum_{Q\in\mathscr{I}(Q_0)}\diam Q^{\mathcal{Q}-1}\leq \sum_{Q\in\mathscr{I}(Q_0)}\phi(Q)=\phi\bigg(\bigcup_{Q\in\mathscr{I}(Q_0)}Q\bigg)\leq \phi(Q_0\setminus C)\leq \oldep{eps:3}\phi(Q_0)\leq\oldep{eps:3} \oldC{C:VIB}\diam Q_0^{\mathcal{Q}-1}.
        \label{Schifo2}
    \end{split}
\end{equation}
Putting together \eqref{schifo1} and \eqref{Schifo2}, we deduce that:
\begin{equation}
    \begin{split}
       \mathcal{S}^{\mathcal{Q}-1}\bigg(P_{\Pi(Q)}\bigg(\bigcup_{Q\in \mathscr{I}(Q_0)} B(\mathfrak{c}(Q),2\oldC{child}\diam Q)\bigg)\bigg)\leq& 2^{\mathcal{Q}-1}c(\Pi(Q_0))\oldC{C:VIB}^2\oldep{eps:3}\oldC{child}^{\mathcal{Q}-1}\diam Q_0^{\mathcal{Q}-1}\\
       \leq& \frac{c(\Pi(Q_0))}{2A_0^{\mathcal{Q}-1}}\diam Q_0^{\mathcal{Q}-1},
        \label{Schifo1}
    \end{split}
\end{equation}
where the last inequality comes from the choice of $\oldep{eps:3}$, see Notation \ref{notation1}.

The next step in the proof of the proposition is to show that:
\begin{equation}\mathcal{S}^{\mathcal{Q}-1}(P_{\Pi(Q_0)}(F(Q_0)))\geq \frac{c(\Pi(Q_0))\diam Q_0^{\mathcal{Q}-1}}{A_0^{\mathcal{Q}-1}}.
\label{eq:200}
\end{equation}
In order to ease notations in the following we let $x=\mathfrak{c}(Q_0)\delta_{10A_0^{-1}\diam Q_0}(\mathfrak{n}(Q_0))$ and define:
$$B_{+}:=B(x,A_0^{-1}\diam Q_0) \qquad \text{and}\qquad B_{-}:=B(x,A_0^{-1}\diam Q_0)\delta_{20A_0^{-1}\diam Q_0}(\mathfrak{n}(Q_0)^{-1}).$$
Before proceeding further with the proof of \eqref{eq:200}, we give a brief outline of what we are going to do, hoping to help the reader in keeping track of what purpose each of our computations serve. 
As a first step towards the proof of \eqref{eq:200}, we prove that $B_{\pm}\subseteq G_{\pm }(Q_0)$.
Note that this implies that each one of the $B_+$ and $B_-$ are on different sides of the plane $\Pi(Q_0)$. Let $\overline{Q}$ is the element of $\mathcal{M}(C,\iota)$ containing $Q_0$ and recall that by Lemma \ref{lemma:intersectionG} $\mathfrak{G}_+(\overline{Q})$ and $\mathfrak{G}_-(\overline{Q})$ are disjoint open sets. Therefore, for any horizontal line parallel to the normal of the plane $\Pi(Q_0)$, with starting point in $B_+$ and end point in $B_-$, we can find an $y$ in such segment belonging the complement of $\mathfrak{G}(Q_0)$. Hence, our final step in the proof of \eqref{eq:200} is to show that $y\in F(Q_0)$, thus proving the inclusion $P_{\Pi(Q_0)}(Q_0)\subseteq P_{\Pi(Q_0)}(F(Q_0))$ and in turn our claim. To achieve this we will show that either $y$ belongs to $E(\vartheta,\gamma)$ or that there there is a centre $\mathfrak{c}(\tilde{Q})$ of a cube $\tilde{Q}$ in the family $\mathscr{I}(Q_0)$ such that $d(y,\mathfrak{c}(Q))\leq 2\oldC{child}\diam \tilde{Q}$.

Let us proceed with the proof of \eqref{eq:200}. We will prove that $B_+\subseteq G_+(Q_0)$ and $B_-\subseteq G_-(Q_0)$ separately, since the computations differ.

Let us begin with the proof of the inclusion $B_+\subseteq G_+(Q_0)$. For any $\Delta\in \mathbb{G}$ such that $\lVert\Delta\rVert\leq A_0^{-1}\diam Q_0$, we have:
\begin{equation}
    d(\mathfrak{c}(Q_0),x\Delta)=\lVert \delta_{10A_0^{-1}\diam Q_0}(\mathfrak{n}(Q_0))\Delta\rVert\leq 11A_0^{-1}\diam Q_0\leq A_0\diam Q_0\label{eq:n30}
\end{equation}
Moreover, from the definition of $B_+$ we infer:
\begin{equation}
\begin{split}
    \langle \pi_1(\mathfrak{c}(Q_0)^{-1}x\Delta),\mathfrak{n}(Q_0)\rangle=&\langle \pi_1(\delta_{10A_0^{-1}\diam Q_0}(\mathfrak{n}(Q_0))\Delta),\mathfrak{n}(Q_0)\rangle\\
    =&10A_0^{-1}\diam Q_0+\langle \pi_1\Delta,\mathfrak{n}(Q_0)\rangle\geq 9A_0^{-1}\diam Q_0.
\end{split}
\label{eq:n31}
\end{equation}
Inequalities \eqref{eq:n30} and \eqref{eq:n31} finally imply that $B_+\subseteq G_+(Q_0)$. 

Let us prove that $B_-\subseteq G_-(Q_0)$. Similarly to the previous case, for any $\lVert\Delta\rVert\leq A_0^{-1}\diam Q_0$, we have:
\begin{equation}
\begin{split}
      d(\mathfrak{c}(Q_0),x\Delta \delta_{20A_0^{-1}\diam Q_0}(\mathfrak{n}(Q_0)^{-1}))=&\lVert \delta_{10A_0^{-1}\diam Q_0}(\mathfrak{n}(Q_0))\Delta \delta_{20A_0^{-1}\diam Q_0}(\mathfrak{n}(Q_0)^{-1})\rVert\\
      \leq & 31A_0^{-1}\diam Q_0\leq A_0\diam Q_0.
\end{split}
  \label{eq:n32}
\end{equation}
Finally, we deduce that:
\begin{equation}
\begin{split}
    \big\langle \pi_1(\mathfrak{c}(Q_0)^{-1}&x\Delta \delta_{20A_0^{-1}\diam Q_0}(\mathfrak{n}(Q_0)^{-1})),\mathfrak{n}(Q_0)\big\rangle\\
    =&\big\langle \pi_1(\delta_{10A_0^{-1}\diam Q_0}(\mathfrak{n}(Q_0))\Delta \delta_{20A_0^{-1}\diam Q_0}(\mathfrak{n}(Q_0)^{-1})),\mathfrak{n}(Q_0)\big\rangle\\
    =&-10A_0^{-1}\diam Q_0+\langle \pi_1\Delta,\mathfrak{n}(Q_0)\rangle\leq -9A_0^{-1}\diam Q_0.
\end{split}
\label{eq:n33}
\end{equation}
Inequalities \eqref{eq:n32} and \eqref{eq:n33} together finally imply that $B_-\subseteq G_-(Q_0)$. 

Suppose now that $\overline{Q}$ is the unique cube in $\mathcal{M}(C,\iota)$ containing $Q_0$. Thanks to Lemma \ref{lemma:intersectionG} we know that the sets $\mathfrak{G}_{+}(\overline{Q})$ and $\mathfrak{G}_-(\overline{Q})$ are open and disconnected. With this in mind, for any $a\in B_+$ we define the curve $\gamma_a:[0,1]\to\mathbb{G}$ as:
$$\gamma_a(t):=a\delta_{20A_0^{-1}\diam Q_0 t}(\mathfrak{n}(Q_0)^{-1}).$$
Note that by definition of $B_{-}$, we have that $\gamma_a(1)\in B_-$. On the other hand, since $\gamma_a(0)\in B_+$ we infer that $\gamma_a$ must meet the complement of $\mathfrak{G}(\overline{Q})$ at $y=\gamma_a(s)$ for some $s\in(0,1)$.
We can estimate the distance of $y$ from $\mathfrak{c}(Q_0)$ in the following way:
\begin{equation}
\begin{split}
    d(y,\mathfrak{c}(Q_0))\leq& d(a\delta_{20 A_0^{-1}\diam Q_0s}(\mathfrak{n}(Q_0)),\mathfrak{c}(Q_0))\leq d(a,\mathfrak{c}(Q_0))+20 A_0^{-1}\diam Q_0 s\\
    \leq& d(x,\mathfrak{c}(Q_0))+d(x,a)+20 A_0^{-1}\diam Q_0 s\\
    \leq&  10A_0^{-1}\diam Q_0+A_0^{-1}\diam Q_0+20 A_0^{-1}\diam Q_0 s\leq 40A_0^{-1}\diam Q_0<(A_0-2)\diam Q_0,
    \label{eq:31}
\end{split}
\end{equation}
where the first inequality in the last line above comes from the definition of $x$ and the fact that $a\in B(x,A_0^{-1}\diam Q_0)$. The above computation, implies that if $\overline{Q}$ is the cube of $\mathcal{M}(C,\iota)$ containing $Q_0$, then $y\in I(\overline{Q})$.
Furthermore, thanks to inequality \eqref{eq:31}, the choice of $A_0$ and Proposition \ref{prop:centri} we have:
\begin{equation}
\begin{split}
    \dist(y,E(\vartheta,\gamma)\setminus Q_0)\geq& \dist(\mathfrak{c}(Q_0),E(\vartheta,\gamma)\setminus Q_0)-d(y,\mathfrak{c}(Q_0))\\\geq& 64^{-1}\zeta^2\diam Q_0-40A_0^{-1}\diam Q_0\geq 100 A_0^{-1}\diam Q_0.\end{split}
    \label{eq:32}
\end{equation}
Thanks to \eqref{eq:31} and \eqref{eq:32} we deduce that:
\begin{equation}
    \dist(y,E(\vartheta,\gamma)\setminus Q_0)\geq100 A_0^{-1}\diam Q_0>d(y,\mathfrak{c}(Q_0))\geq \dist(y,Q_0).
    \label{eq:33}
\end{equation}
Therefore, if $z\in E(\vartheta,\gamma)$ is the point of minimal distance of $y$ from $E(\vartheta,\gamma)$, \eqref{eq:33} implies that $z\in Q_0\cap E(\vartheta,\gamma)$. Furthermore, since by assumption $y\not\in \mathfrak{G}(\overline{Q})$ and by \eqref{eq:31} we have $y\in I(\overline{Q})$, Proposition \ref{lemma2.28} implies:
\begin{equation}
    d(z,y)=\dist(y,E(\vartheta,\gamma))\leq4A_0^{-1}d(y)< d(y)/10,
    \label{eq:34}
\end{equation}
where the last inequality can be strict only if $d(y)>0$. In this case, the definition of the function $d$ (see \eqref{eq:n35}) implies that:
\begin{equation}
d(z)\geq d(y)-d(z,y)>\frac{9d(y)}{10},
    \label{eq:n1010}
\end{equation}
and thus $z$ cannot be contained in a cube $Q\in \Delta(C,\iota)$ with $\diam Q\leq9d(y)/10$, where last inequality is strict only if $d(y)>0$.

If $d(y)=0$, the bound \eqref{eq:34} implies that $d(y,z)=0$ and thus since $E(\vartheta,\gamma)$ is compact we have $y=z\in E(\vartheta,\gamma)$. Therefore:
$$y\in E(\vartheta,\gamma)\cap Q_0\subseteq C\cap Q_0\cup\bigcup_{Q\in\mathscr{I}(Q_0)} Q\subseteq C\cap Q_0\cup \bigcup_{Q\in \mathscr{I}(Q_0)} B(\mathfrak{c}(Q),2\oldC{child}\diam Q)= F(Q_0).$$

If on the other hand $d(y)>0$, we claim that there is a cube $Q_1\in\Delta(C,\iota)$, contained in $Q_0$ and possibly $Q_0$ itself, such that:
\begin{itemize}
\item[(a)] $z\in Q_1$ and for any cube $Q\in\Delta(C,\iota)$ contained in $Q_1$ we have $z\not\in Q$,
\item[(b)]$\diam Q_1\geq9d(y)/10$,
    \item[(c)] there exists a $\tilde{Q}\in\mathscr{I}(Q_0)$, that is a child of $Q_1$ and for which $z\in\tilde{Q}$,
\end{itemize}
Let us verify that such a cube $Q_1$ exists.
Since $z\in Q_0$, for any cube $Q\in\Delta(C,\iota)$ such that $Q\subseteq Q_0$ and $z\in Q$ we have: 
\begin{equation}
    9d(y)/10\leq d(z)\leq \dist(z,Q)+\diam Q=\diam Q,
    \label{eq:disproj}
\end{equation}
where the first inequality above comes from \eqref{eq:n1010} and the second one from the definition of $d$.
Let $Q_1$ be the smallest cube of $\Delta(C,\iota)$ containing $z$ and note that for any cube $Q\subseteq Q_1$ belonging to $\Delta(C,\iota)$ we have that $z\not\in Q$. This proves (a) and (b).
In order to prove (c), we note that any ancestor of $Q_1$ in $\Delta(E(\vartheta,\gamma),\iota)$ must be contained in $\Delta(C,\iota)$. Furthermore, since the condition $\diam Q_1\geq 9 d(y)/10$ implies that $z\in E(\vartheta,\gamma)\setminus C$, we infer that there must exist a cube $\tilde{Q}$ in $\mathscr{I}(Q_0)$ for which $z\in \tilde{Q}$. Such cube must be a child of $Q_1$ otherwise the maximality of $\tilde{Q}$ would be contradicted.

Let us use (a), (b) and (c) to conclude the proof of the theorem. Items (a), (b) and inequality \eqref{eq:34} imply:
\begin{equation}
    \dist(y,Q_1)\leq d(y,z)=\dist(y,E(\vartheta,\gamma))\leq d(y)/10\leq \diam Q_1/9.
    \label{eq:n36}
\end{equation}
Therefore, Proposition \ref{prop:stabcubi} together with \eqref{eq:n36} imply:
\begin{equation}
    d(\mathfrak{c}(\tilde{Q}),y)\leq d(\mathfrak{c}(\tilde{Q}),z)+d(z,y)\leq \diam \tilde{Q}+\diam Q_1/9\leq \diam \tilde{Q}+\oldC{child}\diam\tilde{Q}/9<2 \oldC{child}\diam \tilde{Q}.
    \label{eq:n37}
\end{equation}
The bound \eqref{eq:n37} finally proves that $y\in F(Q_0)$, and more precisely we have shown that for any $a\in B_+$ the curve $\gamma_a$ meets the set $F(Q_0)$.
In turn, this shows that $F(Q_0)$ has big projections, indeed thanks to Proposition \ref{cor:2.2.19} we infer:
\begin{equation}
    \mathcal{S}^{\mathcal{Q}-1}\Big(P_{\Pi(Q_0)}(F(Q_0))\Big)\geq \mathcal{S}^{\mathcal{Q}-1}\Big(P_{\Pi(Q_0)}(B(x,A_0^{-1}\diam Q_0)\Big)= c(\Pi(Q_0))A_0^{-(\mathcal{Q}-1)}\diam Q_0^{\mathcal{Q}-1}.
    \label{eq:202}
\end{equation}
This implies thanks to \eqref{Schifo1}, \eqref{eq:202} and Proposition \ref{cor:2.2.19} that:
\begin{equation}
\begin{split}
     \mathcal{S}^{\mathcal{Q}-1}(P_{\Pi(Q_0)}(Q_0\cap C))
    \geq&\mathcal{S}^{\mathcal{Q}-1}(P_{\Pi(Q_0)}(F(Q_0)))-\mathcal{S}^{\mathcal{Q}-1}\bigg(P_{\Pi(Q_0)}\Big(\bigcup_{Q\in \mathscr{I}(Q_0)} B(\mathfrak{c}(Q),\oldC{child}\diam Q_0)\Big)\bigg)\\
    \geq& \frac{c(\Pi(Q_0))}{2A_0^{\mathcal{Q}-1}}\diam Q_0^{\mathcal{Q}-1}\geq\frac{\diam Q_0^{\mathcal{Q}-1}}{2A_0^{\mathcal{Q}-1}}.
    \nonumber
\end{split}
\end{equation}
\end{proof}

\begin{osservazione}\label{rk:propasjn}
Suppose $\psi$ is a Radon on $\mathbb{G}$ supported on the compact set $K$ and satisfying the following assumptions: 
\begin{itemize}
    \item[(i)] there exists a $\delta\in\N$ such that $\delta^{-1}\leq \Theta^{\mathcal{Q}-1}_*(\psi,x)\leq \Theta^{\mathcal{Q}-1,*}(\psi,x)\leq \delta$ for $\psi$-almost every $x\in\mathbb{G}$,
        \item[(ii)] $ \limsup_{r\to 0}d_{x,4kr}(\psi,\mathfrak{M})\leq 4^{-(\mathcal{Q}+1)}\oldep{e}(\delta)$ for $\psi$-almost every $x\in \mathbb{G}$.
\end{itemize}
Then Propositions \ref{prop:nearnorm}, \ref{lemma2.26},  \ref{lemma:intersectionG}, \ref{lemma2.28} and Theorem \ref{TH:proiezioni} can proved repeating verbatim their proofs, where in this case the compact $C$ is substituted with $\tilde{C}$, the compact set yielded by Proposition \ref{rk:primo} and instead of using of Proposition \ref{lemmaduepsei} we always use Proposition \ref{rk:C}.
\end{osservazione}

\subsection{Construction of the \texorpdfstring{$\phi$}{Lg}-positive intrinsic Lipschitz graph}
\label{intr:graph}

This subsection is devoted to the proof of the main result of Section \ref{sec:main}, Theorem \ref{ueue} that we restate here for reader's convenience:

\ueue*

The proof of Theorem \ref{ueue} follows the following argument. Fixed a cube $Q\in\mathcal{M}(C,\iota)$, we prove that the family $\mathcal{B}(Q)$ of the maximal sub-cubes of $Q$ having small projection on $\Pi(Q)$, thanks to Theorem \ref{TH:proiezioni} is small in measure. Therefore, we can find a cube $Q^\prime\in \Delta(C,\iota)\setminus \mathcal{B}(Q)$, that is contained in $Q$, and for which any sub-cube $\tilde{Q}$ of $Q$ has big projections on $\Pi(Q)$. This independence on the scales, thanks to Proposition \ref{prop:cono} implies that $C\cap Q$ is a $\Pi(Q)$-intrinsic Lipschitz graph.

\begin{proposizione}\label{intrgraph}
Suppose $E$ is a Borel subset of $\mathbb{G}$ and assume there is a plane $W\in\G(\mathcal{Q}-1)$ and an $\alpha>0$ such that for any $w\in E$ we have: \begin{equation}
    E\subseteq wC_W(\alpha).
    \label{eq:n60}
\end{equation}
Then $E$ is contained in an intrinsic Lipschitz graph.
\end{proposizione}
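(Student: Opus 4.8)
The plan is to produce an intrinsic Lipschitz function whose graph contains $E$ by using the splitting $\mathbb{G}=W\rtimes\mathfrak{N}(W)$ from \eqref{eq:intr10} and defining the function pointwise on the projection $P_W(E)$. First I would observe that the cone condition \eqref{eq:n60} means that for any two points $w,w'\in E$ we have $w'\in wC_W(\alpha)$, i.e. $\lVert P_{\mathfrak{N}(W)}(w^{-1}w')\rVert\leq\alpha\lVert P_W(w^{-1}w')\rVert$. The crucial consequence I want to extract is that the map $P_W$ restricted to $E$ is injective: if $w,w'\in E$ had $P_W(w^{-1}w')\in\mathfrak{N}(W)$, the cone inequality would force $P_{\mathfrak{N}(W)}(w^{-1}w')=0$ as well, hence $w^{-1}w'=e$ and $w=w'$. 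Here I would use Proposition \ref{prop:proiezioni}, in particular the identity $P_{\mathfrak{N}(V)}(x*y)=P_{\mathfrak{N}(V)}(x)+P_{\mathfrak{N}(V)}(y)$ and the expression $P_W(x^{-1}y)=P_W(x)^{-1}*P_W(y)*P_{\mathfrak{N}(W)}(\cdots)$ to translate "$P_W(w)=P_W(w')$" into "$w^{-1}w'\in\mathfrak{N}(W)$".

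Next I would set $E':=P_W(E)\subseteq W$ and define $f:E'\to\mathfrak{N}(W)$ by $f(v):=P_{\mathfrak{N}(W)}(w)$ where $w$ is the unique point of $E$ with $P_W(w)=v$; well-definedness is exactly the injectivity just established, and by construction $w=v*f(v)$, so $gr(f)=E$. It then remains to verify the intrinsic Lipschitz estimate of Definition \ref{def:intrgraph}: for every $v\in E'$, $gr(f)\subseteq (vf(v))C_W(\alpha)$, i.e. $E\subseteq wC_W(\alpha)$ where $w=vf(v)$ is the corresponding point of $E$ — but this is precisely hypothesis \eqref{eq:n60}. Hence $f$ is intrinsic Lipschitz with constant $\alpha$ and $A=gr(f)=E$ is an intrinsic Lipschitz graph. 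Strictly speaking one should note that Definition \ref{def:intrgraph} requires $E'$ to be a Borel subset of $W$; since $P_W$ is continuous and injective on $E$ and, by Proposition \ref{prop:conifero}, on $E$ the metric $d$ is bi-Lipschitz comparable to the Euclidean distance via $\lvert\pi_1(\cdot)\rvert$ (points of $E$ never lie in each other's $W$-cone complement is false, but they always do lie in the cone, so one gets the opposite bound), the map $P_W|_E$ is a homeomorphism onto its image and Borel measurability of $E'$ follows from that of $E$.

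The main obstacle I anticipate is the measurability bookkeeping: since $P_W$ is not Lipschitz (Remark after Proposition \ref{prop:proiezioni}), one cannot blindly assert that $P_W(E)$ is Borel, and one has to argue that $P_W$ restricted to the cone-constrained set $E$ is actually well-behaved — injective and with continuous inverse — so that $E'$ is Borel and $f$ is a genuine function between Borel sets. Everything else (injectivity, the identification $gr(f)=E$, the Lipschitz estimate) is a direct unwinding of the definitions and the algebraic identities for splitting projections already available in the paper, so no substantial new computation is needed.
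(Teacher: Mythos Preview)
Your approach is essentially identical to the paper's: both argue that the cone condition forces $P_W$ to be injective on $E$ (since $w_1^{-1}w_2\in\mathfrak{N}(W)$ together with $w_1^{-1}w_2\in C_W(\alpha)$ implies $w_1=w_2$), then define $f$ on $P_W(E)$ by inverting $P_W$, identify $gr(f)=E$, and read off the intrinsic Lipschitz condition directly from \eqref{eq:n60}.

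One small remark: your attempt to justify Borel measurability of $P_W(E)$ via Proposition~\ref{prop:conifero} is misapplied, since that proposition gives a bound for points \emph{outside} the cone $C_W(\alpha)$, whereas here every $w^{-1}w'$ lies \emph{inside} $C_W(\alpha)$. The paper simply does not address this point; if you want to be careful, note that $P_W$ is continuous and injective on $E$, so by the Lusin--Souslin theorem the image $P_W(E)$ of a Borel set under an injective Borel map between Polish spaces is Borel.
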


\begin{proof}
Thanks to the assumption on $E$, for any $w_1,w_2\in E$ we have $w_1^{-1}w_2\in C_W(\alpha)$.
This implies that for any $v\in P_W(E)$, there exists a unique $w\in E$ such that $P_W(w)=v$, otherwise we would have $w^{-1}_1w_2\in \mathfrak{N}(W)$.

Let $f:P_W(E)\to \mathfrak{N}(V)$ be the map associating every $w\in P_W(E)$ to the only element in its preimage $P_W^{-1}(w)$.
With this definition we have that the set $\text{gr}(f):=\{vf(v):v\in P_W^{-1}(E)\}$ coincides with $E$ and thus it is an intrinsic Lipschitz graph since
$\text{gr}(f)\subseteq vC_W(\alpha)$,
for any $v\in E$.
\end{proof}

\begin{proposizione}\label{propCa}
Defined $\newep\label{eps:4}:=\min\{\oldep{eps:1},(32\vartheta \oldC{C:up}\oldC{C:VIB}A_0^{\mathcal{Q}-1})^{-1}\}$. There exists a compact set $C_1\subseteq C$ and a $\iota_1\in\N$ such that:
\begin{itemize}
    \item[(i)]$\phi(C\setminus C_1)\leq \oldep{eps:4}\phi(C)$,
    \item[(ii)] whenever $Q\in \Delta(C_1,\iota_1)$, we have
    $(1-\oldep{eps:3}/32)\phi(Q)\leq \phi(Q\cap C)$.
\end{itemize}
\end{proposizione}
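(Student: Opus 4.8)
The plan is to obtain $C_1$ by a routine density argument using the Lebesgue differentiation theorem, exactly in the spirit of Proposition \ref{proppers}. The statement \eqref{eq:Leb1}-type argument tells us that for $\phi$-almost every $x\in C$ the density of $C$ in $\phi$ is full, i.e. $\lim_{r\to 0}\phi(B(x,r)\cap C)/\phi(B(x,r))=1$. First I would record that the set $C$ is compact (it is the compact set produced by Proposition \ref{proppers}), so $\phi\llcorner C$ is a Radon measure satisfying the density bounds $\vartheta^{-1}r^{\mathcal{Q}-1}\le\phi(B(x,r))\le\vartheta r^{\mathcal{Q}-1}$ for $x\in C$ and $r<1/\gamma$ inherited from $\mathscr{E}(\mu,\nu)\supseteq C$. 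In particular $\phi$ is asymptotically doubling on $C$, so the Lebesgue differentiation theorem of \cite{MR3363168} applies and gives
$$
\phi\Big(\Big\{x\in C:\limsup_{r\to 0}\frac{\phi(B(x,r)\setminus C)}{\phi(B(x,r))}>0\Big\}\Big)=0.
$$

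Next I would invoke Severini--Egoroff's theorem to upgrade this pointwise statement to a uniform one on a large compact subset. Concretely, setting $g_r(x):=\phi(B(x,r)\setminus C)/\phi(B(x,r))$, the functions $g_{1/j}$ tend to $0$ $\phi$-almost everywhere on $C$ as $j\to\infty$; by Egoroff there is a compact $C_1\subseteq C$ with $\phi(C\setminus C_1)\le\oldep{eps:4}\phi(C)$ on which the convergence $g_{1/j}\to 0$ is uniform. Hence there is a $j_0\in\N$ such that $\phi(B(x,r)\setminus C)\le(\oldep{eps:3}/64)\phi(B(x,r))$ for every $x\in C_1$ and every $0<r<1/j_0$. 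This proves (i) and an intermediate ball-version of (ii).

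To translate the ball estimate into the dyadic-cube estimate (ii), I would use the containment properties of the dyadic cubes of Theorem \ref{evev}: any cube $Q\in\Delta(C_1,\iota_1)$ meeting $C_1$ is comparable to a ball in the sense that $Q\subseteq B(\mathfrak{c}(Q),C\diam Q)$ and $B(\mathfrak{c}(Q),c\,\zeta^2\diam Q)\cap E(\vartheta,\gamma)\subseteq Q$ for appropriate structural constants, together with $\diam Q\le 2^{-\iota_1 N+5}/\gamma$. Choosing $\iota_1$ large enough that $2^{-\iota_1 N+5}/\gamma$ is below the threshold $1/j_0$ obtained from Egoroff (and also below $1/\gamma$), and using the doubling of $\phi$ on $C$ to absorb the constant relating $\diam Q$ to the radius of the enclosing ball at the cost of a bounded multiplicative factor, I get
$$
\phi(Q\setminus C)\le\phi(B(\mathfrak{c}(Q),C\diam Q)\setminus C)\le\frac{\oldep{eps:3}}{64}\,\phi(B(\mathfrak{c}(Q),C\diam Q))\le\frac{\oldep{eps:3}}{32}\,\phi(Q),
$$
where the last inequality again uses doubling and the comparability $B(\mathfrak{c}(Q),c\zeta^2\diam Q)\cap K\subseteq Q\subseteq B(\mathfrak{c}(Q),C\diam Q)$ together with $\phi(K\setminus E(\vartheta,\gamma))$ being negligible near $C$. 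Rearranging gives $(1-\oldep{eps:3}/32)\phi(Q)\le\phi(Q\cap C)$, which is (ii).

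The main obstacle I expect is bookkeeping the constants in the last step: the density estimate from Egoroff is naturally a statement about metric balls, while (ii) is about dyadic cubes, and the ratio $\diam Q/r_{\mathrm{ball}}$ introduces a structural constant $C$ that depends on $\zeta$ and hence ultimately on $\vartheta$. One has to be careful that the slack between the threshold $\oldep{eps:3}/64$ chosen from Egoroff and the target $\oldep{eps:3}/32$ in (ii) is wide enough to swallow the doubling constant $2^{\mathcal{Q}}$ raised to a power $\lceil\log_2 C\rceil$ coming from $\phi(B(\mathfrak{c}(Q),C\diam Q))\le 2^{\mathcal{Q}\lceil\log_2 C\rceil}\phi(B(\mathfrak{c}(Q),\diam Q))$ and the lower comparison $\phi(Q)\ge\phi(B(\mathfrak{c}(Q),c\zeta^2\diam Q)\cap E(\vartheta,\gamma))\ge(2\vartheta)^{-1}(c\zeta^2)^{\mathcal{Q}-1}\diam Q^{\mathcal{Q}-1}$. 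Since all these constants are fixed once $\vartheta,\gamma$ and $\sigma=\vartheta$ are fixed, one simply chooses the Egoroff threshold to be $\oldep{eps:3}$ divided by a suitably large power of $2$ depending only on $\mathbb{G}$ and $\vartheta$; I would not spell out the exact exponent but indicate that it is finite and structural. Everything else is a verbatim repetition of the Egoroff argument already used for Proposition \ref{proppers}.
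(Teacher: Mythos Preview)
Your approach is correct in spirit and leads to the same conclusion, but it takes a genuinely different route from the paper, and one step as written needs a small fix.

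\textbf{Comparison.} The paper avoids the ball-to-cube translation entirely by observing that the family $\Delta(C,\iota)$ itself is a $\phi\llcorner C$-Vitali relation (maximal cubes in any fine subcover are disjoint and still cover). Federer's differentiation theorem (Theorem 2.9.11 in \cite{Federer1996GeometricTheory}) then gives directly
\[
\lim_{Q\to x}\frac{\phi(C\cap Q)}{\phi(Q)}=1\quad\text{for $\phi$-a.e. }x\in C,
\]
and Egoroff is applied to the sequence $f_j(x):=\phi(C\cap Q_j(x))/\phi(Q_j(x))$, where $Q_j(x)$ is the unique cube of layer $\Delta_j$ containing $x$. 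This yields (ii) with no constants to track: uniform convergence of the $f_j$ to $1$ on $C_1$ is literally the statement that $\phi(Q\cap C)\ge(1-\oldep{eps:3}/32)\phi(Q)$ for all cubes of sufficiently high generation meeting $C_1$. Your detour through balls works but forces you to absorb the structural constant relating $\phi(B(x,\diam Q))$ to $\phi(Q)$ (of order $\vartheta\,\oldC{C:VIB}$, not $2$) into the Egoroff threshold, which you correctly identify as harmless but tedious.

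\textbf{A small gap.} In your translation step you center the ball at $\mathfrak{c}(Q)$, but the uniform Egoroff bound $g_r(x)\le\oldep{eps:3}/64$ is only guaranteed for $x\in C_1$, and $\mathfrak{c}(Q)$ lies in $E(\vartheta,\gamma)$ (Proposition \ref{prop:centri}), not necessarily in $C_1$. The fix is easy: since $Q\in\Delta(C_1,\iota_1)$ there is some $x\in Q\cap C_1$, and $Q\subseteq B(x,\diam Q)$; apply the Egoroff bound at this $x$ instead. With that adjustment, and with the Egoroff threshold taken as $\oldep{eps:3}/(32\vartheta\,\oldC{C:VIB})$ rather than $\oldep{eps:3}/64$, your argument goes through.
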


\begin{proof}
First of all, we prove that the set $\Delta(C,\iota)$ is a $\phi\llcorner C$ Vitali relation. It is immediate to see that the family $\Delta(C,\iota)$ is a fine covering of $C$. Furthermore, let $E$ be a Borel set contained in $C$ and suppose $\mathcal{A}\subseteq \Delta(C,\iota)$ is a fine covering of $E$. Defined $\mathcal{A}^*:=\{Q\in\mathcal{A}:Q\text{ is maximal}\}$, it is immediate too see that:
$$\bigcup_{Q\in\mathcal{A}}Q=\bigcup_{Q\in\mathcal{A}^*}Q,$$ and thus the family $\mathcal{A}^*$ is still a covering of $E$. The maximality of the elements of $\mathcal{A}^*$ implies that they are pairwise disjoint and thus $\Delta(C,\iota)$ is a $\phi$-Vitali relation in the sense of section 2.8.16 of \cite{Federer1996GeometricTheory}. Therefore, thanks to Theorem 2.9.11 in \cite{Federer1996GeometricTheory}, we deduce that:
\begin{equation}
    \lim_{Q\to x}\frac{\phi(C\cap Q)}{\phi(Q)}=1,
        \label{eq:40}
\end{equation}
for $\phi$-almost every $x\in C$. For any $j\in\N$, define the functions $f_j(x):=\phi(C\cap Q_j(x))/\phi(Q_j(x))$, where $Q_j(x)$ is the unique cube of the generation $\Delta_j$ containing $x$. The identity \eqref{eq:40} implies that $\lim_{j\to\infty} f_j(x)=1$ for $\phi$-almost every $x\in C$. Therefore, Severini-Egoroff theorem implies that we can find a compact subset $C_1$ of $C$ such that $\phi(C\setminus C_1)\leq \oldep{eps:4}\phi(C)$ and $f_j(x)$ converges uniformly to $1$ on $C_1$. This proves (i) and (ii) at once.
\end{proof}

\begin{osservazione}\label{prop:2.24holds}
If $\psi$ is a Radon measure on $\mathbb{G}$ satisfying the hypothesis of Proposition \ref{rk:primo} and $\tilde{C}$ is the compact set yielded by Proposition \ref{rk:C} with the same argument we employed above we can construct a compact subset $\tilde{C}_1$ of $\tilde{C}$ and a $\tilde{\iota}_1\in\N$ satisfying (i) and (ii) of Proposition \ref{propCa} provided $\oldep{eps:4}$ is substituted with $\oldep{eps:4}(2\delta):=\min\{\oldep{eps:1},(64\delta \oldC{C:up}\oldC{C:VIB}(2\delta)A_0^{\mathcal{Q}-1}(2\delta))^{-1}\}$, $\oldep{eps:3}$ with $\oldep{eps:3}(2\delta)$ and $\Delta(C,\iota)$ with $\Delta^\psi(\tilde{C};2\delta,\gamma,\tilde{\iota}_1)$.
\end{osservazione}

\begin{teorema}\label{th:intr:lipgraph}
Let $C_1$ be the compact set yielded by Proposition \ref{propCa} then, there exists a cube $Q^\prime\in \Delta(C_1,2\iota_1)$ such that $Q^\prime\cap C_1$ is an intrinsic Lipschitz graph of positive $\phi$-measure.
\end{teorema}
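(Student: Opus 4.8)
The plan is to run the corona-type argument sketched in the introduction for Theorem \ref{th:th:th:th}. First I would record what Proposition \ref{propCa} buys us: for every $Q\in\Delta(C_1,\iota_1)$ we have $(1-\oldep{eps:3}/32)\phi(Q)\le\phi(Q\cap C_1)\le\phi(Q\cap C)$, so Theorem \ref{TH:proiezioni} is applicable to every such $Q$ (as $1-\oldep{eps:3}<1-\oldep{eps:3}/32$), and by Remark \ref{rk:rephrase} moreover $\phi(Q\cap C_1)\ge(1-\oldep{eps:3}/32)\oldC{C:VIB}^{-1}\diam Q^{\mathcal{Q}-1}>0$; hence \emph{every} cube of $\Delta(C_1,2\iota_1)$ already has positive $\phi$-mass in $C_1$, and it only remains to produce one whose intersection with $C_1$ satisfies a cone condition. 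Fix $Q_0\in\mathcal{M}(C_1,\iota_1)$ and set $W:=\Pi(Q_0)$, the plane given by Lemma \ref{lemmaduepsei}, the family $\{\Pi(Q):Q\subseteq Q_0\}$ being coherently oriented as in the proposition following Proposition \ref{prop:cubbit}.

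The core of the argument is to show that the ``bad'' subcubes — those violating hypothesis (ii) of Proposition \ref{prop:cono} for the plane $W$ — are negligible. Let $\mathcal{B}(Q_0)$ be the family of maximal $Q\in\Delta(C_1,\iota_1)$ with $Q\subsetneq Q_0$ and $\mathcal{S}^{\mathcal{Q}-1}(P_W(Q\cap C))<\diam Q^{\mathcal{Q}-1}/(4\oldC{C:VIB}^2A_0^{\mathcal{Q}-1})$; recalling the translation invariance $\mathcal{S}^{\mathcal{Q}-1}\llcorner W(P_W(p*E))=\mathcal{S}^{\mathcal{Q}-1}\llcorner W(P_W(E))$, this is exactly the negation of \eqref{eq:60} for $W$. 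Since the cubes of $\mathcal{B}(Q_0)$ are pairwise disjoint, Remark \ref{rk:rephrase} gives $\sum_{\mathcal{B}(Q_0)}\diam Q^{\mathcal{Q}-1}\le\oldC{C:VIB}\sum_{\mathcal{B}(Q_0)}\phi(Q)\le\oldC{C:VIB}\phi(Q_0)\le\oldC{C:VIB}^2\diam Q_0^{\mathcal{Q}-1}$, so
$$\mathcal{S}^{\mathcal{Q}-1}\Big(P_W\big(\textstyle\bigcup_{Q\in\mathcal{B}(Q_0)}(Q\cap C)\big)\Big)\le\sum_{Q\in\mathcal{B}(Q_0)}\mathcal{S}^{\mathcal{Q}-1}(P_W(Q\cap C))<\frac{\diam Q_0^{\mathcal{Q}-1}}{4A_0^{\mathcal{Q}-1}},$$
whereas Theorem \ref{TH:proiezioni} applied to $Q_0$ itself yields $\mathcal{S}^{\mathcal{Q}-1}(P_W(Q_0\cap C))\ge\diam Q_0^{\mathcal{Q}-1}/(2A_0^{\mathcal{Q}-1})$. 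Thus the good shadow $D:=P_W(Q_0\cap C)\setminus P_W\big(\bigcup_{\mathcal{B}(Q_0)}(Q\cap C)\big)$ has positive $\mathcal{S}^{\mathcal{Q}-1}\llcorner W$-measure, and by Proposition \ref{cor:2.2.19} and Corollary \ref{cor:cor1} the set $Q_0\cap C\setminus\bigcup_{\mathcal{B}(Q_0)}Q$ has positive $\phi$-measure.

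It remains to convert this into the statement. The plan is to select, using the slow rotation of the planes $\Pi(Q)$ between neighbouring cubes (Propositions \ref{prop:nearnorm}, \ref{prop:cubbit}) together with the estimate just obtained, a cube $Q'\in\Delta(C_1,2\iota_1)$ contained in $Q_0$ and disjoint from $\bigcup_{\mathcal{B}(Q_0)}Q$; then no subcube of $Q'$ can be bad for $W$, so every subcube $\tilde Q\subseteq Q'$ (including $Q'$) satisfies (ii) of Proposition \ref{prop:cono} with the fixed plane $W$, while (i) holds automatically for cubes of $\Delta(C_1,\iota)\subseteq\Delta(C,\iota)$ exactly as in the proof of Lemma \ref{lemmaduepsei}. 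Now run Proposition \ref{prop:cono} over all dyadic scales: given $x,y\in Q'\cap C_1$ with $d(x,y)\le\diam Q'$, by the choice of $R$ and $k$ in Notation \ref{notation1} the scale intervals $[R\diam\tilde Q,2^{N+6}\zeta^{-2}R\diam\tilde Q]$ attached to the cubes $\tilde Q$ in the chain $x\in\tilde Q\subseteq Q'$ interlock and cover $(0,\diam Q']$, so we may pick such a $\tilde Q$ with $R\diam\tilde Q\le d(x,y)\le2^{N+6}\zeta^{-2}R\diam\tilde Q=\tfrac{k}{32}\diam\tilde Q<(k-1)\diam\tilde Q/8$, whence $y\in B(x,(k-1)\diam\tilde Q/8)$ and Proposition \ref{prop:cono} gives $y\in xC_W(\alpha)$ for a fixed $\alpha>\alpha_0$. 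Consequently $Q'\cap C_1\subseteq xC_W(\alpha)$ for every $x\in Q'\cap C_1$, so by Proposition \ref{intrgraph} the set $Q'\cap C_1$ is an intrinsic Lipschitz graph, and it has positive $\phi$-measure by the first paragraph. I expect the main obstacle to be precisely the selection of the bad-cube-free cube $Q'$: the shadow bound only says the bad cubes fail to project onto all of $Q_0\cap C$, and converting this — against the fact that the planes $\Pi(Q)$, though varying slowly, may accumulate an unbounded tilt over many scales — into the existence of an entire dyadic cube avoiding $\mathcal{B}(Q_0)$ is the delicate stopping-time step, modelled on Chapter 2, \S2 of \cite{DavidSemmes}.
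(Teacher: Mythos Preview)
Your strategy matches the paper's: define maximal bad subcubes $\mathcal{B}(Q_0)$ failing \eqref{eq:60} for the fixed plane $W=\Pi(Q_0)$, bound their total projection by $\diam Q_0^{\mathcal{Q}-1}/(4A_0^{\mathcal{Q}-1})$, compare with Theorem~\ref{TH:proiezioni} to get positive $\phi$-measure outside $\mathscr{B}(Q_0):=\bigcup_{\mathcal{B}(Q_0)}Q$, and then feed the scale window \eqref{eq:2055} into Proposition~\ref{prop:cono}. Your quantitative bookkeeping (the summation $\sum_{\mathcal{B}(Q_0)}\diam Q^{\mathcal{Q}-1}\le\oldC{C:VIB}^2\diam Q_0^{\mathcal{Q}-1}$, the interlocking of the intervals $[R\diam\tilde Q,2^{N+6}\zeta^{-2}R\diam\tilde Q]$) agrees with the paper's computation \eqref{eq:cono1}. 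The harmless discrepancy that you test the bad condition on $Q\cap C$ while the paper uses $E(\vartheta,\gamma)\cap Q$ only makes your bad family smaller, since $C\subseteq E(\vartheta,\gamma)$.

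Where you diverge is in your diagnosis of the ``main obstacle''. You expect to need the slow rotation of $\Pi(Q)$ (Propositions~\ref{prop:nearnorm}, \ref{prop:cubbit}) and a David--Semmes stopping-time to locate a cube $Q'$ \emph{disjoint} from $\mathscr{B}(Q_0)$. Neither ingredient enters: badness is measured against the single fixed plane $W$, so no tilt accumulates, and the paper never produces a bad-cube-free cube. The key observation you are missing is purely combinatorial: once $\phi(C_1\cap Q_0\setminus\mathscr{B}(Q_0))>0$, the \emph{maximality} of the elements of $\mathcal{B}(Q_0)$ forces every cube of $\Delta(C_1,2\iota_1)$ containing a point $x\in C_1\setminus\mathscr{B}(Q_0)$ to be good (a bad one would lie in some maximal bad cube, dragging $x$ into $\mathscr{B}(Q_0)$). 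Hence for any two such points $x_1,x_2$, the cubes $Q_{x_i}$ at the scale $j$ dictated by $d(x_1,x_2)$ are automatically good, and Proposition~\ref{prop:cono} applies directly. The ``delicate stopping-time step'' you anticipate collapses to this one-line consequence of maximality; the paper's $Q'$ is essentially $Q_0'$ itself, with the operative Lipschitz set being $C_1\cap Q_0'\setminus\mathscr{B}(Q_0')$.
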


\begin{proof}
For any $Q_0\in \mathcal{M}(C_1,2\iota_1)$, Theorem \ref{TH:proiezioni} and Proposition \ref{propCa} imply that:
\begin{equation}
    \mathcal{S}^{\mathcal{Q}-1}(P_{\Pi(Q_0)}(Q_0\cap C))\geq \frac{\diam Q_0^{\mathcal{Q}-1}}{2A_0^{\mathcal{Q}-1}}.
    \label{eq:n55}
\end{equation}
Therefore, for any $Q_0\in\mathcal{M}(C_1,2\iota_1)$ we let $\mathcal{B}(Q_0)$ be the family of the  maximal cubes $Q\in \Delta(C_1,2\iota_1)$, contained in $Q_0$ for which:
    \begin{equation}
     \mathcal{S}^{\mathcal{Q}-1}\big(P_{\Pi(Q_0)}(E(\vartheta,\gamma)\cap Q)\big)< \frac{\diam Q^{\mathcal{Q}-1}}{4\oldC{C:VIB}^2A_0^{\mathcal{Q}-1}},
       \label{eq:50}
       \end{equation}
and we define $\mathscr{B}(Q_0):=\bigcup_{Q\in \mathcal{B}(Q_0)}Q$.
The first step of the proof of the theorem is to show that:
\begin{equation}
    \phi(C\cap [Q_0\setminus \mathscr{B}(Q_0)])>\frac{\phi(Q_0)}{8\vartheta \oldC{C:up}\oldC{C:VIB}A_0^{\mathcal{Q}-1}},\qquad\text{for any }Q_0\in\mathcal{M}(C_1,2\iota_1).
    \label{eq:n53}
\end{equation}

Throughout this paragraph we shall assume that $Q_0\in\mathcal{M}(C_1,2\iota_1)$ is fixed. The maximality of the elements $Q$ of $\mathcal{B}(Q_0)$ implies that they are pairwise disjoint and since $Q\cap E(\vartheta,\gamma)\neq \emptyset$, Remark \ref{rk:rephrase} yields:
\begin{equation}
     \mathcal{S}^{\mathcal{Q}-1}\big(P_{\Pi(Q_0)}(E(\vartheta,\gamma)\cap Q)\big)< \frac{\diam Q^{\mathcal{Q}-1}}{4\oldC{C:VIB}^2A_0^{\mathcal{Q}-1}}\leq \frac{\phi(Q)}{4\oldC{C:VIB}A_0^{\mathcal{Q}-1}}.
     \label{eq:n51}
       \end{equation}
Thanks to Proposition \ref{cor:2.2.19} and Corollary \ref{cor:cor1}, we have:
\begin{equation}
    \phi(C\cap [Q_0\setminus \mathscr{B}(Q_0)])\geq  \frac{\mathcal{S}^{\mathcal{Q}-1}(C\cap [Q_0\setminus \mathscr{B}(Q_0)])}{\vartheta}\geq \frac{\mathcal{S}^{\mathcal{Q}-1}\big(P_{\Pi(Q_0)}(C\cap [Q_0\setminus \mathscr{B}(Q_0)])\big)}{2c(\Pi(Q_0))\vartheta}.
    \label{eq:numero1}
\end{equation}
On the other hand, thanks to \eqref{eq:n55} we infer that:
\begin{equation}
    \begin{split}
       \mathcal{S}^{\mathcal{Q}-1}\big(P_{\Pi(Q_0)}(C\cap [Q_0\setminus \mathscr{B}(Q_0)])\big)
        &\geq \mathcal{S}^{\mathcal{Q}-1}(P_{\Pi(Q_0)}(C\cap Q_0))-\mathcal{S}^{\mathcal{Q}-1}\Big( P_{\Pi(Q_0)}\Big(E(\vartheta,\gamma)\cap\mathscr{B}(Q_0)\Big)\\
        &\geq \frac{\diam Q_0^{\mathcal{Q}-1}}{2A_0^{\mathcal{Q}-1}}
        -\sum_{Q\in\mathcal{B}(Q_0)}\mathcal{S}^{\mathcal{Q}-1}( P_{\Pi(Q_0)}(E(\vartheta,\gamma)\cap Q))).
        \label{eq:410}
    \end{split}
\end{equation}
Since $Q_0\cap E(\vartheta,\gamma)\neq \emptyset$, Remark \ref{rk:rephrase}, \eqref{eq:n51}, \eqref{eq:410} and the fact that the elements in $\mathcal{B}(Q_0)$ are disjoint imply:
\begin{equation}
\begin{split}
    \mathcal{S}^{\mathcal{Q}-1}\big(P_{\Pi(Q_0)}(C\cap [Q_0\setminus \mathscr{B}(Q_0)])\big)\geq&\frac{\phi(Q_0)}{2\oldC{C:VIB}A_0^{\mathcal{Q}-1}}-\frac{1}{4\oldC{C:VIB}A_0^{\mathcal{Q}-1}}\sum_{Q\in\mathcal{B}(Q_0)}\phi(Q)\\=&\frac{\phi(Q_0)}{2\oldC{C:VIB}A_0^{\mathcal{Q}-1}}-\frac{1}{4\oldC{C:VIB}A_0^{\mathcal{Q}-1}}\phi(\mathscr{B}(Q_0)).
\end{split}
    \label{eq:n52}
\end{equation}
Thanks to the bounds \eqref{eq:numero1} and \eqref{eq:n52}, we eventually deduce that:
\begin{equation}
    \begin{split}
        2c(\Pi(Q_0))\vartheta\phi(C\cap [Q_0\setminus \mathscr{B}(Q_0)])\geq& \frac{\phi(Q_0)}{2\oldC{C:VIB}A_0^{\mathcal{Q}-1}}-\frac{1}{4\oldC{C:VIB}A_0^{\mathcal{Q}-1}}\phi(\mathscr{B}(Q_0))
        \\=&\frac{\phi(Q_0)}{4\oldC{C:VIB}A_0^{\mathcal{Q}-1}}+\frac{1}{4\oldC{C:VIB}A_0^{\mathcal{Q}-1}}\phi(Q_0\setminus \mathscr{B}(Q_0)),
        \label{eq:n57}
    \end{split}
\end{equation}
where the last inequality above follows since $\mathcal{B}(Q_0)\subseteq Q_0$.
Inequality \eqref{eq:n57} together with the bound from above on $c(\Pi(Q_0))$, see Proposition \ref{cor:2.2.19}, immediately imply \eqref{eq:n53}.

Now that \eqref{eq:n53} is proved, we want to construct a cube $Q^\prime\in \Delta(C_1,2\iota_1)$ disjoint from $\bigcup_{Q_0\in\mathcal{M}(C_1,2\iota_1)}\mathscr{B}(Q_0)$ and such that $\phi(C_1\cap Q^\prime)>0$.
Since the elements of $\mathcal{M}(C_1,2\iota_1)$ are pairwise disjoint and their union covers $C_1$, we infer that:
\begin{equation}
\begin{split}
     \phi\bigg(C_1\setminus \bigcup_{Q_0\in\mathcal{M}(C_1,2\iota_2)}\mathscr{B}(Q_0)\bigg)=&\phi\bigg(\bigcup_{Q_0\in\mathcal{M}(C_1,2\iota_1)} C_1\cap [Q_0\setminus \mathscr{B}(Q_0)]\bigg)=\sum_{Q_0\in \mathcal{M}(C_1,2\iota_1)}\phi(C_1\cap [Q_0\setminus \mathscr{B}(Q_0)])\\
     \geq&\sum_{Q_0\in \mathcal{M}(C_1,2\iota_1)}\phi(C\cap [Q_0\setminus \mathscr{B}(Q_0)])-\phi((C\setminus C_1)\cap Q_0)\\
     \geq&\sum_{Q_0\in \mathcal{M}(C_1,2\iota_1)}\frac{\phi(Q_0)}{8\vartheta \oldC{C:up}\oldC{C:VIB}A_0^{\mathcal{Q}-1}}-\phi(C\setminus C_1)\geq \frac{\phi(C_1)}{8\vartheta \oldC{C:up}\oldC{C:VIB}A_0^{\mathcal{Q}-1}}-\oldep{eps:4}\phi(C).
     \label{eq:n2000}
\end{split}
\end{equation}
where the first inequality of the last line above follows from \eqref{eq:n52}. Therefore, 
Proposition \ref{propCa} and \eqref{eq:n2000} imply:
\begin{equation}
\phi\bigg(C_1\setminus \bigcup_{Q_0\in\mathcal{M}(C_1,2\iota_2)}\mathscr{B}(Q_0)\bigg)\geq \frac{1-\oldep{eps:4}}{8\vartheta \oldC{C:up}\oldC{C:VIB}A_0^{\mathcal{Q}-1}}\phi(C)-\oldep{eps:4}\phi(C)\geq \frac{\phi(C)}{16\vartheta \oldC{C:up}\oldC{C:VIB}A_0^{\mathcal{Q}-1}}.    \label{eq:n2001}
\end{equation}
Inequality \eqref{eq:n2001} implies that there must exist a cube $Q_0^\prime\in\mathcal{M}(C_1,2\iota_1)$ such that $\phi(C_1\setminus \bigcup_{Q\in\mathcal{B}(Q_0^\prime)}Q)>0$. 
Defined $\mathscr{G}$ to be the set of maximal cubes in $\Delta(C_1,2\iota_1)\setminus \mathcal{B}(Q_0^\prime)$ contained in $Q_0^\prime$, we can find at least a cube $Q^\prime\in \mathscr{G}$ for which $\phi(C_1\cap Q^\prime)>0$. Furthermore, thanks to the maximality of the elements in $\mathcal{B}(Q_0^\prime)$ we also deduce that any sub-cube of $Q^\prime$ is not an element of $\mathcal{B}(Q_0^\prime)$.

We prove now that $C_1\cap Q^\prime$ is contained in an intrinsic Lipschitz graph. Indeed, we claim that: \begin{equation}
    x_1^{-1}x_2\in C_{\Pi(Q_0^\prime)}(2\alpha_0)\qquad\text{for any }x_1,x_2\in C_1\cap Q^\prime,
\label{eq:cono}
\end{equation}
where $\alpha_0$ was defined in Proposition \ref{prop:cono}.  Fix $x_1,x_2\in C_1\cap Q^\prime$ and note that there exists a unique $j\in\N$ such that:
$$R\gamma^{-1}2^{-jN+5}\leq d(x_1,x_2)\leq R\gamma^{-1}2^{-(j-1)N+5}.$$
For $i=1,2$ we let $Q_{x_i}$ be the unique cubes in the $j$-th layer of cubes $\Delta_{j}$ for which $x_i\in Q_{x_i}$. Suppose $Q^\prime\in \Delta_{\overline{j}}$ and note that Theorem \ref{evev}(iv) and the choice of $j$ imply:
\begin{equation}
R\gamma^{-1}2^{-jN+5}\leq d(x_1,x_2)\leq \diam Q^\prime \leq \gamma^{-1}2^{-\overline{j}N+5}.
\label{eq:n2010}
\end{equation}
The chain of inequalities \eqref{eq:n2010} implies that $\overline{j}\leq j$ and thus by Theorem \ref{evev}(iii) we infer that $Q_{x_i}\subseteq Q^\prime$ for $i=1,2$. Furthermore, thanks to Theorem \ref{evev}(ii), (v), for $i=1,2$ we have:
\begin{equation}
    R\diam Q_{x_i}\leq R\gamma^{-1}2^{-jN+5}\leq d(x_1,x_2)\leq R\gamma^{-1}2^{-(j-1)N+5}\leq 2^{N+6}\gamma^{-1}R2^{-jN-1}\leq 2^{N+6}\zeta^{-2}R\diam Q_{x_i},
    \label{eq:cono1}
\end{equation}
Since $Q_{x_i}\in \Delta(C_1,2\iota_1)$, Lemma \ref{lemmaduepsei} implies that $\alpha(Q_{x_i})\leq \oldep{e}$ for $i=1,2$. Furthermore, the construction of $Q^\prime$ insures that for any cube $Q\in \Delta(C_1,2\iota_1)$ contained in $Q^\prime$, we have:
\begin{equation}
    \mathcal{S}^{\mathcal{Q}-1}\big(P_{\Pi(Q_0^\prime)}(E(\vartheta,\gamma)\cap Q)\big)\geq \frac{\diam Q^{\mathcal{Q}-1}}{4\oldC{C:VIB}^2A_0^{\mathcal{Q}-1}}.
    \label{eq:cono3}
\end{equation}
This proves that the hypothesis of Proposition \ref{prop:cono} are satisfied and thus $x_1\in x_2 C_{\Pi(Q_0^\prime)}(2\alpha_0)$. Finally $C_1\cap Q^\prime$ is proved to be contained in an intrinsic Lipschitz graph by means of Proposition \ref{intrgraph}.
\end{proof}

We observe that is immediate to infer that Theorem \ref{th:intr:lipgraph}
directly implies Theorem \ref{ueue}. Finally, 
{\color{red}[...]}
Theorem \ref{th:quasifprett} of Section \ref{sec:end}.

\begin{proposizione}\label{prop:mild}
Suppose $\psi$ is a Radon measure on $\mathbb{G}$ supported on a compact set $K$ satisfying the two following assumptions:
\begin{itemize}
    \item[(i)] $\delta^{-1}\leq \Theta^{\mathcal{Q}-1}_*(\psi,x)\leq \Theta^{\mathcal{Q}-1,*}(\psi,x)\leq \delta$ for $\psi$-almost every $x\in\mathbb{G}$,
        \item[(ii)] $ \limsup_{r\to 0}d_{x,4kr}(\psi,\mathfrak{M})\leq 4^{-(\mathcal{Q}+1)}\oldep{e}(\delta)$ for $\psi$-almost every $x\in \mathbb{G}$.
\end{itemize}
Then if $\tilde{C}_1$ be the compact set yielded by Remark \ref{prop:2.24holds} then, there exists a cube $Q^\prime\in \Delta^\psi(\tilde{C}_1;2\delta,\gamma,2\tilde{\iota}_1)$ such that $Q^\prime\cap \tilde{C}_1$ is an intrinsic Lipschitz graph of positive $\psi$-measure.
\end{proposizione}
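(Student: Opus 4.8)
The plan is to re-run the proof of Theorem \ref{th:intr:lipgraph} essentially word for word, feeding in at each stage the $\psi$-versions of the auxiliary results rather than the ones established for the distinguished measure $\phi$. The hypotheses (i)--(ii) on $\psi$ are exactly those of Propositions \ref{rk:primo} and \ref{rk:C}, so fixing $\mu\geq 8\delta$ we obtain from Proposition \ref{rk:primo} natural numbers $\gamma,\tilde{\iota}_0,\nu$ and a compact set $\tilde{C}\subseteq\mathscr{E}^\psi_{2\delta,\gamma}(\mu,\nu)$ with $\psi(K\setminus\tilde{C})\leq 2\oldep{eps:1}\psi(K)$ and with $d_{x,4kr}(\psi,\mathfrak{M})+d_{x,4kr}(\psi\llcorner E^\psi(2\delta,\gamma),\mathfrak{M})\leq 4^{-\mathcal{Q}}\oldep{e}(\delta)$ for $x\in\tilde{C}$ and $r<2^{-\tilde{\iota}_0N(2\delta)+5}/\gamma$; put $\tilde{\iota}:=\max\{\tilde{\iota}_0,\nu\}$. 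Proposition \ref{rk:C} then provides, for every $Q\in\Delta^\psi(\tilde{C};2\delta,\gamma,\tilde{\iota})$, a plane $\Pi(Q)\in\G(\mathcal{Q}-1)$ realising the two approximation estimates (the upper bound on $\dist(w,\mathfrak{c}(Q)\Pi(Q))$ on $B(\mathfrak{c}(Q),k\diam Q/2)$ and the ``no holes'' property), together with $\alpha(Q)\leq\oldep{e}(2\delta)$; by Remark \ref{rk:propasjn}, Propositions \ref{prop:nearnorm}, \ref{lemma2.26}, \ref{lemma:intersectionG}, \ref{lemma2.28} and Theorem \ref{TH:proiezioni} all hold verbatim with $C$ replaced by $\tilde{C}$ and $E(\vartheta,\gamma)$ by $E^\psi(2\delta,\gamma)$; and by Remark \ref{prop:2.24holds} we obtain a compact $\tilde{C}_1\subseteq\tilde{C}$ and a $\tilde{\iota}_1\in\N$ with $\psi(\tilde{C}\setminus\tilde{C}_1)\leq\oldep{eps:4}(2\delta)\psi(\tilde{C})$ and $(1-\oldep{eps:3}(2\delta)/32)\psi(Q)\leq\psi(Q\cap\tilde{C})$ for every $Q\in\Delta^\psi(\tilde{C}_1;2\delta,\gamma,\tilde{\iota}_1)$.

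Next I would carry out the bad-cube count exactly as in the first half of the proof of Theorem \ref{th:intr:lipgraph}. For each $Q_0\in\mathcal{M}(\tilde{C}_1,2\tilde{\iota}_1)$, Theorem \ref{TH:proiezioni} (in the $\psi$-form of Remark \ref{rk:propasjn}) gives $\mathcal{S}^{\mathcal{Q}-1}(P_{\Pi(Q_0)}(Q_0\cap\tilde{C}))\geq\diam Q_0^{\mathcal{Q}-1}/(2A_0^{\mathcal{Q}-1})$; I define $\mathcal{B}(Q_0)$ to be the maximal sub-cubes $Q\subseteq Q_0$ in $\Delta^\psi(\tilde{C}_1;2\delta,\gamma,2\tilde{\iota}_1)$ for which $\mathcal{S}^{\mathcal{Q}-1}(P_{\Pi(Q_0)}(E^\psi(2\delta,\gamma)\cap Q))<\diam Q^{\mathcal{Q}-1}/(4\oldC{C:VIB}^2A_0^{\mathcal{Q}-1})$ and $\mathscr{B}(Q_0):=\bigcup_{Q\in\mathcal{B}(Q_0)}Q$. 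Using disjointness of maximal cubes, Corollary \ref{cor:cor1} and Proposition \ref{cor:2.2.19}, the comparison $\mathcal{S}^{\mathcal{Q}-1}\approx\psi$ on $E^\psi(2\delta,\gamma)$, and the cube-counting of Remark \ref{rk:rephrase}, one gets $\psi(\tilde{C}\cap[Q_0\setminus\mathscr{B}(Q_0)])>\psi(Q_0)/(16\delta\,\oldC{C:up}\oldC{C:VIB}A_0^{\mathcal{Q}-1})$ (the numerology is that of Theorem \ref{th:intr:lipgraph} with $\vartheta$ replaced by $2\delta$). Summing over the pairwise disjoint cubes of $\mathcal{M}(\tilde{C}_1,2\tilde{\iota}_1)$, which cover $\tilde{C}_1$, and absorbing $\psi(\tilde{C}\setminus\tilde{C}_1)\leq\oldep{eps:4}(2\delta)\psi(\tilde{C})$, yields $\psi(\tilde{C}_1\setminus\bigcup_{Q_0}\mathscr{B}(Q_0))>0$, so some $Q_0'\in\mathcal{M}(\tilde{C}_1,2\tilde{\iota}_1)$ has $\psi(\tilde{C}_1\setminus\mathscr{B}(Q_0'))>0$; choosing a maximal cube $Q'\subseteq Q_0'$ in $\Delta^\psi(\tilde{C}_1;2\delta,\gamma,2\tilde{\iota}_1)\setminus\mathcal{B}(Q_0')$ with $\psi(\tilde{C}_1\cap Q')>0$, maximality of the elements of $\mathcal{B}(Q_0')$ forces every sub-cube of $Q'$ to have big $\Pi(Q_0')$-projection.

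The final step is the intrinsic Lipschitz conclusion. This scale-uniform big-projection property is precisely hypothesis (ii) of Proposition \ref{prop:cono} for every sub-cube of $Q'$, while hypothesis (i) holds for each such cube because $\alpha(Q)\leq\oldep{e}(2\delta)$ by the $\psi$-version of Lemma \ref{lemmaduepsei} (Proposition \ref{rk:C}). Since the proof of Proposition \ref{prop:cono} uses the measure only through Propositions \ref{prop1vsinfty} and \ref{prop:bil2} — which hold for any Radon measure with the density bound of (i) and small $d_{x,r}$, as supplied by Proposition \ref{rk:primo} — together with the cube-level hypotheses (i)--(ii), it transfers verbatim to $\psi$ and $E^\psi(2\delta,\gamma)$. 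Then, arguing as at the end of the proof of Theorem \ref{th:intr:lipgraph}: for $x_1,x_2\in\tilde{C}_1\cap Q'$ pick $j$ with $R\gamma^{-1}2^{-jN+5}\leq d(x_1,x_2)\leq R\gamma^{-1}2^{-(j-1)N+5}$, take $Q_{x_i}\in\Delta_j$ with $x_i\in Q_{x_i}$, check (via Theorem \ref{evev}) that $Q_{x_i}\subseteq Q'$ and $R\diam Q_{x_i}\leq d(x_1,x_2)\leq 2^{N+6}\zeta^{-2}R\diam Q_{x_i}$, and apply the $\psi$-analogue of Proposition \ref{prop:cono} to conclude $x_1^{-1}x_2\in C_{\Pi(Q_0')}(2\alpha_0)$. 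Proposition \ref{intrgraph} then exhibits $\tilde{C}_1\cap Q'$ as an intrinsic Lipschitz graph, and $\psi(\tilde{C}_1\cap Q')>0$ gives the claim.

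The one point that is not a mechanical transcription, and hence the main thing to verify carefully, is precisely that Proposition \ref{prop:cono} transfers to $\psi$: its statement is written for $\phi$ and $E(\vartheta,\gamma)$, and Remark \ref{rk:propasjn} lists the transferable results but does not mention it. One must therefore inspect its proof and confirm that the measure enters only through Propositions \ref{prop1vsinfty}--\ref{prop:bil2} and through the hypotheses on the cube, and that every constant appearing ($\oldep{e}$, $\varepsilon_\mathbb{G}$, $A_0$, $k$, $R$, and the comparison constants in Theorem \ref{TH:proiezioni} and Proposition \ref{propCa}) is consistently taken with $\sigma=2\delta$; since those dependences are already built into the statements of Propositions \ref{rk:primo}, \ref{rk:C} and Remarks \ref{rk:propasjn}, \ref{prop:2.24holds}, no further estimate is needed, only this bookkeeping check.
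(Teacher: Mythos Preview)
Your proposal is correct and follows exactly the paper's own approach: the paper's proof is the single sentence ``Thanks to Propositions \ref{rk:primo}, \ref{rk:C} and Remarks \ref{rk:propasjn} and \ref{prop:2.24holds}, the verbatim argument we used to prove [Theorem \ref{th:intr:lipgraph}] can be applied here,'' and you have faithfully unpacked that verbatim argument. Your observation that Proposition \ref{prop:cono} is not explicitly listed in Remark \ref{rk:propasjn} and therefore requires a direct inspection of its proof is a genuine and useful addition that the paper glosses over.
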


\begin{proof}
Thanks to  Propositions \ref{rk:primo}, \ref{rk:C} and Remarks \ref{rk:propasjn} and \ref{prop:2.24holds}, the verbatim argument we used to prove \ref{TH:proiezioni} can be applied here.
\end{proof}


\section{Conclusions and discussion of the results}\label{sec:end}

In this section we use the main result of Section \ref{sec:main}, i.e. Theorem \ref{ueue}, to deduce a number of consequences. First of all we prove Theorem \ref{main}, that is the main result of this paper, that is a $1$-codimensional extension of the Marstrand-Mattila rectifiability criterion to general Carnot groups. Secondly, we provide in Corollary \ref{fpsrig} a rigidity results for finite perimeter sets in Carnot groups: we are able to show that if locally a Caccioppoli set is not too far from its natural tangent plane, then its boundary is an \emph{intrinsic rectifiable set}, see Definition \ref{def:rectif}. Eventually, we use Theorem \ref{main} to prove that a $1$-codimensional version of Preiss's rectifiability theorem in the Heisenberg groups $\HH^n$.

\subsection{Main results}
In this subsection we finally conclude the proof of the main results of this work.

\begin{teorema}\label{main}
 Suppose $\phi$ is a Radon measure on $\mathbb{G}$ and let $\tilde{d}(\cdot,\cdot)$ be a left invariant, homogeneous distance on $\mathbb{G}$. Assume further that for $\phi$-almost all $x\in \mathbb{G}$ we have:
\begin{itemize}
    \item[(i)] $$0<\liminf_{r\to 0}\frac{\phi(\tilde{B}(x,r))}{r^{\mathcal{Q}-1}}\leq \limsup_{r\to 0}\frac{\phi(\tilde{B}(x,r))}{r^{\mathcal{Q}-1}}<\infty,$$
    where $\tilde{B}(x,r)$ is the ball relative to the metric $\tilde{d}$ centred at $x$ of radius $r>0$,
    \item[(ii)]$\Tan_{\mathcal{Q}-1}(\phi,x)\subseteq \mathfrak{M}$, where $\mathfrak{M}$ is the family of $1$-codimensional flat measures introduced in Definition \ref{def:flatmeasures}.
\end{itemize}
Then $\phi$ is absolutely continuous with respect to $\mathcal{S}^{\mathcal{Q}-1}$ and $\mathbb{G}$ can be covered $\phi$-almost all with countably many $C^1_\mathbb{G}$-surfaces.
\end{teorema}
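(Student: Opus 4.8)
The plan is to deduce Theorem \ref{main} from Theorem \ref{ueue} by a standard exhaustion argument, after first reducing to the normalized hypotheses under which Theorem \ref{ueue} was proved. First I would observe that, by a now-classical comparison of homogeneous distances (all are bi-Lipschitz equivalent to $d$, and such equivalences preserve the density hypotheses up to fixed constants and preserve the class of tangent measures, since $\Tan$ only depends on the metric through the rescaled balls), it suffices to prove the statement for $\tilde d = d$, the smooth-box metric fixed in Definition \ref{smoothnorm}. Next, since $\phi$ is Radon, write $\mathbb{G}=\bigcup_j K_j$ with $K_j$ compact and, by inner regularity of $\phi$, it is enough to cover $\phi\llcorner K$ for each fixed compact $K$; restricting $\phi$ to $K$ preserves both hypotheses (i) and (ii) — hypothesis (ii) because of Proposition \ref{prop:locality}, the locality of tangents. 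So from now on I assume $\phi$ is supported on a compact set and satisfies (i)–(ii), i.e. $\phi$ is exactly the measure fixed at the start of Section \ref{sec:main}.

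Next I would run the exhaustion. By Theorem \ref{ueue} there is an intrinsic Lipschitz graph $\Gamma$ with $\phi(\Gamma)>0$. Since the hypotheses (i), (ii) are stable under restriction (again using Proposition \ref{prop:locality}), the standard maximal-collection / decomposition argument — precisely Proposition \ref{th:dec} applied with $\mathscr{F}$ the family of intrinsic Lipschitz graphs, whose members are $\mathcal{S}^{\mathcal{Q}-1}$-$\sigma$-finite by Proposition \ref{intrlipsigfin} — yields a Borel decomposition $\mathbb{G}=E^r\cup E^u$ with $E^r$ covered by countably many intrinsic Lipschitz graphs and $\phi\llcorner E^u$ having zero mass on every such graph. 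Applying Theorem \ref{ueue} to $\phi\llcorner E^u$ (which still satisfies the hypotheses, if it were nonzero) would produce an intrinsic Lipschitz graph of positive $\phi\llcorner E^u$-measure, a contradiction; hence $\phi(E^u)=0$ and $\phi$-almost all of $\mathbb{G}$ is covered by countably many intrinsic Lipschitz graphs $\Gamma_i$. Moreover, since $\phi$ has positive finite $(\mathcal{Q}-1)$-lower density a.e., Proposition \ref{prop:dens} (or rather its scheme, localized on the sets $E^\phi(\vartheta,\gamma)$ together with Proposition \ref{prop:BIGGI}) shows $\phi\ll\mathcal{S}^{\mathcal{Q}-1}$, and Corollary \ref{corrii} lets us take the covering graphs to be graphs of intrinsic Lipschitz functions $f_i:V_i\to\mathfrak{N}(V_i)$.

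The remaining, and main, task is the promotion of regularity from intrinsic Lipschitz to $C^1_\mathbb{G}$, along the lines sketched in the introduction. For each $i$, extend $f_i$ to all of $V_i$ by Theorem \ref{whole}, so $\Gamma_i$ is (contained in) the boundary of a finite perimeter set with an associated horizontal normal $\mathfrak{n}_i$ (Theorem \ref{whole} together with the finite-perimeter theory recalled in Appendix \ref{def:perimetro}). On the one hand, $\phi\ll\mathcal{S}^{\mathcal{Q}-1}$ and the flatness of $\Tan_{\mathcal{Q}-1}(\phi,x)$ for $\phi$-a.e.\ $x$, combined with the locality of tangents (Proposition \ref{prop:locality}), force $\Tan_{\mathcal{Q}-1}(\mathcal{S}^{\mathcal{Q}-1}\llcorner\Gamma_i,x)\subseteq\mathfrak{M}$ for $\mathcal{S}^{\mathcal{Q}-1}$-a.e.\ $x\in\Gamma_i\cap E$ (where $E=\supp\phi$), because $\phi$ and $\mathcal{S}^{\mathcal{Q}-1}\llcorner\Gamma_i$ are mutually absolutely continuous on the overlap. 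On the other hand, the perimeter-measure description of blowups of $\mathcal{S}^{\mathcal{Q}-1}\llcorner\Gamma_i$ (the cited Propositions on sets with constant horizontal normal, i.e.\ Propositions \ref{prop:costnorm}, \ref{obs6.5}, \ref{prop:tang}) shows that every element of $\Tan_{\mathcal{Q}-1}(\mathcal{S}^{\mathcal{Q}-1}\llcorner\Gamma_i,x)$ is the perimeter measure of a halfspace with horizontal normal equal to $\mathfrak{n}_i(x)$ for a.e.\ $x$. Intersecting the two descriptions: for $\mathcal{S}^{\mathcal{Q}-1}$-a.e.\ $x\in\Gamma_i\cap E$ the tangent set is the single Haar measure of the vertical hyperplane $V(\mathfrak{n}_i(x))$ orthogonal to $\mathfrak{n}_i(x)$. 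This rigidity of tangents plus a Whitney-type extension theorem for sets with "$C^0$" horizontal normal, obtained in Appendix \ref{def:perimetro} by adapting \cite{MFSSC}, upgrades $\Gamma_i\cap E$ to a countable union of $C^1_\mathbb{G}$-surfaces. I expect this last step — matching the two tangent descriptions and invoking the Whitney-type theorem cleanly, in particular showing that $\mathfrak{n}_i$ agrees $\mathcal{S}^{\mathcal{Q}-1}$-a.e.\ on $\Gamma_i\cap E$ with a function that is continuous on the relevant portion — to be the technical heart of the argument; the exhaustion and the reductions are routine.
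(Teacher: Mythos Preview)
Your proposal is correct and follows essentially the same route as the paper's proof: reduce to the smooth-box metric by bi-Lipschitz equivalence, localize to the compact sets $E(\vartheta,\gamma,R)$ to obtain $\phi\ll\mathcal{S}^{\mathcal{Q}-1}$ via Proposition \ref{prop:dens}, run the exhaustion argument with Theorem \ref{ueue} and Corollary \ref{corrii} to get intrinsic Lipschitz rectifiability, and then upgrade to $C^1_\mathbb{G}$ using Propositions \ref{prop:locality}, \ref{prop:tang} and the Whitney-type Proposition \ref{prop:C1H}. The only point the paper makes more explicit is that the rigidity of tangents is converted into the quantitative cone condition \eqref{eq:n3000} (i.e.\ \eqref{eq:reg}) before invoking Proposition \ref{prop:C1H}; this is precisely the ``matching'' step you flag as the technical heart.
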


\begin{proof}
Since $\tilde{d}$ is bi-Lipschitz equivalent to $d$, see for instance Corollary 5.15 in \cite{equivmetr}, the hypothesis (i) implies:
\begin{equation}
    0<\Theta^{\mathcal{Q}-1}_*(\phi,x)\leq\Theta^{\mathcal{Q}-1,*}(\phi,x)<\infty,
    \label{eq:n4000}
\end{equation}
for $\phi$-almost every $x\in\mathbb{G}$. For any $\vartheta,\gamma,R\in\N$ we define:
\begin{equation}
\begin{split}
    E(\vartheta,\gamma,R):=&\{x\in \overline{B(0,R)}:\vartheta^{-1}r^{\mathcal{Q}-1}\leq \phi(B(x,r))\leq \vartheta r^{\mathcal{Q}-1}\text{ for any }0<r<1/\gamma\}.
    \nonumber
\end{split}    
\end{equation}
It is possible to prove, with the same arguments used in the proof of Proposition \ref{prop:BIGGI}, that the $E(\vartheta,\gamma, R)$ are compact sets and: 
\begin{equation}
    \phi(\mathbb{G}\setminus \bigcup_{\vartheta,\gamma,R} E(\vartheta,\gamma,R))=0,
    \label{eq:n4030}
\end{equation}
Thus, if $A$ is an $\mathcal{S}^{\mathcal{Q}-1}$-null Borel set, Proposition \ref{prop:dens} yields:
\begin{equation}
    \phi(A)\leq \sum_{\vartheta,\gamma,R\in\N}\phi(A\cap E(\vartheta,\gamma,R))\leq \sum_{\vartheta,\gamma,R\in\N}\vartheta 2^{\mathcal{Q}-1}\mathcal{S}^{\mathcal{Q}-1}(A\cap E(\vartheta,\gamma,R))=0.
    \nonumber
\end{equation}
The above computation proves that $\phi$ is absolutely continuous with respect to $\mathcal{S}^{\mathcal{Q}-1}$ and just to fix notations we let $\rho\in L^1(\mathcal{S}^{\mathcal{Q}-1})$ be such that $\phi=\rho\mathcal{S}^{\mathcal{Q}-1}$.

As a second step, we show that $\mathbb{G}$ can be covered $\phi$-almost all with countably many intrinsic Lipschitz graphs. Assume by contradiction there are $\vartheta,\gamma, R\in\N$ for which we can find a subset of $E(\vartheta,\gamma,R)$ of positive $\phi$-measure that we denote, following the notations of Corollary \ref{corrii}, with $E(\vartheta,\gamma,R)^u$ and that has $\mathcal{S}^{\mathcal{Q}-1}$-null intersection with any intrinsic Lipschitz graph. Thanks to Corollary 2.9.11 of \cite{Federer1996GeometricTheory} it is immediate to see that:
$$\vartheta^{-1}\leq \Theta^{\mathcal{Q}-1}_*(\phi\llcorner E(\vartheta,\gamma,R)^u,x)\leq \Theta^{\mathcal{Q}-1,*}(\phi\llcorner E(\vartheta,\gamma,R)^u,x)\leq \vartheta,$$
for $\phi$-almost every $x\in E(\vartheta,\gamma,R)^u$. Furthermore, thanks to Proposition \ref{prop:locality}, we infer that $\Tan_{\mathcal{Q}-1}(\phi\llcorner E(\vartheta,\gamma,R)^u,x)\subseteq \mathfrak{M}$ for $\phi$-almost every $x\in E(\vartheta,\gamma,R)^u$. And since its hypothesis are satisfied, Theorem \ref{ueue}  implies that there exists an intrinsic Lipschitz graph $\Gamma$ such that $\phi(\Gamma\cap E(\vartheta,\gamma, R)^u)>0$. However, this is not possible since Proposition \ref{prop:dens} would yield:
$$0<\phi(\Gamma\cap E(\vartheta,\gamma, R)^u)\leq \vartheta 2^{\mathcal{Q}-1}\mathcal{S}^{\mathcal{Q}-1}(E(\vartheta,\gamma, R)^u\cap \Gamma),$$
and this contradicts the fact that $E(\vartheta,\gamma,R)$ intersects in a $\mathcal{S}^{\mathcal{Q}-1}$-null set every intrinsic Lipschitz graph.

This concludes the first part of the proof of the proposition. Up to this point we have shown that the sets $E(\vartheta,\gamma,R)$ for any choice of $\vartheta,\gamma,R$ are covered $\mathcal{S}^{\mathcal{Q}-1}$-almost all by countably many intrinsic Lipschitz graphs. Furthermore, since  $\phi\ll\mathcal{S}^{\mathcal{Q}-1}$, thanks to \eqref{eq:n4030} we conclude that $\phi$-almost all $\mathbb{G}$ can be covered by countably many intrinsic Lipschitz graphs.

In this paragraph, we assume that $\vartheta,\gamma,R\in\N$ are fixed. Thanks to Proposition \ref{prop:dens} we infer that $\mathcal{S}^{\mathcal{Q}-1}\llcorner E(\vartheta,\gamma,R)$  is mutually absolutely continuous with respect to $\phi\llcorner E(\vartheta,\gamma)$ and in particular:
$$\vartheta^{-1}\leq \rho(x)\leq \vartheta2^{\mathcal{Q}-1}\text{ for }\mathcal{S}^{\mathcal{Q}-1}\text{-almost every }x\in E(\vartheta,\gamma,R).$$
Let $\{\gamma_i\}_{i\in\N}$ be the sequence of intrinsic Lipschitz functions $\gamma_i:W_i\to\mathfrak{N}(W_i)$ for which $\phi(E(\vartheta,\gamma,R)\setminus \bigcup_{i\in\N} \text{gr}(\gamma_i))=0$ and let $E_i:=\text{epi}(\gamma_i)$ be the epigraph of the function $\gamma_i$, that was defined in \eqref{eq:n70}.
Thanks to Proposition \ref{prop:locality} we deduce that for $\phi$-almost every $x\in E(\vartheta,\gamma,R)\cap \text{gr}(\gamma_i)$ we have:
\begin{equation}
\begin{split}
    \Tan_{\mathcal{Q}-1}(\phi\llcorner E(\vartheta,\gamma,R)\cap \text{gr}(\gamma_i),x)=\rho(x)\Tan_{\mathcal{Q}-1}(\mathcal{S}^{\mathcal{Q}-1}\llcorner \text{gr}(\gamma_i),x)=\rho(x)\mathfrak{d}(x)\Tan_{\mathcal{Q}-1}(\lvert \partial E_i\rvert_{\mathbb{G}},x),
    \nonumber
\end{split}
\end{equation}
where $\mathfrak{d}$ is the density yielded by Remark \ref{rk:b1} and $\lvert\partial E_i\rvert_{\mathbb{G}}$ is the perimeter measure of $E_i$. Finally, Proposition \ref{prop:tang} implies that:
\begin{equation}
\begin{split}
    \Tan_{\mathcal{Q}-1}(\phi\llcorner E(\vartheta,\gamma,R)\cap \text{gr}(\gamma_i),x)\subseteq \rho(x)\mathfrak{d}(x)\{\lambda\mathcal{S}^{\mathcal{Q}-1}\llcorner V_i(x):\lambda\in [L_{\mathbb{G}}^{-1},l_{\mathbb{G}}^{-1}]\},
    \label{eq:n4020}
\end{split}
\end{equation}
where $V_i(x)\in \G(\mathcal{Q}-1)$ is the plane orthogonal to $\mathfrak{n}_{E_i}(x)$, the generalized inward normal introduced in Definition \ref{def:norm}. 

We now prove that \eqref{eq:n4020} implies that for $\mathcal{S}^{\mathcal{Q}-1}$-almost every $x\in \text{gr}(\gamma_i)$ and every $\alpha>0$ we have:
\begin{equation}
    \lim_{r\to 0}\frac{\mathcal{S}^{\mathcal{Q}-1}(\text{gr}(\gamma_i)\cap B(x,r)\setminus xX_{V_x}(\alpha))}{r^{\mathcal{Q}-1}}=0,
    \label{eq:n3000}
\end{equation}
where $X_{V_x}(\alpha):=\{w\in \mathbb{G}:\dist(w,V_x)\leq \alpha\lVert w\rVert\}$.
Thanks to \eqref{eq:n4020}, for $\mathcal{S}^{\mathcal{Q}-1}$-almost every $x\in \text{gr}(\gamma_i)$ and any sequence $r_i\to 0$, there exists a $\lambda>0$ for which:
\begin{equation}
    \frac{T_{x,r}\mathcal{S}^{\mathcal{Q}-1}\llcorner E(\vartheta,\gamma,R)}{r_i^{\mathcal{Q}-1}}\rightharpoonup \lambda \mathcal{S}^{\mathcal{Q}-1}\llcorner V_x.
    \label{eq:n27}
\end{equation}
The convergence in \eqref{eq:n27} implies that:
\begin{equation}
\begin{split}
    \lim_{i\to \infty}\frac{\mathcal{S}^{\mathcal{Q}-1}\llcorner \text{gr}(\gamma_i)(B(x,r_i)\setminus xX_{V_x}(\alpha))}{r_i^{\mathcal{Q}-1}}=&\lim_{i\to\infty}\frac{T_{x,r_i}(\mathcal{S}^{\mathcal{Q}-1}\llcorner \text{gr}(\gamma_i))\big(B(0,1)\setminus X_{V_x}(\alpha)\big)}{r_i^{\mathcal{Q}-1}}\\
    =&\lambda(\mathcal{S}^{\mathcal{Q}-1}\llcorner V_x)\big(B(0,1)\setminus X_{V_x}(\alpha)\big)=0,
    \label{eq:n28}
\end{split}
\end{equation}
where the second last identity above comes from the fact that $\mathcal{S}^{\mathcal{Q}-1} \big(V_x\cap \partial B(0,1)\setminus X_{V_x}(\alpha)\big)=0$ and Proposition 2.7 of \cite{DeLellis2008RectifiableMeasures}.

Proposition \ref{prop:C1H} and \eqref{eq:n3000} together imply that each one of the intrinsic Lipschitz graphs $\text{gr}(\gamma_i)$ can be covered $\mathcal{S}^{\mathcal{Q}-1}$-almost all with $C^1_\mathbb{G}$-surfaces. In particular this shows that for any $\vartheta,\gamma, R$ the set $E(\vartheta,\gamma,R)$ can be covered $\mathcal{S}^{\mathcal{Q}-1}$-almost all, and thus $\phi$-almost all, by countably many $C^1_\mathbb{G}$-surfaces.
This, thanks to the arbitrariness of $\vartheta,\gamma,R\in\N$ and \eqref{eq:n4030} concludes the proof of the theorem.
\end{proof} 

The following theorem trades off the regularity of tangents, that are assumed only to be close enough to flat measures, with a strengthened hypothesis on the $(\mathcal{Q}-1)$-density of $\phi$.

\begin{teorema}\label{th:quasifprett}
Suppose $\phi$ is a Radon measure on $\mathbb{G}$ and let $\tilde{d}(\cdot,\cdot)$ be a left invariant, homogeneous distance on $\mathbb{G}$. If there exists a $\delta\in\N$ such that:
\begin{equation}
    \delta^{-1}<\liminf_{r\to 0}\frac{\phi(\tilde{B}(x,r))}{r^{\mathcal{Q}-1}}\leq \limsup_{r\to 0}\frac{\phi(\tilde{B}(x,r))}{r^{\mathcal{Q}-1}}<\delta\qquad \text{for }\phi\text{-almost every }x\in\mathbb{G},
    \label{eq:nn1}
\end{equation}
    where $\tilde{B}(x,r)$ is the ball relative to the metric $\tilde{d}$ centred at $x$ of radius $r>0$, then we can find an $\varepsilon(\delta,\tilde{d})>0$ such that, if:
    $$\limsup_{r\to 0}d_{x,r}(\phi,\mathfrak{M})\leq \varepsilon(\delta,\tilde{d}) \qquad\text{for }\phi\text{-almost every }x\in \mathbb{G},$$
then $\phi$ is absolutely continuous with respect to $\mathcal{S}^{\mathcal{Q}-1}$ and $\mathbb{G}$ can be covered $\phi$-almost all with countably many intrinsic Lipschitz surfaces.
\end{teorema}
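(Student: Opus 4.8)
The plan is to reproduce, essentially verbatim, the argument of Theorem~\ref{main}, replacing every appeal to Theorem~\ref{ueue} (which needs exactly flat tangents) with its quantitative counterpart Proposition~\ref{prop:mild}, which only requires $\limsup_{r\to0}d_{x,4kr}(\psi,\mathfrak{M})$ to be smaller than $4^{-(\mathcal{Q}+1)}\oldep{e}(\delta)$. First I would pass from $\tilde d$ to the distance $d$ of the paper: the two are bi-Lipschitz equivalent (Corollary 5.15 of \cite{equivmetr}), so \eqref{eq:nn1} produces a natural number $\delta_1=\delta_1(\delta,\tilde d)$ with
$$\delta_1^{-1}\le \Theta_*^{\mathcal{Q}-1}(\phi,x)\le \Theta^{\mathcal{Q}-1,*}(\phi,x)\le \delta_1\qquad\text{for }\phi\text{-a.e. }x\in\mathbb{G},$$
and I set $\varepsilon(\delta,\tilde d):=4^{-(\mathcal{Q}+1)}\oldep{e}(2\delta_1)$. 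Exactly as in the first part of the proof of Theorem~\ref{main}, the sets $E(2\delta_1,\gamma,R):=\{x\in\overline{B(0,R)}:(2\delta_1)^{-1}r^{\mathcal{Q}-1}\le\phi(B(x,r))\le 2\delta_1 r^{\mathcal{Q}-1}\text{ for }0<r<1/\gamma\}$ are compact, cover $\phi$-almost all of $\mathbb{G}$ as $\gamma,R$ range over $\mathbb{N}$, and Proposition~\ref{prop:dens} gives $\phi\ll\mathcal{S}^{\mathcal{Q}-1}$.

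For the covering statement I would argue by contradiction. If $\mathbb{G}$ cannot be covered $\phi$-almost all by countably many intrinsic Lipschitz graphs, then applying the decomposition Corollary~\ref{corrii} to each $E(2\delta_1,\gamma,R)$ and using $\phi\ll\mathcal{S}^{\mathcal{Q}-1}$ there exist $\gamma,R$ with $\phi(E(2\delta_1,\gamma,R)^u)>0$, where $E^u:=E(2\delta_1,\gamma,R)^u$ has $\mathcal{S}^{\mathcal{Q}-1}$-null intersection with every intrinsic Lipschitz graph. Put $\psi:=\phi\llcorner E^u$, a finite measure supported on the compact set $E(2\delta_1,\gamma,R)$; I would verify the hypotheses of Proposition~\ref{prop:mild} with $\delta:=2\delta_1$. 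Since $\phi\llcorner E(2\delta_1,\gamma,R)$ is asymptotically doubling, $E^u$ has $\phi$-density $1$ at $\psi$-a.e.\ point, which yields the density bounds for $\psi$. For the tangent condition I would use $F_{x,r}(\psi,\phi)\le 2r\,\phi(B(x,r)\setminus E^u)$ (because every $f\in\lip(\overline{B(x,r)})$ satisfies $f\le 2r$ on its support and $\psi$ differs from $\phi$ only off $E^u$), which gives
$$d_{x,r}(\psi,\mathfrak{M})\le d_{x,r}(\phi,\mathfrak{M})+2\,\phi(B(x,r)\setminus E^u)/r^{\mathcal{Q}-1};$$
at $\psi$-a.e.\ $x$ the last term is the product of $\phi(B(x,r)\setminus E^u)/\phi(B(x,r))\to0$ with the bounded quantity $\phi(B(x,r))/r^{\mathcal{Q}-1}$, so it vanishes as $r\to0$, whence $\limsup_{r\to0}d_{x,4kr}(\psi,\mathfrak{M})=\limsup_{s\to0}d_{x,s}(\psi,\mathfrak{M})\le\limsup_{s\to0}d_{x,s}(\phi,\mathfrak{M})\le\varepsilon(\delta,\tilde d)=4^{-(\mathcal{Q}+1)}\oldep{e}(2\delta_1)$. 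Then Proposition~\ref{prop:mild} produces an intrinsic Lipschitz graph $\Gamma$ with $\psi(\Gamma)>0$, while Corollary~\ref{cor:cor1} gives $\psi\le\phi\llcorner E(2\delta_1,\gamma,R)\le 2\delta_1 2^{\mathcal{Q}-1}\mathcal{S}^{\mathcal{Q}-1}\llcorner E(2\delta_1,\gamma,R)$, so $\psi(\Gamma)=\phi(E^u\cap\Gamma)\le 2\delta_1 2^{\mathcal{Q}-1}\mathcal{S}^{\mathcal{Q}-1}(E^u\cap\Gamma)=0$, a contradiction. Thus $\mathbb{G}$ is covered $\phi$-almost all by countably many intrinsic Lipschitz graphs, which together with $\phi\ll\mathcal{S}^{\mathcal{Q}-1}$ completes the proof.

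The part I expect to require the most care is the verification of hypothesis (ii) of Proposition~\ref{prop:mild} for the restricted measure $\psi$: unlike the notion of tangent measure, the functional $d_{x,r}(\cdot,\mathfrak{M})$ is \emph{not} stable under restriction to a subset — it is positively homogeneous of degree one in the measure — so the smallness of $\limsup_{r\to0}d_{x,r}(\phi,\mathfrak{M})$ does not transfer to $\phi\llcorner E^u$ for free. The transfer is recovered only through the fact that $E^u$ has full $\phi$-density at $\psi$-a.e.\ point, and this is precisely where the uniform two-sided density hypothesis \eqref{eq:nn1} — rather than the merely $\phi$-a.e.\ positivity and finiteness used in Theorem~\ref{main} — enters in an essential way. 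All the remaining steps, namely the construction and compactness of the sets $E(\vartheta,\gamma,R)$, the measurability of the relevant auxiliary sets, the decomposition argument and the comparison of $\phi$ with $\mathcal{S}^{\mathcal{Q}-1}$, are routine repetitions of the corresponding parts of the proofs of Theorems~\ref{main} and~\ref{ueue}, as already packaged in Propositions~\ref{rk:primo}, \ref{rk:C}, \ref{prop:mild} and Remarks~\ref{rk:propasjn}, \ref{prop:2.24holds}.
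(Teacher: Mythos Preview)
Your proposal is correct and follows essentially the same route as the paper. The paper's own proof is a two-line remark: pass to the metric $d$ by bi-Lipschitz equivalence, set $\varepsilon(\delta,\tilde d):=4^{-(\mathcal{Q}+1)}\oldep{e}(\mathfrak{c}\delta)$, and then ``verbatim repeat the first part of the argument used to prove Theorem~\ref{main}, where instead of Theorem~\ref{ueue} we make use of Proposition~\ref{prop:mild}''. You have simply unpacked what that verbatim repetition amounts to, and in particular you have made explicit the one step the paper leaves implicit, namely why the bound $\limsup_{r\to0}d_{x,r}(\phi,\mathfrak{M})\le\varepsilon$ survives restriction to the purely unrectifiable piece $E^u$; your estimate $F_{x,r}(\psi,\phi)\le r\,\phi(B(x,r)\setminus E^u)$ together with the Lebesgue density theorem (available since $\phi$ is asymptotically doubling) is exactly the right mechanism, and your final paragraph correctly identifies this as the only place requiring care.
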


\begin{proof}
The first step in the proof is to note that since the metric $\tilde{d}$ and $d$ are bi-Lipschitz equivalent, there exists a constant $\mathfrak{c}>1$, that we can assume without loss of generality to be a natural number, such that:
 $$ (\mathfrak{c}\delta)^{-1}<\liminf_{r\to 0}\frac{\phi(B(x,r))}{r^{\mathcal{Q}-1}}\leq \limsup_{r\to 0}\frac{\phi(B(x,r))}{r^{\mathcal{Q}-1}}<\mathfrak{c}\delta\qquad \text{for }\phi\text{-almost every }x\in\mathbb{G}.$$
If we let $\varepsilon(\delta,\tilde{d}):=4^{-(\mathcal{Q}+1)}\oldep{e}(\mathfrak{c}\delta)$ then the verbatim repetition of the first part of the argument used to prove Theorem \ref{main}, where instead of Theorem \ref{ueue} we make use of Proposition \ref{prop:mild}, proves the claim.
\end{proof}

An immediate consequence of Theorem \ref{th:quasifprett} is the following:

\begin{corollario}\label{fpsrig}
Let $\vartheta_{\mathbb{G}}:=\max\{l_\mathbb{G}^{-1},L_{\mathbb{G}}\}$ where $l_\mathbb{G}$ and $L_\mathbb{G}$ are the constants yielded by Theorem \ref{th:4.16} and suppose $\Omega\subseteq \mathbb{G}$ is a finite perimeter set such that:
    $$\limsup_{r\to 0}d_{x,r}(\lvert\partial \Omega\rvert_\mathbb{G},\mathfrak{M})\leq \varepsilon(\vartheta_{\mathbb{G}},d) \qquad\text{for }\lvert\partial \Omega\rvert_{\mathbb{G}}\text{-almost every }x\in \mathbb{G},$$
where $\varepsilon(\vartheta_{G},d)$ is the constant yielded by Theorem \ref{th:quasifprett} and $d$ is the metric introduced in Definition \ref{smoothnorm}. Then $\mathbb{G}$ can be covered $\lvert\partial \Omega\rvert_\mathbb{G}$-almost all with countably many intrinsic Lipschitz surfaces. 
\end{corollario}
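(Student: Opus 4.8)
The plan is to obtain the statement as a direct application of Theorem \ref{th:quasifprett} to the Radon measure $\phi:=\lvert\partial\Omega\rvert_{\mathbb{G}}$, taking for the auxiliary distance the homogeneous, left invariant distance $\tilde d:=d$ of Definition \ref{smoothnorm}. Since the functional $d_{x,r}(\cdot,\mathfrak{M})$ of Definition \ref{def:metr} is built from the metric balls of $d$, the hypothesis on tangents assumed in the corollary is, verbatim, the smallness condition needed by Theorem \ref{th:quasifprett}; the only thing left to check is that $\delta:=\vartheta_{\mathbb{G}}$ is an admissible density bound for $\lvert\partial\Omega\rvert_{\mathbb{G}}$.

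First I would invoke Theorem \ref{th:4.16}, which provides dimensional constants $0<l_{\mathbb{G}}\le L_{\mathbb{G}}$, independent of $\Omega$, such that
\[
l_{\mathbb{G}}\le \Theta^{\mathcal{Q}-1}_*(\lvert\partial\Omega\rvert_{\mathbb{G}},x)\le \Theta^{\mathcal{Q}-1,*}(\lvert\partial\Omega\rvert_{\mathbb{G}},x)\le L_{\mathbb{G}}\qquad\text{for }\lvert\partial\Omega\rvert_{\mathbb{G}}\text{-a.e. }x\in\mathbb{G}.
\]
By the very definition $\vartheta_{\mathbb{G}}=\max\{l_{\mathbb{G}}^{-1},L_{\mathbb{G}}\}$, these estimates give $\vartheta_{\mathbb{G}}^{-1}\le \liminf_{r\to0}r^{-(\mathcal{Q}-1)}\lvert\partial\Omega\rvert_{\mathbb{G}}(B(x,r))\le \limsup_{r\to0}r^{-(\mathcal{Q}-1)}\lvert\partial\Omega\rvert_{\mathbb{G}}(B(x,r))\le\vartheta_{\mathbb{G}}$ for $\lvert\partial\Omega\rvert_{\mathbb{G}}$-a.e. $x$, i.e. condition \eqref{eq:nn1} of Theorem \ref{th:quasifprett} holds with $\delta=\vartheta_{\mathbb{G}}$ and $\tilde d=d$. (If the bounds produced by Theorem \ref{th:4.16} are non-strict one replaces $\vartheta_{\mathbb{G}}$ by the smallest integer strictly larger than it and correspondingly shrinks the threshold; this is harmless since $\varepsilon(\cdot,d)$ is non-increasing in its first argument.)

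Finally, the remaining hypothesis of Theorem \ref{th:quasifprett} is exactly the assumption $\limsup_{r\to0}d_{x,r}(\lvert\partial\Omega\rvert_{\mathbb{G}},\mathfrak{M})\le\varepsilon(\vartheta_{\mathbb{G}},d)$ imposed in the corollary. Hence Theorem \ref{th:quasifprett} applies and yields that $\lvert\partial\Omega\rvert_{\mathbb{G}}$ is absolutely continuous with respect to $\mathcal{S}^{\mathcal{Q}-1}$ and that $\mathbb{G}$ is covered $\lvert\partial\Omega\rvert_{\mathbb{G}}$-almost all by countably many intrinsic Lipschitz surfaces, which is the claim. The only genuinely delicate point is bookkeeping: one must make sure that the density estimates of Theorem \ref{th:4.16} are stated with respect to the metric $d$ entering Definition \ref{def:metr}, and that $l_{\mathbb{G}},L_{\mathbb{G}}$ do not depend on the chosen Caccioppoli set — both guaranteed by the material collected in Appendix \ref{def:perimetro} — so that $\varepsilon(\vartheta_{\mathbb{G}},d)$ appearing in the statement is precisely the threshold furnished by Theorem \ref{th:quasifprett}.
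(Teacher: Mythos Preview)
Your proposal is correct and follows essentially the same route as the paper: invoke Theorem \ref{th:4.16} to obtain the two-sided density bounds with the group constants $l_{\mathbb{G}},L_{\mathbb{G}}$, set $\delta=\vartheta_{\mathbb{G}}$, and apply Theorem \ref{th:quasifprett} with $\tilde d=d$. Your added remarks on the strict versus non-strict inequalities and on the metric compatibility are careful bookkeeping that the paper's two-line proof leaves implicit.
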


\begin{proof}
Theorem \ref{th:4.16} implies that
$l_\mathbb{G}<\Theta_*^{\mathcal{Q}-1}(\lvert\partial \Omega\rvert_\mathbb{G},x)\leq \Theta^{\mathcal{Q}-1,*}(\lvert\partial \Omega\rvert_\mathbb{G},x)<L_{\mathbb{G}}$ for $\phi$-almost every $x\in\mathbb{G}$. Theorem \ref{th:quasifprett} directly imply the statement.
\end{proof}

As mentioned at the beginning of this section, the main application of Theorem \ref{main} is an extension Preiss's rectifiability theorem to $1$-codimensional measures in $\HH^n$.

\begin{teorema}\label{preissHn}
Suppose $d$ is the Koranyi metric in $\HH^n$ and $\phi$ is a Radon measure on $\HH^n$ such that:
\begin{equation}
    0<\Theta^{2n+1}(\phi,x):=\lim_{r\to 0}\frac{\phi(B(x,r))}{r^{2n+1}}<\infty,\qquad \text{for }\phi\text{-almost every }x\in \mathbb{H}^n.
    \label{eq:limit}
\end{equation}
Then $\phi$ is absolutely continuous with respect to $\mathcal{S}^{\mathcal{Q}-1}$ and $\mathbb{H}^n$ can be covered $\phi$-almost all with $C^1_{\mathbb{H}^n}$-surfaces.
\end{teorema}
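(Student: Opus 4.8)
The plan is to obtain Theorem \ref{preissHn} as a direct consequence of Theorem \ref{main}. Recall that the Heisenberg group $\HH^n$ is a Carnot group of homogeneous dimension $\mathcal{Q}=2n+2$, so that $2n+1=\mathcal{Q}-1$, and that the Koranyi metric $d$ is a left invariant, homogeneous distance on $\HH^n$; hence $\HH^n$ equipped with $d$ falls within the scope of Theorem \ref{main}, which is already stated for an arbitrary left invariant homogeneous $\tilde d$ so that no bi-Lipschitz comparison is needed. The density hypothesis \eqref{eq:limit} is tailor-made to supply condition (i) of that theorem: since for $\phi$-almost every $x$ the limit $\Theta^{2n+1}(\phi,x)=\lim_{r\to0}\phi(B(x,r))/r^{2n+1}$ exists and belongs to $(0,\infty)$, the lower and upper $(\mathcal{Q}-1)$-densities of $\phi$ at $x$ both coincide with it, and in particular $0<\Theta^{\mathcal{Q}-1}_*(\phi,x)\le\Theta^{\mathcal{Q}-1,*}(\phi,x)<\infty$.

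The only substantive point is therefore to verify condition (ii) of Theorem \ref{main}, namely that $\Tan_{\mathcal{Q}-1}(\phi,x)\subseteq\mathfrak{M}$ for $\phi$-almost every $x\in\HH^n$. This is exactly the assertion that a positive and finite $(2n+1)$-density forces the tangent measures of $\phi$ to be Haar measures of vertical hyperplanes --- the flatness-of-tangents phenomenon --- and it is established in the author's companion work on the geometry of $1$-codimensional measures in the Heisenberg groups. For the reader who wishes to see where the difficulty lies: the standard Marstrand--Preiss density machinery first shows that at $\phi$-almost every $x$ every $\nu\in\Tan_{2n+1}(\phi,x)$ is $(2n+1)$-uniform, i.e. $\nu(B(p,r))=c\,r^{2n+1}$ for all $p\in\supp\nu$ and all $r>0$; the genuinely hard step, and the heart of that companion paper, is the classification of $(2n+1)$-uniform measures in $\HH^n$, which turns out to force flatness.

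Granting (ii), Theorem \ref{main} applies with $\tilde d=d$ and yields at once that $\phi$ is absolutely continuous with respect to $\mathcal{S}^{\mathcal{Q}-1}=\mathcal{S}^{2n+1}$ and that $\HH^n$ can be covered $\phi$-almost all by countably many $C^1_{\HH^n}$-surfaces, which is the claim. Thus the proof is essentially a one-line reduction once the two ingredients are in place: the contribution of the present paper is the implication ``flat tangents $\Rightarrow$ $C^1_{\mathbb{G}}$-rectifiability'' encoded in Theorem \ref{main}, while the remaining input --- flatness of tangents of $(2n+1)$-density measures in $\HH^n$ --- is the genuine obstacle and the only place where work beyond the Carnot-group machinery developed here is required.
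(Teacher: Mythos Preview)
Your proposal is correct and follows essentially the same approach as the paper: cite the companion result (Theorem~1.2 of \cite{merlo}) to obtain $\Tan_{\mathcal{Q}-1}(\phi,x)\subseteq\mathfrak{M}$ from the density hypothesis, then apply Theorem~\ref{main}. Your exposition is slightly more detailed in spelling out why condition~(i) holds and in sketching the uniform-measure mechanism behind the cited result, but the logical structure is identical to the paper's two-line proof.
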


\begin{proof}
Thanks to Theorem 1.2 of \cite{merlo}, the almost sure existence of the limit in \eqref{eq:limit} implies that
$\Tan(\phi,x)\subseteq
\mathfrak{M}$,
for $\phi$-almost every $x\in\mathbb{G}$. Thanks to Theorem \ref{main}, this proves the claim.
\end{proof}

\subsection{Discussion of the results}

Theorem \ref{main} shows that $C^1_\mathbb{G}$-rectifiability in Carnot groups can be characterized by the same conditions on the densities and on the tangents as the Lipschitz rectifiability in Euclidean spaces. With this in mind we introduce the following two definitions:

\begin{definizione}[$\mathscr{P}$-rectifiable measures]\label{def:rect}
Suppose that $\phi$ is a Radon measure on some Carnot group $\mathbb{G}$ endowed with a left invariant and homogeneous metric $d$ and let $m$ be a positive integer. We say that $\phi$ is $\mathscr{P}_m$-rectifiable if:
\begin{itemize}
    \item[(i)]$0<\Theta_*^{m}(\phi,x)\leq \Theta^{m,*}(\phi,x)<\infty$ for $\phi$-almost every $x\in\mathbb{G}$,
    \item[(\hypertarget{unique}{ii})]$\Tan_m(\phi,x)\subseteq \{\lambda \mu_x:\lambda>0\}$, for $\phi$-almost every $x\in\mathbb{G}$ where $\mu_x$ is some Radon measure on $\mathbb{G}$.
    \end{itemize}
\end{definizione}

\begin{osservazione}
It was already remarked by P. Mattila in the last paragraph of \cite{Mattila2005MeasuresGroups} that Definition \ref{def:rect} may be considered the correct notion of rectifiability in $\HH^1$.
\end{osservazione}

\begin{osservazione}
 Instead of condition (\hyperlink{unique}{ii}) of Definition \ref{def:rect}, we can assume without loss of generality that $\mu_x=\mathcal{H}^{m}\llcorner V(x)$ for some $V(x)\in\G(m)$, where $\G(m)$ is the family of $m$-dimensional homogeneous subgroups of $\mathbb{G}$ introduced in Definition \ref{def:flatmeasures}. This is due to Theorem 3.2 of \cite{Mattila2005MeasuresGroups} and Theorem 3.6 of \cite{Cartan}: the former result tells us that $\mu_x$ must be the Haar measure of a closed, dilation-invariant subgroup of $\mathbb{G}$ and the latter that such subgroup is actually a Lie subgroup.
\end{osservazione}

\begin{definizione}[$\mathscr{P}^*$-rectifiable measures]\label{def:rect1}
Suppose that $\phi$ is a Radon measure on some Carnot group $\mathbb{G}$ endowed with a left invariant and homogeneous metric $d$ and let $m$ be a positive integer. We say that $\phi$ is $\mathscr{P}^*_m$-rectifiable if:
\begin{itemize}
    \item[(i)]$0<\Theta_*^{m}(\phi,x)\leq \Theta^{m,*}(\phi,x)<\infty$ for $\phi$-almost every $x\in\mathbb{G}$,
    \item[(ii)]$\Tan_m(\phi,x)\subseteq \mathfrak{M}(m)$, for $\phi$-almost every $x\in\mathbb{G}$.
    \end{itemize}
\end{definizione}

The difference between Definitions \ref{def:rect} and \ref{def:rect1} is that in the former the tangent to $\phi$ is the same plane at every scale, while in the latter the tangents are planes that may vary at different scales. Although there is no a priori reason for which these definition should be equivalent in general, we see that our main result Theorem \ref{main}, may be rewritten as:

\begin{teorema}
Suppose $\phi$ is a Radon measure on $\mathbb{G}$. The following are equivalent:
\begin{itemize}
    \item[(i)] $\phi$ is $\mathscr{P}_{\mathcal{Q}-1}$-rectifiable,
    \item[(ii)] $\phi$ is $\mathscr{P}^*_{\mathcal{Q}-1}$-rectifiable,
    \item[(iii)] $\phi$ is absolutely continuous with respect to $\mathcal{H}^{\mathcal{Q}-1}$ and $\mathbb{G}$ can be covered $\phi$-almost all with countably many $C^1_\mathbb{G}$-surfaces.
\end{itemize}
\end{teorema}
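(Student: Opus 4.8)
The plan is to establish the cycle of implications (i) $\Rightarrow$ (ii) $\Rightarrow$ (iii) $\Rightarrow$ (i). The implication (ii) $\Rightarrow$ (iii) requires no new work: conditions (i) and (ii) of Definition \ref{def:rect1} are, word for word, the hypotheses of Theorem \ref{main} when the auxiliary homogeneous distance $\tilde d$ is taken to be the distance $d$ of Definition \ref{smoothnorm}, and the conclusion of Theorem \ref{main} is exactly (iii), after recalling that $\mathcal{S}^{\mathcal{Q}-1}$ and $\mathcal{H}^{\mathcal{Q}-1}$ are mutually absolutely continuous (indeed comparable), so that absolute continuity with respect to one is the same as with respect to the other.

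For (i) $\Rightarrow$ (ii), note first that the density hypothesis is identical in Definitions \ref{def:rect} and \ref{def:rect1}, so only the statement on tangents must be upgraded. By the Remark following Definition \ref{def:rect}, condition (\hyperlink{unique}{ii}) of Definition \ref{def:rect} forces $\mu_x$ to be, by Theorem 3.2 of \cite{Mattila2005MeasuresGroups}, the Haar measure of a closed, dilation-invariant subgroup of $\mathbb{G}$, and, by Theorem 3.6 of \cite{Cartan}, that subgroup is a homogeneous Lie subgroup; since $\mu_x$ has Hausdorff dimension $\mathcal{Q}-1$ it lies in $\G(\mathcal{Q}-1)$, and therefore, up to the normalisation furnished by Proposition \ref{prop:rapp}, $\mu_x$ is (a multiple of) an element of $\mathfrak{M}$. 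Hence $\Tan_{\mathcal{Q}-1}(\phi,x)\subseteq\mathfrak{M}$ for $\phi$-almost every $x$, which is condition (ii) of Definition \ref{def:rect1}.

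The substantive implication is (iii) $\Rightarrow$ (i), although essentially all the needed ingredients have already been assembled in the proof of Theorem \ref{main}. Assume $\phi\ll\mathcal{H}^{\mathcal{Q}-1}$ and that there are $C^1_\mathbb{G}$-surfaces $\{\Gamma_i\}_{i\in\N}$ with $\phi\bigl(\mathbb{G}\setminus\bigcup_i\Gamma_i\bigr)=0$; replacing the $\Gamma_i$ by pairwise disjoint Borel subsets $\Gamma_i'\subseteq\Gamma_i$ one may assume $\phi$ is concentrated on the disjoint union $\bigcup_i\Gamma_i'$. By the Remark after Definition \ref{regsur} each $\mathcal{S}^{\mathcal{Q}-1}\llcorner\Gamma_i$ is $\sigma$-finite, and by the blowup analysis of $C^1_\mathbb{G}$-surfaces used in Section \ref{sec:end} and Appendix \ref{def:perimetro} (Propositions \ref{prop:costnorm}, \ref{obs6.5} and \ref{prop:tang}) the surface measure $\psi_i:=\mathcal{S}^{\mathcal{Q}-1}\llcorner\Gamma_i'$ satisfies $0<\Theta^{\mathcal{Q}-1}_*(\psi_i,x)\le\Theta^{\mathcal{Q}-1,*}(\psi_i,x)<\infty$ and $\Tan_{\mathcal{Q}-1}(\psi_i,x)=\{\lambda\,\mathcal{S}^{\mathcal{Q}-1}\llcorner V_i(x):\lambda>0\}$ for $\psi_i$-almost every $x$, where $V_i(x)\in\G(\mathcal{Q}-1)$ is the tangent subgroup. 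Since $\phi\llcorner\Gamma_i'\ll\psi_i$, its Radon--Nikodym density $\rho_i$ is finite $\psi_i$-almost everywhere, hence $\phi$-almost everywhere, and it is positive $\phi$-almost everywhere because $\phi$ is concentrated on $\Gamma_i'$; combining $\rho_i\in L^1_{\mathrm{loc}}(\psi_i)$ with the Lebesgue differentiation theorem and the density bounds for $\psi_i$ yields $0<\Theta^{\mathcal{Q}-1}_*(\phi,x)\le\Theta^{\mathcal{Q}-1,*}(\phi,x)<\infty$ for $\phi$-a.e. $x$, which is condition (i) of Definition \ref{def:rect}. Finally, Proposition \ref{prop:locality} applied to $\psi_i$ and $\rho_i$ gives $\Tan_{\mathcal{Q}-1}(\phi,x)=\rho_i(x)\Tan_{\mathcal{Q}-1}(\psi_i,x)=\{\lambda\,\mathcal{S}^{\mathcal{Q}-1}\llcorner V_i(x):\lambda>0\}$ for $\phi$-a.e. $x$, which is condition (\hyperlink{unique}{ii}) of Definition \ref{def:rect} with $\mu_x:=\mathcal{S}^{\mathcal{Q}-1}\llcorner V_i(x)$. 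This closes the cycle.

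I expect the only delicate point to be this last step, and within it the bookkeeping: disjointifying the covering surfaces, checking measurability and $\phi$-a.e. finiteness and positivity of the Radon--Nikodym densities, and invoking the locality of tangents to transfer the a.e. properties of the surface measures $\mathcal{S}^{\mathcal{Q}-1}\llcorner\Gamma_i$ to $\phi$ itself. None of this is conceptually hard once Theorem \ref{main} and Proposition \ref{prop:locality} are available, so the proof is essentially a matter of assembly.
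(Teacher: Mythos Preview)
Your proof is correct and follows the same structure the paper intends: the theorem is presented there as a direct reformulation of Theorem~\ref{main}, with (ii) $\Rightarrow$ (iii) being Theorem~\ref{main} itself, (i) $\Rightarrow$ (ii) being the Remark after Definition~\ref{def:rect}, and the remaining direction being the ``easy'' one.

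The one difference worth noting concerns (iii) $\Rightarrow$ (i). The paper does not argue this internally at all: in the Introduction it attributes the implication (i) $\Rightarrow$ (ii) of Theorem~\ref{main:2} (which is your (iii) $\Rightarrow$ (ii), and a fortiori (iii) $\Rightarrow$ (i) once one knows $C^1_\mathbb{G}$-surfaces have a \emph{unique} tangent plane) to Lemma~3.4 and Corollary~3.6 of \cite{VITTONE}. You instead try to reconstruct this from the paper's own machinery. That is fine, but your specific citations are slightly off target: Propositions~\ref{prop:costnorm}, \ref{obs6.5} and~\ref{prop:tang} concern finite perimeter sets and intrinsic Lipschitz graphs whose tangents are \emph{assumed} flat, not $C^1_\mathbb{G}$-surfaces per se. What you actually need is the (standard) fact that a $C^1_\mathbb{G}$-surface has, at $\mathcal{S}^{\mathcal{Q}-1}$-a.e.\ point, positive and finite $(\mathcal{Q}-1)$-density and a unique blowup equal to the plane orthogonal to the horizontal gradient of its defining function; this is precisely what the Vittone reference supplies. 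Once that is granted, your use of Proposition~\ref{prop:locality} to transfer the density and unique-tangent properties from $\mathcal{S}^{\mathcal{Q}-1}\llcorner\Gamma_i$ to $\phi$ via the Radon--Nikodym density is the right move, and the bookkeeping you flag (disjointification, positivity and finiteness of $\rho_i$ $\phi$-a.e.) is routine.
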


The notion of $\mathscr{P}$-rectifiable measures is also relevant since in different contests it appears to imply the right notion of rectifiability. This is summarized in the following theorem, that is an immediate consequence of the Eucidean Marstrand-Mattila rectifiability criterion and Theorem \ref{main}:

\begin{teorema}\label{teorema:defrect}
The following two statements hold:
\begin{itemize}
    \item[(i)] A Radon measure $\phi$ on $\R^n$ is $\mathscr{P}_m$-rectifiable if and only if it is Euclidean $m$-rectifiable;
    \item[(ii)] A Radon measure $\phi$ on $\mathbb{G}$ is $\mathscr{P}_{\mathcal{Q}-1}$-rectifiable if and only if it is     $C^1_\mathbb{G}$-rectifiable.
\end{itemize}
\end{teorema}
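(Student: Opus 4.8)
The plan is to obtain both equivalences by unwinding the definitions and then quoting, respectively, the Euclidean Marstrand--Mattila criterion (Theorem \ref{th:eu:mm}) and Theorem \ref{main}, together with the classical ``easy'' implications relating rectifiability to density and tangent bounds; no argument beyond bookkeeping is needed.

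\emph{Part (i).} First I would rewrite $\mathscr{P}_m$-rectifiability on $\R^n$ in concrete terms. By the remark following Definition \ref{def:rect} (based on Theorem 3.2 of \cite{Mattila2005MeasuresGroups} and Theorem 3.6 of \cite{Cartan}) the measure $\mu_x$ is a Haar measure of a homogeneous subgroup, and in $\mathbb{G}=\R^n$ the $m$-dimensional homogeneous subgroups are precisely the linear $m$-planes; hence $\phi$ is $\mathscr{P}_m$-rectifiable if and only if $0<\Theta_*^{m}(\phi,x)\le \Theta^{m,*}(\phi,x)<\infty$ and $\Tan_m(\phi,x)\subseteq\{\lambda\mathcal{H}^m\llcorner V(x):\lambda>0\}$ for a single $V(x)\in\G(n,m)$ depending only on $x$, for $\phi$-almost every $x$. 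This last condition is formally stronger than condition (ii b) of Theorem \ref{th:eu:mm}, which allows the plane to rotate with the scale; therefore $\mathscr{P}_m$-rectifiability implies (ii a)--(ii b) of Theorem \ref{th:eu:mm}, and the criterion yields that $\phi$ is Euclidean $m$-rectifiable. For the converse, if $\phi$ is Euclidean $m$-rectifiable, then by the classical theory of rectifiable measures (see e.g.\ Chapter~3 of \cite{DeLellis2008RectifiableMeasures}) the $m$-density exists, is positive and finite, and the tangent measure is unique and equal to $\Theta^{m}(\phi,x)\,\mathcal{H}^m\llcorner T_x$ with $T_x$ the approximate tangent plane, at $\phi$-almost every $x$; thus $\phi$ is $\mathscr{P}_m$-rectifiable.

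\emph{Part (ii).} The argument runs parallel, with Theorem \ref{main} in place of the Euclidean criterion. Again by the remark after Definition \ref{def:rect}, $\phi$ is $\mathscr{P}_{\mathcal{Q}-1}$-rectifiable if and only if it satisfies the density bounds and $\Tan_{\mathcal{Q}-1}(\phi,x)\subseteq\{\lambda\mathcal{H}^{\mathcal{Q}-1}\llcorner V(x):\lambda>0\}$ for a fixed $V(x)\in\G(\mathcal{Q}-1)$; since $\mathcal{H}^{\mathcal{Q}-1}\llcorner V$ is a Haar measure of $V$, hence a positive multiple of $\mathcal{S}^{\mathcal{Q}-1}\llcorner V\in\mathfrak{M}$ (compare Proposition \ref{prop:rapp}), this implies $\Tan_{\mathcal{Q}-1}(\phi,x)\subseteq\mathfrak{M}$, i.e.\ the hypotheses of Theorem \ref{main} hold. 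Theorem \ref{main} then gives that $\phi\ll\mathcal{S}^{\mathcal{Q}-1}$ and that $\mathbb{G}$ is covered $\phi$-almost all by countably many $C^1_\mathbb{G}$-surfaces, which is exactly $C^1_\mathbb{G}$-rectifiability of $\phi$. Conversely, if $\phi$ is $C^1_\mathbb{G}$-rectifiable, write $\phi=\rho\,\mathcal{S}^{\mathcal{Q}-1}\llcorner\big(\bigcup_i\Gamma_i\big)$ with $C^1_\mathbb{G}$-surfaces $\Gamma_i$ (each $\sigma$-finite with respect to $\mathcal{S}^{\mathcal{Q}-1}$ by the remark after Definition \ref{regsur}) and a density $\rho$; partitioning $\bigcup_i\Gamma_i$ and using the locality of tangents (Proposition \ref{prop:locality}) together with the easy implication $\text{(i)}\Rightarrow\text{(ii)}$ of Theorem \ref{main:2} in the sharpened form given by Lemma 3.4 and Corollary 3.6 of \cite{VITTONE}, one gets that at $\phi$-almost every $x\in\Gamma_i$ one has the density bounds and $\Tan_{\mathcal{Q}-1}(\mathcal{S}^{\mathcal{Q}-1}\llcorner\Gamma_i,x)=\{\lambda\mathcal{H}^{\mathcal{Q}-1}\llcorner V_i(x):\lambda>0\}$, where $V_i(x)\in\G(\mathcal{Q}-1)$ is orthogonal to the horizontal normal of $\Gamma_i$ at $x$. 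Multiplying by the locally bounded, $\mathcal{S}^{\mathcal{Q}-1}$-a.e.\ positive density $\rho$ and invoking Proposition \ref{prop:locality} once more, we conclude $\Tan_{\mathcal{Q}-1}(\phi,x)\subseteq\{\lambda\mathcal{H}^{\mathcal{Q}-1}\llcorner V(x):\lambda>0\}$ for $\phi$-almost every $x$, i.e.\ $\phi$ is $\mathscr{P}_{\mathcal{Q}-1}$-rectifiable.

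\emph{Main obstacle.} There is no genuinely hard step here: the statement is a repackaging of Theorem \ref{main}, the Euclidean Marstrand--Mattila criterion, and the classical fact that rectifiable measures have positive and finite density and a unique, flat tangent almost everywhere. The only points requiring care are (1) observing that the ``one tangent measure up to dilations'' clause of Definition \ref{def:rect} is a priori stronger than, but — precisely because of the two criteria — a posteriori equivalent to, the ``tangents in $\mathfrak{M}(m)$, possibly rotating'' clause of Definition \ref{def:rect1} and of the criteria; and (2) correctly combining the locality of tangents (Proposition \ref{prop:locality}) with a restriction/decomposition argument to reduce the converse in (ii) from a measure covered by countably many $C^1_\mathbb{G}$-surfaces to the single-surface blow-up computation.
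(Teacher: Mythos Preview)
Your proposal is correct and matches the paper's own treatment: the paper does not give a detailed proof of this theorem, stating only that it ``is an immediate consequence of the Euclidean Marstrand--Mattila rectifiability criterion and Theorem \ref{main}'', and your write-up is precisely the natural unpacking of that sentence. The bookkeeping you outline --- reducing $\mathscr{P}_m$ to flat tangents via the remark after Definition \ref{def:rect}, invoking Theorem \ref{th:eu:mm} (resp.\ Theorem \ref{main}) for the hard direction, and quoting the classical/known blow-up of rectifiable measures (resp.\ \cite{VITTONE} and Proposition \ref{prop:locality}) for the converse --- is exactly what is intended.
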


In \cite{MR2789472} P. Mattila, F. Serra Cassano and R. Serapioni proved in Theorems 3.14 and 3.15 that whenever a good notion of regular surface is available in the Heisenberg group, provided the tangents are selected carefully, see Definition 2.16 of \cite{MR2789472}, a $\mathscr{P}_m$-rectifiable measure is also rectifiable with respect of the family of regular surfaces of the right dimension.
However, because of the algebraic structure of the group $\HH^n$, there is not an a priori (known) good notion of regular surface that includes the vertical line $\mathcal{V}:=\{(0,0,t):t\in\R\}$. For this reason the uniform measure $\mathcal{S}^2\llcorner \mathcal{V}$ is considered to be \emph{non-rectifiable} from the standpoint of \cite{MR2789472}.
Up to this point Haar measures of  not complemented  homogeneous subgroups (like the vertical line $\mathcal{V}$ in $\HH^1$) were considered non-rectifiable and thus preventing a possible extension of Preiss's Theorem to low dimension even in $\HH^1$. This was already remarked in \cite{Chousionis2015MarstrandsGroup}. On the other hand, we have:

\begin{teorema}\label{teoremavertic}
Let $\phi$ be a Radon measure on $\HH^1$ such that for $\phi$-almost every $x\in \HH^1$, we have:
$$0<\Theta^2(\phi,x):=\lim_{r\to 0}\frac{\phi(B(x,r))}{r^2}<\infty,$$
where $B(x,r)$ are the metric balls with respect to the Koranyi metric.
Then $\phi$ is $\mathscr{P}_2$-rectifiable.
\end{teorema}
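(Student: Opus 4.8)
The plan is to reduce the statement to the already-established Theorem \ref{preissHn} (equivalently Theorem \ref{main}) by identifying which tangent measures may actually occur. The starting point is Theorem 1.2 of \cite{merlo}, which, as used in the proof of Theorem \ref{preissHn}, tells us that the almost sure existence of the positive and finite density $\Theta^2(\phi,x)$ forces $\Tan_2(\phi,x)\subseteq\mathfrak{M}(2)$ for $\phi$-almost every $x\in\HH^1$; that is, every $2$-dimensional tangent measure is (a multiple of) the Haar measure $\mathcal{S}^2\llcorner V$ of some homogeneous subgroup $V\in\G(2)$ of $\HH^1$. In $\HH^1$ the homogeneous dimension is $\mathcal{Q}=4$, so $\G(2)=\G(\mathcal{Q}-2)$; the only $2$-dimensional homogeneous subgroups of $\HH^1$ are the vertical planes containing the center and the vertical line together with a horizontal line is not an option since that has the wrong dimension — in fact the $2$-dimensional homogeneous subgroups are exactly the vertical planes $V$ whose horizontal part is a line in $V_1$. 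Here, however, the key subtlety of this theorem is precisely that this is a \emph{low-dimensional} (not $1$-codimensional) density statement, so I cannot invoke Theorem \ref{main} directly, and the vertical line $\mathcal{V}$ does appear among the admissible tangents. The point of the theorem is that $\mathscr{P}_2$-rectifiability, as defined in Definition \ref{def:rect}, does \emph{not} require the tangent subgroup to be complemented, so $\mathcal{S}^2\llcorner\mathcal{V}$ is perfectly admissible.

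The heart of the argument is therefore to upgrade the inclusion $\Tan_2(\phi,x)\subseteq\mathfrak{M}(2)$ (tangents are flat, but a priori the flat subgroup may rotate with the scale) to the stronger statement that for $\phi$-almost every $x$ there is a \emph{single} homogeneous subgroup $V(x)\in\G(2)$ with $\Tan_2(\phi,x)\subseteq\{\lambda\mathcal{S}^2\llcorner V(x):\lambda>0\}$, which is exactly condition (\hyperlink{unique}{ii}) of Definition \ref{def:rect}. First I would split $\HH^1$ $\phi$-almost all into the Borel set $A_h$ of points whose tangents are all horizontal-type vertical planes (the complemented $\G(\mathcal{Q}-1)=\G(3)$? no — here the flat $2$-subgroups) versus the set $A_v$ of points where the vertical line $\mathcal{V}$ occurs as a tangent. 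On $A_v$, one shows using Theorem 1.2 of \cite{merlo} — which actually gives uniqueness of the tangent up to the group of symmetries — or directly using the classification of uniform measures in $\HH^1$, that if $\mathcal{S}^2\llcorner\mathcal{V}$ is one tangent at $x$ then it is the only one, because the vertical line is the only $2$-dimensional homogeneous subgroup invariant under the relevant symmetries and a mixed-scale behaviour is excluded by the positivity of the limit density (not merely the upper and lower densities). On $A_h$, the tangents are Haar measures of vertical planes; here I would invoke the $1$-codimensional machinery: restricting $\phi$ to a piece of $A_h$ with uniformly bounded density, Theorem \ref{ueue} and Theorem \ref{main} apply to $\phi\llcorner A_h$ (this is $1$-codimensional in the $3$-dimensional... wait, $\mathcal{V}$ excluded means the tangents live in $\G(\mathcal{Q}-1)$? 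In $\HH^1$, $\mathcal{Q}=4$ and $\dim V=2=\mathcal{Q}-2$, so this is \emph{not} codimension one) — so instead on $A_h$ I would use that the only non-complemented homogeneous $2$-subgroup is $\mathcal{V}$, hence on $A_h$ all tangent subgroups are complemented vertical planes, and then apply Theorem 3.14--3.15 of \cite{MR2789472} together with Proposition \ref{prop:locality} to deduce that the tangent plane is locally constant, giving (\hyperlink{unique}{ii}).

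The main obstacle I expect is exactly the uniqueness-of-tangent step on the set $A_v$ where the vertical line appears: one must rule out that at a single point the tangents oscillate between $\mathcal{S}^2\llcorner\mathcal{V}$ and $\mathcal{S}^2\llcorner W$ for a vertical plane $W$, and more generally that the flat tangent subgroup rotates. This is where the hypothesis that the \emph{limit} $\Theta^2(\phi,x)$ exists (rather than just $0<\Theta^2_*\le\Theta^{2,*}<\infty$) is essential, and the cleanest route is to quote the full strength of Theorem 1.2 of \cite{merlo}, which should already encode that the density-existence forces the tangent to be unique up to dilations; if that theorem as stated only gives flatness, one supplements it with a connectedness/rigidity argument in the space of uniform measures of $\HH^1$, using that the Grassmannian $\G(2)$ of $\HH^1$ together with the distinguished point $\mathcal{V}$ cannot be traversed continuously by blow-ups at a density point. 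Once (\hyperlink{unique}{ii}) is in hand, together with the density bounds (i) which are immediate from \eqref{eq:limit}, the measure $\phi$ satisfies both conditions of Definition \ref{def:rect} and is $\mathscr{P}_2$-rectifiable, completing the proof.
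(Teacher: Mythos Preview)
Your proposal rests on a misidentification of $\G(2)$ in $\HH^1$, and this error propagates through the entire plan. You write that ``the $2$-dimensional homogeneous subgroups are exactly the vertical planes $V$ whose horizontal part is a line in $V_1$'', but a vertical plane in $\HH^1$ has stratification $\dim(V\cap V_1)=1$, $\dim(V\cap V_2)=1$, hence homogeneous (and Hausdorff) dimension $1\cdot 1+2\cdot 1=3$: vertical planes are precisely the elements of $\G(\mathcal{Q}-1)=\G(3)$, not of $\G(2)$. The \emph{only} homogeneous subgroup of $\HH^1$ with Hausdorff dimension $2$ is the center $\mathcal{V}=\{(0,0,t):t\in\R\}$, so $\G(2)=\{\mathcal{V}\}$ and $\mathfrak{M}(2)=\{\lambda\,\mathcal{S}^2\llcorner\mathcal{V}:\lambda>0\}$ is already a single ray. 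Consequently your dichotomy $A_h$/$A_v$, the appeal to the codimension-one machinery of Theorem \ref{main}, the invocation of \cite{MR2789472}, and the whole discussion of possible oscillation between $\mathcal{V}$ and a vertical plane are all addressing a non-existent difficulty.

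There is a second, related gap: Theorem 1.2 of \cite{merlo} concerns the $(2n+1)$-density in $\HH^n$ (the codimension-one case used in Theorem \ref{preissHn}); it says nothing about $\Theta^2$ in $\HH^1$. The paper's proof instead combines two external inputs: Proposition~2.2 of \cite{merlo}, which gives that the existence of the positive finite $2$-density forces every element of $\Tan_2(\phi,x)$ to be a $2$-uniform measure on $(\HH^1,d_{\text{Koranyi}})$, and Theorem~1.4 of \cite{ChousionisONGROUP}, which classifies such $2$-uniform measures. That classification yields that $\Tan_2(\phi,x)\subseteq\{\lambda\,\mathcal{S}^2\llcorner\mathcal{V}:\lambda>0\}$ for $\phi$-a.e.\ $x$, which is exactly condition (\hyperlink{unique}{ii}) of Definition \ref{def:rect}; condition (i) is immediate from the hypothesis. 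No splitting, no Marstrand--Mattila argument, and no connectedness reasoning in a Grassmannian are required --- the Grassmannian $\G(2)$ is a point.
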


\begin{proof}
This follows from Proposition 2.2 of \cite{merlo} and Theorem 1.4 of \cite{ChousionisONGROUP}.
\end{proof}

As remarked in the previous paragraph, to our knowledge in literature there is not a good candidate of rectifiability in Carnot groups for which the density problem may have a positive answer. On the other hand, Theorems \ref{preissHn}, \ref{teorema:defrect} and \ref{teoremavertic} encourage us to state the  density problem in Carnot groups in the following way:

\begin{congettura}\label{conj1}
Suppose $\phi$ is a Radon measure on the Carnot group $\mathbb{G}$. There exists a left invariant distance $d$ on $\mathbb{G}$ such that the following are equivalent:
\begin{itemize}
    \item[(i)] there exists an $\alpha>0$ such that for $\phi$-almost every $x\in\mathbb{G}$ we have $0<\Theta^\alpha(\phi,x):=\lim_{r\to 0}\phi(B(x,r))/r^\alpha<\infty$,
    \item[(ii)] $\alpha\in\{0,\ldots,\mathcal{Q}\}$ and $\phi$ is $\mathscr{P}_{\alpha}$- rectifiable. 
\end{itemize}
\end{congettura}

Neither one of the implications of the formulation of the density problem is of easy solution. In \cite{MerloAntonelli} the author of the present work in collaboration with G. Antonelli prove the implication (ii)$\Rightarrow$(i) of the Density Problem when the tangents measures to $\phi$ are supported on complemented subgroups.

Furthermore, as already observed in \cite{merlo}, if $d$ is a left invariant distance coming from a polynomial norm on $\mathbb{G}$ with the same argument used in \cite{Kirchheim2002UniformilySpaces} and later on in \cite{Chousionis2015MarstrandsGroup}, it is possible to show that if (i) in the Density Problem holds, then $\alpha\in\N$. 
In $\R^n$ this implies thanks to Theorem 3.1 of \cite{polynorms}, that there is an open and dense set $\Omega$ in the space of norms  (with the distance induced by the Hausdorff distance of the unit balls) for which for any $\lVert\cdot\rVert\in\Omega$, Marstrand's theorem holds.


\appendix

\section{Construction of dyadic cubes}\label{AppendiceA}
Throughout this section we assume $\phi$ to be a fixed Radon measure on the Carnot group $\mathbb{G}$, supported on a compact set $K$, and such that:
$$0<\liminf_{r\to 0}\frac{\phi(B(x,r))}{r^m}\leq \limsup_{r\to 0}\frac{\phi(B(x,r))}{r^m}<\infty,\text{ for }\phi\text{-almost every }x\in\mathbb{G}.$$
In the following, we construct a family of \emph{dyadic cubes for the measure} $\phi$. There are many constructions in literature of such objects both in the Euclidean and in (rather general) metric spaces. Unfortunately, in the context of general metric spaces, dyadic cubes are only available for AD-regular measures, see for instance \cite{christ}. However, since the measure $\phi$ is not so regular, we need to provide a generalisation. More precisely, for any fixed $\xi,\tau\in\N$, we construct a family of partitions $\{\Delta_j^\phi(\xi,\tau)\}_{j\in\N}$ of $K$ such that:
\begin{itemize}
    \item[(i)]$\diam Q\leq 2^{-j}/\tau$ and $\phi(Q)\leq 2^{-jm}/\tau$ for any $Q\in\Delta_j^\phi(\xi,\tau)$,
    \item[(ii)] $2^{-j}\lesssim\diam Q$ and $2^{-jm}\lesssim_\xi\phi(Q)$ for those cubes $Q\in \Delta_j^\phi(\xi,\tau)$, for which $Q\cap E^\phi(\xi,\tau)\neq \emptyset$.
\end{itemize}
The collection $\Delta^\phi(\xi,\tau):=\{\Delta_j^\phi(\xi,\tau):j\in\N\}$ is said to be a family of dyadic cubes for $\phi$ relative to the parameters $\xi,\tau$. The strategy we employ to construct such partitions, is to adapt the construction given by G. David for AD-regular measures that can be found in Appendix I of \cite{DavidLN} to this less regular case.

In Subsection \ref{has:distance} we briefly recall the definition of the Hausdorff distance of compact sets and prove some technical facts used in Section \ref{sec:main} and Subsection \ref{sec:proofdyadic}. In Subsection \ref{sec:dyadiccubes}, we state Theorem \ref{evev} that is the main result of this appendix and prove some of its consequences. Finally, in Subsection \ref{sec:proofdyadic}, we prove Theorem \ref{evev}.

\subsection{Hausdorff distance of sets}
\label{has:distance}

In this subsection we recall some of the properties of the Hausdorff distance of sets and the Hausdorff convergence of compact sets.

\begin{definizione}[Hausdorff distance]\label{def:Haus}
For any couple of sets in $A,B\subseteq \mathbb{G}$, we define their \emph{Hausdorff distance} as:
$$d_H(A,B):=\max\Big\{\sup_{x\in A}\text{dist}(x,B),\sup_{y\in B}\text{dist}(A,y)\Big\}.$$
Furthermore, for any compact set $\kappa$ in $\mathbb{G}$, we define $\mathfrak{F}(\kappa):=\{A\subseteq \kappa: A\text{ is compact}\}$.
\end{definizione}

\begin{osservazione}
Recall that for any couple of sets $A,B\subseteq \mathbb{G}$ we have $d_H(A,B)=d_H(\overline{A},\overline{B})$.
\end{osservazione}

The following is a well known property of the  Hausdorff metric.

\begin{teorema}
For any compact set $\kappa$ of $\mathbb{G}$, we have that $(\mathfrak{F}(\kappa),d_H)$ is a compact metric space.
\end{teorema}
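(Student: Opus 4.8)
The statement to prove is that $(\mathfrak{F}(\kappa), d_H)$ is a compact metric space, where $\kappa$ is a fixed compact subset of $\mathbb{G}$ and $\mathfrak{F}(\kappa)$ is the collection of nonempty compact subsets of $\kappa$ (I would first note in passing that $d_H$ is a genuine metric on $\mathfrak{F}(\kappa)$: symmetry and nonnegativity are immediate, $d_H(A,B)=0$ forces $\overline A=\overline B$ hence $A=B$ for compact sets, and the triangle inequality follows from $\dist(x,C)\le\dist(x,B)+\sup_{b\in B}\dist(b,C)$ for every $x$). Since $(\mathfrak{F}(\kappa),d_H)$ is a metric space, it suffices to prove \emph{sequential} compactness: every sequence $\{A_i\}_{i\in\mathbb{N}}\subseteq\mathfrak{F}(\kappa)$ admits a subsequence converging in $d_H$ to some $A\in\mathfrak{F}(\kappa)$.

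First I would recall that $\kappa$, being compact in $\mathbb{G}\cong\mathbb{R}^n$ with the metric $d$, is totally bounded and complete; in particular for each $k\in\mathbb{N}$ there is a finite set $S_k\subseteq\kappa$ that is a $2^{-k}$-net for $\kappa$. Given a sequence $\{A_i\}$, for each $k$ consider the family of subsets of $S_k$ of the form $N_k(A):=\{s\in S_k:\dist(s,A)\le 2^{-k}\}$; since $S_k$ is finite there are only finitely many such subsets, so by a diagonal argument I can extract a single subsequence (not relabelled) along which $N_k(A_i)$ is eventually constant, equal to some fixed $T_k\subseteq S_k$, \emph{simultaneously for all} $k$. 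The plan is then to define the candidate limit as
\[
A:=\bigcap_{k\in\mathbb{N}}\cl\Big(\bigcup_{s\in T_k}\overline{B(s,2^{-k+1})}\Big),
\]
or, equivalently and perhaps more cleanly, as the set of all $x\in\kappa$ that are limits of sequences $x_i\in A_i$ (the \emph{Kuratowski limit}); I would check these two descriptions agree. One shows $A$ is nonempty (the $T_k$ are nonempty since the $A_i$ are, and a compactness/nesting argument produces a point), closed, and contained in $\kappa$, hence $A\in\mathfrak{F}(\kappa)$.

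Finally I would verify $d_H(A_i,A)\to 0$ along the subsequence by splitting into the two one-sided bounds. For $\sup_{x\in A_i}\dist(x,A)$: given $\varepsilon>0$ pick $k$ with $2^{-k+2}<\varepsilon$; for large $i$ every point of $A_i$ lies within $2^{-k}$ of some $s\in N_k(A_i)=T_k$, and by construction such $s$ lies within $2^{-k+1}$ of points of $A$ for all large indices, so $\dist(x,A)<\varepsilon$. For $\sup_{y\in A}\dist(A_i,y)$: each $y\in A$ is, by the defining property, a limit of points of $A_i$, and a uniform version of this (again using the finite nets $S_k$ and the stabilisation of $T_k$) gives $\dist(y,A_i)<\varepsilon$ for all large $i$ uniformly in $y\in A$, using compactness of $A$. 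Combining the two estimates yields $d_H(A_i,A)<\varepsilon$ for large $i$, proving the claimed convergence.

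\textbf{Main obstacle.} The routine parts (that $d_H$ is a metric, that the one-sided bounds combine) are standard; the delicate point is extracting the convergent subsequence and simultaneously controlling both one-sided Hausdorff distances \emph{uniformly}. The cleanest route is the diagonal/net argument sketched above — organising the bookkeeping of the nets $S_k$ and the stabilised traces $T_k$ so that the two $\sup$ estimates both come out uniform — or, alternatively, invoking the general fact that the hyperspace of nonempty closed subsets of a compact metric space is compact in the Hausdorff metric (Blaschke's selection theorem); since the paper seems to want a self-contained treatment I would present the explicit net argument rather than cite it.
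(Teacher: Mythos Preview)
Your proposal is correct, but it takes a genuinely different route from the paper. The paper does not prove this result at all: it simply writes ``See for instance Theorem VI of \S 28 in \cite{Hausdorff}'' and moves on. Your assumption that ``the paper seems to want a self-contained treatment'' is therefore mistaken; the author treats this as a classical fact (essentially Blaschke's selection theorem) and defers to the literature.

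What you sketch is the standard self-contained proof of that classical fact: extract a diagonal subsequence along which the traces on finer and finer finite nets stabilise, identify the limit as the Kuratowski limit, and verify both one-sided Hausdorff bounds uniformly. This is entirely sound, and indeed more informative than a bare citation. The trade-off is the usual one: the paper's approach keeps the appendix short and avoids reproving folklore, while your approach is self-contained and makes the argument available to a reader without access to Hausdorff's book. Either is acceptable; just be aware that in the paper's context this theorem is regarded as background rather than something requiring proof.
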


\begin{proof}
See for instance Theorem VI of \S 28 in \cite{Hausdorff}.
\end{proof}

The next proposition will be used in the proof of Lemma \ref{lemma:A2} and Proposition \ref{prop:A10}. It establishes the  stability of the Hausdorff convergence under finite unions.

\begin{proposizione}\label{prop:1}
Let $M\in \N$ and assume $\{D_i\}_{i=1,\ldots,M}$ and $\{D^\prime_i\}_{i=1,\ldots M}$ are finite families of subsets of $\mathbb{G}$. Then:
    \begin{equation}
         d_H\Big(\bigcup_{i=1}^MD_i,\bigcup_{i=1}^MD_i^\prime\Big)\leq \max_{i=1,\ldots,M}d_H(D_i,D_i^\prime).
         \label{eq:A9010}
    \end{equation}
Let $\kappa$ be a compact set in $\mathbb{G}$. Suppose that for any $j=1,\ldots, M$, the sequences $\{A^j_i\}_{i\in\N}\subseteq \mathfrak{F}(\kappa)$ converge in the Hausdorff metric to some $A^j\in\mathfrak{F}(\kappa)$. Then:
    \begin{equation}
    \lim_{i\to\infty} d_H\bigg(\bigcup_{j=1}^M A_i^j, \bigcup_{j=1}^M A^j\bigg)=0.
    \label{eq:A9011}
    \end{equation}
Finally, if the set $B$ is contained in $\bigcup_{j=1}^MA_i^j$ for any $i\in\N$, then $B\subseteq \bigcup_{j=1}^MA^j$.
\end{proposizione}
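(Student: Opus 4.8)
Looking at Proposition \ref{prop:1}, the final statement to prove is: if $B \subseteq \bigcup_{j=1}^M A_i^j$ for all $i \in \N$, then $B \subseteq \bigcup_{j=1}^M A^j$, given that each $A_i^j \to A^j$ in the Hausdorff metric.

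The plan is to argue pointwise. Fix $x \in B$. Since $x \in \bigcup_{j=1}^M A_i^j$ for every $i$, by the pigeonhole principle there is some index $j_0 \in \{1,\dots,M\}$ and an infinite subset $I \subseteq \N$ such that $x \in A_i^{j_0}$ for all $i \in I$. The key point is that Hausdorff convergence $A_i^{j_0} \to A^{j_0}$ implies convergence of the distance functions: $\dist(x, A_i^{j_0}) \to \dist(x, A^{j_0})$. Indeed, for any point $y$ and any two compact sets $C, D$ one has $|\dist(y,C) - \dist(y,D)| \le d_H(C,D)$, which is standard (and follows from the definition in Definition \ref{def:Haus}). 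Applying this with $y = x$, $C = A_i^{j_0}$, $D = A^{j_0}$ gives $|\dist(x, A_i^{j_0}) - \dist(x, A^{j_0})| \le d_H(A_i^{j_0}, A^{j_0}) \to 0$. Since $x \in A_i^{j_0}$ for $i \in I$, we have $\dist(x, A_i^{j_0}) = 0$ for those $i$, hence $\dist(x, A^{j_0}) = 0$. Because $A^{j_0}$ is compact (it lies in $\mathfrak{F}(\kappa)$) and therefore closed, $\dist(x, A^{j_0}) = 0$ forces $x \in A^{j_0} \subseteq \bigcup_{j=1}^M A^j$. As $x \in B$ was arbitrary, $B \subseteq \bigcup_{j=1}^M A^j$.

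There is essentially no obstacle here: the only ingredients are the pigeonhole principle (legitimate because $M$ is finite and $\N$ is infinite), the $1$-Lipschitz dependence of the distance function on the Hausdorff distance of the target set, and closedness of the limit sets. One small point worth being careful about: the pigeonhole step needs the index set $\{1,\dots,M\}$ to be finite, which is exactly the standing hypothesis; without finiteness the statement can fail. I would present the three displayed facts — the Lipschitz estimate $|\dist(x,C)-\dist(x,D)| \le d_H(C,D)$, the resulting convergence $\dist(x,A_i^{j_0}) \to \dist(x,A^{j_0})$, and the conclusion $\dist(x,A^{j_0}) = 0$ — and then invoke closedness of $A^{j_0}$.

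One could alternatively combine this with \eqref{eq:A9011}: since $B \subseteq \bigcup_j A_i^j$ and $\bigcup_j A_i^j \to \bigcup_j A^j$, any $x \in B$ satisfies $\dist(x, \bigcup_j A^j) \le \dist(x, \bigcup_j A_i^j) + d_H(\bigcup_j A_i^j, \bigcup_j A^j) = d_H(\bigcup_j A_i^j, \bigcup_j A^j) \to 0$, whence $\dist(x, \bigcup_j A^j) = 0$ and $x \in \bigcup_j A^j$ by closedness of the finite union. This is slightly slicker since \eqref{eq:A9011} is already available in the proposition, so I would likely present it this way to avoid repeating the pigeonhole argument; but both routes are routine and equivalent in content.
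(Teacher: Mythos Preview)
Your argument for the third claim is correct and is exactly the paper's: pigeonhole to find a fixed index $j_0$ along a subsequence, then $\dist(b,A^{j_0})=0$ from Hausdorff convergence, then closedness of $A^{j_0}$; your alternative via \eqref{eq:A9011} is an equally valid minor variant. Note only that the proposition also asks for \eqref{eq:A9010} and \eqref{eq:A9011}, which the paper proves by unwinding the definition of $d_H$ (your write-up treats these as given).
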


\begin{proof}
First of all, note that since identity \eqref{eq:A9011} is an immediate consequence of \eqref{eq:A9010}, we just need to prove the former. Thanks to the definition of the distance $d_H$, we have: 
\begin{equation}
\begin{split}
     d_H\Big(\bigcup_{i=1}^MD_i,\bigcup_{i=1}^MD_i^\prime\Big)=&\max\bigg\{\sup_{x\in \bigcup_{i=1}^M D_i} \dist\bigg(x, \bigcup_{i=1}^M D^\prime_i\bigg),\sup_{y\in \bigcup_{i=1}^M D_i^\prime} \dist\bigg(y, \bigcup_{i=1}^M D_i\bigg)\bigg\}\\
    \leq&\max\bigg\{\max_{i=1,\ldots,M}\sup_{x\in D_i}\dist\bigg(x, \bigcup_{i=1}^M D^\prime_i\bigg),\max_{i=1,\ldots,M}\sup_{x\in D^\prime_i}\dist\bigg(x, \bigcup_{i=1}^M D^\prime_i\bigg)\bigg\}\\
    \leq& \max\bigg\{\max_{i=1,\ldots,M}\sup_{x\in D_i}\dist(x,  D^\prime_i),\max_{i=1,\ldots,M}\sup_{x\in D^\prime_i}\dist(x, D^\prime_i)\bigg\}\\
    =&\max_{i=1,\ldots,M}\max\bigg\{\sup_{x\in D_i}\dist(x,  D^\prime_i),\sup_{x\in D^\prime_i}\dist(x, D^\prime_i)\bigg\}=\max_{i=1,\ldots,M}\dist(D_i,D_i^\prime).
\end{split}
    \nonumber
\end{equation}
This concludes the proof of \eqref{eq:A9010} and thus of \eqref{eq:A9011}. The proof of the last part of the proposition follows by the pidgeonhole principle. Indeed, for any $b\in B$ there exists a $j(b)\in\{1,\ldots,M\}$ such that $b\in A_{j(b)}^{i_k}$ for any $k\in\N$. In particular, we conclude that $\dist(b,A_{j(b)})=0$ and thus, since $A_{j(b)}$ is closed, we infer that $b\in A_{j(b)}$.
\end{proof}

\subsection{Dyadic cubes}
\label{sec:dyadiccubes}
In this subsection we state the main theorem of this appendix, Theorem \ref{evev} and prove, assuming its validity, a couple of consequences that will be used in Section \ref{sec:main}. Throughout the rest of Appendix \ref{AppendiceA}, we will always assume that $\xi$ and $\tau$ are two fixed natural numbers such that $\phi(E^\phi(\xi,\tau))>0$, where the set $E^\phi(\xi,\tau)$ was defined in Proposition \ref{prop:cpt}.

\begin{definizione}
For any subset $A$ of $\mathbb{G}$ and any $\delta>0$, we let:
$$\partial(A,\delta):=\{u\in A:\text{dist}(u,K\setminus A)\leq\delta\}\cup\{u\in K\setminus A:\text{dist}(u,A)\leq\delta\},$$
where we recall that $K$ is the compact set supporting the measure $\phi$.
\end{definizione}

Throughout the rest of this subsection, we simplify the expression of the constant introduced in Notation \ref{notation1} to:
$$N:=N(\xi),\qquad\zeta:=\zeta(\xi),\qquad\oldC{C:F}:=\oldC{C:F}(\xi),\qquad \oldC{C:VIB}:=\oldC{C:VIB}(\xi),\qquad\oldC{child}:=\oldC{child}(\xi).$$

\begin{teorema}\label{evev}
For any $j\in\N$ there are disjoint partitions $\Delta_j^\phi(\xi,\tau)$ of $K$ having the following properties:
\begin{itemize}
    \item[(i)] if $j\leq j^\prime$, $Q\in\Delta_j^\phi(\xi,\tau)$ and $Q^\prime\in\Delta_{j^\prime}^\phi(\xi,\tau)$, then either $Q$ contains $Q^\prime$ or $Q\cap Q^\prime=\emptyset$,
    \item[(ii)]if $Q\in\Delta_j^\phi(\xi,\tau)$ we have  $\diam(Q)\leq 2^{-Nj+5}/\tau$,
    \item[(iii)] if $Q\in \Delta_j^\phi(\xi,\tau)$ and $Q\cap E^\phi(\xi,\tau)\neq \emptyset$, then $\oldC{C:F}^{-1}\big(2^{-Nj}/\tau\big)^{m}\leq \phi(Q)\leq \oldC{C:F}\big(2^{-Nj}/\tau\big)^{m}$,
    \item[(iv)] if $Q\in\Delta_j^\phi(\xi,\tau)$, we have $ \phi\big(\partial(Q,\zeta^2 2^{-Nj}/\tau )\big)\leq \oldC{C:F} \zeta\big(2^{-Nj}/\tau)^{m}$,
    \item[(v)] if $Q\in\Delta_j^\phi(\xi,\tau)$ and $Q\cap E^\phi(\xi,\tau)\neq \emptyset$, there exists a $\mathfrak{c}(Q)\in K$ such that $B(\mathfrak{c}(Q),\zeta^2 2^{-Nj-1}/\tau)\subseteq Q_j(x)$.
\end{itemize}
We will denote with the symbol $\Delta^\phi(\xi,\tau)$ the family of all dyadic cubes, i.e. $\Delta^\phi(\xi,\tau)=\{Q\in \Delta^\phi(\xi,\tau):j\in\N\}$.
\end{teorema}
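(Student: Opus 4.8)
The construction adapts G. David's dyadic cube construction for AD-regular measures (Appendix~I of \cite{DavidLN}) to the present setting, where $\phi$ is merely locally doubling on the set $E^\phi(\xi,\tau)$. The key difficulty is that $\phi$ is \emph{not} AD-regular on all of $K$; the density lower/upper bounds are only available on $E^\phi(\xi,\tau)$, so the standard argument must be localized and the sizes of cubes must be controlled only through points of $E^\phi(\xi,\tau)$. The main steps are as follows.

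\textbf{Step 1: maximal nets and the combinatorial skeleton.} For each generation $j\in\N$, I would choose a maximal $2^{-Nj}/\tau$-separated subset $\{x_i^j\}_{i\in I_j}$ of $E^\phi(\xi,\tau)$. These ``centres'' give a nested sequence of nets after passing, if necessary, to a refinement guaranteeing that the net at generation $j+1$ refines the one at generation $j$ in the sense that every generation-$(j+1)$ centre is close to a unique generation-$j$ centre; this is the same inductive selection as in \cite{DavidLN}. From these centres one builds ``pre-cubes'' by the usual rule assigning each centre $x_i^{j+1}$ to the nearest surviving $x_{i'}^j$, and then defines the cube $Q_i^j$ as the union of the pre-cubes descending from $x_i^j$. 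Disjointness within a generation and the nesting property (i) are then purely combinatorial consequences of the construction and of the separation/covering properties of the nets; the constant $\oldC{child}$ from Notation~\ref{notation1} enters as a uniform bound on the number of children of a cube, which follows from a standard volume-counting argument using the doubling property of $\phi$ near $E^\phi(\xi,\tau)$ (Proposition~\ref{prop:cpt} gives the two-sided bound $\vartheta^{-1}r^{\mathcal{Q}-1}\le \phi(B(x,r))\le \vartheta r^{\mathcal{Q}-1}$ for $x\in E^\phi(\xi,\tau)$).

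\textbf{Step 2: size estimates (ii), (iii), (v).} Property (ii) follows since $Q_i^j$ is contained in a ball of radius a fixed multiple of $2^{-Nj}/\tau$ around $x_i^j$ (here the factor $2^5$ absorbs the geometric-series loss coming from the descendants). Property (v) is immediate: take $\mathfrak{c}(Q)=x_i^j\in E^\phi(\xi,\tau)\subseteq K$ and use that the ball $B(x_i^j,\zeta^2 2^{-Nj-1}/\tau)$ cannot meet any other cube of the same generation because of the separation of the centres together with the smallness of $\zeta$. For (iii), when $Q\cap E^\phi(\xi,\tau)\ne\emptyset$ one has both an interior ball and an exterior containing ball centred at points of $E^\phi(\xi,\tau)$ (using Step~1 and (v)); applying the two-sided density bound of Proposition~\ref{prop:cpt} at these points and absorbing the finitely many geometric constants into $\oldC{C:F}=2^{24\mathcal{Q}}\sigma$ gives $\oldC{C:F}^{-1}(2^{-Nj}/\tau)^m\le\phi(Q)\le \oldC{C:F}(2^{-Nj}/\tau)^m$.

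\textbf{Step 3: the small-boundary estimate (iv).} This is the delicate point and the true heart of the proof, exactly as in \cite{DavidLN}. The idea is that, along the tower of ancestors of a fixed small cube, the $\phi$-measure of the $\zeta^2 2^{-Nj}/\tau$-neighbourhood of the cube boundary, summed over the $N$ ``sub-generations'' between consecutive true generations, is controlled by a fixed fraction of $\phi(Q)$; since there are $N=N(\xi)=\lfloor -4\log\zeta\rfloor+40$ sub-generations, an averaging (pigeonhole) argument over the choice of which intermediate scale to cut at produces at least one choice for which the boundary measure is at most $\oldC{C:F}\zeta(2^{-Nj}/\tau)^m$. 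Concretely, one introduces for each integer $\ell$ with $0\le \ell< N$ an intermediate net at scale $2^{-(Nj+\ell)}/\tau$, estimates $\sum_{\ell}\phi(\partial(Q,\zeta^2 2^{-Nj}/\tau))$ using the doubling property and the fact that the annular neighbourhoods at distinct $\ell$ are essentially disjoint, and then fixes $\ell$ achieving the average; the choice of $N$ in terms of $\zeta$ is precisely what makes the geometric loss $2^{5}$ and the factor $\zeta$ compatible. One then goes back and \emph{redefines} the cube construction so that all cuts are made at these good intermediate scales; re-running Steps~1--2 with this modified net preserves (i), (ii), (iii), (v) because the modification changes all relevant radii only by bounded factors already absorbed into $\oldC{C:F}$.

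\textbf{Main obstacle.} The genuinely non-routine part is Step~3: making the pigeonhole/averaging argument for the small-boundary property work with a measure that is only doubling \emph{near} $E^\phi(\xi,\tau)$ rather than globally. One must be careful that the annular neighbourhoods whose measures are summed are taken around cubes that meet $E^\phi(\xi,\tau)$ (so that Proposition~\ref{prop:cpt} applies) and that the ``bad'' cubes disjoint from $E^\phi(\xi,\tau)$ contribute nothing to the statements that mention $E^\phi(\xi,\tau)$ while still forming a legitimate partition of $K$. Everything else is bookkeeping with the explicit constants $N,\zeta,\oldC{C:F},\oldC{C:VIB},\oldC{child}$ fixed in Notation~\ref{notation1}, following \cite{DavidLN} and using Proposition~\ref{prop:1} of the previous subsection to pass Hausdorff-limits of finite unions of pre-cubes through the construction.
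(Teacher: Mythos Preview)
Your outline follows a Christ-style template, but it departs from the paper's construction in two places where the details genuinely matter, and in the second of these there is a real quantitative gap.

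\textbf{Step 1: nets only in $E^\phi(\xi,\tau)$ cannot partition $K$.} You take centres exclusively in $E^\phi(\xi,\tau)$, yet the cubes must form a partition of all of $K$. You flag this at the end (``bad cubes disjoint from $E^\phi(\xi,\tau)$ \ldots still forming a legitimate partition of $K$'') but never say how those cubes are produced. In the paper this is handled explicitly: one takes a maximal $2^{-j}/\tau$-net $\Xi_0(j)$ in $K$ that \emph{contains} a maximal net $\gimel_0(j)\subseteq E^\phi(\xi,\tau)$, with a total order placing $\gimel_0(j)$ first. The pre-cubes around the extra centres fill out $K$, while those around $\gimel_0(j)$-centres carry the mass lower bound and the inner-ball property; a gluing step (too-small $\gimel_0$-pre-cubes are attached to a nearby massive one) then yields the pieces $D_j$ of Proposition~\ref{propA1}.

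\textbf{Step 3: the pigeonhole over $N$ sub-generations gives the wrong rate.} This is the substantive gap. Your scheme bounds $\sum_{\ell=0}^{N-1}\phi\bigl(\partial(Q_\ell,\zeta^2 2^{-Nj}/\tau)\bigr)$ by a constant times $(2^{-Nj}/\tau)^m$ and then picks a good $\ell$. That yields a boundary bound of order $(2^{-Nj}/\tau)^m/N$, and since $N=\lfloor-4\log\zeta\rfloor+40\sim\log(1/\zeta)$ you obtain a factor $1/\log(1/\zeta)$, not the $\zeta$ required in (iv). The disjointness claim for the annular neighbourhoods at distinct $\ell$ is also unjustified: boundaries of cubes built at neighbouring scales $2^{-(Nj+\ell)}$ have no reason to avoid one another (and in any nested scheme the coarser boundaries are contained in the finer ones), so you cannot do better than the crude bound by $\phi(Q)$ for the sum.

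The paper gets the factor $\zeta$ (written there as $\eta=\xi^{-2}2^{-40\mathcal Q}$) from a \emph{per-ball} pigeonhole, not a per-generation one. For each centre $x$ one chooses the radius of the basic ball $B_j(x)$ among roughly $\lfloor 1/\eta\rfloor$ equally spaced candidates in $(2^{-j}/\tau,(1+\eta)2^{-j}/\tau)$ so that the corona of width $\eta^2 2^{-j}/\tau$ around $\partial B_j(x)$ has $\phi$-measure $\lesssim\eta(2^{-j}/\tau)^m$ (Lemma~\ref{lemma1}); with $\sim 1/\eta$ options the average is $\sim\eta$, which is exactly the stated rate. The nesting (i) is then enforced not by a direct child-to-parent assignment as you propose, but by a Hausdorff-limit procedure $D_{j,d}(x)\to R_j(x)$ (Lemma~\ref{lemma3}, Lemma~\ref{lemma:A2}) followed by an ordered subtraction $R_j\mapsto Q_j$; Proposition~\ref{prop:1} is used precisely to pass these finite unions through the limit. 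In the paper the constant $N$ only fixes the ratio between consecutive \emph{true} generations so that the geometric series in the Hausdorff-limit step converges with the margin $2^{-N+6}\le\eta^2/2$; it plays no role whatsoever in producing the small-boundary estimate.
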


\begin{osservazione}\label{rk:rephrase}
Part (iii) of Theorem \ref{evev} can be rephrased in the following useful way. Recalling that $\oldC{C:VIB}(\xi)=\oldC{C:F}(32\zeta^{-2})^m$ and  putting together Theorem \ref{evev} (ii), (iii) and (v) we infer that:
\begin{itemize}
    \item[(iii)'] if $Q\cap E^\phi(\xi,\tau)\neq \emptyset$ then $\oldC{C:VIB}^{-1}\diam Q^{m}\leq\phi(Q)\leq\oldC{C:VIB} \diam Q^m$.
\end{itemize}
\end{osservazione}

The families of cubes yielded by Theorem \ref{evev} may have the annoying property that fixed a cube $Q\in \Delta_j^\phi(\xi,\tau)$, the only sub-cube of $Q$ in the layer $\Delta_{j+1}^\phi(\xi,\tau)$ contained in $Q$, is just $Q$ itself. The following proposition shows that this is not much of a problem for the cubes intersecting $E^\phi(\xi,\tau)$.  

\begin{proposizione}\label{prop:stabcubi}
Suppose that $Q^*\in\Delta_j^\phi(\xi,\tau)$ is parent of a cube $Q\in\Delta_{j+k}^\phi(\xi,\tau)$ such that $Q\cap E^\phi(\xi,\tau)\neq \emptyset$, i.e. $Q^*$ is the smallest cube in $\Delta^\phi(\xi,\tau)$ strictly containing $Q$. Then $k < \big\lfloor2\log \oldC{C:F}/Nm\big\rfloor+1$ and:
$$
\frac{\diam Q^*}{\diam Q}\leq\oldC{child}.$$
\end{proposizione}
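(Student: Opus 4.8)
\textbf{Proof plan for Proposition \ref{prop:stabcubi}.}

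The plan is to compare the measures of $Q$ and $Q^*$ using the size estimates from Theorem \ref{evev}, exploiting that $Q\cap E^\phi(\xi,\tau)\neq\emptyset$ and hence also $Q^*\cap E^\phi(\xi,\tau)\neq\emptyset$ (since $Q\subseteq Q^*$). First I would observe that since $Q^*$ is the \emph{parent} of $Q$, there is no cube of any intermediate layer $\Delta_{j+1}^\phi(\xi,\tau),\dots,\Delta_{j+k-1}^\phi(\xi,\tau)$ strictly between them; in particular the unique cube of $\Delta_{j+1}^\phi(\xi,\tau)$ containing $Q$ must be $Q^*$ itself, so by part (i) of Theorem \ref{evev} applied down the chain, $Q^*$ coincides with the cube of layer $\Delta_{j+1}^\phi(\xi,\tau),\dots,\Delta_{j+k-1}^\phi(\xi,\tau)$ containing $Q$. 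This is the key structural point: it forces the measure $\phi(Q^*)$ to be controlled \emph{from above} by the layer-$(j+k-1)$ upper bound, not merely the layer-$j$ one.

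Next I would run the numerics. By Theorem \ref{evev}(iii) applied to $Q\in\Delta_{j+k}^\phi(\xi,\tau)$ with $Q\cap E^\phi(\xi,\tau)\neq\emptyset$,
\begin{equation}
\phi(Q)\geq \oldC{C:F}^{-1}\big(2^{-N(j+k)}/\tau\big)^{m}.
\nonumber
\end{equation}
On the other hand, applying the same part of Theorem \ref{evev} to the cube $\tilde Q\in\Delta_{j+k-1}^\phi(\xi,\tau)$ with $Q\subseteq \tilde Q$ (which exists by (i) and satisfies $\tilde Q\cap E^\phi(\xi,\tau)\neq\emptyset$), together with the structural observation $\tilde Q=Q^*$ when $k\geq 2$ (and trivially $\phi(Q^*)=\phi(\tilde Q)$ when $k=1$ as well, since then $Q^*\in\Delta_{j}^\phi$ with $j=(j+k)-1$), we get
\begin{equation}
\phi(Q^*)=\phi(\tilde Q)\leq \oldC{C:F}\big(2^{-N(j+k-1)}/\tau\big)^{m}.
\nonumber
\end{equation}
Actually the cleanest route is: $Q^*$ being the parent means $j^*:=j$ and the child layer index is $j+k$; but no cube of layers $j+1,\dots,j+k-1$ strictly contains $Q$ other than $Q^*$, hence $Q^*$ belongs simultaneously to the ``graded'' description at level $j+k-1$, giving $\phi(Q^*)\le \oldC{C:F}(2^{-N(j+k-1)}/\tau)^m$. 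Combining with $\phi(Q)\le\phi(Q^*)$ and the lower bound on $\phi(Q)$,
\begin{equation}
\oldC{C:F}^{-1}\big(2^{-N(j+k)}/\tau\big)^{m}\leq \phi(Q)\leq \phi(Q^*)\leq \oldC{C:F}\big(2^{-N(j+k-1)}/\tau\big)^{m},
\nonumber
\end{equation}
which simplifies to $\oldC{C:F}^{-2}\leq 2^{Nm}$, i.e. $2\log\oldC{C:F}\leq Nm\,k$ is \emph{reversed}: rearranging gives $2^{-Nmk}\leq \oldC{C:F}^2 2^{-Nm}$, hence $Nm(k-1)\le 2\log\oldC{C:F}$, so $k\le 2\log\oldC{C:F}/(Nm)+1$, yielding $k<\lfloor 2\log\oldC{C:F}/(Nm)\rfloor+1$ after noting $k\in\N$ and handling the boundary case (if $2\log\oldC{C:F}/(Nm)$ is an integer one checks the inequality is strict because $\phi(Q)<\phi(Q^*)$ strictly, as $Q\subsetneq Q^*$, or simply by the definition of floor). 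For the diameter ratio, Theorem \ref{evev}(ii) gives $\diam Q^*\leq 2^{-Nj+5}/\tau$, and Theorem \ref{evev}(v) applied to $Q$ (using $Q\cap E^\phi(\xi,\tau)\neq\emptyset$) gives $\diam Q\geq \zeta^2 2^{-N(j+k)-1}/\tau$, whence
\begin{equation}
\frac{\diam Q^*}{\diam Q}\leq \frac{2^{-Nj+5}/\tau}{\zeta^2 2^{-N(j+k)-1}/\tau}=2^{Nk+6}\zeta^{-2}\leq 2^{\frac{2\log\oldC{C:F}}{m}+N}\zeta^{-2}=\oldC{child},
\nonumber
\end{equation}
where in the last inequality I used the bound on $k$ just obtained, namely $Nk\le \frac{2\log\oldC{C:F}}{m}+N$ (since $N(k-1)\le 2\log\oldC{C:F}/m$), and absorbed the $2^6$ factor into the constant (recalling the definition $\oldC{child}(\sigma):=2^{\frac{2\log\oldC{C:F}}{m}+N}\zeta^{-2}$ from Notation \ref{notation1}, one should double-check the exponent bookkeeping, possibly adjusting by the universal factor $2^6$ which is harmless).

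The main obstacle I anticipate is the structural bookkeeping in the first step: correctly extracting from the definition of ``parent'' (the \emph{smallest} cube strictly containing $Q$) the fact that $Q^*$ simultaneously represents $Q$ at all intermediate generations, so that the \emph{upper} measure bound one is allowed to use is the one at generation $j+k-1$ rather than $j$. Without this observation one only gets $\phi(Q^*)\le\oldC{C:F}(2^{-Nj}/\tau)^m$, which is too weak to bound $k$. Everything after that is a routine manipulation of the explicit constants $N,\zeta,\oldC{C:F},\oldC{child}$ fixed in Notation \ref{notation1}, and I would double-check the numerical constants (the $2^5$ and $2^6$ factors) against the precise definition of $\oldC{child}$ to make sure the final inequality is clean.
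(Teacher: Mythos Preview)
Your proof has a genuine gap in the ``structural bookkeeping'' step, and this propagates into an algebraic error that makes the argument circular.

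The claim that the unique cube of $\Delta_{j+1}^\phi(\xi,\tau)$ containing $Q$ is $Q^*$ is backwards. By Theorem~\ref{evev}(i) that cube, call it $\tilde Q$, satisfies $Q\subseteq\tilde Q\subseteq Q^*$. If $\tilde Q\supsetneq Q$ then $\tilde Q$ would be a cube strictly containing $Q$ that is strictly smaller than $Q^*$ (being at generation $j+1>j$), contradicting that $Q^*$ is the parent. Hence $\tilde Q=Q$, not $Q^*$: it is $Q$, not $Q^*$, that represents the cube at every intermediate generation $j+1,\dots,j+k-1$. Consequently the upper bound you wrote, $\phi(Q^*)\le \oldC{C:F}(2^{-N(j+k-1)}/\tau)^m$, is unjustified. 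More fatally, even granting your chain
\[
\oldC{C:F}^{-1}\big(2^{-N(j+k)}/\tau\big)^{m}\leq \phi(Q)\leq \phi(Q^*)\leq \oldC{C:F}\big(2^{-N(j+k-1)}/\tau\big)^{m},
\]
the two extremes differ only by one generation, so the inequality collapses to $\oldC{C:F}^{-2}\le 2^{Nm}$, which contains no $k$ at all. The subsequent ``rearranging'' to $2^{-Nmk}\le \oldC{C:F}^2 2^{-Nm}$ and then $Nm(k-1)\le 2\log\oldC{C:F}$ does not follow; the $k$ has simply been inserted by hand.

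The paper's argument avoids this by comparing across the \emph{full} gap rather than a single generation: for any ancestor $\tilde Q$ of $Q$ whose generation differs from that of $Q$ by at least $\lfloor 2\log\oldC{C:F}/(Nm)\rfloor+1$, Theorem~\ref{evev}(iii) gives $\phi(\tilde Q)-\phi(Q)\ge \oldC{C:F}^{-1}(2^{-N\cdot(\text{gen }\tilde Q)}/\tau)^m-\oldC{C:F}(2^{-N\cdot(\text{gen }Q)}/\tau)^m>0$, hence $\tilde Q\supsetneq Q$. Since every such ancestor strictly contains $Q$, the parent (the closest strictly containing ancestor) must lie within the stated number of generations of $Q$. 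Equivalently, you can exploit the corrected structural fact that $Q$ itself sits in $\Delta_{j+1}^\phi(\xi,\tau)$ and compare the lower bound for $\phi(Q)$ at generation $j+1$ with its upper bound at generation $j+k$; that comparison involves $2^{N(k-1)m}\le\oldC{C:F}^2$ and does bound $k$. Your treatment of the diameter ratio is fine once $k$ is bounded.
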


\begin{proof}
Suppose $\tilde{Q}$ is the ancestor of the cube $Q$ contained in the layer $\Delta_{j^\prime}^\phi(\xi,\tau)$ for some $j^\prime$ for which $j^\prime-j\geq \big\lfloor2\log \oldC{C:F}/Nm\big\rfloor+1$. Then $\tilde{Q}\cap E^\phi(\xi,\tau)\neq \emptyset$ and thanks to Theorem \ref{evev}(i) and (iii), we infer:
\begin{equation}
\phi(\tilde{Q}\setminus Q)= \phi(\tilde{Q})-\phi(Q)\geq \oldC{C:F}^{-1}\bigg(\frac{2^{-jN}}{\tau}\bigg)^m-\oldC{C:F}\bigg(\frac{2^{-j^\prime N}}{\tau}\bigg)^m=\oldC{C:F}^{-2}\bigg(\frac{2^{-jN}}{\tau}\bigg)^m(1-\oldC{C:F}^22^{-(j^\prime-j)Nm})>0,    
\label{eq:A1000}
\end{equation}
where the last inequality above comes from the choice of $j^\prime-j$. It is immediate to see that inequality \eqref{eq:A1000} implies that $Q$ is strictly contained in $\tilde{Q}$. Therefore, the parent cube of $Q$ must be contained in some $\Delta_{j+k}^\phi(\xi,\tau)$ with $0\leq k< \lfloor 2\log \oldC{C:F}/mN\rfloor+1$. Hence, thanks to Theorem \ref{evev}(v) we infer that:
$$\diam Q^*\leq 2^{-Nj+5}/\tau=2^{Nk+6}\zeta^{-2}\cdot\zeta^22^{-N(j+k)-1}/\tau\leq 2^{Nk+6}\zeta^{-2}\diam Q\leq 2^{\frac{2\log \oldC{C:F}}{m}+N}\zeta^{-2}\diam Q=\oldC{child}\diam Q.$$
\end{proof}

The following result tells us that item (v) of Theorem \ref{evev} in some cases can be strengthened to assuming that the centre of the cube $\mathfrak{c}(Q)$ is contained in $E^\phi(\xi,\tau)$.

\begin{proposizione}\label{prop:centri}
Assume that $\mu\in\N$ is such that $\mu\geq 4\xi$. Then, for any cube $Q\in \Delta^\phi(\mathscr{E}^\phi_{\xi,\tau}(\mu,\nu);\xi,\tau,\nu)$ we can find a $\mathfrak{c}(Q)\in E^\phi(\xi,\tau)\cap Q$ such that: $$B(\mathfrak{c}(Q),\zeta^2\diam Q/64)\cap K\subseteq Q.$$
\end{proposizione}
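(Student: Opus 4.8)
\textbf{Proof plan for Proposition \ref{prop:centri}.}

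The plan is to combine the weak lower bound on $\phi$-measure of small balls that is implicit in membership in $\mathscr{E}^\phi_{\xi,\tau}(\mu,\nu)$ with the quantitative volume bounds from Theorem \ref{evev}, item (iii) and item (v). Fix a cube $Q\in\Delta^\phi(\mathscr{E}^\phi_{\xi,\tau}(\mu,\nu);\xi,\tau,\nu)$, say $Q\in\Delta_j^\phi(\xi,\tau)$ with $j\geq\nu$, and let $q:=\zeta^2 2^{-Nj-1}/\tau$ be the radius from Theorem \ref{evev}(v), so that $B(\mathfrak{c}_0,q)\subseteq Q$ for some $\mathfrak{c}_0\in K$ (here $\mathfrak{c}_0$ is only the point produced by (v), not yet assumed to lie in $E^\phi(\xi,\tau)$). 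Since $Q\in\Delta^\phi(\cdots;\xi,\tau,\nu)$, by definition $Q\cap\mathscr{E}^\phi_{\xi,\tau}(\mu,\nu)\neq\emptyset$, so in particular $Q\cap E^\phi(\xi,\tau)\neq\emptyset$ and Theorem \ref{evev}(iii) gives $\phi(Q)\leq \oldC{C:F}(2^{-Nj}/\tau)^{m}$.

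First I would show that $E^\phi(\xi,\tau)$ meets the slightly shrunk ball $B(\mathfrak{c}_0,q/2)$. Suppose not: then $\phi\big(B(\mathfrak{c}_0,q/2)\cap E^\phi(\xi,\tau)\big)=0$. Pick a point $y\in Q\cap\mathscr{E}^\phi_{\xi,\tau}(\mu,\nu)$. By the defining property of $\mathscr{E}^\phi_{\xi,\tau}(\mu,\nu)$ (from Proposition \ref{prop:Ecorsivo}), for every $0<r<1/\nu$ we have $(1-1/\mu)\phi(B(y,r))\leq \phi(B(y,r)\cap E^\phi(\xi,\tau))$; moreover $y\in E^\phi(\xi,\tau)$ so $\phi(B(y,r))\geq \xi^{-1}r^{m}$ for $0<r<1/\tau\leq 1/\gamma$ (recall $\tau$ plays the role of $\gamma$ here). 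Now $B(\mathfrak{c}_0,q/2)\subseteq B(y,2\,\mathrm{diam}\,Q)$ because both $\mathfrak{c}_0$ and $y$ lie in $Q$; choosing $r$ of the order $\mathrm{diam}\,Q\lesssim 2^{-Nj}/\tau$, which is admissible since $j\geq\nu$ forces $r<1/\nu$, we obtain a lower bound $\phi\big(B(y,r)\cap E^\phi(\xi,\tau)\big)\geq (1-1/\mu)\xi^{-1}r^{m}$, a definite fraction of $(2^{-Nj}/\tau)^m$. On the other hand, removing the piece sitting inside $B(\mathfrak{c}_0,q/2)$, which by assumption carries no $\phi\llcorner E^\phi(\xi,\tau)$ mass, together with the upper bound $\phi(Q)\leq \oldC{C:F}(2^{-Nj}/\tau)^m$ and the fact that $\mu\geq 4\xi$, leads to a contradiction with the choice of constants (the factor $\zeta^2$ controls the ratio $q/\mathrm{diam}\,Q$, and $\mathscr{E}$-membership guarantees the surviving mass cannot all be squeezed out). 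Hence there is $\mathfrak{c}(Q)\in E^\phi(\xi,\tau)\cap B(\mathfrak{c}_0,q/2)\subseteq E^\phi(\xi,\tau)\cap Q$.

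Finally I would verify the inclusion $B(\mathfrak{c}(Q),\zeta^2\,\mathrm{diam}\,Q/64)\cap K\subseteq Q$. Since $\mathfrak{c}(Q)\in B(\mathfrak{c}_0,q/2)$ and $\mathrm{diam}\,Q\leq 2^{-Nj+5}/\tau$ by Theorem \ref{evev}(ii), while $q=\zeta^2 2^{-Nj-1}/\tau$, one checks $\zeta^2\,\mathrm{diam}\,Q/64\leq \zeta^2 2^{-Nj-1}/\tau - q/2 = q/2$, so $B(\mathfrak{c}(Q),\zeta^2\,\mathrm{diam}\,Q/64)\subseteq B(\mathfrak{c}_0,q)\subseteq Q$ by the triangle inequality and Theorem \ref{evev}(v); intersecting with $K$ is harmless. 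The main obstacle is the middle step: making the contradiction genuinely quantitative, i.e. pinning down that the numerical relations among $\zeta$, $N$, $\oldC{C:F}$, $\xi$ and $\mu\geq 4\xi$ (all fixed in Notation \ref{notation1} and in the hypotheses) really do force the surviving $E^\phi(\xi,\tau)$-mass inside $Q$ to intersect the ball $B(\mathfrak{c}_0,q/2)$ rather than concentrating on the part of $Q$ away from $\mathfrak{c}_0$; this is exactly where the choice $\mu\geq 4\xi$ and the smallness of $\zeta$ relative to the other constants get used, and it parallels the argument already used to prove Theorem \ref{evev}(v) itself.
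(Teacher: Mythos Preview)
Your plan has a genuine gap in the middle step, and it stems from using the wrong ingredient of Theorem \ref{evev}. You build everything around the center $\mathfrak{c}_0$ from item (v) and then try to force $E^\phi(\xi,\tau)$ to meet the tiny ball $B(\mathfrak{c}_0,q/2)$. But nothing in the hypotheses pushes $E^\phi(\xi,\tau)$-mass toward that particular location: the $\mathscr{E}$-property only says that a large fraction of $\phi(B(y,r))$ sits in $E^\phi(\xi,\tau)$, and the upper bound $\phi(Q)\leq \oldC{C:F}(2^{-Nj}/\tau)^m$ puts no constraint on \emph{where} inside $Q$ that mass lies. The ball $B(\mathfrak{c}_0,q/2)$ has radius comparable to $\zeta^2\,\diam Q$, so its $\phi$-measure could easily be of order $\zeta^{2m}(2^{-Nj}/\tau)^m$, which is negligible compared to $\phi(Q)$; all of the $E^\phi(\xi,\tau)$-mass can perfectly well live in $Q\setminus B(\mathfrak{c}_0,q)$ without violating any of the inequalities you wrote down. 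The ``obstacle'' you flag at the end is not a technicality to be tightened---it is the whole difficulty, and the approach as outlined cannot overcome it.

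The paper's argument is different and shorter: it never tries to locate $\mathfrak{c}(Q)$ near the old center $\mathfrak{c}_0$. Instead it uses the small-boundary estimate, Theorem \ref{evev}(iv), which you did not invoke at all. One shows directly that
\[
\phi\big(E^\phi(\xi,\tau)\cap Q\setminus \partial(Q,\zeta^2\diam Q/32)\big)>0,
\]
by subtracting from $\phi(Q)$ two small error terms: the boundary strip, controlled by (iv) as $\oldC{C:F}\zeta(2^{-Nj}/\tau)^m\leq \oldC{C:F}^2\zeta\,\phi(Q)$, and the complement $\phi(Q\setminus E^\phi(\xi,\tau))$, controlled via the $\mathscr{E}$-property (cover $Q$ by a ball of radius $2^{-Nj+5}/\tau$ centered at some $w\in Q\cap\mathscr{E}^\phi_{\xi,\tau}(\mu,\nu)$) as at most $\mu^{-1}\xi\,\oldC{C:F}\,\phi(Q)$. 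With $\mu\geq 4\xi$ and $\oldC{C:F}^2\zeta\leq 1/2$ these errors leave a positive remainder, so one picks any point $\mathfrak{c}(Q)$ in the nonempty set above; by construction $\mathfrak{c}(Q)\in E^\phi(\xi,\tau)\cap Q$ and $\dist(\mathfrak{c}(Q),K\setminus Q)>\zeta^2\diam Q/32$, which immediately gives $B(\mathfrak{c}(Q),\zeta^2\diam Q/64)\cap K\subseteq Q$. (As a side remark, your final triangle-inequality step also slips arithmetically: $\zeta^2\diam Q/64$ can equal $q$, not $q/2$, since $\diam Q$ can be as large as $2^{-Nj+5}/\tau$.)
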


\begin{osservazione}
Recall that the set $\mathscr{E}^\phi_{\xi,\tau}(\mu,\nu)$ was introduced in Proposition \ref{prop:Ecorsivo} and $\Delta^\phi(\kappa;\xi,\tau,\nu)$ in Notation \ref{notation1}.
\end{osservazione}

\begin{proof}
In order to prove the proposition it suffices to show that:
\begin{equation}
    E^\phi(\xi,\tau)\cap Q\setminus \partial(Q,\zeta^2 \diam Q/32)\neq \emptyset.
    \label{eq:72}
\end{equation}
In order to fix ideas, we let $Q\in\Delta_j^\phi(\xi,\tau)$ for some $j\geq \nu$ and note that since  $Q\cap E^\phi(\xi,\tau)\neq \emptyset$, thanks to Theorem \ref{evev}(ii), (iii) and (iv), we have:
\begin{equation}
    \begin{split}
        \phi( E^\phi(\xi,\tau)\cap Q\setminus&
 \partial(Q,\zeta^2\diam Q/32))\\
        \geq& \phi(E^\phi(\xi,\tau)\cap Q)-\phi(\partial(Q,\zeta^2\diam Q/32))
        \geq\phi(E^\phi(\xi,\tau)\cap Q)-\phi(\partial(Q,\zeta^22^{-jN}/\tau))\\
        \geq& \phi(E^\phi(\xi,\tau)\cap Q)-\oldC{C:F}\zeta(2^{-jN}/\tau)^{\mathcal{Q}-1}
        =\phi(Q)-\phi(Q\setminus E^\phi(\xi,\tau))-\oldC{C:F}\zeta(2^{-jN}/\tau)^{\mathcal{Q}-1}\\
    \geq&\phi(Q)-\phi(Q\setminus E^\phi(\xi,\tau))-\oldC{C:F}^2\zeta\phi(Q).
        \label{eq:74}
    \end{split}
\end{equation}
Since $Q\in\Delta^\phi(\mathscr{E}^\phi_{\xi,\tau}(\mu,\nu);\xi,\tau,\nu)$ we have $\diam{Q}\leq 2^{-N\nu+5}/\tau$ and there exists a $w\in \mathscr{E}^\phi_{\xi,\tau}(\mu,\nu)\cap Q$. Therefore, the definition of $\mathscr{E}^\phi_{\xi,\tau}(\mu,\nu)$ and Theorem \ref{evev}(vi), imply:
\begin{equation}
\begin{split}
    \phi(Q\setminus E^\phi(\xi,\tau))\leq \phi(B(w,2^{-jN+5}/\tau)\setminus E^\phi(\xi,\tau))\leq& \mu^{-1}\phi(B(w,2^{-jN+5}/\tau))\\
    \leq& \mu^{-1}\xi (2^{-jN+5}/\tau)^{\mathcal{Q}-1}\leq \mu^{-1}\xi\oldC{C:F}\phi(Q).
\end{split}
    \label{eq:75}
\end{equation}
Putting together \eqref{eq:74} and \eqref{eq:75}, we conclude that:
$$\phi( E^\phi(\xi,\tau)\cap Q\setminus \partial(Q,\zeta^2\diam Q))\geq (1-\mu^{-1}\xi-\oldC{C:F}^2\zeta)\phi(Q)\geq \phi(Q)/4.$$
where the last inequality follows from the fact that $\oldC{C:F}^2\zeta=2^{48\mathcal{Q}}\xi^2\cdot 2^{-50\mathcal{Q}}\xi^{-2}\leq 1/2$ and $\mu^{-1}\xi\leq 1/4$.
This proves \eqref{eq:72} and in turn the proposition.
\end{proof}

\subsection{Construction of the dyadic cubes}
\label{sec:proofdyadic}
For the rest of the section, we fix $\xi,\tau\in\N$ in such a way that $\phi(E^\phi(\xi,\tau))>0$ and we let $\eta:=\xi^{-2}2^{-40Q}$.

\begin{definizione}
For any $j\in \N$, we let:
\begin{itemize}
    \item[(i)]$\gimel_0(j)$ be a maximal set of points in $E^\phi(\xi,\tau)$ such that $d(x,x^\prime)\geq 2^{-j}/\tau$ for any couple of distinct $x,x^\prime\in\gimel_0(j)$,
\item[(ii)]$\Xi_0(j)$ be a maximal set of points in $K$, containing $\gimel_0(j)$ and such that $d(x,x^\prime)\geq 2^{-j}/\tau$ for any couple of distinct  $x,x^\prime\in\Xi_0(j)$,
\end{itemize}
\end{definizione}

\begin{lemma}\label{lemma1}
Let $x\in K$ be a point such that $\phi(B(x,r))\leq \xi r^m$ for any $0<r<1/\tau$.
Then for any $j\in\N$, there is a ball $B_j(x)$ centred at $x$ of radius $r\in(2^{-j}/\tau,(1+\eta)2^{-j}/\tau)$ such that:
$$
         \phi(\partial(B_j(x),\eta^2 2^{-j}/\tau))\leq2^m\xi \eta(2^{-j}/\tau)^{m}=:\newC\label{C:5} \eta(2^{-j}/\tau)^{m}.
$$
\end{lemma}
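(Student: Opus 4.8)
\textbf{Proof strategy for Lemma \ref{lemma1}.}

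The plan is to exploit a standard Vitali-type averaging argument over the annuli between concentric spheres. First I would observe that the quantity we need to control, $\phi(\partial(B_j(x),\eta^2 2^{-j}/\tau))$, is bounded above by $\phi$ of the closed annulus $\overline{B(x,r+\eta^2 2^{-j}/\tau)}\setminus B(x,r-\eta^2 2^{-j}/\tau)$ for any radius $r$ in the target range. So it suffices to find a radius $r\in(2^{-j}/\tau,(1+\eta)2^{-j}/\tau)$ for which this annular $\phi$-mass is small. Write $\lambda:=2^{-j}/\tau$ and $\delta:=\eta^2\lambda$ for brevity.

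Next I would partition (most of) the interval $(\lambda,(1+\eta)\lambda)$ into roughly $\eta/\eta^2=1/\eta$ disjoint subintervals of length $2\delta=2\eta^2\lambda$; more precisely I would consider radii $r_k:=\lambda+(2k-1)\eta^2\lambda$ for $k=1,\ldots,M$ with $M:=\lfloor 1/(2\eta)\rfloor$, so that the annuli $A_k:=\overline{B(x,r_k+\delta)}\setminus B(x,r_k-\delta)$ are pairwise disjoint and all contained in $B(x,(1+\eta)\lambda)$. Since they are disjoint, $\sum_{k=1}^M \phi(A_k)\leq \phi(B(x,(1+\eta)\lambda))$, and by the hypothesis on $x$ (applied with radius $(1+\eta)\lambda<2\lambda<1/\tau$, using $\eta<1$) this is at most $\xi((1+\eta)\lambda)^m\leq 2^m\xi\lambda^m$. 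By pigeonhole there is some index $k_0$ with $\phi(A_{k_0})\leq 2^m\xi\lambda^m/M$. A crude bound $M\geq 1/(4\eta)$ (valid since $\eta$ is small) gives $\phi(A_{k_0})\leq 4\eta\cdot 2^m\xi\lambda^m$, and taking $r:=r_{k_0}$ together with $\partial(B(x,r),\delta)\subseteq A_{k_0}$ yields the claimed estimate, perhaps after adjusting the constant $\newC$ to absorb the factor $4$ (one can simply set $\oldC{C:5}:=2^{m+2}\xi$, or refine the count of subintervals to land exactly on $2^m\xi$). One should double-check that $r_{k_0}$ indeed lies in $(\lambda,(1+\eta)\lambda)$: since $k_0\leq M\leq 1/(2\eta)$ we have $(2k_0-1)\eta^2<\eta$, so $r_{k_0}<(1+\eta)\lambda$, and $r_{k_0}>\lambda$ is clear.

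I do not anticipate a serious obstacle here; the argument is elementary. The only mildly delicate points are bookkeeping: (a) making sure the annuli are genuinely disjoint, which forces the spacing of the $r_k$ to be at least $2\delta$ and hence caps $M$ at about $1/(2\eta)$; and (b) matching the final constant to the statement's $\newC$, which is purely a matter of choosing the subdivision finely enough or enlarging the constant — since the paper leaves $\newC$ to be defined by this very lemma, either resolution is acceptable. One should also note explicitly that the hypothesis $\phi(B(x,r))\leq\xi r^m$ is used only at the single scale $(1+\eta)\lambda$, which is $<1/\tau$ precisely because $\eta<1$ and $\lambda\leq 1/(2\tau)$ for $j\geq 1$ (and the $j=0$ case either follows from $\tau\geq 2$ or can be handled by the same computation since $\N$ excludes $0$ and one typically only needs $j$ large).
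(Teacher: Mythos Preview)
Your proposal is correct and follows essentially the same pigeonhole argument as the paper: subdivide the radial interval $(2^{-j}/\tau,(1+\eta)2^{-j}/\tau)$ into roughly $1/\eta$ disjoint annuli of thickness $2\eta^2\,2^{-j}/\tau$, all contained in $B(x,(1+\eta)2^{-j}/\tau)$, bound the sum of their $\phi$-masses by $\xi((1+\eta)2^{-j}/\tau)^m\le 2^m\xi(2^{-j}/\tau)^m$, and select one annulus of small measure. The only difference is bookkeeping---the paper uses $\lfloor 1/\eta\rfloor-1$ annuli (centred at radii $(1+\eta p/\lfloor 1/\eta\rfloor)2^{-j}/\tau$) rather than your $\lfloor 1/(2\eta)\rfloor$, which is how it arrives at the constant $2^m\xi$ without the extra factor you flag.
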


\begin{proof}
For any $p\in\{1,\ldots, \lfloor 1/\eta\rfloor-1\}$, let:
$$C(x,p,j):=B\Big(x,\Big(1+\frac{\eta p}{\lfloor 1/\eta\rfloor}+\eta^2\Big)2^{-j}/\tau\Big)\setminus B\Big(x,\Big(1+\frac{\eta p}{\lfloor 1/\eta\rfloor}-\eta^2\Big)2^{-j}/\tau\Big).$$
The coronas $C(x,p,j)$ are pairwise disjoint since $\eta$ was chosen small enough, and their union is contained in the ball $B(x,(1+\eta)2^{-j}/\tau)$.
This implies, thanks to the choice of $x$, that there exists a $\overline{p}\in\{1,\ldots,\lfloor1/\eta\rfloor-1\}$ such that:
$$\phi(C(x,\overline{p},j))\leq \xi \frac{(1+\eta)^{m}}{\lfloor1/\eta\rfloor}(2^{-j}/\tau)^{m},$$
and this concludes the proof of the lemma.
\end{proof}

\begin{lemma}\label{lemma2}
For any $x\in \Xi_0(j)$, we have
$\text{Card}\big(\Xi_0(j)\cap B(x,2^{-j+3}/\tau)\big)\leq 32^\mathcal{Q} $.
\end{lemma}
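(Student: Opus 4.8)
The statement to prove is Lemma \ref{lemma2}: for any $x\in\Xi_0(j)$, we have $\mathrm{Card}\big(\Xi_0(j)\cap B(x,2^{-j+3}/\tau)\big)\leq 32^{\mathcal{Q}}$. This is the standard ``bounded overlap'' estimate for a maximal $2^{-j}/\tau$-separated set, and the plan is to exploit the volume lower bound on small balls coming from membership in the dyadic-cube construction's ``good set''.

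\textbf{The plan.} First I would recall that every point of $\Xi_0(j)$ lies in $K=\mathrm{supp}(\phi)$, but more is true for the points we need to count: by the way $\Xi_0(j)$ is constructed it contains $\gimel_0(j)\subseteq E^\phi(\xi,\tau)$, and the relevant volume estimate is the lower bound $\phi(B(y,r))\geq \xi^{-1}r^m$ valid for $y\in E^\phi(\xi,\tau)$ and $0<r<1/\tau$. Actually the cleanest route is: the points $y\in\Xi_0(j)\cap B(x,2^{-j+3}/\tau)$ are pairwise $2^{-j}/\tau$-separated, so the balls $B(y,2^{-j-1}/\tau)$ are pairwise disjoint and all contained in $B(x,2^{-j+3}/\tau+2^{-j-1}/\tau)\subseteq B(x,2^{-j+4}/\tau)$. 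Then I would apply the upper density bound $\phi(B(x,r))\leq\xi r^m$ (which holds on $K$ under the running hypotheses of Appendix \ref{AppendiceA}, since $x\in K$ and by a covering/compactness argument as in Proposition \ref{prop:cpt}; alternatively restrict to $x$ in the good set) to get $\phi(B(x,2^{-j+4}/\tau))\leq \xi (2^{-j+4}/\tau)^m$, while disjointness and the lower bound give $\phi(B(x,2^{-j+4}/\tau))\geq \mathrm{Card}\cdot \xi^{-1}(2^{-j-1}/\tau)^m$. Combining yields $\mathrm{Card}\leq \xi^2\, 2^{5m}$, which must be compared with the target $32^{\mathcal{Q}}=2^{5\mathcal{Q}}$.

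\textbf{Resolving the constant.} Here a subtlety arises: my crude argument gives $\xi^2 2^{5m}$ rather than $2^{5\mathcal{Q}}$, so I need a cleaner packing argument that does not pay a factor of $\xi^2$. The right move is a purely metric volume-doubling-free packing estimate: by left-invariance and homogeneity of the metric $d$, $\mathcal{L}^n(B(x,r))=c_0 r^{\mathcal{Q}}$ for a fixed constant $c_0>0$ (the Lebesgue measure is Haar and scales by $\lambda^{\mathcal{Q}}$ under $\delta_\lambda$). The disjoint balls $B(y,2^{-j-1}/\tau)$ for $y\in\Xi_0(j)\cap B(x,2^{-j+3}/\tau)$ each have Lebesgue measure $c_0(2^{-j-1}/\tau)^{\mathcal{Q}}$ and sit inside $B(x,(2^{3}+2^{-1})2^{-j}/\tau)\subseteq B(x,2^{-j+4}/\tau)$, whose Lebesgue measure is $c_0(2^{-j+4}/\tau)^{\mathcal{Q}}$. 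Hence $\mathrm{Card}\cdot (2^{-1})^{\mathcal{Q}}\leq (2^{4})^{\mathcal{Q}}$, i.e. $\mathrm{Card}\leq 2^{5\mathcal{Q}}=32^{\mathcal{Q}}$, exactly the claim. I would write the proof this way, using $\mathcal{L}^n$ and its homogeneity rather than $\phi$, so that no dependence on $\xi$ or $\tau$ creeps in.

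\textbf{Main obstacle.} The only real point requiring care is getting the numerology tight enough to land on $32^{\mathcal{Q}}$ exactly: one must choose the separation radius $2^{-j-1}/\tau$ for the disjoint small balls (so that $2^{-j-1}/\tau < \tfrac12 d(y,y')$ for distinct $y,y'$, using $d(y,y')\geq 2^{-j}/\tau$) and check that $2^{-j+3}/\tau + 2^{-j-1}/\tau \leq 2^{-j+4}/\tau$, which holds since $8+\tfrac12\leq 16$. With that bookkeeping done, the ratio of radii is exactly $2^{4}/2^{-1}=2^5$, raised to the power $\mathcal{Q}$, giving the stated bound. Everything else (homogeneity of $\mathcal{L}^n$, left-invariance of $d$) has been recalled in Subsection \ref{Carnotinizio}, so no further input is needed.
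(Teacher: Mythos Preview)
Your proposal is correct and follows essentially the same approach as the paper: the paper also observes that the balls $B(y,2^{-j-1}/\tau)$ for $y\in\Xi_0(j)\cap B(x,2^{-j+3}/\tau)$ are pairwise disjoint and contained in $B(x,2^{-j+4}/\tau)$, and then compares volumes. The only cosmetic difference is that the paper uses the top-dimensional spherical Hausdorff measure $\mathcal{S}^{\mathcal{Q}}$ (for which $\mathcal{S}^{\mathcal{Q}}(B(x,r))=r^{\mathcal{Q}}$) instead of Lebesgue measure $\mathcal{L}^n$, but since both are Haar measures scaling as $r^{\mathcal{Q}}$, the constant cancels and the computation is identical.
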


\begin{proof}
The balls $\{B(y,2^{-j-1}/\tau):y\in \Xi_0(j)\cap B(x,2^{-j+3}/\tau)\}$ are disjoint and contained in $B(x,2^{-j+4}/\tau)$. This implies that:
$$(2^{-j-1}/\tau)^\mathcal{Q}\text{Card}(\Xi_0(j)\cap B(x,2^{-j+3}/\tau))\leq \mathcal{S}^\mathcal{Q}(B(x,2^{-j+4}/\tau))=(2^{-j+4}/\tau)^\mathcal{Q}.$$
\end{proof}

We let $\prec$ be a total order on $\Xi_0(j)$ such that $x\prec y$ whenever $x\in \gimel_0(j)$ and $y\in\Xi_0(j)\setminus \gimel_0(j)$. For every $x\in\Xi_0(j)$ we define:
$$B^\prime_j(x):=B_j(x)\cap \Bigg[\bigcup_{y\prec x} B_j(y) \Bigg]^c.$$
Therefore, thanks to the definitions of the order $\prec$, for any $x\in\gimel_0(j)$ and any $w\in \Xi_0(j)\setminus \gimel(j)$, we have:
\begin{equation}
    B_j(x)\cap B_j^\prime (w)=\emptyset.
    \label{eq:A11}
\end{equation}
The sets $B_j^\prime$ are by construction a disjoint cover of $K$, however their measure could be very small. To avoid this problem, we will glue together some of the $B^\prime_j$s.
Let $\gimel(j)$ be the subset of those $x\in \gimel_0(j)$ such that:
$$\phi(B^\prime_j(x))\geq 32^{-\mathcal{Q}}\xi^{-1}(2^{-j}/\tau)^{m}.$$

\begin{lemma}\label{lemma:rev1}
For any $x\in \gimel_0(j)$ we have $\gimel(j)\cap B(x,2^{-j+2}/\tau)\neq \emptyset$.
\end{lemma}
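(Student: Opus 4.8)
The plan is to argue by contradiction, exploiting the packing estimate of Lemma \ref{lemma2} together with the lower density bound built into the set $E^\phi(\xi,\tau)$. Suppose that for some $x\in\gimel_0(j)$ we have $\gimel(j)\cap B(x,2^{-j+2}/\tau)=\emptyset$. First I would observe that the balls $B_j^\prime(y)$, as $y$ ranges over $\Xi_0(j)$, form a disjoint partition of $K$ (this is immediate from the definition of the $B_j^\prime$'s), so in particular
\begin{equation}
B(x,2^{-j}/\tau)\cap K\subseteq \bigcup_{y\in\Xi_0(j)} \big(B_j^\prime(y)\cap B(x,2^{-j}/\tau)\big).
\nonumber
\end{equation}
Now only finitely many terms on the right-hand side are non-empty: if $B_j^\prime(y)\cap B(x,2^{-j}/\tau)\neq\emptyset$ then, since $B_j^\prime(y)\subseteq B_j(y)\subseteq B(y,(1+\eta)2^{-j}/\tau)$, we get $d(x,y)\leq(2+\eta)2^{-j}/\tau<2^{-j+2}/\tau$, so $y\in\Xi_0(j)\cap B(x,2^{-j+3}/\tau)$, a set which by Lemma \ref{lemma2} has cardinality at most $32^\mathcal{Q}$.

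Next I would split this finite collection of relevant centres $y$ into those lying in $\gimel_0(j)$ and those in $\Xi_0(j)\setminus\gimel_0(j)$, and further, among the former, into those in $\gimel(j)$ and those in $\gimel_0(j)\setminus\gimel(j)$. The contradiction hypothesis kills the first group: no $y\in\gimel(j)$ can be a relevant centre, since such $y$ would satisfy $d(x,y)<2^{-j+2}/\tau$, contradicting $\gimel(j)\cap B(x,2^{-j+2}/\tau)=\emptyset$. For the centres in $\gimel_0(j)\setminus\gimel(j)$, the very definition of $\gimel(j)$ gives $\phi(B_j^\prime(y))< 32^{-\mathcal{Q}}\xi^{-1}(2^{-j}/\tau)^{m}$. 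The centres $w\in\Xi_0(j)\setminus\gimel_0(j)$ are dispatched using \eqref{eq:A11}: since $x\in\gimel_0(j)$, we have $B_j(x)\cap B_j^\prime(w)=\emptyset$, hence a fortiori $B(x,2^{-j}/\tau)\cap B_j^\prime(w)=\emptyset$ because $B(x,2^{-j}/\tau)\subseteq B_j(x)$ by Lemma \ref{lemma1} (the radius of $B_j(x)$ exceeds $2^{-j}/\tau$). So those terms contribute nothing either.

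Combining, every term in the union is either empty or has $\phi$-measure strictly less than $32^{-\mathcal{Q}}\xi^{-1}(2^{-j}/\tau)^{m}$, and there are at most $32^\mathcal{Q}$ of them, whence
\begin{equation}
\phi\big(B(x,2^{-j}/\tau)\cap K\big)\leq 32^\mathcal{Q}\cdot 32^{-\mathcal{Q}}\xi^{-1}(2^{-j}/\tau)^{m}=\xi^{-1}(2^{-j}/\tau)^{m}.
\nonumber
\end{equation}
On the other hand $x\in\gimel_0(j)\subseteq E^\phi(\xi,\tau)$, so by the defining lower bound of $E^\phi(\xi,\tau)$ (recall $m=\mathcal{Q}-1$ here, and $2^{-j}/\tau<1/\tau\leq 1/\xi$ up to the obvious normalisation) we have $\phi(B(x,2^{-j}/\tau))\geq \xi^{-1}(2^{-j}/\tau)^{m}$; a slightly more careful bookkeeping with the constants — using that $E^\phi(\xi,\tau)$ forces $\phi(B(x,r))\geq\xi^{-1}r^{m}$ for all small $r$ — produces a strict contradiction, since the measure of the full ball cannot coincide with the sum of the strictly smaller pieces unless one of the pieces was itself large, i.e. unless some relevant $y$ lies in $\gimel(j)$. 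I expect the main obstacle to be purely bookkeeping: making sure the numerical constant $32^{-\mathcal{Q}}\xi^{-1}$ in the definition of $\gimel(j)$ is genuinely smaller than the lower density constant $\xi^{-1}$ divided by the packing number $32^\mathcal{Q}$, so that the inequality is strict rather than borderline — this is exactly why the threshold in the definition of $\gimel(j)$ was chosen with the factor $32^{-\mathcal{Q}}$. Once the strict inequality is secured, the contradiction is immediate and the lemma follows.
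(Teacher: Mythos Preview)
Your argument is correct and is essentially the contrapositive of the paper's direct averaging argument: the paper covers $K\cap B_j(x)$ by the $B_j'(y)$ with $y\in\gimel_0(j)$ (using \eqref{eq:A11}), bounds the number of such $y$ by $32^{\mathcal Q}$ via Lemma~\ref{lemma2}, and concludes from $\phi(B_j(x))\geq\xi^{-1}(2^{-j}/\tau)^m$ that $\max_y\phi(B_j'(y))\geq 32^{-\mathcal Q}\xi^{-1}(2^{-j}/\tau)^m$. Your worry about the inequality being ``borderline'' is unfounded: since $y\notin\gimel(j)$ means $\phi(B_j'(y))$ is \emph{strictly} below the threshold and at least one relevant $y$ exists (the one whose $B_j'$ contains $x$), the sum is strictly less than $\xi^{-1}(2^{-j}/\tau)^m$, giving the contradiction cleanly.
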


\begin{proof}
Since the sets $B_j^\prime$ are a disjoint cover of $K$, for any $x\in \gimel(j)$ we have:
\begin{equation}
    K\cap B_j(x)\subseteq  \bigcup_{\substack{y\in\Xi(j)\\B_j(x)\cap B_j^\prime(y)\neq \emptyset}} K\cap B_j^\prime(y) =\bigcup_{\substack{y\in\gimel_0(j)\\B_j(x)\cap B_j^\prime(y)\neq \emptyset}} K\cap B_j^\prime(y),
    \label{eq:A12}
\end{equation}
where the last identity above comes from \eqref{eq:A11}. Furthermore, thanks to the inclusion \eqref{eq:A12}, we have:
\begin{equation}
    \sum_{\substack{y\in \gimel_0(j)\\ B_j(x)\cap B_j^\prime(y)\neq \emptyset}} \phi(B^\prime_j(y))\geq\phi(B_j(x))\geq \frac{1}{\xi}\bigg( \frac{2^{-j}}{\tau}\bigg)^{m},
    \label{eq:A1300}
\end{equation}
where the last inequality comes from the fact that $x\in E^\phi(\xi,\tau)$. Finally, Lemma \ref{lemma2} together with \eqref{eq:A1300} and the fact that if $B_j(x)\cap B_j^\prime(y)\neq \emptyset$ then $y\in B(x,2^{-j+3}/\tau)$, imply that:
$$\xi^{-1} (2^{-j}/\tau)\leq \phi(B_j(x))\leq \sum_{\substack{y\in \gimel_0(j)\\ B_j(x)\cap B_j^\prime(y)\neq \emptyset}} \phi(B^\prime_j(y))\leq 32^\mathcal{Q}\max_{\substack{y\in \gimel_0(j)\\ B_j(x)\cap B_j^\prime(y)\neq \emptyset}} \phi(B^\prime_j(y)).$$
This concludes the proof.
\end{proof}

Let $h$ be a map assigning each $y\in\gimel_0(j)\setminus \gimel(j)$ to an element of $\gimel(j)\cap B(y,2^{-j+2})$, that exists thanks to Lemma \ref{lemma:rev1} and define:
\begin{equation}
    D_j(y):= \begin{cases}
   K\cap \bigg[B^\prime_j(y)\cup\bigcup_{x\in h^{-1}(y)} B^\prime_j(x)\bigg]&\text{if }y\in \gimel(j),\\
   K\cap B^\prime_j(y)&\text{if }y\in \Xi_0(j)\setminus\gimel_0(j).
    \end{cases}
    \nonumber
\end{equation}
Finally we let $\Xi(j):=\gimel(j)\cup \Xi_0(j)\setminus \gimel_0(j)$.

\begin{proposizione}\label{propA1}
For any $j\in \N$ the sets $\{D_j(y):y\in \Xi(j)\}$ are a disjoint cover of $K$ and:
\begin{itemize}
    \item[(i)]$\diam(D_j(y))\leq 2^{-j+4}/\tau$ and $D_j(y)\subseteq B(y,2^{-j+3}/\tau)$,
    \item[(ii)] for any $j\in\N$ we have $E^\phi(\xi,\tau)\subseteq \bigcup_{y\in\gimel(j)} D_j(y)$,
    \item[(iii)] $\phi(D_j(y))\geq 32^{-\mathcal{Q}}\xi^{-1}(2^{-j}/\tau)^{m}$ provided $y\in \gimel(j)$ and $j\geq 3$,
    \item[(iv)]$\phi(\partial(D_j(y),2^{-j}\eta^2/\tau))\leq \newC\label{C:6} \eta(2^{-j}/\tau)^{m}$ where $\oldC{C:6}:=2\cdot64^{\mathcal{Q}}\oldC{C:5}$,
    \item[(v)] for any $y\in \gimel(j)$ there exists a $c\in D_j(y)$ such that $B(c,\eta^2 2^{-j-1}/\tau)\subseteq D_j(y)$.
\end{itemize}
\end{proposizione}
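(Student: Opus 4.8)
The plan is to verify each of the five claims by unwinding the definitions of $B_j'$, $D_j$, $\Xi(j)$, $\gimel(j)$ and $h$, and by invoking the already-established lemmas, in particular Lemma \ref{lemma1} (existence of the good radius), Lemma \ref{lemma2} (bounded multiplicity of the net $\Xi_0(j)$) and Lemma \ref{lemma:rev1} (every point of $\gimel_0(j)$ is close to $\gimel(j)$). First I would observe that the sets $\{B_j'(y):y\in\Xi_0(j)\}$ are a disjoint cover of $K$ by their very construction ($B_j'(x)$ removes from $B_j(x)$ everything covered by $\prec$-earlier balls, and $\Xi_0(j)$ is a maximal $2^{-j}/\tau$-separated set, so the balls $B_j(y)$ of radius $>2^{-j}/\tau$ already cover $K$). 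The sets $D_j(y)$ for $y\in\Xi(j)$ are obtained from the $B_j'$ by reassigning, via the injective-on-fibers map $h$, each piece $B_j'(x)$ with $x\in\gimel_0(j)\setminus\gimel(j)$ to exactly one $D_j(h(x))$ with $h(x)\in\gimel(j)$, and keeping the pieces indexed by $\Xi_0(j)\setminus\gimel_0(j)$ untouched; hence $\{D_j(y):y\in\Xi(j)\}$ is again a disjoint cover of $K$. The subtle point to record here is that no piece is lost or doubled: $\gimel_0(j)\setminus\gimel(j)$ is partitioned by the fibers $h^{-1}(y)$, $y\in\gimel(j)$, so the union over $y\in\Xi(j)$ of $D_j(y)$ equals the union over $\Xi_0(j)$ of $B_j'(y)$, which is $K$.

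For (i): if $y\in\gimel(j)$ and $x\in h^{-1}(y)$ then $x\in B(y,2^{-j+2}/\tau)$ by definition of $h$, and $B_j'(x)\subseteq B_j(x)\subseteq B(x,(1+\eta)2^{-j}/\tau)$, so $B_j'(x)\subseteq B(y,2^{-j+2}/\tau+(1+\eta)2^{-j}/\tau)\subseteq B(y,2^{-j+3}/\tau)$ since $\eta$ is small; the case $y\in\Xi_0(j)\setminus\gimel_0(j)$ is immediate, and then $\diam D_j(y)\le 2^{-j+4}/\tau$. For (ii): $E^\phi(\xi,\tau)\subseteq\bigcup_{x\in\gimel_0(j)}B_j(x)$ because $\gimel_0(j)$ is a maximal $2^{-j}/\tau$-separated subset of $E^\phi(\xi,\tau)$ and each $B_j(x)$ has radius $>2^{-j}/\tau$; now every $B_j'(x)$ with $x\in\gimel_0(j)$ is, by construction of $D_j$, contained in some $D_j(y)$ with $y\in\gimel(j)$ (if $x\in\gimel(j)$ take $y=x$, otherwise $y=h(x)$), while \eqref{eq:A11} guarantees $B_j(x)\cap B_j'(w)=\emptyset$ for $w\in\Xi_0(j)\setminus\gimel_0(j)$, so in fact $E^\phi(\xi,\tau)\cap B_j(x)$ is covered by $\bigcup_{z\in\gimel_0(j)}B_j'(z)\subseteq\bigcup_{y\in\gimel(j)}D_j(y)$. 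For (iii): when $y\in\gimel(j)$ the set $D_j(y)$ contains $B_j'(y)$ and, since $y\in\gimel(j)$ means $\phi(B_j'(y))\ge 32^{-\mathcal{Q}}\xi^{-1}(2^{-j}/\tau)^m$, monotonicity of $\phi$ gives the claim (the hypothesis $j\ge 3$ is what is needed to make the various ball inclusions of Lemmas \ref{lemma2} and \ref{lemma:rev1} legitimate). For (v): for $y\in\gimel(j)$ the point $y$ itself lies in $E^\phi(\xi,\tau)$, hence $\phi(B(y,r))\le\xi r^m$ for $r<1/\tau$, so Lemma \ref{lemma1} applies and produces a radius with the stated thin-boundary bound; one then checks $y\in D_j(y)$ and picks $c=y$, observing that a small ball $B(y,\eta^2 2^{-j-1}/\tau)$ about $y$ is contained in $B_j(y)$ minus the boundary corona, and is disjoint from every $B_j(w)$ with $w\prec y$ (because the separation $d(y,w)\ge 2^{-j}/\tau$ dominates $2(1+\eta)2^{-j}/\tau\cdot$[small] is false — here one must instead argue $B(y,\eta^2 2^{-j-1}/\tau)$ is not removed since it sits strictly inside $B_j'(y)$: the point $y$ has positive distance to $\bigcup_{w\prec y}B_j(w)$ by a compactness/separation argument, and $\eta$ is chosen small relative to that), so it survives into $D_j(y)$.

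The main obstacle is item (iv), the thin-boundary estimate for $D_j(y)$. Here I would argue as follows. Write $\delta:=\eta^2 2^{-j}/\tau$. Since $D_j(y)$ is a finite union of sets of the form $B_j'(z)$, and each $B_j'(z)$ is $B_j(z)$ minus a union of balls $B_j(w)$, the topological boundary-at-scale-$\delta$ set $\partial(D_j(y),\delta)$ is contained in the union, over the finitely many $z\in\{y\}\cup h^{-1}(y)$ involved, of $\partial(B_j(z),\delta)$ together with the corresponding corona sets for the removed balls $B_j(w)$. By Lemma \ref{lemma2} the number of relevant indices $z$ (all lying in $B(y,2^{-j+3}/\tau)$) is at most $32^{\mathcal Q}$, and each removed ball likewise comes from a $w$ in a $32^{\mathcal Q}$-bounded neighbourhood; hence $\partial(D_j(y),\delta)$ is covered by at most $C 32^{\mathcal Q}$ sets of the form $\partial(B_j(z),\eta^2 2^{-j}/\tau)$, and Lemma \ref{lemma1} bounds the $\phi$-measure of each by $\oldC{C:5}\eta(2^{-j}/\tau)^m$. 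Summing yields $\phi(\partial(D_j(y),\delta))\le 2\cdot 64^{\mathcal Q}\oldC{C:5}\,\eta(2^{-j}/\tau)^m=\oldC{C:6}\,\eta(2^{-j}/\tau)^m$, which is exactly (iv). The care needed is purely bookkeeping: tracking that a point within distance $\delta$ of the boundary of a finite intersection/difference of balls must be within $\delta$ of the boundary of one of those balls, and that the combinatorial count is controlled by the bounded multiplicity of the net — no new geometric input beyond Lemmas \ref{lemma1}, \ref{lemma2}, \ref{lemma:rev1} is required.
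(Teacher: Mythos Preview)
Your treatment of the disjoint-cover property and of items (i)--(iv) is essentially the same as the paper's: the partition structure of the $B_j'$'s, the nearness estimate $d(x,h(x))\le 2^{-j+2}/\tau$, the inclusion $B_j'(y)\subseteq D_j(y)$, and the covering of $\partial(D_j(y),\delta)$ by at most $O(32^{\mathcal Q})$ coronas $\partial(B_j(z),\delta)$ via Lemmas~\ref{lemma1} and~\ref{lemma2} are exactly the ingredients the paper uses, and your bookkeeping sketch for (iv) matches the paper's two-step argument (first bound $\phi(\partial(B_j'(y),\delta))$, then sum over $\{y\}\cup h^{-1}(y)$).

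The genuine gap is in (v). Your choice $c=y$ does not work: the points of $\Xi_0(j)$ are only $2^{-j}/\tau$-separated while each $B_j(w)$ has radius strictly larger than $2^{-j}/\tau$, so for $w\prec y$ with $d(w,y)=2^{-j}/\tau$ one has $y\in B_j(w)$ and hence $y\notin B_j'(y)$. The assertion that ``$y$ has positive distance to $\bigcup_{w\prec y}B_j(w)$'' is false in general, and even if some positive distance existed it would depend on the particular configuration at scale $j$, whereas $\eta$ is fixed once and for all. The paper avoids this entirely by a measure argument: from (iii) one has $\phi(D_j(y))\ge 32^{-\mathcal Q}\xi^{-1}(2^{-j}/\tau)^m$, from (iv) one has $\phi(\partial(D_j(y),\eta^2 2^{-j}/\tau))\le \oldC{C:6}\eta(2^{-j}/\tau)^m$, and since $\eta$ is small enough that $2\oldC{C:6}\eta<32^{-\mathcal Q}\xi^{-1}$, the set $\{u\in D_j(y):\dist(u,K\setminus D_j(y))>\eta^2 2^{-j}/\tau\}$ has positive $\phi$-measure, hence contains some point $c$; any such $c$ satisfies $B(c,\eta^2 2^{-j-1}/\tau)\cap K\subseteq D_j(y)$.
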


\begin{proof}
The fact that $\{D_j(y):y\in \Xi(j)\}$ is a partition of $K$, follows from the fact that the $B_j^\prime$s are. In order to prove (ii), it is sufficient to note that \eqref{eq:A11} implies that the union of the sets $\{B_j^\prime(y):y\in \gimel_0(j)\}$ covers $E^\phi(\xi,\tau)$. Thanks to the definition of the $D_j$s, this also yields that $\{D_j(y):y\in\gimel(j)\}$ cover $E^\phi(\xi,\tau)$ as well. 
The claim (iii) follows directly from the fact that $B^\prime_j(y)\subseteq D_j(y)$ for any $y\in \gimel(j)$.

In order to prove (i), we distinguish two cases.  If $y\in \Xi(j)\setminus \gimel(j)$, we have $D_j(y)=B^\prime_j(y)\cap K\subseteq B_j(y)\cap K$ and thus the claim follows directly from the definition of $B_j(y)$.
On the other hand, if $y\in \gimel(j)$,
for any $w_1,w_2\in D_j(y)$ there are $a_1,a_2\in \{y\}\cup h^{-1}(y)$ such that $w_i\in B_j^\prime(a_i)$ for $i=1,2$. 
Since by construction  we have $d (h(w),w)\leq 2^{-j+2}/\tau$ for any $w\in \gimel_0(j)$ and $B^\prime_j(w)\subseteq B_j(w)$ for any $w\in \Xi_0(j)$, we deduce that:
$$d(w_1,w_2)\leq d(w_1,a_1)+d(a_1,y)+d(y,a_2)+d(a_2,w_2)\leq 2\cdot2^{-j+1}/\tau+2\cdot2^{-j+2}/\tau\leq 2^{-j+4}/\tau,$$
proving that $\diam(D_j(y))\leq 2^{-j+4}/\tau$. The proof of the second part of (i) follows similarly.

In order to prove (iv), we first estimate $\phi(\partial(B^\prime_j(y),2^{-j}\eta^2/\tau))$. Since for any $y\in \Xi_0(j)$, we have: $$\partial(B_j^\prime(y),2^{-j}\eta^2/\tau)\subseteq \bigcup_{\substack{w\preceq y\\B_j(w)\cap B_j(y)\neq \emptyset}}\partial(B_j(y),2^{-j}\eta^2/\tau),$$
this implies that:
\begin{equation}
    \phi(\partial(B^\prime_j(y),2^{-j}\eta^2/\tau))\leq \sum_{\substack{w\preceq y\\B_j(w)\cap B_j(y)\neq \emptyset}}\phi(\partial(B_j(w),2^{-j}\eta^2/\tau))\leq 32^{\mathcal{Q}}\oldC{C:5}\eta(2^{-j}/\tau)^m,
    \label{eq:A13}
\end{equation}
where the last inequality comes from Lemmas \ref{lemma1} and \ref{lemma2}.
We immediately see that the bound in \eqref{eq:A13} proves (iv) if $y\in \Xi(j)\setminus\gimel(j)$. On the other hand, if $y\in\gimel(j)$ thank to the inclusion:
$$\partial(D_j(y),2^{-j}\eta^2/\tau)\subseteq \partial(B^\prime_j(y),2^{-j}\eta^2/\tau)\cup \bigcup_{w\in h^{-1}(y)} \partial(B^\prime_j(y),2^{-j}\eta^2/\tau),$$
we infer that:
\begin{equation}
\begin{split}
    \phi(\partial(D_j(y),2^{-j}\eta^2/\tau))\leq& \phi(\partial(B^\prime_j(y),2^{-j}\eta^2/\tau))+\sum_{z\in h^{-1}(y)}\phi(\partial(B^\prime_j(z),2^{-j}\eta/\tau))\\
    \leq & \sum_{\substack{w\preceq y\\B_j(w)\cap B_j(y)\neq \emptyset}}\phi(\partial(B_j(w),2^{-j}\eta^2/\tau))+\sum_{z\in h^{-1}(y)}\sum_{\substack{w\preceq z\\B_j(w)\cap B_j(z)\neq \emptyset}}\phi(\partial(B_j(w),2^{-j}\eta^2/\tau))\\
    \leq& 32^{\mathcal{Q}}\oldC{C:5}\eta(2^{-j}/\tau)^m+32^{\mathcal{Q}}\text{Card}(h^{-1}(y))\oldC{C:5}\eta(2^{-j}/\tau)^m\leq 2\cdot 64^\mathcal{Q} \oldC{C:5}\eta (2^{-j}/\tau)^m,
    \nonumber
\end{split}
\end{equation}
where the first inequality of the last line comes from \eqref{eq:A13} and the last one from the estimate on the cardinality of $h^{-1}(y)$, which can be bound with the one of $\gimel_0(j)\cap B(y,2^{-j+2}/\tau)$, see Lemma \ref{lemma2}. The bound in (iv) follows from the choice $\oldC{C:6}$.

In order to verify (v) we first observe that for any $y\in\gimel(j)$ we have $2\phi(\partial(D_j(y),2^{-j}\eta^2/\tau))\leq \phi(D_j(y))$. This implies that:
$$\phi(\{u\in D_j(y):\text{dist}(u,K\setminus D_j(y))>2^{-j}\eta^2/\tau\})>\phi(\partial(D_j(y),2^{-j}\eta^2/\tau))>0,$$
and thus we can find a $c\in K$ such that $K\cap B(c,\eta^2 2^{-j-1}/\tau)\subseteq D_j(y)$, concluding the proof of the proposition.
\end{proof}

We need to modify the sets $D_j$ in order to make each generation of cubes to interact in the intended way with the others. 

\begin{definizione}
Suppose $j\in \{Nl:l\in\N\text{ and }l\geq 2\}$ and let $y\in \Xi(j)$. Since the sets $D_j$ are a disjoint cover of $K$ for any $j$, we can define $\varphi(y)$ as the point $x\in \Xi(j-N)$ such that $y\in D_{j-N}(x)$. Furthermore, for any $d\in\N$ we let:
$$E_d(x):=\{y\in \Xi(j+Nd):\varphi^d(y)=x\},$$
where $\varphi^d=\underbrace{\varphi\circ\varphi\circ\ldots\circ\varphi}_d$.
\end{definizione}

\begin{osservazione}\label{rk:A3}
Since the sets $D_{j}$ are a partition of $K$, this implies that for any $w\in \Xi(j+Nd)$ there exists an $x\in \Xi(j)$ such that $w\in D_j(x)$, or more compactly:
\begin{equation}
    \bigcup_{x\in \Xi(j)}E_d(x)=\Xi(j+Nd),
    \label{eq:A3}
\end{equation}
We define $A(j)$ as the set of those $x\in \Xi(j)$ for which $E_d(x)$ is not definetely empty, i.e.: 
$$A(j):=\{x\in \Xi(j):\text{Card}(\{d\in \N:E_d(x)\neq \emptyset\})=\infty\}.$$
Since $\Xi(j)$ is finite, it is immediate to see (thanks to the pidgeonhole principle) that $A(j)$ must be non-empty.
\end{osservazione}

The following proposition is an elementary consequence of the definition of the sets $E_d$.

\begin{proposizione}\label{prop:A2}
For any $j\in N\N$, $x\in \Xi(j)$ and $d\in\N$, if $E_{d}(x)\neq \emptyset$, then:
\begin{equation}
    \label{eq:A9000}
E_{k}(x)\neq \emptyset\qquad \text{ for any }k\leq d.
\end{equation}
Furthermore, the following identity holds for any $l\in\N$:
\begin{equation}
E_{d+l}(x)=\bigcup_{w\in E_d(x)} E_{l}(w)=\bigcup_{\substack{w\in E_d(x)\\ E_l(w)\neq \emptyset}} E_{l}(w).
\label{eq:A8}
\end{equation}
\end{proposizione}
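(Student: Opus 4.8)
The statement to prove is Proposition \ref{prop:A2}, which asserts two things about the sets $E_d(x)$: a monotonicity property \eqref{eq:A9000} (if $E_d(x)\neq\emptyset$ then $E_k(x)\neq\emptyset$ for all $k\le d$) and a composition identity \eqref{eq:A8} expressing $E_{d+l}(x)$ as a union of the $E_l(w)$ over $w\in E_d(x)$.

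\medskip

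\textbf{Plan for the composition identity \eqref{eq:A8}.} I would start here, since the monotonicity property will follow from it. Recall that $E_d(x)=\{y\in\Xi(j+Nd):\varphi^d(y)=x\}$, where $\varphi:\Xi(j')\to\Xi(j'-N)$ sends $y$ to the unique $x$ with $y\in D_{j'-N}(x)$ (well-defined because the $D_{j'-N}(\cdot)$ partition $K$). The key elementary fact is that $\varphi^{d+l}=\varphi^l\circ\varphi^d$ as maps on the appropriate index sets. Then for $w\in\Xi(j+N(d+l))$ one has $\varphi^{d+l}(w)=x$ if and only if $\varphi^d(\varphi^l(w))=x$. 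Setting $v:=\varphi^l(w)\in\Xi(j+Nd)$, this says $w\in E_{l}(v)$ for some $v$ with $\varphi^d(v)=x$, i.e. $v\in E_d(x)$; conversely any $w$ in some $E_l(v)$ with $v\in E_d(x)$ satisfies $\varphi^{d+l}(w)=\varphi^d(v)=x$. This gives $E_{d+l}(x)=\bigcup_{v\in E_d(x)}E_l(v)$. The second equality in \eqref{eq:A8} (restricting to $v$ with $E_l(v)\neq\emptyset$) is trivial since empty sets contribute nothing to the union. A small point to check: the union over $v\in E_d(x)$ is in fact a \emph{disjoint} union, because each $w$ has a unique $\varphi^l(w)$, though this disjointness is not asserted in the statement and need not be dwelt on.

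\medskip

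\textbf{Plan for the monotonicity property \eqref{eq:A9000}.} This is where the only real content lies, and I expect it to be the main (though still mild) obstacle. Suppose $E_d(x)\neq\emptyset$, so there is some $y\in\Xi(j+Nd)$ with $\varphi^d(y)=x$. I want to produce, for each $k<d$, an element of $E_k(x)$. The natural candidate is $z:=\varphi^{d-k}(y)\in\Xi(j+Nk)$: then $\varphi^k(z)=\varphi^k(\varphi^{d-k}(y))=\varphi^d(y)=x$, so $z\in E_k(x)$, and hence $E_k(x)\neq\emptyset$. This uses only the composition law for $\varphi$ and the fact that $\varphi$ is everywhere defined on $\Xi(j')$ for $j'\in N\N$ with $j'/N\ge 2$ — which is guaranteed because the $D_{j'-N}$ are a partition of all of $K$, so every point of $\Xi(j')\subseteq K$ lies in exactly one $D_{j'-N}(x)$. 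One must just be slightly careful that the indices stay in the valid range $j+Nk\ge 2N$; since $j\in N\N$ with the convention in place that $j\ge 2N$, and $k\ge 0$, this holds. Alternatively, \eqref{eq:A9000} is an immediate corollary of \eqref{eq:A8}: taking $l=d-k$ in \eqref{eq:A8} gives $E_d(x)=\bigcup_{w\in E_k(x)}E_{d-k}(w)$, so if $E_k(x)=\emptyset$ the right-hand side is empty, forcing $E_d(x)=\emptyset$, contrapositive of what we want. I would present this corollary-style argument as it is cleanest: prove \eqref{eq:A8} first, then deduce \eqref{eq:A9000} from it by contraposition.

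\medskip

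\textbf{Expected difficulty.} There is essentially no serious obstacle: the whole proposition is a bookkeeping exercise about iterating the well-defined map $\varphi$, and the one thing to be careful about is making the index ranges and the "definitely empty" wording precise (the phrase $E_d(x)$ "not definitely empty" in Remark \ref{rk:A3} means $E_d(x)\neq\emptyset$ for infinitely many $d$, but here we only need the finite statement). I would keep the write-up to a few lines: state $\varphi^{a+b}=\varphi^a\circ\varphi^b$, verify \eqref{eq:A8} by the substitution $v=\varphi^l(w)$ as above, and then obtain \eqref{eq:A9000} by contraposition using $l=d-k$.
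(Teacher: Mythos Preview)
Your proposal is correct and follows essentially the same approach as the paper: establish \eqref{eq:A8} first by unwinding the composition $\varphi^{d+l}(p)=\varphi^d(\varphi^l(p))$, and then deduce \eqref{eq:A9000} by contraposition (if $E_k(x)=\emptyset$ then the union in \eqref{eq:A8} with $l=d-k$ is empty). One minor slip: you write $\varphi^{d+l}=\varphi^l\circ\varphi^d$ but then correctly compute $\varphi^d(\varphi^l(w))$; since $\varphi$ lowers the level, the composition is $\varphi^{d+l}=\varphi^d\circ\varphi^l$.
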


\begin{proof}
If $p$ belongs to the right hand side of \eqref{eq:A8}, there exists a $w\in E_d(x)$ such that $p\in E_l(w)$. This implies that $p\in E_{d+l}(x)$, since $\varphi^{d+l}(p)=\varphi^d(\varphi^l(p))=\varphi^d(w)=x$.
Viceversa, if $p\in E_{d+l}(x)$ then $\varphi^d(p)\in E_l(x)$ and $p\in E(\varphi^d(p))\neq\emptyset$, proving that $p\in \bigcup_{w\in E_d(x),E_l(w)\neq \emptyset} E_{l}(w)$. Eventually, identity \eqref{eq:A8} directly implies \eqref{eq:A9000}. Indeed, if there was some $0<l< k$ such that $E_l(w)=\emptyset$ for any $w\in E_l(w)$, we would have by \eqref{eq:A8} that:
$$E_k(x)=\bigcup_{\substack{w\in E_{l}(x)\\ E_{k-l}(w)\neq \emptyset}} E_{k-l}(w)=\emptyset,$$
since the union over an empty family is the empty set and this is a contradiction with the fact that by assumption $E_d(x)\neq\emptyset$.
\end{proof}

\begin{osservazione}
Thanks to
Proposition \ref{prop:A2}, we also have that if $x\in A(j)$, then $E_{d}(x)\neq \emptyset$ for any $d\in\N$.
This implies that, again thanks to the finiteness of $\Xi(j)$, that for any $j\in\N$ there exists an $M(j)\in\N$ such that $E_{d}(x)=\emptyset$ for any $x\in \Xi(j)\setminus A(j)$ whenever $d\geq M(j)$.
\end{osservazione}

\begin{proposizione}\label{inclgimel}
For any $j\in N\N$ we have $\gimel(j)\subseteq  A(j)$.
\end{proposizione}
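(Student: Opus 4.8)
The plan is to reduce the statement to a one–step descent claim and then prove that claim by a measure–theoretic contradiction. Recall that by Proposition \ref{prop:A2} (in particular \eqref{eq:A9000}) the set $\{d\in\N:E_d(x)\neq\emptyset\}$ is an initial segment of $\N$, so $A(j)=\{x\in\Xi(j):E_d(x)\neq\emptyset\text{ for every }d\in\N\}$. Hence it suffices to show that $E_d(x)\neq\emptyset$ for all $d$ whenever $x\in\gimel(j)$. I would deduce this from the following \emph{single–step claim}: for every $j'\in N\N$ and every $x\in\gimel(j')$ there is a $y\in\gimel(j'+N)$ with $\varphi(y)=x$, i.e. $y\in D_{j'}(x)$. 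Granting the claim, starting from $x_0:=x\in\gimel(j)$ it produces inductively points $x_i\in\gimel(j+Ni)$ with $\varphi(x_i)=x_{i-1}$, so that $\varphi^d(x_d)=x$ and $x_d\in E_d(x)$ for every $d$; thus $x\in A(j)$.

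To prove the single–step claim I would argue by contradiction, assuming $\gimel(j'+N)\cap D_{j'}(x)=\emptyset$. By Proposition \ref{propA1}(ii) we have $E^\phi(\xi,\tau)\subseteq\bigcup_{y\in\gimel(j'+N)}D_{j'+N}(y)$, and by Proposition \ref{propA1}(i) each $D_{j'+N}(y)$ is contained in $B(y,2^{-(j'+N)+3}/\tau)$. Since by assumption every such $y$ lies in $K\setminus D_{j'}(x)$ (note $\Xi(j'+N)\subseteq K$), any $p\in D_{j'}(x)\cap E^\phi(\xi,\tau)$ satisfies $\dist(p,K\setminus D_{j'}(x))<2^{-(j'+N)+3}/\tau\leq\eta^2 2^{-j'}/\tau$, the last inequality by the choice of $N$ and $\eta$. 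Hence $D_{j'}(x)\cap E^\phi(\xi,\tau)\subseteq\partial(D_{j'}(x),\eta^2 2^{-j'}/\tau)$, and Proposition \ref{propA1}(iv) gives
$$\phi\bigl(D_{j'}(x)\cap E^\phi(\xi,\tau)\bigr)\leq\oldC{C:6}\,\eta\,(2^{-j'}/\tau)^m .$$

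The contradiction is then obtained from a matching lower bound: $\phi(D_{j'}(x)\cap E^\phi(\xi,\tau))>\oldC{C:6}\eta(2^{-j'}/\tau)^m$ for $x\in\gimel(j')$. Since $j'\geq N\geq 3$, Proposition \ref{propA1}(iii) gives $\phi(D_{j'}(x))\geq 32^{-\mathcal{Q}}\xi^{-1}(2^{-j'}/\tau)^m$, and $\oldC{C:6}\eta$ is far below $32^{-\mathcal{Q}}\xi^{-1}$ by the choice of $\eta$; so the point is to show that a fixed fraction of the mass of each $\gimel$–cell lies in $E^\phi(\xi,\tau)$. I expect this to be the main obstacle, and it must be extracted from the construction of the cells $B'_{j}$ and of the gluing map $h$. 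The relevant ingredients are: $E^\phi(\xi,\tau)$ is disjoint from every cell $B'_{j'}(w)$ with $w\in\Xi_0(j')\setminus\gimel_0(j')$ (a consequence of \eqref{eq:A11} together with the maximality of $\gimel_0(j')$); $D_{j'}(x)$ absorbs the cells $B'_{j'}(z)$ of the $h$–children $z$ of $x$; $x\in E^\phi(\xi,\tau)$ together with the $(2^{-j'}/\tau)$–separation of $\gimel_0(j')$ bounds the overlap of competing balls near $x$; and the boundary estimate \eqref{eq:A13} from the proof of Proposition \ref{propA1} absorbs the portions of $D_{j'}(x)$ lost to neighbouring cells. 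Combining this lower bound with the upper bound above proves the single–step claim, and hence the proposition.
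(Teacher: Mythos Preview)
Your approach has a genuine gap at precisely the point you flag as ``the main obstacle'': the lower bound $\phi\bigl(D_{j'}(x)\cap E^\phi(\xi,\tau)\bigr)>\oldC{C:6}\eta(2^{-j'}/\tau)^m$. The membership $x\in\gimel(j')$ only tells you that $\phi(B'_{j'}(x))\geq 32^{-\mathcal{Q}}\xi^{-1}(2^{-j'}/\tau)^m$, and nothing in the construction forces any definite fraction of this mass to lie in $E^\phi(\xi,\tau)$. Globally we only know $\phi(E^\phi(\xi,\tau))>0$; there is no scale-uniform lower density of $E^\phi(\xi,\tau)$ inside $\gimel$-cells. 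Since your induction requires the single-step descent to hold for \emph{every} $x\in\gimel(j')$ at \emph{every} level $j'$, a failure at even one cell breaks the chain, and the ingredients you list (the disjointness \eqref{eq:A11}, the gluing map $h$, the boundary estimate \eqref{eq:A13}) do not manufacture the missing mass.

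The paper avoids this obstacle by not trying to stay inside $\gimel$ at each step. It uses only the interior ball of Proposition~\ref{propA1}(v): there is $c\in K$ with $K\cap B(c,\eta^2 2^{-j-1}/\tau)\subseteq D_j(x)$. For each $d$, let $z_d\in\Xi(j+Nd)$ be the label with $c\in D_{j+Nd}(z_d)$. Using Proposition~\ref{propA1}(i) and the definition of $\varphi$, the telescoping estimate
\[
d\bigl(c,\varphi^{d-1}(z_d)\bigr)\leq\sum_{k=1}^{d}2^{-(j+Nk)+3}/\tau\leq 2^{-(j+N)+4}/\tau
\]
holds, and by the choice of $N$ and $\eta$ this is smaller than the interior-ball radius. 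Hence $\varphi^{d-1}(z_d)\in D_j(x)$, so $\varphi^d(z_d)=x$ and $E_d(x)\neq\emptyset$. This argument is purely metric and needs no mass of $E^\phi(\xi,\tau)$ inside $D_j(x)$.
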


\begin{proof}
Thanks to Proposition \ref{propA1}(iv), for any $x\in \gimel(j)$ there exists a $c\in K$ such that:
\begin{equation}
K\cap B(c,\eta^22^{-j-2}/\tau)\subseteq \{u\in D_j(x):\text{dist}(u,K\setminus D_j(x))>2^{-j}\eta^2/\tau\}.
\label{eq:9001}
\end{equation}
Since the sets $D_{j+Nd}$ cover $K$ for any $d\in\N$, there exists a $z\in \gimel(j+Nd)$ such that
$c\in D_{j+Nd}(z)$. If $d$ is big enough, then $D_{j+Nd}(z)$ must be contained in $D_j(x)$ thanks to \eqref{eq:9001} and Proposition \ref{propA1}(i). Thus $E_d(x)\neq \emptyset$ for any $d\geq \overline{d}$.
\end{proof}

\begin{definizione}
Let $j\in N\N$ and $d\in \N$. For any $x\in \Xi(j)$ for which $E_d(x)\neq \emptyset$, we define:
$$D_{j,d}(x):=\bigcup_{y\in E_{d}(x)} D_{j+Nd}(y).$$
For the sake of notation we also let $D_{j,0}(x):=D_j(x)$.
\end{definizione}

\begin{lemma}\label{lemma3}
For any $j\in N\N$ and any $x\in \Xi(j)$ for which there is a $d\geq 1$ such that $E_d(x)\neq \emptyset$, we have:
\begin{equation}
    d_H(D_{j,k-1}(x),D_{j,k}(x))\leq 2^{-j-N(k-1)+4}/\tau,
    \label{eq:A7}\qquad\text{for any }1\leq k\leq d.
\end{equation}
\end{lemma}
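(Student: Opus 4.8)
\textbf{Proof plan for Lemma \ref{lemma3}.} The goal is to show that passing from the ``level-$(k-1)$ agglomeration'' $D_{j,k-1}(x)$ to the next one $D_{j,k}(x)$ moves the set by at most $2^{-j-N(k-1)+4}/\tau$ in Hausdorff distance. The first step is to reduce to a \emph{single-step} estimate: I would fix $x\in\Xi(j)$ and $1\le k\le d$, set $w:=\varphi^{k-1}$ so that $E_{k-1}(x)=\{\varphi^{k-1}(y)=x\}$, and rewrite, using \eqref{eq:A8}, $D_{j,k}(x)=\bigcup_{z\in E_{k-1}(x)}D_{j+N(k-1),1}(z)$ while $D_{j,k-1}(x)=\bigcup_{z\in E_{k-1}(x)}D_{j+N(k-1),0}(z)=\bigcup_{z\in E_{k-1}(x)}D_{j+N(k-1)}(z)$. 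By Proposition \ref{prop:1} (the stability of the Hausdorff distance under finite unions, inequality \eqref{eq:A9010}), it then suffices to prove for each single index $z\in\Xi(j')$ (with $j'=j+N(k-1)$) that
\begin{equation}
d_H\big(D_{j'}(z),D_{j',1}(z)\big)\le 2^{-j'+4}/\tau,
\nonumber
\end{equation}
since $2^{-j'+4}/\tau=2^{-j-N(k-1)+4}/\tau$ is exactly the claimed bound.

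The core single-step estimate is a diameter-plus-containment argument. On the one hand, every $D_{j'+N}(y)$ with $y\in E_1(z)$ satisfies $D_{j'+N}(y)\subseteq D_{j'}(z)$ by the very definition of $E_1(z)$ and of $\varphi$; hence $D_{j',1}(z)=\bigcup_{y\in E_1(z)}D_{j'+N}(y)\subseteq D_{j'}(z)$, which makes the term $\sup_{u\in D_{j',1}(z)}\dist(u,D_{j'}(z))$ vanish. On the other hand, $D_{j',1}(z)$ is nonempty (by hypothesis $E_k(x)\ne\emptyset$, so by \eqref{eq:A9000} $E_1$ of every relevant ancestor is nonempty), so for any $u\in D_{j'}(z)$ and any fixed $y_0\in E_1(z)$ and any $v\in D_{j'+N}(y_0)\subseteq D_{j'}(z)$ we get $\dist(u,D_{j',1}(z))\le d(u,v)\le\diam D_{j'}(z)$. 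By Proposition \ref{propA1}(i), $\diam D_{j'}(z)\le 2^{-j'+4}/\tau$. Combining the two sides yields $d_H(D_{j'}(z),D_{j',1}(z))\le 2^{-j'+4}/\tau$, as needed.

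Putting these together: I take the maximum over $z\in E_{k-1}(x)$ of the single-step bounds, all of which equal $2^{-j-N(k-1)+4}/\tau$, and Proposition \ref{prop:1} delivers \eqref{eq:A7}. One technical point to check carefully is the indexing of $\varphi$: $\varphi$ is defined on $\Xi(j)$ for $j\in N\N$ with $j/N\ge 2$, and sends it into $\Xi(j-N)$, so one must ensure $j+N(k-1)$ stays inside the range of definition, i.e. $j+N(k-1)\ge 2N$; this holds because $j\in N\N$ with $j\ge 2N$ already (we only apply the construction for $j\in\{Nl:l\ge2\}$) and $k\ge1$. The other point is that the union defining $D_{j',1}(z)$ should be taken over $y\in E_1(z)$ with $E_0(y)\ne\emptyset$, but $E_0(y)=\{y\}$ is never empty, so no cubes are lost.

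\textbf{Main obstacle.} There is no genuine obstacle here; the statement is an essentially bookkeeping lemma. The only place where care is required is making the reduction to the single-step estimate rigorous, i.e. correctly applying \eqref{eq:A8} to express both $D_{j,k-1}(x)$ and $D_{j,k}(x)$ as unions \emph{indexed by the same finite set} $E_{k-1}(x)$ so that Proposition \ref{prop:1} applies term by term, and then verifying the containment $D_{j',1}(z)\subseteq D_{j'}(z)$ cleanly from the definitions of $\varphi$ and $E_d$. Once that is set up, the diameter bound from Proposition \ref{propA1}(i) closes the argument immediately.
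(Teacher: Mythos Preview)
Your reduction to a single-step bound via Proposition~\ref{prop:1} is exactly the structure of the paper's inductive step. The single-step estimate itself, however, contains a genuine error. You claim that $D_{j'+N}(y)\subseteq D_{j'}(z)$ for every $y\in E_1(z)$, ``by the very definition of $E_1(z)$ and of $\varphi$''. This is false: the definition of $\varphi$ only says that the \emph{point} $y$ lies in $D_{j'}(z)$. Nothing prevents $y$ from sitting near the boundary of $D_{j'}(z)$, and then the set $D_{j'+N}(y)$ (diameter $\le 2^{-(j'+N)+4}/\tau$) can protrude outside. So $\sup_{u\in D_{j',1}(z)}\dist(u,D_{j'}(z))$ need not vanish.

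The fix is precisely what the paper does in its base case: for $u\in D_{j'+N}(y)\subseteq B(y,2^{-(j'+N)+3}/\tau)$ with $y\in D_{j'}(z)$, one has $\dist(u,D_{j'}(z))\le d(u,y)\le 2^{-(j'+N)+3}/\tau$, which is well inside the target $2^{-j'+4}/\tau$. For the reverse direction the paper uses $w\in D_{j'}(z)\subseteq B(z,2^{-j'+3}/\tau)$ and $y\in D_{j'}(z)\subseteq B(z,2^{-j'+3}/\tau)$ to obtain $d(w,y)\le 2^{-j'+4}/\tau$. Your ``diameter of $D_{j'}(z)$'' argument for this direction is essentially the same and is fine.

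There is also a second, smaller gap. You assert that $E_1(z)\neq\emptyset$ for \emph{every} $z\in E_{k-1}(x)$, citing \eqref{eq:A9000}. But \eqref{eq:A9000} applied to $x$ gives only $E_k(x)\neq\emptyset$; by \eqref{eq:A8} this guarantees $E_1(z)\neq\emptyset$ for \emph{some} $z\in E_{k-1}(x)$, not for all of them. For those $z$ with $E_1(z)=\emptyset$ one has $D_{j',1}(z)=\emptyset$, and then the termwise bound required by Proposition~\ref{prop:1} is infinite, so the proposition cannot be applied over the full index set $E_{k-1}(x)$ as you wrote it. The paper at this step restricts both unions to $\{z\in E_{k-1}(x):E_1(z)\neq\emptyset\}$; the contribution of the dropped pieces $D_{j'}(z)$ to the Hausdorff distance must then be controlled separately by the same diameter considerations.
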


\begin{proof}
We prove the proposition proceeding by induction on $k$. 
As a first step, we verify \eqref{eq:A7} in the base case $k=1$. Since $E_d(x)\neq \emptyset$, thanks to Proposition \ref{prop:A2} we have that $E_1(x)\neq \emptyset$. Therefore, if $z\in D_{j,1}(x)$ there exists an $y\in E_1(x)$ such that $z\in D_{j+N}(y)$. Thanks to Proposition \ref{propA1}(i) we know that $d(z,y)\leq 2^{-j-N+4}/\tau$ and in addition to this, thanks to the choice of $y$ we also have that $y\in D_j(x)$ and thus:
\begin{equation}
\text{dist}(z,D_{j}(x))\leq d(z,y)\leq 2^{-j-N+4}/\tau, \text{ for any }z\in D_{j,1}(x).
\label{eq:A9002}
\end{equation}
On the other hand for any $w\in D_j(x)$, we have:
\begin{equation}
    \begin{split}
        \text{dist}(w,D_{j,1}(x))\leq &\min_{y\in E_1(x)} d(w,y)\leq d(w,x)+\min_{y\in E_1(x)} d(x,y)
        \leq 2^{-j+3}/\tau+2^{-j+3}/\tau=2^{-j+4}/\tau,
        \label{eq:A9003}
    \end{split}
\end{equation}
where the last inequality comes from Proposition \ref{propA1}(i).
Summing up, \eqref{eq:A9002} and \eqref{eq:A9003} imply that for any $j\in\N$ and any $x\in \Xi(j)$ for which $E_1(x)\neq \emptyset$ we have:
\begin{equation}
d_H(D_{j}(x),D_{j,1}(x))\leq 2^{-j+4}/\tau.
\label{eq:A9012}
\end{equation}
Suppose now that the bound \eqref{eq:A7} holds for $k-1$ for some $1\leq k\leq d$. The definition of $D_{j,k}(x)$ together with identity \eqref{eq:A8} implies:
\begin{equation}
D_{j,k}(x)=\bigcup_{y\in E_{k}(x)}D_{j+Nk}(y)=\bigcup_{\substack{w\in E_{k-1}(x)\\E_1(w)\neq \emptyset}}\bigcup_{y\in E_1(w)} D_{j+N(k-1)+N}(y)=\bigcup_{\substack{w\in E_{k-1}(x)\\E_1(w)\neq \emptyset}}D_{j+N(k-1),1}(w).
\label{eq:nummiri2}
\end{equation}
Identity \eqref{eq:nummiri2} thanks to inequalities \eqref{eq:A9010} and  \eqref{eq:A9012}, allows us to infer:
\begin{equation}
\begin{split}
    d_H(D_{j,k-1}(x),D_{j,k}(x))=& d_H\Bigg(\bigcup_{\substack{w\in E_{k-1}(x)\\E_1(w)\neq \emptyset}}D_{j+N(k-1)}(w),\bigcup_{\substack{w\in E_{k-1}(x)\\E_1(w)\neq \emptyset}}D_{j+N(k-1),1}(w)\Bigg)\\
    \leq&\max_{\substack{w\in E_{k-1}(x)\\E_1(w)\neq \emptyset}}d_H\big(D_{j+N(k-1)}(w),D_{j+N(k-1),1}(w)\big)\leq 2^{-j-N(k-1)+4}/\tau,
    \nonumber
\end{split}
\end{equation}
concluding the proof of the proposition.
\end{proof}

Since the sets $\overline{D_{j,d}(x)}$ are contained in the compact set $K$ and, thanks to Lemma \ref{lemma3}, they form a Cauchy sequence with respect to the Hausdorff metric $d_H$, for any $j\in\N$ and $x\in A(j)$ there is a compact set $R_j(x)$ contained in $K$ such that:
$$\lim_{d\to \infty}d_H(\overline{D_{j,d}(x)},R_j(x))=0.$$

\begin{lemma}\label{lemma:A4}
For any $j\in\N$ and any $x\in A(j)$ we have $R_j(x)\subseteq B(x,2^{-j+5}/\tau)$.
\end{lemma}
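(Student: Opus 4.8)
The statement asserts that the Hausdorff limit $R_j(x)$ of the nested-ish family $\overline{D_{j,d}(x)}$ stays inside the ball $B(x,2^{-j+5}/\tau)$. The natural strategy is to obtain a uniform-in-$d$ containment for the approximating sets $\overline{D_{j,d}(x)}$ and then pass to the Hausdorff limit. First I would fix $j\in\mathbb{N}$ and $x\in A(j)$, so that by Remark \ref{rk:A3} and Proposition \ref{prop:A2} we have $E_d(x)\neq\emptyset$ for every $d\in\mathbb{N}$, and $D_{j,d}(x)$ is genuinely defined for all $d$.

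\textbf{Key step: a uniform bound on $\diam D_{j,d}(x)$, or rather on $\mathrm{dist}(\cdot,x)$.} I would telescope the estimate of Lemma \ref{lemma3}. By the triangle inequality for $d_H$,
\begin{equation}
d_H\big(D_j(x),D_{j,d}(x)\big)\leq \sum_{k=1}^{d} d_H\big(D_{j,k-1}(x),D_{j,k}(x)\big)\leq \sum_{k=1}^{d} 2^{-j-N(k-1)+4}/\tau\leq \frac{2^{-j+4}}{\tau}\cdot\frac{1}{1-2^{-N}}\leq \frac{2^{-j+5}}{\tau},
\nonumber
\end{equation}
where the last inequality uses $N\geq 1$ (indeed $N$ is large by its definition in Notation \ref{notation1}), so $1/(1-2^{-N})\leq 2$. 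Actually one should be a touch careful: this gives $d_H(D_j(x),D_{j,d}(x))\leq 2^{-j+5}/\tau$, and combined with $D_j(x)\subseteq B(x,2^{-j+3}/\tau)$ from Proposition \ref{propA1}(i), one gets that every point of $D_{j,d}(x)$ lies within $2^{-j+3}/\tau+2^{-j+5}/\tau<2^{-j+6}/\tau$ of $x$ — which is slightly weaker than claimed. To recover the sharper constant $2^{-j+5}/\tau$ I would instead argue directly: any $z\in D_{j,d}(x)$ lies in some $D_{j+Nd}(y)$ with $y\in E_d(x)$, and iterating the inclusion $D_{j+N(k)}(w)\subseteq$ "near $D_{j+N(k-1)}(\varphi(w))$" together with Proposition \ref{propA1}(i) bound $D_{j+Nk}(\cdot)\subseteq B(\text{centre},2^{-j-Nk+3}/\tau)$, so $d(z,x)\leq \sum_{k=0}^{d} 2^{-j-Nk+3}/\tau\leq 2^{-j+3}/\tau\cdot 2/(2-2^{-N+1})$; choosing the bookkeeping carefully (the dominant term is the $k=0$ one of size $2^{-j+3}/\tau$, the rest form a geometric tail) this sums to at most $2^{-j+5}/\tau$. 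Hence $\overline{D_{j,d}(x)}\subseteq \overline{B(x,2^{-j+5}/\tau)}$ for every $d$.

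\textbf{Passing to the limit.} Since $\overline{D_{j,d}(x)}\to R_j(x)$ in the Hausdorff metric and all the $\overline{D_{j,d}(x)}$ are contained in the closed set $\overline{B(x,2^{-j+5}/\tau)}$, the limit $R_j(x)$ is contained in $\overline{B(x,2^{-j+5}/\tau)}$ as well; this is the elementary fact that a Hausdorff limit of compact subsets of a fixed closed set stays in that set (if $p\in R_j(x)$ then $\mathrm{dist}(p,\overline{D_{j,d}(x)})\to 0$, so $p$ is a limit of points in $\overline{B(x,2^{-j+5}/\tau)}$, hence in it). That yields $R_j(x)\subseteq B(x,2^{-j+5}/\tau)$ (or its closure, which is what the statement means up to the harmless open/closed distinction, cf.\ the remark $d_H(A,B)=d_H(\overline A,\overline B)$).

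\textbf{Main obstacle.} The only genuinely delicate point is the constant-chasing in the uniform bound: the crude telescoping via Lemma \ref{lemma3} gives a ball of radius $\sim 2^{-j+6}/\tau$, one power of $2$ worse than claimed, so one has to extract the sharper estimate either by a direct nested-inclusion argument on the $D_{j+Nk}$ (using Proposition \ref{propA1}(i) that $D_{j+Nk}(w)\subseteq B(w,2^{-j-Nk+3}/\tau)$ and that the centres $\varphi$-chain together with spacing $\lesssim 2^{-j-Nk+2}/\tau$) or by noting that $x\in D_j(x)$ already and measuring everything from $x$ rather than through the $d_H$-triangle inequality. Once the geometric series is summed with the right leading term, the bound $2^{-j+5}/\tau$ follows; everything else is immediate from the compactness of $K$ and continuity of containment under Hausdorff limits.
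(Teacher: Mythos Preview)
Your approach is correct and is essentially the paper's: telescope Lemma \ref{lemma3}, combine with $D_j(x)\subseteq B(x,2^{-j+3}/\tau)$ from Proposition \ref{propA1}(i), and pass to the Hausdorff limit. The ``main obstacle'' you flag is illusory. You bounded $1/(1-2^{-N})\leq 2$ using only $N\geq 1$, which indeed loses a factor of $2$; but $N\geq 40$ by its definition in Notation \ref{notation1}, so in particular $N>2$ gives $1/(1-2^{-N})\leq 8/7$, and then
\[
2^{-j+3}/\tau+\frac{2^{-j+4}/\tau}{1-2^{-N}}\leq 2^{-j+3}/\tau\cdot\tfrac{23}{7}<2^{-j+5}/\tau,
\]
which is exactly the paper's computation. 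Your proposed ``direct nested-inclusion'' argument is therefore unnecessary; the simple telescoping already gives the sharp constant once you use the actual size of $N$.
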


\begin{proof}
Thanks to Lemma \ref{lemma3} and the triangular inequality for $d_H$, we have:
\begin{equation}
    d_H(R_j(x),D_j(x))\leq \sum_{d=0}^\infty d_H(D_{j,d+1}(x),D_{j,d}(x))\leq 2^{-j+3}/\tau\sum_{j=0}^\infty 2^{-Nd}=\frac{2^{-j+3}/\tau}{1-2^{-N}}.
    \label{eq:A14}
\end{equation}
Thanks to Proposition \ref{propA1}(i) we know that $D_j(x)\subseteq B(x,2^{-j+3}/\tau)$ and thus \eqref{eq:A14} and the triangular inequality yield:
$$ \sup_{y\in R_j(x)}d(x,y)\leq d_H(\{x\},R_j(x))\leq d_H(\{x\},D_j(x))+d_H(D_j(x),R_j(x))\leq2^{-j+3}/\tau+\frac{2^{-j+4}/\tau}{1-2^{-N}}<2^{-j+5}/\tau,$$
where the last inequality comes from the fact that $N>2$.
The above computation immediately implies that $R_j(x)\subseteq B(x,2^{-j+5}/\tau)$.
\end{proof}

\begin{lemma}\label{lemma:A2}
The compact sets $\{R_j(x):x\in A(j)\}$ are a covering of $K$. Furthermore, for any $j\in N\N$, any $x\in A(j)$ and any $l\in\N$ we have:
\begin{equation}
    R_j(x)= \bigcup_{z\in E_l(x)\cap A(j+Nl)} R_{j+Nl}(z).
    \label{eq:A9}
\end{equation}
\end{lemma}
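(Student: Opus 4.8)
The plan is to prove Lemma \ref{lemma:A2} in two parts, exactly parallelling the structure of the preceding lemmas about the sets $D_{j,d}$. First, for the covering statement: fix $x\in K$. Since for each $d\in\N$ the sets $\{D_{j+Nd}(y):y\in\Xi(j+Nd)\}$ partition $K$ (Proposition \ref{propA1}), and since the sets $\{D_{j,d}(z):z\in\Xi(j),\ E_d(z)\neq\emptyset\}$ also partition $K$ by Remark \ref{rk:A3} together with the definition of $D_{j,d}$, there is for each $d$ a point $x_d\in\Xi(j)$ with $x\in D_{j,d}(x_d)$. Because $\Xi(j)$ is finite, infinitely many of the $x_d$ coincide with a single point $\bar x\in\Xi(j)$; for this $\bar x$ we have $E_d(\bar x)\neq\emptyset$ for infinitely many $d$, hence $E_d(\bar x)\neq\emptyset$ for \emph{all} $d$ by Proposition \ref{prop:A2}, so $\bar x\in A(j)$. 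Moreover, by Proposition \ref{prop:A2} again, $x\in D_{j,d}(\bar x)$ holds for all $d$ along the subsequence, hence (using the monotone-in-the-Hausdorff-sense behaviour of the $D_{j,d}(\bar x)$ and the fact that $x$ lies in each of them) we get $\dist(x,D_{j,d}(\bar x))=0$ for all $d$ in the subsequence, so $\dist(x,R_j(\bar x))=0$; since $R_j(\bar x)$ is compact, $x\in R_j(\bar x)$. This shows $K=\bigcup_{x\in A(j)}R_j(x)$.

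Second, for the self-similar identity \eqref{eq:A9}: fix $j\in N\N$, $x\in A(j)$ and $l\in\N$. The key observation is the ``shift by $l$'' decomposition of $D_{j,d}(x)$ for $d\geq l$, which follows from identity \eqref{eq:A8} of Proposition \ref{prop:A2}:
\begin{equation}
D_{j,d}(x)=\bigcup_{y\in E_d(x)}D_{j+Nd}(y)=\bigcup_{\substack{z\in E_l(x)\\ E_{d-l}(z)\neq\emptyset}}\ \bigcup_{y\in E_{d-l}(z)}D_{j+Nl+N(d-l)}(y)=\bigcup_{\substack{z\in E_l(x)\\ E_{d-l}(z)\neq\emptyset}}D_{j+Nl,\,d-l}(z).
\nonumber
\end{equation}
Now let $d\to\infty$. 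On the left-hand side $\overline{D_{j,d}(x)}\to R_j(x)$ in $d_H$. On the right-hand side, the index set $E_l(x)$ is finite and fixed; for $z\in E_l(x)\cap A(j+Nl)$ one has $E_{d-l}(z)\neq\emptyset$ for all large $d$ and $\overline{D_{j+Nl,\,d-l}(z)}\to R_{j+Nl}(z)$, whereas for $z\in E_l(x)\setminus A(j+Nl)$ the set $E_{d-l}(z)$ is empty once $d-l\geq M(j+Nl)$, so those terms eventually drop out of the union. Hence by Proposition \ref{prop:1} (stability of Hausdorff convergence under finite unions, equation \eqref{eq:A9011}) the right-hand side converges in $d_H$ to $\bigcup_{z\in E_l(x)\cap A(j+Nl)}R_{j+Nl}(z)$. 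Since a Hausdorff limit is unique, this compact set equals $R_j(x)$, which is \eqref{eq:A9}. One minor point to record is that $E_l(x)\cap A(j+Nl)\neq\emptyset$ whenever $x\in A(j)$, which is needed so the union on the right is non-empty: indeed $x\in A(j)$ forces $E_{l+d}(x)\neq\emptyset$ for all $d$, and by \eqref{eq:A8} and finiteness of $\Xi(j+Nl)$ some $z\in E_l(x)$ must have $E_d(z)\neq\emptyset$ for infinitely many $d$, i.e.\ $z\in A(j+Nl)$.

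The main obstacle I anticipate is purely bookkeeping rather than conceptual: one must be careful, in the covering part, that ``$x\in D_{j,d}(\bar x)$ for infinitely many $d$'' together with the nesting of the $D_{j,d}(\bar x)$ (each $D_{j,d+1}(\bar x)\subseteq$ a controlled neighbourhood of $D_{j,d}(\bar x)$, from Lemma \ref{lemma3}) genuinely yields $x\in R_j(\bar x)$ — the cleanest route is to note $x\in\overline{D_{j,d}(\bar x)}$ for those $d$ and pass to the $d_H$-limit, using that if $x\in A_{i}$ for all $i$ in a sequence with $A_i\to A$ in $d_H$ then $x\in A$ (this is exactly the last sentence of Proposition \ref{prop:1}, applied with the trivial single-set family, or a direct $\dist(x,A)\leq d_H(A_i,A)\to 0$ argument). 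The other place requiring care is confirming that the two families $\{D_{j,d}(z)\}_z$ and $\{D_{j+Nd}(y)\}_y$ really do give the \emph{same} partition of $K$ for each fixed $d$, which is immediate from $\bigcup_{z\in\Xi(j)}E_d(z)=\Xi(j+Nd)$ in \eqref{eq:A3} and disjointness of the $E_d(z)$ (distinct $z$ have disjoint preimages under $\varphi^d$). Everything else is a direct application of Propositions \ref{prop:1} and \ref{prop:A2} and the convergence $\overline{D_{j,d}(x)}\to R_j(x)$.
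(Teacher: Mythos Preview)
Your proposal is correct and follows essentially the same approach as the paper. For the covering statement the paper argues globally (for $d\geq M(j)$ one has $K=\bigcup_{x\in A(j)}D_{j,d}(x)$, then invokes the last clause of Proposition~\ref{prop:1}), whereas you carry out the same pigeonhole argument point by point; for identity \eqref{eq:A9} your shift-by-$l$ decomposition and passage to the Hausdorff limit via Proposition~\ref{prop:1} are exactly what the paper does.
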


\begin{proof}
Thanks to the definition of $M(j)$, see Remark \ref{rk:A3}, for any $d\geq M(j)$ we have:
$$\bigcup_{x\in A(j)} E_d(x)=\Xi(j+Nd).$$
Therefore, since by Proposition \ref{propA1} the sets $\{D_{j+Nd}(y):y\in \Xi(j+Nd)\}$ for any $j,d\in\N$ are a partition of $K$, for any $d\geq M(j)$ we deduce that:
\begin{equation}
K=\bigcup_{x\in \Xi(j+Nd)}D_{j+Nd}(x)=\bigcup_{x\in A(j)}\bigcup_{w\in E_d(x)}D_{j+Nd}(w)=\bigcup_{x\in A(j)} D_{j,d}(x).
\label{eq:nummiri3}
\end{equation}
The first part of the statement, namely that the $R_j$'s are a cover of $K$, follows from \eqref{eq:nummiri3}, Proposition \ref{prop:1} and the fact that $\lim_{d\to\infty}d_H(\overline{D_{j,d}(x)},R_j(x))=0$ for any $x\in A(j)$.

Concerning the second part of the statement for any $j,l\in\N$, any $w\in \Xi(j+lN)$ and any $d\geq M(j+Nl)$ thanks to Remark \ref{rk:A3}, we have $E_{d}(w)\neq \emptyset$ if and only if $w\in A(j+Nl)$. Thanks to Proposition \ref{prop:A2}, and the above argument we infer that:
\begin{equation}
\begin{split}
    D_{j,d+l}(x)=&\bigcup_{w\in E_{d+l}(x)}D_{j+Nl+Nd}(w)=\bigcup_{\substack{w\in E_{l}(x)\\ E_{d}(w)\neq \emptyset}}\bigcup_{ z\in E_d(w)}D_{j+Nl+Nd}(z)\\
    =&\bigcup_{\substack{w\in E_{l}(x)\\ E_{d}(w)\neq \emptyset}} D_{j+Nl,d}(w)=\bigcup_{w\in E_l(x)\cap A(j+Nl)} D_{j+Nl,d}(w).
\end{split}
\nonumber
\end{equation}
Since $\overline{D_{j,l+d}(x)}$ converges to $R_j(x)$ and $\overline{D_{j+Nl,d}(w)}$ to $R_{j+Nl}(w)$ in the Hausdorff metric as $d$ goes to infinity, the uniqueness of limit and Proposition \ref{prop:1} imply identity \eqref{eq:A9}.
\end{proof}

\begin{proposizione}\label{prop:A10}
For any $j\in\N$ we have $E^\phi(\xi,\tau)\subseteq \bigcup_{x\in \gimel(j)} R_j(x)$.
\end{proposizione}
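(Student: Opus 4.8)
The plan is to trace the containment $E^\phi(\xi,\tau)\subseteq\bigcup_{x\in\gimel(j)}R_j(x)$ through the inductive limit construction of the sets $R_j$. First I would reduce to a fixed $j\in N\N$: for a general $j$ one picks the smallest element $j'$ of $N\N$ with $j'\geq j$ and uses that each $D_{j}(x)$, hence each $R_j(x)$, decomposes along the finer generation $\Xi(j')$ via Lemma \ref{lemma:A2}; alternatively, since the statement as used in Section \ref{sec:main} only ever needs $j\in N\N$, I would simply assume $j\in N\N$ from the start. So fix $j\in N\N$ and a point $p\in E^\phi(\xi,\tau)$.

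Next I would produce, for every $d\in\N$, a cube of the generation $\Xi(j+Nd)$ centred (in the $\gimel$-sense) near $p$. By Proposition \ref{propA1}(ii) we have $E^\phi(\xi,\tau)\subseteq\bigcup_{y\in\gimel(j+Nd)}D_{j+Nd}(y)$, so there is $y_d\in\gimel(j+Nd)$ with $p\in D_{j+Nd}(y_d)$. Applying $\varphi^d$ to $y_d$ gives a point $x_d:=\varphi^d(y_d)\in\Xi(j)$; since $D_{j}$ is a partition and $p\in D_{j+Nd}(y_d)\subseteq D_j(x_d)$ (the latter by iterating the definition of $\varphi$), the point $x_d$ is in fact the unique element of $\Xi(j)$ with $p\in D_j(x_d)$, so $x_d=:x$ is independent of $d$. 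Moreover $E_d(x)\ni y_d\neq\emptyset$ for every $d$, which by the definition of $A(j)$ forces $x\in A(j)$; and since $y_d\in\gimel(j+Nd)$, Proposition \ref{inclgimel} is not quite what is needed, so instead I would argue directly: $y_d\in E_d(x)$ and $y_d\in\gimel(j+Nd)\subseteq A(j+Nd)$, and then use Lemma \ref{lemma:A2} with $l=d$ to write $R_j(x)=\bigcup_{z\in E_d(x)\cap A(j+Nd)}R_{j+Nd}(z)\supseteq R_{j+Nd}(y_d)$. It remains to check that $x\in\gimel(j)$: this is exactly Proposition \ref{inclgimel} applied in reverse is false — rather, $\gimel(j)\subseteq A(j)$ — so I must extract $x\in\gimel(j)$ from the fact that $p$, hence $D_j(x)$, carries $E^\phi(\xi,\tau)$-mass. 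Here I would invoke Proposition \ref{propA1}(ii): the sets $\{D_j(y):y\in\gimel(j)\}$ already cover $E^\phi(\xi,\tau)$, and they are pairwise disjoint among all of $\{D_j(y):y\in\Xi(j)\}$; since $p\in E^\phi(\xi,\tau)\cap D_j(x)$ and $p$ lies in some $D_j(y')$ with $y'\in\gimel(j)$, disjointness gives $x=y'\in\gimel(j)$.

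Finally I would show $p\in R_j(x)$. We have $p\in D_{j+Nd}(y_d)\subseteq D_{j,d}(x)$ for every $d$, so $\dist(p,D_{j,d}(x))=0$ for all $d$; since $\overline{D_{j,d}(x)}\to R_j(x)$ in the Hausdorff metric and $R_j(x)$ is compact, passing to the limit gives $\dist(p,R_j(x))=0$, i.e. $p\in R_j(x)$. (Concretely: pick $q_d\in R_j(x)$ with $d(p,q_d)\leq\dist(p,\overline{D_{j,d}(x)})+d_H(\overline{D_{j,d}(x)},R_j(x))=d_H(\overline{D_{j,d}(x)},R_j(x))\to 0$, and use compactness of $R_j(x)$.) This establishes $p\in R_j(x)$ with $x\in\gimel(j)$, proving the inclusion.

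The main obstacle I anticipate is the bookkeeping needed to pin down that the ancestor $x=\varphi^d(y_d)$ is the \emph{same} point of $\gimel(j)$ for all $d$ and that it genuinely lies in $\gimel(j)$ and not merely in $A(j)$ — this is where one must use both the partition property and the $\gimel$-covering property of Proposition \ref{propA1} together, rather than either alone. Everything else (the Hausdorff-limit passage, the nonemptiness of $E_d(x)$, the identity for $R_j(x)$ from Lemma \ref{lemma:A2}) is routine once the correct $x$ has been identified.
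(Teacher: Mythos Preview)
The central step of your argument is the inclusion $p\in D_{j+Nd}(y_d)\subseteq D_j(x_d)$, which you justify by ``iterating the definition of $\varphi$''. This inclusion is false in general: the families $\{D_{j+Nd}(\cdot)\}$ are \emph{not} nested across generations --- that is precisely why the construction passes to the Hausdorff limits $R_j$ (see Lemma~\ref{lemma3} and Proposition~\ref{lemma:A5}, which show the $D_{j,k}(x)$ are close to one another but need not satisfy any containment). Iterating the definition of $\varphi$ only gives $y_d\in D_{j+N(d-1)}(\varphi(y_d))$, $\varphi(y_d)\in D_{j+N(d-2)}(\varphi^2(y_d))$, etc.; none of these yields $p\in D_j(x_d)$ or even $y_d\in D_j(x_d)$. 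Consequently your conclusion that $x_d$ is independent of $d$ is unjustified (and typically false), and your argument for $x\in\gimel(j)$, which also rests on $p\in D_j(x)$, collapses with it.

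The repair is exactly what the paper does. First, $\varphi$ maps $\gimel(j+N)$ into $\gimel(j)$: if $y\in\gimel(j+N)\subseteq E^\phi(\xi,\tau)$, then by Proposition~\ref{propA1}(ii) and the disjointness of the $D_j$'s the unique $x\in\Xi(j)$ with $y\in D_j(x)$ lies in $\gimel(j)$, i.e.\ $\varphi(y)\in\gimel(j)$. Iterating gives $x_d=\varphi^d(y_d)\in\gimel(j)$, whence $E^\phi(\xi,\tau)\subseteq\bigcup_{x\in\gimel(j)}D_{j,d}(x)$ for every $d$. Since $\gimel(j)$ is finite, the pigeonhole principle produces a fixed $x\in\gimel(j)$ and a subsequence $d_i\to\infty$ with $p\in D_{j,d_i}(x)$. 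Your final Hausdorff-limit passage then works verbatim along this subsequence, since $d_H(\overline{D_{j,d_i}(x)},R_j(x))\to 0$ regardless. So the missing ingredient is not the limit argument (which you handle correctly) but the replacement of your false ``same $x$ at every scale'' claim by a finiteness/pigeonhole argument.
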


\begin{proof}
Thanks to Proposition \ref{propA1}(ii), for any $j,k\in\N$ we have $E^\phi(\xi,\tau)\subseteq \bigcup_{w\in\gimel(j+Nk)}D_{j+Nk}(w)$. This implies that for any $y\in\gimel(j+Nk)$ we can find a $x\in \gimel(j)$ such that $\varphi^k(y)=x$. Thanks to Proposition \ref{inclgimel}, for any $k\in\N$ we have:
\begin{equation}
E^\phi(\xi,\tau)\subseteq\bigcup_{y\in \gimel(j+Nk)} D_{j+Nk}(y)\subseteq \bigcup_{x\in\gimel(j)} \bigcup_{y\in E_k(x)}D_{j+Nk}(y)=\bigcup_{x\in\gimel(j)}D_{j,k}(x). 
\label{eq:inclDjk}
\end{equation}

If we prove that $\dist(w,\bigcup_{x\in\gimel(j)}R_j(x))=0$ for any $w\in E^\phi(\xi,\tau)$, since $\bigcup_{x\in\gimel(j)}R_j(x)$ is closed and contained in $K$, we have $w\in \bigcup_{x\in\gimel(j)}R_j(x)$ proving the proposition. So, fix some $w\in E^\phi(\xi,\tau)$ and note that thanks to \eqref{eq:inclDjk} there exists a sequence $\{y_k\}\subseteq \gimel(j)$ such that $w\in \overline{D_{j,k}(y_k)}$. Since $\gimel(j)$ is a finite set, thanks to the pidgeonhole principle we can assume without loss of generality that $y_k$ is a fixed element $y\in\gimel(j)$ that it is also contained in $A(j)$ thanks to Proposition \ref{inclgimel}. This implies that for any $k\in\N$ we have:
$$\dist(w,R_j(y))\leq \dist(w,\overline{D_{j,k}(y)})+d_H(\overline{D_{j,k}(y)},R_j(y))=d_H(\overline{D_{j,k}(y)},R_j(y)).$$
The arbitrariness of $k$ and the definition of $R_j(y)$ concludes $\dist(w,R_j(y))=0$, proving the proposition.
\end{proof}

Despite the fact that the $R_j$s cover $E$, they may not be disjoint. In order to correct this, we put a total order $\preceq_j$ on $A(j)$ in such a way that:
\begin{itemize}
    \item[(i)]$x\prec_j y$ for any $x\in \gimel(j)$ and any $y\in A(j)\cap\Xi(j)\setminus \gimel(j)$
    \item[(ii)] $x\prec_j y$ if $\varphi(x)\prec_{j-N} \varphi(y)$.
\end{itemize}

\begin{definizione}
For any $j\in N\N$ and $x\in A(j)$, we define:
$$Q_j(x):=R_j(x)\cap \Bigg[\bigcup_{w\prec_j x} R_j(w) \Bigg]^c.$$
Furthermore, we let $\Delta_j:=\{Q_{j}(x):x\in A(j)\}$.
\end{definizione}

\begin{osservazione}\label{rk:A300}
Thanks to the definition of $Q_j$ we have $R_j(x)\cap Q_j(w)=\emptyset$ for any $x\in A(j)\cap \gimel(j)$ and any $w\in \Xi(j)\setminus \gimel(j)\cap A(j)$.
\end{osservazione}

\begin{proposizione}\label{prop:A3}
Let $j\leq j^\prime$, $x\in A(j)$ and $y\in A(j^\prime)$. Then either $Q_{j^\prime}(y)\subseteq Q_j(x)$ or $Q_{j^\prime}(y)\cap Q_j(x)\neq \emptyset$.
\end{proposizione}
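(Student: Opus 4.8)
\textbf{Proof plan for Proposition \ref{prop:A3}.}

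The plan is to imitate the standard dyadic-cube argument: first establish the analogous nesting property at the level of the sets $R_j$, then transfer it to the $Q_j$ using the definition of $Q_j$ as $R_j$ minus the lower-ordered siblings. First I would reduce to the case $j^\prime-j=Nl$ for some $l\in\N$, since the layers $\Delta_j$ are only defined for $j\in N\N$; then $y\in A(j^\prime)=A(j+Nl)$. By Remark \ref{rk:A3} and Proposition \ref{prop:A2}, since $y\in A(j+Nl)$ we have $E_d(y)\neq\emptyset$ for all $d$, so iterating $\varphi$ downward $l$ times lands $y$ in some $R_j(\varphi^l(y))$; more precisely, by identity \eqref{eq:A9} of Lemma \ref{lemma:A2}, $R_j(x)=\bigcup_{z\in E_l(x)\cap A(j+Nl)}R_{j+Nl}(z)$, and the sets $R_{j+Nl}(z)$ for varying $z\in\Xi(j+Nl)$ partition $K$ up to the ordering. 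So either $y\in E_l(x)$, in which case $R_{j^\prime}(y)\subseteq R_j(x)$, or $y\notin E_l(x)$, in which case $R_{j^\prime}(y)$ is one of the pieces making up $R_j(x^\prime)$ for $x^\prime=\varphi^l(y)\neq x$, hence (before removing siblings) $R_{j^\prime}(y)\subseteq R_j(x^\prime)$ with $x^\prime\neq x$.

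Next I would pass from $R$ to $Q$. In the first case $y\in E_l(x)$: I claim $Q_{j^\prime}(y)\subseteq Q_j(x)$. Indeed $Q_{j^\prime}(y)\subseteq R_{j^\prime}(y)\subseteq R_j(x)$, and I must show $Q_{j^\prime}(y)$ is disjoint from $R_j(w)$ for every $w\prec_j x$. If $w\prec_j x$, then by property (ii) of the order $\preceq_{j^\prime}$ (that $\varphi$ is order-preserving) every $z\in E_l(w)\cap A(j^\prime)$ satisfies $z\prec_{j^\prime}y$ (using that $\varphi^l(z)=w\prec_j x=\varphi^l(y)$ and iterating property (ii)), so by \eqref{eq:A9} $R_j(w)=\bigcup_{z\in E_l(w)\cap A(j^\prime)}R_{j^\prime}(z)\subseteq\bigcup_{z\prec_{j^\prime}y}R_{j^\prime}(z)$, which is exactly the set removed from $R_{j^\prime}(y)$ to form $Q_{j^\prime}(y)$. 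Hence $Q_{j^\prime}(y)\cap R_j(w)=\emptyset$ for all $w\prec_j x$, giving $Q_{j^\prime}(y)\subseteq R_j(x)\cap[\bigcup_{w\prec_j x}R_j(w)]^c=Q_j(x)$. In the second case $\varphi^l(y)=x^\prime\neq x$: then either $x\prec_j x^\prime$ or $x^\prime\prec_j x$. If $x^\prime\prec_j x$ then $R_j(x^\prime)$ is among the sets subtracted in forming $Q_j(x)$, so $Q_j(x)\cap R_j(x^\prime)=\emptyset$, hence $Q_j(x)\cap Q_{j^\prime}(y)=\emptyset$ since $Q_{j^\prime}(y)\subseteq R_j(x^\prime)$. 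If $x\prec_j x^\prime$, then by the same order-preservation argument as above every $z\in E_l(x)\cap A(j^\prime)$ has $z\prec_{j^\prime}y$, so $R_j(x)=\bigcup_{z\in E_l(x)\cap A(j^\prime)}R_{j^\prime}(z)$ is contained in the set removed from $R_{j^\prime}(y)$, whence $Q_{j^\prime}(y)\cap R_j(x)=\emptyset$, and a fortiori $Q_{j^\prime}(y)\cap Q_j(x)=\emptyset$.

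The main obstacle I anticipate is the bookkeeping around the order-preservation property: one needs that $\varphi$ being order-preserving layer-by-layer (property (ii) of $\preceq$) propagates to $\varphi^l$, and that this interacts correctly with the decomposition \eqref{eq:A9}, i.e.\ that ``$w\prec_j x$ at level $j$'' implies ``all of $E_l(w)$ comes before all of $E_l(x)$ at level $j^\prime$''. This requires a short induction on $l$ using property (ii), together with the fact that $E_l(w)$ and $E_l(x)$ are disjoint subsets of $\Xi(j^\prime)$ when $w\neq x$ (Proposition \ref{prop:A2} and the partition property of the $D_j$'s). Once that combinatorial lemma is in hand, the rest is a routine case analysis as above. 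A minor point to handle is the degenerate possibility that some $R_j(w)$ or $R_{j^\prime}(z)$ with $z\in A(j^\prime)$ appears with $w\notin A(j)$ — but since $\varphi$ maps $A(j^\prime)$ into $A(j)$ (if $E_d(z)\neq\emptyset$ for all $d$ then $E_{d}(\varphi^l(z))\neq\emptyset$ for all $d$ by Proposition \ref{prop:A2}), this does not occur, and I would note this explicitly at the start.
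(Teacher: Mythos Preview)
Your proposal is correct and follows essentially the same route as the paper: both set $x':=\varphi^l(y)$ and split into the three possibilities $x'=x$, $x'\prec_j x$, $x\prec_j x'$, using the decomposition $R_j(z)=\bigcup_{w\in E_l(z)\cap A(j')}R_{j'}(w)$ from Lemma~\ref{lemma:A2} together with the order-preservation property (ii) of $\preceq$ (iterated $l$ times, as you note) to transfer the ordering down to level $j'$. Two cosmetic remarks: you should treat the trivial case $j=j'$ separately (the $Q_j$ at a fixed level are pairwise disjoint by construction), and in your subcase $x'\prec_j x$ your argument via $Q_{j'}(y)\subseteq R_{j'}(y)\subseteq R_j(x')$ is actually slightly cleaner than the paper's, which expands $R_j(w)$ into $R_{j'}$-pieces first.
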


\begin{proof}
First of all we note that if $j=j^\prime$, since the sets $Q_j$ are pairwise disjoint, either $Q_j(x)=Q_j(y)$, and thus $x=y$, or $Q_j(x)\cap Q_{j^\prime}(y)=\emptyset$. From now on, we can assume without loss of generality that $j^\prime= j+Nl$ for some $l\geq 1$.

 We claim that if $y\in E_l(\overline{z})$ for some $x\prec_j \overline{z}$, then $Q_{j^\prime}(y)\cap Q_{j}(x)=\emptyset$. Indeed, for any $w\in E_l(x)$, we have: $$\varphi^l(w)=x\prec_j\overline{z}=\varphi^l(y).$$ 
 and thanks to the definition of $\prec_{j^\prime}$, we deduce that $w\prec_j y$. Therefore this implies thanks to Lemma \ref{lemma:A2} that:
\begin{equation}
    R_j(x)=\bigcup_{w\in E_l(x)\cap A(j+Nl)} R_{j^\prime}(w)\subseteq\bigcup_{w\prec_{j^\prime}y} R_{j^\prime}(w).
    \label{eq:A10000}
\end{equation}
Thanks to the definition of $Q_{j^\prime}(y)$ and \eqref{eq:A10000}, we conclude that:
$$Q_{j^\prime}(y)\cap Q_{j}(x)\subseteq Q_{j^\prime}(y)\cap R_{j}(x)\subseteq R_{j^\prime}(y)\cap\bigcup_{w\prec_{j^\prime}y}R_{j^\prime}(w)=\emptyset.$$
On the other hand, if $y\in E_l(\overline{z})$ for some $ \overline{z}\prec_j x$, since by assumption $y\in A(j^\prime)$, thanks to Lemma \ref{lemma:A2}, we infer:
\begin{equation}
    \begin{split}
        Q_{j^\prime}(y)\cap Q_{j}(x)\subseteq& R_{j^\prime}(y)\cap Q_{j}(x)= R_{j^\prime}(y)\cap R_{j}(x)\cap \Bigg[\bigcup_{w\prec_j x} R_j(w) \Bigg]^c\\
        =& R_{j^\prime}(y)\cap R_{j}(x)\cap \Bigg[\bigcup_{w\prec_j x} \bigg( \bigcup_{z\in E_l(w)\cap A(j^\prime)} R_{j^\prime}(z) \bigg)\Bigg]^c
        \subseteq R_{j^\prime}(y) \cap R_{j}(x)\cap R_{j^\prime}(y)^c=\emptyset.
        \nonumber
    \end{split}
\end{equation}
Eventually, if $y\in E_l(x)$ we deduce thanks to Lemma \ref{lemma:A2}, that $R_{j^\prime}(y)\subseteq R_{j}(x)$. Thanks to the definition of $\prec_j$ we know that if $w\in E_l(z)$ for some $z\prec_j x$, then $\varphi^l(w)=z\prec_j x=\varphi^l(y)$,
and thus $w\prec_{j^\prime} y$ thanks to the definition of the order relation $\prec_{j^\prime}$. This and Lemma \ref{lemma:A2} imply that:
$$\bigcup_{z\prec_j x}R_j(z)=\bigcup_{z\prec_j x}\bigcup_{w\in E_l(z)\cap A(j+Nl)}R_{j^\prime} (p)\subseteq \bigcup_{w\prec_{j^\prime}y} R_{j^\prime}(w).$$
The above inclusion, shows that $R_{j^\prime}(y)\cap \bigcup_{z\prec_j x}R_j(z)=\emptyset$ and thus, thanks to the definition of the sets $Q_j$, we deduce that  $Q_{j^\prime}(y)\subseteq Q_j(x)$.
\end{proof}

\begin{proposizione}\label{lemma:A5}
For any $j\in N\N$ and any $x\in A(j)$ we have:
\begin{equation}
\sup_{v\in R_j(x)}\dist(v,D_j(x))\leq 2^{-j-N+5}/\tau.
\label{eq:A10001}
\end{equation}
Furthermore, the following inclusions hold:
\begin{equation}
    D_j(x)\triangle R_j(x)\subseteq \partial( D_j(x),\eta^22^{-j-1}/\tau)\qquad\text{and}\qquad D_j(x)\triangle Q_j(x)\subseteq \partial( D_j(x),\eta^22^{-j-1}/\tau).
    \label{eq:A10002}
\end{equation}
\end{proposizione}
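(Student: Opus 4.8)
The plan is to unravel the inductive construction of $R_j(x)$ and $Q_j(x)$ from the finite-stage sets $D_{j,d}(x)$, and to track how Hausdorff-distance errors accumulate geometrically. First I would establish \eqref{eq:A10001}. By Lemma~\ref{lemma:A2} and the definition $D_{j,d}(x)=\bigcup_{y\in E_d(x)}D_{j+Nd}(y)$, every point of $\overline{D_{j,d}(x)}$ lies within $\sum_{k=1}^{d}d_H(D_{j,k-1}(x),D_{j,k}(x))$ of $D_j(x)$; by Lemma~\ref{lemma3} this sum is bounded by $2^{-j+3}/\tau\sum_{k\geq 1}2^{-N(k-1)}=\dfrac{2^{-j+3}/\tau}{1-2^{-N}}$. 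Using $N>2$, or more precisely observing that in fact the bound from the $k\geq 1$ terms starting at the correct index gives the sharper constant, one obtains $\sup_{v\in R_j(x)}\dist(v,D_j(x))\leq 2^{-j-N+5}/\tau$ after passing to the Hausdorff limit $\overline{D_{j,d}(x)}\to R_j(x)$. (The key numerical point is that the first-step error $d_H(D_j(x),D_{j,1}(x))\le 2^{-j+4}/\tau$ is absorbed into $D_j(x)$ itself, so the genuinely ``new'' mass at stage $d\geq 1$ is controlled by $2^{-j-N(d-1)+4}/\tau$, and $\sum_{d\geq 1}2^{-j-N(d-1)+4}/\tau\leq 2^{-j+5}/\tau$, but the refinement to $2^{-j-N+5}$ comes from noting that points of $R_j(x)$ differ from $D_{j,1}(x)\supseteq$-approximants only at scales $\leq 2^{-j-N}/\tau$.)

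Next I would prove the first inclusion in \eqref{eq:A10002}, namely $D_j(x)\triangle R_j(x)\subseteq\partial(D_j(x),\eta^2 2^{-j-1}/\tau)$. For the inclusion $R_j(x)\setminus D_j(x)\subseteq\partial(D_j(x),\cdot)$: a point $v\in R_j(x)\setminus D_j(x)$ lies in $K\setminus D_j(x)$ and, by the stabilization of the construction, $v\in\overline{D_{j,d}(x)}$ for all large $d$; since $D_{j,d}(x)\to R_j(x)$ and each $D_{j+Nd}(y)$ with $y\in E_d(x)$ is ``deep inside'' $D_j(x)$ once $d$ is large — more precisely, Proposition~\ref{propA1}(v) and Lemma~\ref{lemma:A4} show such small cubes, once they survive to stage $d$, are eventually contained in the $\eta^2 2^{-j-1}/\tau$-core of $D_j(x)$ unless $v$ is within $\eta^2 2^{-j-1}/\tau$ of $K\setminus D_j(x)$. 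Hence $\dist(v,K\setminus D_j(x))$ cannot exceed $\eta^2 2^{-j-1}/\tau$; but $v\notin D_j(x)$ forces $\dist(v,D_j(x))\leq\eta^2 2^{-j-1}/\tau$ directly. For the reverse direction $D_j(x)\setminus R_j(x)$: if $w\in D_j(x)$ but $w\notin R_j(x)$, then $w$ is not a limit of the $\overline{D_{j,d}(x)}$, which (by the same eventual-containment argument, using that the $D_{j+Nd}$ partition $K$) can only happen if $w$ lies within $\eta^2 2^{-j-1}/\tau$ of $K\setminus D_j(x)$, i.e.\ $w\in\partial(D_j(x),\eta^2 2^{-j-1}/\tau)$. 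The second inclusion $D_j(x)\triangle Q_j(x)\subseteq\partial(D_j(x),\eta^2 2^{-j-1}/\tau)$ then follows because $Q_j(x)=R_j(x)\setminus\bigcup_{w\prec_j x}R_j(w)$, and by Proposition~\ref{prop:A3} (and Remark~\ref{rk:A300}) the removed part $R_j(x)\cap\bigcup_{w\prec_j x}R_j(w)$ is disjoint from the interior core of $D_j(x)$; combined with $D_j(x)\triangle R_j(x)\subseteq\partial(D_j(x),\cdot)$ and the triangle-type containment $D_j(x)\triangle Q_j(x)\subseteq (D_j(x)\triangle R_j(x))\cup(R_j(x)\triangle Q_j(x))$, one closes the argument.

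The main obstacle I anticipate is the second step: rigorously justifying that a small cube $D_{j+Nd}(y)$ which intersects $D_j(x)$ and survives the gluing to stage $d$ is \emph{eventually} contained in the core $\{u\in D_j(x):\dist(u,K\setminus D_j(x))>\eta^2 2^{-j-1}/\tau\}$, or else is entirely within the boundary layer. This needs the quantitative facts from Proposition~\ref{propA1}(i),(v) — that $\diam D_{j+Nd}(y)\leq 2^{-j-Nd+4}/\tau\to 0$ and that $D_{j+Nd}(y)\subseteq B(y,2^{-j-Nd+3}/\tau)$ — together with a dichotomy on whether the base point $y$ sits in the boundary layer of $D_j(x)$ or not; the accounting of constants (that $2^{-Nd+4}$ becomes $<\eta^2/2$ for $d$ large, and that the ``new'' points added between $R_j(x)$ and $D_j(x)$ all arise from such far-out small cubes) is where the care is required. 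Everything else is a bookkeeping of geometric series already set up in Lemmas~\ref{lemma3}, \ref{lemma:A4}, \ref{lemma:A2} and Proposition~\ref{prop:1}.
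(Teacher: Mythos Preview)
Your plan for \eqref{eq:A10001} is the paper's argument: route through $D_{j,1}(x)$ by observing that every $u\in D_{j,1}(x)$ lies in some $D_{j+N}(y)$ with $y\in D_j(x)$, hence $\dist(u,D_j(x))\le 2^{-j-N+3}/\tau$, and then bound $d_H(D_{j,1}(x),R_j(x))$ by the tail $\sum_{d\ge 1}2^{-j-Nd+4}/\tau$.

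For \eqref{eq:A10002} there is a real gap, and it is precisely the obstacle you flag. Your sketch for $D_j(x)\setminus R_j(x)$ claims that if $w$ is deep inside $D_j(x)$ then the small cell $D_{j+Nd}(y_d)\ni w$ must eventually satisfy $y_d\in E_d(x)$. But membership in $E_d(x)$ is governed by where the \emph{centre} $y_d$ sits relative to the coarser partition $D_{j+N(d-1)}$, not by where $w$ sits relative to $D_j(x)$; Proposition~\ref{propA1}(v) gives you a ball inside $D_j(y)$ for $y\in\gimel(j)$ but says nothing about how later-stage centres thread back through $\varphi$. Likewise, Proposition~\ref{prop:A3} concerns nesting of $Q$-cubes across different generations and Remark~\ref{rk:A300} only separates $\gimel$-centres from non-$\gimel$ ones; neither tells you that $R_j(x)\cap\bigcup_{w\prec_j x}R_j(w)$ avoids the core of $D_j(x)$.

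The paper's device, which you do not use, is to turn \eqref{eq:A10001} against a \emph{different} centre. If $u\in D_j(x)\setminus Q_j(x)$ (this subsumes $D_j(x)\setminus R_j(x)$ since $Q_j(x)\subseteq R_j(x)$), then because the family $\{R_j(z):z\in A(j)\}$ covers $K$ by Lemma~\ref{lemma:A2}, there is some $x'\in A(j)\setminus\{x\}$ with $u\in R_j(x')$. Now apply \eqref{eq:A10001} to $R_j(x')$: this gives $\dist(u,D_j(x'))\le 2^{-j-N+5}/\tau$, and since the $D_j$'s are pairwise disjoint, $D_j(x')\subseteq K\setminus D_j(x)$, placing $u$ in the boundary layer. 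The opposite inclusion $Q_j(x)\setminus D_j(x)\subseteq R_j(x)\setminus D_j(x)$ follows directly from \eqref{eq:A10001} for $R_j(x)$ itself. One then checks $2^{-N+6}\le \eta^2/2$ from the choice of $N$ and $\eta$. No decomposition through $(D_j\triangle R_j)\cup(R_j\triangle Q_j)$ is needed, and the whole ``survival'' analysis you anticipate as the main difficulty disappears.
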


\begin{proof}
If $u\in\ D_{j,1}(x)$, there is a $y\in \Xi(j+N)\cap D_j(x)$ such that $u\in D_{j+N}(y)$ and thanks to Proposition \ref{propA1}(i) we have:
$$\dist(u,D_j(x))\leq \dist(u,y)\leq \diam (D_{j+N}(y))\leq 2^{-j-N+3}/\tau.$$ Therefore, for any $v\in R_j(x)$, we have:
\begin{equation}
\begin{split}
    \dist(v, D_j(x))\leq& \inf_{u\in D_{j,1}(x)} d(u,v)+\dist(u,D_j(x))\\
    \leq&\inf_{u\in D_{j,1}(x)}d(u,v) +2^{-j-N+3}/\tau= \dist(v,D_{j,1}(x))+2^{-j-N+3}/\tau.
    \label{eq:nummm2020}
\end{split}
\end{equation}
Since $\dist(v,D_{j,1}(x))\leq d_H(R_j(x),D_{j,1}(x))$, Lemma \ref{lemma3} together with \eqref{eq:nummm2020} implies that:
\begin{equation}
    \begin{split}
        \dist(v,D_j(x))\leq&d_H(D_{j,1}(x),R_j(x))+2^{-j-N+3}/\tau\leq \sum_{d=1}^\infty d_H(D_{j,d}(x),D_{j,d+1}(x))+2^{-j-N+3}/\tau\\
        \leq& \sum_{d=1}^\infty 2^{-j-Nd+4}/\tau+2^{-j-N+3}/\tau\leq 2^{-j-N+5}/\tau,
        \nonumber
    \end{split}
\end{equation}
for any $v\in R_j(x)$, where the last inequality follows since $N>2$. 

Let us move to the proof of \eqref{eq:A10002}. For any $u\in D_j(x)\cap Q_j(x)^c$, we can assume without loss of generality that there exists an $x(u)\in A(j)\setminus\{x\}$ such that $u\in R_j(x(u))$. This is due to the following argument. The fact that the sets $R_j$ are a cover of $K$ shows that we can always find a $y\in A(j)$ such that $y\in R_j(y)$. Furthermore, if $u\in R(x)\cap Q_j(x)^c$, then by definition of $Q_j(x)$ there must exists another $x(u)\prec_j x$ such that $u \in R_j(x(u))$, and this shows that we can always choose $x(u)$ different from $x$.

Therefore, since $D_j(x(u))\subseteq K\setminus D_j(x)$, thanks to \eqref{eq:A10001} we deduce that for any $u\in D_j(x)\cap Q_j(x)^c$, we have:
\begin{equation}
\text{dist}(u,K\setminus D_j(x))\leq \dist(u,D_j(x(u)))\leq\sup_{v\in R_j(x(u))}\dist(v,D_j(x(u)))\leq 2^{-j-N+6}/\tau.
\label{eq:A10003}
\end{equation}
Thanks to inequality \eqref{eq:A10003}, we infer that:
\begin{equation}
D_j(x)\cap R_j(x)^c\subseteq D_j(x)\cap Q_j(x)^c\subseteq \{w\in D_j(x):\dist(w,K\setminus D_j(x))\leq 2^{-j-N+6}/\tau\}.
    \label{eq:A10005}
\end{equation}
On the other hand, for any $u\in D_j(x)^c\cap R_j(x)$, inequality \eqref{eq:A10001} implies that:
\begin{equation}
    D_j(x)^c\cap Q_j(x)\subseteq D_j(x)^c\cap R_j(x)\subseteq \{w\in K\setminus D_j(x): \dist(w, D_j(x))\leq 2^{-j-N+6}/\tau\}.
    \label{eq:A10006}
\end{equation}
The inclusions \eqref{eq:A10005} and \eqref{eq:A10006} yield both of the inclusion of \eqref{eq:A10002}, and this concludes the proof, since thanks to the choice of $\eta$ we have $2^{-N+6}\leq \eta^2/2$.
\end{proof}

We are left to prove that the families of sets $\Delta_j$ satisfy the properties of Theorem \ref{evev}:

\begin{teorema}
Defined $\newC\label{C:2}:=2^{7\mathcal{Q}}\xi$ and $\newC\label{C3}:=2^{24\mathcal{Q}}\xi$, the families $\{\Delta_j\}_{j\in\N}$ have the following properties:
\begin{itemize}
    \item[(i)] $K= \bigcup_{Q\in\Delta_j}Q$ for any $j\in\N$ and the elements of $\Delta_j$ are pairwise disjoint,
    \item[(ii)] for any $j\in\N$ we have $E^\phi(\xi,\tau)\subseteq \bigcup_{x\in \gimel(j)} Q_j(x)$,
    \item[(iii)] if $j\leq j^\prime$, $Q\in\Delta_j$ and $Q^\prime\in\Delta_{j^\prime}$, then either $Q$ contains $Q^\prime$ or $Q\cap Q^\prime=\emptyset$,
    \item[(iv)]for any $Q\in\Delta_j$ we have  $\diam(Q)\leq 2^{-j+5}/\tau$,
    \item[(v)] if $x\in\gimel(j)$, then $\oldC{C:2}^{-1}\big(2^{-j}/\tau\big)^{m}\leq \phi(Q_j(x))\leq \oldC{C:2}\big(2^{-j}/\tau\big)^{m}$,
    \item[(vi)] if $Q\in\Delta_j$, we have $ \phi\big(\partial(Q,\eta^2 2^{-j}/\tau )\big)\leq \oldC{C3} \eta\big(2^{-j}/\tau)^{m}$,
    \item[(vii)] for any $x\in\gimel(j)$ there is a $c\in K$ such that $B(c,\eta^2 2^{-j-1}/\tau)\cap K\subseteq Q_j(x)$.
\end{itemize}
\end{teorema}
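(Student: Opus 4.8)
The seven properties to be verified are exactly the ones obtained through the construction carried out in Section~\ref{sec:proofdyadic}, so the plan is to collect the pieces already established and to match the constants. First I would address property (i): the sets $\{Q_j(x):x\in A(j)\}$ are pairwise disjoint by construction, since $Q_j(x)$ is defined by removing from $R_j(x)$ all the sets $R_j(w)$ with $w\prec_j x$. To see that they cover $K$, recall from Lemma~\ref{lemma:A2} that the $R_j(x)$ with $x\in A(j)$ cover $K$; for any $v\in K$ one then takes the $\prec_j$-minimal $x$ with $v\in R_j(x)$, and $v\in Q_j(x)$ by construction. Property (ii) is Proposition~\ref{prop:A10} combined with the observation that $\gimel(j)\subseteq A(j)$ (Proposition~\ref{inclgimel}), using again the minimality argument to pass from $R_j$ to $Q_j$: if $w\in E^\phi(\xi,\tau)$ lies in $\bigcup_{x\in\gimel(j)}R_j(x)$ then choosing the $\prec_j$-minimal such $x$ (necessarily in $\gimel(j)$ by property (i) of the order $\preceq_j$, since $\gimel(j)$ precedes the rest of $A(j)$) gives $w\in Q_j(x)$. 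Property (iii) is precisely Proposition~\ref{prop:A3}. Property (iv) follows from Lemma~\ref{lemma:A4}: $R_j(x)\subseteq B(x,2^{-j+5}/\tau)$ is too weak by itself, so instead I would use inequality \eqref{eq:A10001} of Proposition~\ref{lemma:A5} together with $\diam D_j(x)\leq 2^{-j+4}/\tau$ from Proposition~\ref{propA1}(i), giving $\diam Q_j(x)\leq \diam R_j(x)\leq \diam D_j(x)+2\cdot 2^{-j-N+5}/\tau\leq 2^{-j+5}/\tau$.

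For property (v) the plan is to combine the lower bound from Proposition~\ref{propA1}(iii), namely $\phi(D_j(x))\geq 32^{-\mathcal{Q}}\xi^{-1}(2^{-j}/\tau)^m$ for $x\in\gimel(j)$, and the trivial upper bound $\phi(D_j(x))\leq \phi(B(x,2^{-j+3}/\tau))\leq \xi (2^{-j+3}/\tau)^m = 2^{3m}\xi(2^{-j}/\tau)^m$ coming from $x\in E^\phi(\xi,\tau)$ and Proposition~\ref{propA1}(i), with the symmetric-difference estimate \eqref{eq:A10002} from Proposition~\ref{lemma:A5}: $D_j(x)\triangle Q_j(x)\subseteq \partial(D_j(x),\eta^2 2^{-j-1}/\tau)$, whose $\phi$-measure is at most $\oldC{C:6}\eta(2^{-j}/\tau)^m$ by Proposition~\ref{propA1}(iv). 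Since $\eta$ is small (recall $\eta=\xi^{-2}2^{-40\mathcal{Q}}$), the error term is a tiny fraction of the main term, so $\phi(Q_j(x))$ stays between $\tfrac12\cdot 32^{-\mathcal{Q}}\xi^{-1}(2^{-j}/\tau)^m$ and $2\cdot 2^{3m}\xi(2^{-j}/\tau)^m$, and choosing $\oldC{C:2}:=2^{7\mathcal{Q}}\xi$ absorbs both with room to spare (here one uses $m\leq\mathcal{Q}$). For property (vi) I would argue that $\partial(Q_j(x),\eta^2 2^{-j}/\tau)\subseteq \partial(D_j(x),C\eta^2 2^{-j}/\tau)\cup (D_j(x)\triangle Q_j(x))$ for a harmless constant $C$: a point within $\eta^2 2^{-j}/\tau$ of the boundary of $Q_j(x)$ is either already in the symmetric difference or within a comparable distance of the boundary of $D_j(x)$, because away from $D_j(x)\triangle Q_j(x)$ the two sets coincide. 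Then Proposition~\ref{propA1}(iv) applied with a slightly enlarged $\eta$ (still $\leq 2\eta$ say, keeping within the regime where the corona argument of Lemma~\ref{lemma1} works) gives the bound $\oldC{C3}\eta(2^{-j}/\tau)^m$ with $\oldC{C3}:=2^{24\mathcal{Q}}\xi$. Property (vii): Proposition~\ref{propA1}(v) gives a ball $B(c,\eta^2 2^{-j-1}/\tau)$ with $K\cap B(c,\eta^2 2^{-j-1}/\tau)\subseteq$ the $\eta^2 2^{-j}/\tau$-interior of $D_j(x)$; by \eqref{eq:A10002} this interior is not touched by $D_j(x)\triangle Q_j(x)$, hence it is contained in $Q_j(x)$, so the same $c$ works (possibly after shrinking the radius by a fixed factor, which is allowed since the statement only asks for $\eta^2 2^{-j-1}/\tau$).

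\textbf{Main obstacle.} The genuinely delicate points are (v), (vi) and the bookkeeping of constants. The recurring mechanism is that $Q_j(x)$ differs from $D_j(x)$ only inside $\partial(D_j(x),\eta^2 2^{-j-1}/\tau)$, a set whose measure is a small multiple of $\eta$ times the full scale; one must check each time that this error does not spoil the two-sided estimate. This is where the smallness of $\eta$ relative to the AD-regularity constant $\xi$ is used, and where care is needed that all the enlargements of $\eta$ introduced along the way (passing from $Q_j$ to $D_j$ to a slightly larger corona) remain within the range in which Lemma~\ref{lemma1}'s pigeonhole over $\lfloor 1/\eta\rfloor$ coronas, and Lemma~\ref{lemma2}'s packing bound, remain valid. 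Once the translation dictionary between $D_j$ and $Q_j$ is set up via Proposition~\ref{lemma:A5}, the rest is routine; the effort is in keeping the constants $\oldC{C:2}=2^{7\mathcal{Q}}\xi$ and $\oldC{C3}=2^{24\mathcal{Q}}\xi$ honestly above everything that appears, using $m\leq\mathcal{Q}$ and $\eta=\xi^{-2}2^{-40\mathcal{Q}}$ throughout.
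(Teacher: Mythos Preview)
Your outline for (i)--(v) and (vii) is essentially the paper's argument, and the constant bookkeeping is fine.

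The genuine gap is in your plan for (vi). Your proposed inclusion
\[
\partial(Q_j(x),\eta^2 2^{-j}/\tau)\subseteq \partial(D_j(x),C\eta^2 2^{-j}/\tau)\cup (D_j(x)\triangle Q_j(x))
\]
does \emph{not} hold with $C=1$. If $u\in Q_j(x)\cap D_j(x)$ and the nearby witness $v\in K\setminus Q_j(x)$ happens to lie in $D_j(x)\setminus Q_j(x)\subseteq \partial(D_j(x),\eta^2 2^{-j-1}/\tau)$, then the best you get is $\dist(u,K\setminus D_j(x))\le d(u,v)+\dist(v,K\setminus D_j(x))\le \tfrac32\eta^2 2^{-j}/\tau$, so you need $C\ge 3/2$. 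But Proposition~\ref{propA1}(iv) only controls $\phi(\partial(D_j(x),\eta^2 2^{-j}/\tau))$ at the \emph{fixed} scale $\eta^2$: the ball $B_j(x)$ in Lemma~\ref{lemma1} was selected by pigeonhole so that the corona of width $2\eta^2 2^{-j}/\tau$ around its specific radius has small mass. The neighboring coronas (and hence $\partial(B_j(x),\tfrac32\eta^2 2^{-j}/\tau)$) need not be small. You cannot ``apply Proposition~\ref{propA1}(iv) with a slightly enlarged $\eta$'', because changing $\eta$ changes the balls $B_j$, hence the sets $D_j$, $R_j$, $Q_j$ themselves.

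The paper avoids this by not trying to reduce $\partial Q_j(x)$ back to $\partial D_j(x)$ alone. Instead, for $u\in\flat_1 Q_j(x)$ it finds a \emph{neighboring} $x'\neq x$ with $u$ close to $Q_j(x')$, and then uses \eqref{eq:A10001} (which says every point of $R_j(x')$ is within $2^{-j-N+5}/\tau\le \eta^2 2^{-j-1}/\tau$ of $D_j(x')$) to conclude $u\in\partial(D_j(x'),\eta^2 2^{-j}/\tau)$ --- at the original scale, no enlargement needed. A packing argument (as in Lemma~\ref{lemma2}) bounds the number of relevant $x'$ by $2^{7\mathcal Q}$, and summing gives the estimate on $\flat_1$. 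The part $\flat_2 Q_j(x)$ is handled by observing it is covered by $\flat_1 Q_j(y)$ for $O(2^{9\mathcal Q})$ many $y$. The point is that the thin-boundary property only survives at the original width, so one must pass to neighboring $D_j$'s rather than widen the collar around $D_j(x)$.
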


\begin{proof}
Point (i) is satisfied by the definition of the sets $Q_j$ and Lemma \ref{lemma:A2} and (ii) follows from Proposition \ref{prop:A10} and Remark \ref{rk:A300}. Furthermore, (iii) is implied by Proposition \ref{prop:A3} and (iv) by Lemma \ref{lemma:A4}.

\paragraph{Proof of (v)}If $x\in\gimel(j)$ the upper bound follows immediately thanks to the fact that $x\in E^\phi(\xi,\tau)$ and Proposition \ref{lemma:A4}:
$$\phi(Q_j(x))\leq \phi(B(x,2^{-j+5}/\tau))\leq\xi(2^{-j+5}/\tau)^m.$$
For the lower bound note that:
$$\phi(Q_j(x))=\phi(D_j(x))-\phi(D_j(x)\setminus Q_j(x))+\phi(Q_j(x)\setminus D_j(x))
    \geq\phi(D_j(y))-\phi(Q_j(x)\triangle D_j(x)).$$
Thanks to Proposition \ref{lemma:A5} we know that $Q_j(x)\triangle D_j(x)\subseteq \partial(D_j(x),\eta^2 2^{-j}/\tau)$.
Therefore, Proposition \ref{propA1}(iii) and (iv) imply that:
\begin{equation}
\begin{split}
    \phi(Q_j(x))\geq\phi(D_j(y))-\phi(\partial(D_j(x),\eta^22^{-j}/\tau))\geq (32^{-\mathcal{Q}}\xi^{-1}-\oldC{C:6}\eta)(2^{-j}/\tau)^m\geq 64^{-\mathcal{Q}}\xi^{-1}(2^{-j}/\tau)^m,
    \nonumber
\end{split}
\end{equation}
where the last inequality follows from the fact that we assumed $\eta\leq 2^{-13\mathcal{Q}}\xi^{-2}$, see the beginning of Subsection \ref{sec:proofdyadic}.
This concludes the proof of (v) thanks to the choice of $\oldC{C:2}$.

\paragraph{Proof of (vi)}For any $j\in N\N$ and any $x\in A_j(x)$ let us define the sets:
\begin{equation}
    \begin{split}
        \flat_1Q_j(x):=&\{u\in Q_j(x):\dist(u,K\setminus Q_j(x))\leq \eta^22^{-j-1}/\tau\},\\
        \flat_2Q_j(x):=&\{u\in K\setminus Q_j(x):\dist(u, Q_j(x))\leq \eta^22^{-j-1}/\tau\}.
        \nonumber
    \end{split}
\end{equation}
First of all for any $x\in A(j)$, we estimate the measure of the set $\flat_1Q_j(x)$.
If $u\in \flat_1Q_j(x)$, since the $Q_j$s are a partition of $K$, we can find $x\neq x^\prime\in A(j)$ such that $u\in \flat_2Q_j(x^\prime)$. Thanks Proposition \ref{lemma:A5} and the triangular inequality, on the one hand we deduce that:
$$\dist(u,D_j(x))\leq \min_{w\in Q_j(x)}d(u,w)+\dist(w,D_j(x))\leq 2^{-j-N+6}\leq 2^{-j-1}\eta^2/\tau.$$
where the last inequality follows from the choice of $\eta$. On the other:
$$\dist(u,D_j(x^\prime))\leq \inf_{w\in Q_j(x^\prime)} d(u,w)+\dist(w,D_j(x^\prime)) \leq  2^{-j-1}\eta^2/\tau+2^{-j-N+6}\leq \eta^2 2^{-j}/\tau,$$
where once again the last inequality follows from the choice of $\eta$. Therefore the two above bounds imply in particular that $u\in \partial (D_j(x^\prime),\eta^22^{-j}/\tau)$. Therefore, thanks to the arbitrariness of $u$, we conclude that:
\begin{equation}
\flat_1Q_j(x)\subseteq \bigcup_{\substack{x^\prime\in A(j)\\\partial (D_j(x^\prime),\eta^22^{-j}/\tau)\cap \flat_1Q_j(x)\neq \emptyset }} \partial (D_j(x^\prime),\eta^22^{-j}/\tau)
\label{eq:nummm1998}
\end{equation}
Thanks to
Proposition \ref{propA1}(i), Lemma \ref{lemma:A4} and using the same argument we employed to prove Lemma \ref{lemma2}, one can show that:
\begin{equation}
\begin{split}
    \Card(\{y\in A(j):Q_j(y)\cap \partial(D_j(x),&\eta^22^{-j}/\tau)\neq \emptyset\})\leq\Card(\{y\in A(j):R_j(y)\cap \partial(D_j(x),\eta^22^{-j}/\tau)\neq \emptyset\})\\
    \leq& \Card(\{y\in A(j):B(x,2^{-j+5}/\tau)\cap B(x,2^{-j+4}/\tau)\neq \emptyset\})\leq 2^{7\mathcal{Q}}.
\end{split}
    \label{eq:A16}
\end{equation}
Therefore, the inclusion \eqref{eq:nummm1998}, the bound \eqref{eq:A16} and Proposition \ref{propA1}(iv) imply that:
\begin{equation}
    \begin{split}
        \phi(\flat_1Q_j(x))\leq 2^{7\mathcal{Q}}\max_{y:Q_j(y)\cap D_j(x)\neq \emptyset} \phi(\partial(D_j(x),&\eta^22^{-j}/\tau))
        \leq 2^{7\mathcal{Q}}\oldC{C:6} \eta (2^{-j}/\tau)^m.
        \label{eq:A15}
    \end{split}
\end{equation}

We are left to estimate the measure of $\flat_2 Q_j(x)$. Since the $Q_j$s are a partition of $K$, we infer that:
\begin{equation}
        \phi(\flat_2Q_j(x))=\sum_{y\in A(j)\setminus\{x\}}\phi(\{u\in Q_j(y):\dist(u, Q_j(x))\leq \eta^22^{-j-1}\}).
  \label{eq:A17}
\end{equation}
The cardinality of those $y\in A(j)$ for which there exists $u\in Q_j(y)$ such that $\dist(u,K\setminus Q_j(x))\leq \eta^22^{-j-1}$ can be bounded by $2^{9\mathcal{Q}}$ and this can be shown with the same argument used for \eqref{eq:A16}. Thanks to the bounds \eqref{eq:A15} and \eqref{eq:A17} we infer:
\begin{equation}
        \phi(\flat_2Q_j(x))\leq 2^{9\mathcal{Q}}\cdot\max_{\substack{y\in A(j)\setminus\{x\}\\Q_j(y)\cap B(Q_j(x),\eta^22^{-j-1})\neq \emptyset}}\phi(\{u\in Q_j(y):\dist(u, Q_j(x))\leq \eta^22^{-j-1}\})
        \leq  2^{16\mathcal{Q}}\oldC{C:6}\eta(2^{-j}/\tau)^m.
  \label{eq:A18}
\end{equation}
Hence putting together \eqref{eq:A15} and \eqref{eq:A18}, we deduce that:
\begin{equation}
    \phi(\partial(Q_j(x),\eta^22^{-j-1}/\tau))=\phi(\flat_1Q_j(x))+\phi(\flat_2Q_j(x))\leq 2^{7\mathcal{Q}}\oldC{C:6} \eta (2^{-j}/\tau)^m+2^{16\mathcal{Q}}\oldC{C:6}\eta(2^{-j}/\tau)^m\leq 2^{17\mathcal{Q}}\oldC{C:6}\eta(2^{-j}/\tau)^m.
    \nonumber
\end{equation}
Thus, (v) follows with the choice $\oldC{C3}:=2^{17\mathcal{Q}}\oldC{C:6}$.
\paragraph{Proof of (vii)}Thanks for  (iv) and (v), for any $x\in \gimel(j)$ we have:
\begin{equation}
    \begin{split}
    \oldC{C:2}^{-1}(2^{-j}/\tau)^m\leq &\phi(Q_j(x))\leq \phi(\partial(Q_j(x),\eta^2 2^{-j-1}/\tau))+\phi(\{u\in Q_j(x):\dist(u,K\setminus Q_j(x))>\eta^2 2^{-j-1}/\tau\})\\
    \leq & \oldC{C3}\eta(2^{-j}/\tau)^m+\phi(\{u\in Q_j(x):\dist(u,K\setminus Q_j(x))>\eta^2 2^{-j-1}/\tau\})
        \nonumber
    \end{split}
\end{equation}
Since $\eta\leq \xi^{-2}2^{-26\mathcal{Q}}$, we deduce that
$\oldC{C:2}^{-1}-\oldC{C3}\eta\geq \xi^{-1}2^{-8\mathcal{Q}}$ and thus:
$$\xi^{-1}2^{-8\mathcal{Q}}(2^{-j}/\tau)^m\leq \phi(\{u\in Q_j(x):\dist(u,K\setminus Q_j(x))>\eta^2 2^{-j-1}/\tau\}).$$
This implies that there exists $c\in K$ such that $B(c,\eta^22^{-j-1}/\tau)\subseteq Q_j(x)$.
\end{proof}

\section{Finite perimeter sets in Carnot groups}\label{def:perimetro}

Throughout this second part of the appendix if not otherwise stated, we will always endow $\mathbb{G}$ with the smooth-box metric introduced in Definition \ref{smoothnorm}.

\subsection{Finite perimeter sets, intrinsic Lipschitz graphs and regular surfaces}

In this subsection we recall the definitions of functions of bounded variations and finite perimeter sets and we collect from various papers some results that will be useful in the following.

\begin{definizione}
We say that a function $f:\mathbb{G}\to \R$ is of local bounded variation if $f\in L^1_{loc}(\mathbb{G})$ and:
$$\lVert \nabla_{\mathbb{G}} f\rVert(\Omega):=\sup\Big\{ \int_{\Omega} f(x)\text{div}_{\mathbb{G}} \varphi(x) dx:\varphi\in \mathcal{C}^1_0(\Omega,H\mathbb{G}),\lvert \varphi(x)\rvert\leq 1\Big\}<\infty,$$
for any bounded open set $\Omega\subseteq \mathbb{G}$, where $\text{div}_\mathbb{G}\varphi:=\sum_{i=1}^{n_1} X_i\varphi_i$ and where  $X_1,\ldots,X_{n_1}$ are the vector fields introduced in Definition \ref{regsur}. We denote by $\text{BV}_{\mathbb{G},loc}(\mathbb{G})$ the set of all  functions of locally bounded variation. As usual a Borel set $E\subseteq \mathbb{G}$ is said to be of \emph{finite perimeter} if $\chi_E$ is of bounded variation. 
\end{definizione}

\begin{teorema}[Theorem 4.3.2, \cite{MFSSC}]
If $f$ is a function of bounded variation, then $\lVert \nabla_{\mathbb{G}} f\rVert$ is a Radon measure on $\mathbb{G}$. Moreover there exists a $\lVert \nabla_{\mathbb{G}} f\rVert$-measurable horizontal section $\sigma_f:\mathbb{G}\to H\mathbb{G}$ such that $\lvert \sigma_f(x)\rvert=1$ for $\lVert \nabla_{\mathbb{G}} f\rVert$-a.e. $x\in\mathbb{G}$, and for any open set $\Omega$ we have:
$$\int_\Omega f(x)\text{div}_{\mathbb{G}}\varphi(x)dx=\int_\Omega \langle \varphi,\sigma_f\rangle d\lVert \nabla_{\mathbb{G}} f\rVert,\qquad\text{for every }\varphi\in\mathcal{C}^1_0(\Omega,H\mathbb{G}).$$
\end{teorema}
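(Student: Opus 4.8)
The statement to be proved is the structure theorem for $\mathrm{BV}_\mathbb{G}$-functions: if $f\in\mathrm{BV}_{\mathbb{G},loc}(\mathbb{G})$, then the total variation $\lVert\nabla_\mathbb{G} f\rVert$ is a Radon measure, and there is a polarization vector field $\sigma_f$ with $|\sigma_f|=1$ a.e. realizing the integration-by-parts formula. Since this is a quotation of Theorem 4.3.2 of \cite{MFSSC}, the honest thing to do is to indicate how it follows from the standard Riesz representation machinery applied to the horizontal setting. I would proceed as follows.

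\emph{Step 1: the total variation is a Radon measure.} First I would define, for every open set $\Omega$, the quantity $\mu(\Omega):=\lVert\nabla_\mathbb{G} f\rVert(\Omega)$ as in the statement, and check that it is the restriction to open sets of a Borel measure. The key points are: (a) superadditivity on disjoint open sets, which is immediate since test fields supported in disjoint opens can be summed; (b) subadditivity, proved by a partition-of-unity argument subordinate to a finite cover (using that $\mathcal{C}^1_0$ horizontal sections form a module over $\mathcal{C}^1_0$ scalar functions and that $\mathrm{div}_\mathbb{G}(\psi\varphi)=\psi\,\mathrm{div}_\mathbb{G}\varphi+\langle\nabla_\mathbb{G}\psi,\varphi\rangle$); and (c) the hypothesis $\mu(\Omega)<\infty$ for bounded $\Omega$, which gives local finiteness. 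Then the De Giorgi--Letta criterion (or the classical construction in \cite{MFSSC}) upgrades $\mu$ from an open-set functional to a genuine Borel regular, locally finite — hence Radon — measure on $\mathbb{G}$.

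\emph{Step 2: the polarization field.} Consider the linear functional $L(\varphi):=\int_\mathbb{G} f\,\mathrm{div}_\mathbb{G}\varphi\,dx$ defined on compactly supported $\mathcal{C}^1$ horizontal sections $\varphi$. By Step 1, $|L(\varphi)|\le\int|\varphi|\,d\lVert\nabla_\mathbb{G} f\rVert$, so $L$ extends to a bounded functional on $L^1(\lVert\nabla_\mathbb{G} f\rVert; H\mathbb{G})$, the Bochner space of $\lVert\nabla_\mathbb{G} f\rVert$-integrable horizontal sections. Since $H\mathbb{G}$ is a finite-rank bundle (each fibre $\cong\mathbb{R}^{n_1}$) and $\lVert\nabla_\mathbb{G} f\rVert$ is $\sigma$-finite, the Riesz representation theorem for $L^1$ vector-valued integrands yields a unique $\sigma_f\in L^\infty(\lVert\nabla_\mathbb{G} f\rVert; H\mathbb{G})$ with $L(\varphi)=\int\langle\varphi,\sigma_f\rangle\,d\lVert\nabla_\mathbb{G} f\rVert$. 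Finally, the identity $|\sigma_f|=1$ a.e.\ follows by testing against $\varphi$ approximating $\sigma_f/|\sigma_f|$: this forces $\int|\sigma_f|\,d\lVert\nabla_\mathbb{G} f\rVert\ge\lVert\nabla_\mathbb{G} f\rVert$, while the definition of the total variation as a supremum over unit fields gives $|\sigma_f|\le1$ a.e.; combining the two yields $|\sigma_f|=1$ $\lVert\nabla_\mathbb{G} f\rVert$-a.e.

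\emph{Main obstacle.} The only genuinely delicate point is Step 1: converting the open-set supremum functional into a Borel measure, i.e.\ verifying the hypotheses of the De Giorgi--Letta additivity-and-inner-regularity lemma. Subadditivity in particular requires care with the partition of unity, since multiplying a unit horizontal section by a cutoff does not keep it a unit section — one must instead estimate the error term $\langle\nabla_\mathbb{G}\psi,\varphi\rangle$ and let the cutoffs concentrate. Everything after that (Step 2) is the standard Riesz argument, essentially identical to its Euclidean counterpart, so I would state it briefly and refer to \cite{MFSSC} for the details.
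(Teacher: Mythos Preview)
The paper does not give a proof of this statement: it is quoted verbatim as Theorem~4.3.2 of \cite{MFSSC} and used as a black box. Your outline is the standard Riesz/De~Giorgi--Letta argument and is correct; there is nothing to compare against in the paper itself.
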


As in the Euclidean spaces functions of bounded variation are compactly embedded in $L^1$:

\begin{teorema}[Theorem 2.16, \cite{step2}]\label{scalepE}
The set $\text{BV}_{\mathbb{G},loc}(\mathbb{G})$ is compactly embedded in $L^1_{loc}(\mathbb{G})$.
\end{teorema}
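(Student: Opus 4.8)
The statement to be proved is Theorem 2.16 of \cite{step2}, the compact embedding $\mathrm{BV}_{\mathbb{G},loc}(\mathbb{G}) \hookrightarrow\hookrightarrow L^1_{loc}(\mathbb{G})$. Since this is a standard fact whose proof mirrors the Euclidean one, I would organize the argument around the classical Rellich--Kondrachov scheme adapted to the Carnot group setting. First I would reduce to a local statement: fix a bounded open set $\Omega$ with $\overline{\Omega}$ compact, take a bounded sequence $\{f_k\}_{k\in\N}\subseteq \mathrm{BV}_\mathbb{G}(\Omega)$ with $\sup_k\big(\lVert f_k\rVert_{L^1(\Omega)}+\lVert\nabla_\mathbb{G} f_k\rVert(\Omega)\big)<\infty$, and aim to extract a subsequence converging in $L^1(\Omega')$ for every $\Omega'\Subset\Omega$. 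By a diagonal argument over an exhaustion of $\mathbb{G}$ by such sets, this yields the full local compactness.

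\textbf{Key steps.} The core of the argument is a mollification estimate. Let $\{\varphi_\varepsilon\}_{\varepsilon>0}$ be a family of group-convolution mollifiers (smooth, compactly supported in $B(0,\varepsilon)$, with integral one), and set $f_k^\varepsilon := f_k * \varphi_\varepsilon$, the group convolution. The first key step is the quantitative bound
\begin{equation}
\lVert f_k^\varepsilon - f_k\rVert_{L^1(\Omega')} \leq C\,\varepsilon\,\lVert\nabla_\mathbb{G} f_k\rVert(\Omega),
\nonumber
\end{equation}
valid for $\Omega'\Subset\Omega$ and $\varepsilon$ small, which follows from the horizontal Poincar\'e-type inequality for BV functions together with the fact that the mollified function differs from $f_k$ by an average of horizontal translates. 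The second key step is that, for each fixed $\varepsilon$, the family $\{f_k^\varepsilon\}_k$ is bounded in $L^\infty(\Omega')$ and equi-Lipschitz there (with constants depending on $\varepsilon$ but not on $k$), hence by Arzel\`a--Ascoli precompact in $C(\overline{\Omega'})$ and therefore in $L^1(\Omega')$. Combining the two steps with a standard $\varepsilon/3$ argument and a diagonal extraction over $\varepsilon = 1/n$ produces a subsequence of $\{f_k\}$ that is Cauchy in $L^1(\Omega')$; lower semicontinuity of the total variation (which I would invoke as already known, being part of the basic BV theory referenced in the excerpt) ensures the limit again lies in $\mathrm{BV}_{\mathbb{G},loc}$.

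\textbf{Main obstacle.} The delicate point is the first key step: establishing the mollification estimate uniformly in $k$ requires controlling $\int_{\Omega'} |f_k(x\cdot h) - f_k(x)|\,dx$ by $\|h\|\cdot\lVert\nabla_\mathbb{G} f_k\rVert$ when $h$ ranges over a neighbourhood of the identity, and for general group elements $h$ this is subtle because non-horizontal directions are reached only through commutators of horizontal ones, so a naive one-dimensional fundamental-theorem-of-calculus estimate does not suffice. The resolution is to connect $x$ to $x\cdot h$ by a horizontal path of controlled length (using that $V_1$ generates $\mathfrak{g}$, i.e. the Chow--Rashevskii connectivity, together with the ball-box estimate of Remark \ref{box}), decompose the path into horizontal segments, and sum the contributions; this is exactly where the structure of $\mathbb{G}$ enters and is the technical heart of the proof. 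Everything else---the Arzel\`a--Ascoli step, the diagonal argument, lower semicontinuity---is routine and I would only sketch it, citing \cite{MFSSC} and \cite{step2} for the underlying BV machinery.
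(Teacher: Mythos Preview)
The paper does not prove this statement at all: it is quoted as Theorem~2.16 of \cite{step2} and used as a black box (in Proposition~\ref{obs6.5}), so there is no proof in the paper to compare your proposal against. Your outline is the standard Rellich--Kondrachov argument transported to Carnot groups and is essentially correct; the only remark is that the ``main obstacle'' you flag---controlling $\int |f(x\cdot h)-f(x)|\,dx$ for non-horizontal $h$---is in fact handled automatically once the mollifier $\varphi_\varepsilon$ is supported in the \emph{metric} ball $B(0,\varepsilon)$, since by the ball-box estimate every such $h$ is reached by a horizontal path of length $\lesssim\varepsilon$, and one integrates along that path using the distributional definition of $\nabla_\mathbb{G} f$.
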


\begin{definizione}\label{def:norm}
If $E\subseteq \mathbb{G}$ is a Borel set of locally finite perimeter, we let $\lvert\partial E\rvert_\mathbb{G}:=\lVert \nabla_{\mathbb{G}} \chi_E\rVert$. Furthermore we call \emph{generalized horizontal inward} $\mathbb{G}$-\emph{normal to} $\partial E$ the horizontal vector $\mathfrak{n}_E(x):=\sigma_{\chi_E}(x)$. Finally, we define the \emph{reduced boundary} $\partial_{\mathbb{G}}^*E$ to be the set of those $x\in\mathbb{G}$ for which:
\begin{itemize}
    \item[(i)]$\lvert\partial E\rvert_\mathbb{G}(B(x,r))>0$ for any $r>0$,
    \item[(ii)] $\lim_{r\to 0}\fint_{B(x,r)}\mathfrak{n}_Ed\lvert \partial E\rvert_{\mathbb{G}}$ exists,
    \item[(iii)] $\lim_{r\to 0}\Big\lVert\fint_{B(x,r)}\mathfrak{n}_Ed\lvert \partial E\rvert_{\mathbb{G}}\Big\rVert =1$.
\end{itemize}
\end{definizione}

The following lemma on the scaling of the perimeter will come in handy later on.

\begin{lemma}
Assume $E$ is a set of finite perimeter in $\mathbb{G}$ and let $x\in\mathbb{G}$ and $r>0$. Then:
$$\lvert \partial (\delta_{1/r}(x^{-1}E))\rvert_\mathbb{G}=r^{-(\mathcal{Q}-1)}T_{x,r}\lvert\partial E\rvert_{\mathbb{G}}.$$
\end{lemma}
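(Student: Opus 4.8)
This is a scaling computation for the $\mathbb{G}$-perimeter measure, entirely analogous to the well-known Euclidean fact. The plan is to fix $x\in\mathbb{G}$ and $r>0$, write $\psi:=\delta_{1/r}\circ\tau_{x^{-1}}$ for the map $z\mapsto\delta_{1/r}(x^{-1}z)$, and compute $\lVert\nabla_\mathbb{G}\chi_{\psi(E)}\rVert$ directly from its variational definition, testing against horizontal vector fields $\varphi\in\mathcal{C}^1_0(\Omega,H\mathbb{G})$ and changing variables. Note first that $\chi_{\psi(E)}=\chi_E\circ\psi^{-1}$, where $\psi^{-1}(w)=x*\delta_r(w)$.

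First I would recall how the horizontal vector fields $X_1,\dots,X_{n_1}$ transform under left translations and dilations. Left translations are isometries of the horizontal distribution, so $X_i$ is left-invariant; and since $\delta_\lambda$ is a group automorphism with $\delta_\lambda(\delta_t e_i)=\delta_{\lambda t}e_i$, one gets $X_i(\delta_\lambda z)=\lambda\,(d\delta_\lambda)(X_i(z))$ in the appropriate sense, so that for $u\in\mathcal{C}^1$ one has $X_i(u\circ\delta_\lambda)=\lambda\,(X_i u)\circ\delta_\lambda$ and $X_i(u\circ\tau_p)=(X_iu)\circ\tau_p$. Consequently, for $\varphi=(\varphi_1,\dots,\varphi_{n_1})\in\mathcal{C}^1_0(\Omega,H\mathbb{G})$, setting $\tilde\varphi:=\varphi\circ\psi$ (with an appropriate rescaling of components so that $\tilde\varphi$ is again a section of $H\mathbb{G}$ with $\lvert\tilde\varphi\rvert\le1$ whenever $\lvert\varphi\rvert\le1$), one has $\operatorname{div}_\mathbb{G}\tilde\varphi=r^{-1}(\operatorname{div}_\mathbb{G}\varphi)\circ\psi$.

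Then I would change variables in the integral $\int_{\psi^{-1}(\Omega)}\chi_E(z)\operatorname{div}_\mathbb{G}\tilde\varphi(z)\,dz$. Since the Lebesgue measure $dz$ on $\mathbb{G}=(\mathbb{R}^n,*)$ is a Haar measure, it is left-invariant and scales as $d(\delta_\lambda z)=\lambda^{\mathcal{Q}}dz$; writing $w=\psi(z)$, i.e. $z=x*\delta_r(w)$, gives $dz=r^{\mathcal{Q}}\,dw$. Combining this with the factor $r^{-1}$ from $\operatorname{div}_\mathbb{G}$ yields
\begin{equation}
\int_{\psi^{-1}(\Omega)}\chi_E(z)\operatorname{div}_\mathbb{G}\tilde\varphi(z)\,dz=r^{\mathcal{Q}-1}\int_\Omega\chi_{\psi(E)}(w)\operatorname{div}_\mathbb{G}\varphi(w)\,dw.
\nonumber
\end{equation}
Taking the supremum over admissible $\varphi$ on both sides, and using that $\varphi\mapsto\tilde\varphi$ is a bijection of the admissible test fields on $\Omega$ and on $\psi^{-1}(\Omega)$ respectively, gives $\lvert\partial(\psi(E))\rvert_\mathbb{G}(\Omega)=r^{\mathcal{Q}-1}\lvert\partial E\rvert_\mathbb{G}(\psi^{-1}(\Omega))$ for every bounded open $\Omega$; since $\psi(E)=\delta_{1/r}(x^{-1}E)$ and $\psi^{-1}(\Omega)=x*\delta_r(\Omega)$, by definition of $T_{x,r}$ this is exactly $\lvert\partial(\delta_{1/r}(x^{-1}E))\rvert_\mathbb{G}=r^{-(\mathcal{Q}-1)}T_{x,r}\lvert\partial E\rvert_\mathbb{G}$ as measures, by the coincidence on open sets and outer regularity.

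\textbf{Main obstacle.} The computation itself is routine; the one point requiring care is the precise transformation law for $\operatorname{div}_\mathbb{G}$ under the composition of a left translation and a dilation — in particular making sure the component rescaling of $\varphi$ that keeps $\tilde\varphi$ a unit-length horizontal section produces exactly one power of $r^{-1}$ and no stray constants, so that the homogeneity exponent comes out as $\mathcal{Q}-1$ rather than $\mathcal{Q}$. Once the identity $\operatorname{div}_\mathbb{G}(\varphi\circ\psi)=r^{-1}(\operatorname{div}_\mathbb{G}\varphi)\circ\psi$ is established, everything else follows from left-invariance and the scaling $\lambda^{\mathcal{Q}}$ of Haar measure.
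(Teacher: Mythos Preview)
Your proposal is correct and follows essentially the same approach as the paper: define $\tilde\varphi=\varphi\circ\psi$, establish $\operatorname{div}_\mathbb{G}\tilde\varphi=r^{-1}(\operatorname{div}_\mathbb{G}\varphi)\circ\psi$, change variables using the $r^{\mathcal{Q}}$ scaling of Haar measure, and match test fields on $\Omega$ with those on $x\delta_r\Omega$. The paper proves the divergence identity by computing $X_j\tilde\varphi_j$ directly from the definition $X_j u(z)=\lim_{h\to0}h^{-1}(u(z\delta_h e_j)-u(z))$, which is exactly your transformation laws $X_i(u\circ\tau_p)=(X_iu)\circ\tau_p$ and $X_i(u\circ\delta_\lambda)=\lambda(X_iu)\circ\delta_\lambda$ spelled out; your parenthetical about a ``rescaling of components'' is unnecessary, since $\tilde\varphi_j(z):=\varphi_j(\psi(z))$ already gives a section of $H\mathbb{G}$ with $|\tilde\varphi|\le1$ iff $|\varphi|\le1$.
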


\begin{proof}
For any $\varphi\in C^1_0(\mathbb{G},H\mathbb{G})$, any $x\in\mathbb{G}$ and any $r>0$, defined $\tilde{\varphi}(z):=\varphi(\delta_{1/r}(x^{-1}z))$, the following identity holds: 
\begin{equation}
    \text{div}_{\mathbb{G}}\tilde{\varphi}(z)=r^{-1}\text{div}_{\mathbb{G}}\varphi(\delta_{1/r}(x^{-1}z)).
    \label{eq:nummm1992}
\end{equation}
This, indeed is due to the fact that:
$$X_j\tilde{\varphi}_j(z):=\lim_{h\to 0}\frac{\tilde{\varphi}_j(z\delta_h(e_j))-\tilde{\varphi}_j(z)}{h}=\lim_{h\to 0}\frac{\varphi_j(\delta_{1/r}(x^{-1}z\delta_h(e_j)))-\varphi_j(\delta_{1/r}(x^{-1}z))}{h}=r^{-1}X_j\varphi_j(\delta_{1/r}(x^{-1}z)).$$
Thanks to identity \eqref{eq:nummm1992} and the fact that the Lebesgue measure is a Haar measure for $\mathbb{G}$, we infer that: $$\int \chi_{\delta_{1/r}(x^{-1}E)}(y)\text{div}_\mathbb{G}\varphi(y) dy=r^{-\mathcal{Q}}\int \chi_E \text{div}_\mathbb{G}\varphi(\delta_{1/r}(x^{-1}y))dy=r^{-(\mathcal{Q}-1)}\int \chi_E(y) \text{div}_\mathbb{G}\tilde{\varphi}(y)dy.$$
It is not hard to see that $\varphi\in C^1_0(\Omega,H\mathbb{G})$ if and only if $\tilde{\varphi}\in C^1_0(x\delta_r\Omega,H\mathbb{G})$ and thus for any open set $\Omega$ we have:
$$\lvert(\partial \delta_{1/r}(x^{-1}E))\rvert_\mathbb{G}(\Omega)=r^{-(\mathcal{Q}-1)}\lvert\partial E\rvert_\mathbb{G}(x\delta_r\Omega)=r^{-(\mathcal{Q}-1)}T_{x,r}\lvert\partial E\rvert_\mathbb{G}(\Omega).$$
This concludes the proof.
\end{proof}

\begin{teorema}[Theorem 4.16, \cite{ambled}]\label{th:4.16}
Let $E\subseteq \mathbb{G}$ be a set of locally finite perimeter. Then $\lvert \partial E\rvert_{\mathbb{G}}$ is asymptotically doubling, and more precisely the following hold. For $\lvert \partial E\rvert_{\mathbb{G}}$-a.e. $x\in\mathbb{G}$ there exists an $\overline{r}(x)>0$ such that:
\begin{equation}
    l_{\mathbb{G}}r^{\mathcal{Q}-1}\leq\lvert \partial E\rvert_{\mathbb{G}}(B(x,r))\leq L_{\mathbb{G}}r^{\mathcal{Q}-1},\qquad \text{for any }r\in(0,\overline{r}(x)),
    \label{eq:adreggi}
\end{equation}
where the constants $l_{\mathbb{G}}$ and $L_{\mathbb{G}}$ depend only on $\mathbb{G}$ and the metric $d$. As a consequence $\lvert \partial E\rvert_{\mathbb{G}}$ is concentrated on $\partial^*_{\mathbb{G}} E$, i.e. $\lvert \partial E\rvert_{\mathbb{G}}(\mathbb{G}\setminus \partial^*_{\mathbb{G}} E)=0$.
\end{teorema}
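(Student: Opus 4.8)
The final statement to prove is Theorem~\ref{th:4.16} (Theorem 4.16 of \cite{ambled}), asserting that the perimeter measure $\lvert\partial E\rvert_{\mathbb{G}}$ of a locally finite perimeter set $E\subseteq\mathbb{G}$ is asymptotically doubling, satisfying the two-sided Ahlfors bound \eqref{eq:adreggi} at $\lvert\partial E\rvert_{\mathbb{G}}$-a.e.\ point, and hence is concentrated on the reduced boundary $\partial^*_{\mathbb{G}}E$. Since this is quoted verbatim from \cite{ambled}, I would not reprove it from scratch; the plan is to reconstruct the classical De~Giorgi--type argument adapted to Carnot groups.

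First I would establish the \emph{lower bound} via a relative isoperimetric inequality. In a Carnot group the global isoperimetric inequality $\min\{\lvert B\cap E\rvert,\lvert B\setminus E\rvert\}^{(\mathcal{Q}-1)/\mathcal{Q}}\lesssim \lvert\partial E\rvert_{\mathbb{G}}(B)$ on metric balls holds (this follows from the Sobolev--Poincar\'e inequality together with the compact embedding $\mathrm{BV}_{\mathbb{G},loc}\hookrightarrow L^1_{loc}$, Theorem~\ref{scalepE}). At a point $x$ where $\lvert\partial E\rvert_{\mathbb{G}}(B(x,r))>0$ for all $r$ one argues that neither $\lvert B(x,r)\cap E\rvert/\lvert B(x,r)\rvert$ nor $\lvert B(x,r)\setminus E\rvert/\lvert B(x,r)\rvert$ can be too small, because otherwise comparing the perimeter in concentric balls with the volume of the minority phase and invoking the coarea/differentiation of the measure $r\mapsto\lvert\partial E\rvert_{\mathbb{G}}(B(x,r))$ would force the density to vanish, contradicting positivity. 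Using the homogeneity $\lvert B(x,r)\rvert=c_{\mathbb{G}}r^{\mathcal{Q}}$ this yields $\lvert\partial E\rvert_{\mathbb{G}}(B(x,r))\geq l_{\mathbb{G}}r^{\mathcal{Q}-1}$ for small $r$, with $l_{\mathbb{G}}$ depending only on $\mathbb{G}$ and $d$.

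Next I would treat the \emph{upper bound}. Here one compares $E$ on the ball $B(x,r)$ with the competitor obtained by filling in or removing $B(x,r)$: for $\lvert\partial E\rvert_{\mathbb{G}}$-a.e.\ $x$ (those that are density points and at which $r\mapsto\lvert\partial E\rvert_{\mathbb{G}}(\partial B(x,r))=0$ for a.e.\ $r$, cf.\ Lemma~\ref{radii}) one has $\lvert\partial(E\cup B(x,r))\rvert_{\mathbb{G}}(\mathbb{G})\leq\lvert\partial E\rvert_{\mathbb{G}}(\mathbb{G}\setminus\overline{B(x,r)})+\mathcal{H}^{\mathcal{Q}-1}(\partial B(x,r)\cap E^{(1)})$, and the perimeter of a ball scales like $r^{\mathcal{Q}-1}$ by the homogeneity of $d$. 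A standard comparison (minimality of the original set against these two competitors, combined with the lower bound just proved to control the volume terms) gives $\lvert\partial E\rvert_{\mathbb{G}}(B(x,r))\leq L_{\mathbb{G}}r^{\mathcal{Q}-1}$ for $r<\overline{r}(x)$. Finally, the two-sided bound makes $\lvert\partial E\rvert_{\mathbb{G}}$ asymptotically doubling, so by the Lebesgue differentiation theorem for such measures (available in \cite{MR3363168}, as used in Proposition~\ref{prop:locality}) the limit $\lim_{r\to 0}\fint_{B(x,r)}\mathfrak{n}_E\,d\lvert\partial E\rvert_{\mathbb{G}}$ exists and has unit norm at $\lvert\partial E\rvert_{\mathbb{G}}$-a.e.\ $x$, since $\lvert\mathfrak{n}_E\rvert=1$ a.e.; that is precisely the statement that $\lvert\partial E\rvert_{\mathbb{G}}(\mathbb{G}\setminus\partial^*_{\mathbb{G}}E)=0$.

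The main obstacle is the upper bound: unlike in $\R^n$, one must be careful that the isoperimetric-type inequalities and the perimeter of metric balls behave well with respect to the non-Riemannian metric $d$, and that the comparison competitors are admissible. This is exactly where the homogeneity of $d$ (so that $\lvert\partial B(x,r)\rvert\asymp r^{\mathcal{Q}-1}$ with a constant independent of $x$ and $r$) and the structure theory of $\mathrm{BV}_{\mathbb{G}}$ functions are essential; the lower bound, by contrast, is the soft direction once the relative isoperimetric inequality is in hand. In the write-up I would simply cite \cite{ambled} for the full details rather than reproduce the comparison argument, noting only that the constants $l_{\mathbb{G}},L_{\mathbb{G}}$ and the existence of $\overline{r}(x)$ are what the later sections (e.g.\ Corollary~\ref{fpsrig} via $\vartheta_{\mathbb{G}}=\max\{l_{\mathbb{G}}^{-1},L_{\mathbb{G}}\}$) actually use.
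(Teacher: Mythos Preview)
The paper does not prove this theorem at all: it is quoted as a black-box result from \cite{ambled}, with no proof environment following the statement. Your decision to ultimately cite \cite{ambled} is therefore exactly what the paper does.

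That said, your sketch of the upper bound contains a genuine conceptual error. You write that one obtains $\lvert\partial E\rvert_{\mathbb{G}}(B(x,r))\leq L_{\mathbb{G}}r^{\mathcal{Q}-1}$ by ``minimality of the original set against these two competitors''. But $E$ is an \emph{arbitrary} set of locally finite perimeter, not a perimeter minimizer, so there is no reason $\lvert\partial E\rvert_{\mathbb{G}}$ should be smaller than the perimeter of $E\cup B(x,r)$ or $E\setminus B(x,r)$. The comparison-with-competitors argument you describe is the standard one for minimal sets or almost-minimizers, and it simply does not apply here.

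The correct route to the upper bound (as carried out in \cite{ambled} and earlier in the Heisenberg setting by Franchi--Serapioni--Serra Cassano) is via the divergence structure: at a point $x$ where the generalized normal $\mathfrak{n}_E(x)$ is defined, test the Gauss--Green formula with the vector field $\varphi=\psi\,\mathfrak{n}_E(x)$ for a cutoff $\psi$ supported in $B(x,2r)$ and equal to $1$ on $B(x,r)$. The left-hand side $\int_E\operatorname{div}_{\mathbb{G}}\varphi$ is bounded by $\lVert\nabla_{\mathbb{G}}\psi\rVert_{L^1}\lesssim r^{\mathcal{Q}-1}$, while the right-hand side $\int\psi\langle\mathfrak{n}_E,\mathfrak{n}_E(x)\rangle\,d\lvert\partial E\rvert_{\mathbb{G}}$ is close to $\lvert\partial E\rvert_{\mathbb{G}}(B(x,r))$ once $r$ is small enough that the averaged normal is near $\mathfrak{n}_E(x)$. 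There is then a bootstrap issue (the upper bound at $x$ uses that $x\in\partial^*_{\mathbb{G}}E$, while showing $\partial^*_{\mathbb{G}}E$ has full measure uses doubling), which in \cite{ambled} is handled by first establishing asymptotic doubling from the lower bound alone together with an abstract covering argument. Your lower-bound paragraph via the relative isoperimetric inequality is along the right lines.
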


\begin{osservazione} \label{rk:b1}
 Proposition \ref{prop:dens} and Theorem \ref{th:4.16}  imply that $l_{\mathbb{G}}\mathcal{S}^{\mathcal{Q}-1}\llcorner \partial^*_\mathbb{G}E\leq \lvert \partial E\rvert_{\mathbb{G}}\leq L_{\mathbb{G}}\mathcal{S}^{\mathcal{Q}-1}\llcorner \partial^*_\mathbb{G}E$. Therefore, the measures $\mathcal{S}^{\mathcal{Q}-1}\llcorner \partial^*_\mathbb{G}E$ and $\lvert \partial E\rvert_{\mathbb{G}}$ are mutually absolutely continuous. In particular there exists a $\mathfrak{d}\in L^1(\lvert \partial E\rvert_{\mathbb{G}})$ such that: 
 $$\mathcal{S}^{\mathcal{Q}-1}\llcorner \partial^*_\mathbb{G}E=\mathfrak{d}\lvert \partial E\rvert_{\mathbb{G}},$$
 and for $\lvert \partial E\rvert_{\mathbb{G}}$-almost every $x\in \mathbb{G}$ we have $L_{\mathbb{G}}^{-1}\leq\mathfrak{d}(x)\leq l_{\mathbb{G}}^{-1}$.
\end{osservazione}

\begin{teorema}[Theorem 4.3.9, \cite{MFSSC}]
If $f:V\to \mathfrak{N}(V)$ is an intrinsic Lipschitz map, the sub-graph:
\begin{equation}
    \text{epi}(f):=\Big\{v*\delta_t(\mathfrak{n}(V)):t< \langle  \pi_1 f(v),\mathfrak{n}(V)\rangle\Big\},
\label{eq:n70}
\end{equation}
is a set with locally finite $\mathbb{G}$-perimeter.
\end{teorema}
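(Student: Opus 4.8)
\textbf{Proof proposal for the statement that $\operatorname{epi}(f)$ has locally finite $\mathbb{G}$-perimeter.}

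The plan is to reduce the statement to a classical fact about boundaries of epigraphs, using the intrinsic Lipschitz structure to control things. First I would recall that, by Definition \ref{def:intrgraph}, an intrinsic Lipschitz function $f:V\to\mathfrak{N}(V)$ has a graph contained in cones: $\operatorname{gr}(f)\subseteq vf(v)C_V(\alpha)$ for every $v\in V$. This cone condition is exactly what prevents $\operatorname{epi}(f)$ from having a wild boundary. The strategy is to show that $\operatorname{epi}(f)$ is, up to the semidirect splitting $\mathbb{G}=V\rtimes\mathfrak{N}(V)$ of \eqref{eq:intr10}, the subgraph of a function that is Lipschitz in an appropriate intrinsic sense, and then invoke the cited result Theorem 4.3.9 of \cite{MFSSC} — which is precisely this statement — so that the real content is just verifying its hypotheses are met by our normalised setup (the metric $d$ of Definition \ref{smoothnorm}, the plane $V\in\G(\mathcal{Q}-1)$ being a vertical hyperplane whose normal lies in $V_1$, and $\mathfrak{N}(V)$ being the horizontal line it determines).

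Concretely, the key steps in order would be: (1) Verify that the set $\operatorname{epi}(f)$ is measurable and that its topological boundary is contained in $\operatorname{gr}(f)$ together with a set of vanishing measure — this follows since $\operatorname{epi}(f)=\{v\delta_t(\mathfrak{n}(V)):t<\langle\pi_1 f(v),\mathfrak{n}(V)\rangle\}$ and the cone condition forces the defining inequality to vary in a controlled way. (2) Use the extension theorem, Theorem \ref{whole}, to assume without loss of generality that $f$ is defined on all of $V$; this is harmless because restriction to a Borel subset of $V$ only removes a portion of the boundary and we want a \emph{locally} finite perimeter statement. (3) Apply the structure of $P_V$ and $P_{\mathfrak{N}(V)}$ from Proposition \ref{prop:proiezioni}, together with the bi-Lipschitz equivalence on the complement of cones (Proposition \ref{prop:conifero}), to show that inside any fixed ball $B(0,R)$ the set $\operatorname{gr}(f)$ is covered, up to controlled overlap, by finitely many "slabs" over which the graph looks like a Euclidean-Lipschitz graph — this gives the finiteness of $\mathcal{S}^{\mathcal{Q}-1}(\operatorname{gr}(f)\cap B(0,R))$, which one already has from Proposition \ref{intrlipsigfin}. (4) Conclude, via the standard characterisation of finite perimeter through the finiteness of the measure-theoretic boundary (and the fact that the reduced boundary is detected by the perimeter measure, Theorem \ref{th:4.16}), that $\chi_{\operatorname{epi}(f)}\in\mathrm{BV}_{\mathbb{G},loc}(\mathbb{G})$.

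The main obstacle I expect is step (3): translating the \emph{intrinsic} cone condition into a genuine quantitative comparison between $\operatorname{gr}(f)$ and a union of Euclidean-Lipschitz pieces, because the projection $P_V$ is \emph{not} Lipschitz (as emphasised in the Remark following Proposition \ref{prop:proiezioni}), so one cannot naively pull back the Euclidean BV theory along $P_V$. The honest way around this is that the cited theorem from \cite{MFSSC} already does this work, so the cleanest proof is simply: observe that $f:V\to\mathfrak{N}(V)$ intrinsic Lipschitz with $V\in\G(\mathcal{Q}-1)$ and $\mathbb{G}=V\rtimes\mathfrak{N}(V)$ places us exactly in the hypotheses of Theorem 4.3.9 of \cite{MFSSC}, whence $\operatorname{epi}(f)$ has locally finite $\mathbb{G}$-perimeter. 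Thus the "proof" here is really a citation with a short verification that our normalisations (smooth-box metric, exponential coordinates, the identification of $1$-codimensional homogeneous subgroups with vertical hyperplanes of \eqref{eq:intr3}) do not alter the applicability of that result, which is immediate since all homogeneous left-invariant metrics are bi-Lipschitz equivalent and perimeter finiteness is a metric-independent notion.
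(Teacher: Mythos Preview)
Your proposal is correct and matches the paper's approach exactly: the paper does not prove this statement at all but simply cites it as Theorem 4.3.9 of \cite{MFSSC}, and you arrive at the same conclusion, namely that the ``proof'' is just a citation together with the observation that the present setup (the splitting $\mathbb{G}=V\rtimes\mathfrak{N}(V)$ with $V\in\G(\mathcal{Q}-1)$) falls squarely within the hypotheses of that cited result. Your additional sketch of how one might argue directly is a reasonable outline, but the paper itself supplies no such argument.
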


Since the topological boundary of $\text{epi}(f)$ coincides with $\text{gr}(f)$, thanks to Theorem 3.2.1
of \cite{MFSSC}, we infer that $\lvert \partial\text{epi}(f)\rvert_\mathbb{G}(\mathbb{G}\setminus \partial_\mathbb{G}^* \text{epi}(f))=\lvert \partial\text{epi}(f)\rvert_\mathbb{G}( \text{gr}(f)\setminus \partial_\mathbb{G}^* \text{epi}(f))=0$.
In particular, thanks to Remark \ref{rk:b1}, we deduce the following:

\begin{proposizione}\label{vf}
$\mathcal{S}^{\mathcal{Q}-1}(\text{gr}(f)\setminus \partial_\mathbb{G}^* \text{epi}(f))=0$.
\end{proposizione}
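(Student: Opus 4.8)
The statement $\mathcal{S}^{\mathcal{Q}-1}(\text{gr}(f)\setminus \partial_\mathbb{G}^* \text{epi}(f))=0$ follows almost immediately from the two facts that have just been recalled in the excerpt: (a) by the theorem quoted from \cite{MFSSC}, $\text{epi}(f)$ is a set of locally finite $\mathbb{G}$-perimeter, so $\lvert \partial\text{epi}(f)\rvert_\mathbb{G}$ is a well-defined Radon measure; and (b) by Theorem \ref{th:4.16} (De Giorgi's structure theorem for finite perimeter sets in Carnot groups, from \cite{ambled}), the perimeter measure is concentrated on the reduced boundary, i.e. $\lvert \partial\text{epi}(f)\rvert_\mathbb{G}(\mathbb{G}\setminus\partial^*_\mathbb{G}\text{epi}(f))=0$. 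The plan is therefore: first observe that the topological boundary of $\text{epi}(f)$ is exactly $\text{gr}(f)$, which is clear from the defining inequality $t<\langle\pi_1 f(v),\mathfrak{n}(V)\rangle$ in \eqref{eq:n70} (the epigraph is an open-below region whose frontier is the graph). Combining this with (b) gives
$$\lvert \partial\text{epi}(f)\rvert_\mathbb{G}\big(\text{gr}(f)\setminus \partial_\mathbb{G}^* \text{epi}(f)\big)=\lvert \partial\text{epi}(f)\rvert_\mathbb{G}\big(\mathbb{G}\setminus \partial_\mathbb{G}^* \text{epi}(f)\big)=0,$$
using Theorem 3.2.1 of \cite{MFSSC} to see that $\lvert\partial\text{epi}(f)\rvert_\mathbb{G}$ does not charge $\mathbb{G}\setminus\text{gr}(f)$ either (since the perimeter measure of a set is supported on its topological boundary).

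Next I would promote this from a statement about $\lvert\partial\text{epi}(f)\rvert_\mathbb{G}$ to one about $\mathcal{S}^{\mathcal{Q}-1}$. This is exactly the content of Remark \ref{rk:b1}: there it is recorded, via Proposition \ref{prop:dens} and the density bounds \eqref{eq:adreggi} of Theorem \ref{th:4.16}, that $\mathcal{S}^{\mathcal{Q}-1}\llcorner\partial^*_\mathbb{G}\text{epi}(f)$ and $\lvert\partial\text{epi}(f)\rvert_\mathbb{G}$ are mutually absolutely continuous. In particular any $\lvert\partial\text{epi}(f)\rvert_\mathbb{G}$-null Borel subset of $\mathbb{G}$ is $\mathcal{S}^{\mathcal{Q}-1}$-null once we know $\mathcal{S}^{\mathcal{Q}-1}$ does not concentrate mass there; more precisely, the set $\text{gr}(f)\setminus\partial^*_\mathbb{G}\text{epi}(f)$ is disjoint from $\partial^*_\mathbb{G}\text{epi}(f)$, so $\mathcal{S}^{\mathcal{Q}-1}\llcorner\partial^*_\mathbb{G}\text{epi}(f)$ assigns it zero mass, and one needs to rule out that $\mathcal{S}^{\mathcal{Q}-1}$ itself gives it positive measure. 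For that, apply the upper bound in Proposition \ref{prop:dens} (equivalently, Proposition 2.10.17 of \cite{Federer1996GeometricTheory}) on the set $A:=\text{gr}(f)\setminus\partial^*_\mathbb{G}\text{epi}(f)$: the upper $(\mathcal{Q}-1)$-density of $\lvert\partial\text{epi}(f)\rvert_\mathbb{G}$ is bounded above $\lvert\partial\text{epi}(f)\rvert_\mathbb{G}$-a.e. by $L_\mathbb{G}$ (Theorem \ref{th:4.16}), which yields $\mathcal{S}^{\mathcal{Q}-1}(A)\le C\,\lvert\partial\text{epi}(f)\rvert_\mathbb{G}(A)=0$. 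One also uses here that $\mathcal{S}^{\mathcal{Q}-1}\llcorner\text{gr}(f)$ is $\sigma$-finite (Proposition \ref{intrlipsigfin} together with Theorem \ref{whole}), so the $\sigma$-finiteness hypothesis of the density comparison is met.

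The argument has essentially no hard step; it is a bookkeeping combination of the structure theorem for finite perimeter sets and the density estimates already in place. The one point that deserves a sentence of care is the identification $\partial(\text{epi}(f))=\text{gr}(f)$ as topological boundary — this is where intrinsic Lipschitzness is used, guaranteeing the graph has empty interior and that no point off the graph is a boundary point — together with the observation that Theorem 3.2.1 of \cite{MFSSC} (that the perimeter measure is carried by the topological boundary) is applicable to $\text{epi}(f)$, which is exactly why the displayed chain of equalities collapses $\text{gr}(f)\setminus\partial^*_\mathbb{G}\text{epi}(f)$ to $\mathbb{G}\setminus\partial^*_\mathbb{G}\text{epi}(f)$. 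I expect no genuine obstacle beyond making these citations precise.
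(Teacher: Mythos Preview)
Your overall strategy is exactly the paper's: identify $\partial(\text{epi}(f))=\text{gr}(f)$, use that the perimeter is concentrated on the reduced boundary, and then pass from $\lvert\partial\text{epi}(f)\rvert_\mathbb{G}$-nullity to $\mathcal{S}^{\mathcal{Q}-1}$-nullity via the density comparison in Remark~\ref{rk:b1}/Proposition~\ref{prop:dens}. However, the last step as you wrote it is backwards.

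You invoke the \emph{upper} bound $\Theta^{\mathcal{Q}-1,*}\le L_\mathbb{G}$ together with Federer~2.10.17 to conclude $\mathcal{S}^{\mathcal{Q}-1}(A)\le C\,\lvert\partial\text{epi}(f)\rvert_\mathbb{G}(A)$. But Federer~2.10.17 says that an upper bound on the upper density yields $\lvert\partial\text{epi}(f)\rvert_\mathbb{G}(A)\le C\,\mathcal{S}^{\mathcal{Q}-1}(A)$, which is the opposite inequality and does not help. What you need is the \emph{lower} density bound $l_\mathbb{G}$ from Theorem~\ref{th:4.16} together with Federer~2.10.19 (or, equivalently, the left-hand inequality $\delta_1\mathcal{S}^{m}\llcorner E\le\psi$ in Proposition~\ref{prop:dens} applied with $E=\text{gr}(f)$, the topological boundary that supports $\lvert\partial\text{epi}(f)\rvert_\mathbb{G}$). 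That gives $l_\mathbb{G}\,\mathcal{S}^{\mathcal{Q}-1}\llcorner\text{gr}(f)\le\lvert\partial\text{epi}(f)\rvert_\mathbb{G}$, hence $\mathcal{S}^{\mathcal{Q}-1}(A)\le l_\mathbb{G}^{-1}\lvert\partial\text{epi}(f)\rvert_\mathbb{G}(A)=0$.

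There is also a related issue with your ``$\lvert\partial\text{epi}(f)\rvert_\mathbb{G}$-a.e.'' qualifier: since $\lvert\partial\text{epi}(f)\rvert_\mathbb{G}(A)=0$, a bound holding only $\lvert\partial\text{epi}(f)\rvert_\mathbb{G}$-a.e.\ says nothing at points of $A$. For Federer~2.10.19 you need the lower density bound at every point of $A$. This is where the intrinsic-Lipschitz hypothesis is genuinely used: by the cone condition, every $x\in\text{gr}(f)$ has both $\text{epi}(f)\cap B(x,r)$ and its complement of volume $\sim r^{\mathcal{Q}}$, so the relative isoperimetric inequality gives $\lvert\partial\text{epi}(f)\rvert_\mathbb{G}(B(x,r))\gtrsim r^{\mathcal{Q}-1}$ at \emph{every} point of the graph (this is the content of the Ahlfors-regularity statement in Theorem~3.2.1 of \cite{MFSSC}). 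With that correction in place, your argument coincides with the paper's.
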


It is convenient to associate to every intrinsic Lipschitz function $f:V\to\mathfrak{N}(V)$ a normal vector field to its graph:

\begin{definizione}\label{fnormal}
For any $f:V\to \mathfrak{N}(V)$ intrinsic Lipschitz function, we denote by $\mathfrak{n}_f:\partial_\mathbb{G}^* \text{epi}(f)\to H\mathbb{G}$ the inward inner $\mathbb{G}$-normal of $\text{epi}(f)$.
\end{definizione}

\subsection{Tangents measures versus tangent sets to finite perimeter sets}

In this subsection we connect the notion of tangent sets to Caccioppoli sets, that is extensively used in the theory of finite perimeter sets, to the notion of tangent measures. This will help us to prove that if the perimeter measure of a Caccioppoli set has flat tangents, then it has a unique tangent, that coincides with the plane in $\G(\mathcal{Q}-1)$ orthogonal to the normal.

\begin{definizione}(Tangent sets)
Let $E\subseteq \mathbb{G}$ be a set of locally finite perimeter and assume $x\in \partial^*_{\mathbb{G}} E$. We denote by $\Tan(E,x)$ the limit points in the topology of the local convergence in measure of the sets $\{\delta_{1/r}(x^{-1}E)\}_{r>0}$ as $r\to 0$. 
\end{definizione}

For a proof of the following proposition, we refer to \cite{ambled} and in particular to Proposition 5.3.

\begin{proposizione}\label{prop:costnorm}
If $E$ is a set of finite perimeter, for $\mathcal{S}^{\mathcal{Q}-1}$-almost every $x\in\partial^*_\mathbb{G} E$ we have:
\begin{itemize}
    \item[(i)]$\Tan(E,x)\neq \emptyset$,
    \item[(ii)] the elements of $\Tan(E,x)$ are finite perimeter sets,
    \item[(iii)]for any $F\in\Tan(E,x)$ we have $\mathfrak{n}_F(y)=\mathfrak{n}_E(x)\text{ for }\lvert\partial F\rvert_{\mathbb{G}}\text{-almost every }y\in \mathbb{G}$.
\end{itemize}
\end{proposizione}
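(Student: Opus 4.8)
The statement is the Carnot-group counterpart of De Giorgi's blow-up theorem, and the plan is to run the classical blow-up argument, with the Euclidean differentiation tools replaced by the asymptotic doubling of $\lvert\partial E\rvert_\mathbb{G}$ and the compactness of $\mathrm{BV}_\mathbb{G}$. Fix $x\in\partial^*_\mathbb{G}E$ in the full $\lvert\partial E\rvert_\mathbb{G}$-measure set of points where: (a) the two-sided density estimate \eqref{eq:adreggi} holds for all $r\in(0,\overline{r}(x))$; and (b) $x$ is a Lebesgue point of $\mathfrak{n}_E\in L^1(\lvert\partial E\rvert_\mathbb{G};H\mathbb{G})$ with respect to $\lvert\partial E\rvert_\mathbb{G}$, that is $\fint_{B(x,s)}\lvert\mathfrak{n}_E-\nu\rvert\,d\lvert\partial E\rvert_\mathbb{G}\to0$ as $s\to0$, where $\nu:=\mathfrak{n}_E(x)$ and $\lvert\nu\rvert=1$. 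Property (b) is available because $\lvert\partial E\rvert_\mathbb{G}$ is asymptotically doubling by Theorem~\ref{th:4.16}, so the Lebesgue differentiation theorem of \cite{MR3363168} applies. Since (a) and (b) hold $\lvert\partial E\rvert_\mathbb{G}$-almost everywhere, and $\lvert\partial E\rvert_\mathbb{G}$ and $\mathcal{S}^{\mathcal{Q}-1}\llcorner\partial^*_\mathbb{G}E$ are mutually absolutely continuous by Remark~\ref{rk:b1}, they also hold $\mathcal{S}^{\mathcal{Q}-1}$-almost everywhere on $\partial^*_\mathbb{G}E$, which is why the conclusion is phrased in terms of $\mathcal{S}^{\mathcal{Q}-1}$.

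Set $E_r:=\delta_{1/r}(x^{-1}E)$. The scaling identity $\lvert\partial E_r\rvert_\mathbb{G}=r^{-(\mathcal{Q}-1)}T_{x,r}\lvert\partial E\rvert_\mathbb{G}$ gives $\lvert\partial E_r\rvert_\mathbb{G}(B(0,R))=r^{-(\mathcal{Q}-1)}\lvert\partial E\rvert_\mathbb{G}(B(x,rR))$, which by (a) lies between $l_\mathbb{G}R^{\mathcal{Q}-1}$ and $L_\mathbb{G}R^{\mathcal{Q}-1}$ once $r$ is small. Hence the $\chi_{E_r}$ have locally uniformly bounded $\mathbb{G}$-variation, so for any sequence $r_i\to0$ the compact embedding $\mathrm{BV}_{\mathbb{G},loc}(\mathbb{G})\hookrightarrow L^1_{loc}(\mathbb{G})$ of Theorem~\ref{scalepE} produces a subsequence with $\chi_{E_{r_i}}\to\chi_F$ in $L^1_{loc}(\mathbb{G})$ for some set $F$ of locally finite perimeter; this proves (i) and (ii). Passing the convergence $\int\chi_{E_{r_i}}\,\text{div}_\mathbb{G}\varphi\,dx\to\int\chi_F\,\text{div}_\mathbb{G}\varphi\,dx$ through the representation formula of Theorem 4.3.2 of \cite{MFSSC} shows that the $H\mathbb{G}$-valued measures $\mathfrak{n}_{E_{r_i}}\lvert\partial E_{r_i}\rvert_\mathbb{G}$ converge weakly-$*$ to $\mathfrak{n}_F\lvert\partial F\rvert_\mathbb{G}$, and (after a further subsequence) $\lvert\partial E_{r_i}\rvert_\mathbb{G}\rightharpoonup\mu$ for a Radon measure $\mu$; combining $\mu(K)\ge\limsup_i\lvert\partial E_{r_i}\rvert_\mathbb{G}(K)$ for $K$ compact with (a) gives $\mu(\overline{B(0,R)})\ge l_\mathbb{G}R^{\mathcal{Q}-1}$, so $\mu\neq0$ and $F$ is non-trivial.

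It remains to prove (iii), i.e. $\mathfrak{n}_F\equiv\nu$ $\lvert\partial F\rvert_\mathbb{G}$-almost everywhere. Rescaling the Lebesgue-point estimate (b),
\begin{equation}
\int_{B(0,R)}\lvert\mathfrak{n}_{E_r}-\nu\rvert\,d\lvert\partial E_r\rvert_\mathbb{G}=r^{-(\mathcal{Q}-1)}\int_{B(x,rR)}\lvert\mathfrak{n}_E-\nu\rvert\,d\lvert\partial E\rvert_\mathbb{G}\le L_\mathbb{G}R^{\mathcal{Q}-1}\fint_{B(x,rR)}\lvert\mathfrak{n}_E-\nu\rvert\,d\lvert\partial E\rvert_\mathbb{G},
\nonumber
\end{equation}
and the right-hand side tends to $0$ as $r\to0$. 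Therefore, for every $\varphi\in\mathcal{C}^1_0(\mathbb{G},H\mathbb{G})$ one has $\int\langle\varphi,\mathfrak{n}_{E_{r_i}}\rangle\,d\lvert\partial E_{r_i}\rvert_\mathbb{G}-\int\langle\varphi,\nu\rangle\,d\lvert\partial E_{r_i}\rvert_\mathbb{G}\to0$; letting $i\to\infty$ and using the two weak-$*$ limits above yields $\int\langle\varphi,\mathfrak{n}_F\rangle\,d\lvert\partial F\rvert_\mathbb{G}=\int\langle\varphi,\nu\rangle\,d\mu$ for all such $\varphi$, hence $\mathfrak{n}_F\lvert\partial F\rvert_\mathbb{G}=\nu\mu$ as $H\mathbb{G}$-valued measures (a horizontal vector measure being determined by its pairings with horizontal test fields). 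Taking total variations and using $\lvert\mathfrak{n}_F\rvert=1$ $\lvert\partial F\rvert_\mathbb{G}$-a.e. and $\lvert\nu\rvert=1$ forces $\lvert\partial F\rvert_\mathbb{G}=\mu$, and then $\mathfrak{n}_F\lvert\partial F\rvert_\mathbb{G}=\nu\lvert\partial F\rvert_\mathbb{G}$ gives $\mathfrak{n}_F=\nu=\mathfrak{n}_E(x)$ $\lvert\partial F\rvert_\mathbb{G}$-almost everywhere. The main obstacle is precisely this last step — reconciling the merely weak-$*$ behaviour of the rescaled perimeter measures with the pointwise Lebesgue-point information in order to pin down the normal — and it is here that the two-sided density bound \eqref{eq:adreggi} and the exact scaling of the perimeter are indispensable; everything else is soft compactness.
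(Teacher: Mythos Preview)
The paper does not give its own proof of this proposition: it simply refers the reader to \cite{ambled}, Proposition~5.3. So there is no in-paper argument to compare against.

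Your blow-up argument is correct and is exactly the line of reasoning behind the cited result. The key steps --- uniform local perimeter bounds from \eqref{eq:adreggi}, $L^1_{loc}$-compactness via Theorem~\ref{scalepE}, weak-$*$ convergence of the vector-valued Gauss--Green measures, and the Lebesgue-point estimate for $\mathfrak{n}_E$ that forces the limit normal to be constant --- are all in order. The identification $\lvert\partial F\rvert_\mathbb{G}=\mu$ via uniqueness of the polar decomposition of the vector measure $\mathfrak{n}_F\lvert\partial F\rvert_\mathbb{G}=\nu\mu$ is the clean way to close the argument; this bypasses any delicate comparison of $\liminf$ and $\limsup$ of the scalar perimeters. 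In short, you have written out what the paper outsources.
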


The following proposition is a characterisation of the tangent measures of perimeter measures:

\begin{proposizione}[observation (6.5), \cite{ambled}]\label{obs6.5}
If $E$ is a set of locally finite perimeter, for $\lvert\partial E\rvert_\mathbb{G}$-almost every $x\in \partial^*_{\mathbb{G}} E$ we have that:
\begin{itemize}
    \item[(i)] if $\{r_i\}_{i\in\N}$ is an infinitesimal sequence such that $\delta_{1/r_i}(x^{-1}E)$ converges locally in measure to some Borel set $L$, then $L$ is a finite perimeter set and
    $r_i^{-(\mathcal{Q}-1)}T_{x,r_i}\lvert\partial E\rvert_\mathbb{G}\rightharpoonup \lvert\partial L\rvert_\mathbb{G}$.
    In particular, if $L\in \Tan(E,x)$ then $\lvert \partial L\rvert_\mathbb{G}\in \Tan_{\mathcal{Q}-1}(\lvert\partial E\rvert_\mathbb{G},x)$,
    \item[(ii)] if $\nu\in \Tan_{\mathcal{Q}-1}(\lvert \partial E\rvert_\mathbb{G},x)$, then there is an $L\in \Tan(E,x)$ such that $\nu=\lvert \partial L\rvert_{\mathbb{G}}$.
\end{itemize}
\end{proposizione}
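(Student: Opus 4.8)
\textbf{Proof proposal for Proposition \ref{obs6.5}.}

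The plan is to deduce both items from the compactness theory for finite perimeter sets together with the lower semicontinuity and (crucially) the upper bound on the perimeter measure provided by Theorem \ref{th:4.16}. First I would fix a point $x\in\partial^*_\mathbb{G}E$ at which the asymptotic doubling estimate \eqref{eq:adreggi} of Theorem \ref{th:4.16} holds, so that there is an $\overline r(x)>0$ with $l_\mathbb{G}r^{\mathcal{Q}-1}\le\lvert\partial E\rvert_\mathbb{G}(B(x,r))\le L_\mathbb{G}r^{\mathcal{Q}-1}$ for $0<r<\overline r(x)$; by Theorem \ref{th:4.16} this is a full-measure set of points for $\lvert\partial E\rvert_\mathbb{G}$. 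The scaling lemma preceding Theorem \ref{th:4.16} gives $\lvert\partial(\delta_{1/r}(x^{-1}E))\rvert_\mathbb{G}=r^{-(\mathcal{Q}-1)}T_{x,r}\lvert\partial E\rvert_\mathbb{G}$, which is the identity that ties the two notions of blow-up together and which I will use repeatedly.

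For item (i): suppose $r_i\to 0$ and $E_i:=\delta_{1/r_i}(x^{-1}E)\to L$ locally in measure. The uniform bound $\lvert\partial E_i\rvert_\mathbb{G}(B(0,R))=r_i^{-(\mathcal{Q}-1)}\lvert\partial E\rvert_\mathbb{G}(B(x,r_iR))\le L_\mathbb{G}R^{\mathcal{Q}-1}$ (valid once $r_iR<\overline r(x)$) together with the lower semicontinuity of total variation under $L^1_{loc}$-convergence shows that $L$ has locally finite perimeter; this is the standard Euclidean argument transplanted to $\mathbb{G}$, using Theorem \ref{scalepE} for the compact embedding. To upgrade convergence of sets to weak convergence of perimeter measures I would invoke the structure of the argument in \cite{ambled}: the measures $\mu_i:=r_i^{-(\mathcal{Q}-1)}T_{x,r_i}\lvert\partial E\rvert_\mathbb{G}$ are locally uniformly bounded, hence weakly precompact; any weak limit $\mu$ dominates $\lvert\partial L\rvert_\mathbb{G}$ by lower semicontinuity, and the reverse inequality $\mu\le\lvert\partial L\rvert_\mathbb{G}$ follows because the densities of the $\mu_i$ are uniformly controlled from above and below (again \eqref{eq:adreggi}), so no mass can escape or concentrate on lower-dimensional sets — this is exactly the rigidity that makes the blow-up of a finite perimeter set ``see'' only the perimeter. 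Hence $\mu=\lvert\partial L\rvert_\mathbb{G}$, the limit is unique, and the whole sequence converges; in particular if $L\in\Tan(E,x)$ then $\lvert\partial L\rvert_\mathbb{G}\in\Tan_{\mathcal{Q}-1}(\lvert\partial E\rvert_\mathbb{G},x)$.

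For item (ii): let $\nu\in\Tan_{\mathcal{Q}-1}(\lvert\partial E\rvert_\mathbb{G},x)$, realised along some $r_i\to 0$ as $\mu_i\rightharpoonup\nu$. Consider the rescaled sets $E_i=\delta_{1/r_i}(x^{-1}E)$; by the uniform perimeter bound and Theorem \ref{scalepE} they are precompact in $L^1_{loc}$, so a subsequence converges locally in measure to some $L$ of locally finite perimeter. Applying item (i) along that subsequence gives $\mu_i\rightharpoonup\lvert\partial L\rvert_\mathbb{G}$, and since the original sequence already converges to $\nu$ we conclude $\nu=\lvert\partial L\rvert_\mathbb{G}$ with $L\in\Tan(E,x)$. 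The main obstacle — and the place where care is needed — is the matching upper bound $\mu\le\lvert\partial L\rvert_\mathbb{G}$ in item (i): lower semicontinuity is automatic, but ruling out extra limit mass requires the asymptotic doubling of Theorem \ref{th:4.16} (to guarantee $\nu$ is itself doubling with the right exponent) and a De Giorgi-type argument showing that the reduced boundary of $L$ carries all of $\nu$; this is precisely the content one imports from \cite{ambled}, and I would cite it rather than reprove it, since the whole point of stating the lemma here is to package that reference for use in the proof of the main theorem.
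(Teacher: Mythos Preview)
Your treatment of (ii) is essentially identical to the paper's: bound the rescaled perimeters uniformly via Theorem~\ref{th:4.16}, extract an $L^1_{loc}$-convergent subsequence of $\delta_{1/r_i}(x^{-1}E)$ via Theorem~\ref{scalepE}, then apply (i) along that subsequence and use uniqueness of weak limits.

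For (i) the paper takes a more hands-on route than your lower-semicontinuity-plus-citation scheme. Rather than arguing separately that any weak limit $\mu$ satisfies $\lvert\partial L\rvert_\mathbb{G}\le\mu$ (easy) and $\mu\le\lvert\partial L\rvert_\mathbb{G}$ (outsourced), it works directly from the variational definition of perimeter: for every \emph{fixed} test field $\varphi\in C^1_0(\Omega,H\mathbb{G})$,
\[
\Big\lvert\int_{E_i}\text{div}_\mathbb{G}\varphi-\int_L\text{div}_\mathbb{G}\varphi\Big\rvert\le\lVert\text{div}_\mathbb{G}\varphi\rVert_\infty\,\mathcal{L}^n(E_i\triangle L\cap\Omega)\to 0,
\]
which is immediate from $\chi_{E_i}\to\chi_L$ in $L^1_{loc}$. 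Testing against fields $\varphi$ that nearly realise $\lvert\partial L\rvert_\mathbb{G}(\Omega)$ on one side and $\mu_i(\Omega)$ on the other, the paper obtains $\mu_i(\Omega)\to\lvert\partial L\rvert_\mathbb{G}(\Omega)$ for every bounded open $\Omega$, and then upgrades this to weak convergence by a normalization trick (restricting to a large ball and invoking a standard criterion for weak convergence of probability measures). Your outline is also legitimate---and citing \cite{ambled} for the hard half is entirely in the spirit of the proposition's attribution---but your heuristic for \emph{why} $\mu\le\lvert\partial L\rvert_\mathbb{G}$ is not the right one: asymptotic doubling alone does not preclude extra limit mass (two parallel sheets can collapse onto one, doubling the perimeter while respecting all density bounds). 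The genuine reason is that the \emph{vector}-valued Gauss--Green measures $\partial E_i=\mathfrak n_{E_i}\lvert\partial E_i\rvert_\mathbb{G}$ converge weakly to $\partial L$ (this is exactly what the displayed identity says for fixed $\varphi$), and at a reduced boundary point the unit normals $\mathfrak n_{E_i}$ converge in $L^1(\mu_i)$ to the constant $\mathfrak n_E(x)$; hence no cancellation of directions occurs, and taking total variation commutes with the limit.
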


\begin{proof}
Let us first prove (i). Proposition \ref{obs6.5}(ii) implies that for $\lvert \partial E\rvert_\mathbb{G}$-almost all $x\in\partial^* E$, every limit (locally in measure) of a sequence of the form $\delta_{1/r}(x^{-1}E)$ is of finite  perimeter. Therefore, from now on we assume without loss of generality that $x$ is a fixed point where Proposition \ref{obs6.5}(ii) holds.

Fix now any open and bounded set $\Omega\subseteq \mathbb{G}$ and note that for any $\varphi\in C^1_0(\Omega,H\mathbb{G})$ a simple computation yields:
\begin{equation}
    \bigg\lvert \int_{\delta_{1/r_i}(x^{-1}E)}\text{div}_{\mathbb{G}}\varphi(z) dz-\int_L \text{div}_{\mathbb{G}}\varphi(z)dz\bigg\rvert\leq \lVert \text{div}_\mathbb{G} \varphi\rVert_\infty \mathcal{L}^{n}(\delta_{1/r_i}(x^{-1}E)\triangle L\cap \Omega).
    \label{eq:n62}
\end{equation}

Suppose $\tilde{\varphi}^1\in C^1_0(\Omega,H\mathbb{G})$ is a vector field such that $\int_E\text{div}_{\mathbb{G}} \tilde{\varphi}^1(w) dw\geq \lvert \partial E\rvert_\mathbb{G}(x\delta_{r_i}\Omega)-\epsilon$ and define $\varphi^1(z):=\tilde{\varphi}^1(x\delta_{r_i}(z))$. The bound \eqref{eq:n62} implies that:
\begin{equation}
\begin{split}
    &r_i^{-(\mathcal{Q}-1)}\lvert\partial E\rvert_\mathbb{G}(x\delta_{r_i}\Omega)-\lvert \partial L\rvert_\mathbb{G}(\Omega)-\epsilon\leq \frac{1}{r_i^{\mathcal{Q}-1}}\int_{E}\text{div}_{\mathbb{G}}\tilde{\varphi}^1(z) dz-\int_L \text{div}_{\mathbb{G}}\varphi^1(z)dz\\
    =& \int_{\delta_{1/r}(x^{-1}E)}\text{div}_{\mathbb{G}}\varphi^1(z) dz-\int_L \text{div}_{\mathbb{G}}\varphi^1(z)dz= \lVert \text{div}_\mathbb{G} \varphi^1\rVert_\infty \mathcal{L}^{n}(\delta_{1/r_i}(x^{-1}E)\triangle L\cap \Omega),
\end{split}
    \label{eq:n63}
\end{equation}
where the first inequality of the second line follows from \eqref{eq:nummm1992}.
On the other hand, if $\tilde{\varphi}^2$ is such that $\int_L\text{div}_{\mathbb{G}} \varphi^2(w) dw\geq \lvert \partial L\rvert_\mathbb{G}(\Omega)-\epsilon$ we let $\varphi^2(z):=\tilde{\varphi}^2(\delta_{1/r_i}(x^{-1}z))$. Eventually, the same argument we used in \eqref{eq:n63}, imply:
\begin{equation}
\begin{split}
    \lvert \partial L\rvert_\mathbb{G}(\Omega)-r_i^{-(\mathcal{Q}-1)}\lvert\partial E\rvert_\mathbb{G}(x\delta_{r_i}\Omega)-\epsilon\leq&\int_L \text{div}_{\mathbb{G}}\varphi^2(z)dz- \frac{1}{r_i^{\mathcal{Q}-1}}\int_{E}\text{div}_{\mathbb{G}}\tilde{\varphi}^2(z) dz\\
    \leq& \lVert \text{div}_\mathbb{G} \varphi^2\rVert_\infty \mathcal{L}^{n}(\delta_{1/r_i}(x^{-1}E)\triangle L\cap \Omega).
\end{split}
    \label{eq:n64}
\end{equation}
Putting together \eqref{eq:n63} and \eqref{eq:n64} we infer that:
\begin{equation}
    \begin{split}
        \lim_{i\to\infty}\Big\lvert \lvert \partial L\rvert_\mathbb{G}(\Omega)-&r_i^{-(\mathcal{Q}-1)}\lvert\partial E\rvert_\mathbb{G}(x\delta_{r_i}\Omega)\Big\rvert-\epsilon\\
        \leq& (\lVert \text{div}_\mathbb{G} \varphi^1\rVert_\infty+\lVert \text{div}_\mathbb{G} \varphi^2\rVert_\infty)\lim_{i\to\infty} \mathcal{L}^{n}(\delta_{1/r_i}(x^{-1}E)\triangle L\cap \Omega)=0.
        \nonumber
    \end{split}
\end{equation}
The above inequality together with the arbitrariness of $\epsilon>0$, implies that for any bounded open set $\Omega$ we have:
\begin{equation}
    \lim_{r_i\to 0}\bigg\lvert \lvert \partial L\rvert_\mathbb{G}(\Omega)-\frac{ T_{x,r_i}\lvert\partial E\rvert_\mathbb{G}}{r_i^{\mathcal{Q}-1}}(\Omega)\bigg\rvert=0.
    \label{eq:n65}
\end{equation}
Let $\nu_i:=r_i^{-(\mathcal{Q}-1)}T_{x,r_i}\lvert\partial E\rvert_\mathbb{G}$ and for any $\rho>0$ define $\mu_i:=\nu_i(B(0,\rho))^{-1}\nu_i\llcorner B(0,\rho)$ and $\mu:=\lvert\partial L\rvert_\mathbb{G}(B(0,\rho))^{-1}\lvert\partial L\rvert_\mathbb{G}\llcorner B(0,\rho)$. Thanks to identity \eqref{eq:n65}, we infer that for any bounded open set $\Omega$ we have:
\begin{equation}
    \lim_{i\to\infty} \mu_i(\Omega)=\lim_{i\to\infty}\frac{\nu_i(B(0,\rho)\cap \Omega)}{\nu_i(B(0,\rho))}= \frac{\lvert\partial L\rvert_\mathbb{G}(B(0,\rho)\cap \Omega)}{\lvert\partial L\rvert_\mathbb{G}(B(0,\rho))}=\mu(\Omega).
\end{equation}
Thanks to Theorem 13.16 of \cite{Klenke}, we deduce that $\mu_i\rightharpoonup\mu$. Therefore, for any $f\in\mathcal{C}_c(\mathbb{G})$ we have:
\begin{equation}
    \lim_{i\to\infty}\int f(x)d\nu_i(x)=\lim_{i\to\infty}\nu_i(B(0,\rho))\int f(x)d\mu_i(x)=\int f(x)d\lvert\partial L\rvert_\mathbb{G}(x),
    \nonumber
\end{equation}
proving that $r_i^{-(\mathcal{Q}-1)}T_{x,r_i}\lvert\partial E\rvert_\mathbb{G}\rightharpoonup \lvert\partial L\rvert_\mathbb{G}$. The second part of the statement of (i) follows immediately.

We now prove  (ii). We can assume without loss of generality that $x=0$ satisfy the thesis of Theorem \ref{th:4.16} and that $\{r_i\}$ is an infinitesimal sequence such that: $$r_i^{-(\mathcal{Q}-1)}T_{0,r_i}\lvert\partial E\rvert_\mathbb{G}\rightharpoonup \nu\in\Tan_{\mathcal{Q}-1}(\lvert\partial E\rvert_\mathbb{G},x),$$
Now let $E_i:=\delta_{1/r_i}(E)$, so that $\lvert\partial E_i\rvert_\mathbb{G}=r_i^{\mathcal{Q}-1} T_{0,r_i}\lvert\partial E\rvert_\mathbb{G}$. For any open and bounded open set $\Omega$ we can find an $R>0$ such that $\Omega\subseteq B(0,R)$. Therefore, thanks to Lemma \ref{scalepE} we have:
$$\lvert \partial (\delta_{1/r_i}(x^{-1}E))\rvert_\mathbb{G}(\Omega)\leq\lvert \partial (\delta_{1/r_i}(x^{-1}E))\rvert_\mathbb{G}(B(0,R)) =r_i^{-(\mathcal{Q}-1)}T_{x,r_i}\lvert\partial E\rvert_\mathbb{G}(B(0,R))=\frac{\lvert\partial E\rvert_\mathbb{G}(B(x,R r_i))}{r_i^{\mathcal{Q}-1}}.$$
This implies, thanks to Theorem \ref{th:4.16} that:
$$\limsup_{i\to\infty}\lvert \partial (\delta_{1/r}(x^{-1}E))\rvert_\mathbb{G}(\Omega)\leq \limsup_{i\to\infty} \frac{\lvert\partial E\rvert_\mathbb{G}(B(x,R r_i))}{r_i^{\mathcal{Q}-1}}\leq L_\mathbb{G}R^{\mathcal{Q}-1}.$$
Thus, thanks to Theorem \ref{scalepE} the sequence $\{\delta_{1/r_i}(x^{-1}E)\}_{i\in\N}$ is precompact in $L^1_{loc}(\mathbb{G})$ and since we assumed $\delta_{1/r_i}(x^{-1}E)$ converges locally in measure to $L$ then $\delta_{1/r_i}(x^{-1}E)$ converges in $L^1_{loc}(\mathbb{G})$ to $L$. In particular, thanks to Theorem 2.17 of \cite{step2}, we infer that $L$ is of local finite perimeter. Thus, by definition of the tangent sets we have $L\in\Tan(E,0)$ and thanks to item (i), we conclude that $r_i^{-(\mathcal{Q}-1)}T_{0,r_i}\lvert\partial E\rvert_\mathbb{G}\rightharpoonup \lvert\partial L\rvert_\mathbb{G}$. Thanks to the uniqueness of the limit we conclude that $\lvert\partial L\rvert_\mathbb{G}=\nu$.
\end{proof}

\begin{proposizione}\label{Opennn}
If $E$ is an open set of finite perimeter in $\mathbb{G}$, for $\mathcal{S}^{\mathcal{Q}-1}$-almost any $x\in \partial E$ and any $L\in\Tan(E,x)$ we have $\mathcal{L}^{n}(L\setminus \text{int}(L))=0$. In particular the measures $\lvert \partial L\rvert_\mathbb{G}$ and $\lvert \partial (\text{int} (L))\rvert_\mathbb{G}$ coincide on Borel sets. 
\end{proposizione}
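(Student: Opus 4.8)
The plan is to reduce the statement, via the structure of the perimeter measure, to a purely measure-theoretic fact about tangent measures. First I would use Remark \ref{rk:b1}: since $\mathcal{S}^{\mathcal{Q}-1}\llcorner\partial^*_\mathbb{G}E$ and $\lvert\partial E\rvert_\mathbb{G}$ are mutually absolutely continuous and both concentrated on $\partial^*_\mathbb{G}E$, it is enough to prove the claim for $\lvert\partial E\rvert_\mathbb{G}$-almost every $x\in\partial^*_\mathbb{G}E$. Fix such an $x$ and $L\in\Tan(E,x)$, so that $\delta_{1/r_i}(x^{-1}E)\to L$ locally in measure along some infinitesimal $\{r_i\}$; by Proposition \ref{obs6.5}(i) the set $L$ has locally finite perimeter and $r_i^{-(\mathcal{Q}-1)}T_{x,r_i}\lvert\partial E\rvert_\mathbb{G}\rightharpoonup\lvert\partial L\rvert_\mathbb{G}$. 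The key step I would then carry out is to show that $\lvert\partial L\rvert_\mathbb{G}$ is Ahlfors $(\mathcal{Q}-1)$-regular on its support, namely that
\begin{equation}
l_\mathbb{G}\rho^{\mathcal{Q}-1}\le\lvert\partial L\rvert_\mathbb{G}(B(y,\rho))\le L_\mathbb{G}\rho^{\mathcal{Q}-1}\qquad\text{for every }y\in\supp\lvert\partial L\rvert_\mathbb{G}\text{ and every }\rho>0,
\label{eq:ADblow}
\end{equation}
with $l_\mathbb{G},L_\mathbb{G}$ the constants of Theorem \ref{th:4.16}. At the centre $y=0$ this is immediate: testing the weak convergence on balls $B(0,\rho)$ with $\lvert\partial L\rvert_\mathbb{G}(\partial B(0,\rho))=0$ (all but countably many $\rho$, by Lemma \ref{radii}) and observing that $r_i^{-(\mathcal{Q}-1)}T_{x,r_i}\lvert\partial E\rvert_\mathbb{G}(B(0,\rho))=r_i^{-(\mathcal{Q}-1)}\lvert\partial E\rvert_\mathbb{G}(B(x,r_i\rho))$, Theorem \ref{th:4.16} applied at $x$ gives the two bounds for $i$ large, hence for all $\rho$ by monotonicity.

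The main obstacle, and the only genuinely delicate point, is to extend \eqref{eq:ADblow} from the centre to an arbitrary point $y\in\supp\lvert\partial L\rvert_\mathbb{G}$ and all radii: this is the standard but technical fact that density estimates holding at $\lvert\partial E\rvert_\mathbb{G}$-almost every point, with a point-dependent threshold radius, propagate to every point of the support of any tangent measure, for all radii. I would prove it by a Severini--Egoroff reduction to a compact set $G$ of almost full $\lvert\partial E\rvert_\mathbb{G}$-measure on which the bounds of Theorem \ref{th:4.16} hold for all radii below a fixed $\bar r_0>0$; using that $x$ is a $\lvert\partial E\rvert_\mathbb{G}$-density point of $G$ and that $y\in\supp\lvert\partial L\rvert_\mathbb{G}$ forces $\lvert\partial E\rvert_\mathbb{G}(B(x\delta_{r_i}y,\varepsilon r_i))>0$ for $i$ large, one locates points of $G$ within distance $O(\varepsilon r_i)$ of $x\delta_{r_i}y$, applies the bounds of Theorem \ref{th:4.16} there, rescales by $r_i$, lets $i\to\infty$ and then $\varepsilon\to0$. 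This argument is entirely parallel to the Euclidean blow-up analysis and is also essentially contained in \cite{ambled}; I would present it rather than merely cite it, since it is the crux.

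Granting \eqref{eq:ADblow}, the proof concludes quickly. From the lower bound in \eqref{eq:ADblow} and a $5r$-covering argument, $\supp\lvert\partial L\rvert_\mathbb{G}$ has locally finite $\mathcal{S}^{\mathcal{Q}-1}$-measure, hence Hausdorff dimension at most $\mathcal{Q}-1<\mathcal{Q}$, so $\mathcal{L}^{n}(\supp\lvert\partial L\rvert_\mathbb{G})=0$. On the open set $\Omega:=\mathbb{G}\setminus\supp\lvert\partial L\rvert_\mathbb{G}$ one has $\lvert\partial L\rvert_\mathbb{G}(\Omega)=0$, so $\chi_L$ is a $\mathrm{BV}_{\mathbb{G},loc}$ function with vanishing horizontal gradient on $\Omega$; since each connected component of $\Omega$ is open and $\mathbb{G}$ is a step-$s$ Carnot group, on each component $\chi_L$ equals $0$ or $1$ $\mathcal{L}^{n}$-a.e. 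Letting $A_1$ be the union of those components on which $\chi_L\equiv1$ a.e., the set $A_1$ is open and $\mathcal{L}^{n}(L\triangle A_1)\le\mathcal{L}^{n}(\supp\lvert\partial L\rvert_\mathbb{G})=0$, so $A_1$ is an open representative of $L$. For this representative $L=\text{int}(L)$, which gives $\mathcal{L}^{n}(L\setminus\text{int}(L))=0$; and since $\chi_L=\chi_{\text{int}(L)}$ $\mathcal{L}^{n}$-a.e., the two coincide in $L^{1}_{loc}$ and therefore $\lvert\partial L\rvert_\mathbb{G}=\lvert\partial(\text{int}(L))\rvert_\mathbb{G}$ as measures, which is the last assertion. (Note that the property $\Tan(E,x)\ni L$ with constant normal, Proposition \ref{prop:costnorm}(iii), is not needed for this route, only the Ahlfors regularity \eqref{eq:ADblow}.)
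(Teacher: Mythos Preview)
Your proof is correct and takes a genuinely different route from the paper's. The paper's argument is a one-line citation: by Proposition~\ref{prop:costnorm} every $L\in\Tan(E,x)$ has \emph{constant} horizontal normal $\mathfrak{n}_E(x)$, and then Theorem~1.1 of \cite{bellettini2019sets} (a structure result for sets with constant horizontal normal in general Carnot groups) gives directly that such an $L$ is $\mathcal{L}^n$-equivalent to an open set. Your approach bypasses the constant-normal information entirely: you transfer the density bounds of Theorem~\ref{th:4.16} from $\lvert\partial E\rvert_\mathbb{G}$ to $\lvert\partial L\rvert_\mathbb{G}$ at every point of its support (the Egoroff/density-point/locality-of-tangents argument you sketch is the right one, and Proposition~\ref{prop:locality} lets you run it along the \emph{same} sequence $r_i$), deduce that $\supp\lvert\partial L\rvert_\mathbb{G}$ is $\mathcal{L}^n$-null, and then use that a $BV_{\mathbb{G}}$ characteristic function with vanishing horizontal gradient on a connected open set is a.e.\ constant (via group mollification and Chow--Rashevskii). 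What your route buys is self-containment: it avoids the external structure theorem and, as you note, does not use Proposition~\ref{prop:costnorm}(iii) (nor, in fact, the openness of $E$). What the paper's route buys is brevity and a direct link to the constant-normal theory, which is the natural structural input once Proposition~\ref{prop:costnorm} is in hand.
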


\begin{proof}
This follows for instance from Proposition \ref{prop:costnorm} and Theorem 1.1 of \cite{bellettini2019sets}.
\end{proof}

\begin{osservazione}\label{oss:bastaa}
Let $V_\pm:=\{w\in\mathbb{G}:\pm \langle \mathfrak{n}(V),w\rangle>0\}$. Thanks to identity (2.8) in \cite{ambled}, it is immediate to see that $V_\pm$ are open sets of finite perimeter in $\mathbb{G}$ and that
$\partial V_{\pm}=\mp\mathfrak{n}(V)\mathcal{H}_{eu}^{n-1}\llcorner V$. This implies that the horizontal normal of each of the half spaces determined by $V$ coincides, up to a sign, $\lvert\partial V_\pm\rvert_\mathbb{G}$-almost everywhere with $\mathfrak{n}(V)$.
\end{osservazione}

\begin{proposizione}\label{prop:tang}
Let $V\in\G(\mathcal{Q}-1)$ and let $f:V\to \mathfrak{N}(V)$ be an intrinsic Lipschitz function. Suppose that:
$$\Tan_{\mathcal{Q}-1}(\lvert\partial \text{epi(f)}\rvert_{\mathbb{G}},x)\subseteq \mathfrak{M},\qquad\text{for }\lvert\partial \text{epi(f)}\rvert_{\mathbb{G}}\text{-almost every }x\in \mathbb{G}.$$
Then for $\lvert\partial \text{epi(f)}\rvert_{\mathbb{G}}$-almost every $x\in \mathbb{G}$, we have:
$$\Tan_{\mathcal{Q}-1}(\lvert\partial \text{epi(f)}\rvert_{\mathbb{G}},x)\subseteq \{\lambda\mathcal{S}^{\mathcal{Q}-1}\llcorner V(x):\lambda\in [L_{\mathbb{G}}^{-1},l_{\mathbb{G}}^{-1}]\},$$
where $V(x)\in \G(\mathcal{Q}-1)$ is the plane orthogonal to $\mathfrak{n}_f(x)$, that is the normal to $\text{gr}(f)$ introduced in Definition \ref{fnormal}.
\end{proposizione}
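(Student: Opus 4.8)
The plan is to combine the two descriptions of the tangent measures of $\lvert\partial \mathrm{epi}(f)\rvert_{\mathbb{G}}$ that are now available: the one coming from the hypothesis (flatness, i.e. $\Tan_{\mathcal{Q}-1}(\lvert\partial \mathrm{epi}(f)\rvert_{\mathbb{G}},x)\subseteq\mathfrak{M}$) and the one coming from the fact that $\mathrm{epi}(f)$ is a set of finite perimeter with a well-behaved reduced boundary, namely Propositions \ref{prop:costnorm} and \ref{obs6.5}, which force the tangent measures to be perimeter measures of sets with \emph{constant} horizontal normal equal to $\mathfrak{n}_f(x)$. Reconciling these two facts should pin down the tangent to be, up to a multiplicative constant, the Haar measure of the unique $V(x)\in\G(\mathcal{Q}-1)$ orthogonal to $\mathfrak{n}_f(x)$.

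In detail, I would first fix a point $x$ (outside a $\lvert\partial \mathrm{epi}(f)\rvert_{\mathbb{G}}$-null set) where all the relevant statements hold simultaneously: Theorem \ref{th:4.16} (so $\lvert\partial \mathrm{epi}(f)\rvert_{\mathbb{G}}$ is asymptotically doubling at $x$ and $x\in\partial^*_\mathbb{G}\mathrm{epi}(f)$ with $l_\mathbb{G}r^{\mathcal{Q}-1}\le\lvert\partial \mathrm{epi}(f)\rvert_{\mathbb{G}}(B(x,r))\le L_\mathbb{G}r^{\mathcal{Q}-1}$ for small $r$), Proposition \ref{prop:costnorm} (so every $F\in\Tan(\mathrm{epi}(f),x)$ is a finite perimeter set with $\mathfrak{n}_F\equiv\mathfrak{n}_f(x)$ $\lvert\partial F\rvert_{\mathbb{G}}$-a.e.), Proposition \ref{obs6.5} (so $\nu\in\Tan_{\mathcal{Q}-1}(\lvert\partial \mathrm{epi}(f)\rvert_{\mathbb{G}},x)$ iff $\nu=\lvert\partial L\rvert_{\mathbb{G}}$ for some $L\in\Tan(\mathrm{epi}(f),x)$), and the hypothesis. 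Then, given such a $\nu=\lvert\partial L\rvert_{\mathbb{G}}$, the hypothesis gives $\nu=\Theta\,\mathcal{S}^{\mathcal{Q}-1}\llcorner W$ for some $\Theta>0$ and $W\in\G(\mathcal{Q}-1)$; by Proposition \ref{prop:rapp} this is $\Theta\beta^{-1}\mathcal{H}^{n-1}_{eu}\llcorner W$. The task is to show $W=V(x)$ and $\Theta\in[L_\mathbb{G}^{-1},l_\mathbb{G}^{-1}]$.

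For the identification $W=V(x)$: the set $L$ has locally finite perimeter with $\lvert\partial L\rvert_{\mathbb{G}}=\Theta\mathcal{S}^{\mathcal{Q}-1}\llcorner W$ concentrated on the hyperplane $W$, while by Proposition \ref{prop:costnorm}(iii) its horizontal normal is $\lvert\partial L\rvert_{\mathbb{G}}$-a.e.\ equal to the constant $\mathfrak{n}_f(x)$. On the other hand, Remark \ref{oss:bastaa} (identity (2.8) of \cite{ambled}) tells us that a set whose perimeter measure is a multiple of $\mathcal{S}^{\mathcal{Q}-1}\llcorner W$ must, up to Lebesgue-null modification, be one of the two half-spaces $W_\pm=\{w:\pm\langle\mathfrak{n}(W),w\rangle>0\}$ and must have horizontal normal $\mp\mathfrak{n}(W)$ a.e.; more precisely, since $\mathfrak{n}(L)$ is $\lvert\partial L\rvert_{\mathbb{G}}$-integrable with constant value $\mathfrak{n}_f(x)$ and $\lVert\mathfrak{n}_f(x)\rVert=1$, comparing with $\partial W_\pm=\mp\mathfrak{n}(W)\mathcal{H}^{n-1}_{eu}\llcorner W$ forces $\mathfrak{n}_f(x)=\pm\mathfrak{n}(W)$, hence $W$ is orthogonal to $\mathfrak{n}_f(x)$, i.e.\ $W=V(x)$ by definition of $V(x)$. (One can argue this purely measure-theoretically: $\nu(B(0,r))=\Theta\mathcal{S}^{\mathcal{Q}-1}\llcorner W(B(0,r))=\Theta c(W)r^{\mathcal{Q}-1}$ and by Theorem \ref{th:4.16} applied to $L$, or rather by the blow-up of $\lvert\partial \mathrm{epi}(f)\rvert_{\mathbb{G}}$ and asymptotic doubling, one gets the two-sided density bounds $l_\mathbb{G}\le\Theta c(W)\le L_\mathbb{G}$; combined with Proposition \ref{cor:2.2.19} giving $c(W)=\beta^{-1}\mathcal{H}^{n-1}_{eu}(B(0,1)\cap W)$ independent of $W$, and with the normalisation $\mathcal{S}^{\mathcal{Q}-1}\llcorner W(B(0,1))=\beta^{-1}\mathcal{H}^{n-1}_{eu}(B(0,1)\cap W)=1$ by Proposition \ref{prop:rapp}, we conclude $c(W)=1$ so that $\Theta\in[l_\mathbb{G},L_\mathbb{G}]$... wait, the correct bound: from $l_\mathbb{G}r^{\mathcal{Q}-1}\le\nu(B(0,r))\le L_\mathbb{G}r^{\mathcal{Q}-1}$ and $\nu(B(0,r))=\Theta r^{\mathcal{Q}-1}$ we get $\Theta\in[l_\mathbb{G},L_\mathbb{G}]$; then writing $\nu=\lambda\mathcal{S}^{\mathcal{Q}-1}\llcorner V(x)$ with $\lambda=\Theta$ and recalling $\mathfrak{d}(x)\in[L_\mathbb{G}^{-1},l_\mathbb{G}^{-1}]$ from Remark \ref{rk:b1} accounts for the interval stated in the Proposition, since there one may also be measuring against $\mathcal{S}^{\mathcal{Q}-1}$ rather than the perimeter.) I would carry out the density-bound computation carefully and cite Theorem \ref{th:4.16}, Remark \ref{rk:b1}, Propositions \ref{prop:rapp} and \ref{cor:2.2.19} for the value of $\lambda$.

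The main obstacle I anticipate is the identification step $W=V(x)$: one needs to be careful that the statement $\mathfrak{n}_F\equiv\mathfrak{n}_E(x)$ in Proposition \ref{prop:costnorm}(iii) is exactly the bridge that rules out the a priori possibility that the blow-up plane $W$ could differ from the plane determined by the normal $\mathfrak{n}_f(x)$; this requires invoking the explicit representation of the perimeter of a half-space (Remark \ref{oss:bastaa}, i.e.\ identity (2.8) of \cite{ambled}) together with the uniqueness of the horizontal normal up to $\lvert\partial\cdot\rvert_{\mathbb{G}}$-null sets, and possibly Proposition \ref{Opennn} to pass to $\mathrm{int}(L)$ so that $L$ genuinely is a half-space and not merely a set whose perimeter agrees with that of a half-space. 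Once $W=V(x)$ is established the quantitative bound on $\lambda$ is routine from the asymptotic doubling estimates. I would therefore structure the proof as: (1) reduce to a full-measure set of good points; (2) take $\nu\in\Tan_{\mathcal{Q}-1}$, write $\nu=\lvert\partial L\rvert_{\mathbb{G}}$ with $L\in\Tan(\mathrm{epi}(f),x)$ via Proposition \ref{obs6.5}, and $\nu=\Theta\mathcal{S}^{\mathcal{Q}-1}\llcorner W$ via the hypothesis; (3) use Propositions \ref{prop:costnorm}, \ref{Opennn} and Remark \ref{oss:bastaa} to force $W\perp\mathfrak{n}_f(x)$, i.e.\ $W=V(x)$; (4) use Theorem \ref{th:4.16}, Remark \ref{rk:b1} and Propositions \ref{prop:rapp}, \ref{cor:2.2.19} to locate $\lambda$ in $[L_\mathbb{G}^{-1},l_\mathbb{G}^{-1}]$.
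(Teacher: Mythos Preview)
Your proposal is correct and follows essentially the same route as the paper: fix a good point, use Proposition~\ref{obs6.5} to write any tangent measure as $\lvert\partial L\rvert_{\mathbb{G}}$ for some $L\in\Tan(\mathrm{epi}(f),x)$, invoke the flatness hypothesis to identify this with $\Theta\,\mathcal{S}^{\mathcal{Q}-1}\llcorner W$, then use Proposition~\ref{Opennn} to reduce $L$ to an open set (hence a genuine half-space bounded by $W$), and combine Remark~\ref{oss:bastaa} with Proposition~\ref{prop:costnorm}(iii) to force $\mathfrak{n}(W)=\mathfrak{n}_f(x)$, i.e.\ $W=V(x)$. The paper also uses Proposition~\ref{vf} to pass from the reduced boundary to the topological boundary of $L$, and your slight confusion about whether $\lambda$ lands in $[l_\mathbb{G},L_\mathbb{G}]$ or $[L_\mathbb{G}^{-1},l_\mathbb{G}^{-1}]$ mirrors an apparent inconsistency in the paper itself; the computation you sketch (via Theorem~\ref{th:4.16} and Proposition~\ref{prop:rapp}) is the right one.
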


\begin{proof}
Proposition \ref{obs6.5} implies that for $\mathcal{S}^{\mathcal{Q}-1}$-almost every $x\in \partial^*_{\mathbb{G}}\text{epi}(f)$ and for every $L\in\Tan(\text{epi}(f),x)$, we have:
\begin{equation}
    \lvert \partial L\rvert_\mathbb{G}=\lambda \mathcal{S}^{\mathcal{Q}-1}\llcorner V_{L,x}\text{ for some $ V_{L,x}\in\G(\mathcal{Q}-1)$ and $\lambda>0$}. \label{eq:nummm1993}
\end{equation}
Furthermore Remark \ref{rk:b1}, Proposition \ref{prop:rapp} and a simple computation that we omit, imply that $\lambda\in[l_\mathbb{G},L_{\mathbb{G}}]$. 

Fix now an $x\in \partial^*\text{epi}(f)$ at which \eqref{eq:nummm1993} hold and that satisfies the thesis of Proposition \ref{prop:costnorm} and let $L\in\Tan(\text{epi}(f),x)$. Thanks to these choices, $L$ is a finite perimeter set with constant horizontal normal and Proposition \ref{vf} and \eqref{eq:nummm1993} tell us that its topological boundary must coincide up to $\mathcal{S}^{\mathcal{Q}-1}$-null sets with the plane $V_{L,x}$. Therefore, since by Proposition \ref{Opennn} we can assume without loss of generality that $L$ is an open set, we conclude that $L$ must coincide with one of the two halfspaces determined by $V_{L,x}$. This implies however, thanks to Remark \ref{oss:bastaa}, that:
\begin{equation}
    \mathfrak{n}(V_{L,x})=\mathfrak{n}_L(y)\text{ for }\mathcal{S}^{\mathcal{Q}-1}\text{-almost every }y\in \partial L.
    \label{eq:nummm1994}
\end{equation}
Furthermore, Proposition \ref{prop:costnorm}(iii) and \eqref{eq:nummm1994} imply that $\mathfrak{n}(V_{L,x})=\mathfrak{n}_L(y)=\mathfrak{n}_f(x)$ for $\mathcal{S}^{\mathcal{Q}-1}$-almost all $y\in\partial L$. This shows however that for $\mathcal{S}^{\mathcal{Q}-1}$-almost all $x\in \text{gr}(f)$, every element of $\Tan(\text{epi}(f),x)$ is an halfspace whose boundary is the plane orthogonal to $\mathfrak{n}_f(x)$ and Proposition \ref{obs6.5} concludes the proof.
\end{proof}

\begin{proposizione}\label{prop:C1H}
Suppose $\gamma:V\to \mathfrak{N}(V)$ is an intrinsic Lipschitz function such that for $\mathcal{S}^{\mathcal{Q}-1}$-almost every $x\in V$ there exists a plane $V(x)\in\G(\mathcal{Q}-1)$ for which:
\begin{equation}
    \lim_{r\to 0}\frac{\mathcal{S}^{\mathcal{Q}-1}\big(\text{gr}(\gamma)\cap B(x\gamma(x),r)\setminus x\gamma(x)X_{V_\gamma(x\gamma(x))}(\alpha)\big)}{r^{\mathcal{Q}-1}}=0,
    \label{eq:reg}
\end{equation}
whenever $\alpha>0$ and where $X_{V_\gamma(x\gamma(x))}(\alpha):=\{w\in \mathbb{G}:\dist(w,V(x\gamma(x)))\leq \alpha\lVert w\rVert\}$. Then, $\text{gr}(\gamma)$ can be covered with countably many $C^1_{\mathbb{G}}$-surfaces.
\end{proposizione}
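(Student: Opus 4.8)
The statement to be proved is Proposition \ref{prop:C1H}: an intrinsic Lipschitz graph whose tangent planes exist and \emph{localize} in the cone sense \eqref{eq:reg} at $\mathcal{S}^{\mathcal{Q}-1}$-almost every point can be covered by countably many $C^1_\mathbb{G}$-surfaces. The plan is to run a Whitney-type extension argument, following the adaptation of \cite{MFSSC} promised in the introduction. First I would reduce to a compact piece: by inner regularity of $\mathcal{S}^{\mathcal{Q}-1}\llcorner\text{gr}(\gamma)$ and Severini--Egoroff applied to the family of functions $r\mapsto r^{-(\mathcal{Q}-1)}\mathcal{S}^{\mathcal{Q}-1}(\text{gr}(\gamma)\cap B(x\gamma(x),r)\setminus x\gamma(x)X_{V(x\gamma(x))}(\alpha))$ (for a countable sequence $\alpha_k\downarrow 0$), I would find, for every $\varepsilon>0$, a compact set $K_\varepsilon\subseteq\text{gr}(\gamma)$ with $\mathcal{S}^{\mathcal{Q}-1}(\text{gr}(\gamma)\setminus K_\varepsilon)<\varepsilon$ on which the limit \eqref{eq:reg} is \emph{uniform} in $x$, and on which the map $x\gamma(x)\mapsto\mathfrak{n}(V(x\gamma(x)))$ is continuous. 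Continuity of this normal map is the crucial qualitative output of the cone-localization hypothesis: if two points of $K_\varepsilon$ are close and the graph fills a definite cone around each of the two planes at a common scale, then by the AD-regular-type lower bound \eqref{eq:reg} forces at a definite scale the two cones must overlap, which bounds the angle $|\mathfrak{n}(V(p_1))-\mathfrak{n}(V(p_2))|$ in terms of $d(p_1,p_2)$; one extracts uniform continuity on $K_\varepsilon$ by a compactness/contradiction argument exactly as in the proof of Proposition \ref{prop:nearnorm}.

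Next I would upgrade the pointwise cone condition to a \emph{quantitative, uniform} one on (a further compact subset of) $K_\varepsilon$. Concretely, for each $x\gamma(x)\in K_\varepsilon$ and each scale $0<s<r_0$ I want to produce, inside $B(x\gamma(x),s)$, points of $\text{gr}(\gamma)$ realizing both "sides" of the plane $x\gamma(x)V(x\gamma(x))$ up to error $o(s)$ — this is the analogue of the weak linear approximability conditions \eqref{4.8}--\eqref{4.9} for the single surface $\text{gr}(\gamma)$, and it follows by combining the upper cone exclusion from \eqref{eq:reg} (no graph points far from the plane) with the lower density bound $\mathcal{S}^{\mathcal{Q}-1}(\text{gr}(\gamma)\cap B(p,s))\gtrsim s^{\mathcal{Q}-1}$ (which holds $\mathcal{S}^{\mathcal{Q}-1}$-a.e. since $\text{gr}(\gamma)$ is an intrinsic Lipschitz graph, via Theorem \ref{whole} and Theorem 3.2.1 of \cite{MFSSC}). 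With this, for points $p\in K_\varepsilon$ and scales below a uniform threshold, $\text{gr}(\gamma)$ is squeezed into a narrow strip around $pV(p)$ and meets every subball of that strip — in particular the restriction $\gamma|_{E}$ to the projected piece $E=P_{V}(K_\varepsilon)$ is, after the usual change of base plane, a function whose "intrinsic difference quotients" $\mathfrak{n}(V(\cdot))$ extend continuously, i.e. $\gamma|_E$ is intrinsically differentiable with continuous intrinsic gradient on $E$.

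Finally I would invoke the Whitney-type extension theorem for $C^1_\mathbb{G}$-maps. The data are: the compact set $E$ (a subset of $V\cong\R^{n-1}$ via the splitting $\mathbb{G}=V\rtimes\mathfrak{N}(V)$), the function values $\gamma|_E$, and the candidate intrinsic gradient field given by the continuous normal $p\mapsto\mathfrak{n}(V(p))$; the uniform weak-approximation estimate from the previous paragraph is exactly the first-order Taylor-type compatibility condition required by the Whitney hypothesis (the increments of $\gamma$ match the prescribed gradient up to $o(\text{distance})$, uniformly). The extension theorem — obtained in Appendix \ref{def:perimetro} by adapting the arguments of \cite{MFSSC}, and which produces a $C^1_\mathbb{G}$ function whose zero set is a $C^1_\mathbb{G}$-surface through $\text{gr}(\gamma|_E)$ — then gives a single $C^1_\mathbb{G}$-surface $\Sigma_\varepsilon\supseteq K_\varepsilon$. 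Taking $\varepsilon=1/j$ and letting $j\to\infty$ covers $\mathcal{S}^{\mathcal{Q}-1}$-almost all of $\text{gr}(\gamma)$ with countably many $C^1_\mathbb{G}$-surfaces, as claimed.

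\textbf{Main obstacle.} The delicate point is the passage from the purely \emph{qualitative} pointwise hypothesis \eqref{eq:reg} — which gives, for each fixed $x$, convergence as $r\to0$ with no rate — to the \emph{uniform} first-order estimate on a compact set that the Whitney extension needs. Severini--Egoroff handles uniformity in the scale for a fixed cone aperture, but one must carefully interleave the countably many apertures $\alpha_k$, control the threshold radii $r_0$ uniformly, and simultaneously keep the intrinsic Lipschitz (lower density) bound uniform; the non-Lipschitz nature of the splitting projections $P_V$ (flagged in the remark after Proposition \ref{prop:proiezioni}) means the change-of-base-plane estimates must be done on balls away from the bad cone, using Proposition \ref{prop:conifero} to restore bi-Lipschitz comparison with the Euclidean distance. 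I expect the bulk of the technical work to be in verifying that these uniform estimates assemble into precisely the hypothesis format of the Appendix \ref{def:perimetro} Whitney theorem, rather than in any single new idea.
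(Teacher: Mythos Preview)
Your proposal is correct in outline and follows the same high-level strategy as the paper (Severini--Egoroff and Lusin to reduce to a compact set with uniform estimates and continuous normal, then a Whitney-type extension producing a $C^1_\mathbb{G}$-surface), but the paper's execution is more direct and sidesteps precisely the obstacle you flag. Rather than applying Egoroff to the cone-measure functions of \eqref{eq:reg} for countably many apertures and then separately deriving continuity of the normal via a cone-overlap argument \`a la Proposition~\ref{prop:nearnorm}, the paper first stratifies $\text{gr}(\gamma)$ into sets $A_i$ where the lower density bound $\mathcal{S}^{\mathcal{Q}-1}(B(x,r)\cap\text{gr}(\gamma))\geq L_\mathbb{G}^{-1}l_\mathbb{G}\,r^{\mathcal{Q}-1}$ holds for all $r<1/i$, and then defines the Whitney remainder directly:
\[
\rho_{i,j}(x):=\sup\Big\{\tfrac{|\langle\mathfrak{n}_\gamma(x),\pi_1(x^{-1}y)\rangle|}{d(x,y)}:y\in A_i,\ 0<d(x,y)<1/j\Big\}.
\]
The key step is a short contradiction argument showing $\rho_{i,j}(x)\to 0$ pointwise: if some $y_k\in A_i$ stayed outside the cone $xX_{V_\gamma(x)}(\mathfrak{c}/2)$, then the ball $B(y_k,\mathfrak{c}\,d(x,y_k)/4)$ would lie entirely in $B(x,2d(x,y_k))\setminus xX_{V_\gamma(x)}(\mathfrak{c}/4)$, and the lower density at $y_k$ would force the ratio in \eqref{eq:reg} to stay bounded below, contradicting the hypothesis. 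After that, a single application of Egoroff to $\rho_{i,j}$ and Lusin to $\mathfrak{n}_\gamma$ gives the compact $K_i(\iota)$, and the Whitney theorem is invoked with data $f\equiv 0$ on $K_i(\iota)$ and gradient $\mathfrak{n}_\gamma$ (citing Theorem~5.2 of \cite{step2}, not a new result in the appendix). This avoids the change-of-base-plane and the interleaving of apertures that you anticipated as the main technical burden; your route would also work, but the paper's packaging of the Whitney compatibility condition into the single scalar quantity $\rho_{i,j}$ is cleaner.
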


\begin{proof}
For any $i\in\N$ we define:
$$A_i:=\{x\in \text{gr}(\gamma):\eqref{eq:reg}\text{ holds at }x \text{ and }\mathcal{S}^h(B(x,r)\cap \text{gr}(\gamma))\geq L_\mathbb{G}^{-1}l_\mathbb{G} r^{\mathcal{Q}-1}\text{ for any }0<r<1/i \}.$$
Let us prove that the sets $A_i$ are $\mathscr{S}^h\llcorner \text{gr}(\gamma)$-measurable. It is immediate to see that if we show that the set:
$$\tilde{A}_i:=\{x\in \text{gr}(\gamma):\mathcal{S}^h(B(x,r)\cap \text{gr}(\gamma))\geq L_\mathbb{G}^{-1}l_\mathbb{G} r^{\mathcal{Q}-1}\text{ for any }0<r<1/i \},$$
is closed the measurability of $A_i$ immediately follows since \eqref{eq:reg} holds on a set of full $\mathcal{S}^h\llcorner \text{gr}(\gamma)$-measure. Since $\text{gr}(\gamma)$ is closed, to prove the closedness of $\tilde{A}_i$ it is sufficient to show that if a sequence $\{x_j\}_{j\in\N}\subseteq \tilde{A}_i$ converges to some $x\in \text{gr}(\gamma)$, then $x\in \tilde{A}_i$. So, let $0<r<1/i$ and note that if $d(x,x_j)<r$ we have:
$$L_\mathbb{G}^{-1}l_\mathbb{G} (r-d(x,x_j))^{\mathcal{Q}-1}\leq \mathcal{S}^h\llcorner \text{gr}(\gamma)(B(x_j,r-d(x,x_j)))\leq\mathcal{S}^h\llcorner \text{gr}(\gamma)(B(x,r)). $$
The arbitrariness of $j$ implies that for any $0<r<1/i$ we have $\mathcal{S}^h\llcorner \text{gr}(\gamma)(B(x,r))\geq L_\mathbb{G}^{-1}l_\mathbb{G}r^{\mathcal{Q}-1}$, proving that $x\in\tilde{A}_i$.

We now prove that the sets $A_i$ cover $\mathcal{S}^h$-almost all $\text{gr}(\gamma)$. Recall that $\text{gr}(\gamma)$ is the boundary of the set of locally finite perimeter $\text{epi}(\gamma)$. Thanks to Theorem \ref{th:4.16}, this implies that for $\lvert \partial\text{epi}(\gamma)\rvert_\mathbb{G}$-almost every $x\in \mathbb{G}$ there exists a $\overline{r}(x)>0$ such that for any $0<r<\overline{r}(x)$ we have:
$$ L_\mathbb{G}\mathcal{S}^h\llcorner \text{gr}(\gamma)(B(x,r))\geq \lvert \partial\text{epi}(\gamma)\rvert_\mathbb{G}(B(x,r))\geq l_\mathbb{G}r^{\mathcal{Q}-1},$$
where the first inequality above comes from Remark \ref{rk:b1}. Therefore, if $\overline{r}(x)>1/i$ and \eqref{eq:reg} holds at $x$, then $x\in A_i$ and this concludes the proof of the fact that $\mathcal{S}^h(\text{gr}(\gamma)\setminus \bigcup_{i\in\N} A_i)=0$.

For any $i,j\in \N$ and any $x\in A_i$ we let:
\begin{equation}
    \rho_{i,j}(x):=\sup\bigg\{\frac{\lvert \langle\mathfrak{n}_\gamma(x),\pi_1(x^{-1}y)\rangle\rvert}{d(x,y)}:y\in A_i\text{ and }0<d(x,y)<1/j\bigg\}.
    \nonumber
\end{equation}
We want to prove that for any $i\in\N$ and any $x\in A_i$ we have:
\begin{equation}
    \lim_{j\to \infty}\rho_{i,j}(x)=0.
    \label{eq:nlim}
\end{equation}
Assume by contradiction this is not the case and that there exists a $i\in\N$ and a $z\in A_i$ for which \eqref{eq:nlim} fails. Then, there is a $0<\mathfrak{c}\leq 1$ and an increasing sequence of natural numbers $\{j_k\}_{k\in\N}$ such that for any $k\in\N$ there is a $y_k\in A_i$ for which $y\in B(z,1/j_k)$ and $\lvert \langle\mathfrak{n}_\gamma(z),\pi_1(z^{-1}y_k)\rangle\rvert>\mathfrak{c}d(z,y_k)$.
Thanks to Proposition \ref{cor:SSCM2.2.14} we infer that $y_i\not\in zX_{V_\gamma(z)}(\mathfrak{c}/2)$, indeed:
\begin{equation}
    \dist(V_\gamma(z),z^{-1}y_k)=\lvert\langle \mathfrak{n}_\gamma(z),\pi_1(z^{-1}y_k)\rangle\rvert>\mathfrak{c}d(z,y_k).
\end{equation}
We now claim that for any $k\in\N$ we have:
\begin{equation}
B(y_k,\mathfrak{c}d(z,y_k)/4)\subseteq B(z,2d(z,y_k))\setminus z X_{V_\gamma(z)}(\mathfrak{c}/4).    
    \label{eq:reffin}
\end{equation}
In order to prove the inclusion \eqref{eq:reffin} we fix a $k\in \N$ and let $w:=y_k v\in B(y_k,\mathfrak{c}d(z,y_k)/8)$. With these choices Proposition \ref{cor:SSCM2.2.14} and the triangular inequality imply:
\begin{equation}
\begin{split}
   \dist(V_\gamma(z),z^{-1}w)=&\lvert\langle \mathfrak{n}_\gamma(z),\pi_1(z^{-1} w)\rangle\rvert
   \geq \lvert \langle\mathfrak{n}_\gamma(z),\pi_1(z^{-1} y_k)\rangle\rvert-\lvert \langle\mathfrak{n}_\gamma(z),\pi_1( y_k^{-1}w)\rangle\rvert\\
   \geq & \mathfrak{c}d(z,y_k)-d(y_k,w)\geq \mathfrak{c}d(z,w)-(1+\mathfrak{c})d(y_k,w).
   \label{eq:40000}
\end{split}
\end{equation}
Furthermore, thanks to the choice of $w$ we have:
\begin{align}
    d(y_k,w)\leq \mathfrak{c}d(z,y_k)/4\leq \mathfrak{c}d(z,w)/4+\mathfrak{c}d(y_k,w)/4,\label{eq:equationesmath}\\
d(z,w)\leq d(z,y_k)+d(y_k,w)\leq(1+\mathfrak{c}/8)d(z,y_k)\leq2d(z,y_k).\label{eq:equanigdo2}
\end{align}
Putting together \eqref{eq:40000}, \eqref{eq:equationesmath} and some algebraic computations that we omit, we infer that:
$$\dist(V_\gamma(z),z^{-1}w) \geq \mathfrak{c}d(z,w)/4.$$
The inclusion \eqref{eq:reffin} follows immediately from the above bound and \eqref{eq:equanigdo2}. Therefore, \eqref{eq:reg} and \eqref{eq:reffin} imply:
\begin{equation}
\begin{split}
     \limsup_{r\to 0}\frac{\mathcal{S}^{\mathcal{Q}-1}\big(\text{gr}(\gamma)\cap B(z,r)\setminus z X_{V_{\gamma}(z)}(\mathfrak{c}/8)\big)}{r^{\mathcal{Q}-1}}\geq& \lim_{k\to \infty}\frac{\mathcal{S}^{\mathcal{Q}-1}\big(\text{gr}(\gamma)\cap B(z,2d(z,y_k))\setminus z X_{V_{\gamma}(z)}(\mathfrak{c}/8)\big)}{(2 d(z,y_k))^{\mathcal{Q}-1}}\\
     \geq\lim_{k\to \infty}\frac{\mathcal{S}^{\mathcal{Q}-1}\big(\text{gr}(\gamma)\cap B(y_k,\mathfrak{c}d(z,y_k)/8)\big)}{2^{\mathcal{Q}-1} d(z,y_k)^{\mathcal{Q}-1}}\geq&\lim_{k\to \infty}\frac{L_\mathbb{G}^{-1}l_{\mathbb{G}}(\mathfrak{c}d(z,y_k)/8)^{\mathcal{Q}-1}}{2^\mathcal{Q}d(z,y_k)}=l_\mathbb{G}L_\mathbb{G}^{-1}\bigg(\frac{\mathfrak{c}}{16}\bigg)^{\mathcal{Q}-1},
     \label{eq:n4014}
\end{split}
\end{equation}
where the second last inequality comes from Theorem \ref{th:4.16} and the fact that $y_k\in A_i$. However, since by construction \eqref{eq:reg} holds at any point of $A_i$, \eqref{eq:n4014} is in contradiction with \eqref{eq:reg} and thus \eqref{eq:nlim} must hold at any $x\in A_i$. Define $f_i$ to be the function identically $0$ on $A_i$ and for any $\iota\in\N$ we let $K_i(\iota)$ be a compact subset of $A_i$ for which:
\begin{itemize}
    \item[(i)]$\mathcal{S}^{\mathcal{Q}-1}(A_i\setminus K_i(\iota))\leq 1/\iota$,
    \item[(ii)] $\mathfrak{n}_\gamma$ is continuous on $K_i(\iota)$,
    \item[(iii)] $\rho_{i,j}$ converges uniformly to $0$ on $K_i(\iota)$.
\end{itemize}
The existence of $K_i(\iota)$ is implied by Lusin's theorem and Severini-Egoroff's theorem. Thanks to Whitney extension theorem, see for instance Theorem 5.2 in \cite{step2}, we infer that we can find a $C^1_{\mathbb{G}}$-function such that $f_{i,\iota}\vert_K=0$ and $\nabla_\HH f_{i,\iota}(x)=\mathfrak{n}_\gamma(x)$ for any $x\in K_i(\iota)$. This imples that $A_i$ and thus $\text{gr}(\gamma)$, can be covered $\mathcal{S}^{\mathcal{Q}-1}$-almost all with $C^1_\mathbb{G}$-surfaces. 
\end{proof}

\printbibliography

\section*{Acknowledgements}
I am deeply grateful to Roberto Monti, without whose guidance and constant encouragment I would have never been able to tackle the density problem in the Carnot setting. I would like to also thank Joan Verdera, who invited me in Barcelona during the February of 2019 allowing me to have many fruitful conversation with him and with Xavier Tolsa. I finally would like to thank Gioacchino Antonelli, Giacomo Del Nin and Davide Vittone for many valuable suggestions and correction to the early versions of this work.

%
\end{document}